\documentclass{amsart}

\usepackage{enumerate, amsmath, amsfonts, amssymb, amsthm, wasysym, graphics, graphicx, xcolor, url, a4wide, pdflscape, multido, xargs, colortbl, multicol, multirow, calc, shuffle, ifthen, hvfloat, xstring, stmaryrd, blkarray}
\usepackage[normalem]{ulem}
\usepackage{hyperref, hypcap}
\hypersetup{colorlinks=true, citecolor=blue, linkcolor=blue}
\usepackage[noabbrev,capitalise]{cleveref}

\usepackage{tikz}\usetikzlibrary{trees, snakes, shapes, arrows, matrix, calc, graphs}
\tikzset{blueNode/.style={blue, edge from parent/.style={blue, draw}}}
\tikzset{redNode/.style={red, edge from parent/.style={red, draw}}}
\tikzset{greenNode/.style={green, edge from parent/.style={green, draw}}}
\tikzset{cyanNode/.style={cyan, edge from parent/.style={cyan, draw}}}
\tikzset{blueEdge/.style={edge from parent/.style={blue, draw}}}
\tikzset{redEdge/.style={edge from parent/.style={red, draw}}}
\tikzset{violetEdge/.style={edge from parent/.style={violet, draw}}}
\graphicspath{{figures/}{figures/exmTwist/}}

\usepackage{etoolbox}
\usetikzlibrary{cd}


\title{Signaletic operads}
\thanks{This research was partially supported by the Labex DigiCosme (project ANR-11-LABEX-0045-DIGICOSME) operated by the ANR as part of the program ``Investissement d'Avenir'' Idex Paris-Saclay (ANR-11-IDEX-0003-02).}

\author[F.~Hivert]{Florent Hivert}
\address[F.~Hivert]{Laboratoire de Recherche en Informatique (LRI) -- Universit\'e Paris-Sud, CNRS, Universit\'e Paris-Saclay -- B\^atiment 650, Universit\'e Paris Sud, 91405 ORSAY CEDEX}
\email{florent.hivert@lri.fr}
\urladdr{http://www.lri.fr/~hivert/}

\author[V.~Pilaud]{Vincent Pilaud} 
\address[V.~Pilaud]{Universitat de Barcelona}
\email{vincent.pilaud@ub.edu}
\urladdr{https://www.ub.edu/comb/vincentpilaud/}
\thanks{VP was partially supported by the French ANR grants SC3A~(15\,CE40\,0004\,01), CAPPS~(17\,CE40\,0018), and CHARMS (ANR-19-CE40-0017), by the French--Austrian project PAGCAP (ANR-21-CE48-0020 \& FWF I 5788),  by the Spanish grant PID2022-137283NB-C21 of MCIN/AEI/10.13039/501100011033 / FEDER, UE, and by Departament de Recerca i Universitats de la Generalitat de Catalunya (2021 SGR 00697).}


\newtheorem{theorem}{Theorem}[section]
\newtheorem{corollary}[theorem]{Corollary}
\newtheorem{proposition}[theorem]{Proposition}
\newtheorem{lemma}[theorem]{Lemma}
\newtheorem{definition}[theorem]{Definition}
\newtheorem{conjecture}[theorem]{Conjecture}
\Crefname{conjecture}{Conjecture}{Conjectures}

\theoremstyle{definition}
\newtheorem{example}[theorem]{Example}
\newtheorem{remark}[theorem]{Remark}

\newcommand{\N}{\mathbb{N}} 
\newcommand{\K}{\mathbb{K}} 
\newcommand{\C}{\mathbb{C}} 
\newcommand{\fS}{\mathfrak{S}} 

\newcommand{\set}[2]{\left\{ #1 \;\middle|\; #2 \right\}} 
\newcommand{\bigset}[2]{\big\{ #1 \mid #2 \big\}} 
\newcommand{\ssm}{\smallsetminus} 
\newcommand{\dotprod}[2]{\langle #1 | #2 \rangle} 
\newcommand{\one}{{1\!\!1}} 
\newcommand{\eqdef}{\mbox{\,\raisebox{0.2ex}{\scriptsize\ensuremath{\mathrm:}}\ensuremath{=}\,}} 
\newcommand{\defeq}{\mbox{~\ensuremath{=}\raisebox{0.2ex}{\scriptsize\ensuremath{\mathrm:}} }} 
\newcommand{\mat}[1]{\mathsf{#1}} 
\DeclareMathOperator{\supp}{supp} 
\DeclareMathOperator{\sign}{sign} 

\newcommand{\product}{\cdot} 
\newcommand{\shiftedShuffle}{\,\bar\shuffle\,} 
\newcommand{\alphabet}{\mathcal{A}} 
\newcommand{\Perm}{\mathsf{Perm}} 
\newcommand{\Shuffle}{\mathsf{Shuffle}} 
\newcommand{\FQSym}{\mathsf{FQSym}} 
\newcommand{\WQSym}{\mathsf{WQSym}} 
\DeclareMathOperator{\linearExtensions}{\mathcal{L}} 
\DeclareMathOperator{\LinExt}{LinExt} 
\DeclareMathOperator{\LexMin}{LexMin} 
\DeclareMathOperator{\Std}{Std} 
\DeclareMathOperator{\Des}{Des} 


\newcommandx{\Operad}[1][1=O]{\mathcal{#1}} 
\newcommand{\Operations}{\mathfrak{B}} 
\newcommandx{\operation}[1][1=b]{\mathfrak{#1}} 
\newcommand{\Free}[1][\Operations]{\mathbf{Free}(#1)} 
\newcommand{\Syntax}[1][\Operations]{\mathbf{Trees}(#1)} 
\newcommandx{\tree}[1][1=t]{\mathfrak{#1}} 
\newcommand{\Relations}{\mathfrak{R}} 
\newcommand{\algebra}{\mathsf{Alg}} 
\newcommand{\freeAlgebra}{\mathsf{FAlg}} 
\newcommand{\Rewr}{\overset{*}{\to}}
\newcommand{\Hilbert}{\mathcal{H}} 
\newcommand{\koszul}{{\smash{\b{!}}}}
\newcommand{\whiteManin}{\,\square\,} 
\newcommand{\blackManin}{\,\blacksquare\,} 
\newcommand{\Res}{\mathsf{Res}} 
\newcommand{\Ins}{\mathsf{Ins}} 
\newcommand{\morphism}{\mathsf{m}} 
\newcommand{\id}{\mathsf{id}} 

\newcommand{\As}{\mathsf{As}} 
\newcommand{\Dend}{\mathsf{Dend}} 
\newcommand{\Dias}{\mathsf{Diass}} 
\newcommand{\Dup}{\mathsf{Dup}} 
\newcommand{\Dupdual}{\mathsf{Dup}^\koszul} 
\newcommand{\Quad}{\mathsf{Quad}} 
\newcommand{\Twist}{\mathsf{Twist}} 
\newcommand{\Zinb}{\mathsf{Zinb}} 


\newcommand{\EulPol}[1]{\mathrm{Eul}_{#1}} 
\newcommand{\EulNum}[2]{\genfrac{\langle}{\rangle}{0pt}{}{#1}{#2}} 

\newcommand{\op}[1]{
	\foreach \letters in {#1} {
		\StrLeft{\letters}{1}[\firstletter]
		\StrRight{\letters}{1}[\lastletter]
		\IfEqCase{\lastletter}{%
			{a}{\red}%
			{b}{\blue}%
			{c}{\green}%
			{f}{\bf}%
		}%
		\hspace{-.1mm}
		\IfEqCase{\firstletter}{%
			{l}{\prec}%
			{m}{\prec\hspace*{-.27cm}\succ}%
			{r}{\succ}%
			{o}{\odot}%
			{*}{\color{gray}\prec}%
			{n}{\color{white}\prec}%
		} \hspace{-.1mm}
	}
}

\newcommand{\compoR}[2]{
	\begin{tikzpicture}[baseline=-.5cm, level 1/.style={sibling distance = .8cm, level distance = .6cm}, level 2/.style={sibling distance = .8cm, level distance = .5cm}]
		\node [rectangle, draw] {$\op{#1}$}
			child {node {}}
			child {node [rectangle, draw] {$\op{#2}$}
				child {node {}}
				child {node {}}
			}
		;
	\end{tikzpicture}
} 
\newcommand{\compoL}[2]{
	\begin{tikzpicture}[baseline=-.5cm, level 1/.style={sibling distance = .8cm, level distance = .6cm}, level 2/.style={sibling distance = .8cm, level distance = .5cm}]
		\node [rectangle, draw] {$\op{#2}$}
			child {node [rectangle, draw] {$\op{#1}$}
				child {node {}}
				child {node {}}
			}
			child {node {}}
		;
	\end{tikzpicture}
} 

\newcommand{\ind}[1]{\mathsf{#1}} 
\newcommand{\series}{{\ddagger}} 
\newcommand{\destVect}[2]{\llparenthesis\, #1\, \rrparenthesis_{#2}} 
\newcommand{\kcuts}{\mathrm{rcuts}_k} 
\newcommand{\bcuts}{\mathrm{bcuts}} 

\newcommandx{\signaletic}[1][1=k]{\mathsf{Sig}_{#1}} 
\newcommandx{\messySignaletic}{\mathsf{MSig}_{1}} 
\newcommandx{\messySignaleticParallel}[1][1=k]{\smash{\mathsf{MSig}^\parallel_{#1}}} 
\newcommandx{\messySignaleticSeries}[1][1=k]{\smash{\mathsf{MSig}^\series_{#1}}} 
\newcommandx{\messySignaleticParallelSeries}[1][1=\b{k}]{\smash{\mathsf{MSig}^{\parallel\series}_{#1}}} 
\newcommandx{\messySignaleticSeriesParallel}[1][1=\b{k}]{\smash{\mathsf{MSig}^{\series\parallel}_{#1}}} 
\newcommandx{\tidySignaletic}[1][1=c]{\mathsf{TSig}_{#1}} 
\newcommandx{\tidySignaleticParallel}[1][1=k]{\smash{\mathsf{TSig}^\parallel_{#1}}} 
\newcommandx{\tidySignaleticSeries}[1][1=k]{\smash{\mathsf{TSig}^\series_{#1}}} 
\newcommandx{\tidySignaleticParallelSeries}[1][1=\b{k}]{\smash{\mathsf{TSig}^{\parallel\series}_{#1}}} 
\newcommandx{\tidySignaleticSeriesParallel}[1][1=\b{k}]{\smash{\mathsf{TSig}^{\series\parallel}_{#1}}} 
\newcommandx{\constraint}[1][1 = c]{\mathbf{#1}} 

\newcommandx{\citelangis}[1][1=k]{\mathsf{Cit}_{#1}} 
\newcommandx{\messyCitelangisParallel}[1][1=k]{\smash{\mathsf{MCit}^\parallel_{#1}}} 
\newcommandx{\messyCitelangisSeries}[1][1=k]{\smash{\mathsf{MCit}^\series_{#1}}} 
\newcommandx{\messyCitelangisParallelSeries}[1][1=\b{k}]{\smash{\mathsf{MCit}^{\parallel\series}_{#1}}} 
\newcommandx{\messyCitelangisSeriesParallel}[1][1=\b{k}]{\smash{\mathsf{MCit}^{\series\parallel}_{#1}}} 
\newcommandx{\tidyCitelangisParallel}[1][1=k]{\smash{\mathsf{TCit}^\parallel_{#1}}} 
\newcommandx{\tidyCitelangisSeries}[1][1=k]{\smash{\mathsf{TCit}^\series_{#1}}} 
\newcommandx{\tidyCitelangisParallelSeries}[1][1=\b{k}]{\smash{\mathsf{TCit}^{\parallel\series}_{#1}}} 
\newcommandx{\tidyCitelangisSeriesParallel}[1][1=\b{k}]{\smash{\mathsf{TCit}^{\series\parallel}_{#1}}} 
\newcommand{\cR}{\mathcal{R}} 

\newcommandx{\Zinbiel}[1][1=k]{\mathsf{Zin}_{#1}} 
\newcommandx{\messyZinbielParallel}[1][1=k]{\smash{\mathsf{MZin}^\parallel_{#1}}} 
\newcommandx{\messyZinbielSeries}[1][1=k]{\smash{\mathsf{MZin}^\series_{#1}}} 
\newcommandx{\tidyZinbielParallel}[1][1=k]{\smash{\mathsf{TZin}^\parallel_{#1}}} 
\newcommandx{\tidyZinbielSeries}[1][1=k]{\smash{\mathsf{TZin}^\series_{#1}}} 

\newcommandx{\posParallel}[1][1=k]{\smash{\mathsf{Pos}^\parallel_{#1}}} 
\newcommandx{\posSeries}[1][1=k]{\smash{\mathsf{Pos}^\series_{#1}}} 
\newcommandx{\boundedPosParallel}[1][1=k]{\smash{\mathsf{BP}^\parallel_{#1}}} 
\newcommandx{\rootedPosSeries}[1][1=k]{\smash{\mathsf{RP}^\series_{#1}}} 

\newcommand{\messyEvalPermParallel}{\smash{\mathrm{MEval}^\parallel}} 
\newcommand{\messyEvalPermSeries}{\smash{\mathrm{MEval}^\series}} 
\newcommand{\tidyEvalPermParallel}{\smash{\mathrm{TEval}^\parallel}} 
\newcommand{\tidyEvalPermSeries}{\smash{\mathrm{TEval}^\series}} 
\newcommand{\EvalPosetParallel}{\smash{\mathrm{PEval}^\parallel}} 
\newcommand{\EvalPosetSeries}{\smash{\mathrm{PEval}^\series}} 
\DeclareMathOperator{\Root}{Root} 

\makeatletter
\newcommand{\oset}[2]{%
  {\mathop{#2}\limits^{\vbox to .1\ex@{\kern-\tw@\ex@
   \hbox{\scalebox{0.4}{$#1$}}\vss}}}}
\makeatother

\newcommand{\specialComposition}[3]{\mathbin{\smash{\oset{\mathrm{#1}#2}{\circ}\hspace{-.02cm}_{#3}}}} 
\newcommand{\posetParallelCirc}[1]{\specialComposition{P}{\!\parallel}{#1}} 
\newcommand{\posetSeriesCirc}[1]{\specialComposition{P}{\,\series}{#1}} 
\newcommand{\messyParallelCirc}[1]{\specialComposition{M}{\!\parallel}{#1}} 
\newcommand{\messySeriesCirc}[1]{\specialComposition{M}{\,\series}{#1}} 
\newcommand{\tidyParallelCirc}[1]{\specialComposition{T}{\!\parallel}{#1}} 
\newcommand{\tidySeriesCirc}[1]{\specialComposition{T}{\,\series}{#1}} 

\newcommand{\ie}{\textit{i.e.}~} 
\newcommand{\eg}{\textit{e.g.}~} 
\newcommand{\aka}{\textit{aka.}~} 
\newcommand{\viceversa}{\textit{vice versa}} 
\newcommand{\qandq}{\quad\text{and}\quad} 
\newcommand{\qqandqq}{\qquad\text{and}\qquad} 
\newcommand{\parabf}[1]{\vspace{.3cm}\noindent\textbf{#1}.} 
\newcommand{\paraul}[1]{\vspace{.3cm}\noindent\uline{#1}.} 
\newcommand{\warning}{\vspace{.3cm}\noindent\fbox{\textbf{Warning!}}~} 
\newcommand{\defn}[1]{\emph{\textsf{\color{blue} #1}}} 
\definecolor{green}{RGB}{57,181,74} 
\newcommand{\blue}{\color{blue}} 
\newcommand{\green}{\color{green}} 
\newcommand{\red}{\color{red}} 
\newcommand{\violet}{\color{violet}} 
\newcommand{\cyan}{\color{cyan}} 
\renewcommand{\b}[1]{\mathbf{#1}} 
\newcommand{\transpose}[1]{{}^\mathsf{t}#1} 

\setcounter{tocdepth}{4}
\makeatletter
\def\l@section{\@tocline{1}{7pt}{0pc}{}{}}
\makeatother
\let\oldtocpart=\tocpart
\renewcommand{\tocpart}[2]{\bf\large\oldtocpart{#1}{#2}}
\let\oldtocsection=\tocsection
\renewcommand{\tocsection}[2]{\bf\oldtocsection{#1}{#2}}
\let\oldtocsubsubsection=\tocsubsubsection
\renewcommand{\tocsubsubsection}[2]{\quad\oldtocsubsubsection{#1}{#2}}


\begin{document}

\begin{abstract}
We introduce $k$-signaletic operads and their Koszul duals, generalizing the dendriform, diassociative and duplicial operads (which correspond to the~$k=1$ case).
We show that the Koszul duals of the $k$-signaletic operads act on multipermutations and that the resulting algebras are free, thus providing combinatorial models for these operads.
Finally, motivated by these actions on multipermutations, we introduce similar operations on multiposets which yield yet another relevant operad obtained as Manin powers of the $L$-operad.
\end{abstract}

\vspace*{-.7cm}

\maketitle

\vspace{-.7cm}

\tableofcontents

\clearpage


\section{Introduction}
\label{sec:introduction}

\parabf{Diassociative and dendriform algebras and operads}
In his study of Leibniz algebras, a kind of non commutative Lie algebras, J.-L.~Loday introduced two more families of algebras named diassociative and dendriform~\cite{Loday-dialgebras}.
They are vector spaces endowed with two binary operations~$\op{l}$ and~$\op{r}$ satisfying certain axioms regarding the compositions of two operations (see \cref{subsubsec:dendriformDiassociativeOperads} for precise definitions).
In particular, both operations are associative for diassociative algebras, and the sum of the two operations is associative for dendriform algebras.

The close connections between these two families of algebras are better understood via the theory of (non-symmetric) operads.
An operad is an algebraic structure abstracting a type of algebras.
Their theory allows, via a suitable notion of morphism, to understand relations between algebras of different types.
More precisely operads provide a formalization of operations transforming several inputs into one output, of the ways to compose these operations, and of the relations between different possible compositions (see \cref{subsec:operads}).
It turns out that the diassociative operad~$\Dias$ and the dendriform operad~$\Dend$ are both binary quadratic Koszul operads and that they are Koszul duals to each other (see \cref{subsec:Koszul}).
An enumerative consequence of this duality is that the Hilbert series~$\Hilbert_\Dias$ and~$\Hilbert_\Dend$ of these two operads are Lagrange inverse to each other:
\begin{gather*}
\Hilbert_\Dias \big( - \Hilbert_\Dend(-t) \big) = t,\\
\text{where}\qquad
\Hilbert_\Dias(t) = \sum_{p \ge 1} p \, t^p = \frac{t}{1-t^2}
\qqandqq
\Hilbert_\Dend(t) = \sum_{p \ge 1} C_p \, t^p = \frac{1-\sqrt{1-4t}}{2t}-1
\end{gather*}
where~$C_p \eqdef \frac{1}{p+1} \binom{2p}{p}$ denotes the $p$-th Catalan number.

Combinatorial models for the free algebras on the diassociative and dendriform operads were described in~\cite{Loday-dialgebras, LodayRonco}.
The free diassociative algebra is described in~\cite{Loday-dialgebras} in terms of middle entries in monomials.
The free dendriform algebra was described by J.-L.~Loday and M.~Ronco in~\cite{LodayRonco} as an algebra on planar binary trees.
Other important dendriform algebras include the shuffle algebra and the algebra of permutations of C.~Malvenuto and C.~Reutenauer~\cite{MalvenutoReutenauer} (also known as the algebra of free quasi-symmetric functions~$\FQSym$), where the classical shuffle product splits into two partial shuffles according to the provenance of the first letter:
\begin{gather*}
{\shuffle} = {\op{l}} + {\op{r}},
\qquad\text{where}\qquad
xX \op{l} yY = x (X \shuffle yY)
\qqandqq
xX \op{r} yY = y (xX \shuffle Y).
\end{gather*}

\parabf{Further splitting of associative products}
\enlargethispage{-.6cm}
Several generalizations of dendriform algebras were recently introduced and studied, including tridendriform algebras~\cite{LodayRonco-tridendriform}, quadri-algebras~\cite{AguiarLoday, Foissy}, ennea-algebras~\cite{Leroux-ennea}, $m$-dendriform algebras~\cite{Novelli}, polydendriform algebras~\cite{Giraudo-pluriassociativeI, Giraudo-pluriassociativeII}, and $k$-twistiform algebras~\cite{Pilaud-brickAlgebra}.
These structures rely on various decompositions of an associative product into several operations, often motivated by the combinatorics of the shuffle product.
For instance, there are two natural ways to split further the shuffle product into four partial shuffles, according to the provenance of:
\begin{enumerate}[(i)]
\item the first and last letters, defining a quadri-algebra structure of~\cite{AguiarLoday, Foissy}, or
\item the first two letters, defining a $2$-twistiform algebra structure of~\cite{Pilaud-brickAlgebra}.
\end{enumerate}
While the corresponding $\Quad$ and $\Twist$ operads are not isomorphic, it turns out that they have the same Hilbert series, which is the Lagrange inverse of~$\sum_{p \ge 1} p^2 \, t^p$.
In fact, this equi-enumeration extends to two infinite families of operads generalizing the $\Dend$, $\Quad$ and $\Twist$ operads:
\begin{enumerate}[(i)]
\item the $k$-th Manin power of the dendriform operad ($\Quad$ is the Manin square of $\Dend$),
\item the $k$-twistiform operads defined in~\cite{Pilaud-brickAlgebra}.
\end{enumerate}
Their Hilbert series coincide and is the Lagrange inverse of~$\sum_{p \ge 1} p^k \, t^p$.
The objective of this paper is to explore further the twistiform operads in order to find a good algebraic interpretation of this equi-enumeration.

\parabf{$k$-signaletic operads}
Our approach via the Koszul duals of all these operads offers an enlightening perspective.
The first step is to come back to J.-L.~Loday's original interpretation of the diassociative operad.
Consider a syntax tree~$\tree$ on~$\{\op{l}, \op{r}\}$, where~$\op{l}$ and~$\op{r}$ are binary operations, and imagine that a car starts at the root of~$\tree$ and follows the signs in the nodes of~$\tree$ until it reaches a leaf.
The diassociative operad is then the quotient of the free operad on~$\{\op{l}, \op{r}\}$ by the equivalence relation identifying syntax trees of the same arity in which the car reaches the same destination (see \cref{subsec:signaleticDiassDualDup}).

This interpretation then naturally generalizes to operads with~$2^k$ binary operations~$\{\op{l}, \op{r}\}^k$.
Namely, for a syntax tree on~$\{\op{l}, \op{r}\}^k$, we imagine that $k$ cars start at the root of~$\tree$ and follow the signs in the nodes of~$\tree$ according to the rules described below until they reach the leaves.
We define the \defn{$k$-signaletic operad} as the quotient of the free operad on~$\{\op{l}, \op{r}\}^k$ by the equivalence relation identifying syntax trees with the same arity and in which the $k$ cars reach the same destinations (see \cref{sec:signaleticOperads}).
As the equivalence classes are determined by the destinations of the $k$ cars, and all destination vectors are possible, the Hilbert series of the $k$-signaletic operad is obviously given by~$\sum_{p \ge 1} p^k \, t^k$.

Of course, the destinations of the cars crucially depend on the \defn{circulation rule} telling each car which signal it should consider in each node.
There are two natural possible circulation rules:
\begin{enumerate}[(i)]
\item \defn{Parallel}: All $k$ cars start together, and the $i$-th car always follows the indication given by the $i$-th letter of the traffic signal at each node.
\item \defn{Series}: The $k$ cars start one after the other, and each car always follows the indication given by the leftmost remaining letter of the traffic signal at each node and erases it.
\end{enumerate}
This results in two non-isomorphic but obviously equi-enumerated versions of the $k$-signaletic operad.
The parallel version is the $k$-th Manin power of the diassociative operad, while the series version is a somewhat twisted $k$-th Manin power of the diassociative operad.

Besides the two possible circulation rules, we can also impose two \defn{ordering rules}:
\begin{enumerate}[(i)]
\item \defn{Tidy}: All signals not used by a car must point to the left,
\item \defn{Messy}: No further constraints on the remaining signals.
\end{enumerate}
Again, the resulting operads are non-isomorphic but equi-enumerated.
For instance, when~$k = 1$, the tidy signaletic operad is an analogue of the dual duplicial operad, while the messy signaletic operad is the diassociative operad.
We therefore obtain four versions of the $k$-signaletic operad: $\tidySignaleticParallel$, $\messySignaleticParallel$, $\tidySignaleticSeries$, $\messySignaleticSeries$ (parallel or series, tidy or messy).

It turns out that these $k$-signaletic operads are all quadratic and Koszul operads (see \cref{subsec:HilbertSeriesKoszulitySignaletic}).
Remarkably, our proof is independent of which version we consider.
Namely, we orient the $k$-signaletic quadratic relations according to the $k$-signaletic Tamari lattice, an order generalizing the classical Tamari lattice to syntax trees on~$\{\op{l}, \op{r}\}^k$.
We then show that the resulting $k$-signaletic rewriting system converges to certain right $k$-signaletic combs.
The Koszulity then follows from a classical result of the operad theory.

\parabf{$k$-citelangis operads}
At that stage, it is natural to consider the Koszul duals of the $k$-signaletic operads, which we call \defn{$k$-citelangis operads} (see \cref{sec:citelangisOperads}).
Again, $k$-citelangis operads arise in four flavours: $\tidyCitelangisParallel$, $\messyCitelangisParallel$, $\tidyCitelangisSeries$, $\messyCitelangisSeries$ (parallel or series, tidy or messy).
From their presentation, one recognizes relevant operads:
\begin{enumerate}[(i)]
\item the messy parallel $k$-citelangis operad is the $k$-th Manin power of the dendriform operad,
\item the tidy parallel $k$-citelangis operad is the $k$-th Manin power of a twisted duplicial~operad,
\item the messy series $k$-citelangis operad is a twisted $k$-th Manin power of the dendriform operad, and coincides with the $k$-twistiform operad of~\cite{Pilaud-brickAlgebra},
\item the tidy series $k$-citelangis operad is a twisted $k$-th Manin power of a twisted duplicial operad.
\end{enumerate}
While the $k$-citelangis operads are not isomorphic, they are all equi-enumerated: their Hilbert series is the Lagrange inverse of~$\sum_{p \ge 1} p^k \, t^p$ (see \cref{subsubsec:HilberSeriesCitelangis}).
This gives in particular the algebraic interpretation of the equi-enumeration between the $\Quad$ and~$\Twist$ operads observed earlier.

By Koszul duality, it is not difficult to orient the quadratic relations in these operads into a converging $k$-citelangis rewriting system.
The normal forms of this system are syntax trees where each node imposes some constraints only to its right child.
Therefore, the Hilbert series of the $k$-citelangis operads can be interpreted as a fixed point of the substitution into the generating function of the right combs that are in normal form.
Moreover, the latter has a natural expression in terms of the power of the transition matrix~$M_k$ of the automaton where each operation of~$\{\op{l}, \op{r}\}^k$ points towards its possible children.
While simple to express, the resulting matrices~$M_k$ seem very interesting (see \cref{subsubsec:normalFormsCitelangis}): for instance, we conjecture that their minimal polynomials are multiples of the Eulerian polynomial~$\EulPol{k}(t) \eqdef \sum_{j=0}^{k-1} \EulNum{k}{j} \, t^j$ (where the Eulerian number~$\EulNum{k}{j}$ is the number of permutations of~$\fS_k$ with precisely~$j$ descents).
These polynomials are ubiquitous in this paper since
\[
\sum_{p \ge 1} p^k \, t^p = \frac{t \cdot \EulPol{k}(t)}{(1-t)^{k+1}} \, .
\]

\parabf{Actions on multipermutations and free $k$-citelangis algebras}
Once the presentations of (the four versions of) $k$-citelangis operads are established, we look for natural actions of these operads and combinatorial models for their free algebras (see \cref{sec:actions}).

As already mentioned, this has been done for the $\Dend$ and $\Quad$ operads.
Namely, the algebra of permutations~$\FQSym$ has a dendriform algebra structure where the shuffle product decomposes into two operations depending on the provenance of the first letter of the result~\cite{Loday-dialgebras}, and a quadri-algebra structure where the shuffle product decomposes into four operations depending on the provenance of the first and last letters of the result~\cite{AguiarLoday}.
Moreover, it was shown that the resulting algebras are free~\cite{Loday-dialgebras, Foissy, Vong}, so that the subalgebras generated by the permutation~$1$ for~$\Dend$ and the permutation~$12$ for $\Quad$ provide combinatorial models for the corresponding operads.
Unfortunately, the parallel world is quite limited: these actions cannot be extended to messy parallel $k$-citelangis structures for~$k \ge 2$ because a \mbox{permutation has only two ends!}

In contrast, $\FQSym$ has a natural messy series $k$-citelangis algebra structure for any~$k \ge 1$, where the shuffle product decomposes into $2^k$ operations depending on the provenance of the first $k$ letters of the result~\cite{Pilaud-brickAlgebra}.
Nevertheless, the question of the freeness of the resulting algebras was left widely open in~\cite{Pilaud-brickAlgebra}, and we were lacking combinatorial models for the $k$-twistiform operads.
This paper settles these two problems.

To start with, we need to revisit the combinatorics of the parallel $2$-citelangis operads (see \cref{subsec:actionParallelCitalangisOperads}).
We first observe that (a version of) the tidy parallel $2$-citelangis operad also naturally acts on permutations: the four operations act by concatenation, except that they select the first and last letters depending on their first and last signs.
The resulting tidy parallel $2$-citelangis algebra is free on the permutations that admit no bounded cut, \ie a value~$\gamma$ such that except the first and last letters, all letters smaller than~$\gamma$ arrive before all letters larger than~$\gamma$ in the permutation.
We then observe a triangularity relation between the tidy and messy parallel $2$-citelangis operations on permutations: namely, the result of a tidy operation is given by the lexicographically minimal permutation among the result of the corresponding messy operation.
This triangularity transports the freeness of the tidy parallel $2$-citelangis algebra to that of the messy parallel $2$-citelangis algebra on permutations, thus providing an alternative proof of the results of~\cite{Foissy, Vong}.

This approach to the parallel $2$-citelangis operads provides a solid prototype to tackle the series $k$-citelangis operads (see \cref{subsec:actionSeriesCitalangisOperads}).
We indeed define a similar action of the tidy series $k$-citelangis operad on $k$-permutations (\ie permutations of a multiset where each value is repeated $k$ times).
The resulting tidy series $k$-citelangis algebra is free on the $k$-permutations that admit no $k$-rooted cut, \ie a value~$\gamma$ such that except the first $k$ letters, all letters smaller than~$\gamma$ arrive before all letters larger than~$\gamma$ in the permutation.
By the same triangularity argument, we obtain that the messy series $k$-citelangis algebra on permutations defined in~\cite{Pilaud-brickAlgebra} is free.

\parabf{Operations on posets and poset operads}
As the messy $k$-citelangis operations on permutations are defined as partial shuffle products, their results are sums over all linear extensions of certain $k$-posets (\ie partial orders on a multiset where each value is repeated $k$ times).
The $k$-posets are bounded for the parallel action, and $k$-rooted for the series action (meaning that they have a chain of $k$ successive minimal elements).
The messy $k$-citelangis operations can be described directly on these $k$-posets, but the resulting operations do not satisfy all messy $k$-citelangis relations.
It motivates the introduction of operads~$\boundedPosParallel[2]$ on bounded $2$-posets and~$\rootedPosSeries$ on $k$-rooted $k$-posets defined by these operations (see \cref{subsubsec:parallelPosetOperad,subsubsec:seriesPosetOperad}).
The resulting operads are quadratic and Koszul, with a unique quadratic relation.
The Hilbert series of their Koszul dual is thus~$t + 2^k t^2 + t^3$, which by Lagrange inversion enables us to compute their own Hilbert series.

\parabf{$k$-Zinbiel operads}
Finally, motivated by the actions of the series $k$-citelangis operads on \mbox{$k$-rooted} cuttable $k$-permutations, we define the series \defn{$k$-Zinbiel operad}~$\messyZinbielSeries$ on all $k$-rooted $k$-permutations (see \cref{subsubsec:seriesZinbiel}).
This operad completes our operad zoo: the linear extensions map provides a natural morphism from the $k$-rooted poset operad~$\rootedPosSeries$ to the $k$-Zinbiel operad~$\messyZinbielSeries$, and the messy series $k$-citelangis operad~$\messyCitelangisSeries$ is a suboperad of the $k$-Zinbiel operad~$\messyZinbielSeries$.
We also study a similar operad in the parallel situation when~$k = 2$.

\parabf{Open questions}
Our work motivates many interesting research directions and open questions, some of which are briefly discussed in \cref{sec:openQuestions}.
A short summary:
\begin{itemize}
\item Are there general notions of twisted Manin products of operads that would enlighten series signaletic and citelangis operads in a similar way the classical Manin products explain the parallel signaletic and citelangis operads?
\item What are the properties of the $k$-Zinbiel operads as symmetric operads? Are they Koszul? What are their duals?
\item What is the natural generalization of tridendriform algebras in parallel and series? Is there a natural interpretation in terms of traffic rules for their Koszul dual operads?
\item Is there a relevant generalization of the duplicial operad in the series situation? While the parallel situation allows quite some freedom to extend both the duplicial and twisted duplicial operads, the series situation seems more limited, but might reveal an interesting operad.
\item Is it possible to interpolate between the tidy and messy versions of our operads with \mbox{$q$-analogues}, in the same way the $q$-deformation of the shuffle product interpolates between the classical shuffle and the concatenation products?
\end{itemize}

\parabf{Diagrammatic summary}
Finally, we believe that the following diagram gives a good summary of the combinatorial maps between all the series operads (a similar diagram holds for the parallel operads when~$k = 2$).
In this diagram, plain arrows are operad morphisms, while dashed arrows are just bijections of normal forms.
Different operads and combinatorial objects in this diagram should become clear along the text.

\vspace{.5cm}
\begin{center}
\begin{tikzcd}[column sep=2cm]
	  \blue \rootedPosSeries
	& \blue \messyZinbielSeries
	& \blue \tidyZinbielSeries \\[-.8cm]
	  \begin{minipage}{2.8cm} \begin{center} $k$-rooted \\[-.1cm] posets \end{center} \end{minipage} \arrow[r, "\LinExt",  two heads]
	& \begin{minipage}{2.8cm} \begin{center} $k$-rooted \\[-.1cm] $k$-permutations \end{center} \end{minipage} \arrow[r, dashrightarrow, "\LexMin", hook, two heads]
	& \begin{minipage}{2.8cm} \begin{center} $k$-rooted \\[-.1cm] $k$-permutations \end{center} \end{minipage} \\[.3cm]
	  \phantom{\big(} \blue \posSeries \phantom{\big)} \arrow[u, hook]
	& \phantom{\big(} \blue \messyCitelangisSeries \phantom{\big)} \arrow[u, hook]
	& \phantom{\big(} \blue \tidyCitelangisSeries \phantom{\big)} \arrow[u, hook] \\[-.8cm]
	  \begin{minipage}{2.8cm} \begin{center} $k$-rooted cuttable \\[-.1cm] posets \end{center} \end{minipage} \arrow[r, "\LinExt",  two heads]
	& \begin{minipage}{2.8cm} \begin{center} $k$-rooted cuttable \\[-.1cm] $k$-permutations \end{center} \end{minipage} \arrow[r, dashrightarrow, "\LexMin", hook, two heads]
	& \begin{minipage}{2.8cm} \begin{center} $k$-rooted cuttable \\[-.1cm] $k$-permutations \end{center} \end{minipage} \\[-.3cm]
	  \phantom{1} & \textsc{\green\;\,koszul \; duality} \arrow[l, no head, squiggly, green] \arrow[r, no head, squiggly, green] & \phantom{1} \\[-.3cm]
	  \phantom{\big(} \blue (\posSeries)^\koszul \phantom{\big)} \arrow[uu, leftrightarrow, squiggly, green]
	& \phantom{\big(} \blue \messySignaleticSeries \phantom{\big)} \arrow[uu, leftrightarrow, squiggly, green]
	& \phantom{\big(} \blue \tidySignaleticSeries \phantom{\big)} \arrow[uu, leftrightarrow, squiggly, green] \\[-.8cm]
	  \begin{minipage}{2.8cm} \begin{center} $L$-destination \\[-.1cm] vectors \end{center} \end{minipage} \arrow[r, hookleftarrow]
	& \begin{minipage}{2.8cm} \begin{center} destination \\[-.1cm] vectors \end{center} \end{minipage} \arrow[r, "=", dashed, leftrightarrow]
	& \begin{minipage}{2.8cm} \begin{center} destination \\[-.1cm] vectors \end{center} \end{minipage}
\end{tikzcd}
\end{center}

\parabf{Acknowledgements}
We are deeply grateful to anonymous referees for impressively numerous comments and extremely constructive suggestions on the presentation of this paper.

\newpage


\section{Preliminaries}
\label{sec:preliminaries}

This section recalls some algebraic and combinatorial preliminaries and fixes notations.
To start with, we denote by~$\N$ the set of non-negative integers, including~$0$.
For an integer~$n$, we denote~$[n] \eqdef \{1,2,\dots,n\}$, with the convention that~$[0] = \varnothing$.
Let~$\K$ be a field of characteristic~$0$.


\subsection{Operads}
\label{subsec:operads}

\enlargethispage{.5cm}
An operad is an algebraic structure abstracting a type of algebras.
We give here the formal definitions needed in this paper.
We refer to~\cite{LodayVallette, Markl} for classical references on operads, and to~\cite{Chapoton-operads, Giraudo-nonsymmetricOperadsCombinatorics, Giraudo-these} for more combinatorial approches.
Note that we work with non-symmetric operads.

\begin{definition}
A (unital, non-symmetric) \defn{operad} (in the category of vector spaces) is a graded $\K$-vector space~$\Operad = \bigoplus_{p \ge 1} \Operad(p)$ endowed with a distinguished element~$\one \in \Operad(1)$ (called the \defn{unit} of~$\Operad$) and linear maps (called \defn{partial compositions} of~$\Operad$)
\[
\circ_i : \Operad(p) \otimes \Operad(q) \to \Operad(p+q-1) \qquad\text{for } p,q \ge 1 \text{ and } i \in [p]
\]
such that the following three relations hold:
\[
\begin{array}{@{}l@{\;\;}c@{\;\;}l}
\text{(\defn{unitality})} & \one \circ_1 \operation[p] = \operation[p] = \operation[p] \circ_i \one & \text{for all } \operation[p] \in \Operad(p), \; i \in [p], \\
\text{(\defn{series axiom})} & (\operation[p] \circ_i \operation[q]) \circ_{i+j-1} \operation[r] = \operation[p] \circ_i (\operation[q] \circ_j \operation[r]) & \text{for all } \operation[p] \in \Operad(p), \; \operation[q] \in \Operad(q), \; \operation[r] \in \Operad(r), \; i \in [p], \; j \in [q], \\
\text{(\defn{parallel axiom})} & (\operation[p] \circ_i \operation[q]) \circ_{j+q-1} \operation[r] = (\operation[p] \circ_j \operation[r]) \circ_i \operation[q] & \text{for all } \operation[p] \in \Operad(p), \; \operation[q] \in \Operad(q), \; \operation[r] \in \Operad(r), \; i < j \in [p].
\end{array}
\]
The elements of~$\Operad(p)$ are called \defn{operations} of arity~$p$.
\end{definition}

Alternatively, one can define the operad~$\Operad$ by its \defn{compositions maps}
\[
\circ : \Operad(p) \otimes \big( \Operad(q_1) \otimes \dots \otimes \Operad(q_p) \big) \to \Operad(q_1+\dots+q_p) \qquad \text{for } p,q_1,\dots,q_p \ge 1
\]
linearly defined for~$\operation[p] \in \Operad(p)$ and~$\operation[q]_1 \in \Operad(q_1), \dots, \operation[q]_p \in \Operad(q_p)$ by
\[
\operation[p] \circ (\operation[q]_1 \otimes \dots \otimes \operation[q]_p) = (((\operation[p] \circ_p \operation[q]_p) \circ_{p-1} \operation[q]_{p-1}) \dots) \circ_1 \operation[q]_1.
\]
(Note that partial compositions can be retrieved from the global composition maps using the unit).

\begin{definition}
An \defn{operad morphism} is a unital graded linear map~$\phi: \Operad \to \Operad'$ between two operads which commutes with partial compositions:
\[
\phi(\operation[p] \circ_i \operation[q]) = \phi(\operation[p]) \circ_i \phi(\operation[q]) \text{ for all } \operation[p] \in \Operad(p), \operation[q] \in \Operad(q) \text{ and } i \in [p].
\]
\end{definition}

\begin{definition}
An \defn{operad ideal} of~$\Operad$ is a graded subspace~$\Operad[I] = \bigoplus_{p \ge 1} \Operad[I](p)$ of~$\Operad$ such that~$\operation[p] \circ_i \operation[q] \in \Operad[I]$ and $\operation[q] \circ_j \operation[p] \in \Operad[I]$ for any~$\operation[p] \in \Operad(p)$, $\operation[q] \in \Operad[I](q)$, $i \in [p]$ and~$j \in [q]$.
\end{definition}

One can then define as usual the quotient~$\Operad / \Operad[I]$ which is itself an operad. We denote by~$\langle \Relations \rangle$ the operad ideal generated by a graded subspace~$\Relations$ of~$\Operad$, \ie the smallest operad ideal of~$\Operad$ containing~$\Relations$.

\begin{definition}
The \defn{Hilbert series} of an operad~$\Operad$ is the formal power series~$\Hilbert_{\Operad}(t)$ encoding the dimensions of~$\Operad(p)$ for~$p \ge 1$:
\[
\Hilbert_{\Operad}(t) \eqdef \sum_{p \ge 1} \dim \Operad(p) \, t^p.
\]
\end{definition}

\begin{definition}
\label{def:algebraOverOperad}
An \defn{algebra over an operad~$\Operad$} is a $\K$-vector space~$\algebra$ endowed with an action ${\cdot : \Operad(p) \otimes \algebra^{\otimes p} \to \algebra}$, for each~$p \ge 1$,  satisfying the relations imposed by the compositions in~$\Operad$, meaning that for all~$\operation[p] \in \Operad(p)$, $\operation[q] \in \Operad(q)$, $a_1 \otimes \dots \otimes a_{p+q-1} \in \algebra^{\otimes p+q-1}$ and~$i \in [p]$,
\[
(\operation[p] \circ_i \operation[q]) \cdot (a_1 \otimes \dots \otimes a_{p+q-1}) = \operation[p] \cdot (a_1 \otimes \dots \otimes a_{i-1} \otimes \operation[q] \cdot (a_i \otimes \dots \otimes a_{i+q-1}) \otimes a_{i+q} \otimes \dots \otimes a_{p+q-1}).
\]
\end{definition}

In other words, the elements of the operad~$\Operad$ are linear operations on~$\algebra$, and the partial compositions on~$\Operad$ correspond to partial compositions in~$\algebra$.
In particular, there is a notion of free algebra over~$\Operad$.

\begin{definition}
The \defn{free algebra over an operad~$\Operad$ generated by a set~$S$} is the $\K$-vector space
\(
\freeAlgebra \eqdef \bigoplus_{p \ge 1} \Operad(p) \otimes \K S^{\otimes p}
\)
endowed with the action ${\cdot : \Operad(p) \otimes \freeAlgebra^{\otimes p} \to \freeAlgebra}$ defined for all ${\operation[p] \in \Operad(p)}$, $\operation[q]_1 \in \Operad(q_1), \dots, \operation[q]_p \in \Operad(q_p)$, and~$v_1 \in \K S^{\otimes q_i}, \dots, v_p \in \K S^{\otimes q_p}$ by
\[
\operation[p] \cdot \big( (\operation[q]_1 \otimes v_1) \otimes \dots \otimes (\operation[q]_p \otimes v_p)\big) \eqdef \big( \operation[p] \circ (\operation[q]_1 \otimes \dots \otimes \operation[q]_p) \big) \otimes (v_1 \otimes \dots \otimes v_p).
\]
\end{definition}

In particular, the free algebra over~$\Operad$ generated by a singleton is isomorphic, as a (graded) vector space, to the operad~$\Operad$ itself.


\subsection{Syntax trees, free operads, presentations, and rewriting systems}
\label{subsec:syntaxTrees}

Operads are about ways to compose operations.
To manipulate these compositions and the relations between them, we need the fundamental tool of syntax trees.

\begin{definition}
A \defn{syntax tree} over a graded set~$\Operations \eqdef \bigsqcup_{p \ge 1} \Operations(p)$ is a rooted planar tree~$\tree$ where each internal node of arity~$p$ is labeled by an element of~$\Operations(p)$. 
We denote by~$\Syntax(p)$ the set of all syntax trees over~$\Operations$ of arity~$p$ and~$\Syntax \eqdef \bigsqcup_{p\ge 1} \Syntax(p)$.
\end{definition}

Throughout, the \defn{arity} of a node is its number of children, and the \defn{arity} of a tree is its number of leaves. 
Note that the tree of arity~$1$, with no internal node and only one leaf, is a valid syntax~tree.

\begin{definition}
The \defn{free unital operad} over a graded set~$\Operations \eqdef \bigsqcup_{p \ge 1} \Operations(p)$ is the operad
\[
\Free \eqdef \bigoplus_{p \ge 1} \Free(p),
\]
where:
\begin{itemize}
\item $\Free(p)$ is the $\K$-vector space generated by syntax trees on~$\Operations$ of arity~$p$,
\item the partial composition~$\circ_i$ is the linear map defined on two syntax trees~$\operation[s]$ and~$\operation[t]$ by grafting the root of~$\operation[t]$ on the $i$-th leaf of~$\operation[s]$, and extended by linearity to~$\Free$, and
\item the unit is the syntax tree of arity~$1$, with no internal node and only one leaf.
\end{itemize}
\end{definition}

As its name suggests, the free operad is a free object in the category of non-symmetric operads so that any operad can be obtained by a quotient of a free operad.

\begin{definition}
A \defn{presentation} of an operad~$\Operad$ is a pair~$(\Operations, \Relations)$ where~$\Operations \eqdef \bigsqcup_{p \ge 1} \Operations(p)$ is a graded set and~$\Relations$ is a subspace of the free operad~$\Free$ such that~$\Operad$ is isomorphic to the quotient~$\Free/\langle \Relations \rangle$. The elements of~$\Operations$ are called \defn{generators} while the elements of~$\Relations$ are called \defn{relations} of the presentation.
\end{definition}

\begin{definition}
An operad is called \defn{binary} if it admits a presentation~$(\Operations, \Relations)$ in which the generators~$\Operations$ all have arity~$2$. A binary operad is \defn{quadratic} if it admits a presentation~$(\Operations, \Relations)$ in which the relations~$\Relations$ only involve trees with two nodes (that is operad elements of arity~$3$).
\end{definition}

In this paper, we only work with binary quadratic operads.
To compute with presentations, it is often useful to use rewriting.

\begin{definition}
A \defn{quadratic rewriting rule} is a pair $(\tree[s], \operation[q])$ formed by a syntax tree $\tree[s]$ of $\Syntax(3)$ and a linear combination $\operation[q]$ in $\Free(3)$. A \defn{quadratic rewriting system} over $\Operations$ is a collection of quadratic rewriting rules. We say that a tree~$\tree\in\Syntax$ \defn{rewrites} into an element~$\operation[p]\in\Free$ by this system and we write $\tree\to\operation[p]$ if there exists an integer $i$, a tree $\tree[u]$ of arity at least $i$, a rule~$(\tree[s], \operation[q])$, and a triple of trees $(\tree[l]_1, \tree[l]_2, \tree[l]_3)$ such that
\[
\tree = \tree[u] \circ_i \big(\tree[s] \circ (\tree[l]_1, \tree[l]_2, \tree[l]_3)\big)
\qqandqq
\operation[p] = \tree[u] \circ_i \big(\operation[q] \circ (\tree[l]_1, \tree[l]_2, \tree[l]_3)\big).
\]
We say that $\tree[s]$ is the \defn{pattern} that was rewritten to $\operation[q]$. Extending by linearity and abusing notation we still denote by~$\operation[p] \to \operation[q]$ this binary relation on $\Free$. We moreover denote by $\Rewr$ its reflexive and transitive closure.
\end{definition}

\begin{example}
We show here an example of an application of a rewriting rule. We fix $\Operations\eqdef\{\op{l}, \op{r}\}$ and we suppose that the system contains the following rewriting rule:
\[
\left(\compoR{l}{l}\, ,\ \compoL{l}{l} \!\!+\!\! \compoR{l}{r}\right)
\]

\pagebreak
Then the following is a possible rewriting:
\bigskip
\[
	\begin{tikzpicture}[baseline=-1.2cm, level/.style={sibling distance = 25mm/#1, level distance = 1.2cm/sqrt(#1)}]
		\node [rectangle, draw] {$\op{r}$}
			child {node [rectangle, draw, fill=red!50] {$\op{l}$}
				child {node [rectangle, draw] {$\op{r}$}
					child {node {\phantom{1}}}
					child {node {\phantom{1}}}
				}
				child {node [rectangle, draw, fill=red!50] {$\op{l}$}
					child {node [rectangle, draw] {$\op{l}$}
						child {node {\phantom{1}}}
						child {node {\phantom{1}}}
					}
					child {node [rectangle, draw] {$\op{r}$}
						child {node {\phantom{1}}}
						child {node [rectangle, draw] {$\op{r}$}
							child {node {\phantom{1}}}
							child {node {\phantom{1}}}
						}
					}
				}
			}
			child {node [rectangle, draw] {$\op{l}$}
				child {node [rectangle, draw] {$\op{r}$}
					child {node {\phantom{1}}}
					child {node {\phantom{1}}}
				}
				child {node {\phantom{1}}}
			}
		;
	\end{tikzpicture}
	\longrightarrow
	\begin{tikzpicture}[baseline=-1.2cm, level/.style={sibling distance = 25mm/#1, level distance = 1.2cm/sqrt(#1)}]
		\node [rectangle, draw] {$\op{r}$}
			child {node [rectangle, draw, fill=red!50] {$\op{l}$}
				child {node [rectangle, draw, fill=red!50] {$\op{l}$}
					child {node [rectangle, draw] {$\op{r}$}
						child {node {\phantom{1}}}
						child {node {\phantom{1}}}
					}
					child {node [rectangle, draw] {$\op{l}$}
						child {node {\phantom{1}}}
						child {node {\phantom{1}}}
					}
				}
				child {node [rectangle, draw] {$\op{r}$}
					child {node {\phantom{1}}}
					child {node [rectangle, draw] {$\op{r}$}
						child {node {\phantom{1}}}
						child {node {\phantom{1}}}
					}
				}
			}
			child {node [rectangle, draw] {$\op{l}$}
				child {node [rectangle, draw] {$\op{r}$}
					child {node {\phantom{1}}}
					child {node {\phantom{1}}}
				}
				child {node {\phantom{1}}}
			}
		;
	\end{tikzpicture}
	+
	\begin{tikzpicture}[baseline=-1.2cm, level/.style={sibling distance = 25mm/#1, level distance = 1.2cm/sqrt(#1)}]
		\node [rectangle, draw] {$\op{r}$}
			child {node [rectangle, draw, fill=red!50] {$\op{l}$}
				child {node [rectangle, draw] {$\op{r}$}
					child {node {\phantom{1}}}
					child {node {\phantom{1}}}
				}
				child {node [rectangle, draw, fill=red!50] {$\op{r}$}
					child {node [rectangle, draw] {$\op{l}$}
						child {node {\phantom{1}}}
						child {node {\phantom{1}}}
					}
					child {node [rectangle, draw] {$\op{r}$}
						child {node {\phantom{1}}}
						child {node [rectangle, draw] {$\op{r}$}
							child {node {\phantom{1}}}
							child {node {\phantom{1}}}
						}
					}
				}
			}
			child {node [rectangle, draw] {$\op{l}$}
				child {node [rectangle, draw] {$\op{r}$}
					child {node {\phantom{1}}}
					child {node {\phantom{1}}}
				}
				child {node {\phantom{1}}}
			}
		;
	\end{tikzpicture}
\]
\end{example}

\begin{definition}
A \defn{normal form} of a rewriting system is a tree that is not rewritable by the rewriting system, which means that it does not contain any pattern~$\tree[s]$ of a rewriting rule~$(\tree[s], \operation[q])$.
\end{definition}

\begin{definition}
A rewriting system is
\begin{itemize}
\item \defn{terminating} if there are no infinite rewriting sequences, so that any tree can be finitely rewritten into a linear combination of normal forms,
\item \defn{confluent} if for any~$\tree \in \Syntax$ and~$\operation[p], \operation[q] \in \Free$ such that $\tree \to \operation[p]$ and $\tree \to \operation[q]$, there exists $\operation[r] \in \Free$ such that $\operation[p] \Rewr \operation[r]$ and $\operation[q] \Rewr \operation[r]$, so that any tree rewrites to at most one combination of normal forms,
\item \defn{convergent} when it is both terminating and confluent, so that any tree rewrites as a unique linear combination of normal forms.
\end{itemize}
\end{definition}

Any convergent rewriting system defines a presentation where the relation space~$\Relations$ is linearly spanned by $\tree[s] - \operation[q]$ for all rules~$(\tree[s], \operation[q])$ in the system. In this case, the quotient $\Free/\langle \Relations \rangle$ can be identified with the linear span of the normal forms of the rewriting system. Conversely, one can orient a presentation.

\begin{definition}
An \defn{orientation} of a binary quadratic presentation~$(\Operations, \Relations)$ is a quadratic rewriting system such that~$\Relations$ is linearly spanned by~$\tree[s] - \operation[q]$ for all rules~$(\tree[s], \operation[q])$ of the system.
\end{definition}


\subsection{Koszulity and Koszul duality}
\label{subsec:Koszul}

A Koszul operad is defined in terms of acyclicity of a certain complex naturally associated to it~\cite{LodayVallette}. 
In this paper we will only use the technique introduced by~\cite{Hoffbeck} that amounts to exhibiting a so-called Poincar\'e\,--\,Birkhoff\,--\,Witt basis. Moreover, we will only use this tools in the specific context of set operads (\ie where the right hand side of each rewriting rule consists of a single tree). This allows the following reformulation of~\cite{DotsenkoKhoroshkin} (see also~\cite{Giraudo-pluriassociative1}), which we take as definition.

\begin{definition}
A set operad~$\Operad$ is \defn{Koszul} if it admits a quadratic presentation whose relations can be oriented into a convergent rewriting system. Moreover, the set of normal forms of the rewriting system is called a \defn{Poincar\'e\,--\,Birkhoff\,--\,Witt basis} of~$\Operad$.
\end{definition}

Next comes the notion of the Koszul dual operad~$\Operad^\koszul$ of a quadratic operad~$\Operad$, which has particularly nice features when~$\Operad$ is Koszul. Consider the free operad~$\Free$ on~$\Operations$. The homogeneous component of degree~$3$ of~$\Free$ can be endowed with a bilinear form~$\dotprod{\cdot}{\cdot}$ defined~by
\[
\dotprod{\tree}{\tree'} = \delta_{\tree = \tree'} \cdot \sign(\tree),
\]
where~$\delta$ denotes the Kronecker delta and~$\sign(\operation[a] \circ_1 \operation[b]) = 1$ while~$\sign(\operation[a] \circ_2 \operation[b]) = -1$.

\begin{definition}
\label{def:Koszul}
The \defn{Koszul dual} of a quadratic operad~$\Operad$ presented by~$(\Operations, \Relations)$ is the quadratic operad~$\Operad^\koszul$ presented by~$(\Operations, \Relations^\koszul)$, whose relations~$\Relations^\koszul$ are given by the orthogonal complement for~$\dotprod{\cdot}{\cdot}$ of the relations~$\Relations$ in~$\Free$.
\end{definition}

We will often apply \cref{def:Koszul} in the following specific situation.

\begin{lemma}
\label{lem:koszulDual}
Consider an equivalence relation~$\equiv$ on $\Syntax(3)$.
Then the spaces linearly spanned by
\begin{align*}
& \bigset{\tree - \tree'}{\tree \equiv \tree'} \cup \bigset{\tree}{\tree \text{ alone in its $\equiv$-congruence class}} \\
\text{and} \qquad &
\bigset{\sum\nolimits_{\tree \in C} \sign(\tree) \cdot \tree}{C \text{ non-trivial $\equiv$-equivalence class}}
\end{align*}
repectively, are orthogonal complements, hence define two Koszul dual binary quadratic operads.
\end{lemma}

\begin{proof}
Let~$R_C \eqdef \sum\nolimits_{\tree \in C} \sign(\tree) \cdot \tree$ for some non-trivial $\equiv$-equivalence class~$C$.
If~$\tree$ is alone in its $\equiv$-congruence class, then it is disjoint from~$C$, hence~$\dotprod{\tree}{\tree[c]} = 0$ for all~$\tree[c] \in C$, so that~$\dotprod{\tree}{R_C} = 0$.
If~$\tree \equiv \tree'$, then we have two situations
\begin{itemize}
\item if~$\tree, \tree' \notin C$, then~$\dotprod{\tree}{\tree[c]} = \dotprod{\tree'}{\tree[c]} = 0$ for all~$\tree[c] \in C$, so that~$\dotprod{\tree-\tree'}{R_C} = 0$,
\item if~$\tree, \tree' \in C \cap (\Operations \circ_1 \Operations)$, then $\dotprod{\tree-\tree'}{R_C} = \dotprod{\tree}{R_C} - \dotprod{\tree'}{R_C} = \sign(\tree) \cdot \dotprod{\tree}{\tree} - \sign(\tree') \cdot \dotprod{\tree'}{\tree'} = \sign(\tree)^2 - \sign(\tree')^2 = 1 - 1 = 0$.
\qedhere
\end{itemize}
\end{proof}

The dual of a Kozul operad is always Koszul.
Moreover, the Hilbert series of Koszul dual Koszul operads are related by Lagrange inversion as stated in the following theorem.

\begin{theorem}[{\cite[Thm.~7.6.13]{LodayVallette}, \cite[Eq.~2.3.3]{Giraudo-nonsymmetricOperadsCombinatorics}}]
\label{thm:HilbertKoszulLagrange}
The Hilbert series of two Koszul dual Koszul operads~$\Operad$ and~$\Operad^\koszul$ satisfy
\[
\Hilbert_{\Operad}(-\Hilbert_{\Operad^\koszul}(-t)) = t.
\]
\end{theorem}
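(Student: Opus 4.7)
The plan is to follow the classical Ginzburg--Kapranov approach and translate Koszulity into an Euler characteristic identity on the Koszul complex, which at the level of Hilbert series becomes the stated Lagrange inversion. To any quadratic operad $\Operad$ with Koszul dual $\Operad^\koszul$ one associates a Koszul chain complex supported on the composition $\Operad \circ \Operad^\koszul$, where an element of $\Operad^\koszul$ of arity $p$ sits in homological degree $p - 1$ and the differential is induced by the canonical Koszul twisting morphism $\Operad^\koszul \to \Operad$. Koszulity of $\Operad$ is equivalent to the acyclicity of this complex in every arity $n \geq 2$, while in arity $1$ the homology reduces to the ground field (the operadic unit).

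Once this machinery is in place, the proof reduces to generating series bookkeeping. For non-symmetric operads, the composition $\Operad \circ \Operad^\koszul$ has, in each arity, a dimension given by the corresponding coefficient of $\Hilbert_{\Operad}(\Hilbert_{\Operad^\koszul}(t))$. Taking into account the homological grading by $p - 1$ on the arity-$p$ elements of $\Operad^\koszul$ and passing to Euler characteristics, signs alternate so that the Hilbert series of $\Operad^\koszul$ gets replaced by $-\Hilbert_{\Operad^\koszul}(-t)$. Acyclicity then forces, after summation over all arities, that the Euler characteristic equals the arity-one unit contribution $t$, yielding $\Hilbert_{\Operad}\bigl(-\Hilbert_{\Operad^\koszul}(-t)\bigr) = t$.

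The main obstacle is setting up the Koszul complex and its differential with the correct signs, which normally relies on the bar--cobar formalism for cooperads not developed in this paper. A more combinatorial alternative, better suited to our set-operadic framework, is to argue directly from Poincar\'e--Birkhoff--Witt bases as in \cref{subsec:Koszul}: the normal forms of a convergent quadratic rewriting system for $\Operad$ can be enumerated as syntax trees avoiding the left-hand patterns of the rewriting rules, and a grafting decomposition at the root expresses the generating series of these normal forms as a fixed point of a functional equation whose compositional inverse (up to the sign twist $t \mapsto -t$) is the analogous series for $\Operad^\koszul$. Rather than reproduce either argument, we invoke the theorem as stated and refer the reader to~\cite{LodayVallette, Hoffbeck} for complete proofs.
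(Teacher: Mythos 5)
The paper states this theorem without proof, presenting it as a classical fact from the operad literature (cf.~\cite{LodayVallette}), and your proposal ultimately does the same, so the two treatments agree. Your sketch of the standard argument is accurate: acyclicity of the Koszul complex in arities $\ge 2$ together with the Euler-characteristic bookkeeping (the homological degree $p-1$ on arity-$p$ elements of $\Operad^\koszul$ producing the sign twist $-\Hilbert_{\Operad^\koszul}(-t)$) is exactly how the identity $\Hilbert_{\Operad}(-\Hilbert_{\Operad^\koszul}(-t)) = t$ is derived in the references.
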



\subsection{Six particular operads}
\label{subsec:sixParticularOperads}

This paper deals with generalizations of the following three specific pairs of Koszul dual operads.


\subsubsection{Dendriform and diassociative operads}
\label{subsubsec:dendriformDiassociativeOperads}

\begin{definition}
The \defn{dendriform operad}~$\Dend$ is the quadratic operad over ${\Operations \eqdef \{\op{l}, \op{r}\}}$ defined by the three linear relations:
\medskip
\[
\compoR{l}{l} \!\!+\!\! \compoR{l}{r} \!\!=\!\! \compoL{l}{l}
\qquad
\compoR{r}{l} \!\!=\!\! \compoL{r}{l}
\qquad
\compoR{r}{r} \!\!=\!\! \compoL{l}{r} \!\!+\!\! \compoL{r}{r}.
\]
\end{definition}

\begin{definition}
\label{def:dias}
The \defn{diassociative operad}~$\Dias$ is the quadratic operad over~$\Operations \eqdef \{\op{l}, \op{r}\}$ defined by the five relations:
\medskip
\[
\compoR{l}{l} \!\!=\!\! \compoR{l}{r} \!\!=\!\! \compoL{l}{l}
\qquad
\compoR{r}{l} \!\!=\!\! \compoL{r}{l}
\qquad
\compoR{r}{r} \!\!=\!\! \compoL{l}{r} \!\!=\!\! \compoL{r}{r}.
\]
\end{definition}

\begin{proposition}[\cite{Loday-dialgebras,LodayVallette}]
\label{prop:dendDiass}
\begin{enumerate}[(i)]
\item The dendriform and diassociative operads are Koszul.
\item The dendriform and diassociative operads are Koszul dual operads.
\item The diassociative operad is isomorphic to the operad~$(\Operad[P], \one, (\circ_i))$ with $p$-th homogeneous component $\Operad[P](p) \eqdef \K \set{\destVect{r}{p}}{r \in [p]}$, unit~$\one = \destVect{1}{1}$ and where the composition~$\circ_i$ is defined~by
\[
\destVect{r}{p} \circ_i \destVect{s}{q} =
\begin{cases}
\destVect{r}{p+q-1} & \text{if } r < i, \\
\destVect{r + s - 1}{p+q-1} & \text{if } r = i, \\
\destVect{r + q - 1}{p+q-1} & \text{if } r > i.
\end{cases}
\]
\item The Hilbert series of the dendriform and diassociative operads are given by
\[
\Hilbert_{\Dend}(t) = \sum_{p \ge 1} C_p \, t^p = \frac{1-\sqrt{1-4t}}{2t}-1
\qqandqq
\Hilbert_{\Dias}(t) = \sum_{p \ge 1} p \, t^p = \frac{t}{(1-t)^2},
\]
where~$C_p \eqdef \frac{1}{p+1} \binom{2p}{p}$ denotes the $p$-th Catalan number.
\end{enumerate}
\end{proposition}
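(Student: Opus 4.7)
The plan is to tackle the four claims in turn, leveraging the tools introduced earlier in the section.

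For (i), I would follow Definition 2.10 and exhibit explicit convergent rewriting systems for both operads. On the diassociative side, each of the five equalities of Definition 2.7 is oriented (say towards a left comb) to produce a rewriting rule; the resulting normal forms are easily seen to be the $p$ ``destination combs'' of arity $p$, termination follows from a simple weight decrease at each rewriting step, and confluence reduces to checking the finitely many critical pairs in arity~$4$. On the dendriform side, I would orient the three relations towards right combs (in the spirit of the Tamari-style order mentioned in the introduction for the general $k$-signaletic case), so that right combs become the normal forms; this yields $C_p$ of them in arity $p$. Termination again follows from a suitable weighting, and confluence is a finite verification on the critical pairs in arity~$4$.

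For (ii), the claim is a finite orthogonality check inside the arity-$3$ component of the free operad $\Free$, which has dimension $8$. The $5$ equalities defining $\Dias$ cut out a $5$-dimensional subspace $\Relations_{\Dias}$, while the $3$ relations defining $\Dend$ cut out a $3$-dimensional $\Relations_{\Dend}$; since $5 + 3 = 8$, it suffices to verify that $\Relations_{\Dias}$ and $\Relations_{\Dend}$ are pairwise orthogonal for the bilinear form of Definition 2.11. This is a direct case analysis on a handful of pairs; the spirit of the remark following Definition 2.12 is what is going on, with the mild caveat that the dendriform relations are genuine linear combinations rather than single tree identities.

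For (iii), I would first check that the proposed partial compositions on the $\destVect{p}{q}$ satisfy unitality, series associativity and parallel associativity: this is a case split on the position of $p$ relative to $i$, and symmetrically of $r$ relative to $j$. Identifying $\op{l}$ with $\destVect{1}{2}$ and $\op{r}$ with $\destVect{2}{2}$, I would then verify that the five defining relations of $\Dias$ hold, producing a surjective operad morphism $\Dias \twoheadrightarrow \Operad[P]$ (surjective because $\Operad[P]$ is generated by $\destVect{1}{2}$ and $\destVect{2}{2}$). To upgrade it to an isomorphism, I would invoke the PBW basis obtained in (i) to get $\dim \Dias(p) \le p = \dim \Operad[P](p)$.

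For (iv), the description in (iii) gives $\dim \Dias(p) = p$, hence $\Hilbert_{\Dias}(t) = \sum_{p\ge 1} p\,t^p = t/(1-t)^2$. Koszulity from (i) and Koszul duality from (ii) let me apply Theorem 2.13 to get $\Hilbert_{\Dias}\!\big(\!-\Hilbert_{\Dend}(-t)\big) = t$; solving the resulting quadratic equation yields the closed form $\Hilbert_{\Dend}(t) = (1-\sqrt{1-4t})/(2t) - 1$, and expanding recovers the Catalan coefficients. The main obstacle throughout is the confluence of the dendriform rewriting system: because the right-hand sides are genuine linear combinations of several trees, checking confluence on the arity-$4$ critical pairs is a linear-algebraic verification rather than a simple pattern match, and termination must be ensured carefully despite the branchings introduced by the non-set rules.
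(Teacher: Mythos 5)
Your strategy is sound and is essentially the standard one. Note that the paper does not actually prove this proposition (it is imported from \cite{Loday-dialgebras,LodayVallette}); your plan is in effect the $k=1$ specialization of the machinery of \cref{subsec:HilbertSeriesKoszulitySignaletic,subsec:HilberSeriesNormalFormsCitelangis}: orient the relations along a Tamari-type order, identify the normal forms, then combine Koszul duality with Lagrange inversion. Parts (ii)--(iv) are fine as sketched: the duality check by orthogonality plus the dimension count $5+3=8=\dim\Free(3)$ is exactly right (the diassociative relation space is spanned by differences of consecutive trees in the three chains, and each pairs to zero with the three dendriform relators), and the destination-vector realization together with surjectivity onto $\Operad[P]$ and the spanning property of the normal forms yields (iii) and then (iv).

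The step that would fail as written is the identification of the normal forms in (i). First, for $\Dend$ the normal forms cannot be ``the right combs'': there are $2^{p-1}$ right combs of arity $p$, not $C_p$; the normal forms are the syntax trees avoiding the chosen redex patterns, and counting them requires a short generating-function argument. Second, and more seriously, the orientation of the three-term relations is not innocent: the class ${\op{r}}\circ_2{\op{r}}={\op{r}}\circ_1{\op{l}}={\op{r}}\circ_1{\op{r}}$ of $\Dias$ (resp.\ the relation ${\op{r}}\circ_2{\op{r}}={\op{r}}\circ_1{\op{l}}+{\op{r}}\circ_1{\op{r}}$ of $\Dend$) contains two trees of left-comb shape, and only one of the two possible choices yields a confluent system. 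Concretely, for $\Dias$ keeping ${\op{r}}\circ_1{\op{l}}$ as the surviving normal form leaves as normal forms the left combs with no two consecutive ${\op{r}}$'s, which are Fibonacci-many ($5$ in arity $4$, while $\dim\Dias(4)=4$); keeping ${\op{r}}\circ_1{\op{r}}$ gives exactly the $p$ combs ${\op{l}}^a{\op{r}}^{p-1-a}$. Likewise for $\Dend$, taking $\{{\op{l}}\circ_1{\op{l}},\,{\op{l}}\circ_1{\op{r}},\,{\op{r}}\circ_1{\op{l}}\}$ as redexes yields $15$ normal forms in arity $4$ instead of $C_4=14$, whereas $\{{\op{l}}\circ_1{\op{l}},\,{\op{l}}\circ_1{\op{r}},\,{\op{r}}\circ_1{\op{r}}\}$ gives a generating function satisfying $f=t(1+f)^2$, hence $C_p$ normal forms. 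Your critical-pair check would detect a bad choice, so the gap is repairable, but ``easily seen'' undersells it: you must pin down the orientation and actually enumerate the pattern-avoiding trees (or, as the paper does in general, deduce convergence by comparing the number of normal forms with the dimension obtained from the destination-vector model).
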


\begin{remark}
The shuffle algebra (see \cref{def:shuffle}) can be endowed with the structure of a dendriform algebra~\cite{Loday-dialgebras} defined for two words~$xX$ and~$yY$ by
\[
xX \op{l} yY = x (X \shuffle yY)
\qqandqq
xX \op{r} yY = y (xX \shuffle Y).
\]
Similarly, C.~Malvenuto and C.~Reutenauer's Hopf algebra~$\FQSym$ on permutations can be endowed with the structure of a dendriform algebra by splitting the shifted shuffle product (see \cref{def:shiftedShuffle}).
\end{remark}


\subsubsection{Duplicial and dual duplicial operads}
\label{subsubsec:duplicialOperads}

\begin{definition}
The \defn{duplicial operad}~$\Dup$ is the quadratic operad over ${\Operations \eqdef \{\op{l}, \op{r}\}}$ defined by the three relations:
\medskip
\[
\compoR{l}{l} \!\!=\!\! \compoL{l}{l}
\qquad
\compoR{r}{l} \!\!=\!\! \compoL{r}{l}
\qquad
\compoR{r}{r} \!\!=\!\! \compoL{r}{r}.
\]
\end{definition}

\begin{definition}
The \defn{dual duplicial operad}~$\Dupdual$ is the quadratic operad over~$\Operations \eqdef \{\op{l}, \op{r}\}$ defined by the five linear relations:
\medskip
\[
\compoR{l}{l} \!\!=\!\! \compoL{l}{l}
\quad
\compoR{l}{r} \!\!= 0
\quad
\compoR{r}{l} \!\!=\!\! \compoL{r}{l}
\quad
0 =\!\! \compoL{l}{r}
\quad
\compoR{r}{r} \!\!=\!\! \compoL{r}{r}.
\]
\end{definition}

\begin{proposition}[\cite{Loday-generalizedBialgebras,LodayVallette}]
\label{prop:dup}
\begin{enumerate}[(i)]
\item The duplicial and dual duplicial operads are Koszul.
\item The duplicial and dual duplicial operads are Koszul dual operads.
\item The duplicial operad is isomorphic to the operad~$(\Operad[P], \one, (\circ_i))$ with $p$-th homogeneous component $\Operad[P](p) \eqdef \K \set{\destVect{r}{p}}{r \in [p]}$, unit~$\one = \destVect{1}{1}$ and where the composition~$\circ_i$ is defined~by
\[
\destVect{r}{p} \circ_i \destVect{s}{q} =
\begin{cases}
0 & \text{if } r < i \text{ and } s \ne 1, \\
\destVect{r}{p+q-1} & \text{if } r < i \text{ and } s = 1, \\
\destVect{r + s - 1}{p+q-1} & \text{if } r = i, \\
0 & \text{if } r > i \text{ and } s \ne q, \\
\destVect{r + q - 1}{p+q-1} & \text{if } r > i \text{ and } s = q.
\end{cases}
\]
\item The Hilbert series of the duplicial and dual duplicial operads are given by
\[
\Hilbert_{\Dup}(t) = \sum_{p \ge 1} C_p \, t^p = \frac{1-\sqrt{1-4t}}{2t}-1
\qqandqq
\Hilbert_{\Dupdual}(t) = \sum_{p \ge 1} p \, t^p = \frac{t}{(1-t)^2},
\]
where~$C_p \eqdef \frac{1}{p+1} \binom{2p}{p}$ denotes the $p$-th Catalan number.
\end{enumerate}
\end{proposition}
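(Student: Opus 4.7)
My plan would mirror the proof for the dendriform/diassociative pair in \cref{subsubsec:dendriformDiassociativeOperads}. For Koszul duality~(ii), I would compute directly the orthogonal of the relation space $\Relations(\Dupdual) \subset \Free(3)$ with respect to the scalar product $\dotprod{\cdot}{\cdot}$. Expanding the three defining relations of $\Dupdual$ in the basis of $\Free(3)$, one observes that they pair three of the four possible ``$\compoR{x}{y} = \compoL{x}{y}$'' combinations, so the orthogonal complement is spanned by these three vectors together with the two leftover unpaired trees $\compoR{l}{r}$ and $\compoL{l}{r}$, which is precisely the relation space of $\Dup$.

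For the Koszulity claim~(i), my plan is to exhibit a convergent quadratic rewriting system in each operad, following the strategy recalled in \cref{subsec:Koszul}. I would orient each ``$\compoR{\cdot}{\cdot} = \compoL{\cdot}{\cdot}$'' relation from right to left comb and each ``tree~$=0$'' relation as a rewriting to zero. Termination is immediate since every such rewriting strictly decreases the number of right-comb patterns appearing in the syntax tree, a well-founded statistic. Confluence reduces to a finite check of the critical pairs on syntax trees of arity four, each of which can be verified to rejoin by direct computation. The set of normal forms then provides a Poincar\'e\,--\,Birkhoff\,--\,Witt basis, from which Koszulity follows.

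For the combinatorial realization~(iii), I would verify by direct case analysis that the piecewise formula for $\destVect{a}{b} \circ_i \destVect{c}{d}$ defines an operad satisfying the series and parallel composition axioms of \cref{subsec:operads}, and that the two basis elements of $\Operad[P](2)$ satisfy all the duplicial relations. Sending $\op{l}$ and $\op{r}$ to these basis elements then produces a surjective operad morphism, which is an isomorphism by dimension matching in each arity. Finally, the Hilbert series of~(iv) are immediate: one series is read off directly from the destination vector model by counting basis elements, and the other follows from \cref{thm:HilbertKoszulLagrange} by Lagrange inversion of $t/(1+t)^2$, which produces the Catalan generating series. The main obstacle lies in~(iii): the five-branch definition of $\circ_i$ has to be checked case by case against both the operad axioms and each defining relation, a careful but essentially mechanical bookkeeping.
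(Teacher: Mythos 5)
Your overall route---computing the orthogonal complement for the duality, a convergent quadratic rewriting system for Koszulity, and a sandwich between the normal-form count and the concrete destination-vector model for parts (iii) and (iv)---is the same one the paper uses: the proposition itself is only cited there, but the $k=1$ case is reproved by the machinery of \cref{thm:signaleticOperadsQuadratic} and \cref{rem:multitidySignaleticParallelKoszul}. Your computation for (ii) is correct: each vector $a\circ_2 b - c\circ_1 d$ coming from a ``right comb $=$ left comb'' relation is isotropic for $\dotprod{\cdot}{\cdot}$ and orthogonal to the others, and the two quadratic trees absent from all three relations span the remaining two dimensions of the $5$-dimensional orthogonal, which is exactly the relation space of the dual presentation. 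The only point where your critical-pair check for confluence genuinely diverges from the paper is that the paper avoids it altogether, deducing convergence by matching the number of normal forms against the dimension of the concrete operad of (iii); your version is self-contained but note that your ``isomorphism by dimension matching'' in (iii) secretly uses the same sandwich (surjectivity onto the concrete model gives the lower bound on dimensions, the normal forms the upper bound), so that step should be made explicit rather than treated as bookkeeping.

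The genuine gap is your termination argument. The number of occurrences of the pattern ``node whose right child is internal'' does \emph{not} strictly decrease under a rewriting that replaces $\operation[x](s_1,\operation[y](s_2,s_3))$ by $\operation[y](\operation[x](s_1,s_2),s_3)$: when the slot $s_2$ is itself internal, the pattern destroyed at the old root is exactly compensated by the new pattern created at $\operation[x]$, whose right child becomes $s_2$, while the contribution of $s_3$ is unchanged. Concretely, on the arity-$4$ tree $\operation[x](\bullet,\operation[y](\operation[z](\bullet,\bullet),\bullet))$ the statistic equals $1$ both before and after the rewriting at the root, so your well-founded order is not decreasing and termination does not follow. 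The repair is standard and is what the paper does in \cref{lem:rewritingIncreasesSignaleticTamari} and \cref{coro:rewritingTermination} (with the opposite orientation): each rewriting is a single tree rotation, possibly followed by a relabelling or an annihilation, hence is strictly monotone for the Tamari order on a finite set of trees of fixed arity; alternatively, the integer statistic given by the total number of leaves lying in left subtrees of internal nodes increases by $|s_1|\ge 1$ at each rotation and is bounded above. With that substitution the rest of your argument goes through.
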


\begin{remark}
\label{rem:associativeProductsDend}
Any operation~$\operation \eqdef \mu \, {\op{l}} \,  + \, \nu \, {\op{r}}$ with~$\mu, \nu \in \K$ in the operad~$\Dupdual$ is associative.
In other words, $\op{l}$ and~$\op{r}$ define a compatible associative structure.
\end{remark}

\subsubsection{Leibniz and Zinbiel operads}
\label{subsubsec:LeibnizZinbieOperads}

Leibniz algebras are natural generalizations of Lie algebras with non antisymmetric brackets, and Zinbiel algebras play a salient role in the analysis of divided power algebras~\cite{Loday-generalizedBialgebras, LodayVallette}.
The Zinbiel operad is usually defined as the symmetric Koszul dual of the Leibniz symmetric operad.
As a symmetric operad, it is generated by~$\op{l}$ with the unique relation $(x \op{l} y) \op{l} z = x \op{l} (y \op{l} z + z \op{l} y)$.
In this paper, we use the non-symmetric Zinbiel operad~$\Zinb$, that has the same components~$\Zinb(n)$ and the same composition rules as the symmetric one, but the action of the symmetric group on~$\Zinb(n)$ is trivial for all~$n \ge 2$.
Alternatively, following~\cite{ChapotonHivertAlMould}, we can directly define the non-symmetric Zinbiel operad~$\Zinb$ combinatorially as follows.

\pagebreak
\begin{definition}
The (non-symmetric) \defn{Zinbiel operad}~$\Zinb$ is the operad whose arity~$n$ component~$\Zinb(n)$ has for basis the set of permutations~$\Perm(n)$ and where the composition is given as follows.
Let~$\sigma$ and~$\tau$ be two permutations of degree~$m$ and~$n$ respectively, and let~$i \in [m]$.
Write~$\sigma = \lambda \, i \, \mu$ and~$\tau = f \, \theta$ where $f$ is the first letter of~$\tau$. Then the composition~is~given~by
\[
\sigma \circ_i \tau = \lambda[i,n] \, f[i-1] \, \big( \mu[i,n] \shuffle \theta[i-1] \big)\,,
\]
where we use the one-line notation for permutations, and the usual shuffle product $\shuffle$ of \cref{def:shuffle} and the operad shifting notation of \cref{def:shiftMultipermutation} and \cref{def:shiftMultipermutationPlace}. In this combinatorial realization, the generator $\op{l}$ of the presentation is given by~${\op{l}} \eqdef 12$.
\end{definition}

Finally, it was observed \eg in~\cite[Example~4.2]{ChapotonHivertAlMould}, that the dendriform operad is the suboperad generated by ${\op{l}} \eqdef 12$ and ${\op{r}} \eqdef 21$ of the non-symmetric Zinbiel operad.


\subsection{Manin products}
\label{subsec:ManinProducts}

The original Manin product was defined for quadratic algebras and generalized by B.~Vallette~\cite{Vallette} to any category endowed with two coherent monoidal products which include, as a particular case, non-symmetric operads. As an application, he showed that
\[
\Quad = \Dend \whiteManin \Dend = \Dend \blackManin \Dend.
\]
We will generalize this statement to some of our operads in \cref{subsec:parallelSignaleticOperads,subsec:parallelCitelangisOperads}.
As the exposition in~\cite{Vallette} is too general for our purposes, we follow the former version of~\cite{Foissy} still available on \href{https://arxiv.org/abs/1411.6501v1}{\texttt{arxiv:1411.6501v1}}.

Consider two operads~$\Operad \eqdef \Free/\langle \Relations \rangle$ and~$\Operad' \eqdef \Free[\Operations']/\langle \Relations' \rangle$. Then there is a natural inclusion~$\Psi : \Free[\Operations \times \Operations'] \to \Free[\Operations] \otimes \Free[\Operations']$, which sends a syntax tree on~$\Operations \times \Operations'$ to a pair formed by a syntax tree on~$\Operations$ and a syntax tree on~$\Operations'$.

\begin{definition}
The \defn{white Manin product}~$\Operad \whiteManin \Operad'$ is the operad with generators~$\Operations \times \Operations'$ and relations~$\Relations'' \subseteq \Free[\Operations \times \Operations']$ defined by
\[
\Relations'' \eqdef \Psi^{-1} \big( \Relations \otimes \Free[\Operations'] + \Free[\Operations] \otimes \Relations' \big).
\]
The \defn{black Manin product} is defined by Koszul duality as~$\Operad \blackManin \Operad' \eqdef \big( \Operad^\koszul \whiteManin \Operad'^\koszul \big)^\koszul$.
\end{definition}

For quadratic operads the white product can be computed without relying on presentation by the following characterization, which is also closer to the original definition of V.~Gizburg and M.~Kapranov in the case of symmetric operads~\cite{GinzburgKapranov}.

\begin{proposition}[{[\href{https://arxiv.org/abs/1411.6501v1}{\texttt{arxiv:1411.6501v1}}, Proposition~2]}]
\label{prop:whiteManin}
For two quadratic operads~$\Operad, \Operad'$, the white Manin product~$\Operad \whiteManin \Operad'$ is the suboperad of the tensor operad~$\Operad \otimes \Operad'$ generated by the homogeneous component of degree~$2$.
\end{proposition}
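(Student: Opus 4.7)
The plan is to construct a natural operad morphism $\bar\phi: \Operad \whiteManin \Operad' \to \Operad \otimes \Operad'$, identify its image as the suboperad generated by the arity~$2$ component, and then argue its injectivity.

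First I would describe the map. The inclusion $\Psi$ is itself a morphism of non-symmetric operads when the target carries its tensor operad structure (arity-wise tensor products, with composition $(a \otimes a') \circ_i (b \otimes b') \eqdef (a \circ_i b) \otimes (a' \circ_i b')$), sending a generator $a \otimes a' \in \Operations \otimes \Operations'$ to itself viewed in $\Free[\Operations](2) \otimes \Free[\Operations'](2)$. Post-composing with the canonical projections $\pi \otimes \pi': \Free[\Operations] \otimes \Free[\Operations'] \to \Operad \otimes \Operad'$ yields an operad morphism $\phi: \Free[\Operations \otimes \Operations'] \to \Operad \otimes \Operad'$. By the very definition $\Relations'' = \Psi^{-1}(\Relations \otimes \Free[\Operations'] + \Free[\Operations] \otimes \Relations')$ we have $\phi(\Relations'') = 0$, so $\phi$ descends to an operad morphism $\bar\phi: \Operad \whiteManin \Operad' \to \Operad \otimes \Operad'$. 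Since $\Operad \whiteManin \Operad'$ is generated in arity~$2$ by $\Operations \otimes \Operations'$, which $\bar\phi$ sends identically onto $(\Operad \otimes \Operad')(2) = \Operad(2) \otimes \Operad'(2) = \Operations \otimes \Operations'$, the image of $\bar\phi$ is exactly the suboperad of $\Operad \otimes \Operad'$ generated by the arity~$2$ component.

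The main obstacle is proving injectivity. At arity~$3$ it is straightforward: if $x \in \Free[\Operations \otimes \Operations'](3)$ satisfies $\phi(x) = 0$, then by quadraticity of $\Operad$ and $\Operad'$ one has $\Psi(x) \in \Relations \otimes \Free[\Operations'](3) + \Free[\Operations](3) \otimes \Relations'$, so $x \in \Relations''$. The nontrivial step is extending this to arbitrary arity, namely showing $\ker\phi = \langle \Relations'' \rangle$ as operad ideals. The crucial observation is that $\Psi$ identifies $\Free[\Operations \otimes \Operations']$ with the ``diagonal'' suboperad $D$ of the tensor operad spanned by tensor products of same-shape trees with matching labelings, while the full kernel of $\pi \otimes \pi'$ is the operadic ideal $\langle \Relations \rangle \otimes \Free[\Operations'] + \Free[\Operations] \otimes \langle \Relations' \rangle$ of $\Free[\Operations] \otimes \Free[\Operations']$. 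The key lemma is that any element of $D \cap (\langle \Relations \rangle \otimes \Free[\Operations'] + \Free[\Operations] \otimes \langle \Relations' \rangle)$ can be decomposed as a sum of operadic compositions, performed inside $D$, of elements of the arity~$3$ intersection, which is precisely $\Psi(\Relations'')$. Pulling back along $\Psi^{-1}$ then yields $\ker\phi = \langle \Relations'' \rangle$ and hence the desired injectivity of $\bar\phi$.
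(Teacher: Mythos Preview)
The paper does not prove this proposition at all; it is quoted from an earlier arXiv version of Foissy's paper and used as a black box. So there is no paper proof to compare against.

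Your construction of $\bar\phi$ and the identification of its image with the suboperad generated in arity~$2$ are correct, and your treatment of injectivity in arity~$3$ is fine. The gap is that your ``key lemma'' for higher arities is precisely the nontrivial content of the proposition, and you have only restated it, not proved it. Asserting that every element of $D \cap \big(\langle\Relations\rangle \otimes \Free[\Operations'] + \Free[\Operations] \otimes \langle\Relations'\rangle\big)$ ``can be decomposed as a sum of operadic compositions, performed inside~$D$, of elements of the arity~$3$ intersection'' is just a reformulation of the desired inclusion $\ker\phi \subseteq \langle\Relations''\rangle$. The obstacle is concrete: a generic element of $\langle\Relations\rangle \otimes \Free[\Operations']$ is a sum of terms of the form $\big(u \circ_i (r \circ (\ell_1,\ell_2,\ell_3))\big) \otimes t'$ with $r \in \Relations$, and there is no reason any individual such term should lie in the diagonal~$D$ (the shapes of the two tensor factors need not agree term by term). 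Whatever cancellation brings the total sum back into~$D$ has to be exploited to rewrite it using only compositions that stay in~$D$, and you have not supplied that mechanism. A complete argument typically proceeds either via a careful shape-graded analysis of the operadic ideal, or via the universal property of the white Manin product; as written, your sketch stops exactly where the real work begins.
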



\subsection{Eulerian numbers and polynomials}
\label{subsec:EulerianNumbers}

We now focus on Eulerian numbers, which are underlying all the present study. Indeed, from a purely combinatorial point of view, the main result of the present paper is to give a combinatorial interpretation of the Lagrange inverse of the generating series $\sum_p p^k t^p$ which involve Eulerian numbers. These numbers were introduced by L.~Euler in the context of differential calculus.

\begin{definition}
For~$j < k$, the \defn{Eulerian number}~$\EulNum{k}{j}$ is the number of permutations~$\sigma$ of~$\fS_k$ with precisely~$j$ descents (\ie positions~$i \in [k]$ such that~$\sigma_i > \sigma_{i+1}$). The $k$-th \defn{Eulerian polynomial} is the polynomial
\[
\EulPol{k}(t) \eqdef \sum\limits_{j = 0}^{k-1} \EulNum{k}{j} \, t^{j} = \sum_{\sigma \in \fS_k} t^{|\Des(\sigma)|} \, .
\]
\end{definition}

\begin{example}
\cref{table:EulerianNumbers} gathers the first Eulerian numbers.
The first Eulerian polynomials~are:
\[
\EulPol{1}(t) = 1, \quad \EulPol{2}(t) = t + 1, \quad \EulPol{3}(t) = t^2 + 4t + 1, \quad \EulPol{4}(t) = t^3 + 11t^2 + 11t + 1, \quad \dots
\]
\end{example}

We will use the following classical identity on Eulerian numbers.

\begin{lemma}[\cite{Worpitzky}]
\label{lem:Worpitzky}
For any~$p,k \in \N$, we have $\displaystyle p^k = \sum_{j=0}^{k-1} \EulNum{k}{j} \binom{j+p}{k}$.
\end{lemma}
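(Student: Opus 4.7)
The plan is to prove Worpitzky's identity bijectively, by counting functions $f : [k] \to [p]$ in two different ways. The trivial count gives $p^k$.

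For the second count, to each such $f$ we associate its \emph{stable sorting permutation} $\sigma \in \fS_k$, characterized by $f(\sigma(1)) \le f(\sigma(2)) \le \cdots \le f(\sigma(k))$ together with the tie-breaking convention that $\sigma(i) < \sigma(i+1)$ whenever $f(\sigma(i)) = f(\sigma(i+1))$. Equivalently, at every descent position $i$ of $\sigma$ (meaning $\sigma(i) > \sigma(i+1)$), the sorted sequence must increase strictly: $f(\sigma(i)) < f(\sigma(i+1))$.

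For a fixed permutation $\sigma \in \fS_k$ with descent set $D$ of cardinality $j$, the functions $f$ whose stable sort is $\sigma$ correspond bijectively to weakly increasing sequences $1 \le g_1 \le \cdots \le g_k \le p$ satisfying the strict inequality $g_i < g_{i+1}$ for every $i \in D$. The substitution $h_i \eqdef g_i - |\{d \in D \mid d < i\}|$ converts these into arbitrary weakly increasing sequences $1 \le h_1 \le \cdots \le h_k \le p - j$, of which there are $\binom{p + k - 1 - j}{k}$. Summing over all $\sigma \in \fS_k$ and grouping them by their number $j$ of descents thus yields
\[
p^k = \sum_{j=0}^{k-1} \EulNum{k}{j} \binom{p + k - 1 - j}{k}.
\]

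To recover the claimed form, it then suffices to invoke the classical reflection symmetry $\EulNum{k}{j} = \EulNum{k}{k-1-j}$ of Eulerian numbers (a direct consequence of the involution $\sigma_1 \cdots \sigma_k \mapsto (k+1-\sigma_1) \cdots (k+1-\sigma_k)$ on $\fS_k$, which swaps descents and ascents) and re-index the sum via $j' \eqdef k - 1 - j$. The main (minor) obstacle is to correctly track the correspondence between descents of $\sigma$ and the forced strict increases in the associated sorted sequence; everything else is a routine substitution argument.
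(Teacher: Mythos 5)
Your proof is correct and is essentially the second argument the paper sketches for this lemma: counting the $p^k$ words of length $k$ on $[p]$ by grouping them according to their (stable) sorting permutation, which is exactly the standardization bijection the paper alludes to. The only cosmetic difference is that your tie-breaking convention makes \emph{descents} of $\sigma$ correspond to the forced strict rises, yielding $\binom{p+k-1-j}{k}$ and requiring a final appeal to the symmetry $\EulNum{k}{j}=\EulNum{k}{k-1-j}$; with the opposite convention (strict rises at ascents) one lands directly on $\binom{j+p}{k}$.
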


\begin{proof}
It is shown by induction using the identity $\EulNum{k}{j} = (j+1) \EulNum{k-1}{j} + (k-j) \EulNum{k-1}{j-1}$, see \eg \cite{Stanley} (the insertion of~$k$ in a permutation of size~$k-1$ creates a new descent if and only if it is performed at an ascent position). Another approach is to observe that the standardization provides a bijection between the words on~$[p]$ with $k$ letters and the pairs~$(\sigma, \pi)$ where~$\sigma$ is a permutations of~$\fS_k$ and $\pi$ a composition of~$p$ (with possible empty parts) refining the descent composition of~$\sigma$.
\end{proof}

\cref{lem:Worpitzky} translates to an identity of generating functions, and to a differential recursion.

\begin{proposition}
For any~$k \in \N$, we have
\(
\displaystyle
\sum_{p \ge 1} p^k t^p = \frac{t \cdot \EulPol{k}(t)}{(1-t)^{k+1}} \, .
\)
\end{proposition}

\begin{proposition}
For any~$k \in \N$, we have
\(
\displaystyle
\EulPol{k}(t) = t (1-t) \EulPol{k-1}'(t) + \big( (k-1) t + 1 \big) \EulPol{k-1}(t).
\)
\end{proposition}

\begin{table}
	\[
	\begin{array}{l|rrrrrrrrrr}
		\raisebox{-.1cm}{$k$} \backslash \, \raisebox{.1cm}{$j$}
		   & 0 &    1 &     2 &      3 &       4 &       5 &      6 &     7 &    8 & 9 \\[.1cm]
		\hline
		1  & 1 &      &       &        &         &         &        &       &      &   \\
		2  & 1 &    1 &       &        &         &         &        &       &      &   \\
		3  & 1 &    4 &     1 &        &         &         &        &       &      &   \\
		4  & 1 &   11 &    11 &      1 &         &         &        &       &      &   \\
		5  & 1 &   26 &    66 &     26 &       1 &         &        &       &      &   \\
		6  & 1 &   57 &   302 &    302 &      57 &       1 &        &       &      &   \\
		7  & 1 &  120 &  1191 &   2416 &    1191 &     120 &      1 &       &      &   \\
		8  & 1 &  247 &  4293 &  15619 &   15619 &    4293 &    247 &     1 &      &   \\
		9  & 1 &  502 & 14608 &  88234 &  156190 &   88234 &  14608 &   502 &    1 &   \\
		10 & 1 & 1013 & 47840 & 455192 & 1310354 & 1310354 & 455192 & 47840 & 1013 & 1 \\
	\end{array}
	\]
	\caption{The Eulerian numbers~$\displaystyle\EulNum{k}{j}$ for~$0 \le j < k \le 10$.}
	\label{table:EulerianNumbers}
\end{table}


\subsection{Tamari lattice}
\label{subsec:TamariLattice}

The other classical combinatorial ingredient of the present paper is a lattice on the set of binary trees of a given arity, defined by D.~Tamari~\cite{Tamari}. We recall some basic~facts.

\begin{definition}
\label{def:TamariLattice}
A \defn{right rotation} in a binary tree is a substitution of the form
\[
	\begin{tikzpicture}[baseline=0.4cm, level 1/.style={sibling distance = .8cm, level distance = .6cm}, level 2/.style={sibling distance = .8cm, level distance = .5cm}]
		\node [circle, draw] {} [grow' = up]
			child {node [circle, draw] {}
				child {node {$A$}}
				child {node {$B$}}
			}
			child {node {$C$}}
		;
	\end{tikzpicture}
	\quad\longrightarrow\quad
	\begin{tikzpicture}[baseline=0.4cm, level 1/.style={sibling distance = .8cm, level distance = .6cm}, level 2/.style={sibling distance = .8cm, level distance = .5cm}]
		\node [circle, draw] {} [grow' = up]
			child {node {$A$}}
			child {node [circle, draw] {}
				child {node {$B$}}
				child {node {$C$}}
			}
		;
	\end{tikzpicture}
\]
where~$A$, $B$ and~$C$ are three binary subtrees and the substitution is performed at any node of the tree (not necessarily at the root).
The \defn{Tamari lattice} is the partial order on binary trees with~$n$ internal nodes whose cover relations are given by right rotations.
\end{definition}

The Tamari lattice is known to be a lattice.
Its Hasse diagram (graph of cover relations) is the skeleton (graph of vertices and edges, see \cite{Ziegler-polytopes}) of an $(n-1)$-dimensional polytope, called the associahedron, see \eg~\cite{Loday}.
We have represented in \cref{fig:TamariLatticeBinaryTrees} the Tamari lattice for~$n = 4$ and in \cref{fig:associahedra} J.-L.~Loday's $2$- and $3$-dimensional associahedra.

\begin{figure}
	\centerline{
    \begin{tikzpicture}[scale=2, inner sep=2pt]
    	\node (a) at (4.2,1) {
    		\begin{tikzpicture}[xscale=.3, yscale=.4, inner sep=1pt]
    			\node (1) at (1,4) {$1$};
    			\node (2) at (2,3) {$2$};
    			\node (3) at (3,2) {$3$};
    			\node (4) at (4,1) {$4$};
    			\draw[-] (4) -- (3) -- (2) -- (1);
    			\draw[-] (1) -- +(-.7,.7);
    			\draw[-] (1) -- +(.7,.7);
    			\draw[-] (2) -- +(.7,.7);
    			\draw[-] (3) -- +(.7,.7);
    			\draw[-] (4) -- +(.7,.7);
    		\end{tikzpicture}	
    	};
    	\node (b) at (3,2) {
    		\begin{tikzpicture}[xscale=.3, yscale=.4, inner sep=1pt]
    			\node (1) at (1,3) {$1$};
    			\node (2) at (2,4) {$2$};
    			\node (3) at (3,2) {$3$};
    			\node (4) at (4,1) {$4$};
    			\draw[-] (4) -- (3) -- (1) -- (2);
    			\draw[-] (1) -- +(-.7,.7);
    			\draw[-] (2) -- +(-.7,.7);
    			\draw[-] (2) -- +(.7,.7);
    			\draw[-] (3) -- +(.7,.7);
    			\draw[-] (4) -- +(.7,.7);
    		\end{tikzpicture}	
    	};
    	\node (c) at (2,3) {
    		\begin{tikzpicture}[xscale=.3, yscale=.4, inner sep=1pt]
    			\node (1) at (1,2) {$1$};
    			\node (2) at (2,4) {$2$};
    			\node (3) at (3,3) {$3$};
    			\node (4) at (4,1) {$4$};
    			\draw[-] (4) -- (1) -- (3) -- (2);
    			\draw[-] (1) -- +(-.7,.7);
    			\draw[-] (2) -- +(-.7,.7);
    			\draw[-] (2) -- +(.7,.7);
    			\draw[-] (3) -- +(.7,.7);
    			\draw[-] (4) -- +(.7,.7);
    		\end{tikzpicture}	
    	};
    	\node (d) at (1,4) {
    		\begin{tikzpicture}[xscale=.3, yscale=.4, inner sep=1pt]
    			\node (1) at (1,2) {$1$};
    			\node (2) at (2,3) {$2$};
    			\node (3) at (3,4) {$3$};
    			\node (4) at (4,1) {$4$};
    			\draw[-] (4) -- (1) -- (2) -- (3);
    			\draw[-] (1) -- +(-.7,.7);
    			\draw[-] (2) -- +(-.7,.7);
    			\draw[-] (3) -- +(-.7,.7);
    			\draw[-] (3) -- +(.7,.7);
    			\draw[-] (4) -- +(.7,.7);
    		\end{tikzpicture}	
    	};
    	\node (e) at (2,5) {
    		\begin{tikzpicture}[xscale=.3, yscale=.4, inner sep=1pt]
    			\node (1) at (1,1) {$1$};
    			\node (2) at (2,3) {$2$};
    			\node (3) at (3,4) {$3$};
    			\node (4) at (4,2) {$4$};
    			\draw[-] (1) -- (4) -- (2) -- (3);
    			\draw[-] (1) -- +(-.7,.7);
    			\draw[-] (2) -- +(-.7,.7);
    			\draw[-] (2) -- +(.7,.7);
    			\draw[-] (3) -- +(-.7,.7);
    			\draw[-] (4) -- +(.7,.7);
    		\end{tikzpicture}	
    	};
    	\node (f) at (3,6) {
    		\begin{tikzpicture}[xscale=.3, yscale=.4, inner sep=1pt]
    			\node (1) at (1,1) {$1$};
    			\node (2) at (2,2) {$2$};
    			\node (3) at (3,4) {$3$};
    			\node (4) at (4,3) {$4$};
    			\draw[-] (1) -- (2) -- (4) -- (3);
    			\draw[-] (1) -- +(-.7,.7);
    			\draw[-] (2) -- +(-.7,.7);
    			\draw[-] (3) -- +(-.7,.7);
    			\draw[-] (3) -- +(.7,.7);
    			\draw[-] (4) -- +(.7,.7);
    		\end{tikzpicture}	
    	};
    	\node (g) at (4.2,7) {
    		\begin{tikzpicture}[xscale=.3, yscale=.4, inner sep=1pt]
    			\node (1) at (1,1) {$1$};
    			\node (2) at (2,2) {$2$};
    			\node (3) at (3,3) {$3$};
    			\node (4) at (4,4) {$4$};
    			\draw[-] (1) -- (2) -- (3) -- (4);
    			\draw[-] (1) -- +(-.7,.7);
    			\draw[-] (2) -- +(-.7,.7);
    			\draw[-] (3) -- +(-.7,.7);
    			\draw[-] (4) -- +(-.7,.7);
    			\draw[-] (4) -- +(.7,.7);
    		\end{tikzpicture}	
    	};
    	\node (h) at (3.5,3.1) {
    		\begin{tikzpicture}[xscale=.3, yscale=.4, inner sep=1pt]
    			\node (1) at (1,3) {$1$};
    			\node (2) at (2,2) {$2$};
    			\node (3) at (3,3) {$3$};
    			\node (4) at (4,1) {$4$};
    			\draw[-] (4) -- (2) -- (1); \draw[-] (2) -- (3);
    			\draw[-] (1) -- +(-.7,.7);
    			\draw[-] (1) -- +(.7,.7);
    			\draw[-] (3) -- +(-.7,.7);
    			\draw[-] (3) -- +(.7,.7);
    			\draw[-] (4) -- +(.7,.7);
    		\end{tikzpicture}	
    	};
    	\node (i) at (3,4) {
    		\begin{tikzpicture}[xscale=.3, yscale=.4, inner sep=1pt]
    			\node (1) at (1,1) {$1$};
    			\node (2) at (2,4) {$2$};
    			\node (3) at (3,3) {$3$};
    			\node (4) at (4,2) {$4$};
    			\draw[-] (1) -- (4) -- (3) -- (2);
    			\draw[-] (1) -- +(-.7,.7);
    			\draw[-] (2) -- +(-.7,.7);
    			\draw[-] (2) -- +(.7,.7);
    			\draw[-] (3) -- +(.7,.7);
    			\draw[-] (4) -- +(.7,.7);
    		\end{tikzpicture}	
    	};
    	\node (j) at (4.2,4) {
    		\begin{tikzpicture}[xscale=.3, yscale=.4, inner sep=1pt]
    			\node (1) at (1,2) {$1$};
    			\node (2) at (2,1) {$2$};
    			\node (3) at (3,3) {$3$};
    			\node (4) at (4,2) {$4$};
    			\draw[-] (2) -- (1); \draw[-] (2) -- (4) -- (3);
    			\draw[-] (1) -- +(-.7,.7);
    			\draw[-] (1) -- +(.7,.7);
    			\draw[-] (3) -- +(-.7,.7);
    			\draw[-] (3) -- +(.7,.7);
    			\draw[-] (4) -- +(.7,.7);
    		\end{tikzpicture}	
    	};
    	\node (k) at (4.2,5.5) {
    		\begin{tikzpicture}[xscale=.3, yscale=.4, inner sep=1pt]
    			\node (1) at (1,1) {$1$};
    			\node (2) at (2,3) {$2$};
    			\node (3) at (3,2) {$3$};
    			\node (4) at (4,3) {$4$};
    			\draw[-] (1) -- (3) -- (2); \draw[-] (3) -- (4);
    			\draw[-] (1) -- +(-.7,.7);
    			\draw[-] (2) -- +(-.7,.7);
    			\draw[-] (2) -- +(.7,.7);
    			\draw[-] (4) -- +(-.7,.7);
    			\draw[-] (4) -- +(.7,.7);
    		\end{tikzpicture}	
    	};
    	\node (l) at (5.4,4) {
    		\begin{tikzpicture}[xscale=.3, yscale=.4, inner sep=1pt]
    			\node (1) at (1,2) {$1$};
    			\node (2) at (2,3) {$2$};
    			\node (3) at (3,1) {$3$};
    			\node (4) at (4,2) {$4$};
    			\draw[-] (3) -- (1) -- (2); \draw[-] (3) -- (4);
    			\draw[-] (1) -- +(-.7,.7);
    			\draw[-] (2) -- +(-.7,.7);
    			\draw[-] (2) -- +(.7,.7);
    			\draw[-] (4) -- +(-.7,.7);
    			\draw[-] (4) -- +(.7,.7);
    		\end{tikzpicture}	
    	};
    	\node (m) at (6.6,3) {
    		\begin{tikzpicture}[xscale=.3, yscale=.4, inner sep=1pt]
    			\node (1) at (1,3) {$1$};
    			\node (2) at (2,2) {$2$};
    			\node (3) at (3,1) {$3$};
    			\node (4) at (4,2) {$4$};
    			\draw[-] (3) -- (2) -- (1); \draw[-] (3) -- (4);
    			\draw[-] (1) -- +(-.7,.7);
    			\draw[-] (1) -- +(.7,.7);
    			\draw[-] (2) -- +(.7,.7);
    			\draw[-] (4) -- +(-.7,.7);
    			\draw[-] (4) -- +(.7,.7);
    		\end{tikzpicture}	
    	};
    	\node (n) at (6,6) {
    		\begin{tikzpicture}[xscale=.3, yscale=.4, inner sep=1pt]
    			\node (1) at (1,2) {$1$};
    			\node (2) at (2,1) {$2$};
    			\node (3) at (3,2) {$3$};
    			\node (4) at (4,3) {$4$};
    			\draw[-] (2) -- (1); \draw[-] (2) -- (3) -- (4);
    			\draw[-] (1) -- +(-.7,.7);
    			\draw[-] (1) -- +(.7,.7);
    			\draw[-] (3) -- +(-.7,.7);
    			\draw[-] (4) -- +(-.7,.7);
    			\draw[-] (4) -- +(.7,.7);
    		\end{tikzpicture}	
    	};
    	\draw[-, ultra thick] (a) -- (b);
    	\draw[-, ultra thick] (a) -- (h);
    	\draw[-, ultra thick] (a) -- (m);
    	\draw[-, ultra thick] (b) -- (c);
    	\draw[-, ultra thick] (b) -- (l);
    	\draw[-, ultra thick] (c) -- (d);
    	\draw[-, ultra thick] (c) -- (i);
    	\draw[-, ultra thick] (d) -- (e);
    	\draw[-, ultra thick] (e) -- (f);
    	\draw[-, ultra thick] (f) -- (g);
    	\draw[-, ultra thick] (h) -- (d);
    	\draw[-, ultra thick] (h) -- (j);
    	\draw[-, ultra thick] (i) -- (e);
    	\draw[-, ultra thick] (i) -- (k);
    	\draw[-, ultra thick] (j) -- (f);
    	\draw[-, ultra thick] (j) -- (n);
    	\draw[-, ultra thick] (k) -- (g);
    	\draw[-, ultra thick] (l) -- (k);
    	\draw[-, ultra thick] (m) -- (l);
    	\draw[-, ultra thick] (m) -- (n);
    	\draw[-, ultra thick] (n) -- (g);
    \end{tikzpicture}
    }
	\caption{The Tamari lattice on binary trees. See \cref{def:TamariLattice}.}
	\label{fig:TamariLatticeBinaryTrees}
\end{figure}
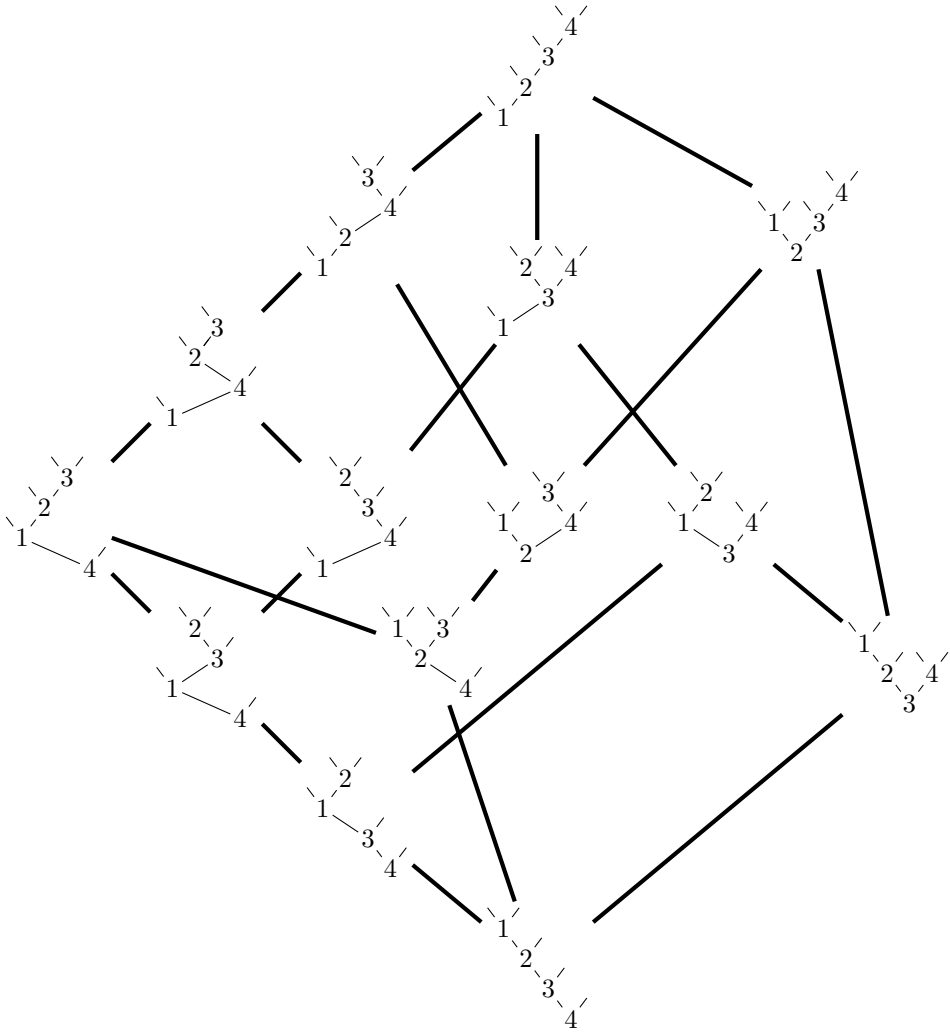

\begin{figure}
    \centerline{\raisebox{.8cm}{\includegraphics[scale=.6]{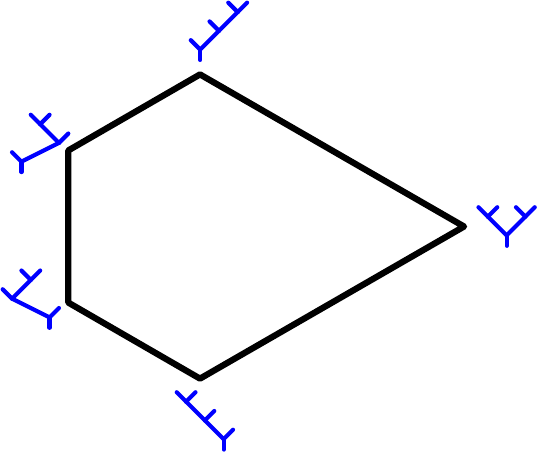}} \quad \includegraphics[scale=.6]{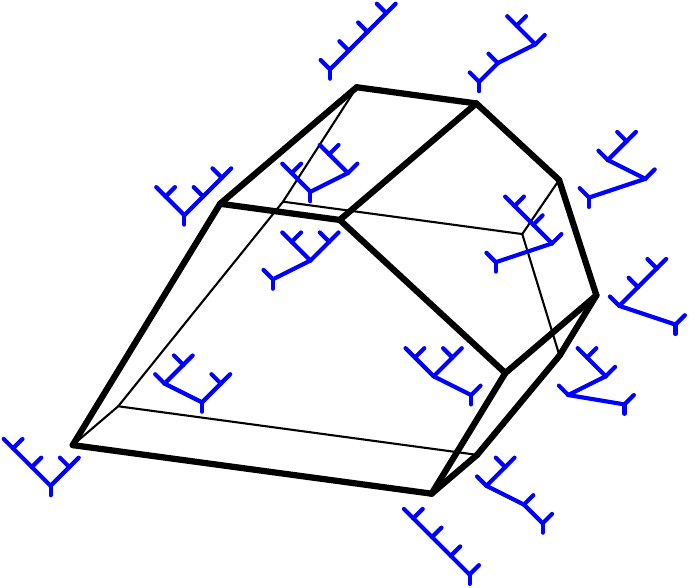}}
	\caption{J.-L.~Loday's associahedra~\cite{Loday} of dimension $2$ (left) and $3$ (right).}
	\label{fig:associahedra}
\end{figure}


\subsection{Multipermutations and multiposets}
\label{subsec:multiobjects}

We finally introduce some combinatorial objects and tools needed to define actions of our operads in \cref{sec:actions}.


\subsubsection{Multisets}
\label{subsubsec:multisets}

We call \defn{multiset} a set
\[
M \eqdef \{1_1, 1_2, \dots, 1_{a_1}, 2_1, 2_2, \dots, 2_{a_2}, \dots, m_1, m_2, \dots, m_{a_m}\}
\]
with several copies of each element that we distinguish with indices when necessary.
For brevity, we use the monomial notation~$M \eqdef 1^{\{a_1\}} 2^{\{a_2\}} \dots m^{\{a_m\}}$ where the exponents are surrounded by curly brackets to avoid any ambiguity.
We denote by~$|M| \eqdef \sum_{i \in [m]} a_i$ the cardinality of~$M$, by~$\max(M) \eqdef m$ its largest element, and by~$\supp(M) \eqdef \set{i \in [m]}{a_i \ne 0}$ the support of~$M$.
It will be convenient to have the following restriction, shift and standardization operations.
These operations are classically used to define the product and coproduct of Hopf algebras and the compositions of operads.

\begin{definition}
\label{def:restrictionMultiset}
For a multiset~$M \eqdef 1^{\{a_1\}} \dots m^{\{a_m\}}$ and a subset~$L \eqdef \{\ell_1 < \dots < \ell_p\}$ of~$[m]$  (possibly empty), we define the \defn{restricted} multiset~$M^{|L} \eqdef 1^{\{a_{\ell_1}\}} \dots p^{\{a_{\ell_p}\}}$.
For~$w$ in~$M^{|L}$ we let~$\overline{w} \eqdef \ell_w$ denote the corresponding element in~$M$.
\end{definition}

\begin{definition}
\label{def:shiftMultiset}
For a multiset~$M \eqdef 1^{\{a_1\}} \dots m^{\{a_m\}}$ and an integer~$j$, we define the \defn{shifted} multiset $M[j] \eqdef (1+j)^{\{a_1\}} \dots (m+j)^{\{a_m\}}$.
For~$w$ in~$M[j]$, we let~$\overline{w} \eqdef w-j$ denote the corresponding element~of~$M$.
\end{definition}

\begin{definition}
\label{def:shiftMultisetPlace}
For a multiset~$M \eqdef 1^{\{a_1\}} \dots m^{\{a_m\}}$ and two integers~$i \le m$ and~$j$, we define the multiset $M[i,j] \eqdef 1^{\{a_1\}} \dots (i-1)^{\{a_{i-1}\}} (i+1+j-1)^{\{a_{i+1}\}} \dots (m+j-1)^{\{a_m\}}$.
In other words, we erase all values~$i$ and replace values~$v > i$ by~$v+j-1$.
For~$w$ in~$M[i,j]$, we let~$\overline{w}$ denote the corresponding element in~$M$, that is
\[
\overline{w} \eqdef
\begin{cases}
w & \text{if } w \le i-1, \\
w-j+1 & \text{if } w \ge i+j.
\end{cases}
\]
\end{definition}

Note that the notation $\overline{x}$ of the three previous definitions is ambiguous on purpose.
Its intended meaning should be always clear from the context.

\begin{definition}
\label{def:standardizationMultiset}
The \defn{standardization} of a multiset~$M \eqdef 1^{\{a_1\}} 2^{\{a_2\}} \dots m^{\{a_m\}}$ is the bijection~$\Std$ from~$M$ to~$[|M|]$ defined by
\[
\Std(i_u) \eqdef u + \sum_{j < i} a_j.
\]
\end{definition}

For example, consider the multiset~$M = 1^{\{3\}}2^{\{1\}}4^{\{2\}} = \{1_1, 1_2, 1_3, 2_1, 4_1, 4_2\}$.
Then we have~$M^{|\{1,4\}} = 1^{\{3\}}2^{\{2\}}$, $M[3] = 4^{\{3\}}5^{\{1\}}7^{\{2\}}$, $M[2,4] = 1^{\{3\}}7^{\{2\}}$, $\Std(2_1) = 4$ and~$\Std(4_2) = 6$.

\begin{remark}
\label{rem:relationOperatorsMultisets}
Note that there are natural relations between these operations. Among others:
\[
(M^{|L})^{|X} = M^{|\set{\ell_x}{x \in X}},
\quad
(M^{|L})[j] = (M[j])^{|\set{\ell+j}{\ell \in L}}
\qandq
(M[i,j])[k] = (M[k])[i+k, j].
\]
\end{remark}


\subsubsection{Multipermutations}
\label{subsubsec:multipermutations}

We consider a permutation of a finite set~$X$ as a word whose letters are the elements of~$X$.

\begin{definition}
\label{def:multipermutation}
A \defn{multipermutation} of a multiset~$M \eqdef 1^{\{a_1\}} 2^{\{a_2\}} \dots m^{\{a_m\}}$ is a permutation of~$M$ where the copies of the same integer appear in natural order, meaning that~$i_u$ is to the left of~$i_v$ for all $i \in [m]$ and~$0 < u < v \le a_i$.
Since the different copies appear in natural order, there is no need to distinguish between them when writing a multipermutation.

For two integers~$m$ and~$k>0$, a \defn{$k$-permutation} of degree~$m$ is a multipermutation of the multiset~$1^{\{k\}} 2^{\{k\}} \dots m^{\{k\}}$.
We denote by~$\Perm_k$ the set of $k$-permutations, by~$\Perm_k(m)$ those of degree $m$, and by~${\FQSym_k = \bigoplus_{m > 0} \K\Perm_k(m)}$ the graded $\K$-vector space whose $m$-th homogeneous component has basis~$\Perm_k(m)$.
\end{definition}

The cardinality of~$\Perm_k(m)$ is a multinomial coefficient:
\[
|\Perm_k(m)| = \binom{mk}{k^{\{m\}}} = \frac{(mk)!}{(k!)^m} \, .
\]
For example, the word~$132123$ is one of the $90$ $2$-permutations of degree~$3$, the word~$321312132$ is one of the~$1680$ $3$-permutations of degree~$3$, and the word~$31421324$ is one of the~$2520$ $2$-permutations of degree~$4$.
We now define the restriction, shift and standardization operations on multipermutations, similar to \cref{def:restrictionMultiset,def:shiftMultiset,def:shiftMultisetPlace,def:standardizationMultiset}.

\begin{definition}
\label{def:restrictionMultipermutation}
For a multipermutation~$\mu$ of~$M$ and a subset~$L \eqdef \{\ell_1 < \dots < \ell_p\}$ of~$[\max(M)]$ (possibly empty), the \defn{restricted} multipermutation~$\mu^{|L}$ of~$M^{|L}$ is obtained from~$\mu$ by keeping only the values of~$\mu$ that belong to~$L$ while preserving their original order, and replacing~$\ell_w$ by~$w$.
\end{definition}

\begin{definition}
\label{def:shiftMultipermutation}
For a multipermutation~$\mu$ of~$M$ and an integer~$j$, the \defn{shifted} multipermutation~$\mu[j]$ on~$M[j]$ is defined by~$\mu[j]_p = \mu_p+j$ for any~$p \in [|M|]$.
\end{definition}

\begin{definition}
\label{def:shiftMultipermutationPlace}
For a multipermutation~$\mu$ of~$M$ and two integers~$i \le \max(M)$ and~$j$, the multipermutation $\mu[i,j]$ of~$M[i,j]$ is obtained from~$\mu$ by erasing all values~$i$ and replacing all values~$v > i$ by~$v + j -1$.
\end{definition}

\begin{definition}
\label{def:standardizationMultipermutation}
The \defn{standardization} of a multipermutation~$\mu$ of~$M$ is the permutation~$\Std(\mu)$ of~$[|M|]$ defined by~$\Std(\mu)_p = \Std(\mu_p)$ for any~$p \in [|M|]$.
\end{definition}

For example, consider the multipermutation~$\mu = 31421324$.
Then we have~$\mu^{|\{2,3,4\}|} = 231213$, $\mu[3] = 64754657$, $\mu[2,4] = 617167$, and $\Std(\mu) = 51732648$.

\begin{remark}
\label{rem:relationOperatorsMultipermutations}
As in \cref{rem:relationOperatorsMultiposets}, there are natural relations between these operations, including:
\[
(\mu^{|L})^{|X} = \mu^{|\set{\ell_x}{x \in X}},
\quad
(\mu^{|L})[j] = (\mu[j])^{|\set{\ell+j}{\ell \in L}}
\qandq
(\mu[i,j])[k] = (\mu[k])[i+k, j].
\]
\end{remark}

The standardization clearly defines a bijection from the set of $k$-permutations of degree $m$ to the set of $k$-monotone permutations of~$[mk]$, \ie permutations~$\sigma$ of~$[mk]$ such that the letters ${ki, ki+1, \dots, k(i+1)-1}$ appear in this particular order form left to right in~$\sigma$ for all~${i \in [m]}$. Equivalently, $\sigma$ appears in the $m$-iterated shifted shuffle product (see \cref{def:shiftedShuffle}) of the identity permutation of degree~$k$.
Although we could work with monotone permutations, we believe that most statements are more natural on multipermutations.


\subsubsection{Concatenation and shifted concatenation}
\label{subsubsec:concatenation}

We quickly recall the concatenation product on words and the shifted concatenation product on permutations.
We fix a finite alphabet~$\alphabet$ and denote by~$\alphabet^n$ the set of words on~$\alphabet$ of length~$n$.
Note that~$\alphabet^0 = \{\varepsilon\}$ consists of the empty word of length~$0$.

\begin{definition}
\label{def:concatenation}
The \defn{free associative algebra} over~$\alphabet$ is the graded $\K$-vector space~$\alphabet^* \eqdef \bigoplus_{n \in \N} \K \alphabet^n$ endowed with the \defn{concatenation}~${X \cdot Y \eqdef XY}$.
\end{definition}

We denote by~$\alphabet^{\ge k} \eqdef \bigoplus_{n \ge k} \K \alphabet^n$.
It is stable by the concatenation product.
Note that for~${\alphabet = [m]}$ where~$m > 0$, the concatenation product does not stabilizes the subspace of $k$-permutations. For this, we need a slight modification of this product obtained by shifting. 

\begin{definition}
\label{def:shiftedConcatenation}
The \defn{shifted concatenation}~$\mu \,\bar\cdot\, \nu$ of two multipermutations~$\mu$ of~$M$ and~$\nu$ of~$N$ is the permutation of~$M \cup N[m]$ defined by~$\mu \,\bar\cdot\, \nu \eqdef \mu \cdot \nu[m]$, where~$m \eqdef \max(M)$.
\end{definition}

For example ${\blue123213}\ \bar\cdot\ {\red2112} = {\blue123213}{\red5445}$.
Note that the shifted concatenation is associative and graded on $\FQSym_k$.
Indeed the shifted concatenation of two multipermutations of degrees $m$ and $n$ is of degree $m+n$.


\subsubsection{Shuffle and shifted shuffle}
\label{subsubsec:shuffle}

We now quickly recall the shuffle product on words and the shifted shuffle product on permutations.

\begin{definition}
\label{def:shuffle}
The \defn{shuffle algebra} over~$\alphabet$ is the graded $\K$-vector space $\Shuffle(\alphabet) \eqdef \bigoplus_{n \in \N} \K \alphabet^n$ endowed with the \defn{shuffle product} defined inductively by~$X \shuffle \varepsilon \eqdef X$, $\varepsilon \shuffle Y \eqdef Y$ and \linebreak
\(
xX \shuffle yY \eqdef x(X \shuffle yY) + y(xX \shuffle Y)
\)
for any two words~$X,Y$ on~$\alphabet$.
\end{definition}

We denote by~$\Shuffle^{\ge k} \eqdef \bigoplus_{n \ge k} \K \alphabet^n$.
It is stable by the shuffle product.
Note that for~${\alphabet = [m]}$ where~$m > 0$, the shuffle product does not stabilizes the subspace of $k$-permutations. For this, we need a slight modification of this product obtained by shifting. 

\begin{definition}
\label{def:shiftedShuffle}
The \defn{shifted shuffle product}~$\mu \shiftedShuffle \nu$ of two multipermutations~$\mu$ of~$M$ and~$\nu$ of~$N$ is defined as~$\mu \shiftedShuffle \nu \eqdef \mu \shuffle \nu[m]$, where~$m \eqdef \max(M)$.
By definition, it is a linear combination of multipermutations of~$M \cup N[m]$.
\end{definition}

\begin{lemma}
The standardization~$\Std$ and the shifted shuffle product~$\shiftedShuffle$ commute: for any two multipermutations~$\mu$ and~$\nu$, we have $\Std(\mu \shiftedShuffle \nu) = \Std(\mu) \shiftedShuffle \Std(\nu)$.
\end{lemma}

The shifted shuffle product endows the vector space~$\FQSym_k$ with a graded algebra structure that generalizes the algebra on permutations of C.~Malvenuto and C.~Reutenauer~\cite{MalvenutoReutenauer}. By standardization (see~\cref{def:standardizationMultipermutation}), it is also a subalgebra of the classical algebra~$\FQSym$ on permutations. For example
\[
	{\blue123213} \shiftedShuffle {\red2112}
	= {\blue123213}{\red5445}
	+ {\blue12321}{\red5}{\blue3}{\red445}
	+ \cdots (\text{$210$~terms}) \cdots
	+ {\red544}{\blue1}{\red5}{\blue123213}
	+ {\red5445}{\blue123213}.
\]


\subsubsection{Multiposets}
\label{subsubsec:multiposets}

Recall that a \defn{poset} is a set endowed with a partial order (a reflexive, antisymmetric and transitive binary relation).
We depict posets by their Hasse diagram which is the graph of their cover relations and we always orient them from bottom to top.
The following definition is illustrated on \cref{fig:multiposets}.

\begin{definition}
\label{def:multiposet}
A \defn{multiposet} on a multiset~$M \eqdef 1^{\{a_1\}} 2^{\{a_2\}} \dots m^{\{a_m\}}$ is a partial order~$\le_M$ on the set~$\{1_1, 1_2, \dots, 1_{a_1}, 2_1, 2_2, \dots, 2_{a_2}, \dots, m_1, m_2, \dots, m_{a_m}\}$ where~$i_u \le_M i_v$ for all~$i \in [\max(M)]$ and~$0 < u \le v \le a_i$.
Since the different copies are naturally ordered there is no need to distinguish between them when drawing the Hasse diagram of a multiposet.
For two integers~$m$ and~$k>0$, a \defn{$k$-poset} of degree $m$ is a multiposet on the multiset~$1^{\{k\}} 2^{\{k\}} \dots m^{\{k\}}$.
\end{definition}

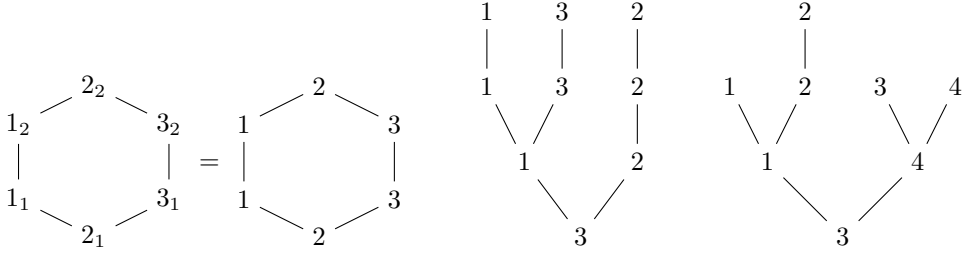
\begin{figure}[t]
	\centerline{
	\begin{tikzpicture}[baseline=.9cm]
		\node (1a) at (0,.5) {$1_1$};
		\node (1b) at (0,1.5) {$1_2$};
		\node (2a) at (1,0) {$2_1$};
		\node (2b) at (1,2) {$2_2$};
		\node (3a) at (2,.5) {$3_1$};
		\node (3b) at (2,1.5) {$3_2$};
		\draw (1a) -- (1b);
		\draw (1b) -- (2b);
		\draw (2a) -- (1a);
		\draw (2a) -- (3a);
		\draw (3a) -- (3b);
		\draw (3b) -- (2b);
	\end{tikzpicture}
	=
	\begin{tikzpicture}[baseline=.9cm]
		\node (1a) at (0,.5) {$1$};
		\node (1b) at (0,1.5) {$1$};
		\node (2a) at (1,0) {$2$};
		\node (2b) at (1,2) {$2$};
		\node (3a) at (2,.5) {$3$};
		\node (3b) at (2,1.5) {$3$};
		\draw (1a) -- (1b);
		\draw (1b) -- (2b);
		\draw (2a) -- (1a);
		\draw (2a) -- (3a);
		\draw (3a) -- (3b);
		\draw (3b) -- (2b);
	\end{tikzpicture}
	\qquad
	\begin{tikzpicture}[baseline=.9cm, level/.style={level distance = 1cm}, level 1/.style={sibling distance = 1.5cm}, level 2/.style={sibling distance = 1cm}]
		\node {$3$} [grow' = up]
			child {node {$1$}
				child {node {$1$}
					child {node {$1$}}
				}
				child {node {$3$}
					child {node {$3$}}
				}
			}
			child {node {$2$}
				child {node {$2$}
					child {node {$2$}}
				}
			}
		;
	\end{tikzpicture}
	\qquad
	\begin{tikzpicture}[baseline=.9cm, level/.style={level distance = 1cm}, level 1/.style={sibling distance = 2cm}, level 2/.style={sibling distance = 1cm}]
		\node {$3$} [grow' = up]
			child {node {$1$}
				child {node {$1$}}
				child {node {$2$}
					child {node {$2$}}
				}
			}
			child {node {$4$}
				child {node {$3$}}
				child {node {$4$}}
			}
		;
	\end{tikzpicture}
	}
	\caption{A $2$-poset of degree~$3$ (left), a $3$-poset of degree~$3$ (middle), and a $2$-poset of degree~$4$ (right). See \cref{def:multiposet}.}
	\label{fig:multiposets}
\end{figure}

We now define the restriction, shift and standardization operations on multiposets, similar to \cref{def:restrictionMultiset,def:shiftMultiset,def:shiftMultisetPlace,def:standardizationMultiset,def:restrictionMultipermutation,def:shiftMultipermutation,def:shiftMultipermutationPlace,def:standardizationMultipermutation}.
These definitions are illustrated on \cref{fig:shiftStandardizationMultiposets}.

\begin{figure}
	\centerline{\begin{tabular}{ccccc}
	\begin{tikzpicture}[level/.style={level distance = 1cm}, level 1/.style={sibling distance = 2cm}, level 2/.style={sibling distance = 1cm}]
		\node {$3$} [grow' = up]
			child {node {$1$}
				child {node {$1$}}
				child {node {$2$}
					child {node {$2$}}
				}
			}
			child {node {$4$}
				child {node {$3$}}
				child {node {$4$}}
			}
		;
	\end{tikzpicture}
	&
	\begin{tikzpicture}[level/.style={level distance = 1cm}, level 1/.style={sibling distance = 2cm}, level 2/.style={sibling distance = 1cm}]
		\node {$2$} [grow' = up]
			child {node {$1$}
				child {node {$1$}}
			}
			child {node {$3$}
				child {node {$2$}}
				child {node {$3$}}
			}
		;
	\end{tikzpicture}
	&
	\begin{tikzpicture}[level/.style={level distance = 1cm}, level 1/.style={sibling distance = 2cm}, level 2/.style={sibling distance = 1cm}]
		\node {$6$} [grow' = up]
			child {node {$4$}
				child {node {$4$}}
				child {node {$5$}
					child {node {$5$}}
				}
			}
			child {node {$7$}
				child {node {$6$}}
				child {node {$7$}}
			}
		;
	\end{tikzpicture}
	&
	\begin{tikzpicture}[level/.style={sibling distance = 1cm, level distance = 1cm}]
		\node {$1$} [grow' = up]
			child {node {$1$}}
			child {node {$2$}
				child {node {$2$}}
			}
		;
		\node[xshift=1.2cm] {$7$} [grow' = up]
			child {node {$7$}}
		;
	\end{tikzpicture}
	&
	\begin{tikzpicture}[level/.style={level distance = 1cm}, level 1/.style={sibling distance = 2cm}, level 2/.style={sibling distance = 1cm}]
		\node {$5$} [grow' = up]
			child {node {$1$}
				child {node {$2$}}
				child {node {$3$}
					child {node {$4$}}
				}
			}
			child {node {$7$}
				child {node {$6$}}
				child {node {$8$}}
			}
		;
	\end{tikzpicture}
	\\
	$\le_M$ & $\le_{M^{|\{2,3,4\}}}$ & $\le_{M[3]}$ & $\le_{M[3,4]}$ & $\Std(\le_M)$
	\end{tabular}
	}
	\caption{Shifts and standardization of multiposets. See \cref{def:restrictionPoset,def:shiftPoset,def:shiftPosetPlace,def:standardizationMultiposet}.}
	\label{fig:shiftStandardizationMultiposets}
\end{figure}

\begin{definition}
\label{def:restrictionPoset}
For a multiposet~$\le_M$ on~$M$ and a subset~$L \eqdef \{\ell_1, \dots, \ell_p\}$ of~$[\max(M)]$ (possibly empty), we define the \defn{restricted} multiposet~$\le_{M^{|L}}$ on~$M^{|L}$ by~${i_u \le_{M^{|L}} j_v \iff (\ell_i)_u \le_M (\ell_j)_v}$.
\end{definition}

\begin{definition}
\label{def:shiftPoset}
For a multiposet~$\le_M$ on~$M$ and an integer~$j$, we define the \defn{shifted} multiposet~$\le_{M[j]}$ on~$M[j]$ by~${x_u \le_{M[j]} y_v \iff x_u - j \le_M y_v - j}$.
\end{definition}

\begin{definition}
\label{def:shiftPosetPlace}
For a multiposet~$\le_M$ on~$M$ and two integers~$i \le \max(M)$ and~$j$, we define the multiposet~$\le_{M[i,j]}$ on~$M[i,j]$ by~${x \le_{M[i,j]} y \iff \overline{x} \le_M \overline{y}}$ where~$x \mapsto \overline{x}$ is the injection from~$M[i,j]$ to~$M$ described in \cref{def:shiftMultisetPlace}.
\end{definition}

\begin{definition}
\label{def:standardizationMultiposet}
The \defn{standardization} of a multiposet~$\le_M$ on~$M$ is the poset~$\le_{\Std(M)}$ of~$\Std(M)$ defined by~$\Std(x) \le_{\Std(M)} \Std(y) \iff x \le_M y$ for all~$x,y \in M$.
\end{definition}

\begin{remark}
\label{rem:relationOperatorsMultiposets}
As in \cref{rem:relationOperatorsMultiposets}, there are natural relations between these operations, including:
\[
{\le_{(M^{|L})^{|X}}} = {\le_{M^{|\set{\ell_x}{x \in X}}}},
\quad
{\le_{(M^{|L})[j]}} = {\le_{(M[j])^{|\set{\ell+j}{\ell \in L}}}}
\qandq
{\le_{(M[i,j])[k]}} = {\le_{(M[k])[i+k, j]}}.
\]
\end{remark}

We conclude with two relevant operations on multiposets.

\begin{definition}
\label{def:disjointUnionOrderedSum}
Consider two multiposets~$\le_M$ and~$\le_N$ on two disjoint multisets~$M$ and~$N$ (meaning~$\supp(M) \cap \supp(N) = \varnothing$), and let~$P \eqdef M \sqcup N$.
We define 
\begin{itemize}
\item the \defn{disjoint union}~${\le_M} \sqcup {\le_N}$ to be the multiposet~$\le_\sqcup$ on~$P$ where~$x \le_\sqcup y$ if and only if either $x \in M$, $y \in M$ and~$x \le_M y$, or~$x \in N$, $y \in N$ and~$x \le_N y$.
\item the \defn{ordered sum}~${\le_M} + {\le_N}$ to be the multiposet~$\le_+$ on~$P$ where~$x \le_+ y$ if and only if either $x \le_\sqcup y$, or~$x \in M$ and $y \in N$.
\end{itemize}
\end{definition}


\subsubsection{Tree multiposets}
\label{subsubsec:treeMultiposets}

We are now interested in multiposets with special shapes, namely trees and forests. They can be defined recursively or directly in terms of the partial order.

\begin{definition}
\label{def:treeMultiposet}
A multiposet $\le_M$ is a \defn{forest} if $x \le_M z$ and $y \le_M z$ implies either $x \le_M y$ or~$y \le_M x$ for all $x, y, z \in M$. It is a \defn{tree} if it is a forest with a unique minimal element, called the~\defn{root}.
\end{definition}

For example, the leftmost multiposet of \cref{fig:multiposets} is not a tree while the middle and rightmost multiposets of \cref{fig:multiposets} are.
Clearly a multiposet is a forest (resp.~a tree) if its Hasse diagram is a forest (resp.~a tree) in the graph-theoretical sense. We therefore freely use the vocabulary of trees (such as children, subtree, \dots) on tree multiposets.

\begin{definition}
\label{def:intervalLabelled}
A tree multiposet is \defn{interval labelled} if for any subtrees~$t$ and~$t'$ with the same parent, all the labels that appear in~$t$ are strictly smaller (in the sense of the standard order on~$[\max(M)]$) than all the labels that appear in~$t'$ or \viceversa. In other words, the labels that appear in distinct descendant subtrees of a given node belong to disjoint intervals.
\end{definition}

For instance, the middle tree multiposet of \cref{fig:multiposets} is not interval labelled (since the left descendant tree of the root contains~$1$ and~$3$ while the right descendant tree of the root contains~$2$) while the right tree multiposet of \cref{fig:multiposets} is interval labelled.


\subsubsection{Linear extensions}
\label{subsubsec:linearExtensions}

A multiposet~$\le_M$ on~$M$ is \defn{linear} if it is a total order, meaning that any two elements of~$M$ are comparable for~$\le_M$. Note that the multipermutations of~$M$ are precisely the linear multiposets of~$M$.

\begin{definition}
A \defn{linear extension} of a multiposet~$\le_M$ is a multipermutation~$\sigma$ that extends~$\le_M$, meaning that~$x \le_M y$ implies that~$x$ appears to the left of~$y$ in~$\sigma$ for all~$x,y \in M$.
We denote by $\linearExtensions(\le_M)$ the set of linear extensions of the multiposet~$\le_M$.
\end{definition}

For example, the multipermutations~$231132$, $321312132$ and~$31421324$ are linear extensions of the multiposets of \cref{fig:multiposets}.
The following statement is illustrated by the examples of the previous sections.

\begin{lemma}
The linear extensions operation~$\linearExtensions$ commutes with the restriction, the shift and the standardization: for any multiposet~$\le_M$, any subset~$L \subseteq [\max(M)]$ and any integers~$i \le \max(M)$ and~$j$, we have
\[
\begin{array}{ccc}
\linearExtensions(\le_{M^{|L}}) = \bigset{\sigma^{|L}}{\sigma \in \linearExtensions(\le_M)}
&&
\linearExtensions(\le_{M[j]}) = \bigset{\sigma[j]}{\sigma \in \linearExtensions(\le_M)}
\\[.2cm]
\linearExtensions(\le_{M[i,j]}) = \bigset{\sigma[i,j]}{\sigma \in \linearExtensions(\le_M)}
& \,\qandq\, &
\linearExtensions(\Std(\le_M)) = \bigset{\Std(\mu)}{\mu \in \linearExtensions(\le_M)}.
\end{array}
\]
\end{lemma}

The last point of this lemma enables us to transport classical properties of linear extensions of posets to linear extensions of multiposets. For instance, the hook length formula enables us to count the number of linear extensions of a tree multiposet.

\begin{lemma}[\cite{BjornerWachs}]
The number of linear extensions of a tree multiposet~$\le_M$ is given by
\[
\frac{|M|!}{\prod_{x \in M} |M_x|}
\]
where $M_x$ denotes the subtree rooted at~$x \in M$.
\end{lemma}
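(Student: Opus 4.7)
The plan is to reduce the multiposet statement to the classical hook length formula for tree posets via standardization. The preceding lemma tells us that $\Std$ induces a bijection
\[
\linearExtensions(\le_M) \longrightarrow \linearExtensions(\Std(\le_M)), \qquad \mu \longmapsto \Std(\mu),
\]
so in particular $|\linearExtensions(\le_M)| = |\linearExtensions(\Std(\le_M))|$. Since standardization is merely a relabelling of elements of~$M$ by~$[|M|]$ that preserves the relation (and hence the Hasse diagram), the poset~$\Std(\le_M)$ is again a tree whenever~$\le_M$ is, and the subtree rooted at $\Std(x)$ in~$\Std(\le_M)$ is the image under~$\Std$ of the subtree rooted at~$x$ in~$\le_M$. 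In particular $|\Std(M)_{\Std(x)}| = |M_x|$ for every~$x \in M$, and $|\Std(M)| = |M|$.

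Next, I would invoke the classical hook length formula of Bj\"orner--Wachs~\cite{BjornerWachs} applied to the ordinary tree poset~$\Std(\le_M)$ on the set~$[|M|]$, yielding
\[
|\linearExtensions(\Std(\le_M))| = \frac{|M|!}{\prod_{y \in \Std(M)} |\Std(M)_y|}.
\]
Substituting the identifications from the previous paragraph gives the desired formula for~$|\linearExtensions(\le_M)|$.

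There is essentially no obstacle, since the classical result is assumed: the only thing to verify carefully is that the auxiliary structures (tree shape, subtree sizes, cardinality) really do transport along~$\Std$, which is immediate because $\Std$ is a poset isomorphism from~$(M, \le_M)$ onto~$([|M|], \le_{\Std(M)})$. The implicit but crucial point is that the condition ``copies of the same integer are naturally ordered'' (\cref{def:multiposet}) guarantees that $\Std$ is order-preserving from~$M$ to~$[|M|]$, so no spurious comparisons are introduced when we pass to the standardized poset.
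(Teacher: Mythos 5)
Your proof is correct and follows exactly the route the paper intends: the paper states the lemma with a citation to Bj\"orner--Wachs and justifies it only by the remark that the preceding lemma (commutation of $\linearExtensions$ with standardization) transports the classical hook length formula from tree posets to tree multiposets. Your additional check that $\Std$ is a poset isomorphism preserving the tree structure and subtree cardinalities is the same (implicit) verification.
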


For instance, the number of linear extensions of the middle and rightmost tree multiposets of \cref{fig:multiposets} are~$9! / (9 \cdot 5 \cdot 3 \cdot 2^3) = 336$ and $8! / (8 \cdot 4 \cdot 3 \cdot 2) = 210$ respectively.

\begin{lemma}
\label{lem:disjointUnionOrderedSumLinearExtensions}
For any two multiposets~$\le_M$ and~$\le_N$ on two disjoints multisets~$M$ and~$N$ (meaning~$\supp(M) \cap \supp(N) = \varnothing$),
\begin{gather*}
\linearExtensions({\le_M} \sqcup {\le_N}) = \linearExtensions(\le_M) \shuffle \linearExtensions(\le_N) = \bigcup\nolimits_{\substack{\sigma \in \linearExtensions(\le_M) \\ \tau \in \linearExtensions(\le_N)}} \sigma \shuffle \tau, \\
\linearExtensions({\le_M} + {\le_N}) = \linearExtensions(\le_M) \cdot \linearExtensions(\le_N) = \set{\sigma \cdot \tau}{\sigma \in \linearExtensions(\le_M), \; \tau \in \linearExtensions(\le_N)},
\end{gather*}
where~$\shuffle$ denotes the shuffle product of \cref{def:shuffle} and~$\cdot$ denotes the concatenation product of \cref{def:concatenation}.
\end{lemma}

\begin{proof}
A permutation is a linear extensions of~${\le_M} \sqcup {\le_N}$ if and only if its restriction to~$M$ is a linear extension of~${\le_M}$ and its restriction to~$N$ is a linear extension of~${\le_N}$, hence if and only if it belongs to~$\linearExtensions(\le_M) \shuffle \linearExtensions(\le_N)$.
A permutation is a linear extensions of~${\le_M} + {\le_N}$ if and only if its first $|M|$ letters form a linear extension of~${\le_M}$ and its last $|N|$ letters form a linear extension of~${\le_N}$, hence if and only if it belongs to~$\linearExtensions(\le_M) \cdot \linearExtensions(\le_N)$.
\end{proof}


\clearpage
\section{Signaletic operads}
\label{sec:signaleticOperads}

This paper essentially relies on an interpretation of the diassociative and dual duplicial operads in terms of traffic signals in an arborescent road. This interpretation then admits four natural extensions that we call messy/tidy series/parallel signaletic operads.


\subsection{Signaletic interpretation of the diassociative and dual duplicial operads}
\label{subsec:signaleticDiassDualDup}

\setlength{\fboxsep}{1.5pt}
We consider that a syntax tree~$\tree$ on the operations~$\{\op{l}, \op{r}\}$ is an arborescent road with a designated entry point where each branching node is occupied by a traffic signal~$\boxed{\op{l}}$ or~$\boxed{\op{r}}$. A car arrives at the root of~$\tree$ and drives through the tree~$\tree$ following at each branching node the direction indicated by the traffic signal. The car ends at a certain leaf of~$\tree$ that we call the \defn{destination} of~$\tree$. This procedure is illustrated on three syntax trees in \cref{fig:twotrees}. 

\begin{figure}[h]
	\centerline{
	\begin{tikzpicture}[level/.style={sibling distance=25mm/#1, level distance = 1.2cm/sqrt(#1)}]
		\node [rectangle, draw] (root) {$\op{la}$}
			child {node [rectangle, draw] (l) {$\op{la}$}
				child {node [rectangle, draw] (ll) {$\op{ra}$}
					child {node (lll) {1}}
					child {node [rectangle, draw] (llr) {$\op{la}$}
						child {node (llrl) {2}}
						child {node (llrr) {3}}
					}
				}
				child {node [rectangle, draw] (lr) {$\op{r}$}
					child {node (lrl) {4}}
					child {node [rectangle, draw] (lrr) {$\op{r}$}
						child {node (lrrl) {5}}
						child {node (lrrr) {6}}
					}
				}
			}
			child {node [rectangle, draw] (r) {$\op{l}$}
				child {node [rectangle, draw] (rl) {$\op{r}$}
					child {node (rll) {7}}
					child {node (rlr) {8}}
				}
				child {node [rectangle, draw] (rr) {$\op{r}$}
					child {node [rectangle, draw] (rrl) {$\op{r}$}
						child {node (rrll) {9}}
						child {node (rrlr) {10}}
					}
					child {node (rrr) {11}}
				}
			}
		;
		\draw[join=round, line cap=round, line width = 3pt,   red, opacity=.5, ->] ($(root) + (0,.5)$) -- (root.center) -- (l.center) -- (ll.center) -- (llr.center) -- (llrl.center);
		\node [rectangle, draw, xshift=5.5cm] (root) {$\op{rb}$}
			child {node [rectangle, draw] (l) {$\op{r}$}
				child {node [rectangle, draw] (ll) {$\op{l}$}
					child {node (lll) {1}}
					child {node [rectangle, draw] (llr) {$\op{r}$}
						child {node (llrl) {2}}
						child {node (llrr) {3}}
					}
				}
				child {node [rectangle, draw] (lr) {$\op{r}$}
					child {node (lrl) {4}}
					child {node [rectangle, draw] (lrr) {$\op{l}$}
						child {node (lrrl) {5}}
						child {node (lrrr) {6}}
					}
				}
			}
			child {node [rectangle, draw] (r) {$\op{rb}$}
				child {node [rectangle, draw] (rl) {$\op{l}$}
					child {node (rll) {7}}
					child {node (rlr) {8}}
				}
				child {node [rectangle, draw] (rr) {$\op{lb}$}
					child {node [rectangle, draw] (rrl) {$\op{lb}$}
						child {node (rrll) {9}}
						child {node (rrlr) {10}}
					}
					child {node (rrr) {11}}
				}
			}
		;
		\draw[join=round, line cap=round, line width = 3pt,  blue, opacity=.5, ->] ($(root) + (0,.5)$) -- (root.center) -- (r.center) -- (rr.center) -- (rrl.center) -- (rrll.center);
		\node [rectangle, draw, xshift=11cm] (root) {$\op{lc}$}
			child {node [rectangle, draw] (l) {$\op{rc}$}
				child {node [rectangle, draw] (ll) {$\op{l}$}
					child {node (lll) {1}}
					child {node [rectangle, draw] (llr) {$\op{r}$}
						child {node (llrl) {2}}
						child {node (llrr) {3}}
					}
				}
				child {node [rectangle, draw] (lr) {$\op{lc}$}
					child {node (lrl) {4}}
					child {node [rectangle, draw] (lrr) {$\op{l}$}
						child {node (lrrl) {5}}
						child {node (lrrr) {6}}
					}
				}
			}
			child {node [rectangle, draw] (r) {$\op{r}$}
				child {node [rectangle, draw] (rl) {$\op{r}$}
					child {node (rll) {7}}
					child {node (rlr) {8}}
				}
				child {node [rectangle, draw] (rr) {$\op{r}$}
					child {node [rectangle, draw] (rrl) {$\op{l}$}
						child {node (rrll) {9}}
						child {node (rrlr) {10}}
					}
					child {node (rrr) {11}}
				}
			}
		;
		\draw[join=round, line cap=round, line width = 3pt, green, opacity=.5, ->] ($(root) + (0,.5)$) -- (root.center) -- (l.center) -- (lr.center) -- (lrl.center);
	\end{tikzpicture}}
	\caption{Interpretation of the diassociative operad in terms of syntax tree traversal.}
	\label{fig:twotrees}
\end{figure}

\noindent
Observe that the diassociative relations are compatible with the destination:

\medskip
\centerline{$
\begin{tikzpicture}[baseline=-.5cm, level/.style={sibling distance = .8cm, level distance = .8cm}]
	\node [rectangle, draw] (root) {$\op{la}$}
		child {node (l) {1}}
		child {node [rectangle, draw] (r) {$\op{l}$}
			child {node (rl) {2}}
			child {node (rr) {3}}
		}
	;
	\draw[join=round, line cap=round, line width = 3pt, red, opacity=.5, ->] ($(root) + (0,.5)$) -- (root.center) -- (l.center);
\end{tikzpicture}
\!\!=\!\!
\begin{tikzpicture}[baseline=-.5cm, level/.style={sibling distance = .8cm, level distance = .8cm}]
	\node [rectangle, draw] (root) {$\op{la}$}
		child {node (l) {1}}
		child {node [rectangle, draw] (r) {$\op{r}$}
			child {node (rl) {2}}
			child {node (rr) {3}}
		}
	;
	\draw[join=round, line cap=round, line width = 3pt, red, opacity=.5, ->] ($(root) + (0,.5)$) -- (root.center) -- (l.center);
\end{tikzpicture}
\!\!=\!\!
\begin{tikzpicture}[baseline=-.5cm, level/.style={sibling distance = .8cm, level distance = .8cm}]
	\node [rectangle, draw] (root) {$\op{la}$}
		child {node [rectangle, draw] (l) {$\op{la}$}
			child {node (ll) {1}}
			child {node (lr) {2}}
		}
		child {node (r) {3}}
	;
	\draw[join=round, line cap=round, line width = 3pt, red, opacity=.5, ->] ($(root) + (0,.5)$) -- (root.center) -- (l.center) -- (ll.center);
\end{tikzpicture}
\qquad
\begin{tikzpicture}[baseline=-.5cm, level/.style={sibling distance = .8cm, level distance =.8cm}]
	\node [rectangle, draw] (root) {$\op{ra}$}
		child {node (l) {1}}
		child {node [rectangle, draw] (r) {$\op{la}$}
			child {node (rl) {2}}
			child {node (rr) {3}}
		}
	;
	\draw[join=round, line cap=round, line width = 3pt, red, opacity=.5, ->] ($(root) + (0,.5)$) -- (root.center) -- (r.center) -- (rl.center);
\end{tikzpicture}
\!\!=\!\!
\begin{tikzpicture}[baseline=-.5cm, level/.style={sibling distance = .8cm, level distance = .8cm}]
	\node [rectangle, draw] (root) {$\op{la}$}
		child {node [rectangle, draw] (l) {$\op{ra}$}
			child {node (ll) {1}}
			child {node (lr) {2}}
		}
		child {node (r) {3}}
	;
	\draw[join=round, line cap=round, line width = 3pt, red, opacity=.5, ->] ($(root) + (0,.5)$) -- (root.center) -- (l.center) -- (lr.center);
\end{tikzpicture}
\qquad
\begin{tikzpicture}[baseline=-.5cm, level/.style={sibling distance = .8cm, level distance = .8cm}]
	\node [rectangle, draw] (root) {$\op{ra}$}
		child {node (l) {1}}
		child {node [rectangle, draw] (r) {$\op{ra}$}
			child {node (rl) {2}}
			child {node (rr) {3}}
		}
	;
	\draw[join=round, line cap=round, line width = 3pt, red, opacity=.5, ->] ($(root) + (0,.5)$) -- (root.center) -- (r.center) -- (rr.center);
\end{tikzpicture}
\!\!=\!\!
\begin{tikzpicture}[baseline=-.5cm, level/.style={sibling distance = .8cm, level distance = .8cm}]
	\node [rectangle, draw] (root) {$\op{ra}$}
		child {node [rectangle, draw] (l) {$\op{l}$}
			child {node (ll) {1}}
			child {node (lr) {2}}
		}
		child {node (r) {3}}
	;
	\draw[join=round, line cap=round, line width = 3pt, red, opacity=.5, ->] ($(root) + (0,.5)$) -- (root.center) -- (r.center);
\end{tikzpicture}
\!\!=\!\!
\begin{tikzpicture}[baseline=-.5cm, level/.style={sibling distance = .8cm, level distance = .8cm}]
	\node [rectangle, draw] (root) {$\op{ra}$}
		child {node [rectangle, draw] (l) {$\op{r}$}
			child {node (ll) {1}}
			child {node (lr) {2}}
		}
		child {node (r) {3}}
	;
	\draw[join=round, line cap=round, line width = 3pt, red, opacity=.5, ->] ($(root) + (0,.5)$) -- (root.center) -- (r.center);
\end{tikzpicture}$}
\medskip

\noindent
This shows that two equivalent syntax trees have the same destination. The reverse statement can be shown using normal forms as will be generalized in \cref{thm:signaleticOperadsQuadratic}.

\begin{proposition}[\cite{Loday-dialgebras}]
Two syntax trees on~$\{\op{l}, \op{r}\}$ with the same arity represent the same operation in the diassociative operad if and only if they have the same destination.
\end{proposition}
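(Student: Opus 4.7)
The implication ``same operation in $\Dias$ $\Rightarrow$ same destination'' is precisely the compatibility check displayed just before the proposition: each of the five diassociative relations of \cref{def:dias} preserves the destination of the traversal, hence so does any sequence of such rewritings, making the destination a well-defined invariant of each equivalence class.

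For the converse, the plan is to exhibit a canonical representative of every equivalence class. For each arity $p$ and each $i \in [p]$, let $\tree[C]_{p,i}$ be the right comb of arity $p$ whose labels along the rightmost path read $\op{r}^{i-1}\op{l}^{p-i}$ from top to bottom; a direct traversal shows its destination is~$i$. The goal is then that every arity-$p$ tree reduces to $\tree[C]_{p,\mathrm{dest}(\tree)}$ under the following orientation of the diassociative relations:
\[
\compoL{l}{l} \to \compoR{l}{l}, \quad
\compoL{r}{l} \to \compoR{r}{l}, \quad
\compoL{l}{r} \to \compoR{r}{r}, \quad
\compoL{r}{r} \to \compoR{r}{r}, \quad
\compoR{l}{r} \to \compoR{l}{l}.
\]
The first four rules are Tamari-like right rotations; iterating them drives any syntax tree to a right comb, the monovariant being the number of left-leaning two-node patterns. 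The fifth rule then acts on right combs and forces every label below an $\op{l}$ on the rightmost path to become an $\op{l}$ too, so its normal forms among right combs are exactly the~$\tree[C]_{p,i}$.

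What remains is to verify that this rewriting system is convergent. Termination follows from the lexicographic monovariant ``(number of left-leaning two-node patterns, then number of $\op{r}$-labels strictly below an $\op{l}$ on the rightmost path)''. Confluence can in principle be obtained by Newman's lemma after a finite critical-pair inspection on three-node patterns; however, a shorter closing argument is to observe that this terminating rewriting produces at most~$p$ normal forms spanning $\Dias(p)$, that the ``$\Rightarrow$'' direction equips $\Dias(p)$ with a destination map onto $[p]$ which is surjective since the $p$ combs $\tree[C]_{p,i}$ realize all destinations, and that $\dim \Dias(p) = p$ by part (iv) of the diassociative proposition in \cref{subsubsec:dendriformDiassociativeOperads}; these three facts together force the destination map to be an isomorphism, yielding the ``$\Leftarrow$'' direction. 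The main obstacle is the confluence check, which the dimension argument just sketched circumvents.
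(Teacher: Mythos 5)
Your route is the paper's own, specialized to $k=1$: the forward implication is the displayed compatibility check, and the converse is obtained by orienting each relation towards the right comb of its class, identifying the normal forms with the combs $\tree[C]_{p,i}$ (these are exactly the right $1$-signaletic combs of \cref{def:rightSignaleticComb}), and then sidestepping the confluence check by counting — at most $p$ normal forms spanning the quotient against $p$ destinations that are realized and are class invariants. This is precisely the argument of \cref{lem:signaleticCombs} and \cref{thm:signaleticOperadsQuadratic}; note also that your counting already forces $\dim\Dias(p)=p$, so citing that dimension is harmless but not needed.

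The one step that does not hold as written is the termination measure. The number of left-leaning two-node patterns (nodes whose left child is internal) is \emph{not} strictly decreased by a right rotation $(A\cdot B)\cdot C \to A\cdot(B\cdot C)$: the count changes by $[B\text{ internal}]-1$, which vanishes whenever $B$ is internal. Concretely, $(1\cdot(2\cdot 3))\cdot 4 \to 1\cdot((2\cdot 3)\cdot 4)$ has exactly one such pattern on each side. Since your second lexicographic component only watches the rightmost path, it cannot compensate for a rotation performed off that path, so the pair can remain constant across a rewriting step and the claimed monovariant does not prove termination. The repair is standard and is what the paper uses: a right rotation strictly increases the shape in the finite Tamari order — equivalently, it strictly decreases $\sum_{v}\ell(v)$, where $\ell(v)$ is the number of leaves in the left subtree of the node $v$ — while your fifth rule preserves the shape and strictly decreases the number of $\op{r}$ labels; the lexicographic pair (Tamari position of the shape, number of $\op{r}$'s) therefore terminates. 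This is \cref{lem:rewritingIncreasesSignaleticTamari} and \cref{coro:rewritingTermination}. With that substitution your argument is complete.
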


Consider now the dual duplicial operad~$\Dupdual$.
We observe that its relations are also compatible with the destination. The difference is that a syntax tree, regarded as an element of~$\Dupdual$, vanishes if some signal outside the route of the car does not point towards this route:

\medskip
\centerline{$
\begin{tikzpicture}[baseline=-.5cm, level/.style={sibling distance = .8cm, level distance = .8cm}]
	\node [rectangle, draw] (root) {$\op{la}$}
		child {node (l) {1}}
		child {node [rectangle, draw] (r) {$\op{l}$}
			child {node (rl) {2}}
			child {node (rr) {3}}
		}
	;
	\draw[join=round, line cap=round, line width = 3pt, red, opacity=.5, ->] ($(root) + (0,.5)$) -- (root.center) -- (l.center);
\end{tikzpicture}
\!\!=\!\!
\begin{tikzpicture}[baseline=-.5cm, level/.style={sibling distance = .8cm, level distance = .8cm}]
	\node [rectangle, draw] (root) {$\op{la}$}
		child {node [rectangle, draw] (l) {$\op{la}$}
			child {node (ll) {1}}
			child {node (lr) {2}}
		}
		child {node (r) {3}}
	;
	\draw[join=round, line cap=round, line width = 3pt, red, opacity=.5, ->] ($(root) + (0,.5)$) -- (root.center) -- (l.center) -- (ll.center);
\end{tikzpicture}
\quad\;
\begin{tikzpicture}[baseline=-.5cm, level/.style={sibling distance = .8cm, level distance = .8cm}]
	\node [rectangle, draw] (root) {$\op{la}$}
		child {node (l) {1}}
		child {node [rectangle, draw] (r) {$\op{r}$}
			child {node (rl) {2}}
			child {node (rr) {3}}
		}
	;
	\draw[join=round, line cap=round, line width = 3pt, red, opacity=.5, ->] ($(root) + (0,.5)$) -- (root.center) -- (l.center);
\end{tikzpicture}
\!\!= 0
\quad\;
\begin{tikzpicture}[baseline=-.5cm, level/.style={sibling distance = .8cm, level distance =.8cm}]
	\node [rectangle, draw] (root) {$\op{ra}$}
		child {node (l) {1}}
		child {node [rectangle, draw] (r) {$\op{la}$}
			child {node (rl) {2}}
			child {node (rr) {3}}
		}
	;
	\draw[join=round, line cap=round, line width = 3pt, red, opacity=.5, ->] ($(root) + (0,.5)$) -- (root.center) -- (r.center) -- (rl.center);
\end{tikzpicture}
\!\!=\!\!
\begin{tikzpicture}[baseline=-.5cm, level/.style={sibling distance = .8cm, level distance = .8cm}]
	\node [rectangle, draw] (root) {$\op{la}$}
		child {node [rectangle, draw] (l) {$\op{ra}$}
			child {node (ll) {1}}
			child {node (lr) {2}}
		}
		child {node (r) {3}}
	;
	\draw[join=round, line cap=round, line width = 3pt, red, opacity=.5, ->] ($(root) + (0,.5)$) -- (root.center) -- (l.center) -- (lr.center);
\end{tikzpicture}
\quad\;
0 =\!\!
\begin{tikzpicture}[baseline=-.5cm, level/.style={sibling distance = .8cm, level distance = .8cm}]
	\node [rectangle, draw] (root) {$\op{ra}$}
		child {node [rectangle, draw] (l) {$\op{l}$}
			child {node (ll) {1}}
			child {node (lr) {2}}
		}
		child {node (r) {3}}
	;
	\draw[join=round, line cap=round, line width = 3pt, red, opacity=.5, ->] ($(root) + (0,.5)$) -- (root.center) -- (r.center);
\end{tikzpicture}
\quad\;
\begin{tikzpicture}[baseline=-.5cm, level/.style={sibling distance = .8cm, level distance = .8cm}]
	\node [rectangle, draw] (root) {$\op{ra}$}
		child {node (l) {1}}
		child {node [rectangle, draw] (r) {$\op{ra}$}
			child {node (rl) {2}}
			child {node (rr) {3}}
		}
	;
	\draw[join=round, line cap=round, line width = 3pt, red, opacity=.5, ->] ($(root) + (0,.5)$) -- (root.center) -- (r.center) -- (rr.center);
\end{tikzpicture}
\!\!=\!\!
\begin{tikzpicture}[baseline=-.5cm, level/.style={sibling distance = .8cm, level distance = .8cm}]
	\node [rectangle, draw] (root) {$\op{ra}$}
		child {node [rectangle, draw] (l) {$\op{r}$}
			child {node (ll) {1}}
			child {node (lr) {2}}
		}
		child {node (r) {3}}
	;
	\draw[join=round, line cap=round, line width = 3pt, red, opacity=.5, ->] ($(root) + (0,.5)$) -- (root.center) -- (r.center);
\end{tikzpicture}$}
\medskip

\begin{proposition}
A syntax tree~$\tree$ on~$\{\op{l}, \op{r}\}$ vanishes in the dual duplicial operad except if all signals outside the route of the car point toward this route.
Two non-vanishing syntax trees on~${\{\op{l}, \op{r}\}}$ represent the same operation in the dual duplicial operad if and only if they have the same destination.
\end{proposition}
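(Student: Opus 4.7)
The plan is to establish both claims simultaneously by orienting the five defining relations into a convergent rewriting system whose normal forms are in bijection with the destination vectors $\destVect{p}{q}$ for $p \in [q]$.

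The first step is to verify that each of the five relations is compatible with the traffic-signal interpretation. The three non-vanishing relations $\op{l} \circ_2 \op{l} = \op{l} \circ_1 \op{l}$, $\op{r} \circ_2 \op{l} = \op{l} \circ_1 \op{r}$, and $\op{r} \circ_2 \op{r} = \op{r} \circ_1 \op{r}$ equate arity-$3$ trees sharing a common destination (leaves $1$, $2$, and $3$ respectively), and in which all off-route signals already point toward the route on both sides. The two vanishing relations $\op{l} \circ_2 \op{r} = 0$ and $\op{r} \circ_1 \op{l} = 0$ kill exactly the arity-$3$ trees whose single off-route node carries a signal pointing away from the route. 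Since each relation can be applied inside a larger syntax tree by grafting arbitrary subtrees at the three leaves, these local observations propagate inductively: the destination is preserved by non-vanishing rewrites, and any tree containing a misaligned off-route signal can be reduced to $0$ by first using the non-vanishing rules to drag the offending configuration into an arity-$3$ subpattern and then applying the appropriate vanishing rule.

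The second step orients the three non-vanishing relations as rewrites from right-leaning to left-leaning configurations in the Tamari order of \cref{def:TamariLattice}, and keeps the two vanishing rules as rewrites to $0$. Termination follows from the strict monotonicity in the finite Tamari lattice of each non-vanishing step, while vanishing rewrites are trivially terminating. Confluence reduces to a finite critical-pair analysis in arity $4$, where overlapping patterns occur when the ``right child'' of one arity-$3$ pattern coincides with the ``root'' of another; each such critical pair can be checked to reduce identically (possibly to $0$) under either order. The resulting normal forms are the left-combs whose sequence of labels avoids the pattern $\op{r}\op{l}$ (which would trigger the vanishing rule $\op{r} \circ_1 \op{l} \to 0$), that is, sequences of $i$ copies of $\op{l}$ followed by $q-1-i$ copies of $\op{r}$ for $0 \le i \le q-1$. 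These are in canonical bijection with the destination vectors $\destVect{p}{q}$ for $p \in [q]$.

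The main obstacle is the confluence verification, especially because both vanishing and non-vanishing rules coexist, so the analysis must ensure that applying a non-vanishing rule never creates an unresolvable branching with a vanishing rule, and \viceversa. Once confluence is established, every syntax tree reduces to either $0$ or to a unique normal form determined by its destination, which simultaneously proves both parts of the proposition.
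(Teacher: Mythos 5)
Your proposal follows essentially the same route as the paper, which proves this statement as the $k=1$, $\constraint=\odot$ instance of the general machinery of \cref{subsec:HilbertSeriesKoszulitySignaletic} (see \cref{thm:signaleticOperadsQuadratic} and \cref{rem:multitidySignaleticParallelKoszul}): check that the quadratic relations preserve the ``tidy destination'' (the destination if all off-route signals point toward the route, and $0$ otherwise), then orient them into a terminating rewriting system whose normal forms are combs in bijection with destinations. The only differences are cosmetic or organizational: you orient toward left combs where the paper uses right combs, and you propose to establish confluence by a critical-pair analysis in arity~$4$, which you flag as the main obstacle and leave unchecked. That obstacle is actually unnecessary: since the tidy destination map descends to a surjection from the quotient onto the $q$-dimensional span of the vectors $\destVect{p}{q}$, $p\in[q]$, while termination shows that the $q$ normal forms of arity~$q$ span the quotient, all these dimensions coincide, the normal forms form a basis, and convergence (hence uniqueness of normal forms and both claims of the proposition) follows for free --- this counting argument is exactly how the paper closes the proof, and you should use it to replace your unexecuted confluence check.
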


For our purposes, we need to consider a slight modification of the dual duplicial operad.
We still consider a syntax tree on~$\{\op{l}, \op{r}\}$ and we let a car traverse the tree, but we now impose that all signals outside the route of the car point to the left.
We will see as a special case of \cref{subsubsec:tidyParallelSignaleticOperads} that this defines a quadratic operad~$\Dupdual_{\op{l}}$ over~$\{\op{l}, \op{r}\}$ defined by the five linear relations:

\medskip
\centerline{$
\begin{tikzpicture}[baseline=-.5cm, level/.style={sibling distance = .8cm, level distance = .8cm}]
	\node [rectangle, draw] (root) {$\op{la}$}
		child {node (l) {1}}
		child {node [rectangle, draw] (r) {$\op{l}$}
			child {node (rl) {2}}
			child {node (rr) {3}}
		}
	;
	\draw[join=round, line cap=round, line width = 3pt, red, opacity=.5, ->] ($(root) + (0,.5)$) -- (root.center) -- (l.center);
\end{tikzpicture}
\!\!=\!\!
\begin{tikzpicture}[baseline=-.5cm, level/.style={sibling distance = .8cm, level distance = .8cm}]
	\node [rectangle, draw] (root) {$\op{la}$}
		child {node [rectangle, draw] (l) {$\op{la}$}
			child {node (ll) {1}}
			child {node (lr) {2}}
		}
		child {node (r) {3}}
	;
	\draw[join=round, line cap=round, line width = 3pt, red, opacity=.5, ->] ($(root) + (0,.5)$) -- (root.center) -- (l.center) -- (ll.center);
\end{tikzpicture}
\quad\;
\begin{tikzpicture}[baseline=-.5cm, level/.style={sibling distance = .8cm, level distance = .8cm}]
	\node [rectangle, draw] (root) {$\op{la}$}
		child {node (l) {1}}
		child {node [rectangle, draw] (r) {$\op{r}$}
			child {node (rl) {2}}
			child {node (rr) {3}}
		}
	;
	\draw[join=round, line cap=round, line width = 3pt, red, opacity=.5, ->] ($(root) + (0,.5)$) -- (root.center) -- (l.center);
\end{tikzpicture}
\!\!= 0
\quad\;
\begin{tikzpicture}[baseline=-.5cm, level/.style={sibling distance = .8cm, level distance =.8cm}]
	\node [rectangle, draw] (root) {$\op{ra}$}
		child {node (l) {1}}
		child {node [rectangle, draw] (r) {$\op{la}$}
			child {node (rl) {2}}
			child {node (rr) {3}}
		}
	;
	\draw[join=round, line cap=round, line width = 3pt, red, opacity=.5, ->] ($(root) + (0,.5)$) -- (root.center) -- (r.center) -- (rl.center);
\end{tikzpicture}
\!\!=\!\!
\begin{tikzpicture}[baseline=-.5cm, level/.style={sibling distance = .8cm, level distance = .8cm}]
	\node [rectangle, draw] (root) {$\op{la}$}
		child {node [rectangle, draw] (l) {$\op{ra}$}
			child {node (ll) {1}}
			child {node (lr) {2}}
		}
		child {node (r) {3}}
	;
	\draw[join=round, line cap=round, line width = 3pt, red, opacity=.5, ->] ($(root) + (0,.5)$) -- (root.center) -- (l.center) -- (lr.center);
\end{tikzpicture}
\quad\;
0 =\!\!
\begin{tikzpicture}[baseline=-.5cm, level/.style={sibling distance = .8cm, level distance = .8cm}]
	\node [rectangle, draw] (root) {$\op{ra}$}
		child {node [rectangle, draw] (l) {$\op{r}$}
			child {node (ll) {1}}
			child {node (lr) {2}}
		}
		child {node (r) {3}}
	;
	\draw[join=round, line cap=round, line width = 3pt, red, opacity=.5, ->] ($(root) + (0,.5)$) -- (root.center) -- (r.center);
\end{tikzpicture}
\quad\;
\begin{tikzpicture}[baseline=-.5cm, level/.style={sibling distance = .8cm, level distance = .8cm}]
	\node [rectangle, draw] (root) {$\op{ra}$}
		child {node (l) {1}}
		child {node [rectangle, draw] (r) {$\op{ra}$}
			child {node (rl) {2}}
			child {node (rr) {3}}
		}
	;
	\draw[join=round, line cap=round, line width = 3pt, red, opacity=.5, ->] ($(root) + (0,.5)$) -- (root.center) -- (r.center) -- (rr.center);
\end{tikzpicture}
\!\!=\!\!
\begin{tikzpicture}[baseline=-.5cm, level/.style={sibling distance = .8cm, level distance = .8cm}]
	\node [rectangle, draw] (root) {$\op{ra}$}
		child {node [rectangle, draw] (l) {$\op{l}$}
			child {node (ll) {1}}
			child {node (lr) {2}}
		}
		child {node (r) {3}}
	;
	\draw[join=round, line cap=round, line width = 3pt, red, opacity=.5, ->] ($(root) + (0,.5)$) -- (root.center) -- (r.center);
\end{tikzpicture}$}

\medskip
\noindent
The opposite rule, forcing all signals not located on the route to point to the right, also yields an operad~$\Dupdual_{\op{r}}$.
We let~$\Dupdual_{\odot} \eqdef \Dupdual$ (resp.~$\Dupdual_\star \eqdef \Dias$) denote the classical dual duplicial operad (resp.~diassociative operad), where the signals not located on the route point towards the route (resp.~have no constraints).
In contrast, note that the rule forcing all signals not located on the route to point away from the route does not define an operad.
We will see as a special case of \cref{subsec:parallelSignaleticOperads} that the four operads~$\Dupdual_\star$, $\Dupdual_{\odot}$, $\Dupdual_{\op{l}}$ and~$\Dupdual_{\op{r}}$ are quadratic~and~Koszul.

In \cref{subsec:parallelSignaleticOperads,subsec:seriesSignaleticOperads}, we define and study two natural generalizations of this signaletic interpretation of the diassociative and dual duplicial operads. Namely, we fix an integer~$k \ge 1$, we consider a syntax tree~$\tree$ on the operations~$\{\op{l}, \op{r}\}^k$, and we assume that $k$ cars arrive at the root of~$\tree$. These $k$ cars will drive through the tree~$\tree$ following the directions indicated by the traffic signal at each branching node in two different ways:
\begin{itemize}
\item \defn{Parallel}: The $k$ cars all start together at the root of~$\tree$, and the $i$-th car always follows the indication given by the $i$-th letter of the traffic signal at each branching node. This corresponds to the white Manin product of the diassociative and duplicial operads.
\item \defn{Series}: The $k$ cars start one after the other at the root of~$\tree$, and each car always follows the indication given by the leftmost remaining letter of the traffic signal at each branching node and erases it.
\end{itemize}
These two circulation rules are illustrated in \cref{fig:traversingTreeParallel,fig:traversingTreeSeries}.
Moreover, we can impose the following additional constraints:
\begin{itemize}
\item \defn{Tidy}: All signals not used by a car must point to the left (as in~$\Dupdual_{\op{l}}$).
\item \defn{Messy}: No further constraints on the remaining signals (as in~$\Dias$).
\end{itemize}
We will see that the two circulation rules (parallel and series) and the two ordering rules (messy or tidy) define four families of operads.
Although not isomorphic, these four families of operads have identical Hilbert series and very similar behaviors.
We will actually observe that many proof techniques can be applied to both families irrespective of the series or parallel circulation rule, and of the tidy or messy constraints. 

In fact, these two circulation rules can also be mixed as quickly discussed in \cref{subsec:seriesParallelOperads}.
We have preferred to first discuss them separately to simplify the presentation.


\subsection{Parallel signaletic operads}
\label{subsec:parallelSignaleticOperads}

We start with the parallel signaletic operads, and we treat separately the messy and tidy situations.
Although these operads are just white Manin powers of the diassociative and duplicial operads, we use an elementary combinatorial presentation that will allow to define similarly series signaletic operads with a slight modification of the circulation rule.


\subsubsection{Messy parallel signaletic operads}
\label{subsubsec:messyParallelSignaleticOperads}

Fix an integer~$k \ge 0$ and consider a syntax tree~$\tree$ on the operations~$\Operations_k \eqdef \{\op{l}, \op{r}\}^k$ with~$n$ leaves that we label~$1, \dots, n$ from left to right. Assume that $k$ cars~$c_1, \dots, c_k$ arrive simultaneously at the root of~$\tree$ and that the $j$-th car~$c_j$ always follows the indication given by the $j$-th letter of the traffic signal at each branching node. We call \defn{parallel routes} the paths followed by the $k$ cars in the syntax tree~$\tree$. Finally, each car~$c_j$ ends at a certain leaf labeled~$\ell_j$ and we call \defn{parallel destination vector} of~$\tree$ the vector~$\destVect{\ell_1, \dots, \ell_k}{n}$. See \cref{fig:traversingTreeParallel}.

\begin{figure}[!h]
	\centerline{
	\begin{tikzpicture}[level/.style={sibling distance=6cm/#1, level distance = 1.2cm/sqrt(#1)}]
		\node [rectangle, draw] (root) {$\op{la,rb,lc}$}
			child {node [rectangle, draw] (l) {$\op{la,r,rc}$}
				child {node [rectangle, draw] (ll) {$\op{ra,l,l}$}
					child {node (lll) {1}}
					child {node [rectangle, draw] (llr) {$\op{la,r,r}$}
						child {node (llrl) {2}}
						child {node (llrr) {3}}
					}
				}
				child {node [rectangle, draw] (lr) {$\op{r,r,lc}$}
					child {node (lrl) {4}}
					child {node [rectangle, draw] (lrr) {$\op{r,l,l}$}
						child {node (lrrl) {5}}
						child {node (lrrr) {6}}
					}
				}
			}
			child {node [rectangle, draw] (r) {$\op{l,rb,r}$}
				child {node [rectangle, draw] (rl) {$\op{r,l,r}$}
					child {node (rll) {7}}
					child {node (rlr) {8}}
				}
				child {node [rectangle, draw] (rr) {$\op{r,lb,r}$}
					child {node [rectangle, draw] (rrl) {$\op{r,lb,l}$}
						child {node (rrll) {9}}
						child {node (rrlr) {10}}
					}
					child {node (rrr) {11}}
				}
			}
		;
		\draw[join=round, line cap=round, line width = 3pt,   red, opacity=.5, ->] ($(root) + (-.25,.5)$) -- ($(root) + (-.25,0)$) -- ($(l) + (-.25,0)$) -- ($(ll) + (-.25,0)$) -- ($(llr) + (-.25,0)$) -- ($(llrl) + (-.25,0)$);
		\draw[join=round, line cap=round, line width = 3pt,  blue, opacity=.5, ->] ($(root) + (  0,.5)$) -- ($(root) + (  0,0)$) -- ($(r) + (  0,0)$) -- ($(rr) + (  0,0)$) -- ($(rrl) + (  0,0)$) -- ($(rrll) + (  0,0)$);
		\draw[join=round, line cap=round, line width = 3pt, green, opacity=.5, ->] ($(root) + ( .25,.5)$) -- ($(root) + ( .25,0)$) -- ($(l) + ( .25,0)$) -- ($(lr) + ( .25,0)$) -- ($(lrl) + ( .25,0)$);
	\end{tikzpicture}
	}
	\caption{Traversing the syntax tree in parallel. The parallel routes are marked, and the parallel destination vector is~$\destVect{2,9,4}{11}$.}
	\label{fig:traversingTreeParallel}
\end{figure}

\pagebreak
We say that two syntax trees~$\tree, \tree'$ on~$\Operations_k$ with the same arity are \defn{messy parallel $k$-signaletic equivalent} and we write~$\tree \approxeq^{\parallel} \tree'$ if they have the same parallel destination vector.

\begin{proposition}
The messy parallel $k$-signaletic equivalence is compatible with grafting of syntax trees: $\tree \circ_i \tree[s] \approxeq^\parallel \tree' \circ_i \tree[s]'$ for any syntax trees~$\tree \approxeq^\parallel \tree'$ of arity~$p$ and $\tree[s] \approxeq^\parallel \tree[s]'$ of arity~$q$, and any~$i \in [p]$.
\end{proposition}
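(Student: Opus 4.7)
The plan is to show that the parallel destination vector of a grafted tree depends only on the parallel destination vectors of its two factors and the grafting index. Once this is established, the statement follows directly by symmetry.

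First I would fix notation: let $\tree$ have arity $p$ with parallel destination vector $\destVect{\ell_1, \dots, \ell_k}{p}$, and let $\tree[s]$ have arity $q$ with parallel destination vector $\destVect{m_1, \dots, m_k}{q}$. In the grafted tree $\tree \circ_i \tree[s]$, the $p+q-1$ leaves are numbered left-to-right as follows: leaves $1, \dots, i-1$ correspond to the first $i-1$ leaves of $\tree$, leaves $i, \dots, i+q-1$ correspond to the $q$ leaves of $\tree[s]$, and leaves $i+q, \dots, p+q-1$ correspond to the last $p-i$ leaves of $\tree$. Crucially, grafting does not modify any internal signal, and the $j$-th car in the parallel circulation rule only reads the $j$-th letter of each signal it encounters, independently of the other cars.

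Next I would analyze each car $c_j$ separately. Because the signals on the path above the grafting point are unchanged, the route of $c_j$ in $\tree \circ_i \tree[s]$ begins by exactly reproducing its route in $\tree$. Three cases arise according to the destination $\ell_j$ of $c_j$ in $\tree$:
\begin{itemize}
\item if $\ell_j < i$, then $c_j$ terminates before reaching the grafted subtree, so its destination in $\tree \circ_i \tree[s]$ is $\ell_j$;
\item if $\ell_j > i$, then $c_j$ terminates on a leaf located to the right of the grafted subtree, so its destination in $\tree \circ_i \tree[s]$ is $\ell_j + q - 1$;
\item if $\ell_j = i$, then $c_j$ enters the grafted copy of $\tree[s]$ at its root and, by the same parallel circulation rule, ends at the leaf of $\tree[s]$ with local label $m_j$, so its destination in $\tree \circ_i \tree[s]$ is $i + m_j - 1$.
\end{itemize}
This matches precisely the composition rule of the operad $\Operad[P]$ in \cref{def:dias}(iii) applied coordinate-wise.

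Therefore the parallel destination vector of $\tree \circ_i \tree[s]$ is a function only of $\destVect{\ell_1, \dots, \ell_k}{p}$, $\destVect{m_1, \dots, m_k}{q}$, and $i$. If $\tree \approxeq^\parallel \tree'$ and $\tree[s] \approxeq^\parallel \tree[s]'$, then by definition the destination vectors of $\tree$ and $\tree'$ coincide, as do those of $\tree[s]$ and $\tree[s]'$, and hence so do the destination vectors of $\tree \circ_i \tree[s]$ and $\tree' \circ_i \tree[s]'$. This yields $\tree \circ_i \tree[s] \approxeq^\parallel \tree' \circ_i \tree[s]'$. The only mildly delicate point to keep track of is the relabeling of leaves under grafting, which is purely combinatorial bookkeeping; there is no genuine obstacle.
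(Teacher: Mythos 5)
Your proof is correct and follows exactly the same route as the paper: a car-by-car case analysis according to whether the destination in $\tree$ is less than, equal to, or greater than $i$, yielding the same composition formula for destination vectors. Nothing to add.
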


\begin{proof}
Consider the $j$-th car~$c_j$ and denote by~$\ind{p}_j$ its parallel destination in~$\tree$ and by~$\ind{q}_j$ its parallel destination in~$\tree[s]$. Then its parallel destination~$\ind{r}_j$ in~$\tree \circ_i \tree[s]$ is given by
\begin{equation}
\label{eq:messySignaleticParallelDestinations}
\ind{r}_j = 
\begin{cases}
\ind{p}_j & \text{if } \ind{p}_j < i, \\
\ind{p}_j + \ind{q}_j - 1 & \text{if } \ind{p}_j = i, \\
\ind{p}_j + q - 1 & \text{if } \ind{p}_j > i.
\end{cases}
\qedhere
\end{equation}
\end{proof}

Here are some examples of \cref{eq:messySignaleticParallelDestinations}: for $\ind{p} \eqdef \destVect{2, 1, 4, 2, 2}{5}$ and $\ind{q} \eqdef \destVect{3, 1, 6, 1, 2}{6}$, we have $\ind{p} \circ_1 \ind{q} = \destVect{7, 1, 9, 7, 7}{10}$, $\ind{p} \circ_2 \ind{q} = \destVect{4, 1, 9, 2, 3}{10}$, $\ind{p} \circ_3 \ind{q} = \ind{p} \circ_4 \ind{q} = \destVect{2, 1, 9, 2, 2}{10}$, and $\ind{p} \circ_5 \ind{q} = \destVect{2, 1, 4, 2, 2}{10}$.

\begin{definition}
The \defn{messy parallel $k$-signaletic operad}~$\messySignaleticParallel$ is the quotient of the free operad on~${\Operations_k \eqdef \{\op{l}, \op{r}\}^k}$ by the messy parallel $k$-signaletic equivalence.
\end{definition}

We can immediately observe the connection with the Manin products of \cref{subsec:ManinProducts}.

\begin{proposition}
\label{prop:messyParallelSignaleticManinProduct}
For any two integers $k$ and $l$, we have
\[
\messySignaleticParallel = \Dias^{\whiteManin k} = \Dias^{\blackManin k}
\qqandqq
\messySignaleticParallel[k+l] = \messySignaleticParallel[k]\whiteManin\messySignaleticParallel[l].
\]
\end{proposition}

\begin{proof}
A syntax tree~$\tree$ on~$\Operations_k \times \Operations_l$ can be viewed as a pair~$(\tree_1, \tree_2)$ of syntax trees on~$\Operations_k$ and~$\Operations_l$ respectively  with the same shape.
Moreover, two syntax trees~$\tree, \tree'$ on~$\Operations_k \times \Operations_l$ are messy parallel $(k+l)$-signaletic equivalent if for their corresponding pairs~$(\tree_1,\tree_2)$ and~$(\tree_1', \tree_2')$ of syntax trees on~$\Operations_k$ and~$\Operations_l$, the trees~$\tree_1$ and~$\tree_1'$ are messy parallel $k$-signaletic equivalent and the trees~$\tree_2$ and~$\tree_2'$ are messy parallel $l$-signaletic equivalent respectively.
Hence, we get~${\messySignaleticParallel[k+l] = \messySignaleticParallel[k]\whiteManin\messySignaleticParallel[l]}$.
As~$\messySignaleticParallel[1] = \Dias$, this immediately shows that~${\messySignaleticParallel = \Dias^{\whiteManin k}}$.
The other equality follow from the fact that~${\Dend \whiteManin \Dend = \Dend \blackManin \Dend}$, which was shown in~\cite{Vallette}.
\end{proof}

\cref{def:dias,prop:dendDiass,prop:whiteManin,prop:messyParallelSignaleticManinProduct} imply that the messy parallel $k$-signaletic operad~$\messySignaleticParallel$ is quadratic and Koszul.
We will see an alternative proof in \cref{subsec:HilbertSeriesKoszulitySignaletic}, with an argument uniform for all $k$-signaletic operads (it applies to both messy and tidy and to both parallel and series).
We now extract from the definitions the quadratic relations which provide a presentation of~$\messySignaleticParallel$.

\begin{definition}
\label{def:messySignaleticRelationsParallel}
We call \defn{messy parallel $k$-signaletic relations} all the quadratic relations of~$\messySignaleticParallel$, that is all the relations of the form $\tree_1 = \tree_2$ where $\tree_1$ and $\tree_2$ are two syntax trees with two nodes (\ie of arity~$3$) sharing the same parallel destination vector.
\end{definition}

\begin{remark}
\label{rem:messySignaleticRelationsParallel}
These relations can be explicitly described for any~$k$ as follows.
For any destination vector~${\ind{p} \in [3]^k}$ of arity~$3$, the messy parallel $k$-signaletic operad~$\messySignaleticParallel$ satisfies the quadratic relation~$\tree_1 = \tree_2$ for any two syntax trees~$\tree_1$ and~$\tree_2$ of one of the following forms:
\begin{align*}
	&
	\begin{tikzpicture}[baseline=-.5cm, level/.style={sibling distance = .8cm, level distance = .8cm}]
		\node [rectangle, draw, minimum height=.6cm] {$\operation[a]_\ind{p}$}
			child {node {$1$}}
			child {node [rectangle, draw, minimum height=.6cm] {$\operation[b]_\ind{p}$}
				child {node {$2$}}
				child {node {$3$}}
			}
		;
	\end{tikzpicture}
	\quad\text{where}\quad
	(\operation[a]_\ind{p})_i = \begin{cases} \op{l} & \text{if } \ind{p}_i = 1 \\ \op{r} & \text{if } \ind{p}_i = 2 \\ \op{r} & \text{if } \ind{p}_i = 3 \end{cases}
	\qandq
	(\operation[b]_\ind{p})_i = \begin{cases} \op{l} & \text{if } \ind{p}_i = 2 \\ \op{r} & \text{if } \ind{p}_i = 3 \end{cases}
	\quad\text{for all } i \in [k],
\\
	\text{or}\quad &
	\begin{tikzpicture}[baseline=-.5cm, level/.style={sibling distance = .8cm, level distance = .8cm}]
		\node [rectangle, draw, minimum height=.6cm] {$\operation[c]_\ind{p}$}
			child {node [rectangle, draw, minimum height=.6cm] {$\operation[d]_\ind{p}$}
				child {node {$1$}}
				child {node {$2$}}
			}
			child {node {$3$}}
		;
	\end{tikzpicture}
	\quad\text{where}\quad
	(\operation[c]_\ind{p})_i = \begin{cases} \op{l} & \text{if } \ind{p}_i = 1 \\ \op{l} & \text{if } \ind{p}_i = 2 \\ \op{r} & \text{if } \ind{p}_i = 3 \end{cases}
	\qandq
	(\operation[d]_\ind{p})_i = \begin{cases} \op{l} & \text{if } \ind{p}_i = 1 \\ \op{r} & \text{if } \ind{p}_i = 2 \end{cases}
	\quad\text{for all } i \in [k].
\end{align*}
Note that there are $2^{1+2k}$ syntax trees of arity~$3$ on~$\Operations_k$ but only $3^k$ parallel destination vectors (\ie messy parallel $k$-signaletic equivalence classes).
Therefore, there are $2^{1+2k}-3^k$ independent messy parallel $k$-signaletic relations among syntax trees of arity~$3$ on~$\Operations_k$.
\end{remark}

\begin{example}[Messy parallel $0$-, $1$- and~$2$-signaletic relations]
\label{exm:messySignaleticRelationsParallel}
The messy parallel $0$-signaletic relation is the associative relation:
\allowdisplaybreaks
\begin{gather}
\compoR{n}{n} = \compoL{n}{n}. \tag{$\messySignaleticParallel[]\,.$}\label{eq:messySignaleticParallel0}
\end{gather}
In other words, the messy parallel $0$-signaletic operad~$\messySignaleticParallel[0]$ is just the associative operad~$\As$.

The messy parallel $1$-signaletic relations are the~$5$ diassociative relations:
\allowdisplaybreaks
\begin{gather}
\compoR{l}{l} = \compoR{l}{r} = \compoL{l}{l}, \tag{$\messySignaleticParallel[]\,1$}\label{eq:messySignaleticParallel1} \\[-.1cm]
\compoR{r}{l} = \compoL{r}{l}, \tag{$\messySignaleticParallel[]\,2$}\label{eq:messySignaleticParallel2} \\[-.1cm]
\compoR{r}{r} = \compoL{l}{r} = \compoL{r}{r}. \tag{$\messySignaleticParallel[]\,3$}\label{eq:messySignaleticParallel3}
\end{gather}
In other words, the messy parallel $1$-signaletic operad~$\messySignaleticParallel[1]$ is just the diassociative operad~$\Dias$.

The messy parallel $2$-signaletic relations are the following~$23$ relations:
\begin{gather}
\compoR{l,l}{l,l} = \compoR{l,l}{l,r} = \compoR{l,l}{r,l} = \compoR{l,l}{r,r} = \compoL{l,l}{l,l}, \tag{$\messySignaleticParallel[]\,11$}\label{eq:messySignaleticParallel11} \\[-.1cm]
\compoR{l,r}{l,l} = \compoR{l,r}{r,l} = \compoL{l,r}{l,l}, \tag{$\messySignaleticParallel[]\,12$}\label{eq:messySignaleticParallel12} \\[-.1cm]
\compoR{l,r}{l,r} = \compoR{l,r}{r,r} = \compoL{l,l}{l,r} = \compoL{l,r}{l,r}, \tag{$\messySignaleticParallel[]\,13$}\label{eq:messySignaleticParallel13} \\[-.1cm]
\compoR{r,l}{l,l} = \compoR{r,l}{l,r} = \compoL{r,l}{l,l}, \tag{$\messySignaleticParallel[]\,21$}\label{eq:messySignaleticParallel21} \\[-.1cm]
\compoR{r,r}{l,l} = \compoL{r,r}{l,l}, \tag{$\messySignaleticParallel[]\,22$}\label{eq:messySignaleticParallel22} \\[-.1cm]
\compoR{r,r}{l,r} = \compoL{r,l}{l,r} = \compoL{r,r}{l,r}, \tag{$\messySignaleticParallel[]\,23$}\label{eq:messySignaleticParallel23} \\[-.1cm]
\compoR{r,l}{r,l} = \compoR{r,l}{r,r} = \compoL{l,l}{r,l} = \compoL{r,l}{r,l}, \tag{$\messySignaleticParallel[]\,31$}\label{eq:messySignaleticParallel31} \\[-.1cm]
\compoR{r,r}{r,l} = \compoL{l,r}{r,l} = \compoL{r,r}{r,l}, \tag{$\messySignaleticParallel\,32$}\label{eq:messySignaleticParallel32} \\[-.1cm]
\compoR{r,r}{r,r} = \compoL{l,l}{r,r} = \compoL{l,r}{r,r} = \compoL{r,l}{r,r} = \compoL{r,r}{r,r}. \tag{$\messySignaleticParallel[]\,33$}\label{eq:messySignaleticParallel33}
\end{gather}
Note that these relations were already considered as the dual of the $\Quad$ operad, see for instance~\cite[Proposition~3]{Foissy}.
The translation between our notations and that of~\cite[Proposition~3]{Foissy} is the following:
\[
{\op{l,l}} = \; \nwarrow,
\qquad
{\op{l,r}} = \; \nearrow,
\qquad
{\op{r,l}} = \; \swarrow,
\qqandqq
{\op{r,r}} = \; \searrow.
\]
\end{example}

\begin{remark}
\label{rem:restrictionMessySignaleticParallel}
For any subset $I$ of $[k]$ of cardinality~$\ell \le k$, there is a surjective restriction operad morphism ${\Res_I : \messySignaleticParallel[k] \to \messySignaleticParallel[\ell]}$ sending a destination vector $\ind{p} = \destVect{\ind{p}_1, \dots, \ind{p}_k}{n}$ to the destination subvector ${\Res_I(\ind{p})\eqdef\destVect{\ind{p}_i \mid i \in I}{n}}$. It is equivalently defined on operations by~$\Res_I(\operation)\eqdef(\operation_i)_{i\in I}$.
This is actually a general fact for Manin products of non-symmetric set operads (see \cref{subsec:ManinProducts} for more details). Indeed, each (non-symmetric) set operad~$\Operad$ has a morphism~$\morphism$ to the non-symmetric associative operad~$\As$ sending any operation to the unique operation of~$\As$ with the same arity. Noting that~$\As$ is the neutral element for the Manin product, we observe that~$\Res_I$ is nothing but the following Manin product of morphisms:
\[
\Res_I = \morphism_1 \whiteManin \morphism_2 \whiteManin \dots \whiteManin \morphism_k
\qquad\text{where}\qquad
\morphism_i=\begin{cases}\id&\text{if $i\in I$}, \\ \morphism &\text{otherwise.}\end{cases}
\]
\end{remark}

\begin{remark}
\label{rem:extensionMessySignaleticParallel}
Conversely, since both operations~${\op{l}}$ and~${\op{r}}$ are associative in~$\messySignaleticParallel[1]$, for any~${i \in [k+1]}$ and~$\operation[a] \in \{\op{l}, \op{r}\}$, the map $\Ins^{\operation[a]}_i : \messySignaleticParallel[k] \to \messySignaleticParallel[k+1]$ inserting~$\operation[a]$ at position~$i$ in every nodes is an injective operad morphism.
On destination vectors of degree~$n$ it amounts to respectively insert~$1$ or~$n$ at position~$i$.
Similarly to the previous \cref{rem:restrictionMessySignaleticParallel}, this is inserting $\As \to \messySignaleticParallel[1]$ in a Manin product of identity morphisms.
\end{remark}


\subsubsection{Tidy parallel signaletic operads}
\label{subsubsec:tidyParallelSignaleticOperads}

We now impose additional constraints on the trafic signals not contained in the parallel routes.
We say that a syntax tree~$\tree$ on~$\Operations_k$ is \defn{tidy parallel} if all the traffic signals not contained in its parallel routes \textbf{point to the left}.
Otherwise, we say that~$\tree$ is \defn{messy parallel}.
We define the \defn{tidy parallel destination vector} of a syntax tree~$\tree$ to be its parallel destination vector if~$\tree$ is tidy parallel, and to be $0$ if~$\tree$ is messy parallel.
Finally, we say that two syntax trees~$\tree, \tree'$ on~$\Operations_k$ are \defn{tidy parallel $k$-signaletic equivalent} and we write~$\tree \equiv^\parallel \tree'$ if they have the same tidy parallel destination vector.
In particular, all messy parallel syntax trees are equivalent to the zero tree~$0$.

\begin{proposition}
The tidy parallel $k$-signaletic equivalence is compatible with grafting of syntax trees: $\tree \circ_i \tree[s] \equiv^\parallel \tree' \circ_i \tree[s]'$ for any syntax trees~$\tree \equiv^\parallel \tree'$ of arity~$p$ and $\tree[s] \equiv^\parallel \tree[s]'$ of arity~$q$, and any~$i \in [p]$.
\end{proposition}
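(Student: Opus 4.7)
The plan is to analyze how the parallel routes of $\tree \circ_i \tree[s]$ decompose between $\tree$ and $\tree[s]$, then use this to track how messiness and tidiness propagate under grafting. The key structural observation is that a node $v$ of $\tree$ is visited by exactly the same subset of cars in $\tree \circ_i \tree[s]$ as in~$\tree$ alone, while a node~$v$ of $\tree[s]$ is visited in $\tree \circ_i \tree[s]$ only by the cars in $J \cap A_v$, where $J \eqdef \{j \in [k] : \ind{p}_j = i\}$ is determined by the parallel destination vector~$\ind{p}$ of~$\tree$, and $A_v$ is the set of cars visiting~$v$ in standalone~$\tree[s]$.

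From this, messiness is monotone under grafting: if either $\tree$ or $\tree[s]$ is messy, then a right-pointing letter lying outside its own routes is still outside the routes of $\tree \circ_i \tree[s]$ (indeed, the set of on-route letters can only shrink under grafting on either side), so the grafted tree is also messy. Hence whenever $\tree \equiv^\parallel 0$ or $\tree[s] \equiv^\parallel 0$, both $\tree \circ_i \tree[s]$ and $\tree' \circ_i \tree[s]'$ reduce to~$0$, and the desired equivalence is trivial.

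It remains to treat the case where $\tree, \tree'$ are both tidy with common destination~$\ind{p}$ and $\tree[s], \tree[s]'$ are both tidy with common destination~$\ind{q}$. Then the parallel destination vector of~$\tree \circ_i \tree[s]$ is computed by formula~\eqref{eq:messySignaleticParallelDestinations} applied to $\ind{p}$ and~$\ind{q}$, and so it depends only on $\ind{p}, \ind{q}, i$; the same holds for~$\tree' \circ_i \tree[s]'$. The crucial step, which I expect to be the main obstacle, is to show that whether $\tree \circ_i \tree[s]$ is tidy also depends only on~$\ind{p}, \ind{q}, i$, not on the particular shapes of~$\tree$ and~$\tree[s]$. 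To this end, observe that at any node~$v$ of~$\tree[s]$, tidiness of~$\tree[s]$ forces letter~$j$ with $j \in A_v$ to point toward the child of~$v$ containing leaf~$\ind{q}_j$; for the grafted tree to be tidy, this same letter must additionally point left whenever $j \notin J$. Iterating this constraint over all ancestors~$v$ of leaf~$\ind{q}_j$ in~$\tree[s]$ forces leaf~$\ind{q}_j$ to sit on the leftmost path, i.e.\ $\ind{q}_j = 1$, for every~$j$ with $\ind{p}_j \ne i$; conversely, this criterion is readily seen to be sufficient. Since it depends only on~$\ind{p}, \ind{q}, i$, the trees $\tree \circ_i \tree[s]$ and $\tree' \circ_i \tree[s]'$ are either both tidy (with the common destination vector given by~\eqref{eq:messySignaleticParallelDestinations}) or both messy, and in either case equivalent.
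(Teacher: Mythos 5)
Your proof is correct and follows essentially the same route as the paper's: both reduce the claim to the fact that the tidy parallel destination vector of $\tree \circ_i \tree[s]$ is $0$ unless $\ind{q}_j = 1$ for every $j$ with $\ind{p}_j \ne i$, and is otherwise given by the messy formula \eqref{eq:messySignaleticParallelDestinations}. The paper merely asserts this criterion, while you supply the route-decomposition argument (routes in $\tree$ unchanged, routes in $\tree[s]$ restricted to the cars landing at leaf~$i$) that justifies it.
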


\begin{proof}
Let~$\ind{p}$ and $\ind{q}$ denote the tidy parallel destination vectors of~$\tree[t]$ and~$\tree[s]$ respectively.
Then the parallel series destination vector~$\ind{r}$ of~$\tree[t] \circ_i \tree[s]$ is $0$ unless $\ind{q}_\ell = 1$ for all $j \in [k]$ such that $\ind{p}_j \ne i$, and if so, is equal to the messy parallel destination vector, that is for all~$j \in [k]$,
\begin{equation}
\label{eq:tidySignaleticParallelDestinations}
\ind{r}_j = 
\begin{cases}
\ind{p}_j & \text{if } \ind{p}_j < i, \\
\ind{p}_j + \ind{q}_j - 1 & \text{if } \ind{p}_j = i, \\
\ind{p}_j + q - 1 & \text{if } \ind{p}_j > i.\\
\end{cases}
\qedhere
\end{equation}
\end{proof}

Here are some examples of \cref{eq:tidySignaleticParallelDestinations}: for $\ind{p} \eqdef \destVect{2, 1, 4, 2, 2}{5}$ and $\ind{q} \eqdef \destVect{3, 1, 1, 4, 1}{4}$, we have $\ind{p} \circ_1 \ind{q} = \ind{p} \circ_3 \ind{q} = \ind{p} \circ_4 \ind{q} = \ind{p} \circ_5 \ind{q} = 0$ and $\ind{p} \circ_2 \ind{q} = \destVect{4, 1, 7, 5, 2}{8}$.

\begin{definition}
\label{def:tidySignaleticParallel}
The \defn{tidy parallel $k$-signaletic operad}~$\tidySignaleticParallel$ is the quotient of the free operad on~${\Operations_k \eqdef \{\op{l}, \op{r}\}^k}$ by the tidy parallel $k$-signaletic equivalence.
\end{definition}

Similarly to \cref{prop:messyParallelSignaleticManinProduct},  we can immediately observe the connection with the white Manin product of \cref{subsec:ManinProducts}.

\begin{proposition}
\label{prop:tidyParallelSignaleticManinProduct}
For any two integers $k$ and $l$, we have
\[
\tidySignaleticParallel = {\Dupdual_{\op{l}}}^{\whiteManin k}
\qqandqq
\tidySignaleticParallel[k+l] = \tidySignaleticParallel[k]\whiteManin\tidySignaleticParallel[l].
\]
\end{proposition}

This implies that the tidy parallel $k$-signaletic operad~$\tidySignaleticParallel$ is quadratic and Koszul.
As already mentioned, we will see a uniform argument in \cref{subsec:HilbertSeriesKoszulitySignaletic}.
We now extract from the definitions the quadratic relations which provide a presentation of~$\tidySignaleticParallel$.

\begin{definition}
\label{def:tidySignaleticRelationsParallel}
We call \defn{tidy parallel $k$-signaletic relations} all the quadratic relations of~$\tidySignaleticParallel$, that is all the relations of the form $\tree = 0$ where $\tree$ is a messy parallel syntax tree with two nodes (\ie of arity~$3$), and all the relations of the form $\tree_1 = \tree_2$ where $\tree_1$ and $\tree_2$ are two tidy syntax trees with two nodes (\ie of arity~$3$) sharing the same parallel destination vector.
\end{definition}

\begin{remark}
\label{rem:tidySignaleticRelationsParallel}
These relations can be explicitly described for any~$k$ as follows.
First, the tidy parallel $k$-signaletic operad~$\tidySignaleticParallel$ satisfies the quadratic relations
\[
	\begin{tikzpicture}[baseline=-.5cm, level/.style={sibling distance = .8cm, level distance = .8cm}]
		\node [rectangle, draw, minimum height=.6cm] {$\operation[a]$}
			child {node {$1$}}
			child {node [rectangle, draw, minimum height=.6cm] {$\operation[b]$}
				child {node {$2$}}
				child {node {$3$}}
			}
		;
	\end{tikzpicture}
	\!\!= 0
	\qqandqq
	\begin{tikzpicture}[baseline=-.5cm, level/.style={sibling distance = .8cm, level distance = .8cm}]
		\node [rectangle, draw, minimum height=.6cm] {$\operation[c]$}
			child {node [rectangle, draw, minimum height=.6cm] {$\operation[d]$}
				child {node {$1$}}
				child {node {$2$}}
			}
			child {node {$3$}}
		;
	\end{tikzpicture}
	\!\!= 0
\]
for any~$\operation[a], \operation[b], \operation[c], \operation[d] \in \Operations_k$ such that~$\operation[a]_i = {\op{l}}$ and~$\operation[b]_i = {\op{r}}$ for some~$i \in [k]$, and~$\operation[c]_j = {\op{r}}$ and~$\operation[d]_j = {\op{r}}$ for some~$j \in [k]$ .
Second, for any destination vector~${\ind{p} \in [3]^k}$ of arity~$3$, the tidy parallel $k$-signaletic operad~$\tidySignaleticParallel$ satisfies the quadratic relation
\[
	\begin{tikzpicture}[baseline=-.5cm, level/.style={sibling distance = .8cm, level distance = .8cm}]
		\node [rectangle, draw, minimum height=.6cm] {$\operation[a]_\ind{p}$}
			child {node {$1$}}
			child {node [rectangle, draw, minimum height=.6cm] {$\operation[b]_\ind{p}$}
				child {node {$2$}}
				child {node {$3$}}
			}
		;
	\end{tikzpicture}
	=
	\begin{tikzpicture}[baseline=-.5cm, level/.style={sibling distance = .8cm, level distance = .8cm}]
		\node [rectangle, draw, minimum height=.6cm] {$\operation[c]_\ind{p}$}
			child {node [rectangle, draw, minimum height=.6cm] {$\operation[d]_\ind{p}$}
				child {node {$1$}}
				child {node {$2$}}
			}
			child {node {$3$}}
		;
	\end{tikzpicture}
\]
where~$\operation[a]_\ind{p}, \operation[b]_\ind{p}, \operation[c]_\ind{p}, \operation[d]_\ind{p} \in \Operations_k$ are defined by
\[
	(\operation[a]_\ind{p})_i \eqdef \begin{cases} \op{l} & \!\!\text{if } \ind{p}_i = 1 \\ \op{r} & \!\!\text{if } \ind{p}_i = 2 \\ \op{r} & \!\!\text{if } \ind{p}_i = 3 \end{cases}
	\quad
	(\operation[b]_\ind{p})_i \eqdef \begin{cases} \op{l} & \!\!\text{if } \ind{p}_i = 1 \\ \op{l} & \!\!\text{if } \ind{p}_i = 2 \\ \op{r} & \!\!\text{if } \ind{p}_i = 3 \end{cases}
	\quad
	(\operation[c]_\ind{p})_i \eqdef \begin{cases} \op{l} & \!\!\text{if } \ind{p}_i = 1 \\ \op{l} & \!\!\text{if } \ind{p}_i = 2 \\ \op{r} & \!\!\text{if } \ind{p}_i = 3 \end{cases}
	\quad
	(\operation[d]_\ind{p})_i \eqdef \begin{cases} \op{l} & \!\!\text{if } \ind{p}_i = 1 \\ \op{r} & \!\!\text{if } \ind{p}_i = 2 \\ \op{l} & \!\!\text{if } \ind{p}_i = 3 \end{cases}
\]
for all~$i \in [k]$.
Again, there are $2^{1+2k}-3^k$ independent tidy parallel $k$-signaletic relations among syntax trees of arity~$3$ on~$\Operations_k$.
\end{remark}

\begin{example}[Tidy parallel $0$-, $1$- and~$2$-signaletic relations]
The tidy parallel $0$-signaletic relation is the associative relation:
\allowdisplaybreaks
\begin{gather}
\compoR{n}{n} = \compoL{n}{n}. \tag{$\tidySignaleticParallel[]\,.$}\label{eq:tidySignaleticParallel0}
\end{gather}
In other words, the tidy parallel $0$-signaletic operad~$\tidySignaleticParallel[0]$ is just the associative operad~$\As$.

The tidy parallel $1$-signaletic relations are the~$5$ twisted dual duplicial relations:
\allowdisplaybreaks
\begin{gather}
\compoR{l}{l} = \compoL{l}{l} \qandq \compoR{l}{r} = 0, \tag{$\tidySignaleticParallel[]\,1$}\label{eq:tidySignaleticParallel1} \\[-.1cm]
\compoR{r}{l} = \compoL{r}{l}, \tag{$\tidySignaleticParallel[]\,2$}\label{eq:tidySignaleticParallel2} \\[-.1cm]
\compoR{r}{r} = \compoL{l}{r} \qandq \compoL{r}{r} = 0. \tag{$\tidySignaleticParallel[]\,3$}\label{eq:tidySignaleticParallel3}
\end{gather}
In other words, the tidy parallel $1$-signaletic operad~$\tidySignaleticParallel[1]$ is just the twisted dual duplicial operad~$\Dupdual_{\op{l}}$.

The tidy parallel $2$-signaletic relations are the following~$23$ relations:
\begin{gather}
\compoR{l,l}{l,l} = \compoL{l,l}{l,l} \qandq \compoR{l,l}{l,r} = \compoR{l,l}{r,l} = \compoR{l,l}{r,r} = 0, \tag{$\tidySignaleticParallel[]\,11$}\label{eq:tidySignaleticParallel11} \\[-.1cm]
\compoR{l,r}{l,l} = \compoL{l,r}{l,l} \qandq \compoR{l,r}{r,l} = 0, \tag{$\tidySignaleticParallel[]\,12$}\label{eq:tidySignaleticParallel12} \\[-.1cm]
\compoR{l,r}{l,r} = \compoL{l,l}{l,r} \qandq \compoR{l,r}{r,r} = \compoL{l,r}{l,r} = 0, \tag{$\tidySignaleticParallel[]\,13$}\label{eq:tidySignaleticParallel13} \\[-.1cm]
\compoR{r,l}{l,l} = \compoL{r,l}{l,l} \qandq \compoR{r,l}{l,r} = 0, \tag{$\tidySignaleticParallel[]\,21$}\label{eq:tidySignaleticParallel21} \\[-.1cm]
\compoR{r,r}{l,l} = \compoL{r,r}{l,l}, \tag{$\tidySignaleticParallel[]\,22$}\label{eq:tidySignaleticParallel22} \\[-.1cm]
\compoR{r,r}{l,r} = \compoL{r,l}{l,r} \qandq \compoL{r,r}{l,r} = 0, \tag{$\tidySignaleticParallel[]\,23$}\label{eq:tidySignaleticParallel23} \\[-.1cm]
\compoR{r,l}{r,l} = \compoL{l,l}{r,l} \qandq \compoR{r,l}{r,r} = \compoL{r,l}{r,l} = 0, \tag{$\tidySignaleticParallel[]\,31$}\label{eq:tidySignaleticParallel31} \\[-.1cm]
\compoR{r,r}{r,l} = \compoL{l,r}{r,l} \qandq \compoL{r,r}{r,l} = 0, \tag{$\tidySignaleticParallel[]\,32$}\label{eq:tidySignaleticParallel32} \\[-.1cm]
\compoR{r,r}{r,r} = \compoL{l,l}{r,r} \qandq \compoL{l,r}{r,r} = \compoL{r,l}{r,r} = \compoL{r,r}{r,r} = 0. \tag{$\tidySignaleticParallel[]\,33$}\label{eq:tidySignaleticParallel33}
\end{gather}
\end{example}

\begin{remark}
\label{rem:restrictionTidySignaleticParallel}
Similarly to \cref{rem:restrictionMessySignaleticParallel}, for any subset $I$ of $[k]$ of cardinality~$\ell \le k$, there is a surjective restriction operad morphism ${\Res_I : \tidySignaleticParallel[k] \to \tidySignaleticParallel[\ell]}$ sending a destination vector ${\ind{p} = \destVect{\ind{p}_1, \dots, \ind{p}_k}{n}}$ to the destination subvector ${\Res_I(\ind{p})\eqdef\destVect{\ind{p}_i \mid i \in I}{n}}$.
\end{remark}

\begin{remark}
\label{rem:extensionTidySignaleticParallel}
Similarly to \cref{rem:extensionMessySignaleticParallel}, since the two operations $\op{l}$ and~${\op{l}} + {\op{r}}$ are associative in~$\tidySignaleticParallel[1]$, 
for any~$i \in [k+1]$ and~$\operation[a] \in \{\op{l}, {\op{l}} + {\op{r}}\}$, the map $\Ins^{\operation[a]}_i : \tidySignaleticParallel[k] \to \tidySignaleticParallel[k+1]$ inserting~$\operation[a]$ at position~$i$ in every nodes is an injective operad morphism.
On destination vectors, the map~$\Ins^{\op{l}}_i$ inserts a~$1$ at position~$i$, while the map~$\Ins^{{\op{l}} + {\op{r}}}_i$ is given by
\[
\Ins_i^{{\op{l}} + {\op{r}}} \big( \destVect{r_1, \dots, r_k}{n} \big) = \sum_{d \in [n]} \destVect{r_1, \dots, r_{i-1}, d, r_{i}, \dots, r_k}{n}.
\]
\end{remark}
\begin{remark}
\label{rem:multitidySignaleticParallel}
In this parallel setting, we have in fact much more freedom on the additional constraints that we impose for the tidy situation.
Consider a constraint word~$\constraint$ with $k$ letters in~$\{\op{l}, \op{r}, \odot, \star\}$.
We say that a syntax tree~$\tree$ on~$\Operations_k$ is \defn{$\constraint$-tidy parallel} if at each node, the $i$-th traffic signal either is contained in the $i$-th parallel route of~$\tree$, or points to the left if~$\constraint_i = {\op{l}}$, to the right if~$\constraint_i = {\op{r}}$, towards the $i$-th parallel route of~$\tree$ if~$\constraint_i = \odot$, or in any direction if~$\constraint_i = \star$.
The resulting $\constraint$-tidy parallel signaletic equivalence is then clearly compatible with grafting. Indeed, for two syntax trees~$\tree[t]$ and~$\tree[s]$ of arities~$p$ and~$q$ with $\constraint$-tidy parallel destination vectors~$\ind{p}$ and $\ind{q}$, the $\constraint$-tidy parallel destination vector~$\ind{r}$ of~$\tree[t] \circ_i \tree[s]$ is $0$ unless for all $j \in [k]$
\begin{itemize}
\item if $\ind{p}_j < i$, we have either~$\constraint_j = \star$, or $\constraint_j \in \{\odot, \op{l}\}$ and $\ind{q}_j = 1$, or $\constraint_j = {\op{r}}$ and $\ind{q}_j = q$,
\item if $\ind{p}_j < i$, we have either~$\constraint_j = \star$, or $\constraint_j \in \{\odot, \op{r}\}$ and $\ind{q}_j = q$, or $\constraint_j = {\op{l}}$ and $\ind{q}_j = 1$,
\end{itemize}
and if so, is equal to the messy parallel destination vector.
This defines the \defn{$\constraint$-tidy parallel signaletic operad}~$\tidySignaleticParallel[\constraint]$ as in \cref{def:tidySignaleticParallel}.
In other words, for any words~$\constraint[c], \constraint[d] \in \{\op{l}, \op{r}, \odot, \star\}$, we have
\[
\tidySignaleticParallel[\constraint] = \underset{i \in [|\constraint|]}{\scalebox{2}{$\square$}} \Dupdual_{\constraint_i}
\qqandqq
\tidySignaleticParallel[{\constraint[c] \cdot \constraint[d]}] = \tidySignaleticParallel[{\constraint[c]}] \whiteManin \tidySignaleticParallel[{\constraint[d]}].
\]
We have decided to present the tidy parallel $k$-signaletic operad with the constraint~$\constraint = {\op{l}\!^k}$ to simplify the presentation and since this will be the only possible option in series.
However, note that we will use an action of the Koszul dual of the $\op{l,r}$-tidy parallel signaletic operad in \cref{subsec:actionParallelCitalangisOperads}.
\end{remark}


\subsection{Series signaletic operads}
\label{subsec:seriesSignaleticOperads}

We now consider the series signaletic operads, and we treat separately the messy and tidy situations.


\subsubsection{Messy series signaletic operads}
\label{subsubsec:messySeriesSignaleticOperads}

Fix an integer~$k \ge 0$ and consider a syntax tree~$\tree$ on the operations~$\{\op{l}, \op{r}\}^k$ with~$n$ leaves that we label~$1, \dots, n$ from left to right. Assume now that $k$ cars~$c_1, \dots, c_k$ depart sequentially from the root of~$\tree$ and that the $j$-th car~$c_j$ always follows the indication given by the leftmost remaining letter of the traffic signal at each branching node and erases this letter. We call \defn{series routes} the paths followed by the $k$ cars in the syntax tree~$\tree$. Finally, each car~$c_j$ ends at a certain leaf labeled~$\ell_j$ and we call \defn{series destination vector} of~$\tree$ the vector~$\destVect{\ell_1, \dots, \ell_k}{n}$. See \cref{fig:traversingTreeSeries}.

\begin{figure}[h]
	\centerline{
	\begin{tikzpicture}[level/.style={sibling distance=6cm/#1, level distance = 1.2cm/sqrt(#1)}]
		\node [rectangle, draw] (root) {$\op{la,rb,lc}$}
			child {node [rectangle, draw] (l) {$\op{la,rc,r}$}
				child {node [rectangle, draw] (ll) {$\op{ra,l,l}$}
					child {node (lll) {1}}
					child {node [rectangle, draw] (llr) {$\op{la,r,r}$}
						child {node (llrl) {2}}
						child {node (llrr) {3}}
					}
				}
				child {node [rectangle, draw] (lr) {$\op{rc,r,l}$}
					child {node (lrl) {4}}
					child {node [rectangle, draw] (lrr) {$\op{rc,l,l}$}
						child {node (lrrl) {5}}
						child {node (lrrr) {6}}
					}
				}
			}
			child {node [rectangle, draw] (r) {$\op{lb,r,r}$}
				child {node [rectangle, draw] (rl) {$\op{rb,l,r}$}
					child {node (rll) {7}}
					child {node (rlr) {8}}
				}
				child {node [rectangle, draw] (rr) {$\op{r,l,r}$}
					child {node [rectangle, draw] (rrl) {$\op{r,l,l}$}
						child {node (rrll) {9}}
						child {node (rrlr) {10}}
					}
					child {node (rrr) {11}}
				}
			}
		;
		\draw[join=round, line cap=round, line width = 3pt,   red, opacity=.5, ->] ($(root) + (-.25,.5)$) -- ($(root) + (-.25,0)$) -- ($(l) + (-.25,0)$) -- ($(ll) + (-.25,0)$) -- ($(llr) + (-.25,0)$) -- ($(llrl) + (-.25,0)$);
		\draw[join=round, line cap=round, line width = 3pt,  blue, opacity=.5, ->] ($(root) + ( 0,.5)$) -- ($(root) + ( 0,0)$) -- ($(r) + (-.25,0)$) -- ($(rl) + (-.25,0)$) -- ($(rlr) + (-.25,0)$);
		\draw[join=round, line cap=round, line width = 3pt, green, opacity=.5, ->] ($(root) + ( .25,.5)$) -- ($(root) + ( .25,0)$) -- ($(l) + ( 0,0)$) -- ($(lr) + (-.25,0)$) -- ($(lrr) + (-.25,0)$) -- ($(lrrr) + (-.25,0)$);
	\end{tikzpicture}
	}
	\caption{Traversing the syntax tree in series. The series routes are marked, and the series destination vector is~$\destVect{2,8,6}{11}$.}
	\label{fig:traversingTreeSeries}
\end{figure}

We say that two syntax trees~$\tree, \tree'$ on~$\Operations_k$ with the same arity are \defn{messy series $k$-signaletic equivalent} and we write~$\tree \approxeq^{\series} \tree'$ if they have the same series destination vector.

\begin{proposition}
The messy series $k$-signaletic equivalence is compatible with grafting of syntax trees: $\tree \circ_i \tree[s] \approxeq^\series \tree' \circ_i \tree[s]'$ for any syntax trees~$\tree \approxeq^\series \tree'$ of arity~$p$ and $\tree[s] \approxeq^\series \tree[s]'$ of arity~$q$, and any~$i \in [p]$.
\end{proposition}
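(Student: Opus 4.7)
The plan is to mimic the argument for the parallel case by deriving an explicit formula for the series destination vector of~$\tree \circ_i \tree[s]$ in terms of those of~$\tree$ and~$\tree[s]$. Write $\ind{p} \eqdef \destVect{\ind{p}_1, \dots, \ind{p}_k}{p}$ and $\ind{q} \eqdef \destVect{\ind{q}_1, \dots, \ind{q}_k}{q}$ for those two vectors, and denote by~$\ind{r}$ the series destination vector of~$\tree \circ_i \tree[s]$.

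First I would observe that each car~$c_j$ follows inside the outer copy of~$\tree$ exactly the same route as it would in~$\tree$ alone, since the leftmost remaining letter at each outer node only depends on the outer visits of~$c_1, \dots, c_{j-1}$ through that node. Hence $c_j$ first reaches leaf~$\ind{p}_j$ of~$\tree$, and whenever $\ind{p}_j \ne i$ the car never enters the grafted copy of~$\tree[s]$: a relabeling of leaves gives $\ind{r}_j = \ind{p}_j$ if $\ind{p}_j < i$ and $\ind{r}_j = \ind{p}_j + q - 1$ if $\ind{p}_j > i$, exactly as in the parallel situation.

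The crux of the argument is the case $\ind{p}_j = i$. Define $m_j \eqdef |\set{j' \le j}{\ind{p}_{j'} = i}|$. I would establish the following sub-claim: in any series traversal of a syntax tree on~$\Operations_k$, the trajectory of the $m$-th car depends only on the trajectories of the first $m-1$ cars, not on any car arriving later. This is an immediate induction on the length of the path, using the fact that each letter a car consumes is the leftmost letter left by the previous cars. Applied to the subsequence $\set{c_{j'}}{\ind{p}_{j'} = i}$ (in its original order, which is exactly the order in which its members enter~$\tree[s]$), the sub-claim shows that~$c_j$, being the~$m_j$-th car to enter~$\tree[s]$, follows the trajectory of the~$m_j$-th car in the standalone series traversal of~$\tree[s]$. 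Its destination inside~$\tree[s]$ is therefore~$\ind{q}_{m_j}$, and shifting by~$i - 1$ gives $\ind{r}_j = \ind{q}_{m_j} + i - 1$.

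Collecting the three cases yields
\[
\ind{r}_j =
\begin{cases}
\ind{p}_j & \text{if } \ind{p}_j < i, \\
\ind{q}_{m_j} + i - 1 & \text{if } \ind{p}_j = i, \\
\ind{p}_j + q - 1 & \text{if } \ind{p}_j > i,
\end{cases}
\]
which expresses~$\ind{r}$ as a function of~$\ind{p}$, $\ind{q}$, $i$, $p$, $q$ alone. The desired compatibility then follows at once: if $\tree \approxeq^\series \tree'$ and $\tree[s] \approxeq^\series \tree[s]'$, then they share the same pairs~$(\ind{p}, p)$ and~$(\ind{q}, q)$, so $\tree \circ_i \tree[s]$ and $\tree' \circ_i \tree[s]'$ share the same~$\ind{r}$. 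The main (mild) obstacle is the sub-claim isolating the first~$m$ cars, but this is immediate from the purely past-dependent nature of the series circulation rule.
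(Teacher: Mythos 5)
Your proof is correct and follows essentially the same route as the paper, which simply states the composition formula~\eqref{eq:messySignaleticSeriesDestinations} for the series destination vector (your formula agrees with it, since $\ind{p}_j = i$ in the middle case); you merely spell out the justification via the sub-claim that each car's trajectory depends only on the earlier cars.
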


\begin{proof}
Consider the $j$-th car~$c_j$ and denote by~$\ind{p}_j$ its series destination in~$\tree$ and by~$\ind{q}_j$ its series destination in~$\tree[s]$. Then its series destination~$\ind{r}_j$ in~$\tree \circ_i \tree[s]$ is given by
\begin{equation}
\label{eq:messySignaleticSeriesDestinations}
\ind{r}_j = 
\begin{cases}
\ind{p}_j & \text{if } \ind{p}_j < i, \\
\ind{p}_j + \ind{q}_{|\set{\ell \le j}{\ind{p}_\ell = i}|} - 1 & \text{if } \ind{p}_j = i, \\
\ind{p}_j + q - 1 & \text{if } \ind{p}_j > i.
\end{cases}
\qedhere
\end{equation}
\end{proof}

Here are some examples of \cref{eq:messySignaleticSeriesDestinations}: for $\ind{p} \eqdef \destVect{2, 1, 4, 2, 2}{5}$ and $\ind{q} \eqdef \destVect{3, 1, 6, 1, 2}{6}$, we have $\ind{p} \circ_1 \ind{q} = \destVect{7, 3, 9, 7, 7}{10}$, $\ind{p} \circ_2 \ind{q} = \destVect{4, 1, 9, 2, 7}{10}$, $\ind{p} \circ_3 \ind{q} = \destVect{2, 1, 9, 2, 2}{10}$, $\ind{p} \circ_4 \ind{q} = \destVect{2, 1, 6, 2, 2}{10}$, and $\ind{p} \circ_5 \ind{q} = \destVect{2, 1, 4, 2, 2}{10}$.

\begin{definition}
The \defn{messy series $k$-signaletic operad}~$\messySignaleticSeries$ is the quotient of the free operad on~${\Operations_k \eqdef \{\op{l}, \op{r}\}^k}$ by the messy series $k$-signaletic equivalence.
\end{definition}

As already mentioned, we will see in \cref{subsec:HilbertSeriesKoszulitySignaletic} that the messy series $k$-signaletic operad~$\messySignaleticSeries$ is quadratic and Koszul. We now extract from the definitions the quadratic relations which provide a presentation of~$\messySignaleticSeries$.

\begin{definition}
\label{def:messySignaleticRelationsSeries}
We call \defn{messy series $k$-signaletic relations} all the quadratic relations of~$\messySignaleticSeries$, that is all the relations of the form $\tree_1 = \tree_2$ where $\tree_1$ and $\tree_2$ are two syntax trees with two nodes (\ie of arity~$3$) sharing the same series destination vector.
\end{definition}

\begin{remark}
\label{rem:messySignaleticRelationsSeries}
These relations can be explicitly described for any~$k$ as follows.
For any destination vector~${\ind{p} \in [3]^k}$ of arity~$3$, the messy series $k$-signaletic operad~$\messySignaleticSeries$ satisfies the quadratic relation~$\tree_1 = \tree_2$ for any two syntax trees~$\tree_1$ and~$\tree_2$ of one of the following forms:

\begin{align*}
	&
	\begin{tikzpicture}[baseline=-.5cm, level/.style={sibling distance = .8cm, level distance = .8cm}]
		\node [rectangle, draw, minimum height=.6cm] {$\operation[a]_\ind{p}$}
			child {node {$1$}}
			child {node [rectangle, draw, minimum height=.6cm] {$\operation[b]_\ind{p}$}
				child {node {$2$}}
				child {node {$3$}}
			}
		;
	\end{tikzpicture}
	\;\,\text{where}\;\,
	(\operation[a]_\ind{p})_i = \begin{cases} \op{l} & \text{if } \ind{p}_i = 1 \\ \op{r} & \text{if } \ind{p}_i = 2 \\ \op{r} & \text{if } \ind{p}_i = 3 \end{cases}
	\;\,\text{and}\;\,
	(\operation[b]_\ind{p})_i = \begin{cases} \op{l} & \text{if } (\ind{p}^{\{2,3\}})_i = 2 \\ \op{r} & \text{if } (\ind{p}^{\{2,3\}})_i = 3 \end{cases}
	\;\,\text{for all } i \in [k],
\\
	\text{or}\;\, &
	\begin{tikzpicture}[baseline=-.5cm, level/.style={sibling distance = .8cm, level distance = .8cm}]
		\node [rectangle, draw, minimum height=.6cm] {$\operation[c]_\ind{p}$}
			child {node [rectangle, draw, minimum height=.6cm] {$\operation[d]_\ind{p}$}
				child {node {$1$}}
				child {node {$2$}}
			}
			child {node {$3$}}
		;
	\end{tikzpicture}
	\;\,\text{where}\;\,
	(\operation[c]_\ind{p})_i = \begin{cases} \op{l} & \text{if } \ind{p}_i = 1 \\  \op{l} & \text{if } \ind{p}_i = 2 \\ \op{r} & \text{if } \ind{p}_i = 3 \end{cases}
	\;\,\text{and}\;\,
	(\operation[d]_\ind{p})_i = \begin{cases} \op{l} & \text{if } (\ind{p}^{\{1,2\}})_i = 1 \\ \op{r} & \text{if } (\ind{p}^{\{1,2\}})_i = 2 \end{cases}
	\;\,\text{for all } i \in [k],
\end{align*}
where for any~$L \subseteq [3]$, we have denoted by~$\ind{p}^L$ the subword of~$\ind{p}$ consisting only of the letters which belong to~$L$, and for any~$x \in L$ the condition~$(\ind{p}^L)_i = x$ implicitly implies that~$\ind{p}^L$ has length at least~$i$.
Again, there are $2^{1+2k}-3^k$ independent messy series $k$-signaletic relations among syntax trees of arity~$3$ on~$\Operations_k$.
\end{remark}

\begin{example}[Messy series $0$-, $1$- and~$2$-signaletic relations]
The messy series $0$-signaletic relation is the associative relation:
\allowdisplaybreaks
\begin{gather}
\compoR{n}{n} = \compoL{n}{n}. \tag{$\messySignaleticSeries[]\,.$}\label{eq:messySignaleticSeries0}
\end{gather}
In other words, the messy series $0$-signaletic operad~$\messySignaleticSeries[0]$ is just the associative operad~$\As$.

The messy series $1$-signaletic relations are the~$5$ diassociative relations:
\begin{gather*}
\compoR{l}{l} = \compoR{l}{r} = \compoL{l}{l}, \tag{$\messySignaleticSeries[]\,1$}\label{eq:messySignaleticSeries1} \\[-.1cm]
\compoR{r}{l} = \compoL{r}{l}, \tag{$\messySignaleticSeries[]\,2$}\label{eq:messySignaleticSeries2} \\[-.1cm]
\compoR{r}{r} = \compoL{l}{r} = \compoL{r}{r}. \tag{$\messySignaleticSeries[]\,3$}\label{eq:messySignaleticSeries3}
\end{gather*}
In other words, the messy series $1$-signaletic operad~$\messySignaleticSeries[1]$ is just the diassociative operad~$\Dias$.

The messy series $2$-signaletic relations are the following~$23$ relations:
\allowdisplaybreaks
\begin{gather*}
\compoR{l,l}{l,l} = \compoR{l,l}{l,r} = \compoR{l,l}{r,l} = \compoR{l,l}{r,r} = \compoL{l,l}{l,l}, \tag{$\messySignaleticSeries[]\,11$}\label{eq:messySignaleticSeries11} \\[-.1cm]
\compoR{l,r}{l,l} = \compoR{l,r}{l,r} = \compoL{l,r}{l,l}, \tag{$\messySignaleticSeries[]\,12$}\label{eq:messySignaleticSeries12} \\[-.1cm]
\compoR{l,r}{r,l} = \compoR{l,r}{r,r} = \compoL{l,l}{l,r} = \compoL{l,r}{l,r}, \tag{$\messySignaleticSeries[]\,13$}\label{eq:messySignaleticSeries13} \\[-.1cm]
\compoR{r,l}{l,l} = \compoR{r,l}{l,r} = \compoL{r,l}{l,l}, \tag{$\messySignaleticSeries[]\,21$}\label{eq:messySignaleticSeries21} \\[-.1cm]
\compoR{r,r}{l,l} = \compoL{r,r}{l,l}, \tag{$\messySignaleticSeries[]\,22$}\label{eq:messySignaleticSeries22} \\[-.1cm]
\compoR{r,r}{l,r} = \compoL{r,l}{l,r} = \compoL{r,r}{l,r}, \tag{$\messySignaleticSeries[]\,23$}\label{eq:messySignaleticSeries23} \\[-.1cm]
\compoR{r,l}{r,l} = \compoR{r,l}{r,r} = \compoL{l,l}{r,l} = \compoL{l,r}{r,l}, \tag{$\messySignaleticSeries[]\,31$}\label{eq:messySignaleticSeries31} \\[-.1cm]
\compoR{r,r}{r,l} = \compoL{r,l}{r,l} = \compoL{r,r}{r,l}, \tag{$\messySignaleticSeries[]\,32$}\label{eq:messySignaleticSeries32} \\[-.1cm]
\compoR{r,r}{r,r} = \compoL{l,l}{r,r} = \compoL{l,r}{r,r} = \compoL{r,l}{r,r} = \compoL{r,r}{r,r}. \tag{$\messySignaleticSeries[]\,33$}\label{eq:messySignaleticSeries33}
\end{gather*}
\end{example}

\begin{remark}
\label{rem:restrictionMessySignaleticSeries}
The only subset $I \subseteq [k]$ of cardinality~$\ell$ for which the restriction~$\Res_I$ defined in \cref{rem:restrictionMessySignaleticParallel} is a surjective morphism~$\Res_I:\messySignaleticSeries[k] \mapsto \messySignaleticSeries[l]$ between messy series $k$-signaletic operads is the initial subset~$[\ell]$. We denote this morphism by~$\Res^k_\ell$. Its action on destination vectors or operations amounts to keep only the first~$\ell$ coordinates.
\end{remark}

\begin{remark}
\label{rem:extensionMessySignaleticSeries}
In contrast to \cref{rem:extensionMessySignaleticParallel}, the map~$\Ins^{\operation[a]}_i$ inserting an operation~$\operation[a] \in \{\op{l}, \op{r}\}$ at position~$i$ in all nodes of a syntax tree is not an operad morphism~$\messySignaleticSeries[k] \mapsto \messySignaleticSeries[k+1]$.
Counter-examples are given by:
\[
\compoR{l}{l} = \compoL{l}{l}
\qandq
\compoR{r}{r} = \compoL{r}{r},
\]
while
\[
\compoR{l,r}{l,r} \ne \compoL{l,r}{l,r}
\qandq
\compoR{r,l}{r,l} \ne \compoL{r,l}{r,l}.
\]
\end{remark}


\subsubsection{Tidy series signaletic operads}
\label{subsubsec:tidySeriesSignaleticOperads}

We now impose additional constraints on the trafic signals not contained in the series routes.
We say that a syntax tree~$\tree$ on~$\Operations_k$ is \defn{tidy series} if all the traffic signals not contained in its series routes point to the left.
Otherwise, we say that~$\tree$ is \defn{messy series}.
We define the \defn{tidy series destination vector} of a syntax tree~$\tree$ to be its series destination vector if~$\tree$ is tidy series, and to be $0$ if~$\tree$ is messy series.
Finally, we say that two syntax trees~$\tree, \tree'$ on~$\Operations_k$ are \defn{tidy series $k$-signaletic equivalent} and we write~$\tree \equiv^\series \tree'$ if they have the same tidy series destination vector.
In particular, all messy series syntax trees are equivalent to the zero tree~$0$.

\begin{proposition}
The tidy series $k$-signaletic equivalence is compatible with grafting of syntax trees: $\tree \circ_i \tree[s] \equiv^\series \tree' \circ_i \tree[s]'$ for any syntax trees~$\tree \equiv^\series \tree'$ of arity~$p$ and $\tree[s] \equiv^\series \tree[s]'$ of arity~$q$, and any~$i \in [p]$.
\end{proposition}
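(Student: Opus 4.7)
The plan is to follow the same strategy as the parallel case: I will derive an explicit formula for the tidy series destination vector $\ind{r}$ of $\tree \circ_i \tree[s]$ in terms of the tidy series destination vectors $\ind{p}$ of $\tree$ and $\ind{q}$ of $\tree[s]$ depending only on these vectors (and on $i$ and $q$). Once such a formula is in hand, compatibility with grafting follows at once, mimicking what is done in the parallel case via \cref{eq:tidySignaleticParallelDestinations}.

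Assuming $\ind{p} \ne 0$ and $\ind{q} \ne 0$, let $J \eqdef \{j \in [k] : \ind{p}_j = i\} = \{j_1 < \cdots < j_r\}$ be the cars of $\tree$ reaching leaf $i$. In the composed tree these are exactly the cars entering $\tree[s]$, arriving at its root in the order $j_1, \dots, j_r$. My central alignment claim is that for each $l \in [r]$, the $l$-th car of $J$ traverses $\tree[s]$ inside $\tree \circ_i \tree[s]$ along exactly the route followed by the $l$-th car of $[k]$ inside $\tree[s]$ standalone. I prove this by induction on the depth in $\tree[s]$: at any node $v$ reached in both configurations, for $l \le r$ the arrival rank of car $l$ at $v$ equals $|\{l' \le l : \text{car } l' \text{ visits } v \text{ in the standalone}\}|$ in both cases (since any $l' \le l \le r$ automatically lies in $[r]$), so car $l$ reads the same leftmost remaining letter of the signal at $v$ in both. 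In particular car $j_l$ ends at position $\ind{q}_l$ in $\tree[s]$, and the messy series formula \cref{eq:messySignaleticSeriesDestinations} governs each coordinate of $\ind{r}$ whenever $\ind{r} \ne 0$.

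Turning to the tidy constraint, inside $\tree$ all car routes are untouched by the grafting, so tidyness of $\tree$ transfers. Inside $\tree[s]$, let $L_v \subseteq [k]$ denote the set of cars visiting $v$ in $\tree[s]$ standalone. By the alignment, the cars visiting $v$ in the composed tree are indexed by $L_v \cap [r]$ and consume precisely the letters $s_1, \dots, s_{|L_v \cap [r]|}$ at $v$. The letters $s_a$ with $a > |L_v|$ are already forced to point left by tidyness of $\tree[s]$; the new tidy constraint in the composed tree is that the letters $s_a$ with $|L_v \cap [r]| < a \le |L_v|$ also point left. These are exactly the letters consumed in the standalone by cars $c > r$ visiting $v$, so demanding they all point left at every node is equivalent to requiring that cars $r+1, \dots, k$ in $\tree[s]$ standalone reach the leftmost leaf, that is, $\ind{q}_{r+1} = \cdots = \ind{q}_k = 1$.

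Combining everything, the tidy series destination vector of $\tree \circ_i \tree[s]$ vanishes unless $\ind{p} \ne 0$, $\ind{q} \ne 0$, and $\ind{q}_{r+1} = \cdots = \ind{q}_k = 1$, in which case it coincides with \cref{eq:messySignaleticSeriesDestinations}. Since this description depends only on $\ind{p}$ and $\ind{q}$ (and on $i$, $q$), grafting respects the tidy series equivalence, proving the proposition. The only delicate step is the alignment induction; once that is in place, the rest is a direct bookkeeping of which leftmost letters are consumed at each node of $\tree[s]$ once only the cars of $J$ enter.
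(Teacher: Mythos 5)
Your proof is correct and follows essentially the same route as the paper: the paper's proof simply asserts the composition formula \eqref{eq:tidySignaleticSeriesDestinations} together with the vanishing condition $\ind{q}_\ell = 1$ for all $|\ind{p}|_i < \ell \le k$, which is exactly the formula you derive. Your alignment induction and the analysis of which letters are consumed at each node of $\tree[s]$ supply the details the paper leaves implicit, but the underlying argument is the same.
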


\begin{proof}
Let~$\ind{p}$ and $\ind{q}$ denote the tidy series destination vectors of~$\tree[t]$ and~$\tree[s]$ respectively.
For~$i \in [p]$, we denote by~$|\ind{p}|_i \eqdef |\set{\ell \in [k]}{\ind{p}_\ell = i}|$ the number of occurrences of~$i$ in~$\ind{p}$.
Then the tidy series destination vector~$\ind{r}$ of~$\tree[t] \circ_i \tree[s]$ is $0$ unless $\ind{q}_\ell = 1 \text{ for all } |\ind{p}|_i < \ell \le k$, and if so, is equal to the messy series destination vector, that is for all~$j \in [k]$,
\begin{equation}
\label{eq:tidySignaleticSeriesDestinations}
\ind{r}_j = 
\begin{cases}
\ind{p}_j & \text{if } \ind{p}_j < i, \\
\ind{p}_j + \ind{q}_{|\set{\ell \le j}{\ind{p}_\ell = i}|} - 1 & \text{if } \ind{p}_j = i, \\
\ind{p}_j + q - 1 & \text{if } \ind{p}_j > i.\\
\end{cases}
\qedhere
\end{equation}
\end{proof}

Here are some examples of \cref{eq:tidySignaleticSeriesDestinations}: for $\ind{p} \eqdef \destVect{2, 1, 4, 2, 4}{5}$ and $\ind{q} \eqdef \destVect{3, 5, 1, 1, 1}{6}$, we have $\ind{p} \circ_1 \ind{q} = \ind{p} \circ_3 \ind{q} = \ind{p} \circ_5 \ind{q} = 0$, $\ind{p} \circ_2 \ind{q} = \destVect{4, 1, 9, 6, 9}{10}$, $\ind{p} \circ_4 \ind{q} = \destVect{2, 1, 6, 2, 8}{10}$.

\begin{definition}
The \defn{tidy series $k$-signaletic operad}~$\tidySignaleticSeries$ is the quotient of the free operad on~${\Operations_k \eqdef \{\op{l}, \op{r}\}^k}$ by the tidy series $k$-signaletic equivalence.
\end{definition}

As already mentioned, we will see in \cref{subsec:HilbertSeriesKoszulitySignaletic} that the tidy series $k$-signaletic operad~$\tidySignaleticSeries$ is quadratic and Koszul. We now extract from the definitions the quadratic relations which provide a presentation of~$\tidySignaleticSeries$.

\begin{definition}
\label{def:tidySignaleticRelationsSeries}
We call \defn{tidy series $k$-signaletic relations} all the quadratic relations of~$\tidySignaleticSeries$, that is all the relations of the form $\tree = 0$ where $\tree$ is a messy series syntax tree with two nodes (\ie of arity~$3$), and all the relations of the form $\tree_1 = \tree_2$ where $\tree_1$ and $\tree_2$ are two tidy syntax trees with two nodes (\ie of arity~$3$) sharing the same series destination vector.
\end{definition}

\begin{remark}
\label{rem:tidySignaleticRelationsSeries}
These relations can be explicitly described for any~$k$ as follows.
First, the tidy series $k$-signaletic operad~$\tidySignaleticSeries$ satisfies the quadratic relations
\[
	\begin{tikzpicture}[baseline=-.5cm, level/.style={sibling distance = .8cm, level distance = .8cm}]
		\node [rectangle, draw, minimum height=.6cm] {$\operation[a]$}
			child {node {$1$}}
			child {node [rectangle, draw, minimum height=.6cm] {$\operation[b]$}
				child {node {$2$}}
				child {node {$3$}}
			}
		;
	\end{tikzpicture}
	\!\!= 0
	\qqandqq
	\begin{tikzpicture}[baseline=-.5cm, level/.style={sibling distance = .8cm, level distance = .8cm}]
		\node [rectangle, draw, minimum height=.6cm] {$\operation[c]$}
			child {node [rectangle, draw, minimum height=.6cm] {$\operation[d]$}
				child {node {$1$}}
				child {node {$2$}}
			}
			child {node {$3$}}
		;
	\end{tikzpicture}
	\!\!= 0
\]
for any~$\operation[a], \operation[b], \operation[c], \operation[d] \in \Operations_k$ such that~$\operation[b]_i = {\op{r}}$ for some~$i > |\operation[a]|_{\op{r}}$, and~$\operation[d]_j = {\op{r}}$ for some~$j > |\operation[c]|_{\op{l}}$.
Second, for any destination vector~${\ind{p} \in [3]^k}$ of arity~$3$, the tidy series $k$-signaletic operad~$\tidySignaleticSeries$ satisfies the quadratic relation
\[
	\begin{tikzpicture}[baseline=-.5cm, level/.style={sibling distance = .8cm, level distance = .8cm}]
		\node [rectangle, draw, minimum height=.6cm] {$\operation[a]_\ind{p}$}
			child {node {$1$}}
			child {node [rectangle, draw, minimum height=.6cm] {$\operation[b]_\ind{p}$}
				child {node {$2$}}
				child {node {$3$}}
			}
		;
	\end{tikzpicture}
	=
	\begin{tikzpicture}[baseline=-.5cm, level/.style={sibling distance = .8cm, level distance = .8cm}]
		\node [rectangle, draw, minimum height=.6cm] {$\operation[c]_\ind{p}$}
			child {node [rectangle, draw, minimum height=.6cm] {$\operation[d]_\ind{p}$}
				child {node {$1$}}
				child {node {$2$}}
			}
			child {node {$3$}}
		;
	\end{tikzpicture}
\]
where~$\operation[a]_\ind{p}, \operation[b]_\ind{p}, \operation[c]_\ind{p}, \operation[d]_\ind{p} \in \Operations_k$ are defined by
\begin{gather*}
	(\operation[a]_\ind{p})_i \eqdef \begin{cases} \op{l} & \text{if } \ind{p}_i = 1 \\ \op{r} & \text{if } \ind{p}_i = 2 \\ \op{r} & \text{if } \ind{p}_i = 3 \end{cases}
	\qquad
	(\operation[b]_\ind{p})_i \eqdef \begin{cases} \op{l} & \text{if } |\ind{p}^{\{2,3\}}| < i \\ \op{l} & \text{if } (\ind{p}^{\{2,3\}})_i = 2 \\ \op{r} & \text{if } (\ind{p}^{\{2,3\}})_i = 3 \end{cases}
	\\
	(\operation[c]_\ind{p})_i \eqdef \begin{cases} \op{l} & \text{if } \ind{p}_i = 1 \\ \op{l} & \text{if } \ind{p}_i = 2 \\ \op{r} & \text{if } \ind{p}_i = 3 \end{cases}
	\qquad
	(\operation[d]_\ind{p})_i \eqdef \begin{cases} \op{l} & \text{if } (\ind{p}^{\{1,2\}})_i = 1 \\ \op{r} & \text{if } (\ind{p}^{\{1,2\}})_i = 2 \\ \op{l} & \text{if } |\ind{p}^{\{1,2\}}| < i \end{cases}
\end{gather*}
for all~$i \in [k]$.
Again for any~$L \subseteq [3]$, we have denoted by~$\ind{p}^L$ the subword of~$\ind{p}$ consisting only of the letters which belong to~$L$, and for any~$x \in L$ the condition~$(\ind{p}^L)_i = x$ implicitly assume that~$\ind{p}^L$ has length at least~$i$.
Again, there are $2^{1+2k}-3^k$ independent tidy series $k$-signaletic relations among syntax trees of arity~$3$ on~$\Operations_k$.
\end{remark}

\begin{example}[Tidy series $0$-, $1$- and~$2$-signaletic relations]
The tidy series $0$-signaletic relation is the associative relation:
\allowdisplaybreaks
\begin{gather}
\compoR{n}{n} = \compoL{n}{n}. \tag{$\tidySignaleticSeries[]\,.$}\label{eq:tidySignaleticSeries0}
\end{gather}
In other words, the tidy series $0$-signaletic operad~$\tidySignaleticSeries[0]$ is just the associative operad~$\As$.

The tidy series $1$-signaletic relations are the~$5$ twisted dual duplicial relations:
\begin{gather*}
\compoR{l}{l} = \compoL{l}{l} \qandq \compoR{l}{r} = 0, \tag{$\tidySignaleticSeries[]\,1$}\label{eq:tidySignaleticSeries1} \\[-.1cm]
\compoR{r}{l} = \compoL{r}{l}, \tag{$\tidySignaleticSeries[]\,2$}\label{eq:tidySignaleticSeries2} \\[-.1cm]
\compoR{r}{r} = \compoL{l}{r} \qandq \compoL{r}{r} = 0. \tag{$\tidySignaleticSeries[]\,3$}\label{eq:tidySignaleticSeries3}
\end{gather*}
In other words, the tidy series $1$-signaletic operad~$\tidySignaleticSeries[1]$ is just the twisted dual duplicial operad~$\Dupdual_{\op{l}}$.

The tidy series $2$-signaletic relations are the following~$23$ relations:
\allowdisplaybreaks
\begin{gather*}
\compoR{l,l}{l,l} = \compoL{l,l}{l,l} \qandq \compoR{l,l}{l,r} = \compoR{l,l}{r,l} = \compoR{l,l}{r,r} = 0, \tag{$\tidySignaleticSeries[]\,11$}\label{eq:tidySignaleticSeries11} \\[-.1cm]
\compoR{l,r}{l,l} = \compoL{l,r}{l,l} \qandq \compoR{l,r}{l,r} = 0, \tag{$\tidySignaleticSeries[]\,12$}\label{eq:tidySignaleticSeries12} \\[-.1cm]
\compoR{l,r}{r,l} = \compoL{l,l}{l,r} \qandq \compoR{l,r}{r,r} = \compoL{l,r}{l,r} = 0, \tag{$\tidySignaleticSeries[]\,13$}\label{eq:tidySignaleticSeries13} \\[-.1cm]
\compoR{r,l}{l,l} = \compoL{r,l}{l,l} \qandq \compoR{r,l}{l,r} = 0, \tag{$\tidySignaleticSeries[]\,21$}\label{eq:tidySignaleticSeries21} \\[-.1cm]
\compoR{r,r}{l,l} = \compoL{r,r}{l,l}, \tag{$\tidySignaleticSeries[]\,22$}\label{eq:tidySignaleticSeries22} \\[-.1cm]
\compoR{r,r}{l,r} = \compoL{r,l}{l,r} \qandq \compoL{r,r}{l,r} = 0, \tag{$\tidySignaleticSeries[]\,23$}\label{eq:tidySignaleticSeries23} \\[-.1cm]
\compoR{r,l}{r,l} = \compoL{l,l}{r,l} \qandq \compoR{r,l}{r,r} = \compoL{l,r}{r,l} = 0, \tag{$\tidySignaleticSeries[]\,31$}\label{eq:tidySignaleticSeries31} \\[-.1cm]
\compoR{r,r}{r,l} = \compoL{r,l}{r,l} \qandq \compoL{r,r}{r,l} = 0, \tag{$\tidySignaleticSeries[]\,32$}\label{eq:tidySignaleticSeries32} \\[-.1cm]
\compoR{r,r}{r,r} = \compoL{l,l}{r,r} \qandq \compoL{l,r}{r,r} = \compoL{r,l}{r,r} = \compoL{r,r}{r,r} = 0. \tag{$\tidySignaleticSeries[]\,33$}\label{eq:tidySignaleticSeries33}
\end{gather*}
\end{example}

\begin{remark}
\label{rem:restrictionTidySignaleticSeries}
As in~\cref{rem:restrictionMessySignaleticSeries}, there is a surjective restriction morphism~${\Res^k_\ell : \tidySignaleticSeries[k] \mapsto \tidySignaleticSeries[l]}$.
\end{remark}

\begin{remark}
\label{rem:extensionTidySignaleticSeries}
Similarly to \cref{rem:extensionMessySignaleticSeries}, the maps~$\Ins^{\op{l}}_i$ and $\Ins^{{\op{l}}+{\op{r}}}_i$ are not operad morphisms $\tidySignaleticSeries[k] \mapsto \tidySignaleticSeries[k+1]$. For instance, the reader can check that when~$k = 1$, for any~${i \in \{1,2\}}$ and $\operation[a] \in \{{\op{l}}, {\op{l}}+{\op{r}}\}$, the morphism relation is not verified on ${\op{r}} \circ_1 {\op{l}}$.
\end{remark}


\subsection{Hilbert series, presentation and Koszulity}
\label{subsec:HilbertSeriesKoszulitySignaletic}

In this section, we give the Hilbert series and prove the quadratic presentation and Koszulity of all signaletic operads, using an approach which does not depend on whether the cars arrive in series or parallel, and whether the signals outside the routes of the cars are pointing or not to the left. We will therefore speak here about the signaletic operad and signaletic relations without further precision concerning series/parallel, nor messy/tidy. We therefore write~$\signaletic$ to denote without distinction any of the four $k$-signaletic operads~$\messySignaleticParallel$, $\tidySignaleticParallel$, $\messySignaleticSeries$, or~$\tidySignaleticSeries$.


\subsubsection{Hilbert series}
\label{subsubsec:HilberSeriesSignaletic}

By definition, the basis of~$\signaletic$ is given by syntax trees on~$\Operations_k$ modulo the $k$-signaletic equivalence, and the composition is given by grafting syntax trees. Alternatively, we can represent the basis of~$\signaletic$ using destination vectors: namely, ${\signaletic = \bigoplus_{p \ge 1} \signaletic(p)}$ where~${\signaletic(p) = \K \langle [p]^k \rangle}$ represent all possible destination vectors in a syntax tree on~$\Operations_k$ with arity~$p$, and the composition rule is described by the rules given in \cref{eq:messySignaleticParallelDestinations,eq:tidySignaleticParallelDestinations,eq:messySignaleticSeriesDestinations,eq:tidySignaleticSeriesDestinations}. Moreover, all destination vectors can clearly be obtained from a syntax tree. This shows the following statement.

\begin{corollary}
\label{coro:HilbertSignaletic}
The Hilbert series of the (messy or tidy, parallel or series) $k$-signaletic operad~$\signaletic$ is given by
\[
\Hilbert_{\signaletic}(t) = \sum_{p \ge 1} p^k t^p = \frac{t \cdot \EulPol{k}(t)}{(1-t)^{k+1}},
\]
where~$\EulPol{k}(t) \eqdef \sum\limits_{p = 0}^{k-1} \EulNum{k}{p} t^p$ and~$\EulNum{k}{p}$ is the number of permutations of~$\fS_k$ with precisely $p$ descents.
In particular, we have~$\Hilbert_{\signaletic[k+1]}(t) = t \Hilbert'_{\signaletic}(t)$.
\end{corollary}


\subsubsection{Signaletic Tamari order}
\label{subsubsec:signaleticTamariOrder}

In order to show that the $k$-signaletic operads are Koszul, we will need to orient the $k$-signaletic relations into a well-chosen rewriting system.
For this, we will be guided by the following natural generalization of the Tamari lattice of \cref{subsec:TamariLattice} to the syntax trees on~$\Operations_k$.

\begin{definition}
\label{def:signaleticTamariOrder}
The \defn{$n$-th $k$-signaletic Tamari order} is the partial order on the set~$\Syntax[\Operations_k](n)$ of syntax trees of arity $n$ defined by $\tree[s] \le \tree[t]$ if
\begin{itemize}
\item the shape of $\tree[s]$ is Tamari strictly smaller than the shape of $\tree[t]$
\item or $\tree[s]$ and $\tree[t]$ have the same shape and each letter in $\tree[s]$ is smaller than or equal to the corresponding letter in $\tree[t]$ for the order ${\op{r}} \le {\op{l}}$.
\end{itemize}
\end{definition}

In other words, the $k$-signaletic Tamari order is isomorphic to the lexicographic product of the Tamari order of \cref{def:TamariLattice} with the boolean lattice obtained as the $k(n-1)$-fold power of~${\op{r}} \le {\op{l}}$.

We will show in \cref{lem:maximumSignaleticTamari} that the following syntax trees are the maximums of the $k$-signaletic Tamari order in their signaletic equivalence classes.

\begin{definition}
\label{def:rightSignaleticComb}
A \defn{right $k$-signaletic comb} is a syntax tree on~$\Operations_k \eqdef \{\op{l}, \op{r}\}^k$ where the left child of each node is empty and each signal which is not visited by a car points to the left (\ie it is tidy).
\end{definition}

For instance, the quadratic right $k$-signaletic combs are
\medskip
\[
\compoR{l}{*} \qquad \compoR{r}{l} \qqandqq \compoR{r}{r}
\]
for the $1$-signaletic operads,
\medskip
\[
\compoR{l,l}{*,*} \; \compoR{l,r}{*,l} \; \compoR{l,r}{*,r} \;
\compoR{r,l}{l,*} \; \compoR{r,r}{l,l} \; \compoR{r,r}{l,r} \;
\compoR{r,l}{r,*} \; \compoR{r,r}{r,l} \; \compoR{r,r}{r,r}
\]
for the parallel $2$-signaletic operads, and
\medskip
\[
\compoR{l,l}{*,*} \; \compoR{l,r}{l,*} \; \compoR{l,r}{r,*} \;
\compoR{r,l}{l,*} \; \compoR{r,r}{l,l} \; \compoR{r,r}{l,r} \;
\compoR{r,l}{r,*} \; \compoR{r,r}{r,l} \; \compoR{r,r}{r,r}
\]
for the series $2$-signaletic operads.
The unvisited signals are colored in gray, and they all point to the left.

Note that the right $k$-signaletic combs are distinct in the series or parallel signaletic operad:
\medskip
\[ 
\compoR{l,r}{l,r} \ne \compoR{l,r}{r,l}
\]
are respectively the parallel and series right $2$-signaletic combs with destination vector~$\destVect{1,3}{3}$. However, the right $k$-signaletic combs are counted in both cases by the number~$p^k$ of possible signaletic destination vectors, as shown by the following statement.

\begin{lemma}
\label{lem:bijectionCombsDestinations}
The map sending the right $k$-signaletic combs to their destination vectors is~\mbox{bijective}.
\end{lemma}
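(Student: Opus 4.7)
The plan is to prove bijectivity by exhibiting, for each destination vector $\destVect{d_1, \dots, d_k}{p}$, the unique right $k$-signaletic comb that realizes it. Recall that a right comb of arity $p$ consists of $p-1$ internal nodes $\nu_0, \dots, \nu_{p-2}$ along the right spine: the left child of $\nu_i$ is the leaf labeled $i+1$, the right child of $\nu_i$ is $\nu_{i+1}$ for $i<p-2$, and the right child of $\nu_{p-2}$ is leaf $p$. Hence a car at $\nu_i$ reading $\op{l}$ reaches leaf $i+1$, while reading $\op{r}$ it proceeds to $\nu_{i+1}$ (or to leaf~$p$ when $i = p-2$).

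In the parallel case, the $j$-th entry of the signal at~$\nu_i$ is read only by car~$j$, and only when~$i+1 \le d_j$. The tidiness hypothesis together with the required routing thus forces this entry to equal $\op{r}$ when $i+1 < d_j$, and $\op{l}$ when $i+1 \ge d_j$. This determines every entry of every signal unambiguously, producing a unique comb.

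In the series case, I would instead work node by node. At~$\nu_i$, the visiting cars are exactly those with~$d_c \ge i+1$; enumerated in increasing order of index as~$c_1 < \dots < c_{m_i}$, car~$c_j$ reads the $j$-th entry of the signal at~$\nu_i$. This forces the first $m_i$ entries: the $j$-th equals $\op{l}$ when $d_{c_j} = i+1$ and $\op{r}$ when $d_{c_j} \ge i+2$. The remaining $k - m_i$ entries are unvisited and must equal $\op{l}$ by tidiness, yielding once more a unique signal vector at each node.

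In either regime, the comb thus constructed is tidy by design and one checks directly that it realizes the prescribed destination vector; conversely, the derivations above show that no other right $k$-signaletic comb can realize it. The only subtlety is the order-dependent bookkeeping in the series case; once that is unwound, both cases reduce to the same elementary check.
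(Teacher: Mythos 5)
Your proof is correct and follows essentially the same route as the paper's: the paper simply observes that the cars' prescribed destinations force the visited signals along the right comb and tidiness forces the unvisited ones to $\op{l}$. Your write-up just makes explicit the node-by-node bookkeeping (including the order-of-arrival argument in the series case) that the paper leaves implicit.
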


\begin{proof}
For a given destination vector, the corresponding right $k$-signaletic comb is obtained by letting the $k$ cars traverse the right comb in such a way that they reach their planned destination and finally complete the unvisited signals with~$\op{l}$.
\end{proof}

For example, the following picture shows respectively the series and parallel right $k$-signaletic combs corresponding to the destination vector~$\destVect{1,4}{6}$ and the destination vector~$\destVect{1,5,3}{6}$:
\medskip
\[
	\begin{array}{r@{\quad}r@{\qquad\qquad}r@{\quad}r}
		\begin{tikzpicture}[baseline=-1.2cm, level distance=7mm, sibling distance=8mm]
			\node [rectangle, draw] {$\op{l}\op{r}$}
				child {node {$1$}}
				child {node [rectangle, draw] {$\op{*}\op{r}$}
					child {node {\phantom{1}}}
					child {node [rectangle, draw] {$\op{*}\op{r}$}
						child {node {\phantom{1}}}
						child {node [rectangle, draw] {$\op{*}\op{l}$}
							child {node {$4$}}
							child {node [rectangle, draw] {$\op{*}\op{*}$}
								child {node {\phantom{1}}}
								child {node {\phantom{1}}}
							}
						}
					}
				}
			;
		\end{tikzpicture}
		&
		\begin{tikzpicture}[baseline=-1.2cm, level distance=7mm, sibling distance=8mm]
			\node [rectangle, draw] {$\op{l}\op{r}$}
				child {node {$1$}}
				child {node [rectangle, draw] {$\op{r}\op{*}$}
					child {node {\phantom{1}}}
					child {node [rectangle, draw] {$\op{r}\op{*}$}
						child {node {\phantom{1}}}
						child {node [rectangle, draw] {$\op{l}\op{*}$}
							child {node {$4$}}
							child {node [rectangle, draw] {$\op{*}\op{*}$}
								child {node {\phantom{1}}}
								child {node {\phantom{1}}}
							}
						}
					}
				}
			;
		\end{tikzpicture}
		&
		\begin{tikzpicture}[baseline=-1.2cm, level distance=7mm, sibling distance=8mm]
			\node [rectangle, draw] {$\op{l}\op{r}\op{r}$}
				child {node {$1$}}
				child {node [rectangle, draw] {$\op{*}\op{r}\op{r}$}
					child {node {\phantom{1}}}
					child {node [rectangle, draw] {$\op{*}\op{r}\op{l}$}
						child {node {$3$}}
						child {node [rectangle, draw] {$\op{*}\op{r}\op{*}$}
							child {node {\phantom{1}}}
							child {node [rectangle, draw] {$\op{*}\op{l}\op{*}$}
								child {node {$5$}}
								child {node {\phantom{1}}}
							}
						}
					}
				}
			;
		\end{tikzpicture}
		&
		\begin{tikzpicture}[baseline=-1.2cm, level distance=7mm, sibling distance=8mm]
			\node [rectangle, draw] {$\op{l}\op{r}\op{r}$}
				child {node {$1$}}
				child {node [rectangle, draw] {$\op{r}\op{r}\op{*}$}
					child {node {\phantom{1}}}
					child {node [rectangle, draw] {$\op{r}\op{l}\op{*}$}
						child {node {$3$}}
						child {node [rectangle, draw] {$\op{r}\op{*}\op{*}$}
							child {node {\phantom{1}}}
							child {node [rectangle, draw] {$\op{l}\op{*}\op{*}$}
								child {node {$5$}}
								child {node {\phantom{1}}}
							}
						}
					}
				}
			;
		\end{tikzpicture}
		\\
		\text{parallel} & \text{series} & \text{parallel} & \text{series}
	\end{array}
\]
Again, the unvisited signals are colored in gray, and they all point to the left.

\begin{lemma}
\label{lem:maximumSignaleticTamari}
In each $k$-signaletic congruence class, the right $k$-signaletic comb is the unique $k$-signaletic Tamari maximum.
\end{lemma}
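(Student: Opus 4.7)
The plan is to show that within each $k$-signaletic congruence class, every tree distinct from the right $k$-signaletic comb is strictly smaller, in the $k$-signaletic Tamari order, than some other tree of the same class. Since the order is finite, iterating will lead to a maximum, which must therefore coincide with the right $k$-signaletic comb. Combined with \cref{lem:bijectionCombsDestinations}, which guarantees exactly one right $k$-signaletic comb per class, this yields both existence and uniqueness of the Tamari maximum in each class.

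I plan to use two kinds of destination-preserving, strictly Tamari-increasing local moves on a tree $\tree$. The first is a local right rotation at a quadratic left-leaning subtree $\operation[c] \circ_1 \operation[d]$ of $\tree$: from the $k$-signaletic relations of \cref{subsubsec:messyParallelSignaleticOperads,subsubsec:tidyParallelSignaleticOperads,subsubsec:messySeriesSignaleticOperads,subsubsec:tidySeriesSignaleticOperads}, for every destination vector $\ind{p} \in [3]^k$ there is a quadratic right tree $\operation[a] \circ_2 \operation[b]$ with the same destination vector (choosing unvisited signals to be $\op{l}$ in the tidy case), so $\operation[c] \circ_1 \operation[d]$ can be replaced locally by a signaletic-equivalent right tree. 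Since the composition formulas \eqref{eq:messySignaleticParallelDestinations}, \eqref{eq:tidySignaleticParallelDestinations}, \eqref{eq:messySignaleticSeriesDestinations}, \eqref{eq:tidySignaleticSeriesDestinations} express the full destination vector in terms of the destination vectors of the factors, this replacement preserves the destination of $\tree$; and the shape gets strictly larger in the classical Tamari order, hence strictly larger in the $k$-signaletic Tamari order.

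The second move applies once $\tree$ has right-comb shape: if some signal not visited by any car points to $\op{r}$, flip it to $\op{l}$. In the messy case, no car traverses this signal, so the flip preserves every car's route and hence the destination vector. In the tidy case this step never arises, since every tree in a non-zero tidy class already has all unvisited signals pointing to $\op{l}$. The shape is unchanged while the signals increase letterwise in the order $\op{r} < \op{l}$, so the resulting tree is strictly larger in the $k$-signaletic Tamari order.

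The iteration terminates because the $k$-signaletic Tamari order on arity-$n$ trees on $\Operations_k$ is a finite poset, and the only trees on which neither move applies are exactly those in right-comb shape with all unvisited signals pointing to $\op{l}$, namely the right $k$-signaletic combs. The step I expect to need the most careful verification, uniformly across the four flavors (parallel/series, messy/tidy), is the availability of a destination-preserving right rotation at every left-leaning quadratic subtree; this follows by reading off matching left and right trees in the explicit enumeration of the $2^{1+2k} - 3^k$ quadratic $k$-signaletic relations.
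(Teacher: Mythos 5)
Your argument is correct, but it takes a genuinely different route from the paper's. The paper proves the lemma by a single direct comparison: given any tree $\tree$ and the right $k$-signaletic comb $\tree[c]$ with the same destination vector, either their shapes differ, in which case the shape of $\tree$ is Tamari-smaller because the right comb shape is the top of the Tamari lattice, or both are right combs, in which case equality of destination vectors forces the visited signals to coincide and the unvisited signals of $\tree[c]$ are all $\op{l}$, the maximal letter. Your proof instead builds a strictly increasing, destination-preserving chain of local moves (right rotations at left-leaning quadratic subtrees, then flips of unvisited signals from $\op{r}$ to $\op{l}$) terminating at the right comb. This is valid, but it essentially re-proves, inside this lemma, the content that the paper isolates as \cref{lem:rewritingIncreasesSignaleticTamari} and \cref{lem:signaleticCombs} and only needs later for \cref{coro:signaleticNormalForms}; the direct comparison is shorter and does not require verifying the availability of a destination-preserving rotation at every left-leaning quadratic subtree. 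What your approach buys is that it simultaneously establishes convergence of the rewriting toward the comb, so it packages more of \cref{subsubsec:rewritingSystemSignaletic} into one argument.

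One point you should make explicit in the tidy setting: for the local rotation to have a tidy right-leaning replacement with the same destination vector, the left-leaning quadratic subtree must itself be non-zero as a standalone arity-$3$ tree. This does hold, because the tidy equivalence is compatible with grafting and any composition involving the zero element is zero, so a tree lying in a non-zero class cannot contain a messy quadratic pattern; but as written your proposal silently assumes it. Also note that both your statement and the paper's implicitly exclude the zero class in the tidy case, which contains no right $k$-signaletic comb.
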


\begin{proof}
Consider a syntax tree~$\tree$ and a right $k$-signaletic comb~$\tree[c]$ with the same arity and destination vector.
If~$\tree$ is not a right comb, then the shape of~$\tree$ is Tamari smaller than the shape of~$\tree[c]$, so that~$\tree$ is $k$-signaletic Tamari smaller than~$\tree[c]$.
Otherwise, since~$\tree$ and~$\tree[c]$ have the same destination vector, the signals visited by the cars coincide in the two syntax trees.
As all unvisited signals in~$\tree[c]$ point to the left, and~${\op{r}} \le {\op{l}}$, we obtain that~$\tree$ is $k$-signaletic Tamari smaller than~$\tree[c]$.
\end{proof}


\subsubsection{Rewriting system}
\label{subsubsec:rewritingSystemSignaletic}

We now orient the $k$-signaletic relations of \cref{def:messySignaleticRelationsParallel,def:tidySignaleticRelationsParallel,def:messySignaleticRelationsSeries,def:tidySignaleticRelationsSeries} according to the signaletic Tamari order of \cref{subsubsec:signaleticTamariOrder}.

\begin{definition}
The \defn{$k$-signaletic rewriting system} is the set of rewriting rules~$(\tree[s],\tree[c])$ where~$\tree[s]$ and~$\tree[c]$ are two distinct quadratic trees with the same destination vector and~$\tree[c]$ is a right $k$-signaletic comb.
\end{definition}

\begin{lemma}
\label{lem:rewritingIncreasesSignaleticTamari}
Given any rewriting $\tree[t] \to \tree[p]$, we have $\tree[t] < \tree[p]$ in the signaletic Tamari order.
\end{lemma}

\begin{proof}
Call $(\tree[s],\tree[q])$ the rule that was applied. We distinguish two cases:
\begin{itemize}
\item If~$\tree[s]$ and~$\tree[q]$ have distinct shapes then $\tree[s]$ and $\tree[q]$ are left (resp. right) quadratic trees. Consequently, the shape of~$\tree[p]$ is obtained from the shape of~$\tree[t]$ by a right Tamari rotation so that~$\tree[t] < \tree[p]$.
\item If $\tree[s]$ and $\tree[q]$ have the same shape then they are both right quadratic trees and the rewriting changed some $\op{r}$ to $\op{l}$ in the bottom node. Again~$\tree[t] < \tree[p]$.
\qedhere
\end{itemize}
\end{proof}

\begin{corollary}
\label{coro:rewritingTermination}
The $k$-signaletic rewriting system terminates.
\end{corollary}

\begin{proof}
The $k$-signaletic rewriting system is strictly increasing for a finite order.
\end{proof}


\subsubsection{Normal forms}
\label{subsubsec:normalFormSignaletic}

According to our choice of rewriting system, the quadratic normal forms are precisely the quadratic right $k$-signaletic combs.
We aim to prove that this property holds for any arity.

\begin{lemma}
\label{lem:signaleticCombs}
Any syntax tree on~$\Operations_k \eqdef \{\op{l}, \op{r}\}^k$ can be rewritten to a right $k$-signaletic comb by the $k$-signaletic rewriting system.
\end{lemma}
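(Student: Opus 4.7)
The plan is to leverage termination (Corollary~\ref{coro:rewritingTermination}) and reduce the lemma to showing that every (nonzero) normal form of the rewriting system is a right $k$-signaletic comb; equivalently, that any tree which is not a right $k$-signaletic comb contains a rewritable quadratic subtree. If $\tree$ is not of right-comb shape, then some internal node has an internal left child, and the corresponding two-node subtree (failing to have right-comb shape) is not a right $k$-signaletic comb, hence is rewritable by the rule associated with its destination vector.

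It remains to treat the case where $\tree$ has right-comb shape, with spine $v_1 \to v_2 \to \cdots$\,. I would proceed by induction on $i$ to show that, assuming each quadratic subtree $(v_i, v_{i+1})$ is a right $k$-signaletic comb, the \emph{global} right-comb condition holds at every $v_i$ (every letter of $v_i$'s signal not read in the global routing of $\tree$ points to the left). The base case $i=1$ is trivial, since all $k$ cars enter the root $v_1$. For the inductive step, the key identity is that, under the induction hypothesis at $v_i$, the number of $\op{r}$s in $v_i$'s signal equals the number of cars globally proceeding from $v_i$ to $v_{i+1}$; the local right-comb condition at $(v_i, v_{i+1})$ then translates directly into the global right-comb condition at $v_{i+1}$.

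The main obstacle is the series case: the relationship between locally-read and globally-read letters of $v_i$ depends on the letter consumption performed by cars at ancestors of $v_i$, so the induction hypothesis at $v_i$ is precisely what is needed to match the local count $|L_i|$ (number of $\op{r}$s among all $k$ letters of $v_i$) with the global count $|C_{i+1}|$ (number of cars reaching $v_{i+1}$). In the parallel case each coordinate is independent, making the propagation formally simpler. In the tidy variants, any messy quadratic subtree rewrites to $0$, so a nonzero normal form must contain only tidy quadratic subtrees and the argument above still applies verbatim.
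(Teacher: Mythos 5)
Your proof is correct and follows essentially the same route as the paper's: eliminate left children via rotation rules to reach right-comb shape, then propagate tidiness from the root down the spine. The paper phrases this as an explicit top-to-bottom rewriting strategy rather than via termination plus a characterization of normal forms, but the content is the same; your counting identity matching $|v_i|_{\op{r}}$ with the number of cars reaching $v_{i+1}$ just makes explicit the step the paper leaves implicit in the series case.
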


\begin{proof}
First of all, if a node has an edge to a non-leaf left child, then there is a rewriting rule which rotates this edge. Therefore, any syntax tree can be rewritten to a syntax tree with the shape of a right comb. As a consequence, it is sufficient to show that any syntax tree $\tree$ whose shape is a right comb can be ultimately rewritten to the unique right $k$-signaletic comb with the same destination vector as $\tree$. The solution is to start from the root of the right comb, and to perform the rewritings from top to bottom. This replaces all the unvisited signs by left signs leading to the right $k$-signaletic comb.
\end{proof}

\begin{corollary}
\label{coro:signaleticNormalForms}
The right $k$-signaletic combs are precisely the normal forms of the $k$-signaletic quadratic rewriting system.
\end{corollary}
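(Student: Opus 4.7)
The plan is to establish the two inclusions separately. For the easy direction, I would invoke \cref{lem:signaleticCombs} directly: any syntax tree can be rewritten to a right $k$-signaletic comb by the $k$-signaletic rewriting system. If $\tree$ is a normal form, then no rule can be applied to it, so the only rewriting sequence starting at $\tree$ is the empty one, and hence $\tree$ must coincide with the right $k$-signaletic comb produced by the lemma.

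For the other direction, I would show that every right $k$-signaletic comb~$\tree$ is a normal form by checking that each of its quadratic subtrees is itself a right $k$-signaletic comb. Since by definition the left-hand side~$\tree[s]$ of any rewriting rule~$(\tree[s], \tree[c])$ is distinct from the unique right $k$-signaletic comb~$\tree[c]$ sharing its destination vector (\cref{lem:bijectionCombsDestinations}), this will guarantee that no rule applies to~$\tree$. The right-comb shape is inherited because every internal node of~$\tree$ has a leaf as left child, so every quadratic subtree automatically has right-comb shape. For the tidiness condition, the key observation is monotonicity of visited signals: in a quadratic subtree rooted at a node~$P$ with right child~$N$, all $k$ cars enter at~$P$, whereas in the ambient tree possibly fewer cars do; it follows (in both the parallel and series rules) that the signals visited at~$N$ in the whole tree form a subset of those visited at~$N$ in the subtree, and dually that the signals unvisited at~$N$ in the subtree form a subset of those unvisited at~$N$ in~$\tree$. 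Since the latter all point left by hypothesis, so do the former, and the subtree is a right $k$-signaletic comb.

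The main technical point is the monotonicity claim for visited signals between a quadratic subtree and the ambient tree, which requires a brief case split on the circulation rule (in parallel, the visited indices at~$N$ in the subtree are exactly~$\{i : P_i = \op{r}\}$, and the corresponding set in~$\tree$ is obtained by intersecting with the indices of cars having reached~$P$; in series, the visited signals at~$N$ form a leftmost prefix whose length in the subtree dominates its length in~$\tree$ for the same reason). Once this is in place the rest is essentially bookkeeping. The messy/tidy distinction requires no extra work, since the rewriting system is uniform across the four variants and the right $k$-signaletic combs are tidy by definition.
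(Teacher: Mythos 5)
Your proof is correct, but for the harder inclusion it takes a genuinely different route from the paper. The paper disposes of the claim that right $k$-signaletic combs are normal forms in one line, by combining \cref{lem:rewritingIncreasesSignaleticTamari} (every rewriting strictly increases the $k$-signaletic Tamari order) with \cref{lem:maximumSignaleticTamari} (the right comb is the unique Tamari maximum of its destination class): a rewriting applied to a right comb would produce a strictly larger tree in the same class, contradicting maximality. You instead argue locally, showing that every quadratic subtree of a right $k$-signaletic comb is itself (as a standalone quadratic tree) a right $k$-signaletic comb, so that by \cref{lem:bijectionCombsDestinations} it can never be the left-hand side of a rule. Your monotonicity claim for visited signals is the right technical point and your case split is accurate: in parallel the visited indices at the lower node are $\set{i}{\operation[a]_i = {\op{r}}}$ intersected with the cars that actually reach the upper node, and in series they form a leftmost prefix whose length can only shrink when fewer cars arrive, so in both cases the unvisited signals of the standalone subtree are among the unvisited signals in the ambient tree and therefore point left. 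What the paper's argument buys is brevity, since the Tamari lemmas are already in place (and are needed anyway for termination); what yours buys is independence from the order structure and an explicit proof of the pattern-closure property (quadratic subtrees of right combs are right combs), which is the structural fact implicitly exploited later when decomposing admissible syntax trees for the citelangis normal forms. The first inclusion (normal forms are right combs, via \cref{lem:signaleticCombs}) is handled identically in both proofs. Neither your proof nor the paper's explicitly treats the degenerate class of messy trees with tidy destination vector $0$, but this does not affect the direction you prove in detail, since right combs are tidy and your argument only needs that their quadratic subtrees avoid all left-hand sides.
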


\begin{proof}
This follows from \cref{lem:maximumSignaleticTamari,lem:rewritingIncreasesSignaleticTamari,lem:signaleticCombs}.
\end{proof}


\subsubsection{Presentation and Koszulity}
\label{subsubsec:koszulitySignaletic}

We conclude this section with the presentation and Koszulity of the signaletic operads.

\begin{theorem}
\label{thm:signaleticOperadsQuadratic}
The $k$-signaletic operad~$\signaletic$ is quadratic and Koszul. In particular, the $k$-signaletic relations give a presentation of~$\signaletic$ and the right $k$-signaletic combs form a Poincar\'e\,--\,Birkhoff\,--\,Witt basis of~$\signaletic$.
\end{theorem}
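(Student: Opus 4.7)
The plan is to apply the standard PBW/dimension-count strategy, building on the ingredients already assembled in \cref{subsubsec:signaleticTamariOrder,subsubsec:rewritingSystemSignaletic,subsubsec:normalFormSignaletic}. I would introduce the auxiliary operad $\Operad$ defined as the free operad on $\Operations_k = \{\op{l}, \op{r}\}^k$ modulo the quadratic $k$-signaletic relations (in the appropriate flavour among \cref{def:messySignaleticRelationsParallel,def:tidySignaleticRelationsParallel,def:messySignaleticRelationsSeries,def:tidySignaleticRelationsSeries}). Since these relations hold by definition in $\signaletic$, there is a canonical surjective operad morphism $\phi \colon \Operad \twoheadrightarrow \signaletic$, and the goal reduces to showing that $\phi$ is an isomorphism and that the right $k$-signaletic combs form a Poincaré--Birkhoff--Witt basis.

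The core argument will be a dimension squeeze. By \cref{coro:rewritingTermination} the rewriting system terminates, so every element of $\Operad(p)$ rewrites to a linear combination of normal forms. \cref{lem:signaleticCombs,coro:signaleticNormalForms} identify these normal forms with the right $k$-signaletic combs of arity $p$, and \cref{lem:bijectionCombsDestinations} shows there are exactly $p^k$ of them, whence $\dim_\K \Operad(p) \le p^k$. On the other hand \cref{coro:HilbertSignaletic} gives $\dim_\K \signaletic(p) = p^k$, and surjectivity of $\phi$ forces the two inequalities to collapse to equalities. Thus $\phi$ is an isomorphism, the stated relations do provide a quadratic presentation of $\signaletic$, and the right $k$-signaletic combs are a linear basis.

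Once dimensions match, confluence of the rewriting system is automatic: two distinct normal-form expressions of the same tree would contradict the linear independence of the normal forms modulo the relations. The rewriting system is therefore convergent, so by the PBW criterion of Hoffbeck and Dotsenko--Khoroshkin recalled in \cref{subsec:Koszul}, $\signaletic$ is Koszul with PBW basis given by the right $k$-signaletic combs. The point that was carefully engineered, and which I expect to be the only subtle ingredient, is that the entire argument is uniform across the four variants: the orientation by the signaletic Tamari order of \cref{def:signaleticTamariOrder}, the strict increase along rewritings in \cref{lem:rewritingIncreasesSignaleticTamari}, and the explicit reduction procedure of \cref{lem:signaleticCombs} (first rotate to a right-comb shape, then sanitize the unvisited signals from the root downwards so as not to undo earlier rewritings) all proceed identically regardless of messy/tidy or parallel/series, and feed the same dimension count.
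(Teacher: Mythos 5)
Your proposal is correct and follows essentially the same route as the paper: identify the normal forms of the terminating rewriting system with the right $k$-signaletic combs, count them against the known dimension $p^k$ of $\signaletic(p)$ from \cref{coro:HilbertSignaletic}, and deduce convergence (hence the quadratic presentation and Koszulity via the PBW criterion) from the resulting dimension squeeze. The only difference is presentational — you make the surjection $\phi$ and the "spanning plus matching dimensions implies independence implies confluence" step explicit, which the paper compresses into one sentence.
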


\begin{proof}
By \cref{coro:signaleticNormalForms}, the right $k$-signaletic combs are the normal forms of the $k$-signaletic quadratic rewriting system. Since the number of right $k$-signaletic combs of arity~$p$ is the dimension of the homogeneous component of degree~$p$ of the $k$-signaletic operad, we obtain that the $k$-signaletic rewriting system is convergent. This shows that the $k$-signaletic operads are quadratic and Koszul by definition, and that the right $k$-signaletic combs form a Poincar\'e\,--\,Birkhoff\,--\,Witt basis.
\end{proof}

\begin{remark}
The result of \cref{thm:signaleticOperadsQuadratic} was already partially established for messy $k$-signaletic operads: see~\cite{Loday-dialgebras} for the diassociative operad~$\Dias$ (\ie the $1$-signaletic operad), \cite{Foissy, Vatne} for the operad~$\Quad^\koszul$ (\ie the messy parallel $2$-signaletic operad, see \cref{rem:messyCitelangisParallelOperadManin}), and \cite{Vatne} for higher black Manin products of the diassociative operad~$\Dias$ (\ie the messy parallel $k$-signaletic operads, see \cref{subsec:ManinProducts,prop:messyParallelSignaleticManinProduct}).
\end{remark}

\begin{remark}
Note that in the tidy situation, we actually used a slight extension of the notion of set-operad.
Namely, we included a zero element~$\operation[o]_d$ in~$\Operad(d)$ for each degree~$d$ such that any composition involving some~$\operation[o]_d$ results in some $\operation[o]_{d'}$.
Then the Koszulity of the set-operad is equivalent to the Koszulity of the linearized operad when $\operation[o]_d$ is actually the zero of the vector space~$\Operad(d)$.
\medskip
\end{remark}


\newpage
\section{Citelangis operads}
\label{sec:citelangisOperads}

We now introduce the citelangis operads, which are defined as the Koszul duals of the signaletic operads presented in \cref{sec:signaleticOperads}.
In this section, we describe their presentation in terms of generators and relations and discuss properties of their Hilbert series.
We show in particular that some citelangis operads where already considered in the literature: the messy parallel $2$-citelangis algebras are the quadrialgebras of~\cite{AguiarLoday, Foissy} and the messy series $k$-citelangis algebras are the $k$-twistiform algebras of~\cite{Pilaud-brickAlgebra}. 
Combinatorial models and actions of the citelangis operads will be discussed later in \cref{sec:actions}.


\subsection{Parallel citelangis operads}
\label{subsec:parallelCitelangisOperads}

We start with the Koszul duals of the parallel signaletic operads, and we treat separately the messy and tidy situations.


\subsubsection{Messy parallel citelangis operads}
\label{subsubsec:messyParallelCitelangisOperads}

We start with the messy setting.

\begin{definition}
The \defn{messy parallel $k$-citelangis operad}~$\messyCitelangisParallel$ is the Koszul dual of the messy parallel $k$-signaletic operad:
\[
\messyCitelangisParallel \eqdef (\messySignaleticParallel)^\koszul
\]
\end{definition}

From \cref{prop:messyParallelSignaleticManinProduct,prop:dendDiass}\,(ii), we derive the following statement.

\begin{proposition}
\label{prop:messyParallelCitelangisManinProduct}
For any two integers $k$ and $l$, we have
\[
\messyCitelangisParallel = \Dend^{\whiteManin k} = \Dend^{\blackManin k}
\qqandqq
\messyCitelangisParallel[k+l] = \messyCitelangisParallel[k]\blackManin\messyCitelangisParallel[l].
\]
\end{proposition}

\begin{remark}
\label{rem:messyCitelangisParallelOperadManin}
Besides the well-known associative and dendriform algebras, some special cases of messy parallel $k$-citelangis algebras were specifically studied in the literature:
\begin{itemize}
\item for~$k = 2$, the messy parallel $2$-citelangis algebras are known as quadri-algebras, introduced by M.~Aguiar and J.-L.~Loday in~\cite{AguiarLoday} and studied in~\cite{Vatne, Foissy}.
\item for~$k = 3$, the messy parallel $3$-citelangis algebras are the octo-algebras of~\cite{Leroux-octo}.
\end{itemize}
\end{remark}

By \cref{thm:signaleticOperadsQuadratic}, the messy parallel $k$-citelangis operad~$\messyCitelangisParallel$ is quadratic and Koszul.
To describe its quadratic relations, it is convenient to consider sums of operations like~${\op{m}} \eqdef {\op{l}} + {\op{r}}$.

\begin{proposition}
\label{prop:messyCitelangisParallelRelations}
Consider the $3^k$ operations~$\{\op{l}, \op{m}, \op{r}\}^k$ related by
\[
\operation[b]\!\op{m}\!\operation[b'] = \operation[b]\!\op{l}\!\operation[b'] + \operation[b]\!\op{r}\!\operation[b']
\]
for all~$\operation[b],\operation[b'] \in \{\op{l}, \op{m}, \op{r}\}^*$ with~$|\operation[b]| + |\operation[b']| = k-1$. 

For any destination vector~$\ind{p} \in [3]^k$ of arity~$3$, the messy parallel $k$-citelangis operad~$\messyCitelangisParallel$ satisfies the quadratic relation
\begin{equation} \tag{$\messyCitelangisParallel[]\,\ind{p}$}\label{eq:messyCitelangisParallelp}
	\begin{tikzpicture}[baseline=-.5cm, level/.style={sibling distance = .8cm, level distance = .8cm}]
		\node [rectangle, draw, minimum height=.6cm] {$\operation[a]_\ind{p}$}
			child {node {$1$}}
			child {node [rectangle, draw, minimum height=.6cm] {$\operation[b]_\ind{p}$}
				child {node {$2$}}
				child {node {$3$}}
			}
		;
	\end{tikzpicture}
=
	\begin{tikzpicture}[baseline=-.5cm, level/.style={sibling distance = .8cm, level distance = .8cm}]
		\node [rectangle, draw, minimum height=.6cm] {$\operation[c]_\ind{p}$}
			child {node [rectangle, draw, minimum height=.6cm] {$\operation[d]_\ind{p}$}
				child {node {$1$}}
				child {node {$2$}}
			}
			child {node {$3$}}
		;
	\end{tikzpicture}
\end{equation}
where the operations~$\operation[a]_\ind{p}, \operation[b]_\ind{p}, \operation[c]_\ind{p}, \operation[d]_\ind{p} \in \{\op{l}, \op{m}, \op{r}\}^k$ are defined by
\[
	(\operation[a]_\ind{p})_i \eqdef \begin{cases} \op{l} & \!\!\text{if } \ind{p}_i = 1 \\ \op{r} & \!\!\text{if } \ind{p}_i = 2 \\ \op{r} & \!\!\text{if } \ind{p}_i = 3 \end{cases}
	\quad
	(\operation[b]_\ind{p})_i \eqdef \begin{cases} \op{m} & \!\!\text{if } \ind{p}_i = 1 \\ \op{l} & \!\!\text{if } \ind{p}_i = 2 \\ \op{r} & \!\!\text{if } \ind{p}_i = 3 \end{cases}
	\quad
	(\operation[c]_\ind{p})_i \eqdef \begin{cases} \op{l} & \!\!\text{if } \ind{p}_i = 1 \\ \op{l} & \!\!\text{if } \ind{p}_i = 2 \\ \op{r} & \!\!\text{if } \ind{p}_i = 3 \end{cases}
	\quad
	(\operation[d]_\ind{p})_i \eqdef \begin{cases} \op{l} & \!\!\text{if } \ind{p}_i = 1 \\ \op{r} & \!\!\text{if } \ind{p}_i = 2 \\ \op{m} & \!\!\text{if } \ind{p}_i = 3 \end{cases} 
\]
for any~$i \in [k]$.
Note that \cref{eq:messyCitelangisParallelp} involves $2^{|\ind{p}|_1}$ syntax trees of~$\Syntax[\Operations_k]$ on the left hand side and $2^{|\ind{p}|_3}$ syntax trees of~$\Syntax[\Operations_k]$ on the right hand side.
We call these relations the \defn{messy parallel $k$-citelangis relations}.
\end{proposition}

\begin{proof}
This follows from~\cref{lem:koszulDual,rem:messySignaleticRelationsParallel}.
\end{proof}

\begin{example}[Messy parallel $0$-, $1$- and~$2$-citelangis relations]
\label{exm:messyCitelangisParallelRelations}
The messy parallel $0$-citelangis relation is the associative relation:
\allowdisplaybreaks
\begin{gather}
\compoR{n}{n} = \compoL{n}{n}. \tag{$\messyCitelangisParallel[]\,.$}\label{eq:messyCitelangisParallel0}
\end{gather}
In other words, the messy parallel $0$-citelangis operad~$\messyCitelangisParallel[0]$ is just the associative operad~$\As$.

The messy parallel $1$-citelangis relations are the $3$ dendriform relations:

\vspace{.4cm}
\centerline{
\begin{tabular}{c@{\qquad}c@{\qquad}c}
\(\compoR{l}{m} \!\!\overset{\displaystyle\text{($\messyCitelangisParallel[]\,1$)\label{eq:messyCitelangisParallel1}}}{=}\!\! \compoL{l}{l}\) &
\(\compoR{r}{l} \!\!\overset{\displaystyle\text{($\messyCitelangisParallel[]\,2$)\label{eq:messyCitelangisParallel2}}}{=}\!\! \compoL{r}{l}\) &
\(\compoR{r}{r} \!\!\overset{\displaystyle\text{($\messyCitelangisParallel[]\,3$)\label{eq:messyCitelangisParallel3}}}{=}\!\! \compoL{m}{r}\)
\end{tabular}
}

\noindent
where
\[
{\op{m}} = {\op{l}} + {\op{r}}.
\]
In other words, the messy parallel $1$-citelangis operad~$\messyCitelangisParallel[1]$ is just the dendriform operad~$\Dend$.

The messy parallel $2$-citelangis relations are the following $9$ relations:

\vspace{.4cm}
\centerline{
\begin{tabular}{c@{\qquad}c@{\qquad}c}
\(\compoR{l,l}{m,m} \!\!\overset{\displaystyle\text{($\messyCitelangisParallel[]\,11$)\label{eq:messyCitelangisParallel11}}}{=}\!\! \compoL{l,l}{l,l}\) &
\(\compoR{l,r}{m,l} \!\!\overset{\displaystyle\text{($\messyCitelangisParallel[]\,12$)\label{eq:messyCitelangisParallel12}}}{=}\!\! \compoL{l,r}{l,l}\) &
\(\compoR{l,r}{m,r} \!\!\overset{\displaystyle\text{($\messyCitelangisParallel[]\,13$)\label{eq:messyCitelangisParallel13}}}{=}\!\! \compoL{l,m}{l,r}\) \\[.8cm]
\(\compoR{r,l}{l,m} \!\!\overset{\displaystyle\text{($\messyCitelangisParallel[]\,21$)\label{eq:messyCitelangisParallel21}}}{=}\!\! \compoL{r,l}{l,l}\) &
\(\compoR{r,r}{l,l} \!\!\overset{\displaystyle\text{($\messyCitelangisParallel[]\,22$)\label{eq:messyCitelangisParallel22}}}{=}\!\! \compoL{r,r}{l,l}\) &
\(\compoR{r,r}{l,r} \!\!\overset{\displaystyle\text{($\messyCitelangisParallel[]\,23$)\label{eq:messyCitelangisParallel23}}}{=}\!\! \compoL{r,m}{l,r}\) \\[.8cm]
\(\compoR{r,l}{r,m} \!\!\overset{\displaystyle\text{($\messyCitelangisParallel[]\,31$)\label{eq:messyCitelangisParallel31}}}{=}\!\! \compoL{m,l}{r,l}\) &
\(\compoR{r,r}{r,l} \!\!\overset{\displaystyle\text{($\messyCitelangisParallel[]\,32$)\label{eq:messyCitelangisParallel32}}}{=}\!\! \compoL{m,r}{r,l}\) &
\(\compoR{r,r}{r,r} \!\!\overset{\displaystyle\text{($\messyCitelangisParallel[]\,33$)\label{eq:messyCitelangisParallel33}}}{=}\!\! \compoL{m,m}{r,r}\)
\end{tabular}
}

\noindent
where
\begin{gather*}
{\op{m,l}} \; = \; {\op{l,l}} + {\op{r,l}}, \qquad
{\op{m,r}} \; = \; {\op{l,r}} + {\op{r,r}}, \\
{\op{l,m}} \; = \; {\op{l,l}} + {\op{l,r}}, \qquad
{\op{r,m}} \; = \; {\op{r,l}} + {\op{r,r}}, \\
\text{and}\qquad {\op{m,m}} \; = \; {\op{l,m}} + {\op{r,m}} \; = \; {\op{m,l}} + {\op{m,r}} \; = \; {\op{l,l}} + {\op{l,r}} + {\op{r,l}} + {\op{r,r}}.
\end{gather*}
These relations are those of the $\Quad$ operad, see for instance~\cite[Definition~1]{Foissy}.
The translation between our notations and that of~\cite[Definition~1]{Foissy} is the following:
\[
{\op{l,l}} = \; \nwarrow,
\qquad
{\op{l,r}} = \; \nearrow,
\qquad
{\op{r,l}} = \; \swarrow,
\qqandqq
{\op{r,r}} = \; \searrow.
\]
\end{example}

\begin{remark}
\label{rem:restrictionMessyCitelangisParallel}
For any subset $I=\{i_1 < \dots < i_\ell\}$ of $[k]$, the dual of the morphism~$\Res_I$ defined in \cref{rem:restrictionMessySignaleticParallel} is an injective operad morphism ${\Res_I^\koszul : \messyCitelangisParallel[\ell] \to \messyCitelangisParallel[k]}$. The image~$\operation[b] \eqdef \Res_I^\koszul(\operation[a])$ of an operation~$\operation[a]$ of $\messyCitelangisParallel[\ell]$ verifies $\operation[b]_{i_p} = \operation[a]_p$ if~$p \in [\ell]$ and $\operation[b]_j = {\op{m}}$ if $j \in [k] \ssm I$.
Hence, for~$\ell \le k$, there are $\binom{k}{\ell}$ natural messy parallel $\ell$-citelangis structures in a messy parallel $k$-citelangis algebra.

In particular if $\ell=1$, a messy parallel $k$-citelangis algebra~$(\algebra, \Operations_k)$ gives $k$ different dendriform algebras~$(\algebra, \op{l}\!\!_p, \op{r}\!\!_p)$ for the operations ${\op{l}\!\!_p \eqdef \op{m}\!\!^{p-1}\!\!\op{l}\op{m}\!\!^{k-p}}$ and ${\op{r}\!\!_p \eqdef \op{m}\!\!^{p-1}\!\!\op{r}\op{m}\!\!^{k-p}}$.

Another interesting particular case is $\ell = 0$ and therefore $I = \emptyset$. One then gets a morphism of operad ${\Res_\emptyset^\koszul : \As \to \messyCitelangisParallel[k]}$. In terms of algebra, this means that a messy parallel $k$-citelangis algebra~$(\algebra, \Operations_k)$ defines a structure of associative algebra~$(\algebra, {\op{m}\!\!^k})$. This can be seen by adding up all messy parallel $k$-citelangis relations of \cref{prop:messyCitelangisParallelRelations}, obtaining
\[
	\begin{tikzpicture}[baseline=-.5cm, level 1/.style={sibling distance = .8cm, level distance = .8cm}, level 2/.style={sibling distance = .6cm, level distance = .6cm}]
		\node [rectangle, draw] {$\op{m}\!\!^k$}
			child {node {}}
			child {node [rectangle, draw] {$\op{m}\!\!^k$}
				child {node {}}
				child {node {}}
			}
		;
	\end{tikzpicture}
	=
	\begin{tikzpicture}[baseline=-.5cm, level 1/.style={sibling distance = .8cm, level distance = .8cm}, level 2/.style={sibling distance = .6cm, level distance = .6cm}]
		\node [rectangle, draw] {$\op{m}\!\!^k$}
			child {node [rectangle, draw] {$\op{m}\!\!^k$}
				child {node {}}
				child {node {}}
			}
			child {node {}}
		;
	\end{tikzpicture}
\]
Reciprocally, we say that an associative algebra~$(\algebra, \product)$ admits a \defn{messy parallel $k$-citelangis structure} if it is possible to split the product~$\product$ into~$2^k$ operations~$\Operations_k$ defining a messy parallel $k$-citelangis algebra on~$\algebra$.
\end{remark}

\begin{remark}
The dual of the morphism $\Ins^{\operation[a]}_i$ where $\operation[a] \in \{\op{l}, \op{r}\}$ defined in \cref{rem:extensionMessySignaleticParallel} is a surjective morphism~$(\Ins^{\operation[a]}_i)^\koszul : \messyCitelangisParallel[k+1] \to \messyCitelangisParallel[k]$.
It kills the operations whose $i$-th entry is not an~$\operation[a]$, and deletes the $i$-th entry in the other operations.
\end{remark}


\subsubsection{Tidy parallel citelangis operads}
\label{subsubsec:tidyParallelCitelangisOperads}

We now switch to the tidy setting.

\begin{definition}
The \defn{tidy parallel $k$-citelangis operad}~$\tidyCitelangisParallel$ is the Koszul dual of the tidy parallel $k$-signaletic operad:
\[
\tidyCitelangisParallel \eqdef (\tidySignaleticParallel)^\koszul
\]
\end{definition}

\begin{proposition}
\label{prop:tidyParallelCitelangisManinProduct}
For any two integers $k$ and $l$, we have
\[
\tidyCitelangisParallel = {\Dup_{\op{l}}}^{\blackManin k}
\qqandqq
\tidyCitelangisParallel[k+l] = \tidyCitelangisParallel[k]\blackManin\tidyCitelangisParallel[l].
\]
\end{proposition}

By \cref{thm:signaleticOperadsQuadratic}, the tidy parallel $k$-citelangis operad~$\tidyCitelangisParallel$ is quadratic and Koszul.
We now describe its quadratic relations.

\begin{proposition}
\label{prop:tidyCitelangisParallelRelations}
For any destination vector~$\ind{p} \in [3]^k$ of arity~$3$, the tidy parallel $k$-citelangis operad~$\tidyCitelangisParallel$ satisfies the quadratic relation
\begin{equation} \tag{$\tidyCitelangisParallel[]\,\ind{p}$}\label{eq:tidyCitelangisParallelp}
	\begin{tikzpicture}[baseline=-.5cm, level/.style={sibling distance = .8cm, level distance = .8cm}]
		\node [rectangle, draw, minimum height=.6cm] {$\operation[a]_\ind{p}$}
			child {node {$1$}}
			child {node [rectangle, draw, minimum height=.6cm] {$\operation[b]_\ind{p}$}
				child {node {$2$}}
				child {node {$3$}}
			}
		;
	\end{tikzpicture}
=
	\begin{tikzpicture}[baseline=-.5cm, level/.style={sibling distance = .8cm, level distance = .8cm}]
		\node [rectangle, draw, minimum height=.6cm] {$\operation[c]_\ind{p}$}
			child {node [rectangle, draw, minimum height=.6cm] {$\operation[d]_\ind{p}$}
				child {node {$1$}}
				child {node {$2$}}
			}
			child {node {$3$}}
		;
	\end{tikzpicture}
\end{equation}
where the operations~$\operation[a]_\ind{p}, \operation[b]_\ind{p}, \operation[c]_\ind{p}, \operation[d]_\ind{p} \in \{\op{l}, \op{m}, \op{r}\}^k$ are defined by
\[
	(\operation[a]_\ind{p})_i \eqdef \begin{cases} \op{l} & \!\!\text{if } \ind{p}_i = 1 \\ \op{r} & \!\!\text{if } \ind{p}_i = 2 \\ \op{r} & \!\!\text{if } \ind{p}_i = 3 \end{cases}
	\quad
	(\operation[b]_\ind{p})_i \eqdef \begin{cases} \op{l} & \!\!\text{if } \ind{p}_i = 1 \\ \op{l} & \!\!\text{if } \ind{p}_i = 2 \\ \op{r} & \!\!\text{if } \ind{p}_i = 3 \end{cases}
	\quad
	(\operation[c]_\ind{p})_i \eqdef \begin{cases} \op{l} & \!\!\text{if } \ind{p}_i = 1 \\ \op{l} & \!\!\text{if } \ind{p}_i = 2 \\ \op{r} & \!\!\text{if } \ind{p}_i = 3 \end{cases}
	\quad
	(\operation[d]_\ind{p})_i \eqdef \begin{cases} \op{l} & \!\!\text{if } \ind{p}_i = 1 \\ \op{r} & \!\!\text{if } \ind{p}_i = 2 \\ \op{l} & \!\!\text{if } \ind{p}_i = 3 \end{cases}
\]
for any~$i \in [k]$.
We call these relations the \defn{tidy parallel $k$-citelangis relations}.
\end{proposition}

\begin{proof}
This follows from~\cref{lem:koszulDual,rem:tidySignaleticRelationsParallel}.
\end{proof}

\begin{example}[Tidy parallel $0$-, $1$- and~$2$-citelangis relations]
\label{exm:tidyCitelangisParallelRelations}
The tidy parallel $0$-citelangis relation is the associative relation:
\allowdisplaybreaks
\begin{gather}
\compoR{n}{n} = \compoL{n}{n}. \tag{$\tidyCitelangisParallel[]\,.$}\label{eq:tidyCitelangisParallel0}
\end{gather}
In other words, the tidy parallel $0$-citelangis operad~$\tidyCitelangisParallel[0]$ is just the associative operad~$\As$.

The tidy parallel $1$-citelangis relations are the $3$ twisted duplicial relations:

\vspace{.4cm}
\centerline{
\begin{tabular}{c@{\qquad}c@{\qquad}c}
\(\compoR{l}{l} \!\!\overset{\displaystyle\text{($\tidyCitelangisParallel[]\,1$)\label{eq:tidyCitelangisParallel1}}}{=}\!\! \compoL{l}{l}\) &
\(\compoR{r}{l} \!\!\overset{\displaystyle\text{($\tidyCitelangisParallel[]\,2$)\label{eq:tidyCitelangisParallel2}}}{=}\!\! \compoL{r}{l}\) &
\(\compoR{r}{r} \!\!\overset{\displaystyle\text{($\tidyCitelangisParallel[]\,3$)\label{eq:tidyCitelangisParallel3}}}{=}\!\! \compoL{l}{r}\)
\end{tabular}
}

\noindent
In other words, the tidy parallel $1$-citelangis operad~$\tidyCitelangisParallel[1]$ is just the twisted duplicial operad~$\Dup_{\op{l}}$.

\pagebreak
The tidy parallel $2$-citelangis relations are the following $9$ relations:

\vspace{.4cm}
\centerline{
\begin{tabular}{c@{\qquad}c@{\qquad}c}
\(\compoR{l,l}{l,l} \!\!\overset{\displaystyle\text{($\tidyCitelangisParallel[]\,11$)\label{eq:tidyCitelangisParallel11}}}{=}\!\! \compoL{l,l}{l,l}\) &
\(\compoR{l,r}{l,l} \!\!\overset{\displaystyle\text{($\tidyCitelangisParallel[]\,12$)\label{eq:tidyCitelangisParallel12}}}{=}\!\! \compoL{l,r}{l,l}\) &
\(\compoR{l,r}{l,r} \!\!\overset{\displaystyle\text{($\tidyCitelangisParallel[]\,13$)\label{eq:tidyCitelangisParallel13}}}{=}\!\! \compoL{l,l}{l,r}\) \\[.8cm]
\(\compoR{r,l}{l,l} \!\!\overset{\displaystyle\text{($\tidyCitelangisParallel[]\,21$)\label{eq:tidyCitelangisParallel21}}}{=}\!\! \compoL{r,l}{l,l}\) &
\(\compoR{r,r}{l,l} \!\!\overset{\displaystyle\text{($\tidyCitelangisParallel[]\,22$)\label{eq:tidyCitelangisParallel22}}}{=}\!\! \compoL{r,r}{l,l}\) &
\(\compoR{r,r}{l,r} \!\!\overset{\displaystyle\text{($\tidyCitelangisParallel[]\,23$)\label{eq:tidyCitelangisParallel23}}}{=}\!\! \compoL{r,l}{l,r}\) \\[.8cm]
\(\compoR{r,l}{r,l} \!\!\overset{\displaystyle\text{($\tidyCitelangisParallel[]\,31$)\label{eq:tidyCitelangisParallel31}}}{=}\!\! \compoL{l,l}{r,l}\) &
\(\compoR{r,r}{r,l} \!\!\overset{\displaystyle\text{($\tidyCitelangisParallel[]\,32$)\label{eq:tidyCitelangisParallel32}}}{=}\!\! \compoL{l,r}{r,l}\) &
\(\compoR{r,r}{r,r} \!\!\overset{\displaystyle\text{($\tidyCitelangisParallel[]\,33$)\label{eq:tidyCitelangisParallel33}}}{=}\!\! \compoL{l,l}{r,r}\)
\end{tabular}
}
\end{example}

\begin{remark}
\label{rem:restrictionTidyCitelangisParallel}
Similarly to \cref{rem:restrictionMessyCitelangisParallel}, for any subset $I$ of $[k]$ of cardinality~$\ell \le k$, the dual morphism of~$\Res_I$ defined in \cref{rem:restrictionTidySignaleticParallel} is an injective operad morphism ${\Res_I^\koszul : \tidyCitelangisParallel[\ell] \to \tidyCitelangisParallel[k]}$.
The image~$\operation[b] \eqdef \Res_I^\koszul(\operation[a])$ of an operation~$\operation[a]$ of $\tidyCitelangisParallel[\ell]$ verifies $\operation[b]_{i_p} = \operation[a]_p$ if~$p \in [\ell]$ and $\operation[b]_j = {\op{l}}$ if $j \in [k] \ssm I$.
\end{remark}

\begin{remark}
The dual of the morphism $\Ins^{\operation[a]}_i$ where $\operation[a] \in \{\op{l}, {\op{l}} + {\op{r}}\}$ defined in \cref{rem:extensionTidySignaleticParallel} is a surjective morphism~$(\Ins^{\operation[a]}_i)^\koszul : \tidyCitelangisParallel[k+1] \to \tidyCitelangisParallel[k]$.
The map~$(\Ins^{\op{l}}_i)^\koszul$ kills the operations whose $i$-th entry is not an~$\op{l}$, and deletes the $i$-th entry in the other operations.
The map~$(\Ins^{{\op{l}} + {\op{r}}}_i)^\koszul$ just deletes the $i$-th entry in all the operations.
\end{remark}

\begin{remark}
\label{rem:multitidyCitelangisParallel}
Following \cref{rem:multitidySignaleticParallel}, for any constraint word~$\constraint \in \{\op{l}, \odot \op{r}\}^k$, we call \defn{$\constraint$-tidy parallel citelangis operad}~$\tidyCitelangisParallel[\constraint]$ the Koszul dual of the $\constraint$-tidy parallel signaletic operad~$\tidySignaleticParallel[\constraint]$.
In other words, for any words~$\constraint[c], \constraint[d] \in \{\op{l}, \odot, \op{r}\}$, we have
\[
\tidyCitelangisParallel = \underset{i \in [|\constraint|]}{\scalebox{2}{$\blacksquare$}} \Dup_{\constraint_i}
\qqandqq
\tidyCitelangisParallel[{\constraint[c] \cdot \constraint[d]}] = \tidyCitelangisParallel[{\constraint[c]}] \blackManin \tidyCitelangisParallel[{\constraint[d]}].
\]
Again, we have decided to present the tidy parallel $k$-citelangis operad with the constraint~$\constraint = {\op{l}\!^k}$ to simplify the presentation and since this will be the only possible option in series.
However, as we will use an action of the $\op{l,r}$-tidy parallel citelangis operad in \cref{subsec:actionParallelCitalangisOperads}, let us present its $9$ relations:

\vspace{.4cm}
\centerline{
\begin{tabular}{c@{\qquad}c@{\qquad}c}
\(\compoR{l,l}{l,r} \!\!\overset{\displaystyle\text{($\tidyCitelangisParallel[\op{l,r}]\,11$)\label{eq:lrtidyCitelangisParallel11}}}{=}\!\! \compoL{l,l}{l,l}\) &
\(\compoR{l,r}{l,l} \!\!\overset{\displaystyle\text{($\tidyCitelangisParallel[\op{l,r}]\,12$)\label{eq:lrtidyCitelangisParallel12}}}{=}\!\! \compoL{l,r}{l,l}\) &
\(\compoR{l,r}{l,r} \!\!\overset{\displaystyle\text{($\tidyCitelangisParallel[\op{l,r}]\,13$)\label{eq:lrtidyCitelangisParallel13}}}{=}\!\! \compoL{l,r}{l,r}\) \\[.8cm]
\(\compoR{r,l}{l,r} \!\!\overset{\displaystyle\text{($\tidyCitelangisParallel[\op{l,r}]\,21$)\label{eq:lrtidyCitelangisParallel21}}}{=}\!\! \compoL{r,l}{l,l}\) &
\(\compoR{r,r}{l,l} \!\!\overset{\displaystyle\text{($\tidyCitelangisParallel[\op{l,r}]\,22$)\label{eq:lrtidyCitelangisParallel22}}}{=}\!\! \compoL{r,r}{l,l}\) &
\(\compoR{r,r}{l,r} \!\!\overset{\displaystyle\text{($\tidyCitelangisParallel[\op{l,r}]\,23$)\label{eq:lrtidyCitelangisParallel23}}}{=}\!\! \compoL{r,r}{l,r}\) \\[.8cm]
\(\compoR{r,l}{r,r} \!\!\overset{\displaystyle\text{($\tidyCitelangisParallel[\op{l,r}]\,31$)\label{eq:lrtidyCitelangisParallel31}}}{=}\!\! \compoL{l,l}{r,l}\) &
\(\compoR{r,r}{r,l} \!\!\overset{\displaystyle\text{($\tidyCitelangisParallel[\op{l,r}]\,32$)\label{eq:lrtidyCitelangisParallel32}}}{=}\!\! \compoL{l,r}{r,l}\) &
\(\compoR{r,r}{r,r} \!\!\overset{\displaystyle\text{($\tidyCitelangisParallel[\op{l,r}]\,33$)\label{eq:lrtidyCitelangisParallel33}}}{=}\!\! \compoL{l,r}{r,r}\)
\end{tabular}
}
\end{remark}


\subsection{Series citelangis operads}
\label{subsec:seriesCitelangisOperads}

We now consider the Koszul duals of the series signaletic operads, and we treat separately the messy and tidy situations.


\subsubsection{Messy series citelangis operads}
\label{subsubsec:messySeriesCitelangisOperads}

We start with the messy setting.

\begin{definition}
The \defn{messy series $k$-citelangis operad}~$\messyCitelangisSeries$ is the Koszul dual of the messy series $k$-signaletic operad:
\[
\messyCitelangisSeries \eqdef (\messySignaleticSeries)^\koszul
\]
\end{definition}

By \cref{thm:signaleticOperadsQuadratic}, the messy series $k$-citelangis operad~$\messyCitelangisSeries$ is quadratic and Koszul.
To describe its quadratic relations, it is convenient to consider sums of operations like~${\op{m}} \eqdef {\op{l}} + {\op{r}}$.

\begin{proposition}
\label{prop:messyCitelangisSeriesRelations}
Consider the $3^k$ operations~$\{\op{l}, \op{m}, \op{r}\}^k$ related by
\[
\operation[b]\!\op{m}\!\operation[b'] = \operation[b]\!\op{l}\!\operation[b'] + \operation[b]\!\op{r}\!\operation[b']
\]
for all~$\operation[b],\operation[b'] \in \{\op{l}, \op{m}, \op{r}\}^*$ with~$|\operation[b]| + |\operation[b']| = k-1$. 

For any destination vector~$\ind{p} \in [3]^k$ of arity~$3$, the messy series $k$-citelangis operad~$\messyCitelangisSeries$ satisfies the quadratic relation
\begin{equation} \tag{$\messyCitelangisSeries[]\,\ind{p}$}\label{eq:messyCitelangisSeriesp}
	\begin{tikzpicture}[baseline=-.5cm, level/.style={sibling distance = .8cm, level distance = .8cm}]
		\node [rectangle, draw, minimum height=.6cm] (root) {$\operation[a]_\ind{p}$}
			child {node {$1$}}
			child {node [rectangle, draw, minimum height=.6cm] (r) {$\operation[b]_\ind{p}$}
				child {node {$2$}}
				child {node {$3$}}
			}
		;
	\end{tikzpicture}
=
	\begin{tikzpicture}[baseline=-.5cm, level/.style={sibling distance = .8cm, level distance = .8cm}]
		\node [rectangle, draw, minimum height=.6cm] {$\operation[c]_\ind{p}$}
			child {node [rectangle, draw, minimum height=.6cm] {$\operation[d]_\ind{p}$}
				child {node {$1$}}
				child {node {$2$}}
			}
			child {node {$3$}}
		;
	\end{tikzpicture}
\end{equation}
where the operations~$\operation[a]_\ind{p}, \operation[b]_\ind{p}, \operation[c]_\ind{p}, \operation[d]_\ind{p} \in \{\op{l}, \op{m}, \op{r}\}^k$ are defined by
\begin{gather*}
(\operation[a]_\ind{p})_i \eqdef \begin{cases} \op{l} & \text{if } \ind{p}_i = 1 \\ \op{r} & \text{if } \ind{p}_i = 2  \\ \op{r} & \text{if } \ind{p}_i = 3 \end{cases} \qquad
(\operation[b]_\ind{p})_i \eqdef \begin{cases} \op{m} & \text{if } |\ind{p}^{\{2,3\}}| < i \\ \op{l} & \text{if } (\ind{p}^{\{2,3\}})_i = 2 \\ \op{r} & \text{if } (\ind{p}^{\{2,3\}})_i = 3 \end{cases} \\
(\operation[c]_\ind{p})_i \eqdef \begin{cases} \op{l} & \text{if } \ind{p}_i = 1 \\ \op{l} & \text{if } \ind{p}_i = 2 \\ \op{r} & \text{if } \ind{p}_i = 3 \end{cases} \qquad
(\operation[d]_\ind{p})_i \eqdef \begin{cases} \op{l} & \text{if } (\ind{p}^{\{1,2\}})_i = 1 \\ \op{r} & \text{if } (\ind{p}^{\{1,2\}})_i = 2 \\ \op{m} & \text{if } |\ind{p}^{\{1,2\}}| < i \end{cases}
\end{gather*}
for any~$i \in [k]$.
Again for any~$L \subseteq [3]$, we have denoted by~$\ind{p}^L$ the subword of~$\ind{p}$ consisting only of the letters which belong to~$L$, and for any~$x \in L$ the condition~$(\ind{p}^L)_i = x$ implicitly assume that~$\ind{p}^L$ has length at least~$i$.
Note that \cref{eq:messyCitelangisSeriesp} involves $2^{|\ind{p}|_1}$ syntax trees of~$\Syntax[\Operations_k]$ on the left hand side and $2^{|\ind{p}|_3}$ syntax trees of~$\Syntax[\Operations_k]$ on the right hand side.
We call these relations the \defn{messy series $k$-citelangis relations}.
\end{proposition}

\begin{proof}
This follows from~\cref{lem:koszulDual,rem:messySignaleticRelationsSeries}.
\end{proof}

\begin{example}[Messy series $0$-, $1$- and~$2$-citelangis relations]
The messy series $0$-citelangis relation is the associative relation:
\allowdisplaybreaks
\begin{equation}
\compoR{n}{n} = \compoL{n}{n}. \tag{$\messyCitelangisSeries[]\,.$}\label{eq:messyCitelangisSeries0}
\end{equation}
In other words, the messy series $0$-citelangis operad~$\messyCitelangisSeries[0]$ is just the associative operad~$\As$.

The messy series $1$-citelangis relations are the $3$ dendriform relations:

\vspace{.4cm}
\centerline{
\begin{tabular}{c@{\qquad}c@{\qquad}c}
\(\compoR{l}{m} \!\!\overset{\displaystyle\text{($\messyCitelangisSeries[]\,1$)\label{eq:messyCitelangisSeries1}}}{=}\!\! \compoL{l}{l}\) &
\(\compoR{r}{l} \!\!\overset{\displaystyle\text{($\messyCitelangisSeries[]\,2$)\label{eq:messyCitelangisSeries2}}}{=}\!\! \compoL{r}{l}\) &
\(\compoR{r}{r} \!\!\overset{\displaystyle\text{($\messyCitelangisSeries[]\,3$)\label{eq:messyCitelangisSeries3}}}{=}\!\! \compoL{m}{r}\)
\end{tabular}
}

\noindent
where
\[
{\op{m}} = {\op{l}} + {\op{r}}.
\]
In other words, the messy series $1$-citelangis operad~$\messyCitelangisSeries[1]$ is just the dendriform operad~$\Dend$.

\pagebreak
The messy series $2$-citelangis relations are the following $9$ relations:

\vspace{.4cm}
\centerline{
\begin{tabular}{c@{\qquad}c@{\qquad}c}
\(\compoR{l,l}{m,m} \!\!\overset{\displaystyle\text{($\messyCitelangisSeries[]\,11$)\label{eq:messyCitelangisSeries11}}}{=}\!\! \compoL{l,l}{l,l}\) &
\(\compoR{l,r}{l,m} \!\!\overset{\displaystyle\text{($\messyCitelangisSeries[]\,12$)\label{eq:messyCitelangisSeries12}}}{=}\!\! \compoL{l,r}{l,l}\) &
\(\compoR{l,r}{r,m} \!\!\overset{\displaystyle\text{($\messyCitelangisSeries[]\,13$)\label{eq:messyCitelangisSeries13}}}{=}\!\! \compoL{l,m}{l,r}\) \\[.8cm]
\(\compoR{r,l}{l,m} \!\!\overset{\displaystyle\text{($\messyCitelangisSeries[]\,21$)\label{eq:messyCitelangisSeries21}}}{=}\!\! \compoL{r,l}{l,l}\) &
\(\compoR{r,r}{l,l} \!\!\overset{\displaystyle\text{($\messyCitelangisSeries[]\,22$)\label{eq:messyCitelangisSeries22}}}{=}\!\! \compoL{r,r}{l,l}\) &
\(\compoR{r,r}{l,r} \!\!\overset{\displaystyle\text{($\messyCitelangisSeries[]\,23$)\label{eq:messyCitelangisSeries23}}}{=}\!\! \compoL{r,m}{l,r}\) \\[.8cm]
\(\compoR{r,l}{r,m} \!\!\overset{\displaystyle\text{($\messyCitelangisSeries[]\,31$)\label{eq:messyCitelangisSeries31}}}{=}\!\! \compoL{l,m}{r,l}\) &
\(\compoR{r,r}{r,l} \!\!\overset{\displaystyle\text{($\messyCitelangisSeries[]\,32$)\label{eq:messyCitelangisSeries32}}}{=}\!\! \compoL{r,m}{r,l}\) &
\(\compoR{r,r}{r,r} \!\!\overset{\displaystyle\text{($\messyCitelangisSeries[]\,33$)\label{eq:messyCitelangisSeries33}}}{=}\!\! \compoL{m,m}{r,r}\)
\end{tabular}
}

\noindent
where
\begin{gather*}
{\op{m,l}} \; = \; {\op{l,l}} + {\op{r,l}}, \qquad
{\op{m,r}} \; = \; {\op{l,r}} + {\op{r,r}}, \\
{\op{l,m}} \; = \; {\op{l,l}} + {\op{l,r}}, \qquad
{\op{r,m}} \; = \; {\op{r,l}} + {\op{r,r}}, \\
\text{and}\qquad {\op{m,m}} \; = \; {\op{l,m}} + {\op{r,m}} \; = \; {\op{m,l}} + {\op{m,r}} \; = \; {\op{l,l}} + {\op{l,r}} + {\op{r,l}} + {\op{r,r}}.
\end{gather*}
\end{example}

\begin{remark}
\label{rem:restrictionMessyCitelangisSeries}
Similarly to~\cref{rem:restrictionMessyCitelangisParallel}, one can dualize \cref{rem:restrictionMessySignaleticSeries}: For any $\ell \le k$ there is a morphism $(\Res^k_\ell)^\koszul:\messyCitelangisSeries[\ell]\mapsto\messyCitelangisSeries[k]$. In terms of algebras a messy series $k$-citelangis algebra for the operations~$\{\op{l}, \op{r}\}^k$ is also a messy series $\ell$-citelangis algebra for the operations~$\set{\operation\!\op{m}\!^{k-\ell}}{\operation \in \{\op{l}, \op{r}\}^{\ell}}$.

In particular for $\ell=0$, a messy series $k$-citelangis algebra~$(\algebra, \Operations_k)$ defines a structure of associative algebra~$(\algebra, {\op{m}\!\!^k})$. This can be seen by adding up all messy series $k$-citelangis relations of \cref{prop:messyCitelangisSeriesRelations}, obtaining
\[
	\begin{tikzpicture}[baseline=-.5cm, level 1/.style={sibling distance = .8cm, level distance = .8cm}, level 2/.style={sibling distance = .6cm, level distance = .6cm}]
		\node [rectangle, draw] {$\op{m}\!\!^k$}
			child {node {}}
			child {node [rectangle, draw] {$\op{m}\!\!^k$}
				child {node {}}
				child {node {}}
			}
		;
	\end{tikzpicture}
	=
	\begin{tikzpicture}[baseline=-.5cm, level 1/.style={sibling distance = .8cm, level distance = .8cm}, level 2/.style={sibling distance = .6cm, level distance = .6cm}]
		\node [rectangle, draw] {$\op{m}\!\!^k$}
			child {node [rectangle, draw] {$\op{m}\!\!^k$}
				child {node {}}
				child {node {}}
			}
			child {node {}}
		;
	\end{tikzpicture}
\]
Reciprocally, we say that an associative algebra~$(\algebra, \product)$ admits a \defn{messy series $k$-citelangis structure} if it is possible to split the product~$\product$ into~$2^k$ operations~$\Operations_k$ defining a messy series $k$-citelangis algebra on~$\algebra$.
\end{remark}


\subsubsection{Tidy series citelangis operads}
\label{subsubsec:tidySeriesCitelangisOperads}

We now switch to the tidy setting.

\begin{definition}
The \defn{tidy series $k$-citelangis operad}~$\tidyCitelangisSeries$ is the Koszul dual of the tidy series $k$-signaletic operad:
\[
\tidyCitelangisSeries \eqdef (\tidySignaleticSeries)^\koszul
\]
\end{definition}

By \cref{thm:signaleticOperadsQuadratic}, the tidy series $k$-citelangis operad~$\tidyCitelangisSeries$ is quadratic and Koszul.
We now describe its quadratic relations.

\begin{proposition}
\label{prop:tidyCitelangisSeriesRelations}
For any destination vector~$\ind{p} \in [3]^k$ of arity~$3$, the tidy series $k$-citelangis operad~$\tidyCitelangisSeries$ satisfies the quadratic relation
\begin{equation} \tag{$\tidyCitelangisSeries[]\,\ind{p}$}\label{eq:tidyCitelangisSeriesp}
	\begin{tikzpicture}[baseline=-.5cm, level/.style={sibling distance = .8cm, level distance = .8cm}]
		\node [rectangle, draw, minimum height=.6cm] (root) {$\operation[a]_\ind{p}$}
			child {node {$1$}}
			child {node [rectangle, draw, minimum height=.6cm] (r) {$\operation[b]_\ind{p}$}
				child {node {$2$}}
				child {node {$3$}}
			}
		;
	\end{tikzpicture}
=
	\begin{tikzpicture}[baseline=-.5cm, level/.style={sibling distance = .8cm, level distance = .8cm}]
		\node [rectangle, draw, minimum height=.6cm] {$\operation[c]_\ind{p}$}
			child {node [rectangle, draw, minimum height=.6cm] {$\operation[d]_\ind{p}$}
				child {node {$1$}}
				child {node {$2$}}
			}
			child {node {$3$}}
		;
	\end{tikzpicture}
\end{equation}
where the operations~$\operation[a]_\ind{p}, \operation[b]_\ind{p}, \operation[c]_\ind{p}, \operation[d]_\ind{p} \in \{\op{l}, \op{m}, \op{r}\}^k$ are defined by
\begin{gather*}
	(\operation[a]_\ind{p})_i \eqdef \begin{cases} \op{l} & \text{if } \ind{p}_i = 1 \\ \op{r} & \text{if } \ind{p}_i = 2 \\ \op{r} & \text{if } \ind{p}_i = 3 \end{cases}
	\qquad
	(\operation[b]_\ind{p})_i \eqdef \begin{cases} \op{l} & \text{if } (\ind{p}^{\{2,3\}})_i = 1 \\ \op{l} & \text{if } (\ind{p}^{\{2,3\}})_i = 2 \\ \op{r} & \text{if } |\ind{p}^{\{2,3\}}| < i \end{cases}
	\\
	(\operation[c]_\ind{p})_i \eqdef \begin{cases} \op{l} & \text{if } \ind{p}_i = 1 \\ \op{l} & \text{if } \ind{p}_i = 2 \\ \op{r} & \text{if } \ind{p}_i = 3 \end{cases}
	\qquad
	(\operation[d]_\ind{p})_i \eqdef \begin{cases} \op{l} & \text{if } (\ind{p}^{\{1,2\}})_i = 1 \\ \op{r} & \text{if } (\ind{p}^{\{1,2\}})_i = 2 \\ \op{l} & \text{if } |\ind{p}^{\{1,2\}}| < i \end{cases}
\end{gather*}
for any~$i \in [k]$.
Again for any~$L \subseteq [3]$, we have denoted by~$\ind{p}^L$ the subword of~$\ind{p}$ consisting only of the letters which belong to~$L$, and for any~$x \in L$ the condition~$(\ind{p}^L)_i = x$ implicitly assume that~$\ind{p}^L$ has length at least~$i$.
We call these relations the \defn{tidy series $k$-citelangis relations}.
\end{proposition}

\begin{proof}
This follows from~\cref{lem:koszulDual,rem:tidySignaleticRelationsSeries}.
\end{proof}

\begin{example}[Tidy series $0$-, $1$- and~$2$-citelangis relations]
The tidy series $0$-citelangis relation is the associative relation:
\allowdisplaybreaks
\begin{equation}
\compoR{n}{n} = \compoL{n}{n}. \tag{$\tidyCitelangisSeries[]\,.$}\label{eq:tidyCitelangisSeries0}
\end{equation}
In other words, the tidy series $0$-citelangis operad~$\tidyCitelangisSeries[0]$ is just the associative operad~$\As$.

The tidy series $1$-citelangis relations are the $3$ twisted duplicial relations:

\vspace{.4cm}
\centerline{
\begin{tabular}{c@{\qquad}c@{\qquad}c}
\(\compoR{l}{l} \!\!\overset{\displaystyle\text{($\tidyCitelangisSeries[]\,1$)\label{eq:tidyCitelangisSeries1}}}{=}\!\! \compoL{l}{l}\) &
\(\compoR{r}{l} \!\!\overset{\displaystyle\text{($\tidyCitelangisSeries[]\,2$)\label{eq:tidyCitelangisSeries2}}}{=}\!\! \compoL{r}{l}\) &
\(\compoR{r}{r} \!\!\overset{\displaystyle\text{($\tidyCitelangisSeries[]\,3$)\label{eq:tidyCitelangisSeries3}}}{=}\!\! \compoL{l}{r}\)
\end{tabular}
}

\noindent
In other words, the tidy series $1$-citelangis operad~$\tidyCitelangisSeries[1]$ is just the twisted duplicial operad~$\Dup_{\op{l}}$.

The tidy series $2$-citelangis relations are the following $9$ relations:

\vspace{.4cm}
\centerline{
\begin{tabular}{c@{\qquad}c@{\qquad}c}
\(\compoR{l,l}{l,l} \!\!\overset{\displaystyle\text{($\tidyCitelangisSeries[]\,11$)\label{eq:tidyCitelangisSeries11}}}{=}\!\! \compoL{l,l}{l,l}\) &
\(\compoR{l,r}{l,l} \!\!\overset{\displaystyle\text{($\tidyCitelangisSeries[]\,12$)\label{eq:tidyCitelangisSeries12}}}{=}\!\! \compoL{l,r}{l,l}\) &
\(\compoR{l,r}{r,l} \!\!\overset{\displaystyle\text{($\tidyCitelangisSeries[]\,13$)\label{eq:tidyCitelangisSeries13}}}{=}\!\! \compoL{l,l}{l,r}\) \\[.8cm]
\(\compoR{r,l}{l,l} \!\!\overset{\displaystyle\text{($\tidyCitelangisSeries[]\,21$)\label{eq:tidyCitelangisSeries21}}}{=}\!\! \compoL{r,l}{l,l}\) &
\(\compoR{r,r}{l,l} \!\!\overset{\displaystyle\text{($\tidyCitelangisSeries[]\,22$)\label{eq:tidyCitelangisSeries22}}}{=}\!\! \compoL{r,r}{l,l}\) &
\(\compoR{r,r}{l,r} \!\!\overset{\displaystyle\text{($\tidyCitelangisSeries[]\,23$)\label{eq:tidyCitelangisSeries23}}}{=}\!\! \compoL{r,l}{l,r}\) \\[.8cm]
\(\compoR{r,l}{r,l} \!\!\overset{\displaystyle\text{($\tidyCitelangisSeries[]\,31$)\label{eq:tidyCitelangisSeries31}}}{=}\!\! \compoL{l,l}{r,l}\) &
\(\compoR{r,r}{r,l} \!\!\overset{\displaystyle\text{($\tidyCitelangisSeries[]\,32$)\label{eq:tidyCitelangisSeries32}}}{=}\!\! \compoL{r,l}{r,l}\) &
\(\compoR{r,r}{r,r} \!\!\overset{\displaystyle\text{($\tidyCitelangisSeries[]\,33$)\label{eq:tidyCitelangisSeries33}}}{=}\!\! \compoL{l,l}{r,r}\)
\end{tabular}
}
\end{example}

\begin{remark}
\label{rem:restrictionTidyCitelangisSeries}
Similarly to \cref{rem:restrictionMessyCitelangisSeries}, there is a restriction morphism~$(\Res^k_\ell)^\koszul : \tidyCitelangisSeries[k] \mapsto \tidyCitelangisSeries[l]$.
\end{remark}


\subsection{Hilbert series and normal forms}
\label{subsec:HilberSeriesNormalFormsCitelangis}

In this section, we discuss numerological properties of the citelangis operads.
As for the signaletic operads (see \cref{subsubsec:HilberSeriesSignaletic}), these properties do not really depend on whether the cars arrive in series or parallel, and whether the signals outside the routes of the cars are pointing or not to the left.
We therefore write~$\citelangis$ to denote without distinction any of the four $k$-citelangis operads~$\messyCitelangisParallel$, $\tidyCitelangisParallel$, $\messyCitelangisSeries$, or~$\tidyCitelangisSeries$.


\subsubsection{Hilbert series}
\label{subsubsec:HilberSeriesCitelangis}

We first discuss the Hilbert series of the $k$-citelangis operad~$\citelangis$, based on \cref{coro:HilbertSignaletic,subsec:Koszul}.

\begin{corollary}
\label{coro:HilbertSeriesCitelangis}
The Hilbert series of the (messy or tidy, parallel or series) $k$-citelangis operad~$\citelangis$ satisfies the equation
\begin{equation}
\label{eq:functionalEquation}
-\Hilbert_{\citelangis}(-t) \cdot \EulPol{k}(-\Hilbert_{\citelangis}(-t)) = t(1+\Hilbert_{\citelangis}(-t))^{k+1}.
\end{equation}
Therefore, the dimension~$d_k(p) \eqdef \dim(\citelangis(p))$ can be computed recursively from~$d_k(1) = 1$ by
\begin{equation}
\label{eq:recursiveFormula}
d_k(p) = \sum_{\substack{q_1, \dots, q_{k+1} \ge 0 \\ q_1 + \dots + q_{k+1} = p-1}} d_k(q_1) \cdots d_k(q_{k+1}) + \sum_{j=1}^{k-1} (-1)^{j+1} \, \EulNum{k}{j} \sum_{\substack{q_1, \dots, q_{j+1} \ge 1 \\ q_1 + \dots + q_{j+1} = p}} d_k(q_1) \cdots d_k(q_{j+1}).
\end{equation}
Alternatively, the dimension~$d_k(p) \eqdef \dim(\citelangis(p))$ is given by the summation formula
\begin{equation}
\label{eq:closedFormula}
d_k(p) = \frac{1}{p} \sum_{\substack{i, j_1, \dots, j_{k-1} \ge 0 \\ i + j' = p-1}} (-1)^{j+j'} \!\! \binom{(k+1)p}{i} \binom{p+j-1}{j} \binom{j}{j_1, \dots, j_{k-1}} \EulNum{k}{1}^{j_1} \cdots \EulNum{k}{k-1}^{j_{k-1}} \!\! ,
\end{equation}
where~$j = j_1 + \dots + j_{k-1}$ and~$j' = j_1 + \dots + (k-1) j_{k-1}$.
\end{corollary}

\begin{proof}
Since the $k$-signaletic operad~$\signaletic$ and the $k$-citelangis operad~$\citelangis$ are Koszul dual Koszul operads by definition, their Hilbert series are Lagrange inverses by \cref{thm:HilbertKoszulLagrange}:
\[
\Hilbert_{\signaletic}(-\Hilbert_{\citelangis}(-t)) = t.
\]
\cref{coro:HilbertSignaletic} thus shows the functional equation of~\cref{eq:functionalEquation}.

Using this functional equation with~$t = -u$, we obtain
\[
\Hilbert_{\citelangis}(u) = u(1+\Hilbert_{\citelangis}(u))^{k+1} + \sum_{j = 1}^{k-1} \EulNum{k}{j} (-\Hilbert_{\citelangis}(u))^{j+1}.
\]
Comparing the coefficient of~$u^p$ in the two sides of this equality yields the recursive formula of~\cref{eq:recursiveFormula}.

Finally, to obtain the summation formula of~\cref{eq:closedFormula}, we use Lagrange's theorem: since $\Hilbert_{\citelangis}(u) = u \Phi(\Hilbert_{\citelangis}(u))$ with
\[
\Phi(v) \eqdef \frac{(1+v)^{k+1}}{\EulPol{k}(-v)}
\]
we obtain by Lagrange inversion theorem that
\[
d_k(p) = [u^p] \, \Hilbert_{\citelangis}(u) = \frac{1}{p} \, [v^{p-1}] \, \Phi(v)^p = \frac{1}{p} \sum_{i = 0}^{p-1} [v^i] \, (1+v)^{(k+1)p} \cdot [v^{p-i-1}] \, \bigg( \frac{1}{\EulPol{k}(-v)} \bigg)^p .
\]
The result thus follows from the development
\begin{align*}
\bigg( \frac{1}{\EulPol{k}(-v)} \bigg)^p & = \bigg( 1 + \sum_{\ell = 1}^{k-1} \EulNum{k}{\ell} (-v)^\ell \bigg)^{-p} = \sum_{j \ge 0} (-1)^j \binom{p+j-1}{j} \bigg( \sum_{\ell = 1}^{k-1} \EulNum{k}{\ell} (-v)^\ell \bigg)^j \\
& = \sum_{j_1, \dots, j_{k-1} \ge 0} (-1)^{j+j'} \binom{p+j-1}{j} \binom{j}{j_1, \dots, j_{k-1}} \EulNum{k}{1}^{j_1} \cdots \EulNum{k}{k-1}^{j_{k-1}} v^{j'}
\end{align*}
where~$j = j_1 + \dots + j_{k-1}$ and~$j' = j_1 + \dots + (k-1) j_{k-1}$.
\end{proof}

\begin{remark}
Observe that
\[
\Phi(v) = \frac{1}{\displaystyle{\sum_{p \ge 0} p^k \, (-v)^{p-1}}} = \sum_{p \ge 0} \bigg( \sum_{\substack{\ell \ge 0 \\ c_1, \dots, c_\ell \ge 1 \\ c_1 + \dots + c_\ell = p}} (-1)^{p-\ell} (c_1+1)^k \cdots (c_\ell+1)^k \bigg) v^p.
\]
\end{remark}

\begin{example}
\label{exm:HilbertSeriesDend}
For~$k=1$, \cref{eq:recursiveFormula,eq:closedFormula} become
\[
d_1(p) = \sum_{\substack{a,b \ge 0 \\ a+b = p-1}} d_1(a) \, d_1(b)
\qqandqq
d_1(p) = \frac{1}{p} \binom{2p}{p-1} = \frac{1}{p+1} \binom{2p}{p} = C_p.
\]
For~$k=2$, we have
\begin{equation}
\label{eq:noncrossingConnectedArcDiagrams}
d_2(p) = \!\!\!\! \sum_{\substack{a,b,c \ge 0 \\ a+b+c = p-1}} \!\!\!\! d_2(a) \, d_2(b) \, d_2(c) + \sum_{\substack{a,b \ge 1 \\ a+b = p}} d_2(a) \,d_2(b)
\qandq
d_2(p) = \frac{1}{p} \sum_i \binom{3p}{i} \binom{2p-i-2}{p-i-1}.
\end{equation}
These formulas appeared as conjectures in the original paper of M.~Aguiar and J.-L.~Loday on quadri-algebras~\cite{AguiarLoday}: the recursive formula is \cite[Remark~4.7]{AguiarLoday} while the summation formula is equivalent to~\cite[Conjecture~4.2]{AguiarLoday} up to the change of variable~$j = 2n-i+1$. These conjectures follow form the work of J.\,E.~Vatne~\cite{Vatne} and L.~Foissy in~\cite{Foissy}.

For~$k=3$ we have
\[
d_3(p) = \!\!\!\! \sum_{\substack{a,b,c,d \ge 0 \\ a+b+c+d = p-1}} \!\!\!\! d_3(a) \, d_3(b) \, d_3(c) \, d_3(d) - \!\! \sum_{\substack{a,b,c \ge 1 \\ a+b+c = p}} \!\!d_3(a) \, d_3(b) \, d_3(c) + 4 \sum_{\substack{a,b \ge 1 \\ a+b = p}} d_3(a) \, d_3(b)
\]
\[
\text{and}\qquad
d_3(p) = \frac{1}{p} \sum_{i,j} (-1)^{i+j+p+1} \binom{4p}{i} \binom{p+j-1}{j} \binom{j}{p-i-j-1} 4^{2j+i-p+1}.
\]
\cref{table:dpk} gathers the values of~$d_k(p)$ for~$k \in [5]$ and~$p \in [8]$.
\begin{table}[h]
	\[
	\begin{array}{l|rrrrrrrr}
	\raisebox{-.1cm}{$k$} \backslash \, \raisebox{.1cm}{$p$}
	  & 1 &  2 &    3 &      4 &       5 &         6 &          7  &             8 \\[.1cm]
	\hline
	1 & 1 &  2 &    5 &     14 &      42 &       132 &         429 &          1430 \\
	2 & 1 &  4 &   23 &    156 &    1162 &      9192 &       75819 &        644908 \\
	3 & 1 &  8 &  101 &   1544 &   26190 &    474144 &     8975229 &     175492664 \\
	4 & 1 & 16 &  431 &  14256 &  525682 &  20731488 &   855780699 &   36512549680 \\
	5 & 1 & 32 & 1805 & 125984 & 9825222 & 820259712 & 71710602189 & 6481491238880
	\end{array}
	\]
	\caption{The values of~$d_k(p)$ for~$k \in [5]$ and~$p \in [8]$.}
	\label{table:dpk}
\end{table}
\end{example}

\begin{remark}
\label{rem:connectedCrossingFreeArcDiagrams}
For $k = 2$, the numbers $d_2(p)$ count the number of rooted non-crossing connected arc diagrams on~$p+1$ points.
To see the summation formula of~\cref{eq:noncrossingConnectedArcDiagrams}, decompose the diagrams according to whether the leftmost arc incident to the root is an isthme (then the deletion of this arc decomposes the diagram into $3$ subdiagrams with at least one node) or not (then the deletion of this arc decomposes the diagram into $2$ subdiagrams with at least two nodes).
These decompositions are illustrated in \cref{fig:decompositionArcDiagrams}.

\begin{figure}[h]
	\centerline{
	$\raisebox{-.45\height}{\includegraphics[scale=.27]{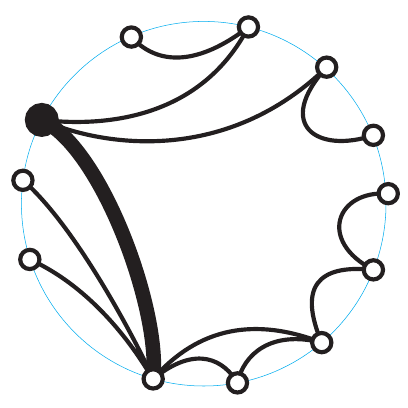}} = \raisebox{-.45\height}{\includegraphics[scale=.27]{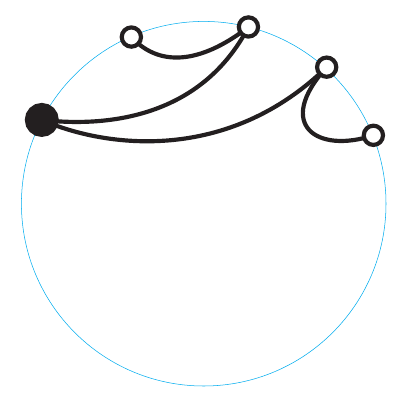}} + \raisebox{-.45\height}{\includegraphics[scale=.27]{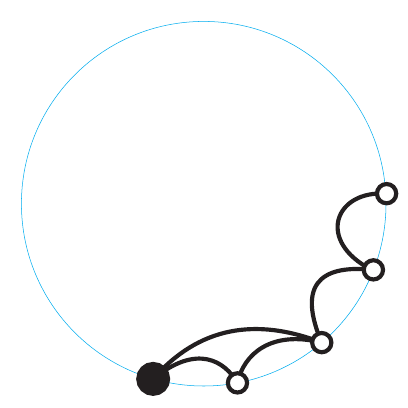}} + \raisebox{-.45\height}{\includegraphics[scale=.27]{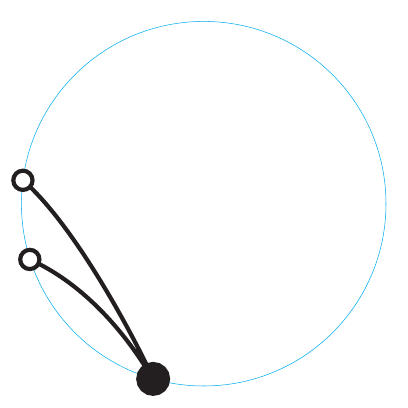}}$
	\qquad
	$\raisebox{-.45\height}{\includegraphics[scale=.27]{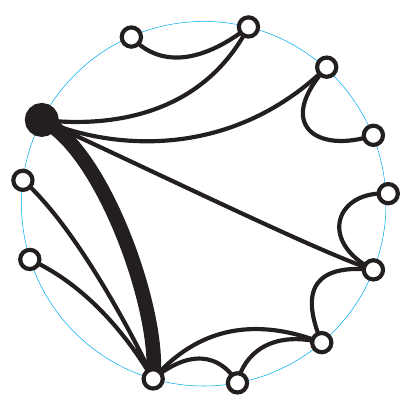}} = \raisebox{-.45\height}{\includegraphics[scale=.27]{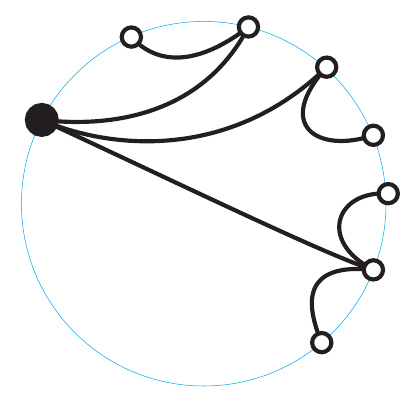}} + \raisebox{-.45\height}{\includegraphics[scale=.27]{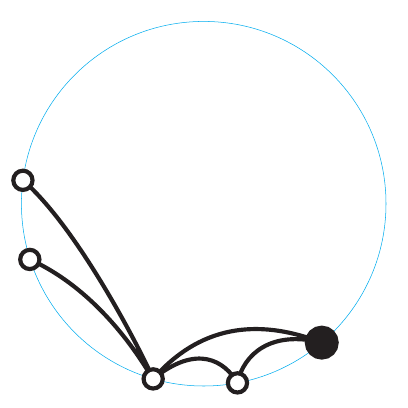}}$
	}
	\caption{Decomposition of two rooted non-crossing connected arc diagrams, whose root and leftmost arc~$\alpha$ incident to the root are bolded. We obtain three subdiagrams when $\alpha$ is an isthme (left), and two subdiagrams when~$\alpha$ is not an isthme (right). See \cref{rem:connectedCrossingFreeArcDiagrams}.}
	\label{fig:decompositionArcDiagrams}
\end{figure}
\end{remark}


\subsubsection{Rewriting system}
\label{subsubsec:rewritingSystemCitelangis}

We now orient the $k$-citelangis relations of \cref{prop:messyCitelangisParallelRelations,prop:tidyCitelangisParallelRelations,prop:messyCitelangisSeriesRelations,prop:tidyCitelangisSeriesRelations} to a convergent rewriting system.
As in \cref{subsubsec:rewritingSystemSignaletic}, we use the $k$-signaletic Tamari order defined in \cref{def:signaleticTamariOrder}, and the right $k$-signaletic combs.

\begin{definition}
\label{def:citelangisRewritingSystem}
The \defn{$k$-citelangis rewriting system} is the quadratic rewriting system where for each destination vector of~$[3]^k$, we orient the corresponding $k$-citelangis relation so that it rewrites the corresponding quadratic right $k$-signaletic comb.
\end{definition}

\begin{example}
For the destination vector~$\destVect{3,1}{}$, the corresponding $2$-citelangis relations (left) are transformed to the following rewriting rules (right):
\[
\begin{array}{c@{\qquad\longrightarrow\qquad}l}
\compoR{r,l}{r,m} \!\!\overset{\displaystyle\text{($\messyCitelangisParallel[]\,31$)}}{=}\!\! \compoL{m,l}{r,l}
&
\left( \compoR{r,l}{r,l} \quad , \quad  \compoL{l,l}{r,l} + \compoL{r,l}{r,l} - \compoR{r,l}{r,r} \right),
\\
\compoR{r,l}{r,l} \!\!\overset{\displaystyle\text{($\tidyCitelangisParallel[]\,31$)}}{=}\!\! \compoL{l,l}{r,l}
&
\left( \compoR{r,l}{r,l} \quad , \quad  \compoL{l,l}{r,l} \right),
\\
\compoR{r,l}{r,m} \!\!\overset{\displaystyle\text{($\messyCitelangisSeries[]\,31$)}}{=}\!\! \compoL{l,m}{r,l}
&
\left( \compoR{r,l}{r,l} \quad , \quad  \compoL{l,l}{r,l} + \compoL{l,r}{r,l} - \compoR{r,l}{r,r} \right),
\\
\compoR{r,l}{r,l} \!\!\overset{\displaystyle\text{($\tidyCitelangisSeries[]\,31$)}}{=}\!\! \compoL{l,l}{r,l}
&
\left( \compoR{r,l}{r,l} \quad , \quad  \compoL{l,l}{r,l} \right).
\end{array}
\]
\end{example}

Note that the $k$-citelangis rewriting system is oriented by the reverse $k$-signaletic Tamari lattice, thus opposite to the orientation for the $k$-signaletic rewriting system.
By this choice of reverse orientation, the Koszulity of the $k$-signaletic rewriting system ensures that the $k$-citelangis rewriting system also converges.


\subsubsection{Normal forms and Eulerian matrices}
\label{subsubsec:normalFormsCitelangis}

Since the $k$-citelangis rewriting system of \cref{def:citelangisRewritingSystem} rewrites precisely the quadratic right $k$-signaletic combs, its normal forms are linear combinations of the syntax trees of~$\Syntax[\Operations_k]$ that do not contain a quadratic right $k$-signaletic comb.
Note that these normal forms are the same for the tidy and the messy $k$-citelangis rewriting system.
In contrast, these normal forms are not the same in series and in parallel, but their generating series are the same.
These generating series are the Hilbert series~$\Hilbert_{\citelangis}(t)$ of the $k$-citelangis operads.

Call \defn{admissible} the syntax trees that do not contain a quadratic right $k$-signaletic comb.
This condition imposes some restrictions on the right child of each node, but none on the left child.
This observation has two immediate consequences:
\begin{enumerate}
\item We can decompose an admissible syntax tree into an admissible right comb with an admissible syntax tree (possibly empty) attached, as the left child, to each node.
\item The admissible right combs can be constructed by a simple automaton with transitions corresponding to all admissible quadratic right combs. The transition matrix of this automaton is described below.
\end{enumerate}
We explore enumerative consequences of these two points in the rest of this section.

Denote by~$\cR_k(t)$ the generating series of right combs on~$\Syntax[\Operations_k]$ (\ie syntax trees where no node has a left child) that do not contain a quadratic right $k$-signaletic comb, where the variable~$t$ counts their number of nodes.
For instance, we have
\begin{gather*}
R_1(t) = 1 + 2 t + t^2 = (1+t)^2, \\
R_2(t) = 1 + 4 t + 7 t^2 + 8 t^3 + 8 t^4 + 8 t^5 + \dots = \frac{(1+t)^3}{1-t}, \\
R_3(t) = 1 + 8 t + 37 t^2 + 144 t^3 + 540 t^4 + 2016 t^5 + \dots = \frac{(1+t)^4}{1-4t+t^2}, \\
R_4(t) = 1 + 16 t + 175 t^2 + 1760 t^3 + 17456 t^4 + 172832 t^5 + \dots = \frac{(1+t)^5}{1-11t+11t^2-t^3}.
\end{gather*}

\begin{proposition}
\label{prop:Rk}
The generating series~$\Hilbert_{\citelangis}(t)$ and~$\cR_k(t)$ satisfy $\Hilbert_{\citelangis}(t) = t \cdot \cR_k \big( \Hilbert_{\citelangis}(t) \big)$.
\end{proposition}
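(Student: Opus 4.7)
The plan is to identify $\Hilbert_{\citelangis}(t)$ with the generating series of normal forms of the $k$-citelangis rewriting system of \cref{def:citelangisRewritingSystem}. By \cref{thm:signaleticOperadsQuadratic} applied to the dual $k$-signaletic operad, this rewriting system is convergent, so its normal forms --- which I will call \emph{admissible} syntax trees --- form a Poincar\'e--Birkhoff--Witt basis of $\citelangis$, and their generating series by arity is exactly $\Hilbert_{\citelangis}(t)$. By construction of the rewriting rules, a syntax tree on $\Operations_k$ is admissible iff it contains no quadratic right $k$-signaletic comb as a pattern.

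The crucial observation I would exploit is that admissibility is a \emph{local} condition involving only right edges: the forbidden patterns are two-node trees whose bottom node is the right child of the top, so any forbidden occurrence in $\tree$ lies entirely within the right spine of some subtree. In particular, no forbidden pattern can straddle a left edge, nor connect two nodes sitting in distinct left subtrees.

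The key step is then a bijective decomposition of admissible syntax trees. Starting from the root of an admissible tree $\tree$, I would follow right edges to extract its right spine $s_1, \dots, s_m$ for some $m \ge 0$; the sequence of labels on this spine is exactly an admissible right comb (counted by $r_m \eqdef [t^m] \cR_k(t)$), and each spine node $s_i$ carries an arbitrary admissible syntax tree $\tree_i$ as its left subtree. The arities satisfy $\text{arity}(\tree) = 1 + \sum_{i=1}^m \text{arity}(\tree_i)$, the $+1$ accounting for the rightmost leaf at the end of the spine (the case $m = 0$ being the trivial tree with one leaf, matching $r_0 = 1$). The locality observation above ensures that this assignment is indeed a bijection: any choice of an admissible right comb of length $m$ together with $m$ arbitrary admissible left subtrees reconstructs an admissible tree, because the remaining constraints on the spine labels and on each $\tree_i$ are independent.

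Translating this bijection into generating series immediately yields
\[
\Hilbert_{\citelangis}(t) \;=\; \sum_{m \ge 0} r_m \, t \, \Hilbert_{\citelangis}(t)^m \;=\; t \cdot \cR_k\bigl(\Hilbert_{\citelangis}(t)\bigr).
\]
The only potentially delicate step is verifying the locality of admissibility underlying the bijection; once this is in hand, everything else is a routine substitution on generating functions.
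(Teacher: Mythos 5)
Your proposal is correct and follows essentially the same route as the paper: the paper also decomposes an admissible syntax tree (normal form avoiding quadratic right $k$-signaletic combs) as an admissible right comb with an arbitrary admissible tree grafted as the left child of each spine node, and reads off the identity $\Hilbert_{\citelangis}(t) = t \cdot \cR_k\bigl(\Hilbert_{\citelangis}(t)\bigr)$ by substitution of generating series. The locality argument you spell out (forbidden patterns only sit along right edges, so the spine labels and the left subtrees are constrained independently) is exactly the observation the paper states just before the proposition.
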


\begin{proof}
We have seen that we can inductively decompose any admissible syntax tree as an admissible right comb where we attach a (possibly empty) admissible syntax tree as the left child of each node.
The generating series are thus related by composition: ${\Hilbert_{\citelangis}(t) = t \cdot \cR_k \big( \Hilbert_{\citelangis}(t) \big)}$.
\end{proof}

\begin{corollary}
The generating series~$\cR_k(t)$ is given by $\cR_k(t) \cdot \EulPol{k}(-t) = (1+t)^{k+1}$.
\end{corollary}

\begin{proof}
This follows from \cref{coro:HilbertSeriesCitelangis,prop:Rk}.
\end{proof}

We now interpret $\cR_k(t)$ in terms of the coefficients of the iterated powers of the transition matrix of an automaton.
Consider the automaton whose nodes are the operations of~$\Operations_k$ and with a transition~$\operation[a] \to \operation[b]$ if the right comb with the root~$\operation[a]$ and the right child~$\operation[b]$ is admissible.
Let~$\mat{M}_k$ denote the transition matrix of this automaton, \ie the $(\Operations_k \times \Operations_k)$-matrix with coefficients $(\mat{M}_k)_{\operation[a]\operation[b]} = 1$ if there is a transition~$\operation[a] \to \operation[b]$ and~$0$ otherwise.
Since the automaton constructs the admissible right combs, we obtain the following statement.

\begin{proposition}
\label{prop:powersMk}
The generating series~$\cR_k(t)$ is given by  $\displaystyle \cR_k(t) = 1 + \sum_{p \ge 1} \sum_{\operation[a], \operation[b] \in \Operations_k} (\mat{M}_k^{p-1})_{\operation[a]\operation[b]} \, t^p.$
\end{proposition}

\begin{proof}
The $p$-th power of the transition matrix encodes all right combs of arity~$p+1$.
\end{proof}

Note that the transitions are the same in the tidy and in the messy setting.
In contrast, they differ in the series and parallel settings:
\setlength\arrayrulewidth{.3pt}\arrayrulecolor{lightgray}
\begin{enumerate}
\item For the parallel $k$-citelangis operad, the transition matrix~$\mat{M}_k^\parallel$ is the $(\Operations_k \times \Operations_k)$-matrix whose $(\operation[a], \operation[b])$-coefficient is given by
\[
(\mat{M}_k^\parallel)_{\operation[a]\operation[b]} =
\begin{cases}
1 & \text{ if there is~$i \in [k]$ such that $\operation[a]_i = {\op{l}}$ while $\operation[b]_i = {\op{r}}$,} \\
0 & \text{ otherwise.}
\end{cases}
\]
for any~$\operation[a] \in \Operations_k$ and~$\operation[b] \in \Operations_k$.
For example,
\[
\qquad\quad
\mat{M}_1^\parallel = \begin{blockarray}{c@{\;\color{lightgray}\vline\;}cc}
\op{l} & \op{r} & \\
\begin{block}{[c@{\;\color{lightgray}\vline\;}c]c@{\!}}
0 & 1 & \op{l} \\
\cline{1-3}
0 & 0 & \op{r} \\
\end{block}
\end{blockarray}
\qquad
\mat{M}_2^\parallel = \begin{blockarray}{c@{\,\color{lightgray}\vline\,}c@{\;}c@{\,\color{lightgray}\vline\,}cc}
\rotatebox{70}{$\op{l,l}$} & \rotatebox{70}{$\op{l,r}$} & \rotatebox{70}{$\op{r,l}$} & \rotatebox{70}{$\op{r,r}$} & \\
\begin{block}{[c@{\,\color{lightgray}\vline\,}c@{}c@{\,\color{lightgray}\vline\,}c]c@{\!}}
0 & 1 & 1 & 1 & \op{l,l} \\
\cline{1-5}
0 & 0 & 1 & 1 & \op{l,r} \\
0 & 1 & 0 & 1 & \op{r,l} \\
\cline{1-5}
0 & 0 & 0 & 0 & \op{r,r} \\
\end{block}
\end{blockarray}
\qquad
\mat{M}_3^\parallel = \begin{blockarray}{c@{\color{lightgray}\vline}c@{}c@{}c@{\color{lightgray}\vline}c@{}c@{}c@{\color{lightgray}\vline}cc}
\rotatebox{70}{$\op{l,l,l}$} & \rotatebox{70}{$\op{l,l,r}$} & \rotatebox{70}{$\op{l,r,l}$} & \rotatebox{70}{$\op{r,l,l}$} & \rotatebox{70}{$\op{l,r,r}$} & \rotatebox{70}{$\op{r,l,r}$} & \rotatebox{70}{$\op{r,r,l}$} & \rotatebox{70}{$\op{r,r,r}$} & \\
\begin{block}{[c@{\color{lightgray}\vline}c@{}c@{}c@{\color{lightgray}\vline}c@{}c@{}c@{\color{lightgray}\vline}c]c@{\!}}
0 & 1 & 1 & 1 & 1 & 1 & 1 & 1 & \op{l,l,l} \\
\cline{1-9}
0 & 0 & 1 & 1 & 1 & 1 & 1 & 1 & \op{l,l,r} \\
0 & 1 & 0 & 1 & 1 & 1 & 1 & 1 & \op{l,r,l} \\
0 & 1 & 1 & 0 & 1 & 1 & 1 & 1 & \op{r,l,l} \\
\cline{1-9}
0 & 0 & 0 & 1 & 0 & 1 & 1 & 1 & \op{l,r,r} \\
0 & 0 & 1 & 0 & 1 & 0 & 1 & 1 & \op{r,l,r} \\
0 & 1 & 0 & 0 & 1 & 1 & 0 & 1 & \op{r,r,l} \\
\cline{1-9}
0 & 0 & 0 & 0 & 0 & 0 & 0 & 0 & \op{r,r,r} \\
\end{block}
\end{blockarray}
\]
(Note that we have gathered together rows and columns according to the number of~$\op{r}$ signs of the corresponding operation of~$\Operations_k$ in the perspective of \cref{lem:sumsMk}.
Would we have ordered the rows and columns in the product order, we would see that~$\mat{M}_k^\parallel$ is the all $1$ matrix minus the incidence matrix of the boolean lattice.
Unfortunately, we are not able to use this observation for \cref{conj:matrices1,conj:matrices2}.)

\item For the series $k$-citelangis operad, the transition matrix~$\mat{M}_k^\series$ is the $(\Operations_k \times \Operations_k)$-matrix whose $(\operation[a], \operation[b])$-coefficient is given by
\[
(\mat{M}_k^\series)_{\operation[a]\operation[b]} =
\begin{cases}
1 & \text{ if there is~$i > |\operation[a]|_{\op{r}}$ such that $\operation[b]_i = {\op{r}}$,} \\
0 & \text{ otherwise.}
\end{cases}
\]
for any~$\operation[a] \in \Operations_k$ and~$\operation[b] \in \Operations_k$.
For example,
\[
\qquad\quad
\mat{M}_1^\series = \begin{blockarray}{c@{\;\color{lightgray}\vline\;}cc}
\op{l} & \op{r} & \\
\begin{block}{[c@{\;\color{lightgray}\vline\;}c]c@{\!}}
0 & 1 & \op{l} \\
\cline{1-3}
0 & 0 & \op{r} \\
\end{block}
\end{blockarray}
\qquad
\mat{M}_2^\series = \begin{blockarray}{c@{\,\color{lightgray}\vline\,}c@{\;}c@{\,\color{lightgray}\vline\,}cc}
\rotatebox{70}{$\op{l,l}$} & \rotatebox{70}{$\op{l,r}$} & \rotatebox{70}{$\op{r,l}$} & \rotatebox{70}{$\op{r,r}$} & \\
\begin{block}{[c@{\,\color{lightgray}\vline\,}c@{}c@{\,\color{lightgray}\vline\,}c]c@{\!}}
0 & 1 & 1 & 1 & \op{l,l} \\
\cline{1-5}
0 & 1 & 0 & 1 & \op{l,r} \\
0 & 1 & 0 & 1 & \op{r,l} \\
\cline{1-5}
0 & 0 & 0 & 0 & \op{r,r} \\
\end{block}
\end{blockarray}
\qquad
\mat{M}_3^\series = \begin{blockarray}{c@{\color{lightgray}\vline}c@{}c@{}c@{\color{lightgray}\vline}c@{}c@{}c@{\color{lightgray}\vline}cc}
\rotatebox{70}{$\op{l,l,l}$} & \rotatebox{70}{$\op{l,l,r}$} & \rotatebox{70}{$\op{l,r,l}$} & \rotatebox{70}{$\op{r,l,l}$} & \rotatebox{70}{$\op{l,r,r}$} & \rotatebox{70}{$\op{r,l,r}$} & \rotatebox{70}{$\op{r,r,l}$} & \rotatebox{70}{$\op{r,r,r}$} & \\
\begin{block}{[c@{\color{lightgray}\vline}c@{}c@{}c@{\color{lightgray}\vline}c@{}c@{}c@{\color{lightgray}\vline}c]c@{\!}}
0 & 1 & 1 & 1 & 1 & 1 & 1 & 1 & \op{l,l,l} \\
\cline{1-9}
0 & 1 & 1 & 0 & 1 & 1 & 1 & 1 & \op{l,l,r} \\
0 & 1 & 1 & 0 & 1 & 1 & 1 & 1 & \op{l,r,l} \\
0 & 1 & 1 & 0 & 1 & 1 & 1 & 1 & \op{r,l,l} \\
\cline{1-9}
0 & 1 & 0 & 0 & 1 & 1 & 0 & 1 & \op{l,r,r} \\
0 & 1 & 0 & 0 & 1 & 1 & 0 & 1 & \op{r,l,r} \\
0 & 1 & 0 & 0 & 1 & 1 & 0 & 1 & \op{r,r,l} \\
\cline{1-9}
0 & 0 & 0 & 0 & 0 & 0 & 0 & 0 & \op{r,r,r} \\
\end{block}
\end{blockarray}
\]
\setlength\arrayrulewidth{1pt}\arrayrulecolor{black}
\end{enumerate}

These matrices seem particularly relevant and closely related to Eulerian polynomials.
Let us mention in particular the following conjectures.

\begin{conjecture}
\label{conj:matrices1}
The minimal and characteristic polynomials of the matrix $\mat{M}_k^\parallel$ are
\[
\begin{array}{c@{\quad}c@{\quad}c}
\pi_{\mat{M}_k^\parallel}(t) = (-1)^{k+1} \cdot t^2 \cdot (t+1)^{k-1} \cdot \EulPol{k}(-t),
& &
\chi_{\mat{M}_k^\parallel}(t) = (-1)^{k+1} \cdot t^2 \cdot (t+1)^{\EulNum{k}{1}} \cdot \EulPol{k}(-t),
\\
\pi_{\mat{M}_k^\series}(t) = (-1)^{k+1} \cdot t^2 \cdot \EulPol{k}(-t),
& \text{and} &
\chi_{\mat{M}_k^\series}(t) = (-1)^{k+1}\cdot t^{2+\EulNum{k}{1}} \cdot \EulPol{k}(-t).
\end{array}
\]
\end{conjecture}

For example, the minimal and characteristic polynomials of the matrices above are
\[
\begin{array}{c@{\qquad}c@{\qquad}c}
\pi_{\mat{M}_1^\parallel}(t) = t^2, & \pi_{\mat{M}_2^\parallel}(t) = t^4 - t^2, & \pi_{\mat{M}_3^\parallel}(t) = t^6 - 2t^5 - 6t^4 - 2t^3 + t^2, \\
\chi_{\mat{M}_1^\parallel}(t) = t^2, & \chi_{\mat{M}_2^\parallel}(t) = t^4 - t^2, & \chi_{\mat{M}_3^\parallel}(t) = t^8 - 9 t^6 - 16 t^5 - 9 t^4 + t^2,
\end{array}
\]
and
\[
\begin{array}{c@{\qquad}c@{\qquad}c}
\pi_{\mat{M}_1^\series}(t) = t^2, & \pi_{\mat{M}_2^\series}(t) = t^3 - t^2, & \pi_{\mat{M}_3^\series}(t) = t^4 - 4 t^3 + t^2, \\
\chi_{\mat{M}_1^\series}(t) = t^2, & \chi_{\mat{M}_2^\series}(t) = t^4 - t^3, & \chi_{\mat{M}_3^\series}(t) = t^8 - 4 t^7 + t^6.
\end{array}
\]

\medskip
On the examples of matrices~$\mat{M}_k^\parallel$ and~$\mat{M}_k^\series$ above, we have gathered together rows and columns according to the number of~$\op{r}$ signs of the corresponding operation of~$\Operations_k$.
We observe that when we sum the columns of~$\mat{M}_k^\parallel$ or~$\mat{M}_k^\series$ according to these groups, all entries in a group of rows are identical.
The following lemma gives a more precise statement.

\begin{lemma}
\label{lem:sumsMk}
For~$0 \le i, j \le k$ and~$\operation[a] \in \Operations_k$ with~$|\operation[a]|_{\op{r}} = i$, we have~${\displaystyle \sum_{\substack{\operation[b] \in \Operations_k \\ |\operation[b]|_{\op{r}} = j}} (\mat{M}_k)_{\operation[a]\operation[b]} = \binom{k}{j} - \binom{i}{j}}$.
\end{lemma}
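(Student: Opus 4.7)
The plan is to count, for each case (parallel and series), the number of $\operation[b] \in \Operations_k$ with $|\operation[b]|_{\op{r}} = j$ for which the entry $(\mat{M}_k)_{\operation[a]\operation[b]}$ vanishes, and subtract from the total $\binom{k}{j}$ of operators of $\Operations_k$ containing exactly $j$ copies of $\op{r}$.

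First I would handle the parallel case. Unfolding the definition, $(\mat{M}_k^\parallel)_{\operation[a]\operation[b]} = 0$ if and only if there is no position $\ell \in [k]$ with $\operation[a]_\ell = {\op{l}}$ and $\operation[b]_\ell = {\op{r}}$, which is equivalent to saying that the support $S_{\op{r}}(\operation[b]) \eqdef \set{\ell \in [k]}{\operation[b]_\ell = {\op{r}}}$ is contained in $S_{\op{r}}(\operation[a])$. Since $|S_{\op{r}}(\operation[a])| = i$ and $|S_{\op{r}}(\operation[b])| = j$, the number of such $\operation[b]$ is precisely $\binom{i}{j}$, giving the formula.

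Next I would treat the series case. By definition, $(\mat{M}_k^\series)_{\operation[a]\operation[b]} = 0$ if and only if no position $\ell > i$ satisfies $\operation[b]_\ell = {\op{r}}$, that is, $S_{\op{r}}(\operation[b]) \subseteq [i]$. Choosing $j$ such positions among $i$ available ones yields again $\binom{i}{j}$ bad operators $\operation[b]$, and the formula follows.

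There is no real obstacle here: both cases reduce to the same elementary count, which explains why the statement is uniform even though the underlying transition rules differ.
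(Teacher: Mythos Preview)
Your proof is correct and follows essentially the same approach as the paper: both characterize when $(\mat{M}_k)_{\operation[a]\operation[b]} = 0$ (the $\op{r}$-positions of $\operation[b]$ lie in $S_{\op{r}}(\operation[a])$ in the parallel case, or in $[i]$ in the series case), count these as $\binom{i}{j}$, and subtract from the total $\binom{k}{j}$.
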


\begin{proof}
Let~$0 \le i, j \le k$ and consider~$\operation[a], \operation[b] \in \Operations_k$ with~$|\operation[a]|_{\op{r}} = i$ and~$|\operation[b]|_{\op{r}} = j$.
From the definitions of~$(\mat{M}_k^\parallel)_{\operation[a]\operation[b]}$ and~$(\mat{M}_k^\series)_{\operation[a]\operation[b]}$ above, we derive that
\begin{enumerate}
\item $(\mat{M}_k^\parallel)_{\operation[a]\operation[b]} = 0$ $\iff$ the $j$ positions of~$\op{r}$ in~$\operation[b]$ are chosen among the $i$ positions of~$\op{r}$~in~$\operation[a]$,
\item $(\mat{M}_k^\series)_{\operation[a]\operation[b]} = 0$ $\iff$ the $j$ positions of~$\op{r}$ in~$\operation[b]$ are chosen among the first $i$ positions.
\end{enumerate}
Therefore, for a fixed operation~$\operation[a] \in \Operations_k$ with~$|\operation[a]|_{\op{r}} = i$, among the~$\binom{k}{j}$ operations~$\operation[b] \in \Operations_k$ with~$|\operation[b]|_{\op{r}} = j$, there are~$\binom{i}{j}$ operations such that~$(\mat{M}_k)_{\operation[a]\operation[b]} = 0$ and~$\binom{k}{j} - \binom{i}{j}$ operations such that~${(\mat{M}_k)_{\operation[a]\operation[b]} = 1}$, in both the series and parallel situations.
The result immediately follows.
\end{proof}

This motivates to introduce the $(k+1) \times (k+1)$-matrix~$\mat{N}_k$ defined by
\[
\mat{N}_k 
\eqdef
\begin{bmatrix}
\displaystyle
\binom{k}{j} - \binom{i}{j}
\end{bmatrix}_{0 \le i,j \le k}
\]

\begin{lemma}
\label{lem:powersMkNk}
For any~$p \in \N$, we have~$\displaystyle \sum_{\operation[a], \operation[b] \in \Operations_k} (\mat{M}_k^p)_{\operation[a]\operation[b]} = \sum_{0 \le i,j \le k} (\mat{N}_k^p)_{ij} \binom{k}{i}$.
\end{lemma}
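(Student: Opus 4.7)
The plan is to establish Lemma~\ref{lem:powersMkNk} by lifting Lemma~\ref{lem:sumsMk} from $\mat{M}_k$ to all its powers via induction. The crucial observation is that Lemma~\ref{lem:sumsMk} exhibits a ``lumpability'' property: grouping operators by their number of $\op{r}$ signs yields block sums depending only on the groups, and the resulting block sum is precisely the entry $(\mat{N}_k)_{ij} = \binom{k}{j} - \binom{i}{j}$. The natural inductive strengthening is therefore: for every~$p \ge 1$, every~$i,j \in \{0,\dots,k\}$ and every~$\operation[a] \in \Operations_k$ with~$|\operation[a]|_{\op{r}} = i$,
\[
\sum_{\substack{\operation[b] \in \Operations_k \\ |\operation[b]|_{\op{r}} = j}} (\mat{M}_k^p)_{\operation[a]\operation[b]} = (\mat{N}_k^p)_{ij}.
\]
Note the right-hand side is independent of the specific choice of~$\operation[a]$ within its group, which is exactly the property that allows the induction to carry.

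The base case $p=1$ is Lemma~\ref{lem:sumsMk}. For the induction step, I would expand $\mat{M}_k^{p+1} = \mat{M}_k^p \cdot \mat{M}_k$, fix~$\operation[a]$ with~$|\operation[a]|_{\op{r}}=i$, and compute
\[
\sum_{\substack{\operation[b] \\ |\operation[b]|_{\op{r}} = j}} (\mat{M}_k^{p+1})_{\operation[a]\operation[b]}
= \sum_{\ell=0}^{k} \sum_{\substack{\operation[c] \\ |\operation[c]|_{\op{r}} = \ell}} (\mat{M}_k^p)_{\operation[a]\operation[c]} \sum_{\substack{\operation[b] \\ |\operation[b]|_{\op{r}} = j}} (\mat{M}_k)_{\operation[c]\operation[b]}.
\]
By Lemma~\ref{lem:sumsMk} the innermost sum equals~$(\mat{N}_k)_{\ell j}$ (which depends only on~$\ell$, not on the specific~$\operation[c]$), so it factors out of the middle sum. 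Applying the inductive hypothesis to the remaining sum over~$\operation[c]$ with~$|\operation[c]|_{\op{r}} = \ell$ yields~$(\mat{N}_k^p)_{i\ell}$, and summing over~$\ell$ gives~$(\mat{N}_k^{p+1})_{ij}$, completing the induction.

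The lemma follows: since there are exactly~$\binom{k}{i}$ operators in~$\Operations_k$ with~$|\operation[a]|_{\op{r}} = i$, summing the strengthened identity over all~$\operation[a] \in \Operations_k$ (partitioned by the value of~$|\operation[a]|_{\op{r}}$) and over all~$j \in \{0, \dots, k\}$ yields
\[
\sum_{\operation[a], \operation[b] \in \Operations_k} (\mat{M}_k^p)_{\operation[a]\operation[b]} = \sum_{i=0}^{k} \binom{k}{i} \sum_{j=0}^{k} (\mat{N}_k^p)_{ij} = \sum_{0 \le i,j \le k} (\mat{N}_k^p)_{ij} \binom{k}{i}.
\]
There is no real obstacle here beyond identifying the correct inductive strengthening; the argument works uniformly for both the parallel and series cases because Lemma~\ref{lem:sumsMk} provides identical block sums in both settings.
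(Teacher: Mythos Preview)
Your proof is correct and takes essentially the same approach as the paper. Both arguments rest on the same intertwining relation coming from \cref{lem:sumsMk} and extend it to all powers by induction; the paper simply packages your strengthened identity $\sum_{|\operation[b]|_{\op{r}}=j}(\mat{M}_k^p)_{\operation[a]\operation[b]} = (\mat{N}_k^p)_{ij}$ as the matrix equation $\mat{M}_k^p\,\mat{R}_k = \mat{R}_k\,\mat{N}_k^p$ (with $(\mat{R}_k)_{\operation[b] j} = \delta_{|\operation[b]|_{\op{r}}=j}$) and introduces a one-sided inverse $\mat{L}_k$ to write $\mat{N}_k^p = \mat{L}_k\,\mat{M}_k^p\,\mat{R}_k$, but the content is identical. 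One small point: the lemma is stated for all $p \in \N$, so you should also cover $p=0$; your strengthened identity holds trivially there (both sides reduce to $\delta_{ij}$), so you may as well take that as the base case.
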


\begin{proof}
We denote by~$\mat{1}_n$ the vector with~$n$ entries all equal to~$1$, and by~$\mat{I}_n$ the $(n \times n)$-identity matrix.
Consider the vector~$\mat{B}_k$, the $\big( (k+1) \times \Operations_k \big)$-matrix~$\mat{L}_k$ and the $\big( \Operations_k \times (k+1) \big)$-matrix~$\mat{R}_k$ whose coefficients are given by
\[
(\mat{B}_k)_i = \binom{k}{i},
\qquad
(\mat{L}_k)_{i \operation[a]} = \delta_{|\operation[a]|_{\op{r}} = i} \binom{k}{i}^{-1}
\qqandqq
(\mat{R}_k)_{\operation[b] j} = \delta_{|\operation[b]|_{\op{r}} = j}.
\]
Observe that
\[
\transpose{\mat{B}_k} \cdot \mat{L}_k = \transpose{\mat{1}_{\Operations_k}}, 
\qquad
\mat{R}_k \cdot \mat{1}_{k+1} = \mat{1}_{\Operations_k}
\qqandqq
\mat{L}_k \cdot \mat{R}_k = \mat{I}_{k+1}.
\]
Moreover, \cref{lem:sumsMk} affirms that
\[
\mat{R}_k \cdot \mat{N}_k = \mat{M}_k \cdot \mat{R}_k.
\]
This imply that
\[
\mat{N}_k^p = \mat{L}_k \cdot \mat{M}_k^p \cdot \mat{R}_k
\]
for any~$p \in \N$.
Indeed, it holds for~$p = 0$ since~$\mat{L}_k \cdot \mat{R}_k = \mat{I}_{k+1}$, and we obtain by induction that~$\mat{N}_k^{p+1} = \mat{N}_k^p \cdot \mat{N}_k = \mat{L}_k \cdot \mat{M}_k^p \cdot \mat{R}_k \cdot \mat{N}_k = \mat{L}_k \cdot \mat{M}_k^p \cdot \mat{M}_k \cdot \mat{R}_k = \mat{L}_k \cdot \mat{M}_k^{p+1} \cdot \mat{R}_k$.
Therefore,
\[
\sum_{0 \le i,j \le k} \!\! (\mat{N}_k^p)_{ij} \binom{k}{i}
=
\transpose{\mat{B}_k} \cdot \mat{N}_k^p \cdot \mat{1}_{k+1}
=
\transpose{\mat{B}_k} \cdot \mat{L}_k \cdot \mat{M}_k^p \cdot \mat{R}_k \cdot \mat{1}_{k+1}
=
\transpose{\mat{1}_{\Operations_k}} \cdot \mat{M}_k^p \cdot \mat{1}_{\Operations_k}
=
\!\! \sum_{\operation[a], \operation[b] \in \Operations_k} \!\! (\mat{M}_k^p)_{\operation[a]\operation[b]}.
\qedhere
\]
\end{proof}

We thus derive from \cref{prop:powersMk,lem:powersMkNk} the following statement which provides a more efficient way to compute approximations of the generating series~$\cR_k(t)$.

\begin{corollary}
\label{coro:powersNk}
The generating series~$\cR_k(t)$ is given by  $\displaystyle \cR_k(t) = 1 + \sum_{p \ge 1} \, \sum_{0 \le i,j \le k} (\mat{N}_k^{p-1})_{ij} \binom{k}{i} t^p.$
\end{corollary}

For example, the first such matrices are given by
\[
\mat{N}_1 =
\begin{bmatrix}
	0 & 1 \\
	0 & 0
\end{bmatrix}
\qquad
\mat{N}_2 =
\begin{bmatrix}
	0 & 2 & 1 \\
	0 & 1 & 1 \\
	0 & 0 & 0
\end{bmatrix}
\qquad
\mat{N}_3 =
\begin{bmatrix}
	0 & 3 & 3 & 1 \\
	0 & 2 & 3 & 1 \\
	0 & 1 & 2 & 1 \\
	0 & 0 & 0 & 0
\end{bmatrix}
\qquad
\mat{N}_4 =
\begin{bmatrix}
	0 & 4 & 6 & 4 & 1 \\
	0 & 3 & 6 & 4 & 1 \\
	0 & 2 & 5 & 4 & 1 \\
	0 & 1 & 3 & 3 & 1 \\
	0 & 0 & 0 & 0 & 0
\end{bmatrix}
\]

Again, these matrices seem closely related to Eulerian polynomials as illustrated by the following conjecture.

\begin{conjecture}
\label{conj:matrices2}
The minimal polynomial~$\pi_{\mat{N}_k}(t)$ and the characteristic polynomial~$\chi_{\mat{N}_k}(t)$ of the matrix~$\mat{N}_k$ are multiples of the Eulerian polynomial:
\[
\pi_{\mat{N}_k}(t) = \chi_{\mat{N}_k}(t) = (-1)^{k+1} \cdot t^2 \cdot \EulPol{k}(-t).
\]
\end{conjecture}

For example, the minimal and characteristic polynomials of the matrices above are
\begin{gather*}
\pi_{\mat{N}_1}(t) = t^2, \quad \pi_{\mat{N}_2}(t) = t^3 - t^2, \quad \pi_{\mat{N}_3}(t) = t^4 - 4t^3 + t^2, \quad \pi_{\mat{N}_4}(t) = t^5 - 11t^4 + 11t^3 - t^2, \\
\chi_{\mat{N}_1}(t) = t^2, \quad \chi_{\mat{N}_2}(t) = t^3 - t^2, \quad \chi_{\mat{N}_3}(t) = t^4 - 4t^3 + t^2, \quad \chi_{\mat{N}_4}(t) = t^5 - 11t^4 + 11t^3 - t^2.
\end{gather*}


\newpage
\section{Combinatorial models and actions}
\label{sec:actions}

It is well known that the shuffle algebra can be endowed with the structure of a dendriform algebra~\cite{Loday-dialgebras} defined for the words~$xX$ and~$yY$ by
\[
xX \op{l} yY = x \big( X \shuffle yY \big)
\qqandqq
xX \op{r} yY = y \big( xX \shuffle Y \big).
\]
Replacing the shuffle by the shifted shuffle, one endows similarly the algebra~$\FQSym$ of permutations with the structure of a dendriform algebra.
The resulting dendriform algebra is known to be free~\cite{Foissy, Vong}, and the dendriform subalgebra of~$\FQSym$ generated by the permutation~$1$ is known to be the free dendriform algebra on one generator~\cite{LodayRonco}.

The objective of this section is to show similar actions of the citelangis operads defined in \cref{sec:citelangisOperads} on a certain family of permutations and posets.
Some of these actions will result in free citelangis algebras, and will thus lead to natural constructions of free citelangis algebras on one generator, providing combinatorial models to index the bases for the citelangis operads.

As the tidy and messy parallel $k$-citelangis operads are Manin powers of the dendriform and twisted duplicial operads (see \cref{subsec:ManinProducts,prop:messyParallelCitelangisManinProduct,prop:tidyParallelCitelangisManinProduct}), they naturally act on $k$-tuples of permutations.
Nevertheless, this action is not really interesting here as it does not extend in the series situation.
It is much more relevant to define actions of the parallel $k$-citelangis operads on permutations, rather than on $k$-tuple of permutations.
Such actions are already known for~$k = 1$ ($\Dend$~operad) or~$k = 2$ ($\Quad$~operad), but are unfortunately only defined in these two situations because a permutation has only two ends!
In contrast, we will define actions of the series $k$-citelangis operads for any~$k \ge 1$.

Even though it is limited to the case~$k \le 2$ and already partially known, we discuss the case of the parallel $k$-citelangis operads in \cref{subsec:actionParallelCitalangisOperads} as it serves as a prototype for the series situation developed in \cref{subsec:actionSeriesCitalangisOperads}.
Let us stress out that, while the structures, phrasings, and notations of \cref{subsec:actionParallelCitalangisOperads,subsec:actionSeriesCitalangisOperads} are voluntarily carbon copies of each other, the definitions of the (messy and tidy) parallel and series compositions of multipermutations and multiposets differ, and the parameter $k$ is restricted to~$2$ in \cref{subsec:actionParallelCitalangisOperads} and not in \cref{subsec:actionSeriesCitalangisOperads}.

We refer the reader to \cref{subsec:multiobjects} for definitions and notations on multipermutations and multiposets.

A road map through combinatorial models and actions for parallel/series messy/tidy citelangis operads is given in \cref{table:roadMap}.
Roughly speaking, both in the parallel situation with~$k = 2$ and in the series situation for arbitrary~$k$, we define an operad on suitable $k$-posets, and two operads on $k$-permutations called messy and tidy, such that
\begin{itemize}
\item the linear extensions map defines a morphism from the poset operad to the messy operad,
\item the tidy composition of two $k$-permutations is the lexicographically minimal $k$-permutation appearing in their messy composition.
\end{itemize}
This enables us to index the bases of the corresponding citanlegis operads by combinatorial objects, namely $k$-permutations satisfying certain decomposition properties. 

\begin{table}[t]
	\centerline{
	\renewcommand{\arraystretch}{1.3}
	\begin{tabular}{c|c|c}
	& Parallel (only $k = 2$) & Series (any $k$)
	\\
	\hline
	\multirow{3}{*}{\rotatebox{90}{poset\hspace{2.5cm}}}
	&
	parallel composition of bounded $2$-posets
	&
	series composition of $k$-rooted $k$-posets
	\\[-.15cm]
	&
	\cref{def:parallelBoundedPosetOperad,fig:parallelPosetOperad}
	&
	\cref{def:seriesRootedPosetOperad,fig:seriesPosetOperad}
	\\
	&
	\begin{tikzpicture}[baseline={(current bounding box.center)}, blue]
		\node (1a) at (0,.5) {$1$};
		\node (1b) at (0,1.5) {$1$};
		\node (2a) at (1,0) {$2$};
		\node (2b) at (1,2) {$2$};
		\node (3a) at (2,.5) {$3$};
		\node (3b) at (2,1.5) {$3$};
		\draw (1a) -- (1b);
		\draw (1b) -- (2b);
		\draw (2a) -- (1a);
		\draw (2a) -- (3a);
		\draw (3a) -- (3b);
		\draw (3b) -- (2b);
	\end{tikzpicture}
	$\;\posetParallelCirc{3}\;$
	\begin{tikzpicture}[baseline={(current bounding box.center)}, red]
		\node (1a) at (0,1) {$1$};
		\node (1b) at (0,2) {$1$};
		\node (2a) at (1,0) {$2$};
		\node (2b) at (1,1) {$2$};
		\draw (1a) -- (1b);
		\draw (2a) -- (1a);
		\draw (2a) -- (2b);
		\draw (2b) -- (1b);
	\end{tikzpicture}
	$\;=\;$
	\begin{tikzpicture}[baseline={(current bounding box.center)}]
		\node (1aM) at (0,.5) {\blue $1$};
		\node (1bM) at (0,2.5) {\blue $1$};
		\node (2aM) at (1,0) {\blue $2$};
		\node (2bM) at (1,3) {\blue $2$};
		\node (1aN) at (2,1.5) {\red $3$};
		\node (1bN) at (2,2.5) {\violet $3$};
		\node (2aN) at (3,.5) {\violet $4$};
		\node (2bN) at (3,1.5) {\red $4$};
		\draw [blue] (1aM) -- (1bM);
		\draw [red] (1aN) -- (1bN);
		\draw [blue] (1bM) -- (2bM);
		\draw [blue] (1bN) -- (2bM);
		\draw [blue] (2aM) -- (1aM);
		\draw [blue] (2aM) -- (2aN);
		\draw [red] (2aN) -- (1aN);
		\draw [red] (2aN) -- (2bN);
		\draw [red] (2bN) -- (1bN);
	\end{tikzpicture}
	&
	\begin{tikzpicture}[baseline={(current bounding box.center)},level/.style={sibling distance=1cm, level distance = .7cm}, blue]
		\node {$3$} [grow' = up]
			child {node {$3$}
				child {node {$1$}
					child {node {$2$}
						child {node {$1$}}
						child {node {$2$}}
					}
				}
				child {node {$4$}
					child {node {$4$}}
				}
			}
		;
	\end{tikzpicture}
	$\;\posetSeriesCirc{1}\;$
	\begin{tikzpicture}[baseline={(current bounding box.center)},level/.style={sibling distance=1cm, level distance = .7cm}, red]
		\node {$2$} [grow' = up]
			child {node {$1$}
				child {node {$1$}}
				child {node {$2$}}
			}
		;
	\end{tikzpicture}
	$\;=\;$
	\begin{tikzpicture}[baseline={(current bounding box.center)},level/.style={sibling distance=1cm, level distance = .7cm}, blue]
		\node {$4$} [grow' = up]
			child {node {$4$}
				child {node {$1$}
					child {node {\violet $3$}
						child {node {$1$}}
						child [violetEdge] {node {\violet $2$}
							child [redEdge] {node {\red $2$}}
							child [redEdge] {node {\red $3$}}						
						}
					}
				}
				child {node {$5$}
					child {node {$5$}}
				}
			}
		;
	\end{tikzpicture}
	\\[.5cm]
	&
	$\LinExt \Big\downarrow$\cref{prop:morphismParallelLinExt}
	&
	$\LinExt \Big\downarrow$\cref{prop:morphismSeriesLinExt}
	\\[.5cm]
	\multirow{3}{*}{\rotatebox{90}{messy}}
	&
	messy parallel composition of $2$-permutations
	&
	messy series composition of $k$-permutations
	\\[-.15cm]
	&
	\cref{def:compositionMessyParallelZinbiel}
	&
	\cref{def:compositionMessySeriesZinbiel}
	\\
	&
	$3{\blue1422}3{\blue41} \messyParallelCirc{3} 3{\red1312}2 =$
	&
	${\blue31}2{\blue3}2{\blue144} \messySeriesCirc{2} 31{\red3122} =$
	\\[-.1cm]
	&
	$5{\red3534}{\blue1622}4{\blue61} + \dots + 5{\blue1622}{\red3534}4{\blue61}$
	&
	${\blue51}4{\blue5}2{\red4233}{\blue1}{\blue66} + \dots +  {\blue5}3{\blue454}1{\blue66}{\red3122}$
	\\[.5cm]
	&
	$\LexMin\Big\downarrow$\cref{lem:morphismParallelLexMin}\hspace*{.8cm}
	&
	$\LexMin\Big\downarrow$\cref{lem:morphismSeriesLexMin}\hspace*{.8cm}
	\\[.5cm]
	\multirow{3}{*}{\rotatebox{90}{tidy}}
	&
	tidy parallel composition of $2$-permutations
	&
	tidy series composition of $k$-permutations
	\\[-.15cm]
	&
	\cref{prop:combinatorialModelCompositionsTidyParallelCitelangis,def:compositionTidyParallelZinbiel}
	&
	\cref{prop:combinatorialModelCompositionsTidySeriesCitelangis,def:compositionTidySeriesZinbiel}
	\\
	&
	$3{\blue1}|{\blue422}3{\blue41} \tidyParallelCirc{3} 3{\red1312}2 = 5{\blue1}{\red3534}{\blue622}4{\blue61}$
	&
	${\blue31}2{\blue3}2{\blue1}|{\blue44} \tidySeriesCirc{2} 31{\red3122} = {\blue51}4{\blue5}2{\blue1}{\red4233}{\blue66}$
	\\[.5cm]
	\multirow{3}{*}{\rotatebox{90}{bases\hspace*{-1cm}}}
	&
	fully bounded cuttable $2$-permutations
	&
	fully $k$-rooted cuttable $k$-permutations
	\\[-.15cm]
	&
	\cref{def:fullyBoundedCuttable,prop:fullyBoundedCuttableBasisMessy,prop:fullyBoundedCuttableBasisTidy}
	&
	\cref{def:fullykrootedCuttable,prop:fullyRootedCuttableBasisMessy,prop:fullyRootedCuttableBasisTidy}
	\end{tabular}
	}
	\caption{A road map through combinatorial models and actions for parallel/series messy/tidy citelangis operads.}
	\label{table:roadMap}
\end{table}


\subsection{Actions of parallel citelangis operads}
\label{subsec:actionParallelCitalangisOperads}

We now discuss the action of parallel $2$-citelangis operads~$\tidyCitelangisParallel[\op{l,r}]$ and~$\messyCitelangisParallel[2]$ on certain $2$-permutations and $2$-posets.
We start with some combinatorial considerations on certain $2$-permutations.


\subsubsection{Bounded cuts in \texorpdfstring{$2$}{2}-permutations}
\label{subsubsec:boundedpermutations}

We introduce first an intruiguing class of $2$-permutations.
These permutations will be instrumental in studying the action of the tidy parallel $2$-citelangis operad, but we also believe that they deserve further study for their own sake.

\begin{definition}
\label{def:boundedCutMultipermutation}
Let~$\sigma \in \Perm_2(n)$ be a $2$-permutation of degree~$n$ and~$\gamma \in [n-1]$.
We say that~$\gamma$ is a \defn{bounded cut} of~$\sigma$ if we can write~$\sigma = f \mu \nu l$ where~$f$ and~$l$ are the first and last letter of~$\sigma$ respectively, while~$\mu$ and~$\nu$ are words such that~$\mu_i \le \gamma < \nu_j$ for all~$i \in [|\mu|]$ and~$j \in [|\nu|]$.
We denote by~$\bcuts(\sigma)$ the set of bounded cuts of~$\sigma$.
We say that the $2$-permutation~$\sigma$ is \defn{bounded cuttable} if it admits a bounded cut, and \defn{bounded uncuttable} if it admits no bounded cut.
\end{definition}

For example, $1$ is a bounded cut of the $2$-permutation~$31123424$, while the $2$-permutation~$31421324$ is bounded uncuttable.
Note that there is no condition on the sizes of~$\mu$ and~$\nu$ in \cref{def:boundedCutMultipermutation} (they can be empty, or of sizes which are not multiples of~$2$).
We now observe that bounded cuts behave properly with restrictions in the sense of of \cref{def:restrictionMultipermutation}.

\begin{lemma}
\label{lem:boundedCutsRestriction}
Consider~$L \subseteq [n]$ and~$\gamma \in [\min(L), \max(L)-1]$, and let~$\gamma^{|L} \eqdef |[\gamma] \cap L|$ denote the number of elements of~$L$ between~$1$ and~$\gamma$.
If~$\gamma$ is a bounded cut of a $2$-permutation~$\sigma \in \Perm_2(n)$, then $\gamma^{|L}$ is a bounded cut of its restriction~$\sigma^{|L}$.
\end{lemma}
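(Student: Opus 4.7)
The plan is to track which positions of~$\sigma$ survive after restriction to~$L$, and then to identify the decomposition of~$\sigma^{|L}$ that witnesses~$\gamma^{|L}$ as a bounded cut. First I would fix the decomposition $\sigma = f \mu \nu l$ provided by the hypothesis (so $\mu$ occupies positions~$[2, 1+|\mu|]$ and $\nu$ occupies positions~$[2+|\mu|, 2n-1]$ in $\sigma$), and let $p_1 < p_2 < \dots < p_{2|L|}$ enumerate those positions $p \in [2n]$ with $\sigma_p \in L$. By \cref{def:restrictionMultipermutation}, the $i$-th letter of~$\sigma^{|L}$ is the image of~$\sigma_{p_i}$ under the relabeling~$\ell_x \mapsto x$, which sends a letter of~$L$ of value~$\le \gamma$ to a value~$\le \gamma^{|L}$ and a letter of value~$> \gamma$ to a value~$> \gamma^{|L}$.

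Before setting up the cut, I would verify that $\gamma^{|L}$ is a legitimate cut index, namely that $1 \le \gamma^{|L} \le |L|-1$. The assumption $\gamma \ge \min(L)$ forces $\min(L) \in [\gamma] \cap L$, so $\gamma^{|L} \ge 1$; and $\gamma \le \max(L) - 1$ forces $\max(L) \notin [\gamma] \cap L$, so $\gamma^{|L} \le |L|-1$.

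The core step is to exhibit the decomposition $\sigma^{|L} = f' \mu' \nu' l'$. I would set $f'$ and $l'$ to be the first and last letters of~$\sigma^{|L}$, namely the relabelings of~$\sigma_{p_1}$ and $\sigma_{p_{2|L|}}$. For any middle index $2 \le i \le 2|L|-1$, the strict chain $1 \le p_1 < p_i < p_{2|L|} \le 2n$ forces $p_i \in [2, 2n-1]$, that is, inside the range occupied by~$\mu\nu$. Thus $\sigma_{p_i}$ is either a letter of~$\mu$ (value $\le \gamma$, relabeled to a value~$\le \gamma^{|L}$, contributing to~$\mu'$) or a letter of~$\nu$ (value $> \gamma$, relabeled to a value~$> \gamma^{|L}$, contributing to~$\nu'$). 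Since in~$\sigma$ all $\mu$-positions precede all $\nu$-positions, the middle letters of~$\sigma^{|L}$ form~$\mu'\nu'$ with $\mu'_i \le \gamma^{|L} < \nu'_j$ for all~$i,j$.

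The statement is essentially bookkeeping, so no real obstacle arises. The only mild subtlety is that when $f \notin L$ or $l \notin L$, the first or last letter of~$\sigma^{|L}$ need not descend from~$f$ or~$l$; this is absorbed by defining~$f'$ and~$l'$ directly as the extremal surviving letters (indices $p_1$ and~$p_{2|L|}$), after which the middle-position analysis above applies uniformly.
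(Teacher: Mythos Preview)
Your proof is correct and follows the same idea as the paper's: write $\sigma = f\mu\nu l$ and track each piece through the restriction. The paper's version is terser---it simply writes $\sigma^{|L} = f^{|L}\mu^{|L}\nu^{|L}l^{|L}$ and asserts the value inequalities survive---while you make explicit the edge case where $f$ or $l$ does not belong to~$L$ (so the first or last letter of $\sigma^{|L}$ must be borrowed from the middle), and you also check that $\gamma^{|L}$ lands in the valid range~$[1,|L|-1]$. These are genuine clarifications rather than a different method.
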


\begin{proof}
Since~$\gamma$ is a bounded cut of~$\sigma$, we can write~$\sigma = f \mu \nu l$ with~$\mu_i \le \gamma < \nu_j$ for all~${i \in [|\mu|]}$ and~$j \in [|\nu|]$.
Then~$\sigma^{|L} = f^{|L} \mu^{|L} \nu^{|L} l^{|L}$.
Moreover~$\mu^{|L}_i \le \gamma^{|L} < \nu^{|L}_j$ for any~$i \in [|\mu^{|L}|]$ and~${j \in [|\nu^{|L}|]}$.
Therefore, $\gamma^{|L}$ is a bounded cut of~$\sigma^{|L}$.
\end{proof}

Note that the reverse statement is wrong.
Consider for instance the $2$-permutation~$\sigma = 213123$ and the subset~$L = \{1,2\}$.
Then~$1$ is not a bounded cut of~$\sigma$, but $1^{|L} = 1$ is a bounded cut of~$\sigma^{|L} = 2112$.

\begin{proposition}
\label{prop:equivalenceFullyBoundedCuttable}
The following conditions are equivalent for a $2$-permutation of degree~$n$:
\begin{enumerate}[(i)]
\item its restriction to any interval of~$[n]$ of size at least $2$ is bounded cuttable,
\item its restriction to any subset of~$[n]$ of size at least $2$ is bounded cuttable.
\end{enumerate}
\end{proposition}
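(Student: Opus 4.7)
The easy direction (ii) $\Rightarrow$ (i) is immediate: every interval of $[n]$ of size at least $2$ is a subset of $[n]$ of size at least $2$, so the interval-restriction hypothesis in (i) is a special case of the subset-restriction hypothesis in (ii). The content lies in the converse.

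For (i) $\Rightarrow$ (ii), the plan is to lift a bounded cut from the interval hull of a given subset $L$ via \cref{lem:boundedCutsRestriction}. Given $L \subseteq [n]$ with $|L| \ge 2$, I set $s \eqdef \min(L)$ and $\ell \eqdef \max(L)$ and consider the interval $I \eqdef [s, \ell]$, whose size is at least $2$. By assumption (i), the restriction $\sigma^{|I}$ is bounded cuttable, so it admits some bounded cut $\gamma \in [1, \ell - s]$. To transport this cut to~$L$, I pass through the shift implicit in the restriction: set $L' \eqdef \set{x - s + 1}{x \in L} \subseteq [\ell - s + 1]$, noting that $\min(L') = 1$ and $\max(L') = \ell - s + 1$, so that the hypothesis $\gamma \in [\min(L'), \max(L') - 1]$ of \cref{lem:boundedCutsRestriction} is automatic. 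Applying the lemma to the $2$-permutation $\sigma^{|I}$ and the subset $L'$ then yields a bounded cut $\gamma^{|L'}$ of $(\sigma^{|I})^{|L'}$. The compositionality identity $(\sigma^{|I})^{|L'} = \sigma^{|L}$ of \cref{rem:relationOperatorsMultipermutations}, applied with the bijection $L' \leftrightarrow L$ given by $x \mapsto x + s - 1$, closes the argument.

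The only point requiring a moment's thought is checking that the transferred cut $\gamma^{|L'}$ is an honest bounded cut of $\sigma^{|L}$, meaning that it lies strictly between $0$ and $|L|$; this is exactly what the choice of~$I$ as the interval hull of~$L$ buys, since it makes the extremes $1$ and $|L'|$ of $L'$ coincide with $\min(L')$ and $\max(L')$, forcing $\gamma$ to straddle at least one element of~$L'$ on each side. Beyond this observation, the proof is essentially a single invocation of \cref{lem:boundedCutsRestriction} combined with the naturality of restriction, and I do not anticipate any serious obstacle.
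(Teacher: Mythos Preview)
Your proof is correct and follows the same approach as the paper: pass to the interval hull~$I = [\min(L),\max(L)]$, apply assumption~(i) to get a bounded cut of~$\sigma^{|I}$, then push this cut down to~$\sigma^{|L}$ via \cref{lem:boundedCutsRestriction} and the compositionality of restriction from \cref{rem:relationOperatorsMultipermutations}. The only quibble is in your final paragraph, where ``the extremes $1$ and $|L'|$ of $L'$'' should read ``$1$ and $\ell-s+1$'' (i.e.\ $|I|$, not $|L'| = |L|$); but this is cosmetic, since the hypothesis check $\gamma \in [\min(L'),\max(L')-1] = [1,\ell-s]$ in your second paragraph already suffices for the lemma to deliver a genuine bounded cut.
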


\begin{proof}
Assume that~$\sigma \in \Perm_2(n)$ satisfies~(i).
Let~$L \subseteq [n]$ with~$|L| \ge 2$.
Since~$|L| \ge 2$, we have~$\min(L) < \max(L)$ so that the restriction~$\sigma^{|[\min(L), \max(L)]}$ admits a bounded cut~$\gamma$ with~$\min(L) \le \gamma < \max(L)$.
By \cref{rem:relationOperatorsMultipermutations}, the restriction~$\sigma^{|L}$ is just the restriction of $\sigma^{|[\min(L), \max(L)]}$ to~$\bar L \eqdef \set{\ell-\min(L)+1}{\ell \in L}$.
By \cref{lem:boundedCutsRestriction}, $\sigma^{|L}$ admits a bounded cut~$\gamma^{|\bar L}$ with~${1 \le \gamma^{|\bar L} \le |L|}$.
Therefore, $\sigma$ satisfies~(ii).
The reverse implication is obvious.
\end{proof}

\begin{definition}
\label{def:fullyBoundedCuttable}
A $2$-permutation is \defn{fully bounded cuttable} if it satisfies the equivalent conditions of \cref{prop:equivalenceFullyBoundedCuttable}.
\end{definition}

For example, the $2$-permutation~$31123424$ is bounded cuttable but not fully bounded cuttable (since the restriction to the interval~$[2,4]$ is not bounded cuttable).
In contrast, the $2$-permu\-tation~$5211332454$ is fully bounded cuttable.
Note that~$11$ is fully bounded cuttable as there is no subset of size at least~$2$.

\medskip
There is also a characterization of fully bounded cuttable $2$-permutations in terms of pattern avoidance.
We start by the following technical observation.

\begin{lemma}
\label{lem:boundedUncuttableImpliesPattern}
A bounded uncuttable $2$-permutation of degree at least~$2$ contains a pattern~$b \cdot c \cdot a \cdot b'$ with~$a \le b, b' \le c$.
\end{lemma}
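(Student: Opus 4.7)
The plan is to prove the contrapositive: assuming that $\sigma$ contains no pattern of the form $b \cdot c \cdot a \cdot b'$ with $a \le b$ and $b' \le c$, I will exhibit a bounded cut of $\sigma$.

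First I would specialize the no-pattern hypothesis to quadruples $(i_1, i_2, i_3, i_4)$ with $i_1 = 1$ and $i_4 = |\sigma|$, letting $(i_2, i_3)$ range over the middle $\sigma' \eqdef \sigma_2 \cdots \sigma_{|\sigma|-1}$: this rules out any pair of middle positions $i_2 < i_3$ with $\sigma_{i_2} \ge l$ and $\sigma_{i_3} \le f$. Writing $P^*$ for the earliest middle position carrying a value $\ge l$ (when it exists), every middle position strictly after $P^*$ must therefore carry a value $> f$, and every middle position strictly before $P^*$ carries a value $< l$.

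Next I would strengthen this by specializing the no-pattern hypothesis to quadruples with $i_2 = P^*$ and $i_4 = |\sigma|$, letting $i_1$ and $i_3$ range over middle positions on the two sides of $P^*$: since $\sigma_{P^*} \ge l$, the alternative $\sigma_{P^*} < l$ fails, so we obtain $\sigma_{i_1} < \sigma_{i_3}$ for every pair of middle positions with $i_1 < P^* < i_3$. In other words, every middle value strictly before $P^*$ is strictly smaller than every middle value strictly after $P^*$. Let $M$ be the maximum of the middle values at positions strictly before $P^*$; then the middle decomposes as a prefix of values $\le M$ followed by a suffix of values $> M$ (the value at $P^*$ itself being $\ge l > M$), so $\gamma = M$ is a bounded cut of $\sigma$, and $1 \le M \le l - 1 \le n - 1$ ensures $M \in [n-1]$.

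The main obstacle is handling the degenerate configurations in which this construction collapses. If $P^*$ does not exist (the middle contains no value $\ge l$), the middle lies in $[1, l-1]$ and $\gamma = l-1$ is a bounded cut. If $P^*$ exists but equals $2$ (so $M$ is undefined), every middle value is either $\ge l$ at position $2$ or $> f$ thereafter, and a short case split on whether $l > f$ or $l \le f$ produces a bounded cut at $\gamma = f$. Finally, the boundary cases $l = 1$ or $f = n$ force one of the two copies of $1$ or $n$ to sit at an endpoint, and a direct verification produces a bounded cut at $\gamma = l - 1$ or $\gamma = n - 1$. These edge cases are routine but genuinely need separate treatment; checking them carefully is the one subtle point of the proof.
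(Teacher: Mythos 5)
There is a genuine gap, and it originates in a misreading of the pattern. You treat ``$a \le b, b' \le c$'' as the two literal inequalities $a \le b$ and $b' \le c$, but the intended meaning is the chain $a \le b \le c$ together with $a \le b' \le c$ (so $a$ is the minimum and $c$ the maximum of the four letters); this is what makes $b$ and $b'$ survive the restriction to $[a,c]$ in the proof of \cref{prop:fullyBoundedCuttablePatterns}, and it is written out explicitly as ``$a \le b_i \le c$'' in \cref{prop:fullykrootedCuttablePatterns}. Under the correct reading, specializing to $i_1 = 1$ and $i_4 = |\sigma|$ only forbids middle pairs with $\sigma_{i_2} \ge \max(f,l)$ and $\sigma_{i_3} \le \min(f,l)$, so your first step is a valid deduction only when $f \le l$. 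It genuinely fails when $f > l$: the $2$-permutation $\sigma = 32114432$ (degree $4$, $f = 3$, $l = 2$) avoids the pattern, yet its middle $211443$ contains the pair $(2,1)$ at positions $(2,3)$ with $\sigma_2 \ge l$ and $\sigma_3 \le f$. On this example $P^* = 2$, your claim that every middle position after $P^*$ carries a value $> f$ is false ($\sigma_3 = 1$), and the cut $\gamma = f = 3$ that your degenerate case outputs is not a bounded cut, since the value $3$ at position $7$ follows the two $4$'s (the actual cut is $\gamma = 2$). Note also that this example is not caught by your boundary cases $l = 1$ or $f = n$.

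The damage is localized but real. Your second specialization, on quadruples $(i_1, P^*, i_3, |\sigma|)$, is in fact valid under the correct reading, because $\sigma_{i_1} < l \le \sigma_{P^*}$ already supplies the chain inequalities involving $c = \sigma_{P^*}$; so the main construction (the cut at $M$ when $P^* > 2$) survives, and for $f \le l$ your two-step argument is a clean alternative to the paper's proof, which instead anchors on the second occurrence of the last letter's value and analyses which values straddle it. But when $f > l$ and $\sigma_2 \ge l$ (so $P^* = 2$ and $M$ is undefined), the hypothesis gives you no usable control on the middle, and this is precisely the regime where the paper's proof does its real work. As written, the proposal does not establish the lemma.
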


\begin{proof}
Suppose by contradiction that a $2$-permutation~$\sigma = \sigma_1 \dots \sigma_{2n}$ contains no such pattern.

Assume first that~$\sigma_1 = \sigma_{2n} \defeq v$.
Then for any~$u \le v < w$, both values~$u$ must appear before both values~$w$, otherwise we would have the pattern~$v \cdot w \cdot u \cdot v$.
Therefore, both~$v-1$ and~$v$ (resp.~$1$, resp.~$n-1$) are bounded cuts of~$\sigma$ if~$1 < v < n$ (resp.~if~$v = 1$, resp.~if~$v = n$).

Assume now that~$r \eqdef \sigma_1 < \sigma_{2n} \defeq s$.
Let~$p$ denote the position of the other~$s$ of~$\sigma$, \ie $p < 2n$ and~$\sigma_p = s$.
We distinguish two cases:
\begin{enumerate}[(i)]
\item Assume first that no value of~$\sigma$ appears both before and after the position~$p$. This implies that for~$w > s$, both values~$w$ appear after the position~$p$, as otherwise we would have a forbidden pattern~$w \cdot w \cdot s \cdot s$. Let~$v$ be the minimal value that appears after the position~$p$. Note that~$v > 1$ as otherwise~$r \cdot s \cdot 1 \cdot s$ is a forbidden pattern. We claim that~$v-1$ is a bounded cut of~$\sigma$. Indeed, for any~$u < v$, both values~$u$ appear before the position~$p$ by definition. Moreover, for any~$w \ge v$ distinct from~$s$, both values~$w$ appear after the position~$p$, as otherwise we would have~$v \le w < s$ and thus the forbidden pattern~$w \cdot s \cdot v \cdot s$.
\item Assume now that there is a value~$t$ that appears both before and after the position~$p$. Note that it imposes that~$s < t$ as otherwise we would have the forbidden pattern~$t \cdot s \cdot t \cdot s$. Let~$q$ denote the position of the first value~$t$. We can assume without loss of generality that $q$ is the minimal position of a value that appears both before and after~$p$. Let~$v$ be the minimal value that appears after the position~$q$. Note that~$v > 1$ as otherwise~$r \cdot t \cdot 1 \cdot s$ is a forbidden pattern. We claim that~$v-1$ is a bounded cut of~$\sigma$. Indeed, for any~$u < v$, both values~$u$ appear before the position~$q$ by definition. Moreover, for any~$w \ge v$ distinct from~$s$, both values~$w$ appear after the position~$q$. Otherwise, by minimality of the position~$q$, the second value~$w$ could not be on the right of~$p$. Therefore, either~$w > s$ and we have the forbidden pattern~$w \cdot w \cdot s \cdot s$, or~$v \le w < s < t$ and we have the forbidden pattern~$w \cdot t \cdot v \cdot s$.
\end{enumerate}

Finally, the case~$\sigma_1 > \sigma_{2n}$ is very similar to the previous one and left to the reader.
\end{proof}

\begin{proposition}
\label{prop:fullyBoundedCuttablePatterns}
A $2$-permutation is fully bounded cuttable if and only if it avoids the pattern ${b \cdot c \cdot a \cdot b'}$ with~$a \le b , b' \le c$.
\end{proposition}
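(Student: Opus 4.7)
The proof splits into the two implications of the biconditional.

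For $(\Leftarrow)$, suppose $\sigma$ avoids the pattern. I would first observe that such a pattern is preserved by restriction: an occurrence of $b \cdot c \cdot a \cdot b'$ at positions $q_1 < q_2 < q_3 < q_4$ in $\sigma^{|L}$ pulls back via the order-preserving injection $x \mapsto \ell_x$ of \cref{def:restrictionMultipermutation} to an occurrence of the same shape in $\sigma$. Consequently every restriction of $\sigma$ also avoids the pattern, and the contrapositive of \cref{lem:boundedUncuttableImpliesPattern} ensures that each such restriction of degree at least $2$ is bounded cuttable. By \cref{prop:equivalenceFullyBoundedCuttable}, this is precisely the condition for $\sigma$ to be fully bounded cuttable.

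For $(\Rightarrow)$, I would argue contrapositively: assuming $\sigma$ contains a pattern $b \cdot c \cdot a \cdot b'$ with $a \le b, b' \le c$, I exhibit a restriction of $\sigma$ that is bounded uncuttable. Note first that $a < c$, since four pattern positions cannot fit into the two copies of a single value. Consider then the restriction $\tau \eqdef \sigma^{|[a,c]}$, a $2$-permutation of degree $m \eqdef c - a + 1 \ge 2$. The pattern is preserved in $\tau$, but the relabelling sends $a \mapsto 1$ and $c \mapsto m$, so that the resulting pattern takes the form $B \cdot m \cdot 1 \cdot B'$ with $1 \le B, B' \le m$, now involving the extreme values of $\tau$. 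It therefore suffices to establish the following key claim: any $2$-permutation $\tau$ of degree $m \ge 2$ containing a pattern $B \cdot m \cdot 1 \cdot B'$ is bounded uncuttable.

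The key claim is the main substance of the proof, and I would argue it by contradiction. Suppose $\tau = f \mu \nu l$ witnesses a bounded cut $\gamma \in [m-1]$, with $\mu$-entries at most $\gamma$ and $\nu$-entries strictly greater than $\gamma$. Let $q_1 < q_2 < q_3 < q_4$ denote the pattern positions, of values $B, m, 1, B'$. Since $m > \gamma$, the value $m$ at $q_2$ lies in $\nu$ or at a boundary position ($f$ or $l$); since $1 \le \gamma$, the value $1$ at $q_3$ lies in $\mu$ or at a boundary position. As $\mu$-positions precede $\nu$-positions, the inequality $q_2 < q_3$ forces either $q_2 = 1$, in which case $q_1 < 1$ is impossible, or $q_3 = 2m$, in which case $q_4 > 2m$ is impossible. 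The main obstacle of the argument sits in this case analysis: it exploits the interplay between the left-to-right pattern order $q_2 < q_3$ and the fact that a bounded cut forces values $> \gamma$ to sit strictly to the right of values $\le \gamma$ in the middle of $\tau$.
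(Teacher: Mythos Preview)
Your proof is correct and follows the same strategy as the paper: restrict to the interval $[a,c]$ for the forward direction, and invoke \cref{lem:boundedUncuttableImpliesPattern} on all restrictions for the converse. The paper simply asserts that $\sigma^{|[a,c]}$ is bounded uncuttable, whereas you supply the explicit boundary-position analysis (the ``key claim'') that justifies this; your argument there is sound, with the two omitted boundary cases $q_2 = 2m$ and $q_3 = 1$ being ruled out immediately by $q_2 < q_3$.
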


\begin{proof}
There is nothing to prove for the $2$-permutation~$11$.
If a $2$-permutation~$\sigma$ contains a pattern~${b \cdot c \cdot a \cdot b'}$ with~$a \le b , b' \le c$, then its restriction~$\sigma^{|[a,c]}$ is bounded uncuttable, so that $\sigma$ is not fully bounded cuttable.
Conversely, if~$\sigma$ avoids the pattern~${b \cdot c \cdot a \cdot b'}$ with~$a \le b , b' \le c$, then all its restrictions do, so that they are all bounded cuttable by \cref{lem:boundedUncuttableImpliesPattern}.
\end{proof}

We derive in particular the following observation from \cref{prop:fullyBoundedCuttablePatterns}.

\begin{corollary}
\label{coro:fullyBoundedCuttableStructure}
If~$\sigma \in \Perm_2(n)$ is fully bounded cuttable and~$i \in [n]$, the factor of~$\sigma$ located in between the two occurences of~$i$ decomposes into~$\mu \nu$ where~$\mu_p < i < \nu_q$ for all~$p \in [|\mu|]$ and~$q \in [|\nu|]$.
\end{corollary}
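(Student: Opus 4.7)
The plan is to deduce this structural decomposition directly from the pattern-avoidance characterization of \cref{prop:fullyBoundedCuttablePatterns}. Let $p_1 < p_2$ denote the positions of the two occurrences of $i$ in $\sigma$, and let $\xi \eqdef \sigma_{p_1+1} \cdots \sigma_{p_2-1}$ denote the factor strictly between them. Note first that since each value~$j$ of~$\sigma$ appears exactly twice in natural order, no letter of~$\xi$ equals~$i$, so every letter of~$\xi$ is either strictly smaller than~$i$ or strictly larger than~$i$. Thus, writing~$\xi = \mu\nu$ as required amounts to showing that no letter of~$\xi$ smaller than~$i$ can appear to the right of a letter of~$\xi$ larger than~$i$.

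I would argue this by contradiction. Suppose there exist positions $p_1 < q_1 < q_2 < p_2$ with $c \eqdef \sigma_{q_1} > i$ and $a \eqdef \sigma_{q_2} < i$. Reading the four letters at positions $p_1 < q_1 < q_2 < p_2$ of~$\sigma$ gives the subword $i \cdot c \cdot a \cdot i$. Setting $b \eqdef i$ and $b' \eqdef i$, we have $a < i = b$ and $b' = i < c$, so $a \le b$ and $b' \le c$. This is precisely a forbidden pattern~$b \cdot c \cdot a \cdot b'$ of \cref{prop:fullyBoundedCuttablePatterns}, contradicting the assumption that $\sigma$ is fully bounded cuttable.

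There is no real obstacle here: the statement is essentially a reformulation of one specific instance of the forbidden pattern, obtained by pinning $b = b' = i$ to the two occurrences of~$i$. The only thing worth emphasizing is that the argument uses \cref{prop:fullyBoundedCuttablePatterns} rather than \cref{def:fullyBoundedCuttable} directly, since the latter would require analyzing the restriction $\sigma^{|[a,c]}$ and reconstructing a bounded cut from the structure of the factor, which is more cumbersome than exhibiting the pattern.
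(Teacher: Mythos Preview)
Your proof is correct and follows exactly the approach the paper indicates: the corollary is stated as a direct consequence of \cref{prop:fullyBoundedCuttablePatterns}, and you have spelled out precisely how the existence of a larger letter before a smaller one between the two occurrences of~$i$ yields the forbidden pattern~$i \cdot c \cdot a \cdot i$. There is nothing to add.
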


\pagebreak
Finally, we observe that fully bounded cuttable $2$-permutations form a pattern class.

\begin{theorem}
\label{thm:fullyBoundedCuttablepermutationClass}
The set of fully bounded cuttable $2$-permutations is a $2$-permutation class: for any fully bounded cuttable $2$-permutation~${\sigma \in \Perm_2(n)}$ and any~$L \subseteq [n]$, the restriction~$\sigma^{|L}$ is fully bounded cuttable.
\end{theorem}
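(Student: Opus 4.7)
The plan is to invoke the characterization of fully bounded cuttable $2$-permutations given by condition~(ii) of \cref{prop:equivalenceFullyBoundedCuttable} (every restriction to a subset of size at least $2$ is bounded cuttable), together with the fact that restriction is transitive, as recorded in \cref{rem:relationOperatorsMultipermutations}. The key observation is that a restriction of a restriction of $\sigma$ is itself a restriction of $\sigma$.

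More precisely, fix a fully bounded cuttable $\sigma \in \Perm_2(n)$ and $L = \{\ell_1 < \dots < \ell_p\} \subseteq [n]$. To show that $\sigma^{|L}$ is fully bounded cuttable, I verify condition~(ii) of \cref{prop:equivalenceFullyBoundedCuttable} for $\sigma^{|L}$. Given any $X \subseteq [p]$ with $|X| \ge 2$, \cref{rem:relationOperatorsMultipermutations} gives the identity $(\sigma^{|L})^{|X} = \sigma^{|\set{\ell_x}{x \in X}}$. Since $|\set{\ell_x}{x \in X}| = |X| \ge 2$, applying~(ii) to $\sigma$ directly yields that this double restriction is bounded cuttable, so $\sigma^{|L}$ satisfies~(ii) and is therefore fully bounded cuttable.

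An equally short alternative argument proceeds via the pattern-avoidance characterization of \cref{prop:fullyBoundedCuttablePatterns}: the relabeling $x \mapsto \ell_x$ used to construct $\sigma^{|L}$ is strictly order-preserving, so any occurrence of the forbidden pattern $b \cdot c \cdot a \cdot b'$ (with $a \le b, b' \le c$) in $\sigma^{|L}$ lifts, at the same positions, to an occurrence of the same forbidden pattern in $\sigma$. Hence pattern avoidance is automatically inherited by restrictions.

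There is essentially no obstacle here: the statement is a formal consequence of either of the two equivalent characterizations, once \cref{prop:equivalenceFullyBoundedCuttable,prop:fullyBoundedCuttablePatterns} are in place, combined with the elementary composition rule $(\sigma^{|L})^{|X} = \sigma^{|\set{\ell_x}{x \in X}}$.
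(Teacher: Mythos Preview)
Your proof is correct and matches the paper's approach almost verbatim: both use the transitivity of restriction $(\sigma^{|L})^{|X} = \sigma^{|\set{\ell_x}{x \in X}}$ together with condition~(ii) of \cref{prop:equivalenceFullyBoundedCuttable}, and both note the alternative argument via the pattern-avoidance characterization of \cref{prop:fullyBoundedCuttablePatterns}.
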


\begin{proof}
Assume that~$\sigma \in \Perm_2(n)$ is fully bounded cuttable and that~$L = \{\ell_1, \dots, \ell_{|L|}\} \subseteq [n]$.
For any~$X \subseteq [|L|]$, the restriction~$(\sigma^{|L})^{|X}$ coincides with the restriction~$\sigma^{|\set{\ell_x}{x \in X}}$ by \cref{rem:relationOperatorsMultiposets}, and is thus bounded cuttable.
Therefore, $\sigma^{|L}$ is fully bounded cuttable.

Another approach would be via \cref{prop:fullyBoundedCuttablePatterns} since patterns are preserved by restriction.
\end{proof}


\subsubsection{Action of~\texorpdfstring{$\tidyCitelangisParallel[\prec\!\succ]$}{TCit<>} on words and permutations}
\label{subsubsec:actionTidyParallelSignaletic}

We show in this section that $\FQSym_2$ can be endowed with a $\op{l,r}$-tidy parallel citelangis structure, and that the resulting $\op{l,r}$-tidy parallel citelangis algebra is free. Therefore, the free $\op{l,r}$-tidy parallel citelangis subalgebra generated by the $2$-permutation~$11$ provides a combinatorial model for the basis for the $\op{l,r}$-tidy parallel citelangis operad.

\paraul{Action on words}
We first observe that for any non-empty alphabet~$\alphabet$, the free algebra~$\alphabet^{\ge 2}$ can be endowed with the structure of a $\op{l,r}$-tidy parallel citelangis algebra.

\begin{definition}
\label{def:tidyCitelangisParallelActionWords}
We define the action of the four $\op{l,r}$-tidy parallel citelangis operations of $\op{l,l}$, $\op{l,r}$, $\op{r,l}$ and $\op{r,r}$ on any two words~$xXx'$ and~$yYy'$ by
\begin{gather*}
xXx' \op{l,l} yYy' = xXyYy'x'
\\
xXx' \op{l,r} yYy' = xXx'yYy'
\\
xXx' \op{r,l} yYy' = yxXYy'x'
\\
xXx' \op{r,r} yYy' = yxXx'Yy'.
\end{gather*}
In other words, we choose the first and last letters of the result among the first and last letters of~$xXx'$ or~$yYy'$ depending on the operation, and we concatenate the remaining factors of~$xXx'$~and~$yYy'$.
\end{definition}

\begin{proposition}
\label{prop:tidyCitelangisParallelWords}
The free algebra~$\alphabet^{\ge 2}$, endowed with the operations of \cref{def:tidyCitelangisParallelActionWords}, defines a $\op{l,r}$-tidy parallel citelangis algebra.
The concatenation product~$\cdot$ of~$\alphabet^{\ge 2}$ is given by~${\op{l,r}}$.
\end{proposition}

\begin{proof}
One immediately checks that our operations on words indeed satisfy the $9$ $\op{l,r}$-tidy parallel $2$-citelangis relations given in \cref{rem:multitidyCitelangisParallel}.
\end{proof}

\paraul{Action on permutations}
Replacing the concatenation by the shifted concatenation, one endows similarly the algebra~$\FQSym_2$ of $2$-permutations with the structure of a $\op{l,r}$-tidy parallel citelangis algebra.
This can be rephrased as follows.

\begin{definition}
\label{def:tidyCitelangisParallelActionPermutations}
For any operation~${\operation \in \Operations_2}$ and any two $2$-permutations~$\mu$ and~$\nu$ of degree~$m$ and~$n$ respectively, $\mu \, \operation \, \nu$ is the $2$-permutation~$\pi$ appearing in~$\mu \shiftedShuffle \nu$ such that
\begin{itemize}
\item the first and last entries of~$\pi$ are determined by~$\operation$:
\[
\pi_1 = \begin{cases} \mu_1 & \text{if } \operation_1 = {\op{l}} \, , \\ \nu_1 + m & \text{if } \operation_1 = {\op{r}} \, , \end{cases}
\qqandqq
\pi_{m+n} = \begin{cases} \mu_m & \text{if } \operation_2 = {\op{l}} \, , \\ \nu_n + m & \text{if } \operation_2 = {\op{r}} \, , \end{cases}
\]

\item the remaining entries of~$\pi$ are the concatenation of the remaining entries of~$\mu$ and the remaining entries of~$\nu[m]$.
\end{itemize}
\end{definition}

For example, for~$\mu = {\blue 42132413}$ and~$\nu = {\red 2121}$ in~$\FQSym_2$ we have~$\mu \op{l,l} \nu = {\blue 4213241}{\red 6565}{\blue3}$, $\mu \op{l,r} \nu = {\blue 42132413}{\red 6565}$, $\mu \op{r,l} \nu = {\red 6}{\blue 4213241}{\red 565}{\blue3}$ and~$\mu \op{r,r} \nu = {\red 6}{\blue 42132413}{\red 565}$.
The following statement is immediate from \cref{prop:tidyCitelangisParallelWords}.

\begin{proposition}
\label{prop:tidyCitelangisParallelMultipermutations}
The algebra~$(\FQSym_2, \bar\cdot)$, endowed with the
operations of \cref{def:tidyCitelangisParallelActionPermutations}, defines a $\op{l,r}$-tidy parallel citelangis algebra.
The concatenation product~$\bar\cdot$ of~$\FQSym_2$ is given~by~${\op{l,r}}$.
\end{proposition}

The goal of this section is to show that the $\op{l,r}$-tidy parallel citelangis algebra~$\FQSym_2$ is free.
To manipulate this $\op{l,r}$-tidy parallel citelangis algebra, we consider the evaluations of syntax trees of~$\Syntax[\Operations_2]$ in~$\FQSym_2$.
See \cref{fig:tidyEvalParallel} for an illustration.

\begin{definition}
\label{def:tidyEvalParallel}
Denote by~$\tidyEvalPermParallel(\tree ; \sigma_1, \dots, \sigma_p)$ the evaluation of a syntax tree~$\tree \in \Syntax[\Operations_2]$ of arity~$p$ on $p$ $2$-permutations~${\sigma_1, \dots, \sigma_p}$ of~$\Perm_2$ using the $\op{l,r}$-tidy parallel citelangis structure of~$\FQSym_2$. 
The \defn{tidy parallel permutation evaluation} of~$\tree$ is then~$\tidyEvalPermParallel(\tree) \eqdef \tidyEvalPermParallel(\tree ; 11, \dots, 11)$.
We extend by linearity $\tree$ to the elements of~$\Free[\Operations_2]$ on the one hand, and $\sigma_1, \dots, \sigma_p$ to the elements of~$\FQSym_2$ on the other hand.
\end{definition}

\begin{figure}[t]
	\centerline{$
	\tidyEvalPermParallel \left(
		\begin{tikzpicture}[baseline={([yshift=-.8ex]current bounding box.center)}, level/.style={sibling distance=18mm/#1, level distance = 1cm/sqrt(#1)}]
			\node [rectangle, draw] {$\op{r,r}$}
				child {node [rectangle, draw] {$\op{r,r}$}
					child {node {11}}
					child {node [rectangle, draw] {$\op{l,l}$}
						child {node {22}}
						child {node {33}}
					}
				}
				child {node [rectangle, draw] {$\op{r,l}$}
					child {node {44}}
					child {node {55}}
				}
			;
		\end{tikzpicture}
	\right)
	=
	5 \cdot \tidyEvalPermParallel \left(
		\begin{tikzpicture}[baseline={([yshift=-.8ex]current bounding box.center)}, level 1/.style={sibling distance = 1cm, level distance = .7cm}, level 2/.style={sibling distance = .5cm, level distance = .6cm}]
			\node [rectangle, draw] {$\op{r,r}$}
				child {node {11}}
				child {node [rectangle, draw] {$\op{l,l}$}
					child {node {22}}
					child {node {33}}
				}
			;
		\end{tikzpicture}	
	\right) \cdot \tidyEvalPermParallel \left(
		\begin{tikzpicture}[baseline={([yshift=-.8ex]current bounding box.center)}, level/.style={sibling distance=18mm/(#1+2), level distance = 1cm/sqrt(#1+2)}]
			\node [rectangle, draw] {$\perp\perp$}
				child {node {4}}
				child {node {5}}
			;
		\end{tikzpicture}	
	\right) \cdot 4
	=
	5 \cdot 211332 \cdot 45 \cdot 4
	$}
	\caption{Illustration of \cref{rem:tidyEvalParallel}.}
	\label{fig:tidyEvalParallel}
\end{figure}

\begin{remark}
\label{rem:tidyEvalParallel}
Let us rephrase algorithmically \cref{def:tidyEvalParallel}.
For this, we generalize the evaluation to partial syntax trees, \ie trees whose nodes are labeled by operations with $2$ letters among~$\{\op{l}, \op{r}, \perp\}$ and whose leaves are labeled by words.
The evaluation~$\tidyEvalPermParallel(\tree[s])$ of such a partial syntax tree~$\tree[s]$ is defined inductively as follows.
If~$\tree[s]$ is a leaf labeled by a word~$\ind{w}$, then~$\tidyEvalPermParallel(\tree[s]) = \ind{w}$.
Otherwise, $\tidyEvalPermParallel(\tree[s])$ is obtained as follows.
If the first (resp.~second) letter of the root of~$\tree[s]$ is~$\perp$, then we let~$f = \varepsilon$ (resp.~$l = \varepsilon$).
Otherwise, we let a car traverse the partial syntax tree~$\tree[s]$.
This car follows and replaces by a~$\perp$ the first (resp.~second) letter of each signal it traverses, and finally arrives at a leaf where it reads and erases the first (resp.~last) letter.
Let~$f$ (resp.~$l$) be the letter read by the first (resp.~second) car at its destination.
After both cars have reached their destinations, the signal at the root is~$\perp\perp$.
We are left with the two partial syntax trees~$\tree[l]$ and~$\tree[r]$ (where some letters of the signals in the nodes and of the words in the leaves have been erased by the cars).
The evaluation~$\tidyEvalPermParallel(\tree[s])$ is then obtained inductively by
\[
\tidyEvalPermParallel(\tree[s]) = f \cdot \tidyEvalPermParallel(\tree[l]) \cdot \tidyEvalPermParallel(\tree[r]) \cdot l.
\]
Finally, for a syntax tree~$\tree$ of arity~$p$ and $2$-permutations~$\sigma_1 \in \Perm_2(n_1), \dots, \sigma_p \in \Perm_2(n_p)$, the evaluation $\tidyEvalPermParallel(\tree ; \sigma_1, \dots, \sigma_p)$ is the evaluation of the partial syntax tree~$\tree[s]$ obtained from~$\tree$ by putting the permutation~$\sigma_i[n_1 + \dots + n_{i-1}]$ at the $i$-th leaf for all~$i \in [p]$.
In particular, for~$\tidyEvalPermParallel(\tree) = \tidyEvalPermParallel(\tree ; 11, \dots, 11)$, the $i$-th leaf is labeled by the word~$ii$.
See \cref{fig:tidyEvalParallel}.
\end{remark}

As $\FQSym_2$ is a $\op{l,r}$-tidy parallel citelangis algebra, the tidy parallel permutation evaluation is preserved by the $\op{l,r}$-tidy parallel citelangis relations of \cref{rem:multitidyCitelangisParallel}.
Thus, $\tidyEvalPermParallel(\tree[s] ; F_1, \dots, F_p) = \tidyEvalPermParallel(\tree[t] ; F_1, \dots, F_p)$ for any~$\tree[s], \tree[t] \in \Free[\Operations_2](p)$ which are equivalent modulo the $\op{l,r}$-tidy parallel citelangis relations, and for any~$F_1, \dots, F_p \in \FQSym_2$.
The objective of this section is to show the reciprocal statement.
The proof is based on bounded cuts in $2$-permutations introduced in \cref{def:boundedCutMultipermutation}.

\paraul{Tidy parallel permutation evaluations and bounded cuts}
Our next two lemmas state that the bounded cuts of a $2$-permutation~$\rho$ precisely correspond to its decompositions of the form~$\rho = \sigma \, \operation \, \tau$, where~$\operation \in \Operations_2$.
Their proofs immediately follow from \cref{def:tidyCitelangisParallelActionPermutations} and are thus left to the reader.

\begin{lemma}
\label{lem:operationImpliesBoundedCut}
For any $2$-permutations~$\sigma \in \Perm_2(m)$ and~$\tau \in \Perm_2(n)$, and any operation~${\operation \in \Operations_2}$, the degree~$m$ of~$\sigma$ is a bounded cut of~$\sigma \, \operation \, \tau$.
\end{lemma}

\begin{lemma}
\label{lem:boundedCutImpliesOperation}
For any $2$-permutation~$\rho \in \Perm_2(\ell)$ and any bounded cut~$\gamma \in \bcuts(\rho)$, there is a unique~${\operation \in \Operations_2}$ (defined by $\operation_i \eqdef {\op{l}}$ if~$\rho_i \le \gamma$ and~$\operation_i \eqdef {\op{r}}$ if~$\rho_i > \gamma$) such that~${\rho = \rho^{|[\gamma]} \, \operation \, \rho^{|[\ell] \ssm [\gamma]}}$.
\end{lemma}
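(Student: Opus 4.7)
The plan is to unpack both sides of the claimed equality and verify they coincide, relying on the explicit description of the bounded cut structure of $\rho$ together with the definition of the operator action on $2$-permutations.

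First I would use the definition of a bounded cut to write $\rho = f\mu\nu l$ where $f$ and $l$ are the first and last letters of $\rho$, every letter of $\mu$ is at most $\gamma$, and every letter of $\nu$ is strictly greater than $\gamma$. The two restrictions are then easy to read off: since $\mu$ already lies entirely in $[\gamma]$ and $\nu$ entirely in $[\ell]\ssm[\gamma]$, the only ambiguity concerns whether each of the extremal letters $f$ and $l$ contributes to $\rho^{|[\gamma]}$ or to $\rho^{|[\ell]\ssm[\gamma]}$. Concretely, $\rho^{|[\gamma]}$ is obtained from $\rho$ by keeping the letters in $[\gamma]$ in order, and $\rho^{|[\ell]\ssm[\gamma]}$ is obtained similarly and relabeled so that its values become $1,\dots,\ell-\gamma$; after shifting this latter restriction back by $\gamma$ (as required in the shifted shuffle in \cref{def:tidyCitelangisParallelActionPermutations}), one recovers exactly the sub-word formed by the letters $>\gamma$ of $\rho$, including $f$ or $l$ when appropriate.

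Next I would evaluate $\rho^{|[\gamma]} \, \operation \, \rho^{|[\ell]\ssm[\gamma]}$ using \cref{def:tidyCitelangisParallelActionPermutations}. By the prescribed choice of $\operation$, its first letter picks the first letter of $\rho^{|[\gamma]}$ precisely when $f \le \gamma$ and picks the (shifted) first letter of $\rho^{|[\ell]\ssm[\gamma]}$ otherwise; in either case the result equals $f$. The symmetric argument yields $l$ for the last letter. The middle of $\rho^{|[\gamma]} \, \operation \, \rho^{|[\ell]\ssm[\gamma]}$ is, by definition, the concatenation of the middle of $\rho^{|[\gamma]}$ and the middle of the shifted right-hand restriction; a case split on whether $f,l$ are $\le\gamma$ or $>\gamma$ shows in each case that these middles are exactly $\mu$ and $\nu$ respectively, so the full word reconstructed is $f\mu\nu l = \rho$.

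Finally, for uniqueness: any identity $\rho = \sigma \, \operation' \, \tau$ with $\sigma$ of degree $\gamma$ forces $\sigma = \rho^{|[\gamma]}$ and $\tau = \rho^{|[\ell]\ssm[\gamma]}$, since the shifted shuffle uses disjoint value ranges $[\gamma]$ and $\{\gamma+1,\dots,\ell\}$; and once the factors are fixed, the first letter of $\rho$ forces $\operation'_1$ and the last letter of $\rho$ forces $\operation'_2$, yielding precisely the $\operation$ of the statement. There is no real obstacle here: the only care required is the bookkeeping of the four cases coming from whether $f$ and $l$ are each $\le\gamma$ or $>\gamma$, all of which follow the same pattern.
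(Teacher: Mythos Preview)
Your proposal is correct and follows essentially the same approach as the paper: the paper states that this lemma (together with \cref{lem:operationImpliesBoundedCut}) ``immediately follows from \cref{def:tidyCitelangisParallelActionPermutations} and is thus left to the reader,'' and what you have written is precisely the unpacking of that definition via the decomposition $\rho = f\mu\nu l$ coming from the bounded cut. Your uniqueness argument is slightly more than needed---since the factors $\rho^{|[\gamma]}$ and $\rho^{|[\ell]\ssm[\gamma]}$ are already fixed in the statement, it suffices to observe that the first and last letters of $\rho$ determine $\operation_1$ and $\operation_2$ respectively---but this does not affect correctness.
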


\begin{remark}
\label{rem:algoTidyEvalParallel}
\cref{lem:boundedCutImpliesOperation} gives an inductive algorithm to compute all decompositions of a given $2$-permutation~$\rho$ as an evaluation of the form~$\rho = \tidyEvalPermParallel(\tree ; \sigma_1, \dots, \sigma_p)$.
Namely, $\rho$ admits
\begin{itemize}
\item the trivial evaluation~$\rho = \tidyEvalPermParallel(\one ; \rho)$, where~$\one$ is the unit syntax tree with no node and a single leaf, and 
\item the evaluation~$\rho = \tidyEvalPermParallel(\tree ; \sigma_1, \dots, \sigma_l, \tau_1, \dots, \tau_r)$, for any bounded cut~$\gamma \in \bcuts(\rho)$ and any evaluations~$\rho^{|[\gamma]} = \tidyEvalPermParallel(\tree[l] ; \sigma_1, \dots, \sigma_l)$ and $\rho^{|[\ell] \ssm [\gamma]} = \tidyEvalPermParallel(\tree[r] ; \tau_1, \dots, \tau_r)$, where~$\tree$ is the syntax tree with root~$\operation \in \Operations_2$ defined by \cref{lem:boundedCutImpliesOperation} and with subtrees~$\tree[l]$ and~$\tree[r]$.
\end{itemize}
This algorithm implies the existence of decompositions of the form~$\rho = \tidyEvalPermParallel(\tree ; \sigma_1, \dots, \sigma_p)$ where~$\sigma_1, \dots, \sigma_p$ are bounded uncuttable.
In fact, we can even impose the position of the first bounded cut.
\end{remark}

\begin{corollary}
\label{coro:boundedCutImpliesSyntaxTree}
For any $2$-permutation~$\rho \in \Perm_2$ and any bounded cut~$\gamma \in \bcuts(\rho)$, there exists a syntax tree~$\tree$ of arity~$p$ with left subtree of arity~$l$ and bounded uncuttable $2$-permutations ${\sigma_1 \in \Perm_2(n_1), \dots, \sigma_p \in \Perm_2(n_p)}$ such that~$\rho = \tidyEvalPermParallel(\tree ; \sigma_1, \dots, \sigma_p)$ and~$\gamma = n_1 + \dots + n_l$.
\end{corollary}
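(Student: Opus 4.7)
The plan is to proceed by strong induction on the degree $\ell$ of $\rho$. In the base case $\ell = 1$, the only $2$-permutation is $11$, which has no bounded cut, so the claim is vacuous. For the inductive step, we appeal directly to \cref{lem:boundedCutImpliesOperation}, which provides a unique operator $\operation \in \Operations_2$ and the decomposition $\rho = \rho^{|[\gamma]} \, \operation \, \rho^{|[\ell]\ssm[\gamma]}$. Since $\rho^{|[\gamma]} \in \Perm_2(\gamma)$ and $\rho^{|[\ell]\ssm[\gamma]} \in \Perm_2(\ell - \gamma)$ both have degree strictly smaller than~$\ell$, we are in a position to apply the induction hypothesis to each of them.

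However, the induction hypothesis as stated would only decompose the two smaller permutations when a bounded cut is specified, while we need decompositions whose leaves are bounded uncuttable. To handle this, I would strengthen the statement (or introduce a separate auxiliary lemma) asserting that every $2$-permutation~$\pi$ of degree~$m$ admits a decomposition $\pi = \tidyEvalPermParallel(\tree[s] ; \sigma_1, \dots, \sigma_q)$ into bounded uncuttable leaves, also proved by induction on the degree: if~$\pi$ is itself bounded uncuttable, use the trivial evaluation $\pi = \tidyEvalPermParallel(\one ; \pi)$; otherwise, pick any bounded cut~$\gamma'$ of~$\pi$, apply \cref{lem:boundedCutImpliesOperation}, and recurse on the two strictly smaller restrictions. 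This auxiliary statement is essentially a restatement of the algorithm described in \cref{rem:algoTidyEvalParallel}, and its termination is immediate from the strict decrease of the degree.

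Applying this auxiliary statement to $\rho^{|[\gamma]}$ and $\rho^{|[\ell]\ssm[\gamma]}$ yields trees $\tree[l]$ of some arity $l$ and $\tree[r]$ of some arity $p - l$, together with bounded uncuttable $\sigma_1, \dots, \sigma_p$, such that $\rho^{|[\gamma]} = \tidyEvalPermParallel(\tree[l] ; \sigma_1, \dots, \sigma_l)$ and $\rho^{|[\ell]\ssm[\gamma]} = \tidyEvalPermParallel(\tree[r] ; \sigma_{l+1}, \dots, \sigma_p)$. Defining $\tree$ to be the syntax tree with root~$\operation$, left subtree~$\tree[l]$ and right subtree~$\tree[r]$, and using that $\tidyEvalPermParallel$ is compatible with grafting (which is precisely the content of \cref{prop:tidyCitelangisParallelMultipermutations}), we obtain $\rho = \tidyEvalPermParallel(\tree ; \sigma_1, \dots, \sigma_p)$.

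The last thing to verify is the matching of arities, namely that $\gamma = n_1 + \dots + n_l$. This is essentially bookkeeping: the evaluation $\tidyEvalPermParallel(\tree[l] ; \sigma_1, \dots, \sigma_l)$ is a $2$-permutation of degree $n_1 + \dots + n_l$ (the operators in $\Operations_2$ preserve total degree under shifted shuffle), and it equals $\rho^{|[\gamma]}$, which has degree~$\gamma$. I expect the main subtlety to lie not in any single step but in cleanly organizing the two interlocking inductions: the one producing the root operator at the prescribed cut, and the auxiliary one producing the uncuttable decomposition in each subtree. Both rest entirely on \cref{lem:boundedCutImpliesOperation} and the strict decrease of degree upon restriction, so no further combinatorial input should be needed.
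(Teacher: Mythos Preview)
Your proposal is correct and follows essentially the same approach as the paper: the corollary is stated immediately after \cref{rem:algoTidyEvalParallel} and is meant to follow directly from that algorithm by choosing $\gamma$ as the first cut and then recursing on the two restrictions until all leaves are bounded uncuttable. Your write-up simply makes explicit the two nested inductions that the paper leaves implicit in the phrase ``In fact, we can even impose the position of the first bounded cut.''
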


We now characterize the $2$-permutations~$\rho$ that admit a decomposition of the form~${\rho = \tidyEvalPermParallel(\tree)}$.

\begin{proposition}
\label{prop:characterizationTidyParallelPermutationEvaluations}
The tidy parallel permutation evaluations of the syntax trees of~$\Syntax[\Operations_2]$ are precisely the fully bounded cuttable $2$-permutations.
\end{proposition}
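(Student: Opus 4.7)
The plan is to prove both inclusions by induction, using the decomposition $\rho = \sigma \operation \tau$ mirroring the root of a syntax tree.

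For the forward direction, I would proceed by induction on the number of nodes of $\tree$. The base case is $\tree = \one$ with $\tidyEvalPermParallel(\tree) = 11$, which is vacuously fully bounded cuttable since $\Perm_2(1)$ admits no subinterval of size $\ge 2$. For the inductive step, write $\tree$ as the grafting of two subtrees $\tree[l]$ and $\tree[r]$ under a root operator $\operation \in \Operations_2$, so that $\rho \eqdef \tidyEvalPermParallel(\tree) = \sigma \, \operation \, \tau$, where $\sigma \eqdef \tidyEvalPermParallel(\tree[l])$ has some degree $m$ and $\tau \eqdef \tidyEvalPermParallel(\tree[r])$ has some degree $n$. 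By the inductive hypothesis, $\sigma$ and $\tau$ are fully bounded cuttable. To verify that $\rho^{|[a,b]}$ is bounded cuttable for any interval $[a,b] \subseteq [m+n]$ with $b-a \ge 1$, I would distinguish three cases: (i)~if $[a,b] \subseteq [m]$ then $\rho^{|[a,b]} = \sigma^{|[a,b]}$, which is bounded cuttable by induction; (ii)~if $[a,b] \subseteq [m+1, m+n]$ then $\rho^{|[a,b]}$ is (a translate of) a restriction of $\tau$, again bounded cuttable by induction; (iii)~if $a \le m < b$, then by \cref{lem:operationImpliesBoundedCut} the value $m$ is a bounded cut of $\rho$, and by \cref{lem:boundedCutsRestriction} its image $m - a + 1$ is a bounded cut of $\rho^{|[a,b]}$ (it lies in $[1, b-a]$ because $a \le m < b$).

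For the reverse direction, I would proceed by induction on the degree $n$ of $\rho$. If $n = 1$ then $\rho = 11 = \tidyEvalPermParallel(\one)$. If $n \ge 2$, apply fully bounded cuttability to the full interval $[n]$ to obtain a bounded cut $\gamma \in \bcuts(\rho)$. By \cref{lem:boundedCutImpliesOperation}, there is a unique $\operation \in \Operations_2$ such that $\rho = \rho^{|[\gamma]} \, \operation \, \rho^{|[n] \ssm [\gamma]}$. By \cref{thm:fullyBoundedCuttablepermutationClass}, the restrictions $\rho^{|[\gamma]}$ and $\rho^{|[n] \ssm [\gamma]}$ are fully bounded cuttable; since both have degree strictly less than $n$, the induction hypothesis provides syntax trees $\tree[l]$ and $\tree[r]$ with $\rho^{|[\gamma]} = \tidyEvalPermParallel(\tree[l])$ and $\rho^{|[n] \ssm [\gamma]} = \tidyEvalPermParallel(\tree[r])$. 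Grafting them under a root labeled $\operation$ yields a syntax tree $\tree$ with $\rho = \tidyEvalPermParallel(\tree)$.

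I do not expect significant obstacles: the heavy lifting has already been done by \cref{lem:operationImpliesBoundedCut,lem:boundedCutImpliesOperation,lem:boundedCutsRestriction,thm:fullyBoundedCuttablepermutationClass}. The only point requiring care is the crossing case~(iii) in the forward direction, where one must verify that the induced bounded cut of $\rho^{|[a,b]}$ lies in the admissible range $[1, |\rho^{|[a,b]}|-1]$; this is precisely the content of the strict inequalities $a \le m < b$.
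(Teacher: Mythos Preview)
Your proposal is correct and follows essentially the same approach as the paper's proof: induction on arity with the three-case analysis $[a,b]\subseteq[\gamma]$, $[a,b]\subseteq[\gamma+1,\ell]$, and the crossing case handled via \cref{lem:operationImpliesBoundedCut,lem:boundedCutsRestriction} for the forward direction, and induction on degree using \cref{lem:boundedCutImpliesOperation,thm:fullyBoundedCuttablepermutationClass} for the converse. The only cosmetic difference is that the paper splits the crossing case as $a<\gamma<b$ (handling $a=\gamma$ in the second case), whereas you use $a\le m<b$; both are valid.
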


\begin{proof}
Consider first a $2$-permutation~$\rho = \tidyEvalPermParallel(\tree)$ with~$\tree \in \Syntax[\Operations_2](\ell)$.
We prove by induction on~$\ell$ that~$\rho$ is fully bounded cuttable.
If~$\ell = 1$, there is nothing to prove.
Assume that~$\ell \ge 2$ and let~$1 \le a < b \le \ell$.
Let~$\tree[l]$ and~$\tree[r]$ denote the left and right subtrees of~$\tree$, and let~$\gamma$ be the arity of~$\tree[l]$, so that~$\rho^{|[\gamma]} = \tidyEvalPermParallel(\tree[l])$ and~$\rho^{|[\ell] \ssm [\gamma]} = \tidyEvalPermParallel(\tree[r])$.
We distinguish three cases:
\begin{itemize}
\item Assume that~$b \le \gamma$. Since~$\rho^{|[\ell]} = \tidyEvalPermParallel(\tree[l])$ is fully bounded cuttable by induction hypothesis and bounded cuts are preserved by restriction by \cref{lem:boundedCutsRestriction}, we obtain that~$\rho^{|[a,b]} = (\rho^{|[\gamma]})^{|[a,b]}$ is bounded cuttable.
\item Assume that~$\gamma \le a$. The argument is similar since~$\rho^{|[a,b]} = (\rho^{|[\ell] \ssm [\gamma]})^{|[a-\gamma,b-\gamma]}$.
\item Assume finally that~$a < \gamma < b$. By \cref{lem:operationImpliesBoundedCut}, $\gamma$ is a bounded cut of~$\rho$. Therefore, $\gamma-a$ is a bounded cut of~$\rho^{|[a,b]}$ by \cref{lem:boundedCutsRestriction}.
\end{itemize}

Conversely, consider now a fully bounded cuttable $2$-permutation~$\rho \in \Perm_2(\ell)$.
Similarly to \cref{rem:algoTidyEvalParallel}, we prove by induction on~$\ell$ that~$\rho$ is the tidy parallel permutation evaluation of a syntax tree.
If~$\ell = 1$, then~$\rho = \tidyEvalPermParallel(\one)$.
If~$\ell \ge 2$, then~$\rho$ admits at least one bounded cut~$\gamma$ by assumption.
Moreover, $\rho^{|[\gamma]}$ and~$\rho^{|[\ell] \ssm [\gamma]}$ are both fully bounded cuttable by \cref{thm:fullyBoundedCuttablepermutationClass}.
By induction, we obtain that~$\rho^{|[\gamma]} = \tidyEvalPermParallel(\tree[l])$ and~$\rho^{|[\ell] \ssm [\gamma]} = \tidyEvalPermParallel(\tree[r])$.
Then~$\rho = \tidyEvalPermParallel(\tree)$, where~$\tree$ is the syntax tree with root~$\operation \in \Operations_2$ defined by \cref{lem:boundedCutImpliesOperation} and with subtrees~$\tree[l]$~and~$\tree[r]$.
\end{proof}

\paraul{Freeness}
Our objective is now to prove that the decompositions of a given $2$-permutation~$\rho$ provides by \cref{coro:boundedCutImpliesSyntaxTree} are all equivalent up to the $\op{l,r}$-tidy parallel citelangis relations of \cref{rem:multitidyCitelangisParallel}.
Our first step is to understand the evaluations of a quadratic syntax tree on three permutations.
We start from a simple observation, which again immediately follows from \cref{def:tidyCitelangisParallelActionPermutations}.
Recall that a syntax tree is $\op{l,r}$-tidy parallel when the first (resp.~last) traffic signal at each node not contained in its first (resp.~last) parallel route point to the left (resp.~right).

\begin{lemma}
\label{lem:boundedCutIffTidy}
For any $2$-permutations~$\rho \in \Perm_2(\ell)$, $\sigma \in \Perm_2(m)$ and~$\tau \in \Perm_2(n)$, and any operations~$\operation[a], \operation[b], \operation[a]', \operation[b]' \in \Operations_2$, we have
\begin{itemize}
\item $\ell + m$ is a bounded cut of~$\tidyEvalPermParallel\Big( \!\!
	\begin{tikzpicture}[baseline=-.5cm, level 1/.style={sibling distance = .8cm, level distance = .7cm}, level 2/.style={sibling distance = .6cm, level distance = .5cm}]
		\node [rectangle, draw, minimum height=.5cm] {$\operation[a]$}
			child {node {}}
			child {node [rectangle, draw, minimum height=.5cm] {$\operation[b]$}
				child {node {}}
				child {node {}}
			}
		;
	\end{tikzpicture}
\!\! ; \rho, \sigma, \tau \Big)$ if and only if
	\begin{tikzpicture}[baseline=-.5cm, level 1/.style={sibling distance = .8cm, level distance = .7cm}, level 2/.style={sibling distance = .6cm, level distance = .5cm}]
		\node [rectangle, draw, minimum height=.5cm] {$\operation[a]$}
			child {node {}}
			child {node [rectangle, draw, minimum height=.5cm] {$\operation[b]$}
				child {node {}}
				child {node {}}
			}
		;
	\end{tikzpicture}
is $\op{l,r}$-tidy~parallel,
\item $\ell$ is a bounded cut of~$\tidyEvalPermParallel \Big( 
	\begin{tikzpicture}[baseline=-.5cm, level 1/.style={sibling distance = .8cm, level distance = .7cm}, level 2/.style={sibling distance = .6cm, level distance = .5cm}]
		\node [rectangle, draw, minimum height=.5cm] {$\operation[a]'$}
			child {node [rectangle, draw, minimum height=.5cm] {$\operation[b]'$}
				child {node {}}
				child {node {}}
			}
			child {node {}}
		;
	\end{tikzpicture}
; \rho, \sigma, \tau \Big)$ if and only if
	\begin{tikzpicture}[baseline=-.5cm, level 1/.style={sibling distance = .8cm, level distance = .7cm}, level 2/.style={sibling distance = .6cm, level distance = .5cm}]
		\node [rectangle, draw, minimum height=.5cm] {$\operation[a]'$}
			child {node [rectangle, draw, minimum height=.5cm] {$\operation[b]'$}
				child {node {}}
				child {node {}}
			}
			child {node {}}
		;
	\end{tikzpicture}
is $\op{l,r}$-tidy parallel.
\end{itemize}
\end{lemma}

\begin{lemma}
\label{lem:uniqueTidyParallelCitelangisPermutationEvaluationQuadratic}
For any $2$-permutations~$\rho, \sigma, \tau \in \Perm_2$, and any operations~$\operation[a], \operation[b], \operation[a]', \operation[b]' \in \Operations_2$, if
\[
	\tidyEvalPermParallel \Big( 
	\begin{tikzpicture}[baseline=-.5cm, level 1/.style={sibling distance = .8cm, level distance = .7cm}, level 2/.style={sibling distance = .6cm, level distance = .5cm}]
		\node [rectangle, draw, minimum height=.5cm] {$\operation[a]$}
			child {node {}}
			child {node [rectangle, draw, minimum height=.5cm] {$\operation[b]$}
				child {node {}}
				child {node {}}
			}
		;
	\end{tikzpicture}
	; \rho, \sigma, \tau \Big)
	=
	\tidyEvalPermParallel \Big( 
	\begin{tikzpicture}[baseline=-.5cm, level 1/.style={sibling distance = .8cm, level distance = .7cm}, level 2/.style={sibling distance = .6cm, level distance = .5cm}]
		\node [rectangle, draw, minimum height=.5cm] {$\operation[a]'$}
			child {node [rectangle, draw, minimum height=.5cm] {$\operation[b]'$}
				child {node {}}
				child {node {}}
			}
			child {node {}}
		;
	\end{tikzpicture}
	; \rho, \sigma, \tau \Big),
\]
then
\(
	\begin{tikzpicture}[baseline=-.5cm, level 1/.style={sibling distance = .8cm, level distance = .7cm}, level 2/.style={sibling distance = .6cm, level distance = .5cm}]
		\node [rectangle, draw, minimum height=.5cm] {$\operation[a]$}
			child {node {}}
			child {node [rectangle, draw, minimum height=.5cm] {$\operation[b]$}
				child {node {}}
				child {node {}}
			}
		;
	\end{tikzpicture}
	=
	\begin{tikzpicture}[baseline=-.5cm, level 1/.style={sibling distance = .8cm, level distance = .7cm}, level 2/.style={sibling distance = .6cm, level distance = .5cm}]
		\node [rectangle, draw, minimum height=.5cm] {$\operation[a]'$}
			child {node [rectangle, draw, minimum height=.5cm] {$\operation[b]'$}
				child {node {}}
				child {node {}}
			}
			child {node {}}
		;
	\end{tikzpicture}
\)
is a $\op{l,r}$-tidy parallel citelangis relation.
\end{lemma}

\begin{proof}
Let~$\pi$ denote the $2$-permutation obtained by these evaluations, and let $\ell$, $m$ and~$n$ denote the degrees of~$\rho$, $\sigma$ and~$\tau$ respectively.
By \cref{lem:operationImpliesBoundedCut}, we obtain that~$\ell$ and~$\ell + m$ are bounded cuts of~$\pi$.
By \cref{lem:boundedCutIffTidy}, we therefore derive that the two syntax trees are $\op{l,r}$-tidy parallel.
Moreover, the parallel destination vector of the two syntax trees are both given by the first and last letters of~$\pi$ where we replace all letters between~$1$ and~$\ell$ by~$1$, all letters between~$\ell + 1$ and~$\ell + m$ by~$2$, and all letters between~$\ell+m+1$ and~$\ell+m+n$ by~$3$.
Since they are $\op{l,r}$-tidy parallel and have the same parallel destination vector, the two syntax trees form a $\op{l,r}$-tidy parallel citelangis relation.
\end{proof}

We now prove that, up to the $\op{l,r}$-tidy parallel citelangis relations, any $2$-permutation can be obtained in a unique way as the evaluation of a syntax tree on bounded uncuttable $2$-permutations.

\begin{proposition}
\label{prop:uniqueTidyParallelCitelangisPermutationEvaluation}
For any syntax trees~$\tree , \tree' \in \Syntax[\Operations_2]$ of arity~$p$ and~$p'$ respectively, and any bounded uncuttable $2$-permuta\-tions~$\sigma_1, \dots, \sigma_p$, $\sigma'_1, \dots, \sigma'_{p'} \in \Perm_2$, if
\[
\tidyEvalPermParallel(\tree ; \sigma_1, \dots, \sigma_p) = \tidyEvalPermParallel(\tree' ; \sigma'_1, \dots, \sigma'_{p'}),
\]
then $p = p'$, $\tree = \tree'$ modulo the $\op{l,r}$-tidy parallel citelangis relations and~$\sigma_i = \sigma_i'$ for all~$i \in [p]$.
\end{proposition}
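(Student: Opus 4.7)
The plan is to argue by strong induction on the degree $n$ of the common evaluation $\rho \eqdef \tidyEvalPermParallel(\tree; \sigma_1, \dots, \sigma_p) = \tidyEvalPermParallel(\tree'; \sigma'_1, \dots, \sigma'_{p'})$. The base case $n = 1$ is immediate: $\rho = 11$ forces $p = p' = 1$, $\tree = \tree' = \one$, and $\sigma_1 = \sigma'_1 = 11$. For the inductive step $n \ge 2$, we first dispose of the trivial situation: if $p = 1$, then $\rho = \sigma_1$ is bounded uncuttable, and a nontrivial $\tree'$ would force a bounded cut of $\rho$ by \cref{lem:operationImpliesBoundedCut}, a contradiction; thus $p' = 1$ and $\sigma'_1 = \rho = \sigma_1$, with the symmetric case being similar.

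Assume $p, p' \ge 2$, and let $\gamma$ and $\gamma'$ denote the root cuts of $\tree$ and $\tree'$ respectively (the degrees of their left sub-evaluations), both bounded cuts of $\rho$ by \cref{lem:operationImpliesBoundedCut}. When $\gamma = \gamma'$, \cref{lem:boundedCutImpliesOperation} forces the root operations to agree; the left (resp.\ right) subtrees of $\tree$ and $\tree'$ both evaluate to $\rho^{|[\gamma]}$ (resp.\ to the restriction of $\rho$ to values $>\gamma$), each of degree strictly less than $n$, so the inductive hypothesis applied to each half yields the desired citelangis equivalence and matching bounded uncuttable inputs.

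When $\gamma \ne \gamma'$, assume $\gamma < \gamma'$ without loss of generality. Writing $\tree' = \tree[l]' \operation[a]' \tree[r]'$ with $\tidyEvalPermParallel(\tree[l]') = \rho^{|[\gamma']}$, \cref{lem:boundedCutsRestriction} shows that $\gamma$ remains a bounded cut of $\rho^{|[\gamma']}$. By \cref{coro:boundedCutImpliesSyntaxTree}, we can exhibit a syntax tree $\tree[l]''$ of root cut $\gamma$, on bounded uncuttable inputs, evaluating to $\rho^{|[\gamma']}$. Since this common evaluation has degree $\gamma' < n$, the inductive hypothesis applied to $\tree[l]'$ and $\tree[l]''$ gives $\tree[l]' \equiv \tree[l]''$ modulo citelangis relations with identical inputs. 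Substituting and decomposing $\tree[l]'' = \tree[u]'' \operation[b]'' \tree[v]''$ with $\deg\tidyEvalPermParallel(\tree[u]'') = \gamma$, we obtain a tree equivalent to $\tree'$ of the form $(\tree[u]'' \operation[b]'' \tree[v]'') \operation[a]' \tree[r]'$. Because $\gamma$ is a bounded cut of $\rho$, \cref{lem:boundedCutIffTidy} ensures that the left-combed quadratic subtree at the root formed by the operators $\operation[a]'$ and $\operation[b]''$ is $\op{l,r}$-tidy parallel; as the nine $\op{l,r}$-tidy parallel citelangis relations of \cref{rem:multitidyCitelangisParallel} cover exactly the nine possible $\op{l,r}$-tidy parallel left-combed quadratic trees, we can rotate the root to produce an equivalent right-combed tree $\tilde{\tree'} = \tree[u]'' \operation[c] (\tree[v]'' \operation[d] \tree[r]')$ with the same evaluation $\rho$ and root cut equal to $\gamma$. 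Applying the previous case to $\tree$ and $\tilde{\tree'}$ concludes the proof.

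The main obstacle is handling the case of distinct root cuts: a naive quadratic rotation at the root of $\tree'$ need not match any citelangis relation, since the specific operators at the root and the right child may not form an $\op{l,r}$-tidy parallel quadratic pattern. The resolution is to first invoke the inductive hypothesis on the strictly smaller sub-evaluation $\rho^{|[\gamma']}$ to realign the left subtree of $\tree'$ so that its root cut is exactly $\gamma$, so that the quadratic subtree at the top of $\tree'$ becomes $\op{l,r}$-tidy parallel by \cref{lem:boundedCutIffTidy} and therefore coincides with the left-hand side of one of the nine $\op{l,r}$-tidy parallel citelangis relations.
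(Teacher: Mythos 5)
Your proof is correct and follows essentially the same strategy as the paper's: induction driven by the correspondence between bounded cuts and root decompositions, re-decomposition of a subtree at a prescribed cut via \cref{coro:boundedCutImpliesSyntaxTree}, and the tidiness of the resulting quadratic pattern at the root via \cref{lem:boundedCutIffTidy}. The only organizational difference is that you realign just the left subtree of $\tree'$ and reduce to the equal-cut case by an explicit rotation through one of the nine relations, whereas the paper symmetrically re-decomposes both trees into three pieces and concludes with the dedicated quadratic \cref{lem:uniqueTidyParallelCitelangisPermutationEvaluationQuadratic} — the underlying content is the same.
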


\begin{proof}
Let~$\pi = \tidyEvalPermParallel(\tree ; \sigma_1, \dots, \sigma_p) = \tidyEvalPermParallel(\tree' ; \sigma'_1, \dots, \sigma'_{p'})$ and let~$n$ denote its degree.

We prove the result by induction on~$p$.
If~$p = 1$, then~$\pi$ is bounded uncuttable, so that~$p' = 1$, $\tree = \tree'$ is the only syntax tree of arity~$1$, and~$\sigma_1 = \sigma_1' = \pi$.

Assume now that~$p > 1$.
Let~$\operation[a]$ be the root of~$\tree$, let~$\tree[l]$ and~$\tree[r]$ be its left and right subtrees, let~$l$ be the arity of~$\tree[l]$ and let~$\gamma$ be the corresponding bounded cut of~$\pi$.
We thus have~${\tidyEvalPermParallel(\tree[l] ; \sigma_1, \dots, \sigma_l) = \pi^{|[\gamma]}}$ and~$\tidyEvalPermParallel(\tree[r] ; \sigma_{l+1}, \dots, \sigma_p) = \pi^{|[n] \ssm [\gamma]}$.
Define~$\operation[a]', \tree[l]', \tree[r]', l'$ and~$\gamma'$ similarly for~$\tree'$.
These notations are illustrated below:

\begin{center}
	\begin{tikzpicture}[baseline={([yshift=-.8ex]current bounding box.center)}, level/.style={sibling distance = .5cm, level distance = .7cm}]
		\node {$\tree$}
			child {node {$\sigma_1$}}
			child [edge from parent/.style={}] {node {$\dots$}}
			child {node {$\sigma_p$}}
		;
	\end{tikzpicture}
	=
	\begin{tikzpicture}[baseline={([yshift=-.8ex]current bounding box.center)}, level 1/.style={sibling distance = 2cm, level distance = .8cm}, level 2/.style={sibling distance = .6cm, level distance = .7cm}]
		\node [rectangle, draw, minimum height=.5cm] {$\operation[a]$}
			child {node {$\tree[l]$}
				child {node {$\sigma_1$}}
				child [edge from parent/.style={}] {node {$\dots$}}
				child {node {$\sigma_l$}}
			}
			child {node {$\tree[r]$}
				child {node {$\sigma_{l+1}$}}
				child [edge from parent/.style={}] {node {$\dots$}}
				child {node {$\sigma_p$}}
			}
		;
	\end{tikzpicture}
	\qquad\text{and}\qquad
	\begin{tikzpicture}[baseline={([yshift=-.8ex]current bounding box.center)}, level/.style={sibling distance = .5cm, level distance = .7cm}]
		\node {$\tree'$}
			child {node {$\sigma'_1$}}
			child [edge from parent/.style={}] {node {$\dots$}}
			child {node {$\sigma'_{p'}$}}
		;
	\end{tikzpicture}
	=
	\begin{tikzpicture}[baseline={([yshift=-.8ex]current bounding box.center)}, level 1/.style={sibling distance = 2cm, level distance = .8cm}, level 2/.style={sibling distance = .6cm, level distance = .7cm}]
		\node [rectangle, draw, minimum height=.5cm] {$\operation[a]'$}
			child {node {$\tree[l]'$}
				child {node {$\sigma'_1$}}
				child [edge from parent/.style={}] {node {$\dots$}}
				child {node {$\sigma'_{l'}$}}
			}
			child {node {$\tree[r]'$}
				child {node {$\sigma'_{l+1}$}}
				child [edge from parent/.style={}] {node {$\dots$}}
				child {node {$\sigma'_{p'}$}}
			}
		;
	\end{tikzpicture}
\end{center}

Assume first that~$\gamma = \gamma'$. Then~$\operation[a] = \operation[a]'$, $\tidyEvalPermParallel(\tree[l] ; \sigma_1, \dots, \sigma_l) = \pi^{|[\gamma]} = \tidyEvalPermParallel(\tree[l]' ; \sigma'_1, \dots, \sigma'_{l'})$ and~$\tidyEvalPermParallel(\tree[r] ; \sigma_{l+1}, \dots, \sigma_p) = \pi^{|[n] \ssm [\gamma]} = \tidyEvalPermParallel(\tree[r]' ; \sigma'_{l'+1}, \dots, \sigma'_{p'})$ by \cref{lem:boundedCutImpliesOperation}.
By induction hypothesis, the first equality ensures that~$l = l'$, that~$\tree[l] = \tree[l]'$ modulo the $\op{l,r}$-tidy parallel citelangis relations and that~${\sigma_i = \sigma'_i}$ for all~$i \in [l]$, while the second equality ensures that~$p-l = p'-l'$, that~$\tree[r] = \tree[r]'$ modulo the $\op{l,r}$-tidy parallel citelangis relations and that~$\sigma_i = \sigma'_i$ for all~$i \in [p] \ssm [l]$.
We thus conclude that~$p = p'$, that~$\tree = \tree'$ modulo the $\op{l,r}$-tidy parallel citelangis relations, and that~$\sigma_i = \sigma'_i$ for all~$i \in [p]$.

Assume now without loss of generality that~$\gamma < \gamma'$.
Consider the $2$-permutations~$\rho \eqdef \pi^{|[\gamma]}$, $\sigma \eqdef \pi^{|[\gamma'] \ssm [\gamma]}$ and~$\tau \eqdef \pi^{|[n] \ssm [\gamma']}$.
Since~$\gamma'$ is a bounded cut of~$\pi$ larger than~$\gamma$, \cref{lem:boundedCutsRestriction} ensures that~$\gamma'-\gamma$ is a bounded cut of~$\pi^{|[n] \ssm [\gamma]}$.
By \cref{coro:boundedCutImpliesSyntaxTree} and induction hypothesis, there exists a syntax tree~$\tree[s]$ of arity~$p-l$ with root~$\operation[b]$, left subtree~$\tree[u]$ of arity~$u$ and right subtree~$\tree[v]$, such that~$\tree[l]' = \tree[s]'$ modulo the $\op{l,r}$-tidy parallel citelangis relations and~$\tidyEvalPermParallel(\tree[s] ; \sigma_{l+1}, \dots, \sigma_p) = \pi^{|[n] \ssm [\gamma]}$, $\tidyEvalPermParallel(\tree[u] ; \sigma_{l+1}, \dots, \sigma_{l+u}) = \sigma$ and $\tidyEvalPermParallel(\tree[v] ; \sigma_{l+u+1}, \dots, \sigma_p) = \tau$.
Similarly, since~$\gamma$ is a bounded cut of~$\pi$ smaller than~$\gamma'$, \cref{lem:boundedCutsRestriction} ensures that~$\gamma$ is a bounded cut of~$\pi^{|[\gamma']}$.
By \cref{coro:boundedCutImpliesSyntaxTree} and induction hypothesis, there exists a syntax tree~$\tree[s]'$ of arity~$l'$, with root~$\operation[b]'$, left subtree~$\tree[u]'$ of arity~$u'$ and right subtree~$\tree[v]'$, such that~$\tree[l]' = \tree[s]'$ modulo the $\op{l,r}$-tidy parallel citelangis relations and $\tidyEvalPermParallel(\tree[s]' ; \sigma'_1, \dots, \sigma'_{l'}) = \pi^{|[\gamma']}$, $\tidyEvalPermParallel(\tree[u]' ; \sigma'_1, \dots, \sigma'_{u'}) = \rho$ and $\tidyEvalPermParallel(\tree[v]' ; \sigma_{u'+1}, \dots, \sigma_{l'}) = \sigma$.
Since
\[
\begin{array}{r@{\;}c@{\;}l}
\tidyEvalPermParallel(\tree[l] ; \sigma_1, \dots, \sigma_l) & = \rho = & \tidyEvalPermParallel(\tree[u]' ; \sigma'_1, \dots, \sigma'_{u'}), \\
\tidyEvalPermParallel(\tree[u] ; \sigma_{l+1}, \dots, \sigma_{l+u}) & = \sigma = & \tidyEvalPermParallel(\tree[v]' ; \sigma_{u'+1}, \dots, \sigma_{l'}), \\
\text{and}\quad
\tidyEvalPermParallel(\tree[v] ; \sigma_{l+u+1}, \dots, \sigma_p) & = \tau = & \tidyEvalPermParallel(\tree[r]' ; \sigma'_{l'+1}, \dots, \sigma'_{p'}),
\end{array}
\]
we obtain by three applications of the induction hypothesis that~$p = p'$ and~$\sigma_i = \sigma'_i$ for all~$i \in [p]$.
Moreover, we have
\[
	\tidyEvalPermParallel \Big( 
	\begin{tikzpicture}[baseline=-.5cm, level 1/.style={sibling distance = .8cm, level distance = .7cm}, level 2/.style={sibling distance = .6cm, level distance = .5cm}]
		\node [rectangle, draw, minimum height=.5cm] {$\operation[a]$}
			child {node {}}
			child {node [rectangle, draw, minimum height=.5cm] {$\operation[b]$}
				child {node {}}
				child {node {}}
			}
		;
	\end{tikzpicture}
	; \rho, \sigma, \tau \Big)
	= \pi =
	\tidyEvalPermParallel \Big( 
	\begin{tikzpicture}[baseline=-.5cm, level 1/.style={sibling distance = .8cm, level distance = .7cm}, level 2/.style={sibling distance = .6cm, level distance = .5cm}]
		\node [rectangle, draw, minimum height=.5cm] {$\operation[a]'$}
			child {node [rectangle, draw, minimum height=.5cm] {$\operation[b]'$}
				child {node {}}
				child {node {}}
			}
			child {node {}}
		;
	\end{tikzpicture}
	; \rho, \sigma, \tau \Big).
\]
By \cref{lem:uniqueTidyParallelCitelangisPermutationEvaluationQuadratic}, we conclude that
\(
	\begin{tikzpicture}[baseline=-.5cm, level 1/.style={sibling distance = .8cm, level distance = .7cm}, level 2/.style={sibling distance = .6cm, level distance = .5cm}]
		\node [rectangle, draw, minimum height=.5cm] {$\operation[a]$}
			child {node {}}
			child {node [rectangle, draw, minimum height=.5cm] {$\operation[b]$}
				child {node {}}
				child {node {}}
			}
		;
	\end{tikzpicture}
	=
	\begin{tikzpicture}[baseline=-.5cm, level 1/.style={sibling distance = .8cm, level distance = .7cm}, level 2/.style={sibling distance = .6cm, level distance = .5cm}]
		\node [rectangle, draw, minimum height=.5cm] {$\operation[a]'$}
			child {node [rectangle, draw, minimum height=.5cm] {$\operation[b]'$}
				child {node {}}
				child {node {}}
			}
			child {node {}}
		;
	\end{tikzpicture}
\)
is a $\op{l,r}$-tidy parallel citelangis relation, and thus that~$\tree = \tree'$ up to $\op{l,r}$-tidy parallel citelangis relations.
\end{proof}

\begin{remark}
\label{rem:algoTidyEvalParallelNormalForms}
From \cref{rem:algoTidyEvalParallel,prop:uniqueTidyParallelCitelangisPermutationEvaluation}, we derive an inductive algorithm to compute the decomposition of a given $2$-permutation~$\rho$ as an evaluation of the form~$\rho = \tidyEvalPermParallel(\tree ; \sigma_1, \dots, \sigma_p)$, where~$\tree$ is in normal form in the $\op{l,r}$-tidy parallel citelangis rewriting system of \cref{subsubsec:rewritingSystemCitelangis} and~$\sigma_1, \dots, \sigma_p$ are bounded uncuttable $2$-permutations.
Namely, 
\begin{itemize}
\item if~$\rho$ is bounded uncuttable, then~$\rho = \tidyEvalPermParallel(\one ; \rho)$, where~$\one$ is the unit syntax tree with no node and a single leaf, and
\item otherwise, $\rho = \tidyEvalPermParallel(\tree ; \sigma_1, \dots, \sigma_l, \tau_1, \dots, \tau_r)$, where~$\gamma$ be the rightmost bounded cut of~$\rho$, $\rho^{|[\gamma]} = \tidyEvalPermParallel(\tree[l] ; \sigma_1, \dots, \sigma_l)$ and $\rho^{|[\ell] \ssm [\gamma]} = \tidyEvalPermParallel(\tree[r] ; \tau_1, \dots, \tau_r)$ are such that~$\tree[l], \tree[r]$ are in normal form and~$\sigma_1, \dots, \sigma_l, \tau_1, \dots, \tau_r$ are bounded uncuttable, and~$\tree$ is the syntax tree with root~$\operation \in \Operations_2$ defined by \cref{lem:boundedCutImpliesOperation} and with subtrees~$\tree[l]$ and~$\tree[r]$.
\end{itemize}
\end{remark}

\cref{prop:uniqueTidyParallelCitelangisPermutationEvaluation} proves the main result of this section.

\begin{theorem}
\label{thm:freeTidyCitelangisParallel}
The $\op{l,r}$-tidy parallel citelangis algebra~$\FQSym_2$ is free on bounded uncuttable $2$-permutations.
\end{theorem}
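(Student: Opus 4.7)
The plan is to recognize the theorem as the combination of an existence and a uniqueness statement, both already established in the preceding development.

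First, I will argue that the $\op{l,r}$-tidy parallel citelangis subalgebra of $\FQSym_2$ generated by the bounded uncuttable $2$-permutations is all of $\FQSym_2$. I proceed by induction on the degree $n$ of a $2$-permutation $\rho \in \Perm_2(n)$. The case $n = 1$ is trivial since the unique $2$-permutation $11$ is vacuously bounded uncuttable. For the inductive step, either $\rho$ is already bounded uncuttable (and is itself a generator), or $\rho$ admits some bounded cut $\gamma \in \bcuts(\rho)$, in which case \cref{lem:boundedCutImpliesOperation} yields a decomposition $\rho = \rho^{|[\gamma]} \, \operation \, \rho^{|[n] \ssm [\gamma]}$ for a unique operator $\operation \in \Operations_2$. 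Since both restrictions have strictly smaller degree, they belong to the generated subalgebra by induction, and hence so does~$\rho$. This is essentially the iterated form of \cref{coro:boundedCutImpliesSyntaxTree}: every $2$-permutation can be written as $\tidyEvalPermParallel(\tree ; \sigma_1, \dots, \sigma_p)$ with bounded uncuttable leaves $\sigma_1, \dots, \sigma_p$.

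Second, I will combine this with \cref{prop:uniqueTidyParallelCitelangisPermutationEvaluation} to conclude freeness. Consider the canonical $\op{l,r}$-tidy parallel citelangis algebra morphism $\Phi$ from the free $\op{l,r}$-tidy parallel citelangis algebra on the set $U$ of bounded uncuttable $2$-permutations to $\FQSym_2$, sending each $\sigma \in U$ to itself. Surjectivity of $\Phi$ is precisely the existence statement of the previous paragraph. Injectivity amounts to showing that two inequivalent classes (modulo the $\op{l,r}$-tidy parallel citelangis relations) of evaluations $\tidyEvalPermParallel(\tree; \sigma_1, \dots, \sigma_p)$ with $\sigma_i$ bounded uncuttable never produce the same $2$-permutation, which is exactly the content of \cref{prop:uniqueTidyParallelCitelangisPermutationEvaluation}. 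Equivalently, the set of evaluations $\tidyEvalPermParallel(\tree ; \sigma_1, \dots, \sigma_p)$, where $\tree$ ranges over the normal forms of the $\op{l,r}$-tidy parallel citelangis rewriting system and $\sigma_1, \dots, \sigma_p$ range over bounded uncuttable $2$-permutations, forms a $\K$-basis of $\FQSym_2$.

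There is essentially no remaining obstacle: the substantive work has already been packaged into \cref{prop:uniqueTidyParallelCitelangisPermutationEvaluation} (uniqueness of the decomposition on bounded uncuttable generators) and into the inductive decomposition algorithm of \cref{rem:algoTidyEvalParallelNormalForms} (which constructively realizes this basis). The proof of the theorem itself is just the formal combination of these two ingredients through the universal property of the free citelangis algebra.
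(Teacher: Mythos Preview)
Your proposal is correct and follows exactly the approach of the paper, which simply records that \cref{prop:uniqueTidyParallelCitelangisPermutationEvaluation} proves the theorem. You have spelled out in full the surjectivity/injectivity argument that the paper leaves implicit, invoking the same ingredients (\cref{coro:boundedCutImpliesSyntaxTree} for existence and \cref{prop:uniqueTidyParallelCitelangisPermutationEvaluation} for uniqueness).
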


\paraul{A complete combinatorial model for~$\tidyCitelangisParallel[\op{l,r}]$}
By \cref{thm:freeTidyCitelangisParallel}, the $\op{l,r}$-tidy parallel citelangis operad~$\tidyCitelangisParallel[\op{l,r}]$ can be fully understood from the $\op{l,r}$-tidy parallel citelangis subalgebra of~$\FQSym_2$ generated by the $2$-permutation~$11$.
We close this section with a completely explicit combinatorial model for this algebra.
We first obtain from \cref{prop:characterizationTidyParallelPermutationEvaluations} a combinatorial model for the operations of~$\tidyCitelangisParallel[\op{l,r}]$.

\begin{proposition}
\label{prop:combinatorialModelTidyParallelCitelangis}
The tidy parallel permutation evaluation~$\tree \mapsto \tidyEvalPermParallel(\tree)$ is a graded bijection from the $\op{l,r}$-tidy parallel equivalence classes of syntax trees of~$\Syntax[\Operations_2]$ to the fully bounded cuttable $2$-permutations.
\end{proposition}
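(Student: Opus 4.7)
The plan is to deduce this proposition directly from the two main technical results just established, namely \cref{prop:characterizationTidyParallelPermutationEvaluations} and \cref{prop:uniqueTidyParallelCitelangisPermutationEvaluation}. First I would verify that the map $\tree \mapsto \tidyEvalPermParallel(\tree)$ descends to $\op{l,r}$-tidy parallel equivalence classes: since \cref{prop:tidyCitelangisParallelMultipermutations} endows $\FQSym_2$ with a $\op{l,r}$-tidy parallel citelangis structure, two syntax trees of $\Syntax[\Operations_2]$ related by the defining relations must have equal tidy parallel permutation evaluations. The grading is immediate from \cref{def:tidyCitelangisParallelActionPermutations}: every operator of $\Operations_2$ adds the degrees of its two arguments, and each leaf contributes the degree-$1$ permutation $11$, so a syntax tree of arity $p$ evaluates to a $2$-permutation of degree $p$.

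Surjectivity onto fully bounded cuttable $2$-permutations is exactly the content of \cref{prop:characterizationTidyParallelPermutationEvaluations}, since by \cref{def:tidyEvalParallel} we have $\tidyEvalPermParallel(\tree) = \tidyEvalPermParallel(\tree ; 11, \dots, 11)$. For injectivity on equivalence classes, I would apply \cref{prop:uniqueTidyParallelCitelangisPermutationEvaluation} to the specific choice $\sigma_1 = \dots = \sigma_p = 11$ and $\sigma'_1 = \dots = \sigma'_{p'} = 11$. The key observation is that $11 \in \Perm_2(1)$ is trivially bounded uncuttable, as \cref{def:boundedCutMultipermutation} requires a bounded cut $\gamma \in [n-1]$ and hence forces degree at least $2$. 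Consequently the hypothesis of \cref{prop:uniqueTidyParallelCitelangisPermutationEvaluation} is satisfied, and any equality $\tidyEvalPermParallel(\tree) = \tidyEvalPermParallel(\tree')$ yields $p = p'$ together with $\tree = \tree'$ modulo the $\op{l,r}$-tidy parallel citelangis relations.

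No new obstacle arises at this stage: the two auxiliary propositions have already absorbed all the combinatorial difficulty, namely the inductive analysis of bounded cuts via restrictions and the identification of the quadratic cases with the $\op{l,r}$-tidy parallel citelangis relations. The present statement is therefore essentially a repackaging, asserting that the free $\op{l,r}$-tidy parallel citelangis subalgebra of $\FQSym_2$ generated by $11$ -- which by \cref{thm:freeTidyCitelangisParallel} is a free citelangis algebra on one generator -- is combinatorially modelled, as a graded vector space, by the fully bounded cuttable $2$-permutations.
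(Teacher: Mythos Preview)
Your proposal is correct and takes essentially the same approach as the paper: well-definedness from \cref{prop:tidyCitelangisParallelMultipermutations}, surjectivity from \cref{prop:characterizationTidyParallelPermutationEvaluations}, and injectivity from the uniqueness result. The only cosmetic difference is that the paper cites \cref{thm:freeTidyCitelangisParallel} for injectivity rather than invoking \cref{prop:uniqueTidyParallelCitelangisPermutationEvaluation} directly with $\sigma_i = 11$; since the former is itself an immediate consequence of the latter, the two arguments are interchangeable.
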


\begin{proof}
This map~$\tree \mapsto \tidyEvalPermParallel(\tree)$ is surjective on fully bounded cuttable $2$-permutations by \cref{prop:characterizationTidyParallelPermutationEvaluations}.
It is compatible with the $\op{l,r}$-tidy parallel citelangis relations by \cref{prop:tidyCitelangisParallelMultipermutations}.
Finally, it is bijective since the $\op{l,r}$-tidy parallel citelangis algebra~$\FQSym_2$ is free on uncuttable $2$-permutations by \cref{thm:freeTidyCitelangisParallel}.
\end{proof}

Therefore, the fully bounded cuttable $2$-permutations can be thought of as a basis of the $\op{l,r}$-tidy parallel citelangis operad~$\tidyCitelangisParallel[\op{l,r}]$.
Through the bijection of \cref{prop:characterizationTidyParallelPermutationEvaluations}, we can thus define the compositions of the $\op{l,r}$-tidy parallel citelangis operad~$\tidyCitelangisParallel[\op{l,r}]$ directly on fully bounded cuttable $2$-permutations.

\begin{definition}
\label{def:combinatorialModelCompositionsTidyParallelCitelangis}
For any integers~$i \le m$ and~$n$, and any two fully bounded cuttable $2$-permutations $\sigma = \tidyEvalPermParallel(\tree[s]) \in \Perm_2(m)$ and $\tau = \tidyEvalPermParallel(\tree[t]) \in \Perm_2(n)$, we define the \defn{tidy parallel $i$-th composition} as
\[
\sigma \tidyParallelCirc{i} \tau \eqdef \tidyEvalPermParallel(\tree[s] \circ_i \tree[t]) \in \Perm_2(m+n-1).
\]
\end{definition}

The following statement provides a direct combinatorial description of the composition~$\tidyParallelCirc{i}$ of the $\op{l,r}$-tidy parallel citelangis operad~$\tidyCitelangisParallel[\op{l,r}]$ on fully bounded cuttable $2$-permutations.

\begin{proposition}
\label{prop:combinatorialModelCompositionsTidyParallelCitelangis}
Let~$\sigma \in \Perm_2(m)$ and~$\tau \in \Perm_2(n)$ be two fully bounded cuttable $2$-permutations and~$i \in [m]$.
Write~$\sigma = \lambda \, i \, \mu \, \nu \, i \, \omega$, where~$\mu_p < i < \nu_q$ for all~$p \in [|\mu|]$ and~$q \in [|\nu|]$ according to \cref{coro:fullyBoundedCuttableStructure}, and write~$\tau = f \, \theta \, l$ where $f$ and $l$ are its first and last letters.
Then
\[
\begin{array}{c@{\; = \;}l@{\,}l@{\,}l@{\,}l@{\,}l@{\,}l@{\,}l}
\sigma \tidyParallelCirc{i} \tau
& \lambda[i,n] & f[i-1] & \mu[i,n] & \theta[i-1] & \nu[i,n] & l[i-1] & \omega[i,n] \\
& \lambda[i,n] & f[i-1] & \mu      & \theta[i-1] & \nu[n-1] & l[i-1] & \omega[i,n].
\end{array}
\]
\end{proposition}

\begin{proof}
Consider arbitrary syntax trees~$\tree[s]$ and~$\tree[t]$ such that~$\sigma = \tidyEvalPermParallel(\tree[s])$ and~$\tau = \tidyEvalPermParallel(\tree[t])$.
By \cref{rem:tidyEvalParallel}, $\tidyEvalPermParallel(\tree[s] \circ_i \tree[t])$ is obtained inductively by letting~$2$ cars traverse~$\tree[s] \circ_i \tree[t]$ in parallel and recording the position of their arrival.
The cars that arrive at position~$i$ in~$\tree[s]$ thus continue their journey through~$\tree[t]$.
Therefore, the first and last values~$i$ in~$\sigma$ are replaced by the first and last values of~$\tau$, and the remaining values of~$\tau$ are placed at the only possible position in~$\sigma \tidyParallelCirc{i} \tau$.
\end{proof}

Here are some examples of tidy parallel compositions on fully bounded cuttable $2$-permutations:
\begin{gather*}
  {\blue6}1|1{\blue432325546} \tidyParallelCirc{1} 2{\red1123}3 = {\blue8}2{\red1123}3{\blue654547768}, \\
  {\blue61143}2|{\blue3}2{\blue5546} \tidyParallelCirc{2} 2{\red1123}3 = {\blue81165}3{\red2234}{\blue5}4{\blue7768}, \\
  {\blue6114}3{\blue2}|3{\blue25546} \tidyParallelCirc{3} 2{\red1123}3 = {\blue8116}4{\blue2}{\red3345}5{\blue27768}, \\
  {\blue611}4{\blue3232}|{\blue55}4{\blue6} \tidyParallelCirc{4} 2{\red1123}3 = {\blue811}5{\blue3232}{\red4456}{\blue77}6{\blue8}, \\
  {\blue61143232}5|5{\blue46} \tidyParallelCirc{5} 2{\red1123}3 = {\blue81143232}6{\red5567}7{\blue48}, \\
  6{\blue1143232554}|6 \tidyParallelCirc{6} 2{\red1123}3 = 7{\blue1143232554}{\red6678}8,
\end{gather*}
where we have marked by a vertical bar the separation $\mu|\nu$.

\begin{remark}
\label{rem:tidyParallelZinbiel}
Motivated by \cref{prop:combinatorialModelCompositionsTidyParallelCitelangis}, we will extend in \cref{subsubsec:parallelZinbiel} the tidy parallel compositions of \cref{def:combinatorialModelCompositionsTidyParallelCitelangis} from fully bounded cuttable $2$-permutations to all $2$-permutations.
\end{remark}


\subsubsection{Action of~\texorpdfstring{$\messyCitelangisParallel[2]$}{MCit2} on words and multipermutations}
\label{subsubsec:actionMessyParallelSignaleticPermutations}

As observed by M.~Aguiar and J.-L.~Loday in~\cite{AguiarLoday}, $\FQSym_2$ can also be endowed with a messy parallel $2$-citelangis algebra structure (\ie quadri-algebra structure).
In this section, we recall the construction and we revisit the proof that the resulting messy parallel $2$-citelangis algebra is free~\cite{Foissy, Vong}.

\paraul{Action on words}
The shuffle subalgebra~$\Shuffle^{\ge 2}$ can be endowed with the structure of a parallel $2$-citelangis algebra (or quadri-algebra~\cite{AguiarLoday}) defined as follows.

\begin{definition}
\label{def:messyCitelangisParallelActionWords}
We define the action of the four messy parallel $2$-citelangis operations~$\op{l,l}$, $\op{l,r}$, $\op{r,l}$ and~$\op{r,r}$ on any two words~$xXx'$ and~$yYy'$ by
\begin{gather*}
xXx' \op{l,l} yYy' = x \big( X \shuffle yYy' \big) x'
\\
xXx' \op{l,r} yYy' = x \big( Xx' \shuffle yY \big) y'
\\
xXx' \op{r,l} yYy' = y \big( xX \shuffle Yy' \big) x'
\\
xXx' \op{r,r} yYy' = y \big( xXx' \shuffle Y \big) y'.
\end{gather*}
In other words, we choose the first and last letters of the results among the first and last letters of~$xXx'$ or~$yYy'$ depending on the operation, and we shuffle the remaining factors of~$xXx'$~and~$yYy'$.
\end{definition}

\begin{proposition}
\label{prop:messyCitelangisParallelWords}
The shuffle algebra~$\Shuffle^{\ge 2}$, endowed with the operations of \cref{def:messyCitelangisParallelActionWords}, defines a messy parallel $2$-citelangis algebra (or quadri-algebra).
The shuffle product~$\shuffle$ of~$\Shuffle^{\ge 2}$ is given by~${\op{m,m}} = {\op{l,l}} + {\op{l,r}} + {\op{r,l}} + {\op{r,r}}$.
\end{proposition}

\pagebreak
\begin{proof}
One immediately checks that our operations on words indeed satisfy the $9$ messy parallel $2$-citelangis relations given in \cref{exm:messyCitelangisParallelRelations}.
\end{proof}

\paraul{Action on permutations}
Replacing the shuffle by the shifted shuffle, one endows similarly the algebra~$\FQSym_2$ of $2$-permutations with the structure of a messy parallel $2$-citelangis algebra.
This was already observed by M.~Aguiar and J.-L.~Loday in~\cite{AguiarLoday}.
This can be rephrased as follows.

\begin{definition}
\label{def:messyCitelangisParallelActionPermutations}
For any operation~$\operation \in \Operations_2$ and any two $2$-permutations~$\mu$ and~$\nu$ of degree~$m$ and~$n$ respectively, $\mu \, \operation \, \nu$ is the sum of all $2$-permutations~$\pi$ appearing in~$\mu \shiftedShuffle \nu$ such that
\[
\pi_1 = \begin{cases} \mu_1 & \text{if } \operation_1 = {\op{l}} \, , \\ \nu_1 + m & \text{if } \operation_1 = {\op{r}} \, , \end{cases}
\qqandqq
\pi_{m+n} = \begin{cases} \mu_m & \text{if } \operation_2 = {\op{l}} \, , \\ \nu_n + m & \text{if } \operation_2 = {\op{r}} \, . \end{cases}
\]
\end{definition}

For example, for~$\mu = {\blue 321312}$ and~$\nu = {\red 213231}$ in~$\FQSym_2$ we have~$\mu \op{l,l} \nu = {\blue 3} ({\blue 2131} \shuffle {\red 546564}) {\blue 2}$, $\mu \op{l,r} \nu = {\blue 3} ({\red 21312} \shuffle {\blue 54656}) {\red 4}$, $\mu \op{r,l} \nu = {\red 5} ({\blue 32131} \shuffle {\red 46564}) {\blue 2}$ and $\mu \op{r,r} \nu = {\red 5} ({\blue 321312} \shuffle {\red 4656}) {\red 4}$.
The next statement is immediate from \cref{prop:messyCitelangisParallelWords}.

\begin{proposition}[\cite{AguiarLoday}]
\label{prop:messyCitelangisParallelMultipermutations}
The algebra~$(\FQSym_2, \shiftedShuffle)$, endowed with the operations of \cref{def:messyCitelangisParallelActionPermutations}, defines a messy parallel $2$-citelangis algebra (or quadri-algebra).
The shifted shuffle product~$\shiftedShuffle$ of~$\FQSym_2$ is given~by~$\op{m,m}$.
\end{proposition}

Similarly to \cref{def:tidyEvalParallel}, we consider the evaluations of syntax trees of~$\Syntax[\Operations_2]$ in~$\FQSym_2$ to manipulate this messy parallel $2$-citelangis algebra.
See \cref{fig:LinExtEvalPosetParallel}.

\begin{definition}
\label{def:messyEvalParallel}
Denote by~$\messyEvalPermParallel(\tree ; \sigma_1, \dots, \sigma_p)$ the evaluation of a syntax tree~$\tree \in \Syntax[\Operations_2]$ of arity~$p$ on $p$ $2$-permutations~${\sigma_1, \dots, \sigma_p}$ of~$\Perm_2$ using the messy parallel $2$-citelangis structure of~$\FQSym_2$.
The \defn{messy parallel permutation evaluation} of~$\tree$ is then~$\messyEvalPermParallel(\tree) \eqdef \messyEvalPermParallel(\tree ; 11, \dots, 11)$.
We extend by linearity $\tree$ to the elements of~$\Free[\Operations_2]$ on the one hand, and $\sigma_1, \dots, \sigma_p$ to the elements of~$\FQSym_2$ on the other hand.
\end{definition}

\begin{remark}
\label{rem:messyEvalParallel}
Let us rephrase algorithmically \cref{def:messyEvalParallel}.
For this, we generalize the evaluation to partial syntax trees as in \cref{rem:tidyEvalParallel}, where nodes are labeled by operations with $2$ letters among~${\{\op{l}, \op{r}, \perp\}}$ and leaves are labeled by any words.
The messy parallel permutation evaluation~$\messyEvalPermParallel(\tree[s])$ of such a partial syntax tree~$\tree[s]$ is defined inductively by
\begin{itemize}
\item $\messyEvalPermParallel(\tree[s]) = \ind{w}$ if $\tree[s]$ is a leaf labeled by the word~$\ind{w}$,
\item $\messyEvalPermParallel(\tree[s]) = f \cdot \big( \messyEvalPermParallel(\tree[l]) \shuffle \messyEvalPermParallel(\tree[r]) \big) \cdot l$ otherwise, where~$f$ and~$l$ are the letters read by two cars in parallel on~$\tree[s]$, and $\tree[l]$ and~$\tree[r]$ are the two partial syntax trees left after these two cars traversed~$\tree[s]$ while replacing by~$\perp$ the letters they use in the signal in the nodes and erasing the letters they read in the words in the leaves as described in \cref{rem:tidyEvalParallel}.
\end{itemize}
Finally, for a syntax tree~$\tree$ of arity~$p$ and $2$-permutations~$\sigma_1 \in \Perm_2(n_1), \dots, \sigma_p \in \Perm_2(n_p)$, the evaluation $\messyEvalPermParallel(\tree ; \sigma_1, \dots, \sigma_p)$ is the evaluation of the partial syntax tree~$\tree[s]$ obtained from~$\tree$ by putting the permutation~$\sigma_i[n_1 + \dots + n_{i-1}]$ at the $i$-th leaf for all~$i \in [p]$.
In particular, for~$\messyEvalPermParallel(\tree) = \messyEvalPermParallel(\tree ; 11, \dots, 11)$, the $i$-th root is labeled by the word~$ii$.
See \cref{fig:LinExtEvalPosetParallel}.
\end{remark}

\paraul{Freeness}
It turns out that the tidy and messy parallel permutation evaluations are related by triangularity for the lexicographic order.

\begin{definition}
For a homogeneous element~$F \in \FQSym_2$, we denote by~$\LexMin(F)$ the lexicographic minimal $2$-permutation with a non-zero coefficient in~$F$.
\end{definition}

\begin{lemma}
\label{lem:LexMinParallel}
For any~$\tree \in \Syntax[\Operations_2](p)$ and~$\sigma_1, \dots, \sigma_p \in \Perm_2$, we have
\[
\tidyEvalPermParallel(\tree ; \sigma_1, \dots, \sigma_p) = \LexMin \big( \messyEvalPermParallel(\tree ; \sigma_1, \dots, \sigma_p) \big).
\]
In particular, $\tidyEvalPermParallel(\tree) = \LexMin \big( \messyEvalPermParallel(\tree) \big)$.
\end{lemma}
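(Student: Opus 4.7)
The proof proceeds by induction on the number of internal nodes of $\tree$, exploiting the parallel recursive descriptions of $\tidyEvalPermParallel$ and $\messyEvalPermParallel$ recalled in \cref{rem:tidyEvalParallel,rem:messyEvalParallel}. The base case, where $\tree$ is a single leaf labelled by some~$\sigma_i$, is immediate since both evaluations return that word unchanged.

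For the inductive step, the crucial observation is that the two car traversals at the root depend only on the shape of~$\tree$ and on the traffic signals, not on how the remaining partial subtrees will ultimately be recombined. Hence both evaluations extract the same first letter~$f$ and last letter~$l$, and leave the same pair of partial syntax subtrees~$\tree[l]$ and~$\tree[r]$ with identical leaf labels. Applying the induction hypothesis to~$\tree[l]$ and~$\tree[r]$, and observing that $\LexMin$ commutes with prepending~$f$ and appending~$l$, the lemma reduces to the identity
\[
\LexMin\big(\messyEvalPermParallel(\tree[l]) \shuffle \messyEvalPermParallel(\tree[r])\big) = \tidyEvalPermParallel(\tree[l]) \cdot \tidyEvalPermParallel(\tree[r]).
\]

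The remaining elementary fact is that for any two words $u$ and $v$ over disjoint alphabets with $\max(u) < \min(v)$, the lexicographically minimal word appearing in the shuffle $u \shuffle v$ is the concatenation $u \cdot v$. This follows by a short secondary induction on~$|u|+|v|$: every word in $u \shuffle v$ starts with $u_1$ or $v_1$, and the hypothesis on the alphabets forces $u_1 < v_1$, so the minimum must start with $u_1$ and then lex-minimize $(u_2 \cdots u_{|u|}) \shuffle v$, which by induction is $(u_2 \cdots u_{|u|}) \cdot v$. Applying this fact is legitimate here because the shifting convention used to label the leaves of the partial syntax trees in \cref{def:messyEvalParallel} places all values occurring in the left subtree strictly below all values occurring in the right subtree.

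The main point requiring care is precisely this preservation of the relative-alphabet condition through the recursion: one must check that at every recursive stage, the word~$\messyEvalPermParallel(\tree[l])$ uses only values strictly below those of~$\messyEvalPermParallel(\tree[r])$. This follows from the initial shifting rule (the $i$-th leaf is labelled by $\sigma_i[n_1+\dots+n_{i-1}]$) together with the fact that the car traversal only erases letters but never permutes them across the left/right split, so that the disjoint interval structure is maintained inductively. Once this is established, the recursive identity above yields the lemma.
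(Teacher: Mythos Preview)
Your proof is correct and follows essentially the same inductive scheme as the paper's own argument: both pass to the partial-syntax-tree description of \cref{rem:tidyEvalParallel,rem:messyEvalParallel}, strip off the common outer letters~$f$ and~$l$, and reduce to the identity $\LexMin(F\shuffle G)=\LexMin(F)\cdot\LexMin(G)$ when the values in~$F$ lie strictly below those in~$G$. The paper simply asserts this reduction step in one line, whereas you spell out the underlying elementary fact about shuffles over separated alphabets and explain why the shifting convention guarantees the hypothesis; this added detail is welcome but not a different approach.
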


\begin{proof}
The proof works by induction using the descriptions of~$\tidyEvalPermParallel$ in \cref{rem:tidyEvalParallel} and of~$\messyEvalPermParallel$ in \cref{rem:messyEvalParallel}.
Indeed, for any partial syntax tree~$\tree[s]$, we have
\begin{itemize}
\item if $\tree[s]$ is a leaf labeled by the word~$\ind{w}$, then
\(
\tidyEvalPermParallel(\tree[s]) = \ind{w} = \messyEvalPermParallel(\tree[s]),
\)
\item otherwise, if and $\tree[l]$ and~$\tree[r]$ are the two partial syntax trees left after $2$ cars traversed~$\tree[s]$ in parallel while erasing the first and last letters of the signals in the nodes and of first and last letters of the words in the leaves as described in \cref{rem:tidyEvalParallel}, then
\begin{align*}
\tidyEvalPermParallel(\tree[s])
& = f \cdot \tidyEvalPermParallel(\tree[l]) \cdot \tidyEvalPermParallel(\tree[r]) \cdot l
  = f \cdot \LexMin \big( \messyEvalPermParallel(\tree[l]) \big) \cdot \LexMin \big( \messyEvalPermParallel(\tree[r]) \big) \cdot l \\
& = \LexMin \big( f \cdot \big( \messyEvalPermParallel(\tree[l]) \shuffle \messyEvalPermParallel(\tree[r]) \big)  \cdot l\big)
  = \LexMin \big( \messyEvalPermParallel(\tree[s]) \big).
\end{align*}
\end{itemize}
The result then follows by application on the syntax tree~$\tree[s]$ obtained from~$\tree$ by putting the permutation~$\sigma_i[n_1 + \dots + n_{i-1}]$ at the $i$-th leaf for all~$i \in [p]$.
\end{proof}

\begin{remark}
Since~$\tidyEvalPermParallel$ is increasing for the lexicographic order, \cref{lem:LexMinParallel} extends to $\FQSym_2$.
Namely, for any~$\tree \in \Syntax[\Operations_2](p)$ and homogeneous elements~${F_1, \dots, F_p \in \FQSym_2}$,
\[
\LexMin \big( \messyEvalPermParallel(\tree ; F_1, \dots, F_p) \big) = \tidyEvalPermParallel \big( \tree;  \LexMin(F_1), \dots, \LexMin(F_p) \big).
\]
\end{remark}

\cref{lem:LexMinParallel} enables us to revisit the following result of~\cite{Foissy, Vong}.

\begin{theorem}[\cite{Foissy, Vong}]
\label{thm:freeMessyCitelangisParallel}
The messy parallel $2$-citelangis algebra~$\FQSym_2$ (\aka quadri-algebra) is free on bounded uncuttable $2$-permutations.
\end{theorem}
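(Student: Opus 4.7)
The plan is to transport the freeness of the $\op{l,r}$-tidy parallel citelangis algebra structure on $\FQSym_2$ (\cref{thm:freeTidyCitelangisParallel}) to the messy parallel $2$-citelangis structure via the lex-min triangularity of \cref{lem:LexMinParallel}. I first set up the canonical algebra morphism $\Phi : \mathcal{F} \to \FQSym_2$ from the free messy parallel $2$-citelangis algebra $\mathcal{F}$ on the set $\mathcal{U}$ of bounded uncuttable $2$-permutations to $\FQSym_2$ (which is a messy parallel $2$-citelangis algebra by \cref{prop:messyCitelangisParallelMultipermutations}), induced by sending each element of $\mathcal{U}$ to itself. To prove that $\Phi$ is an isomorphism, it suffices to show $\Phi$ is surjective, since $\mathcal{F}$ and $\FQSym_2$ share the same Hilbert series: writing $u(t) \eqdef \sum_n |\mathcal{U} \cap \Perm_2(n)| \, t^n$, we have
\[
\Hilbert_{\mathcal{F}}(t) \; = \; \Hilbert_{\messyCitelangisParallel[2]}(u(t)) \; = \; \Hilbert_{\tidyCitelangisParallel[\op{l,r}]}(u(t)) \; = \; \Hilbert_{\FQSym_2}(t),
\]
where the middle equality follows from \cref{coro:HilbertSeriesCitelangis} (combined with \cref{rem:multitidySignaleticParallelKoszul} to handle the $\op{l,r}$-tidy variant) and the last from \cref{thm:freeTidyCitelangisParallel}.

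I then prove surjectivity of $\Phi$ by a double induction on the pair (degree, lexicographic order within that degree). The base case is immediate: every bounded uncuttable $2$-permutation is a generator of $\mathcal{F}$ and hence lies in $\Phi(\mathcal{F})$. For the inductive step, let $\rho \in \Perm_2(n)$ be bounded cuttable, pick $\gamma \in \bcuts(\rho)$, and set $\mu \eqdef \rho^{|[\gamma]}$ and $\nu \eqdef \rho^{|[n] \ssm [\gamma]}$, both of degrees strictly less than $n$. Define $\operation \in \Operations_2$ by $\operation_1 \eqdef {\op{l}}$ (resp.~${\op{r}}$) according to whether the first letter of $\rho$ is $\le \gamma$ (resp.~$> \gamma$), and analogously for $\operation_2$ via the last letter of $\rho$. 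By the outer induction hypothesis, both $\mu$ and $\nu$ lie in $\Phi(\mathcal{F})$, so ${\mu \operation \nu}$, computed via the messy parallel action of \cref{def:messyCitelangisParallelActionPermutations}, also lies in $\Phi(\mathcal{F})$.

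Now \cref{lem:LexMinParallel} applied to the single-node syntax tree labeled by~$\operation$ yields $\LexMin(\mu \operation \nu) = \tidyEvalPermParallel(\operation ; \mu, \nu)$, and \cref{lem:boundedCutImpliesOperation} (for the $\op{l,r}$-tidy action of \cref{def:tidyCitelangisParallelActionPermutations}) identifies this tidy evaluation as precisely $\rho$. Consequently
\[
\mu \operation \nu \; = \; \rho \; + \sum_{\pi > \rho} c_\pi \, \pi,
\]
where the sum ranges over $2$-permutations $\pi \in \mu \shiftedShuffle \nu$ sharing the prescribed first/last letters of $\rho$ and strictly greater than $\rho$ in lex order, with coefficients $c_\pi \in \{0,1\}$. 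Each such $\pi$ belongs to $\Perm_2(n)$ and is lex-larger than $\rho$, hence $\pi \in \Phi(\mathcal{F})$ by the inner induction hypothesis. Therefore $\rho = \mu \operation \nu - \sum_{\pi > \rho} c_\pi \, \pi$ also lies in $\Phi(\mathcal{F})$, completing the induction.

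The main subtlety is to guarantee that $\rho$ itself appears with coefficient exactly one and is the lex-minimum of $\mu \operation \nu$: both are immediate from comparing \cref{def:messyCitelangisParallelActionPermutations} with \cref{def:tidyCitelangisParallelActionPermutations} and invoking the bounded cut property of $\gamma$, which forces the tidy concatenation to reproduce $\rho$ exactly. The inner induction terminates because lex order is a total order on the finite set $\Perm_2(n)$, and the outer one terminates by degree. Surjectivity of $\Phi$ combined with the Hilbert series matching yields the desired isomorphism $\mathcal{F} \cong \FQSym_2$, proving that the messy parallel $2$-citelangis algebra $\FQSym_2$ is free on the bounded uncuttable $2$-permutations.
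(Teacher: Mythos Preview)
Your proof is correct and follows the same underlying idea as the paper---transport the tidy freeness of \cref{thm:freeTidyCitelangisParallel} to the messy structure via the lex-min triangularity of \cref{lem:LexMinParallel}---but the two arguments run in opposite directions.

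The paper proves \emph{injectivity} of $\Phi$: it takes two normal forms $\tree,\tree'$ (for the citelangis rewriting system of \cref{subsubsec:rewritingSystemCitelangis}) with bounded uncuttable arguments and the same messy evaluation, applies $\LexMin$ to deduce equal tidy evaluations, and then invokes \cref{prop:uniqueTidyParallelCitelangisPermutationEvaluation} to conclude $\tree=\tree'$ and equal arguments. Your argument instead proves \emph{surjectivity} of $\Phi$ by a direct triangular induction on (degree, lex order), then uses the Hilbert series matching to upgrade surjectivity to an isomorphism. Both routes require the Hilbert series equality $\Hilbert_{\messyCitelangisParallel[2]}=\Hilbert_{\tidyCitelangisParallel[\op{l,r}]}$ together with \cref{thm:freeTidyCitelangisParallel} to close the argument; you make this step explicit, whereas the paper leaves it implicit. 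The trade-off: the paper's route is slightly cleaner conceptually (working with normal forms gives a canonical basis at once), while yours is more elementary in that it avoids the rewriting machinery of \cref{subsubsec:rewritingSystemCitelangis,subsubsec:normalFormsCitelangis} entirely.
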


\begin{proof}
Consider the $\op{l,r}$-tidy and the messy parallel $2$-citelangis rewriting systems defined in \cref{subsubsec:rewritingSystemCitelangis}.
As seen in \cref{subsubsec:normalFormsCitelangis}, these two rewriting systems have the same normal forms.
Consider two such normal forms~$\tree, \tree'$ of arity~$p$ and~$p'$ respectively, and some bounded uncuttable $2$-permutations~$\sigma_1, \dots, \sigma_p, \sigma'_1, \dots, \sigma'_{p'}$, such that~$\messyEvalPermParallel(\tree ; \sigma_1, \dots, \sigma_p) = \messyEvalPermParallel(\tree' ; \sigma'_1, \dots, \sigma'_{p'})$.
It then follows from \cref{lem:LexMinParallel} that $\tidyEvalPermParallel(\tree ; \sigma_1, \dots, \sigma_p) = \tidyEvalPermParallel(\tree' ; \sigma'_1, \dots, \sigma'_{p'})$.
\cref{prop:uniqueTidyParallelCitelangisPermutationEvaluation} then implies that~$p = p'$, that ${\sigma_i = \sigma'_i}$ for all~$i \in [p]$, and that~$\tree$ and~$\tree'$ are tidy parallel $2$-citelangis equivalent.
As we assumed that they were in normal form, we obtain that~$\tree = \tree'$.
The result follows.
\end{proof}

\begin{remark}
By \cref{thm:freeMessyCitelangisParallel}, the messy parallel $2$-citelangis operad (or $\Quad$ operad) can be fully understood from the messy parallel $2$-citelangis subalgebra of~$\FQSym_2$ generated by the $2$-permutation~$11$.
However, while the fully bounded cuttable $2$-permutations still provide a combinatorial model for the basis of this messy parallel $2$-citelangis algebra similarly to \cref{prop:combinatorialModelTidyParallelCitelangis}, the compositions~$\messyParallelCirc{i}$ of the messy parallel $2$-citelangis operad~$\messyCitelangisParallel$ on fully bounded cuttable $2$-permutations are more intricate than \cref{prop:combinatorialModelCompositionsTidyParallelCitelangis}.
\end{remark}


\subsubsection{Operations of~\texorpdfstring{$\messyCitelangisParallel[2]$}{MCit2} on bounded \texorpdfstring{$2$}{2}-posets}
\label{subsubsec:actionMessyParallelSignaleticBoundedPosets}

In this section, we observe that the result of any messy parallel permutation evaluation is the sum of all linear extensions of a well-chosen $2$-poset.
This observation allows us to encode the messy parallel permutation evaluation by an alternative combinatorial model and to study directly on this model the action of the messy parallel $2$-citelangis operad~$\messyCitelangisParallel[2]$.
It also motivates the introduction of the parallel $2$-poset operad that will be studied in \cref{subsubsec:parallelPosetOperad}.

\paraul{Operations on bounded $2$-posets and parallel poset evaluations}
We first define some operations on the following $2$-posets.

\begin{definition}
A $2$-poset~$\le_M$ is \defn{bounded} if it admits a unique minimal element~${\min(\le_M)}$ and a unique maximal element~${\max(\le_M)}$.
We then denote by~$\le_{M_*}$ the poset induced by~$\le_M$ on~${M_* \eqdef M \ssm \{\min(\le_M)\}}$ and by~$\le_{M^*}$ the poset induced by~$\le_M$ on~$M^* \eqdef M \ssm \{\max(\le_M)\}$.
\end{definition}

\begin{definition}
\label{def:messyCitelangisParallelActionPosets}
Consider an operation~$\operation \in \Operations_2 \eqdef \{\op{l,l},\op{l,r},\op{r,l},\op{r,r}\}$ and two bounded $2$-posets~$\le_M$ and~$\le_N$ of degrees~$m$ and~$n$ respectively.
We define a bounded $2$-poset ${\le_P} \eqdef {\le_M} \; \operation \; {\le_N}$~by:
\begin{itemize}
\item $P \eqdef M \sqcup N[m]$ where $N[m]$ is the $2$-set~$(m+1)^{\{2\}} \dots (m+n)^{\{2\}}$ obtained from~$N$ by shifting each element by the degree~$m$ of~$M$ as in \cref{def:shiftPoset},
\item in $\le_P$, the elements of $M$ (respectively of~$N[m]$) are ordered by~$\le_M$ (resp.~by~$\le_{N[m]}$), and the only other cover relations are $\{\min(\le_M), \min(\le_{N[m]})\}$ whose order is given by the first traffic signal of~$\operation$, and $\{\max(\le_M), \max(\le_{N[m]})\}$ whose order is given by the second traffic signal of~$\operation$. Formally, the comparison~$x \le_P y$ holds for~$x, y \in P$ if and only if one of the following statements holds:
	\begin{itemize}
	\item $x \in M$, $y \in M$ and $x \le_M y$, or
	\item $x \in N[m]$, $y \in N[m]$ and $x \le_{N[m]} y$, or
	\item $x = \min(\le_M)$, $y \in N[m]$ and $\operation_1 = {\op{l}}$, or
	\item $x = \min(\le_{N[m]})$, $y \in M$ and $\operation_1 = {\op{r}}$, or
	\item $x \in N[m]$, $y = \max(\le_M)$ and $\operation_2 = {\op{l}}$, or
	\item $x \in M$, $y = \max(\le_{N[m]})$ and $\operation_2 = {\op{r}}$.
	\end{itemize}
\end{itemize}
\end{definition}

\begin{remark}
\label{rem:sumsPosetsParallel}
\cref{def:messyCitelangisParallelActionPosets} can be conveniently rephrased in terms of ordered sums and disjoint unions of posets presented in \cref{def:disjointUnionOrderedSum}.
Indeed,
\begin{align*}
{\le_M} \op{l,l} {\le_N} & = \{\min(\le_M)\} + \big( {\le_{M_*^*}} \sqcup {\le_{N[m]}} \big) + \{\max(\le_M)\}, \\
{\le_M} \op{l,r} {\le_N} & = \{\min(\le_M)\} + \big( {\le_{M_*}} \sqcup {\le_{N[m]^*}} \big) + \{\max(\le_{N[m]})\}, \\
{\le_M} \op{r,l} {\le_N} & = \{\min(\le_{N[m]})\} + \big( {\le_{M^*}} \sqcup {\le_{N[m]_*}} \big) + \{\max(\le_M)\}, \\
{\le_M} \op{r,r} {\le_N} & = \{\min(\le_{N[m]})\} + \big( {\le_M} \sqcup {\le_{N[m]_*^*}} \big) + \{\max(\le_{N[m]})\},
\end{align*}
where~$m = |M|$.
\end{remark}

\begin{figure}
	\centerline{\begin{tabular}{c@{\hspace{1.2cm}}c@{\hspace{1.2cm}}c@{\hspace{1.2cm}}c@{\hspace{1.2cm}}c}
		\begin{tikzpicture}
			\node (1a) at (0,0) {$1$};
			\node (1b) at (0,1) {$1$};
			\draw (1a) -- (1b);
		\end{tikzpicture}
		&
		\begin{tikzpicture}
			\node (1a) at (0,0) {$1$};
			\node (1b) at (0,2) {$1$};
			\node (2a) at (1,.5) {$2$};
			\node (2b) at (1,1.5) {$2$};
			\draw (1a) -- (2a);
			\draw (2a) -- (2b);
			\draw (2b) -- (1b);
		\end{tikzpicture}
		&
		\begin{tikzpicture}
			\node (1a) at (0,0) {$1$};
			\node (1b) at (0,1) {$1$};
			\node (2a) at (1,1) {$2$};
			\node (2b) at (1,2) {$2$};
			\draw (1a) -- (1b);
			\draw (1a) -- (2a);
			\draw (1b) -- (2b);
			\draw (2a) -- (2b);
		\end{tikzpicture}
		&
		\begin{tikzpicture}
			\node (1a) at (0,1) {$1$};
			\node (1b) at (0,2) {$1$};
			\node (2a) at (1,0) {$2$};
			\node (2b) at (1,1) {$2$};
			\draw (1a) -- (1b);
			\draw (2a) -- (1a);
			\draw (2a) -- (2b);
			\draw (2b) -- (1b);
		\end{tikzpicture}
		&
		\begin{tikzpicture}
			\node (1a) at (0,.5) {$1$};
			\node (1b) at (0,1.5) {$1$};
			\node (2a) at (1,0) {$2$};
			\node (2b) at (1,2) {$2$};
			\draw (1a) -- (1b);
			\draw (1b) -- (2b);
			\draw (2a) -- (1a);
		\end{tikzpicture}
		\\[.2cm]
		${\le_I}$ &
		${\le_I} \op{l,l} {\le_I}$ &
		${\le_I} \op{l,r} {\le_I}$ &
		${\le_I} \op{r,l} {\le_I}$ &
		${\le_I} \op{r,r} {\le_I}$
	\end{tabular}}
	\caption{Four parallel operations on bounded $2$-posets. See \cref{def:messyCitelangisParallelActionPosets}.}
	\label{fig:exmMessyCitelangisParallelActionPosets}
\end{figure}
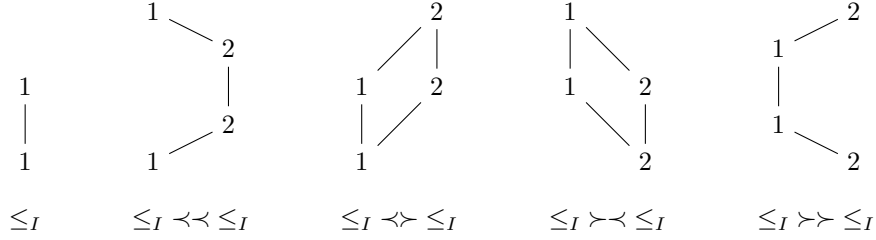

\begin{figure}
	\centerline{$
	\begin{tikzpicture}[baseline={([yshift=-.8ex]current bounding box.center)}, blue]
		\node (1a) at (0,.5) {$1$};
		\node (1b) at (0,1.5) {$1$};
		\node (2a) at (1,0) {$2$};
		\node (2b) at (1,2) {$2$};
		\node (3a) at (2,.5) {$3$};
		\node (3b) at (2,1.5) {$3$};
		\draw (1a) -- (1b);
		\draw (1b) -- (2b);
		\draw (2a) -- (1a);
		\draw (2a) -- (3a);
		\draw (3a) -- (3b);
		\draw (3b) -- (2b);
	\end{tikzpicture}
	\op{r,r}
	\begin{tikzpicture}[baseline={([yshift=-.8ex]current bounding box.center)}, red]
		\node (1a) at (0,1) {$1$};
		\node (1b) at (0,2) {$1$};
		\node (2a) at (1,0) {$2$};
		\node (2b) at (1,1) {$2$};
		\draw (1a) -- (1b);
		\draw (2a) -- (1a);
		\draw (2a) -- (2b);
		\draw (2b) -- (1b);
	\end{tikzpicture}
	=
	\begin{tikzpicture}[baseline={([yshift=-.8ex]current bounding box.center)}]
		\node (1aM) at (0,1.5) {\blue $1$};
		\node (1bM) at (0,2.5) {\blue $1$};
		\node (2aM) at (1,1) {\blue $2$};
		\node (2bM) at (1,3) {\blue $2$};
		\node (3aM) at (2,1.5) {\blue $3$};
		\node (3bM) at (2,2.5) {\blue $3$};
		\node (1aN) at (3,2) {\red $4$};
		\node (1bN) at (3,3.5) {\red $4$};
		\node (2aN) at (4,0.5) {\red $5$};
		\node (2bN) at (4,2) {\red $5$};
		\draw [blue] (1aM) -- (1bM);
		\draw [red] (1aN) -- (1bN);
		\draw [blue] (1bM) -- (2bM);
		\draw [blue] (2aM) -- (1aM);
		\draw [blue] (2aM) -- (3aM);
		\draw [red] (2aN) -- (1aN);
		\draw (2aN) -- (2aM);
		\draw [red] (2aN) -- (2bN);
		\draw (2bM) -- (1bN);
		\draw [red] (2bN) -- (1bN);
		\draw [blue] (3bM) -- (2bM);
		\draw [blue] (3aM) -- (3bM);
	\end{tikzpicture}
	=
	\EvalPosetParallel \left(
		\begin{tikzpicture}[baseline={([yshift=-.8ex]current bounding box.center)}, level/.style={sibling distance=18mm/#1, level distance = 1cm/sqrt(#1)}]
			\node [rectangle, draw] {$\op{r,r}$}
				child {node [rectangle, draw] {$\op{r,r}$}
					child {node {}}
					child {node [rectangle, draw] {$\op{l,l}$}
						child {node {}}
						child {node {}}
					}
				}
				child {node [rectangle, draw] {$\op{r,l}$}
					child {node {}}
					child {node {}}
				}
			;
		\end{tikzpicture}
	\right)
	$}
	\caption{Another example of parallel operation on bounded $2$-posets. See \cref{def:messyCitelangisParallelActionPosets}.}
	\label{fig:exmMessyCitelangisParallelActionPosetsBis}
\end{figure}

For example, let~$\le_I$ denote the $2$-chain on~$1^{\{2\}}$ represented in \cref{fig:exmMessyCitelangisParallelActionPosets}\,(left).
Then the four bounded $2$-posets~${\le_I} \; \operation \; {\le_I}$, for~${\operation \in \Operations_2}$, are represented in \cref{fig:exmMessyCitelangisParallelActionPosets}\,(left).
See \cref{fig:exmMessyCitelangisParallelActionPosetsBis} for another example.
The next statement clearly follows from \cref{rem:sumsPosetsParallel}.

\begin{lemma}
For any operation~$\operation \in \Operations_2$ and any two bounded $2$-posets~$\le_M$ and~$\le_N$ of degrees~$m$ and~$n$ respectively, ${\le_M \; \operation \; \le_N}$ is a bounded $2$-poset of degree~$m+n$.
\end{lemma}

This statement allows to define the evaluation of a syntax tree on posets.
See \cref{fig:exmMessyCitelangisParallelActionPosetsBis}.

\begin{definition}
\label{def:posetParallel}
Denote by~$\EvalPosetParallel(\tree ; \le_{M_1}, \dots, \le_{M_p})$ the evaluation of a syntax tree~${\tree \in \Syntax[\Operations_2]}$ of arity~$p$ on $p$ bounded $2$-posets~${\le_{M_1}, \dots, \le_{M_p}}$ using the operations of \cref{def:messyCitelangisParallelActionPosets}. 
The \defn{parallel poset evaluation} of~$\tree \in \Syntax[\Operations_2]$ is then~$\EvalPosetParallel(\tree) \eqdef \EvalPosetParallel(\tree ; \le_I, \dots, \le_I)$, where~$\le_I$ is the $2$-chain on~$1^{\{2\}}$.
\end{definition}

\begin{remark}
\label{rem:lattices}
It is not difficult to check that, if~$\le_M$ and~$\le_N$ are lattices, then~${\le_M} \, \operation \, {\le_N}$ is a lattice for any operation~$\operation \in \Operations_2$.
Therefore, for any~$\tree \in \Syntax[\Operations_2]$, the parallel poset evaluation~$\EvalPosetParallel(\tree)$ is a lattice $2$-poset.
We will characterize the parallel poset evaluations later in \cref{prop:characterizationParallelPosetEvaluations}.
\end{remark}

\paraul{Bounded cuts in $2$-posets}
We now aim at characterizing parallel poset evaluations.
As in \cref{subsubsec:actionTidyParallelSignaletic}, this understanding goes through bounded cuts in bounded $2$-posets.

\begin{definition}
\label{def:boundedCutPoset}
Let~$\le_M$ be a bounded $2$-poset of degree~$n$ and~$\gamma \in [n-1]$.
We say that $\gamma$ is a \defn{bounded cut} of~$\le_M$ if we can decompose~$M$ into~$M = \{\min(\le_M), \max(\le_M)\} \sqcup L \sqcup R$ such that, for all~$\ell \in L$ and~$r \in R$, we have~$\ell \le \gamma < r$ and~$\ell$ and~$r$ are incomparable for~$\le_M$.
We denote by~$\bcuts(\le_M)$ the set of bounded cuts of~$\le_M$.
We say that the bounded $2$-poset~$\le_M$ is \defn{bounded cuttable} if it admits a bounded cut, and \defn{bounded uncuttable} if it admits no bounded cut.
\end{definition}

The following statements are similar to \cref{lem:boundedCutsRestriction,prop:equivalenceFullyBoundedCuttable}.

\begin{lemma}
\label{lem:boundedCutsRestrictionPoset}
Consider~$L \subseteq [m]$ and~$\gamma \in [\min(L), \max(L)-1]$, and let~$\gamma^{|L} \eqdef |[\gamma] \cap L|$ denote the number of elements of~$L$ between~$1$ and~$\gamma$.
If~$\gamma$ is a bounded cut of a bounded $2$-poset~$\le_M$ of degree~$m$, then $\gamma^{|L}$ is a bounded cut of its restriction~$\le_{M^{|L}}$.
\end{lemma}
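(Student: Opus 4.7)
The proof should directly adapt \cref{lem:boundedCutsRestriction}, replacing the word decomposition $\sigma = f\mu\nu l$ from the permutation setting by the order-theoretic decomposition $M = \{\min(\le_M), \max(\le_M)\} \sqcup A \sqcup B$ coming from \cref{def:boundedCutPoset}. First, I would invoke the hypothesis that $\gamma$ is a bounded cut of $\le_M$ to obtain such a partition, with every $a \in A$ of value at most $\gamma$, every $b \in B$ of value strictly greater than $\gamma$, and $a, b$ incomparable in $\le_M$.

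Next, I would transport this partition through the restriction. Writing $A^{|L}$ and $B^{|L}$ for the images of $A$ and $B$ in $M^{|L}$ (elements of $A$ or $B$ whose values lie in $L$, relabeled via \cref{def:restrictionMultiset}), the induced-order definition in \cref{def:restrictionPoset} immediately gives that $A^{|L}$ and $B^{|L}$ remain pairwise incomparable in $\le_{M^{|L}}$. The value relabeling $\ell_j \mapsto j$ sends $\gamma$ to $|[\gamma] \cap L| = \gamma^{|L}$, so elements of $A^{|L}$ have value at most $\gamma^{|L}$ and elements of $B^{|L}$ have value strictly greater than $\gamma^{|L}$, matching the cut-value conditions of \cref{def:boundedCutPoset}.

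Then I would argue that $\le_{M^{|L}}$ is itself a bounded $2$-poset, with $\min(\le_M)$ and $\max(\le_M)$ (after relabeling) serving as its unique minimum and maximum. The hypothesis $\gamma \in [\min(L), \max(L)-1]$ guarantees values of $L$ on both sides of the cut, which together with the bounded-cut structure (in particular the fact that $\min(\le_M)$ lies below every element of $A \cup B$ and $\max(\le_M)$ above every element of $A \cup B$) forces $\min(\le_M)$ and $\max(\le_M)$ to persist in $M^{|L}$ and remain extremal there. Putting the pieces together, the decomposition $M^{|L} = \{\min(\le_{M^{|L}}), \max(\le_{M^{|L}})\} \sqcup A^{|L} \sqcup B^{|L}$ will witness that $\gamma^{|L}$ is a bounded cut of $\le_{M^{|L}}$.

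The main obstacle will be this third step: verifying the boundedness of the restriction. The permutation analogue is automatic because the first and last letters of a word are easy to track under restriction, but a bounded $2$-poset can a priori lose its unique minimum or maximum when one restricts to a value subset. I expect the careful argument to exploit the specific interaction between the bounded-cut decomposition and the value range of $L$, isolating $\min(\le_M)$ and $\max(\le_M)$ as the only elements of $M$ whose removal would destroy the incomparability of $A$ with $B$ in the restricted poset.
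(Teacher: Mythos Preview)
Your first two steps match the paper's approach: the paper does not give a detailed proof but simply declares the lemma ``similar to'' \cref{lem:boundedCutsRestriction}, and transporting the decomposition $M = \{\min(\le_M),\max(\le_M)\}\sqcup A\sqcup B$ through the restriction exactly parallels what happens for words.

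Your third step, however, does not work, and you have correctly located the real difficulty. The claim that $\min(\le_M)$ and $\max(\le_M)$ ``persist in $M^{|L}$'' is false: the restriction of \cref{def:restrictionPoset} keeps only elements whose \emph{value} lies in $L$, and nothing in the hypotheses forces the value of $\min(\le_M)$ or $\max(\le_M)$ to belong to $L$. For a concrete counterexample, take the degree-$3$ poset displayed in \cref{eq:relationPosetsParallel}: there $\min = 2_1$, $\max = 2_2$, $A=\{1_1,1_2\}$, $B=\{3_1,3_2\}$. The value $\gamma=1$ is a bounded cut and $L=\{1,3\}$ satisfies $\gamma\in[\min(L),\max(L)-1]$, yet the restriction to $L$ is the disjoint union of two $2$-chains --- not a bounded poset at all, so the conclusion is not even well-posed in this instance.

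This means the lemma as literally stated requires an extra hypothesis (for instance that $L$ is an interval, or that $\le_{M^{|L}}$ is already known to be bounded). In the paper the lemma is only invoked for interval restrictions inside the proof of \cref{prop:characterizationParallelPosetEvaluations}, where separate inductive information about the subtrees is in play; your attempt to deduce general boundedness purely from the cut structure cannot succeed.
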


\begin{proposition}
\label{prop:equivalenceFullyBoundedCuttablePoset}
The following conditions are equivalent for a bounded $2$-poset of degree~$n$:
\begin{enumerate}[(i)]
\item its restriction to any interval of~$[n]$ of size at least $2$ is bounded cuttable,
\item its restriction to any subset of~$[n]$ of size at least $2$ is bounded cuttable.
\end{enumerate}
\end{proposition}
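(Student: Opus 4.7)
The plan is to mimic exactly the proof of \cref{prop:equivalenceFullyBoundedCuttable}, transported from multipermutations to multiposets. The implication (ii) $\Rightarrow$ (i) is trivial since any interval is a subset, so the whole content is in (i) $\Rightarrow$ (ii).

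Assume that the bounded $2$-poset $\le_M$ of degree $n$ satisfies~(i), and fix a subset $L \subseteq [n]$ with $|L| \ge 2$. Since $|L| \ge 2$ we have $\min(L) < \max(L)$, so the restriction $\le_{M^{|[\min(L),\max(L)]}}$ is a bounded $2$-poset whose degree is at least $2$; by~(i) it admits a bounded cut $\gamma$ with $\min(L) \le \gamma < \max(L)$. The goal is to transport $\gamma$ into a bounded cut of $\le_{M^{|L}}$.

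For this, first apply the restriction-composition identity from \cref{rem:relationOperatorsMultiposets}, namely ${\le_{(M^{|L'})^{|X}}} = {\le_{M^{|\set{\ell'_x}{x \in X}}}}$, with $L' \eqdef [\min(L),\max(L)]$ and $\bar L \eqdef \set{\ell-\min(L)+1}{\ell \in L}$, which exhibits $\le_{M^{|L}}$ as the restriction of $\le_{M^{|[\min(L),\max(L)]}}$ to $\bar L$. Then apply \cref{lem:boundedCutsRestrictionPoset} to the bounded cut $\gamma$ of $\le_{M^{|[\min(L),\max(L)]}}$ and to the subset $\bar L$ of its ground set: this yields a bounded cut $\gamma^{|\bar L}$ of $\le_{M^{|L}}$. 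Finally, one checks that $\gamma^{|\bar L}$ lies in the admissible range $[1,|L|-1]$: because $\min(L) \le \gamma$ we have $1 \in \bar L \cap [\gamma]$ so $\gamma^{|\bar L} \ge 1$, and because $\gamma < \max(L)$ the element $|L| \in \bar L$ does not lie in $[\gamma]$, so $\gamma^{|\bar L} \le |L|-1$. Hence $\le_{M^{|L}}$ is bounded cuttable, and $\le_M$ satisfies~(ii).

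I do not expect any real obstacle: the proof is purely a book-keeping exercise using the two preparatory lemmas \cref{lem:boundedCutsRestrictionPoset} and \cref{rem:relationOperatorsMultiposets}, both of which have already been stated. The only point one must be a bit careful about is the range of the transported cut, but as above this is immediate from $\min(L) \le \gamma < \max(L)$.
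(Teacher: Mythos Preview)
Your proof is correct and follows exactly the approach the paper intends: the paper does not write out a separate proof for this proposition but simply notes that it is analogous to \cref{prop:equivalenceFullyBoundedCuttable}, and your argument is precisely that transport, using \cref{lem:boundedCutsRestrictionPoset} and \cref{rem:relationOperatorsMultiposets} in place of their multipermutation counterparts. Your explicit verification that $\gamma^{|\bar L} \in [1,|L|-1]$ is in fact slightly more careful than the corresponding step in the paper's proof for permutations.
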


\begin{definition}
\label{def:fullyBoundedCuttablePoset}
A bounded $2$-poset is \defn{fully bounded cuttable} if it satisfies the equivalent conditions of \cref{prop:equivalenceFullyBoundedCuttablePoset}.
\end{definition}

Note that the only multiposet on~$\{1,1\}$ is fully bounded cuttable as there is no subset of size at least~$2$.
The following statements are similar to \cref{lem:operationImpliesBoundedCut,lem:boundedCutImpliesOperation}.

\begin{lemma}
\label{lem:operationImpliesBoundedCutPoset}
For any bounded $2$-posets~$\le_M$ and~$\le_N$, and any operation~${\operation \in \Operations_2}$, the degree~$m$ of~$\le_M$ is a bounded cut of~$\le_M \, \operation \, \le_N$.
\end{lemma}

\begin{lemma}
\label{lem:boundedCutImpliesOperationPoset}
For any bounded $2$-poset~$\le_P$ of degree~$p$ and any bounded cut~$\gamma \in \bcuts(\le_P)$, there is a unique~${\operation \in \Operations_2}$ (defined by $\operation_1 \eqdef {\op{l}}$ if~$\min(\le_P) \le \gamma$ while~$\operation_1 \eqdef {\op{r}}$ if~$\min(\le_P) > \gamma$, and $\operation_2 \eqdef {\op{l}}$ if~$\max(\le_P) \le \gamma$ while~$\operation_2 \eqdef {\op{r}}$ if~$\max(\le_P) > \gamma$) such that~${{\le_P} = {\le_{P^{|[\gamma]}}} \, \operation \, {\le_{P^{|[p] \ssm [\gamma]}}}}$.
\end{lemma}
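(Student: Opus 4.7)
The plan is to treat existence and uniqueness separately. Uniqueness of $\operation$ is immediate from its definition: each of $\operation_1$ and $\operation_2$ is determined by a two-valued condition on $\min(\le_P)$ and $\max(\le_P)$, so any operator yielding a valid decomposition must coincide with the prescribed one.

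For existence, set $M \eqdef P^{|[\gamma]}$ and $N[m] \eqdef P^{|[p] \ssm [\gamma]}$ with $m = \gamma$, and use the decomposition $P = \{\min(\le_P), \max(\le_P)\} \sqcup L \sqcup R$ provided by $\gamma \in \bcuts(\le_P)$. First observe that whichever of $M$ or $N[m]$ contains $\min(\le_P)$ inherits it as its unique minimum, since $\min(\le_P)$ is comparable to and smaller than every element of $\le_P$, hence also every element of any restriction containing it; the symmetric statement holds for $\max(\le_P)$. This identifies which of $\min(\le_M), \min(\le_{N[m]})$ (respectively $\max(\le_M), \max(\le_{N[m]})$) is invoked by $\operation$ in \cref{def:messyCitelangisParallelActionPosets}, and agrees with the values of $\operation_1$ and $\operation_2$ prescribed by the statement.

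It then remains to verify the equality $\le_P = \le_M \, \operation \, \le_{N[m]}$ by comparing all pairs. Within $M$ (respectively $N[m]$), both orders agree by definition of the restriction. For cross-pairs, the extremum of $\le_P$ lying on a given side is globally comparable with every element of the opposite side, providing exactly the clauses of the formal definition of $\, \operation \,$ involving the relevant minimum or maximum. The remaining cross-pairs join a non-extremal element of $M$ (necessarily in $L$, plus possibly the global extremum of $\le_P$ when it lies on the $N[m]$-side) with a non-extremal element of $N[m]$ (symmetrically in $R$); all such pairs are incomparable in $\le_P$ by the incomparability clause in the definition of a bounded cut.

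The main obstacle is the case analysis on the four possible operators $\op{l,l}$, $\op{l,r}$, $\op{r,l}$, $\op{r,r}$: for each one, one identifies which restriction contains $\min(\le_P)$ and which contains $\max(\le_P)$, then verifies that the six clauses in the formal definition of $\operation$ precisely recover the cross-comparabilities present in $\le_P$. The work is bookkeeping rather than technical, each verification reducing either to the global extremality of $\min(\le_P)$ and $\max(\le_P)$, or to the incomparability of $L$ and $R$ built into the bounded cut hypothesis.
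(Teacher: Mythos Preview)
Your proof is correct and matches the paper's approach: the paper states this lemma as analogous to \cref{lem:boundedCutImpliesOperation} (the permutation version), whose proof is declared immediate from the relevant definition and left to the reader. Your direct verification against the clauses of \cref{def:messyCitelangisParallelActionPosets}, splitting into within-$M$, within-$N[m]$, and cross-pairs, is exactly the expected argument.
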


\begin{remark}
\label{rem:algoPosetParallel}
Similarly to \cref{rem:algoTidyEvalParallel}, observe that \cref{lem:boundedCutImpliesOperationPoset} gives an inductive algorithm to compute all decompositions of a given bounded $2$-poset~$\le_P$ as an evaluation of the form ${{\le_P} = \EvalPosetParallel(\tree ; \le_{M_1}, \dots, \le_{M_p})}$.
Namely, $\le_P$ admits
\begin{itemize}
\item the trivial evaluation~$\le_0 = \EvalPosetParallel(\one ; \le_0)$, where~$\one$ is the unit syntax tree with no node and a single leaf, and 
\item the evaluation~${\le_P} = \EvalPosetParallel(\tree ; \le_{M_1}, \dots, \le_{M_l}, \le_{N_1}, \dots, \le_{N_r})$, for any~${\gamma \in \bcuts(\le_P)}$ and any~${\le_{P^{|[\gamma]}}} = \EvalPosetParallel(\tree[l] ; \le_{M_1}, \dots, \le_{M_l})$ and ${\le_{P^{|[\ell] \ssm [\gamma]}}} = \EvalPosetParallel(\tree[r] ; \le_{N_1}, \dots, \le_{N_r})$, where~$\tree$ is the syntax tree with root~$\operation \in \Operations_2$ defined by \cref{lem:boundedCutImpliesOperationPoset} and with subtrees~$\tree[l]$ and~$\tree[r]$.
\end{itemize}
This algorithm implies the existence of decompositions of the form~${\le_P} = \EvalPosetParallel(\tree ; \le_{M_1}, \dots, \le_{M_p})$ where~$\le_{M_1}, \dots, \le_{M_p}$ are bounded uncuttable.
\end{remark}

Note that not all bounded $2$-posets are obtained by evaluating syntax trees on~$\Operations_2$.
For instance, the evaluation the syntax trees of arity~$2$ only produces the four bounded $2$-posets of \cref{fig:exmMessyCitelangisParallelActionPosets}, thus only two of the six linear bounded $2$-posets of degree~$2$.
We now characterize the bounded $2$-posets which are parallel poset evaluations of syntax trees.

\begin{proposition}
\label{prop:characterizationParallelPosetEvaluations}
The parallel poset evaluations of the syntax trees of~$\Syntax[\Operations_2]$ are precisely the fully bounded cuttable $2$-posets.
\end{proposition}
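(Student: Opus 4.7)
The plan is to adapt the proof strategy of Proposition~\ref{prop:characterizationTidyParallelPermutationEvaluations} from $2$-permutations to bounded $2$-posets, exploiting the close parallel between the action on permutations (Definition~\ref{def:tidyCitelangisParallelActionPermutations}) and the operation on posets (Definition~\ref{def:messyCitelangisParallelActionPosets}): in both cases the two ends of the result are determined by the operator and the middles are glued together. All ingredients needed are already in place: Lemmas~\ref{lem:operationImpliesBoundedCutPoset} and~\ref{lem:boundedCutImpliesOperationPoset} are the poset analogues of Lemmas~\ref{lem:operationImpliesBoundedCut} and~\ref{lem:boundedCutImpliesOperation}, Lemma~\ref{lem:boundedCutsRestrictionPoset} is the analogue of Lemma~\ref{lem:boundedCutsRestriction}, and Proposition~\ref{prop:equivalenceFullyBoundedCuttablePoset} plays the role of Proposition~\ref{prop:equivalenceFullyBoundedCuttable}. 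The only ingredient not stated explicitly is the poset counterpart of Theorem~\ref{thm:fullyBoundedCuttablepermutationClass}, namely that fully bounded cuttable $2$-posets form a class stable by restriction; this follows at once from Proposition~\ref{prop:equivalenceFullyBoundedCuttablePoset} combined with the relation $(\le_{M^{|L}})^{|X} = \le_{M^{|\set{\ell_x}{x \in X}}}$ noted in Remark~\ref{rem:relationOperatorsMultiposets}.

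For the forward direction, I would proceed by induction on the arity~$p$ of~$\tree \in \Syntax[\Operations_2]$. If~$p=1$, then~$\EvalPosetParallel(\tree) = \le_I$ has degree~$1$ and is vacuously fully bounded cuttable. If~$p \ge 2$, write~$\tree$ as a root~$\operation \in \Operations_2$ with left subtree~$\tree[l]$ of arity~$l$ and right subtree~$\tree[r]$ of arity~$p-l$, so that ${\le_P} \eqdef \EvalPosetParallel(\tree) = \EvalPosetParallel(\tree[l]) \; \operation \; \EvalPosetParallel(\tree[r])$ with~$\EvalPosetParallel(\tree[l])$ and~$\EvalPosetParallel(\tree[r])$ fully bounded cuttable by induction. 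Lemma~\ref{lem:operationImpliesBoundedCutPoset} tells us that~$l$ is a bounded cut of~$\le_P$. To check the fully bounded cuttable condition of Proposition~\ref{prop:equivalenceFullyBoundedCuttablePoset}(i), take any interval~$[a,b] \subseteq [p]$ with~$b > a$, and distinguish three cases exactly as in the proof of Proposition~\ref{prop:characterizationTidyParallelPermutationEvaluations}: when~$b \le l$ or~$a > l$, the restriction~$\le_{P^{|[a,b]}}$ is obtained from a restriction of~$\EvalPosetParallel(\tree[l])$ or~$\EvalPosetParallel(\tree[r])$ respectively and hence is bounded cuttable by induction; when~$a \le l < b$, Lemma~\ref{lem:boundedCutsRestrictionPoset} provides the bounded cut~$l-a+1$ of~$\le_{P^{|[a,b]}}$.

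For the reverse direction, I would again proceed by induction, this time on the degree~$n$ of a fully bounded cuttable $2$-poset~$\le_P$. If~$n=1$, then~$\le_P = \le_I = \EvalPosetParallel(\one)$. Otherwise, pick any bounded cut~$\gamma \in \bcuts(\le_P)$, which exists by definition; by Lemma~\ref{lem:boundedCutImpliesOperationPoset}, there is a unique~$\operation \in \Operations_2$ with~${\le_P} = {\le_{P^{|[\gamma]}}} \; \operation \; {\le_{P^{|[n]\ssm[\gamma]}}}$. Both restricted $2$-posets are still fully bounded cuttable, by the stability-under-restriction property noted above, and of strictly smaller degrees, so induction yields syntax trees~$\tree[l], \tree[r]$ with~${\le_{P^{|[\gamma]}}} = \EvalPosetParallel(\tree[l])$ and~${\le_{P^{|[n]\ssm[\gamma]}}} = \EvalPosetParallel(\tree[r])$. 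Assembling~$\tree \eqdef \operation \circ (\tree[l], \tree[r])$ produces the desired decomposition ${\le_P} = \EvalPosetParallel(\tree)$.

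The main (minor) obstacle is verifying that the restrictions~$\le_{P^{|[\gamma]}}$ and~$\le_{P^{|[n] \ssm [\gamma]}}$ really are \emph{bounded} $2$-posets as required to apply Lemma~\ref{lem:boundedCutImpliesOperationPoset} and the induction hypothesis; this is a short case analysis on~$\operation$ using the decomposition $P = \{\min(\le_P), \max(\le_P)\} \sqcup L \sqcup R$ from Definition~\ref{def:boundedCutPoset}, together with the fact that any nonempty finite induced subposet of the incomparability component~$L$ (respectively~$R$) inherits a well-defined minimum and maximum thanks to the existence of the global bounds in each suitable piece of the decomposition. Everything else mirrors the permutation argument verbatim.
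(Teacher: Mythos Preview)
Your proposal is correct and follows essentially the same approach as the paper's proof, which explicitly states ``the proof is exactly the same as \cref{prop:characterizationTidyParallelPermutationEvaluations}'' and then runs through the identical two-direction induction with the same three-case split on the position of the interval~$[a,b]$ relative to the cut. Your attention to the boundedness of the restrictions~$\le_{P^{|[\gamma]}}$ and~$\le_{P^{|[n]\ssm[\gamma]}}$ is in fact slightly more careful than the paper, which glosses over this point (it is implicit in the statement of \cref{lem:boundedCutImpliesOperationPoset}).
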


\begin{proof}
The proof is exactly the same as \cref{prop:characterizationTidyParallelPermutationEvaluations}.
Consider first a parallel poset evaluation~${\le_P} = \EvalPosetParallel(\tree)$ with~$\tree \in \Syntax[\Operations_2](\ell)$.
We prove by induction on~$\ell$ that~$\le_P$ is fully bounded cuttable.
If~$\ell = 1$, there is nothing to prove.
Assume that~$\ell \ge 2$ and let~$1 \le a < b \le \ell$.
Let~$\tree[l]$ and~$\tree[r]$ denote the left and right subtrees of~$\tree$, and let~$\gamma$ be the arity of~$\tree[l]$, so that~${\le_{P^{|[\gamma]}}} = \EvalPosetParallel(\tree[l])$ and~$\le_{M^{|[\ell] \ssm [\gamma]}} = \EvalPosetParallel(\tree[r])$.
We distinguish three cases:
\begin{itemize}
\item Assume that~$b \le \gamma$. Since~$\le_{M^{|[\ell]}} = \EvalPosetParallel(\tree[l])$ is fully bounded cuttable by induction hypothesis and bounded cuts are preserved by restriction by \cref{lem:boundedCutsRestrictionPoset}, we obtain that~${\le_{P^{|[a,b]}}} = (\le_{P^{|[\gamma]}})^{|[a,b]}$ is bounded cuttable.
\item Assume that~$\gamma \le a$. The argument is similar since~${\le_{P^{|[a,b]}}} = (\le_{P^{|[\ell] \ssm [\gamma]}})^{|[a-\gamma,b-\gamma]}$.
\item Assume finally that~$a < \gamma < b$. By \cref{lem:operationImpliesBoundedCutPoset}, $\gamma$ is a bounded cut of~$\le_P$. Therefore, $\gamma-a$ is a bounded cut of~$\le_{P^{|[a,b]}}$ by \cref{lem:boundedCutsRestrictionPoset}.
\end{itemize}

Conversely, consider now a fully bounded cuttable $2$-poset~$\le_P$ of degree~$\ell$.
Similarly to \cref{rem:algoPosetParallel}, we prove by induction on~$\ell$ that~$\le_P$ is the parallel poset evaluation of a syntax tree.
If~$\ell = 1$, then~${\le_P} = \EvalPosetParallel(\one)$.
If~$\ell \ge 2$, then~$\le_P$ admits at least one bounded cut~$\gamma$ by assumption.
Moreover, $\le_{P^{|[\gamma]}}$ and~$\le_{P^{|[\ell] \ssm [\gamma]}}$ are both fully bounded cuttable by \cref{lem:boundedCutsRestrictionPoset}.
By induction, we obtain that~${\le_{P^{|[\gamma]}}} = \EvalPosetParallel(\tree[l])$ and~${\le_{P^{|[\ell] \ssm [\gamma]}}} = \EvalPosetParallel(\tree[r])$.
Then~${\le_P} = \EvalPosetParallel(\tree)$, where~$\tree$ is the syntax tree with root~$\operation \in \Operations_2$ defined by \cref{lem:boundedCutImpliesOperation} and with subtrees~$\tree[l]$ and~$\tree[r]$.
\end{proof}

\paraul{Connections between~$\tidyEvalPermParallel$, $\messyEvalPermParallel$ and~$\EvalPosetParallel$}
We have seen in \cref{lem:LexMinParallel} that $\tidyEvalPermParallel(\tree) = \LexMin \big( \messyEvalPermParallel(\tree) \big)$ for any syntax tree~$\tree \in \Syntax[\Operations_2]$.
We now compare with the parallel poset evaluation~$\EvalPosetParallel(\tree)$.

For a multiposet~$\le_M$, we let
\[
\LinExt(\le_M) \eqdef \sum_{\mu \in \linearExtensions(\le_M)} \mu
\]
be the sum of all linear extensions of~$\le_M$ (meaning the formal sum in~$\K\Perm_2(|M|)$).
We obtain the following statement, illustrated in \cref{fig:LinExtEvalPosetParallel}.

\begin{lemma}
\label{lem:LinExtEvalPosetParallel}
For any syntax tree~$\tree \in \Syntax[\Operations_2]$, we have~$\messyEvalPermParallel(\tree) = \LinExt \big( \EvalPosetParallel(\tree) \big)$.
\end{lemma}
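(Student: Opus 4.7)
The plan is to proceed by induction on the syntax tree $\tree \in \Syntax[\Operations_2]$. For the base case, when $\tree$ is a single leaf, one has $\messyEvalPermParallel(\tree) = 11$ and $\EvalPosetParallel(\tree) = {\le_I}$ is the $2$-chain on $1^{\{2\}}$, whose unique linear extension is $11$. For the inductive step, writing $\tree = \operation(\tree[l], \tree[r])$ with $\operation \in \Operations_2$, both evaluations are defined recursively by the same rule at the root, namely
\[
\messyEvalPermParallel(\tree) = \messyEvalPermParallel(\tree[l]) \; \operation \; \messyEvalPermParallel(\tree[r])
\qandq
\EvalPosetParallel(\tree) = \EvalPosetParallel(\tree[l]) \; \operation \; \EvalPosetParallel(\tree[r]),
\]
the first using the messy parallel $2$-citelangis structure on $\FQSym_2$ (\cref{def:messyCitelangisParallelActionPermutations}) and the second using the poset operations of \cref{def:messyCitelangisParallelActionPosets}. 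By induction, $\messyEvalPermParallel(\tree[l]) = \LinExt(\EvalPosetParallel(\tree[l]))$ and similarly for $\tree[r]$. So it remains to establish the following compatibility lemma: for any two bounded $2$-posets $\le_M$ (of degree $m$) and $\le_N$, and any operator $\operation \in \Operations_2$,
\[
\LinExt(\le_M) \; \operation \; \LinExt(\le_N) = \LinExt\big( \, {\le_M} \; \operation \; {\le_N} \, \big).
\]

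To prove this compatibility, the key observation is that for any bounded $2$-poset $\le_M$, every linear extension begins with $\min(\le_M)$ and ends with $\max(\le_M)$, so
\[
\LinExt(\le_M) = \min(\le_M) \cdot \LinExt(\le_{M_*^*}) \cdot \max(\le_M).
\]
Then one uses the decompositions recorded in \cref{rem:sumsPosetsParallel} expressing each $\le_M \, \operation \, \le_N$ as a composition of ordered sums ``$+$'' and disjoint unions ``$\sqcup$''; for instance ${\le_M} \op{l,l} {\le_N} = \{\min(\le_M)\} + ( {\le_{M_*^*}} \sqcup {\le_{N[m]}} ) + \{\max(\le_M)\}$. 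Applying \cref{lem:disjointUnionOrderedSumLinearExtensions} (which converts $+$ into concatenation and $\sqcup$ into shifted shuffle on linear extensions) together with the fact that $\LinExt$ commutes with the shift by $m$, one obtains
\[
\LinExt({\le_M} \op{l,l} {\le_N}) = \min(\le_M) \cdot \big( \LinExt(\le_{M_*^*}) \shuffle \LinExt(\le_{N[m]}) \big) \cdot \max(\le_M),
\]
which by the formula displayed above and \cref{def:messyCitelangisParallelActionPermutations} (applied with first and last letters $\mu_1 = \min(\le_M)$, $\mu_m = \max(\le_M)$) is precisely $\LinExt(\le_M) \op{l,l} \LinExt(\le_N)$. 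The three other cases $\op{l,r}, \op{r,l}, \op{r,r}$ are verified by exactly the same manipulation, the only change being which of $\min(\le_M), \max(\le_M), \min(\le_{N[m]}), \max(\le_{N[m]})$ gets extracted as the leading or trailing letter.

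The argument has no real obstacle: the essential point is that the four operators on posets were designed exactly so that their action on linear extensions reproduces the four operators on words from \cref{def:messyCitelangisParallelActionWords}. The only place to be careful is the shift: one must verify that $\LinExt(\le_{N[m]}) = \LinExt(\le_N)[m]$, which is part of the general compatibility of linear extensions with shifts recorded earlier in the paper.
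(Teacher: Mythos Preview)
Your proof is correct and follows essentially the same approach as the paper: both establish the compatibility $\LinExt(\le_M) \; \operation \; \LinExt(\le_N) = \LinExt({\le_M} \; \operation \; {\le_N})$ via \cref{rem:sumsPosetsParallel} and \cref{lem:disjointUnionOrderedSumLinearExtensions}, and then conclude by induction on syntax trees. You supply more detail (the explicit min/max observation and the case-by-case verification), but the structure and the key ingredients are the same.
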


\begin{proof}
\cref{rem:sumsPosetsParallel,lem:disjointUnionOrderedSumLinearExtensions} prove that
\[
\LinExt(\le_M) \; \operation \, \LinExt(\le_N) = \LinExt({\le_M} \; \operation \; {\le_N}).
\]
for any operation~$\operation\in\Operations_2$ and any bounded $2$-posets~$\le_M$ and~$\le_N$.
The statement then follows by induction on syntax trees.
\end{proof}

\begin{figure}[t]
	\centerline{$
	\messyEvalPermParallel \left(
		\begin{tikzpicture}[baseline={([yshift=-.8ex]current bounding box.center)}, level/.style={sibling distance=18mm/#1, level distance = 1cm/sqrt(#1)}]
			\node [rectangle, draw] {$\op{r,r}$}
				child {node [rectangle, draw] {$\op{r,r}$}
					child {node {}}
					child {node [rectangle, draw] {$\op{l,l}$}
						child {node {}}
						child {node {}}
					}
				}
				child {node [rectangle, draw] {$\op{r,l}$}
					child {node {}}
					child {node {}}
				}
			;
		\end{tikzpicture}
	\right)
	=
	\LinExt \left(
		\begin{tikzpicture}[baseline={([yshift=-.8ex]current bounding box.center)}]
			\node (1aM) at (0,1.5) {$1$};
			\node (1bM) at (0,2.5) {$1$};
			\node (2aM) at (1,1) {$2$};
			\node (2bM) at (1,3) {$2$};
			\node (3aM) at (2,1.5) {$3$};
			\node (3bM) at (2,2.5) {$3$};
			\node (1aN) at (3,2) {$4$};
			\node (1bN) at (3,3.5) {$4$};
			\node (2aN) at (4,0.5) {$5$};
			\node (2bN) at (4,2) {$5$};
			\draw (1aM) -- (1bM);
			\draw (1aN) -- (1bN);
			\draw (1bM) -- (2bM);
			\draw (2aM) -- (1aM);
			\draw (2aM) -- (3aM);
			\draw (2aN) -- (1aN);
			\draw (2aN) -- (2aM);
			\draw (2aN) -- (2bN);
			\draw (2bM) -- (1bN);
			\draw (2bN) -- (1bN);
			\draw (3bM) -- (2bM);
			\draw (3aM) -- (3bM);
		\end{tikzpicture}
	\right)
	=
	\begin{array}{c@{\;}c@{\;}c@{\;}c@{\;}c@{\;}c}
		5211332454 & + & 5211332544 & + \\
		5211334254 & + & 5211335244 & + \\
		5211334524 & + & \dots\dots     \\
		\ldots\ldots & + & 5542331124
	\end{array}
	$}
	\caption{Illustration of \cref{lem:LinExtEvalPosetParallel}.}
	\label{fig:LinExtEvalPosetParallel}
\end{figure}

Conversely, we can obtain~$\EvalPosetParallel(\tree)$ from $\messyEvalPermParallel(\tree)$ by intersecting all linear orders corresponding to the $2$-permutations that appear in~$\messyEvalPermParallel(\tree)$.

\medskip
We now describe the connection between $\tidyEvalPermParallel(\tree)$ and~$\EvalPosetParallel(\tree)$.
By \cref{lem:LinExtEvalPosetParallel,lem:LexMinParallel}, the tidy parallel permutation evaluation~$\tidyEvalPermParallel(\tree)$ is the lexicographically minimal linear extension of~$\EvalPosetParallel(\tree)$.
It can thus be obtained from~$\EvalPosetParallel(\tree)$ by repeatedly reading and deleting the minimal source.

Conversely, there are several syntax trees~$\tree$ whose parallel poset evaluation~$\EvalPosetParallel(\tree)$ has the same lexicographically minimal linear extension.
Given a bounded cuttable $2$-permutation~$\sigma$, we can find all $2$-posets~$\EvalPosetParallel(\tree)$ whose lexicographically minimal linear extension is~$\sigma$ by first using \cref{rem:tidyEvalParallel} to compute all possible syntax trees~$\tree$ such that~$\sigma = \tidyEvalPermParallel(\tree)$, and then applying \cref{def:messyCitelangisParallelActionPosets,def:posetParallel} to get their parallel poset evaluations~$\EvalPosetParallel(\tree)$.

\paraul{Relations among parallel poset evaluations}
We now study which syntax trees evaluate to the same bounded $2$-poset.
Observe first the following quadratic relation:
\begin{equation}
\label{eq:relationPosetsParallel}
	\EvalPosetParallel \left( \compoL{r,r}{l,l} \right)
	= 
	\begin{tikzpicture}[baseline={([yshift=-.8ex]current bounding box.center)}]
		\node (1a) at (-1,.5) {$1$};
		\node (1b) at (-1,1.5) {$1$};
		\node (2a) at (0,0) {$2$};
		\node (2b) at (0,2) {$2$};
		\draw (1a) -- (1b);
		\draw (1b) -- (2b);
		\draw (2a) -- (1a);
	\end{tikzpicture}
	\hspace{-.3cm}\op{l,l}
	\begin{tikzpicture}[baseline={([yshift=-.8ex]current bounding box.center)}]
		\node (1a) at (0,0) {$1$};
		\node (1b) at (0,1) {$1$};
		\draw (1a) -- (1b);
	\end{tikzpicture}
	=
	\begin{tikzpicture}[baseline={([yshift=-.8ex]current bounding box.center)}]
		\node (1a) at (0,.5) {$1$};
		\node (1b) at (0,1.5) {$1$};
		\node (2a) at (1,0) {$2$};
		\node (2b) at (1,2) {$2$};
		\node (3a) at (2,.5) {$3$};
		\node (3b) at (2,1.5) {$3$};
		\draw (1a) -- (1b);
		\draw (1b) -- (2b);
		\draw (2a) -- (1a);
		\draw (2a) -- (3a);
		\draw (3a) -- (3b);
		\draw (3b) -- (2b);
	\end{tikzpicture}
	=
	\begin{tikzpicture}[baseline={([yshift=-.8ex]current bounding box.center)}]
		\node (1a) at (0,0) {$1$};
		\node (1b) at (0,1) {$1$};
		\draw (1a) -- (1b);
	\end{tikzpicture}
	\op{r,r}\hspace{-.3cm}
	\begin{tikzpicture}[baseline={([yshift=-.8ex]current bounding box.center)}]
		\node (1a) at (0,0) {$1$};
		\node (1b) at (0,2) {$1$};
		\node (2a) at (1,.5) {$2$};
		\node (2b) at (1,1.5) {$2$};
		\draw (1a) -- (2a);
		\draw (2a) -- (2b);
		\draw (2b) -- (1b);
	\end{tikzpicture}
	=
	\EvalPosetParallel \left( \compoR{r,r}{l,l} \right)
\end{equation}

\warning
The operations of \cref{def:messyCitelangisParallelActionPosets} on bounded $2$-posets do not satisfy the other messy parallel $2$-citelangis relations, and thus do not define a messy parallel $2$-citelangis algebra.
In fact, we will now show that \cref{eq:relationPosetsParallel} is the only relation that leads to the same parallel poset evaluations.

\begin{lemma}
\label{lem:uniqueRelationParallelPosetOperad}
For any operations~$\operation, \operation' \in \Operations_2$ and any bounded $2$-posets~${\le_M}, {\le_N}, {\le_{M'}}, {\le_{N'}}$, we have~${\le_M} \; \operation \; {\le_N} = {\le_{M'}} \; \operation' \; {\le_{N'}}$ if and only if 
\begin{itemize}
\item either~$\operation = \operation'$, ${\le_M} = {\le_{M'}}$ and ${\le_N} = {\le_{N'}}$, 
\item or~$\operation = {\op{l,l}}$ and~$\operation' = {\op{r,r}}$, and there exists a bounded $2$-poset~$\le_O$ such that ${{\le_M} = {\le_{M'}} \op{r,r} {\le_O}}$ and~${{\le_{N'}} = {\le_O} \op{l,l} {\le_N}}$.
\end{itemize}
\end{lemma}

\begin{proof}
The ``if'' direction is immediate as the second case was already observed in \cref{eq:relationPosetsParallel}.

The ``only if'' direction relies on the fact that for any~$\operation \in \Operations_2$ and any two bounded $2$-posets~$\le_M$ and~$\le_N$, we can easily identify the positions of the minimums and maximums of~$\le_M$ and~$\le_N$ in the $2$-poset~${\le_P} \eqdef {\le_M} \; \operation \; {\le_N}$.
For this, let~$a \eqdef \min(\le_P)$ and~$t \eqdef \max(\le_P)$, let~$b$ and~$c$ denote the minimal and maximal values covering~$a$ in~$\le_P$ and let~$r$ and~$s$ denote the minimal and maximal values covered by~$t$ in~$\le_P$.
Then the $2$-posets~$\le_M$ and~$\le_N$ are (up to the label shift) the sub-$2$-posets of~$\le_P$ induced by $\tilde M$ and $\tilde N$ depending on $\operation$ as follows:
\begin{itemize}
\item if~$\operation = {\op{l,l}}$ then $\tilde M \eqdef [a,t] \ssm [c,s]$ and~$\tilde N \eqdef [c,s] = [a,s] \ssm \{a\} = [c,t] \ssm \{t\}$,
\item if~$\operation = {\op{l,r}}$ then $\tilde M \eqdef [a,r]$ and~$\tilde N \eqdef [c,t]$,
\item if~$\operation = {\op{r,l}}$ then $\tilde M \eqdef [b,t]$ and~$\tilde N \eqdef [a,s]$,
\item if~$\operation = {\op{r,r}}$ then $\tilde M \eqdef [b,r] = [a,r] \ssm \{a\} = [b,t] \ssm \{t\}$ and~$\tilde N \eqdef [a,t] \ssm [b,r]$,
\end{itemize}
where~$[x,y]$ denotes the interval between~$x$ and~$y$ in~$\le_P$.
We now argue the ``only if'' direction of the statement by case analysis:
\begin{itemize}
\item Case $\operation = \operation'$: We immediately obtain that~${\le_M} = {\le_{M'}}$ and~${\le_N} = {\le_{N'}}$ since the description above enables us to reconstruct~$\le_M$ and~$\le_N$ from~${\le_M} \; \operation \; {\le_N}$ when we know~$\operation$.
\item Case~$\operation = {\op{l,r}}$ and~$\operation' = {\op{r,l}}$\,: Impossible since~$\min({\le_M} \op{l,r} {\le_N}) < \max({\le_M} \op{l,r} {\le_N})$ while~$\min({\le_{M'}} \op{r,l} {\le_{N'}}) > \max({\le_{M'}} \op{r,l} {\le_{N'}})$ (where~$<$ and~$>$ denote the natural order on the integer values).
\item Case~$\operation = {\op{l,r}}$ and~$\operation' = {\op{l,l}}$\,: Impossible since the above discussion would imply~${\tilde N = [c,t]}$ and~${\tilde N' = [c,t] \ssm \{t\}}$ (up to the label shift), contradicting the fact that both~$\tilde N$ and~$\tilde N'$ should be even (because we work with $2$-posets).
\item Case~$\operation = {\op{l,l}}$ and~$\operation' = {\op{r,r}}$\,: Let~$\le_O$ denote the bounded sub-$2$-posets of~$\le_P$ induced by~$\tilde O \eqdef [a,t] \ssm ([b,r] \cup [c,s])$. Since~$\le_N$ is the sub-$2$-poset of~$\le_P$ induced by~$\tilde N \eqdef [c,s]$ and~$\le_{M'}$ is the sub-$2$-poset of~$\le_P$ induced by~$\tilde N' \eqdef [b,r]$, we obtain that~${{\le_M} = {\le_{M'}} \op{r,r} {\le_O}}$ and~${{\le_{N'}} = {\le_O} \op{l,l} {\le_N}}$.
\item All other cases are symmetric.
\end{itemize}
An alternative argument would be to show that for any bounded $2$-posets~$\le_M$ and~$\le_N$, the $2$-posets ${\le_M} \op{l,r} {\le_N}$ and~${\le_M} \op{r,l} {\le_N}$ have a unique cut and to apply \cref{lem:operationImpliesBoundedCutPoset,lem:boundedCutImpliesOperationPoset} to prove that the only remaining option is that~$\operation = {\op{l,l}}$ and~$\operation' = {\op{r,r}}$.
\end{proof}

\begin{remark}
\cref{lem:uniqueRelationParallelPosetOperad} fails for multiposets, as we used a parity argument on $2$-posets.
\end{remark}

Finally, the next statement is similar to \cref{prop:uniqueTidyParallelCitelangisPermutationEvaluation}.

\begin{proposition}
\label{prop:uniqueParallelPosetEvaluation}
For any syntax trees~$\tree , \tree' \in \Syntax[\Operations_2]$ of arity~$p$ and~$p'$ respectively, and any bounded uncuttable $2$-posets~$\le_{M_1}, \dots, \le_{M_p}$, $\le_{M'_1}, \dots, \le_{M'_{p'}}$, if
\[
\EvalPosetParallel(\tree ; \le_{M_1}, \dots, \le_{M_p}) = \EvalPosetParallel(\tree' ; \le_{M'_1}, \dots, \le_{M'_{p'}}),
\]
then $p = p'$, $\tree = \tree'$ modulo rewritings using \cref{eq:relationPosetsParallel} and~$\le_{M_i} = \le_{M'_i}$ for all~$i \in [p]$.
\end{proposition}

\begin{proof}
Immediate by induction from \cref{lem:uniqueRelationParallelPosetOperad}.
\end{proof}

\paraul{Parallel poset evaluations of parallel $2$-citelangis normal forms}
As the operations of \cref{def:messyCitelangisParallelActionPosets} on bounded $2$-posets do not satisfy all the messy parallel $2$-citelangis relations, the parallel poset evaluations of all syntax trees result to too many different $2$-posets.
To obtain an alternative combinatorial model for the messy parallel $2$-citelangis operad, we thus need to restrict our attention to parallel poset evaluations of normal forms of the messy parallel $2$-citelangis rewriting system described in \cref{subsubsec:rewritingSystemCitelangis}.

\begin{definition}
\label{def:normalPosetParallel}
A \defn{parallel normal} $2$-poset is a $2$-poset that can be obtained as the parallel poset evaluation of a normal form of the messy parallel $2$-citelangis rewriting system.
\end{definition}

In particular, by \cref{rem:lattices,prop:characterizationParallelPosetEvaluations}, a parallel normal $2$-poset is a fully bounded cuttable lattice.
Besides these conditions, there seems to be no simple characterization of the parallel normal $2$-posets.

There is however a simple algorithm to decide whether a given fully bounded cuttable $2$-poset~$\le_P$ is parallel normal.
Namely, compute its lexicographically minimal linear extension~$\sigma$, then the normal form~$\tree$ of the $\op{l,r}$-tidy parallel citelangis rewriting system such that~$\sigma = \tidyEvalPermParallel(\tree)$ by \cref{rem:algoTidyEvalParallelNormalForms}, and check whether~${\le_P} = \EvalPosetParallel(\tree)$.

Finally, one can also consider the list of all parallel poset evaluations of the $8$ quadratic parallel $2$-signaletic right combs distinct from that of \cref{eq:relationPosetsParallel}.
These posets are given by

\medskip
\centerline{
\begin{tabular}{c@{\;}c@{\;}c@{\;}c@{\;}c@{\;}c@{\;}c@{\;}c}
	\begin{tikzpicture}[baseline={([yshift=-.8ex]current bounding box.center)}, yscale=.7]
		\node (1a) at (0,0) {$1$};
		\node (1b) at (0,5) {$1$};
		\node (2a) at (0,1) {$2$};
		\node (2b) at (0,4) {$2$};
		\node (3a) at (0,2) {$3$};
		\node (3b) at (0,3) {$3$};
		\draw (1a) -- (2a);
		\draw (2a) -- (3a);
		\draw (3a) -- (3b);
		\draw (3b) -- (2b);
		\draw (2b) -- (1b);
	\end{tikzpicture}
	&
	\begin{tikzpicture}[baseline={([yshift=-.8ex]current bounding box.center)}, yscale=.7]
		\node (1a) at (0,0) {$1$};
		\node (1b) at (-.5,2) {$1$};
		\node (2a) at (.5,1) {$2$};
		\node (2b) at (0,4) {$2$};
		\node (3a) at (.5,2) {$3$};
		\node (3b) at (.5,3) {$3$};
		\draw (1a) -- (1b);
		\draw (1a) -- (2a);
		\draw (1b) -- (2b);
		\draw (2a) -- (3a);
		\draw (3a) -- (3b);
		\draw (3b) -- (2b);
	\end{tikzpicture}
	&
	\begin{tikzpicture}[baseline={([yshift=-.8ex]current bounding box.center)}, yscale=.7]
		\node (1a) at (0,0) {$1$};
		\node (1b) at (-.8,1.5) {$1$};
		\node (2a) at (0,1) {$2$};
		\node (2b) at (0,2) {$2$};
		\node (3a) at (.8,2) {$3$};
		\node (3b) at (0,3) {$3$};
		\draw (1a) -- (1b);
		\draw (1a) -- (2a);
		\draw (1b) -- (3b);
		\draw (2a) -- (2b);
		\draw (2a) -- (3a);
		\draw (2b) -- (3b);
		\draw (3a) -- (3b);
	\end{tikzpicture}
	&
	\begin{tikzpicture}[baseline={([yshift=-.8ex]current bounding box.center)}, yscale=.7]
		\node (1a) at (-.5,2) {$1$};
		\node (1b) at (0,4) {$1$};
		\node (2a) at (0,0) {$2$};
		\node (2b) at (.5,3) {$2$};
		\node (3a) at (.5,1) {$3$};
		\node (3b) at (.5,2) {$3$};
		\draw (1a) -- (1b);
		\draw (2a) -- (1a);
		\draw (2a) -- (3a);
		\draw (2b) -- (1b);
		\draw (3a) -- (3b);
		\draw (3b) -- (2b);
	\end{tikzpicture}
	&
	\begin{tikzpicture}[baseline={([yshift=-.8ex]current bounding box.center)}, yscale=.7]
		\node (1a) at (-.8,1.5) {$1$};
		\node (1b) at (0,3) {$1$};
		\node (2a) at (0,1) {$2$};
		\node (2b) at (0,2) {$2$};
		\node (3a) at (0,0) {$3$};
		\node (3b) at (.8,1) {$3$};
		\draw (1a) -- (1b);
		\draw (2a) -- (2b);
		\draw (2b) -- (1b);
		\draw (3a) -- (1a);
		\draw (3a) -- (2a);
		\draw (3a) -- (3b);
		\draw (3b) -- (2b);
	\end{tikzpicture}
	&
	\begin{tikzpicture}[baseline={([yshift=-.8ex]current bounding box.center)}, yscale=.7]
		\node (1a) at (-.7,1) {$1$};
		\node (1b) at (-.7,2) {$1$};
		\node (2a) at (0,0) {$2$};
		\node (2b) at (0,1.5) {$2$};
		\node (3a) at (.8,1.5) {$3$};
		\node (3b) at (0,3) {$3$};
		\draw (1a) -- (1b);
		\draw (1b) -- (3b);
		\draw (2a) -- (1a);
		\draw (2a) -- (2b);
		\draw (2a) -- (3a);
		\draw (2b) -- (3b);
		\draw (3a) -- (3b);
	\end{tikzpicture}
	&
	\begin{tikzpicture}[baseline={([yshift=-.8ex]current bounding box.center)}, yscale=.7]
		\node (1a) at (-.7,1) {$1$};
		\node (1b) at (-.7,2) {$1$};
		\node (2a) at (0,1.5) {$2$};
		\node (2b) at (0,3) {$2$};
		\node (3a) at (0,0) {$3$};
		\node (3b) at (.8,1.5) {$3$};
		\draw (1a) -- (1b);
		\draw (1b) -- (2b);
		\draw (2a) -- (2b);
		\draw (3a) -- (1a);
		\draw (3a) -- (2a);
		\draw (3a) -- (3b);
		\draw (3b) -- (2b);
	\end{tikzpicture}
	&
	\begin{tikzpicture}[baseline={([yshift=-.8ex]current bounding box.center)}, yscale=.7]
		\node (1a) at (-.5,1) {$1$};
		\node (1b) at (-.5,2) {$1$};
		\node (2a) at (.5,1) {$2$};
		\node (2b) at (.5,2) {$2$};
		\node (3a) at (0,0) {$3$};
		\node (3b) at (0,3) {$3$};
		\draw (1a) -- (1b);
		\draw (1b) -- (3b);
		\draw (2a) -- (2b);
		\draw (2b) -- (3b);
		\draw (3a) -- (1a);
		\draw (3a) -- (2a);
	\end{tikzpicture}
	\\[2cm]
    \compoR{l,l}{l,l}
	&
    \compoR{l,r}{l,l}
	&
    \compoR{l,r}{l,r}
	&
    \compoR{r,l}{l,l}
	&
    \compoR{r,l}{r,l}
	&
    \compoR{r,r}{l,r}
	&
    \compoR{r,r}{r,l}
	&
    \compoR{r,r}{r,r}
\end{tabular}
}

\medskip
\noindent
were we represented below each $2$-poset the parallel $2$-signaletic comb from which it was evaluated.
A fully bounded cuttable $2$-poset is parallel normal if it does not contain any of these forbidden $2$-posets as patterns.
The meaning of patterns in bounded $2$-posets should be clear with the poset parallel compositions defined in \cref{def:parallelBoundedPosetOperad}.
Here, we prefer to illustrate intuitively the notion on an example:
\[
	\EvalPosetParallel \left(
		\begin{tikzpicture}[baseline=-.5cm, level 1/.style={sibling distance = .8cm, level distance = .6cm}, level 2/.style={sibling distance = .8cm, level distance = .6cm}]
			\node [rectangle, draw, fill=red!50] {$\op{l,r}$}
				child {node {\blue $11$}}
				child {node [rectangle, draw, fill=red!50] {$\op{l,r}$}
					child {node {\cyan $22$}}
					child {node {\green $33$}}
				}
			;
		\end{tikzpicture}
	\right)
	=
	\begin{tikzpicture}[baseline={([yshift=-.8ex]current bounding box.center)}, yscale=.7]
		\node (1a) at (0,0) {\blue $1$};
		\node (1b) at (-.8,1.5) {\blue $1$};
		\node (2a) at (0,1) {\cyan $2$};
		\node (2b) at (0,2) {\cyan $2$};
		\node (3a) at (.8,2) {\green $3$};
		\node (3b) at (0,3) {\green $3$};
		\draw[blue] (1a) -- (1b);
		\draw (1a) -- (2a);
		\draw (1b) -- (3b);
		\draw[cyan] (2a) -- (2b);
		\draw (2a) -- (3a);
		\draw (2b) -- (3b);
		\draw[green] (3a) -- (3b);
	\end{tikzpicture}
\]
is a pattern in
\[
	\EvalPosetParallel \left(
		\begin{tikzpicture}[baseline={([yshift=-.8ex]current bounding box.center)}, level/.style={sibling distance=1cm, level distance = .6cm}, level 1/.style={sibling distance=1.5cm}, level 4/.style={sibling distance=1.5cm}]
			\node [rectangle, draw] {$\op{r,l}$}
				child {node [rectangle, draw] {$\op{r,r}$}
					child {node {}}
					child {node [rectangle, draw] {$\op{l,l}$}
					child {node [rectangle, draw, fill=red!50] {$\op{l,r}$}
						child {node [rectangle, draw, fill=blue!50] {$\op{r,r}$}
							child {node {}}
							child {node {}}
						}
						child {node [rectangle, draw, fill=red!50] {$\op{l,r}$}
							child {node [rectangle, draw, fill=cyan!50] {$\op{l,r}$}
								child {node {}}
								child {node {}}
							}
							child {node [rectangle, draw, fill=green!50] {$\op{r,l}$}
								child {node {}}
								child {node {}}
							}
						}
					}
						child {node {}}
					}
				}
				child {node [rectangle, draw] {$\op{r,l}$}
					child {node {}}
					child {node {}}
				}
			;
		\end{tikzpicture}
	\right)
	=
	\begin{tikzpicture}[baseline={([yshift=-.8ex]current bounding box.center)}]
		\node (1a) at (.5,2) {$1$};
		\node (1b) at (.5,4) {$1$};
		\node (2a) at (1.7,2) {\blue $2$};
		\node (2b) at (1.7,3) {\blue $2$};
		\node (3a) at (3,1) {\blue $3$};
		\node (3b) at (1.7,4) {\blue $3$};
		\node (4a) at (3,2) {\cyan $4$};
		\node (4b) at (2.5,3) {\cyan $4$};
		\node (5a) at (3.5,3) {\cyan $5$};
		\node (5b) at (3,4) {\cyan $5$};
		\node (6a) at (4,4) {\green $6$};
		\node (6b) at (4,5) {\green $6$};
		\node (7a) at (5,3) {\green $7$};
		\node (7b) at (5,4) {\green $7$};
		\node (8a) at (6,2) {$8$};
		\node (8b) at (6,4) {$8$};
		\node (9a) at (7,2) {$9$};
		\node (9b) at (7,4) {$9$};
		\node (10a) at (8,0) {$10$};
		\node (10b) at (8,2) {$10$};
		\draw (1a) -- (1b);
		\draw (1b) -- (6b);
		\draw[blue] (2a) -- (2b);
		\draw[blue] (2b) -- (3b);
		\draw (3a) -- (1a);
		\draw[blue] (3a) -- (2a);
		\draw (3a) -- (4a);
		\draw (3a) -- (8a);
		\draw (3b) -- (6b);
		\draw[cyan] (4a) -- (4b);
		\draw[cyan] (4a) -- (5a);
		\draw (4a) -- (7a);
		\draw[cyan] (4b) -- (5b);
		\draw[cyan] (5a) -- (5b);
		\draw (5b) -- (6b);
		\draw[green] (6a) -- (6b);
		\draw[green] (7a) -- (6a);
		\draw[green] (7a) -- (7b);
		\draw[green] (7b) -- (6b);
		\draw (8a) -- (8b);
		\draw (8b) -- (6b);
		\draw (9a) -- (9b);
		\draw (9b) -- (6b);
		\draw (10a) -- (3a);
		\draw (10a) -- (9a);
		\draw (10a) -- (10b);
		\draw (10b) -- (9b);
	\end{tikzpicture}
\]


\subsubsection{Parallel poset operad}
\label{subsubsec:parallelPosetOperad}

As observed above, the operations~$\Operations_2$ on bounded $2$-posets defined in \cref{def:messyCitelangisParallelActionPosets} do not satisfy all messy parallel $2$-citelangis relations.
However, they satisfy a subset of these relations which in turn defines an operad on posets.
This operad can be seen as a suboperad of an operad on all bounded $2$-posets defined by the following composition rules, illustrated in \cref{fig:parallelPosetOperad}.

\begin{definition}
\label{def:parallelBoundedPosetOperad}
Let~$\le_M$ and~$\le_N$ be two bounded $2$-posets of degrees~$m$ and~$n$ respectively, and let~$i \in [m]$.
Let~$i_1$ and~$i_2$ denote the two copies of~$i$ in~$M$ such that~$i_1 <_M i_2$.
We define the \defn{parallel composition}~${\le_P} \eqdef {\le_M} \posetParallelCirc{i} {\le_N}$ by:
\begin{itemize}
\item $P \eqdef 1^{\{2\}} \dots (m+n-1)^{\{2\}}$,
\item $\le_P$ is obtained by inserting~$\le_N$ in~$\le_M$ by placing~$\min(\le_N)$ at~$i_1$ and~$\max(\le_N)$ at~$i_2$, and performing the appropriate shift. More precisely, using the notations of \cref{def:shiftMultiset,def:shiftMultisetPlace}, the comparison $x \le_P y$ holds for~$x,y \in P$ if and only if one of the following statements holds:
	\begin{itemize}
	\item $x \in M[i,n]$, $y \in M[i,n]$ and~$\overline{x} \le_M \overline{y}$, or
	\item $x \in N[i-1]$, $y \in N[i-1]$ and~$\overline{x} \le_N \overline{y}$, or
	\item $x \in M[i,n]$, $y \in N[i-1]$ and~$\overline{x} \le_M i_1$, or
	\item $x \in N[i-1]$, $y \in M[i,n]$ and~$i_2 \le_M \overline{y}$.
	\end{itemize}
\end{itemize}
\end{definition}

\begin{figure}[h]
	\centerline{
	\begin{tabular}{c@{\qquad}c@{\qquad}c@{\qquad}c@{\qquad}c}
	\begin{tikzpicture}[baseline={([yshift=-.8ex]current bounding box.center)}, blue]
		\node (1a) at (0,.5) {$1$};
		\node (1b) at (0,1.5) {$1$};
		\node (2a) at (1,0) {$2$};
		\node (2b) at (1,2) {$2$};
		\node (3a) at (2,.5) {$3$};
		\node (3b) at (2,1.5) {$3$};
		\draw (1a) -- (1b);
		\draw (1b) -- (2b);
		\draw (2a) -- (1a);
		\draw (2a) -- (3a);
		\draw (3a) -- (3b);
		\draw (3b) -- (2b);
	\end{tikzpicture}
	&
	\begin{tikzpicture}[baseline={([yshift=-.8ex]current bounding box.center)}, red]
		\node (1a) at (0,1) {$1$};
		\node (1b) at (0,2) {$1$};
		\node (2a) at (1,0) {$2$};
		\node (2b) at (1,1) {$2$};
		\draw (1a) -- (1b);
		\draw (2a) -- (1a);
		\draw (2a) -- (2b);
		\draw (2b) -- (1b);
	\end{tikzpicture}
	&
	\begin{tikzpicture}[baseline={([yshift=-.8ex]current bounding box.center)}]
		\node (2aM) at (2,0) {\blue $3$};
		\node (2bM) at (2,3) {\blue $3$};
		\node (3aM) at (3,.5) {\blue $4$};
		\node (3bM) at (3,2.5) {\blue $4$};
		\node (1aN) at (0,1.5) {\red $1$};
		\node (1bN) at (0,2.5) {\violet $1$};
		\node (2aN) at (1,.5) {\violet $2$};
		\node (2bN) at (1,1.5) {\red $2$};
		\draw [red] (1aN) -- (1bN);
		\draw [blue] (1bN) -- (2bM);
		\draw [blue] (2aM) -- (2aN);
		\draw [blue] (2aM) -- (3aM);
		\draw [red] (2aN) -- (1aN);
		\draw [red] (2aN) -- (2bN);
		\draw [red] (2bN) -- (1bN);
		\draw [blue] (3aM) -- (3bM);
		\draw [blue] (3bM) -- (2bM);
	\end{tikzpicture}
	&
	\begin{tikzpicture}[baseline={([yshift=-.8ex]current bounding box.center)}]
		\node (1aM) at (0,.5) {\blue $1$};
		\node (1bM) at (0,2.5) {\blue $1$};
		\node (3aM) at (3,.5) {\blue $4$};
		\node (3bM) at (3,2.5) {\blue $4$};
		\node (1aN) at (1,1.5) {\red $2$};
		\node (1bN) at (1,3) {\violet $2$};
		\node (2aN) at (2,0) {\violet $3$};
		\node (2bN) at (2,1.5) {\red $3$};
		\draw [blue] (1aM) -- (1bM);
		\draw [red] (1aN) -- (1bN);
		\draw [blue] (1bM) -- (1bN);
		\draw [blue] (2aN) -- (1aM);
		\draw [red] (2aN) -- (1aN);
		\draw [red] (2aN) -- (2bN);
		\draw [blue] (2aN) -- (3aM);
		\draw [red] (2bN) -- (1bN);
		\draw [blue] (3aM) -- (3bM);
		\draw [blue] (3bM) -- (1bN);
	\end{tikzpicture}
	&
	\begin{tikzpicture}[baseline={([yshift=-.8ex]current bounding box.center)}]
		\node (1aM) at (0,.5) {\blue $1$};
		\node (1bM) at (0,2.5) {\blue $1$};
		\node (2aM) at (1,0) {\blue $2$};
		\node (2bM) at (1,3) {\blue $2$};
		\node (1aN) at (2,1.5) {\red $3$};
		\node (1bN) at (2,2.5) {\violet $3$};
		\node (2aN) at (3,.5) {\violet $4$};
		\node (2bN) at (3,1.5) {\red $4$};
		\draw [blue] (1aM) -- (1bM);
		\draw [red] (1aN) -- (1bN);
		\draw [blue] (1bM) -- (2bM);
		\draw [blue] (1bN) -- (2bM);
		\draw [blue] (2aM) -- (1aM);
		\draw [blue] (2aM) -- (2aN);
		\draw [red] (2aN) -- (1aN);
		\draw [red] (2aN) -- (2bN);
		\draw [red] (2bN) -- (1bN);
	\end{tikzpicture}
	\\[1.7cm]
	$\blue \le_M$ & $\red \le_N$ & ${\blue \le_M} \posetParallelCirc{1} {\red \le_N}$ & ${\blue \le_M} \posetParallelCirc{2} {\red \le_N}$ & ${\blue \le_M} \posetParallelCirc{3} {\red \le_N}$
	\end{tabular}
	}
	\caption{Examples of compositions in the parallel $2$-poset operad. See \cref{def:parallelBoundedPosetOperad}.}
	\label{fig:parallelPosetOperad}
\end{figure}
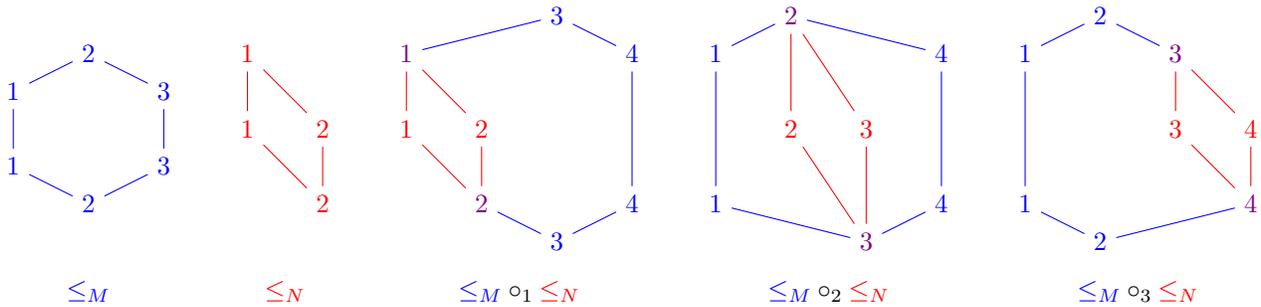

The following statement is left to the reader.

\begin{lemma}
For any two bounded $2$-posets~$\le_M$ and~$\le_N$ of degrees~$m$ and~$n$ respectively, and any~$i \in [m]$, the parallel composition ${\le_M} \posetParallelCirc{i} {\le_N}$ is a bounded $2$-poset of degree~$m+n-1$.
\end{lemma}

\begin{proposition}
The parallel compositions~$\posetParallelCirc{i}$ of \cref{def:parallelBoundedPosetOperad} define an operad structure~$\boundedPosParallel[2]$ on bounded $2$-posets.
\end{proposition}

\begin{proof}
We start with the series axiom (see \cref{subsec:operads}).
Let $\le_P$, $\le_Q$ and $\le_R$ be three bounded $2$-posets of respective arity $p$, $q$ and $r$ and $i\in[p]$, $j\in[q]$. Then applying the definition, one sees that $({\le_P} \posetParallelCirc{i} {\le_Q}) \posetParallelCirc{j+q-1} {\le_R}$ and $({\le_P} \posetParallelCirc{j} {\le_R}) \posetParallelCirc{i} {\le_Q}$ are both equal to $\le_S$ defined by the following:
\begin{itemize}
\item Set $S \eqdef 1^{\{2\}} \dots (p+q+r-2)^{\{2\}}$, and denote by~$P' \eqdef P[i,q+r-1]$, by~$Q' \eqdef Q[j,r][i]$ and by~$R' \eqdef R[i+j-1]$. For $x$ in $Q'$, we still denote $\overline{x}$ the corresponding unshifted element in $Q'$ (which should be denoted $\overline{\overline{x}}$, according to \cref{def:shiftMultisetPlace}). Finally, let~$i_1, i_2$ and $j_1, j_2$ denote the respective coies of $i$ and $j$ in $P$ and $Q$.
\item $x \le_S y$ holds for~$x,y \in S$ if and only if one of the following statements holds:
    \begin{itemize}
    \item $x \in T'$, $y \in T'$ and~$\overline{x} \le_{T} \overline{y}$ for $T \in \{P, Q, R\}$, or
    \item $x \in P'$, $y \in Q'\cup R'$ and~$\overline{x} <_{P} i_1$, or
    \item $x \in Q'\cup R'$, $y \in P'$ and~$i_2 <_{P} \overline{y}$, or
    \item $x \in Q'$, $y \in R'$ and~$\overline{x} <_{Q} j_1$, or
    \item $x \in R'$, $y \in Q'$ and~$j_2 <_{Q} \overline{y}$.
    \end{itemize}
\end{itemize}
This is illustrated in the following picture:

\bigskip
\centerline{$
	\left(
	\begin{tikzpicture}[baseline={([yshift=-.8ex]current bounding box.center)}, xscale=.3, yscale=.5]
		\draw[fill=green!30] (3,0) -- (0,3) -- (3,6) -- (6,3) -- (3,0);
		\node at (3,1.2) {\blue $i_1$};
		\node at (3,4.8) {\blue $i_2$};
	\end{tikzpicture}
	\posetParallelCirc{\blue i}
	\begin{tikzpicture}[baseline={([yshift=-.8ex]current bounding box.center)}, xscale=.3, yscale=.5]
		\draw[fill=blue!30] (3,1) -- (1,3) -- (3,5) -- (5,3) -- (3,1);
		\node at (3,2.2) {\red $j_1$};
		\node at (3,3.8) {\red $j_2$};
	\end{tikzpicture}
	\right)
	\posetParallelCirc{{\blue i}+{\red j}-1}
	\begin{tikzpicture}[baseline={([yshift=-.8ex]current bounding box.center)}, xscale=.3, yscale=.5]
		\draw[fill=red!30] (3,2) -- (2,3) -- (3,4) -- (4,3) -- (3,2);
	\end{tikzpicture}
	=
	\begin{tikzpicture}[baseline={([yshift=-.8ex]current bounding box.center)}, xscale=.3, yscale=.5]
		\draw[fill=green!30] (3,0) -- (0,3) -- (3,6) -- (6,3) -- (3,0);
		\draw[fill=blue!30] (3,1) -- (1,3) -- (3,5) -- (5,3) -- (3,1);
		\draw[fill=red!30] (3,2) -- (2,3) -- (3,4) -- (4,3) -- (3,2);
	\end{tikzpicture}
	=	
	\begin{tikzpicture}[baseline={([yshift=-.8ex]current bounding box.center)}, xscale=.3, yscale=.5]
		\draw[fill=green!30] (3,0) -- (0,3) -- (3,6) -- (6,3) -- (3,0);
		\node at (3,1.2) {\blue $i_1$};
		\node at (3,4.8) {\blue $i_2$};
	\end{tikzpicture}
	\posetParallelCirc{\blue i}
	\left(
	\begin{tikzpicture}[baseline={([yshift=-.8ex]current bounding box.center)}, xscale=.3, yscale=.5]
		\draw[fill=blue!30] (3,1) -- (1,3) -- (3,5) -- (5,3) -- (3,1);
		\node at (3,2.2) {\red $j_1$};
		\node at (3,3.8) {\red $j_2$};
	\end{tikzpicture}
	\posetParallelCirc{\red j}
	\begin{tikzpicture}[baseline={([yshift=-.8ex]current bounding box.center)}, xscale=.3, yscale=.5]
		\draw[fill=red!30] (3,2) -- (2,3) -- (3,4) -- (4,3) -- (3,2);
	\end{tikzpicture}
	\right)
$}

\bigskip
\noindent
A similar computation can be done for parallel axiom (see \cref{subsec:operads}).
We leave the details to the reader, but provide the corresponding illustration:

\bigskip
\centerline{$
	\left(
	\begin{tikzpicture}[baseline={([yshift=-.8ex]current bounding box.center)}, xscale=.3, yscale=.5]
		\draw[fill=green!30] (4.5,0) -- (0,2) -- (4.5,4) -- (9,2) -- (4.5,0);
		\node at (3,1.2) {\blue $i_1$};
		\node at (3,2.8) {\blue $i_2$};
		\node at (6,1.2) {\red $j_1$};
		\node at (6,2.8) {\red $j_2$};
	\end{tikzpicture}
	\posetParallelCirc{\blue i}
	\begin{tikzpicture}[baseline={([yshift=-.8ex]current bounding box.center)}, xscale=.3, yscale=.5]
		\draw[fill=blue!30] (3,1) -- (2,2) -- (3,3) -- (4,2) -- (3,1);
	\end{tikzpicture}
	\right)
	\posetParallelCirc{{\red j}+{\blue q}-1}
	\begin{tikzpicture}[baseline={([yshift=-.8ex]current bounding box.center)}, xscale=.3, yscale=.5]
		\draw[fill=red!30] (6,1) -- (5,2) -- (6,3) -- (7,2) -- (6,1);
	\end{tikzpicture}
	=
	\begin{tikzpicture}[baseline={([yshift=-.8ex]current bounding box.center)}, xscale=.3, yscale=.5]
		\draw[fill=green!30] (4.5,0) -- (0,2) -- (4.5,4) -- (9,2) -- (4.5,0);
		\draw[fill=blue!30] (3,1) -- (2,2) -- (3,3) -- (4,2) -- (3,1);
		\draw[fill=red!30] (6,1) -- (5,2) -- (6,3) -- (7,2) -- (6,1);
	\end{tikzpicture}
	=	
	\left(
	\begin{tikzpicture}[baseline={([yshift=-.8ex]current bounding box.center)}, xscale=.3, yscale=.5]
		\draw[fill=green!30] (4.5,0) -- (0,2) -- (4.5,4) -- (9,2) -- (4.5,0);
		\node at (3,1.2) {\blue $i_1$};
		\node at (3,2.8) {\blue $i_2$};
		\node at (6,1.2) {\red $j_1$};
		\node at (6,2.8) {\red $j_2$};
	\end{tikzpicture}
	\posetParallelCirc{\red j}
	\begin{tikzpicture}[baseline={([yshift=-.8ex]current bounding box.center)}, xscale=.3, yscale=.5]
		\draw[fill=red!30] (6,1) -- (5,2) -- (6,3) -- (7,2) -- (6,1);
	\end{tikzpicture}
	\right)
	\posetParallelCirc{\blue i}
	\begin{tikzpicture}[baseline={([yshift=-.8ex]current bounding box.center)}, xscale=.3, yscale=.5]
		\draw[fill=blue!30] (3,1) -- (2,2) -- (3,3) -- (4,2) -- (3,1);
	\end{tikzpicture}
$}
\end{proof}

The following statements connect this operad~$\boundedPosParallel[2]$ with the parallel poset evaluation of \cref{def:posetParallel}.

\begin{lemma}
\label{lem:parallelPosetOperad}
For any operation~${\operation \in \Operations_2}$ and any bounded $2$-posets~$\le_M$ and~$\le_N$, we have
\[
{\le_M} \; \operation \; {\le_N} = \EvalPosetParallel(\operation) \, \posetParallelCirc{} \, ({\le_M}, {\le_N}),
\]
where the posets~$\EvalPosetParallel(\operation) \eqdef {\le_I} \; \operation \; {\le_I}$ are illustrated in \cref{fig:exmMessyCitelangisParallelActionPosets}.
\end{lemma}

\begin{proof}
By \cref{def:parallelBoundedPosetOperad}, the parallel composition~$\EvalPosetParallel(\operation) \posetParallelCirc{} ({\le_M}, {\le_N})$ places $\min(\le_M)$ at~$1_1$, $\max(\le_M)$ at~$1_2$, $\min(\le_N)$ at~$2_1$, and~$\max(\le_N)$ at~$2_2$ in the $2$-poset~$\EvalPosetParallel(\operation)$.
This coincides with the result of the operation~${\le_M} \; \operation \; {\le_N}$ described in \cref{def:messyCitelangisParallelActionPosets,rem:sumsPosetsParallel}.
\end{proof}

\cref{lem:parallelPosetOperad} enables us to interpret the operations on bounded $2$-posets of the previous section as a suboperad of~$\boundedPosParallel[2]$.
We say that two syntax trees~$\tree, \tree'$ on~$\Operations_2$ with the same arity are \defn{parallel poset equivalent} and we write~$\tree \simeq^{\parallel} \tree'$ if they have the same parallel poset evaluation.

\begin{theorem}
\label{thm:parallelPosetOperad}
The parallel poset equivalence is compatible with the grafting of syntax trees: for any syntax trees~$\tree \simeq^\parallel \tree'$ of arity~$p$ and~$\tree[s] \simeq^\parallel \tree[s]'$ of arity~$q$ and~$i \in [p]$, we have~$\tree \circ_i \tree[s] \simeq^\parallel \tree' \circ_i \tree[s]'$.
Therefore, there exists a \defn{parallel $2$-poset operad}
\[
\posParallel[2] \eqdef \EvalPosetParallel \big( \Syntax[\Operations_2] \big).
\]
\end{theorem}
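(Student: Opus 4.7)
The plan is to deduce the compatibility from the operad structure on $\boundedPosParallel[2]$ already established above, by showing that the parallel poset evaluation $\EvalPosetParallel$ extends to an operad morphism from the free operad $\Free[\Operations_2]$ to $\boundedPosParallel[2]$. Once this is granted, the compatibility of $\simeq^\parallel$ with grafting is immediate: if $\tree \simeq^\parallel \tree'$ and $\tree[s] \simeq^\parallel \tree[s]'$, then $\EvalPosetParallel(\tree) = \EvalPosetParallel(\tree')$ and $\EvalPosetParallel(\tree[s]) = \EvalPosetParallel(\tree[s]')$, hence
\[
\EvalPosetParallel(\tree \circ_i \tree[s]) = \EvalPosetParallel(\tree) \circ_i \EvalPosetParallel(\tree[s]) = \EvalPosetParallel(\tree') \circ_i \EvalPosetParallel(\tree[s]') = \EvalPosetParallel(\tree' \circ_i \tree[s]'),
\]
which is exactly $\tree \circ_i \tree[s] \simeq^\parallel \tree' \circ_i \tree[s]'$. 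The quotient $\Free[\Operations_2] / {\simeq^\parallel}$ then inherits an operad structure whose elements are in bijection with the image $\EvalPosetParallel(\Syntax[\Operations_2])$, giving the announced operad $\posParallel[2]$.

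The key step is thus the morphism identity $\EvalPosetParallel(\tree \circ_i \tree[s]) = \EvalPosetParallel(\tree) \circ_i \EvalPosetParallel(\tree[s])$, where the right-hand side uses the partial composition of $\boundedPosParallel[2]$ from \cref{def:parallelBoundedPosetOperad}. I would prove it by induction on the number of internal nodes of $\tree$. The base case, where $\tree$ is the unit leaf, is trivial since $\tree \circ_1 \tree[s] = \tree[s]$. For the inductive step, write $\tree = \operation \circ (\tree[l], \tree[r])$ with root $\operation \in \Operations_2$ and subtrees $\tree[l], \tree[r]$ of respective arities $p_l, p_r$. By \cref{def:posetParallel} combined with \cref{lem:parallelPosetOperad},
\[
\EvalPosetParallel(\tree) = \EvalPosetParallel(\tree[l]) \; \operation \; \EvalPosetParallel(\tree[r]) = \EvalPosetParallel(\operation) \circ (\EvalPosetParallel(\tree[l]), \EvalPosetParallel(\tree[r])),
\]
where the last composition is in $\boundedPosParallel[2]$. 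If $i \le p_l$, then by definition of grafting $\tree \circ_i \tree[s] = \operation \circ (\tree[l] \circ_i \tree[s], \tree[r])$, so applying the same decomposition, then the induction hypothesis to $\tree[l]$, and finally the series composition axiom of the operad $\boundedPosParallel[2]$ yields
\[
\EvalPosetParallel(\tree \circ_i \tree[s]) = \EvalPosetParallel(\operation) \circ (\EvalPosetParallel(\tree[l]) \circ_i \EvalPosetParallel(\tree[s]), \EvalPosetParallel(\tree[r])) = \EvalPosetParallel(\tree) \circ_i \EvalPosetParallel(\tree[s]).
\]
The case $i > p_l$ is symmetric, using the right subtree $\tree[r]$ instead.

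The main obstacle is essentially bookkeeping: one has to make sure that the ad hoc inductive description of $\EvalPosetParallel$ via the four operators of \cref{def:messyCitelangisParallelActionPosets} genuinely coincides with the operadic composition in $\boundedPosParallel[2]$, and that the reassociation when the grafting descends into a subtree is governed by the series composition axiom. Both points are taken care of by \cref{lem:parallelPosetOperad} and by the operad structure already verified on bounded $2$-posets, so the argument reduces to a clean induction with no further combinatorial work on posets.
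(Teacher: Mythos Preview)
Your proof is correct and follows essentially the same approach as the paper: you show by induction, using \cref{lem:parallelPosetOperad}, that $\EvalPosetParallel$ is the operad morphism from $\Free[\Operations_2]$ to $\boundedPosParallel[2]$ sending each generator $\operation$ to $\EvalPosetParallel(\operation)$, whence $\posParallel[2]$ is the suboperad generated by these images. The paper's proof is simply a terser version of yours, compressing the inductive step you spell out into the phrase ``by induction on \cref{lem:parallelPosetOperad}.''
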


\begin{proof}
By induction on \cref{lem:parallelPosetOperad}, the map~$\EvalPosetParallel$ coincides with the unique operad morphism from the free operad~$\Free[\Operations_2]$ to~$\boundedPosParallel[2]$ that sends a generator~$\operation \in \Operations_2$ to~$\EvalPosetParallel(\operation)$.
Therefore, $\posParallel[2]$ is the suboperad of~$\boundedPosParallel[2]$ generated by~$\bigset{\EvalPosetParallel(\operation)}{\operation \in \Operations_2}$.
\end{proof}

By construction, the parallel $2$-poset operad~$\posParallel[2]$ satisfies the relation of \cref{eq:relationPosetsParallel}.
\cref{prop:uniqueParallelPosetEvaluation} implies that it is the only relation in~$\posParallel[2]$.

\begin{theorem}
\label{thm:presentationParallelPosetOperad}
The parallel $2$-poset operad~$\posParallel[2]$ is generated by~$\Operations_2$ with the unique relation
\[
\compoL{r,r}{l,l} = \compoR{r,r}{l,l}.
\]
\end{theorem}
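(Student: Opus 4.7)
The strategy is to identify $\posParallel[2]$ as the quotient of the free operad $\Free[\Operations_2]$ by the operad ideal generated by the single relation in the statement, exhibiting the expected isomorphism through the evaluation map $\EvalPosetParallel$. The plan has three ingredients and builds directly on the analysis already carried out for \cref{prop:uniqueParallelPosetEvaluation}.

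First I would observe that, by \cref{thm:parallelPosetOperad}, the map $\EvalPosetParallel \colon \Free[\Operations_2] \to \posParallel[2]$ is a surjective operad morphism. Equation \eqref{eq:relationPosetsParallel} explicitly verifies that $\EvalPosetParallel$ sends $\compoL{r,r}{l,l}$ and $\compoR{r,r}{l,l}$ to the same bounded $2$-poset, so the operad ideal $\Operad[I]$ generated by $\compoL{r,r}{l,l} - \compoR{r,r}{l,l}$ is contained in $\ker(\EvalPosetParallel)$. Hence $\EvalPosetParallel$ factors through a surjective operad morphism $\overline{\EvalPosetParallel} \colon \Free[\Operations_2]/\Operad[I] \twoheadrightarrow \posParallel[2]$.

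Next I would establish injectivity of $\overline{\EvalPosetParallel}$, which amounts to showing that any two syntax trees $\tree, \tree' \in \Syntax[\Operations_2]$ with the same parallel poset evaluation are equivalent modulo rewritings using \eqref{eq:relationPosetsParallel}. This is exactly the content of \cref{prop:uniqueParallelPosetEvaluation}, specialized to the inputs $\le_{M_1} = \dots = \le_{M_p} = \le_{M'_1} = \dots = \le_{M'_{p'}} = \le_I$, where $\le_I$ is the $2$-chain on $1^{\{2\}}$ used to define $\EvalPosetParallel$ in \cref{def:posetParallel}. The only hypothesis to check is that $\le_I$ is bounded uncuttable, which is immediate: a bounded cut of a $2$-poset of degree $n$ lies in $[n-1]$, and $\le_I$ has degree $1$, so it admits no bounded cut at all.

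Combining the two points, $\overline{\EvalPosetParallel}$ is both surjective and injective, hence an operad isomorphism between $\Free[\Operations_2]/\Operad[I]$ and $\posParallel[2]$. This yields the claimed presentation. The main conceptual obstacle is not in this short argument itself but is already hidden in the groundwork: one must be convinced that the operad ideal generated by the single quadratic relation coincides with the transitive closure of the rewriting rule it induces, and, more substantially, that the quadratic relations identified in \cref{lem:uniqueRelationParallelPosetOperad} are truly the only ones. Once \cref{prop:uniqueParallelPosetEvaluation} is available, the theorem follows with essentially no further work; the writeup mainly needs to make the specialization to $\le_I$ visible and to record that $\le_I$ is bounded uncuttable for trivial reasons of degree.
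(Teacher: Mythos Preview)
Your proposal is correct and follows essentially the same approach as the paper: the paper simply notes that the relation holds by construction (via \cref{eq:relationPosetsParallel}) and that \cref{prop:uniqueParallelPosetEvaluation} implies it is the only one, while you spell out the same logic more explicitly by factoring the surjection $\EvalPosetParallel$ through the quotient and specializing \cref{prop:uniqueParallelPosetEvaluation} to the bounded uncuttable $2$-poset~$\le_I$.
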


In fact, the parallel $2$-poset operad~$\posParallel[2](\tree)$ is actually isomorphic to the series $2$-poset operad~$\posSeries[2](\tree)$ that will be defined for arbitrary~$k \ge 1$ in \cref{subsubsec:seriesPosetOperad}.
We refer in particular to \cref{subsubsec:seriesPosetOperad} for enumerative properties of~$\posParallel[2](\tree)$.


\subsubsection{Parallel \texorpdfstring{$2$}{2}-Zinbiel operads}
\label{subsubsec:parallelZinbiel}

Recall that the dendriform operad is the suboperad of the non-symmetric Zinbiel operad generated by ${\op{l}} \eqdef 12$ and ${\op{r}} \eqdef 21$, see \cref{subsubsec:LeibnizZinbieOperads}.
It is therefore natural to look for an operad $\messyZinbielParallel[2]$ on $2$-permutations which contains the operad~$\messyCitelangisParallel[2]$ as a suboperad generated in degree~$2$ and which moreover closes the following left square of operad morphisms.
We will also find a tidy version $\tidyZinbielParallel[\op{l,r}]$, closing the right square where the dashed arrows are not operad morphisms, but bijections of normal forms.

\begin{center}
\begin{tikzcd}[column sep=2cm, row sep=.3cm]
	\phantom{|}\boundedPosParallel[2]\phantom{|} \arrow[r, "\LinExt",  two heads]
	& \phantom{|}\messyZinbielParallel[2]\phantom{|} \arrow[r, dashrightarrow, "\LexMin", hook, two heads]
	& \phantom{|}\tidyZinbielParallel[\op{l,r}]\phantom{|} \\
	\phantom{|}\posParallel[2]\phantom{|} \arrow[r, "\LinExt", two heads] \arrow[u, hook]
	& \phantom{|}\messyCitelangisParallel[2]\phantom{|} \arrow[r, dashrightarrow, "\LexMin", hook, two heads] \arrow[u, hook]
	& \phantom{|}\tidyCitelangisParallel[\op{l,r}]\phantom{|} \arrow[u, hook]
\end{tikzcd}
\end{center}

\paraul{Messy parallel $2$-Zinbiel operad}
We start with the messy version.

\begin{definition}
\label{def:compositionMessyParallelZinbiel}
Let~$\sigma \in \Perm_2(m)$ and~$\tau \in \Perm_2(n)$ be two $2$-permutations and~$i \in [m]$.
Write~$\sigma = \lambda \, i \, \mu \, i \, \omega$ and $\tau = f \, \theta \, l$ where $f$ and $l$ are the first and last letter of~$\tau$ respectively.
Then the \defn{messy parallel $i$-th composition} of $\sigma$ and $\mu$ is defined by
\[
\sigma \messyParallelCirc{i} \tau = \lambda[i,n] \, f[i-1] \, (\mu[i,n] \shuffle \theta[i-1]) \, l[i-1] \, \omega[i,n].
\]
We extends this definition by linearity.
\end{definition}

Here are some examples of messy parallel compositions on $2$-permutations:
\begin{align*}
  {\blue3}1{\blue42234}1 \messyParallelCirc{1} 3{\red1312}2 = \; &
  {\blue5}3{\red1312}{\blue64456}2 +
  {\blue5}3{\red131}{\blue6}{\red2}{\blue4456}2 +
  {\blue5}3{\red131}{\blue64}{\red2}{\blue456}2 + \dots +
  {\blue5}3{\blue6}{\red1}{\blue4}{\red3}{\blue4}{\red1}{\blue56}{\red2}2
  \\ & + \cdots (\text{$126$~terms}) \cdots +
  {\blue5}3{\blue6445}{\red1}{\blue6}{\red312}2 + 
  {\blue5}3{\blue64456}{\red1312}2, \\
  {\blue314}22{\blue341} \messyParallelCirc{2} 3{\red1312}2 = \; & {\blue516}4{\red2423}3{\blue561}, \\
  3{\blue1422}3{\blue41} \messyParallelCirc{3} 3{\red1312}2 = \; &
  5{\red3534}{\blue1622}4{\blue61} +
  5{\red353}{\blue1}{\red4}{\blue622}4{\blue61} +
  5{\red353}{\blue16}{\red4}{\blue22}4{\blue61} + \dots +
  5{\blue1}{\red35}{\blue62}{\red34}{\blue2}4{\blue61}
  \\ & + \cdots (\text{$70$~terms}) \cdots +
  5{\blue162}{\red3}{\blue2}{\red534}4{\blue61} +
  5{\blue1622}{\red3534}4{\blue61}, \\
  {\blue31}4{\blue223}4{\blue1} \messyParallelCirc{4} 3{\red1312}2 = \; &
  {\blue31}6{\red4645}{\blue223}5{\blue1} +
  {\blue31}6{\red464}{\blue2}{\red5}{\blue23}5{\blue1} +
  {\blue31}6{\red464}{\blue22}{\red5}{\blue3}5{\blue1} + \dots
  + {\blue31}6{\blue2}{\red464}{\blue2}{\red5}{\blue3}5{\blue1}
  \\ & + \cdots (\text{$35$~terms}) \cdots +
  {\blue31}6{\blue22}{\red4}{\blue3}{\red645}5{\blue1} + 
  {\blue31}6{\blue223}{\red4645}5{\blue1}.
\end{align*}

The following two lemmas relate the messy parallel composition~$\messyParallelCirc{i}$ of $2$-permutations of \cref{def:compositionMessyParallelZinbiel} and the parallel composition~$\posetParallelCirc{i}$ of bounded $2$-posets of \cref{def:parallelBoundedPosetOperad}.
They play the same role for the composition as \cref{lem:LinExtEvalPosetParallel} played for the operations.

\begin{lemma}
\label{lem:morphismParallelPermPos}
For any two $2$-permutations~$\sigma \in \Perm_2(m)$ and~$\tau \in \Perm_2(n)$ and~$i \in [m]$,
\[
\sigma \messyParallelCirc{i} \tau = \LinExt \big( {\le_\sigma} \posetParallelCirc{i} {\le_\tau} \big).
\]
\end{lemma}

\begin{proof}
As in \cref{def:compositionMessyParallelZinbiel}, write~$\sigma = \lambda \, i \, \mu \, i \, \omega$ and $\tau = f \, \theta \, l$ where $f$ and $l$ are the first and last letter of~$\tau$ respectively.
From the description of \cref{def:parallelBoundedPosetOperad}, the poset~${\le_\sigma} \posetParallelCirc{i} {\le_\tau}$ decomposes into the ordered sum
\[
{\le_{\sigma[i,n]}} + \big\{ f[i-1] \big\} + \big( {\le_{\sigma[i,n]}} \, \sqcup \, {\le_{\tau[i-1]}} \big) + \big\{ l[i-1] \big\} + {\le_{\sigma[i,n]}}.
\]
Hence, the statement follows from \cref{lem:disjointUnionOrderedSumLinearExtensions}.
\end{proof}

\begin{lemma}
\label{lem:morphismParallelPermPos2}
For any two bounded $2$-posets~${\le_M} \in \boundedPosParallel[2](m)$ and~${\le_N} \in \boundedPosParallel[2](n)$ and~$i \in [m]$,
\[
\LinExt(\le_M) \messyParallelCirc{i} \LinExt(\le_N) = \LinExt({\le_M} \posetParallelCirc{i} {\le_N}).
\]
\end{lemma}

\begin{proof}
Fix three integers $m$, $n$, and $i\in[m]$.
Remark that if $\nu$ is a permutation appearing in the messy parallel composition $\sigma \messyParallelCirc{i} \tau$ for some $\sigma \in \Perm_2(m)$ and~$\tau \in \Perm_2(n)$, one can recover uniquely these two permutations from $\nu$.
As a consequence, $\LinExt(\le_M) \messyParallelCirc{i} \LinExt(\le_N)$ cannot have multiplicities, so that we can argue by double inclusion.

Observe from \cref{def:parallelBoundedPosetOperad} that the inclusion of relations~${\le_M} \subseteq {\le'_M}$ and~${\le_N} \subseteq {\le'_N}$ imply the inclusion~${{\le_M} \posetParallelCirc{i} {\le_N}} \subseteq {{\le'_M} \posetParallelCirc{i} {\le'_N}}$.
For~$\sigma \in \LinExt(\le_M)$ and~$\tau \in \LinExt(\le_N)$, we have~${\le_M} \subseteq {\le_\sigma}$ and~${\le_N} \subseteq {\le_\tau}$, hence~${{\le_M} \posetParallelCirc{i} {\le_N}} \subseteq {{\le_\sigma} \posetParallelCirc{i} {\le_\sigma}}$.
We thus get~$\LinExt({\le_\sigma} \posetParallelCirc{i} {\le_\tau}) \subseteq \LinExt({\le_M} \posetParallelCirc{i} {\le_N})$.
As~$\LinExt({\le_\sigma} \posetParallelCirc{i} {\le_\tau}) = \sigma \messyParallelCirc{i} \tau$ by \cref{lem:morphismParallelPermPos}, we obtain that
\[
\LinExt(\le_M) \messyParallelCirc{i} \LinExt(\le_N)  \; = \bigcup_{\substack{\sigma \in \LinExt(\le_M) \\ \tau \in \LinExt(\le_N)}} \sigma \messyParallelCirc{i} \tau \; \subseteq \; \LinExt({\le_M} \posetParallelCirc{i} {\le_N}).
\]

Conversely, starting from any linear extension~$\rho$ of~${\le_M} \posetParallelCirc{i} {\le_N}$, restricting~$\rho$  to~$M[i,n]$ and~$N[i-1]$ and renumbering respectively yields linear extensions~$\sigma$ of~$\le_M$ and~$\tau$ of~$\le_N$.
A straightforward case analysis of on \cref{def:parallelBoundedPosetOperad} shows that~$\rho$ appears in~$\sigma \messyParallelCirc{i} \tau$.
\end{proof}

As a consequence, \cref{def:compositionMessyParallelZinbiel} actually defines an operad composition.

\begin{proposition}
\label{prop:morphismParallelLinExt}
The family $\big( \K\Perm_2(n) \big)_{n>0}$ endowed with the messy parallel composition rules~$\messyParallelCirc{i}$ of \cref{def:compositionMessyParallelZinbiel} defines an operad $\messyZinbielParallel[2]$, called the \defn{messy parallel $2$-Zinbiel operad}. Moreover, $\LinExt$ is a surjective operad morphism from $\boundedPosParallel[2]$ to $\messyZinbielParallel[2]$.
\end{proposition}

\begin{proof}
Thanks to \cref{lem:morphismParallelPermPos,lem:morphismParallelPermPos2}, we have
\[
\sigma \messyParallelCirc{i} (\tau \messyParallelCirc{j} \mu) = \LinExt \big( {\le_\sigma} \posetParallelCirc{i} ({\le_\tau} \posetParallelCirc{j} {\le_\mu}) \big).
\]
Using similar equalities for the other compositions, we prove the operad axioms.
The morphism property is just \cref{lem:morphismParallelPermPos}.
The surjectivity follows from $\LinExt({\le_\sigma}) = \sigma$.
\end{proof}

\begin{proposition}
\label{prop:fullyBoundedCuttableBasisMessy}
The operad $\messyCitelangisParallel[2]$ is the suboperad of $\messyZinbielParallel[2]$ generated by the four elements $1221$, $1122+1212$, $2121+2211$ and $2112$.
\end{proposition}

\begin{proof}
The generators are given by the linear extensions of the bounded $2$-posets of \cref{fig:exmMessyCitelangisParallelActionPosets}:
\[
\begin{array}{l@{\;=\;}l@{\quad}c@{\quad}l@{\;=\;}l}
\LinExt({\le_I} \op{l,l} {\le_I}) & 1221,
&& \LinExt({\le_I} \op{l,r} {\le_I}) & 1122+1212, \\
\LinExt({\le_I} \op{r,l} {\le_I}) & 2121+2211
& \text{and}
& \LinExt({\le_I} \op{r,r} {\le_I}) & 2112.
\end{array}
\qedhere
\]
\end{proof}

\paraul{Tidy parallel $2$-Zinbiel operad}
The goal of this section is to generalize to any $2$-permutations the composition formula of~\cref{prop:combinatorialModelCompositionsTidyParallelCitelangis} for fully bounded cuttable $2$-permutations, as suggested by \cref{rem:tidyParallelZinbiel}.

\begin{definition}
\label{def:compositionTidyParallelZinbiel}
Let~$\sigma \in \Perm_2(m)$ and~$\tau \in \Perm_2(n)$ be two $2$-permutations and~$i \in [m]$.
Write~$\sigma = \lambda \, i \, \mu \, \nu \, i \, \omega$, where $\mu$ is the maximal factor such that~$\mu_p < i$ for all~$p \in [|\mu|]$.
Therefore either $\nu$ is empty or $i < \nu_1$. Write moreover~$\tau = f \, \theta \, l$ where $f$ and $l$ are the first and last letter of~$\tau$ respectively.
Then the \defn{tidy parallel $i$-th composition} of $\sigma$ and $\mu$ is defined by
\[
\begin{array}{c@{\;}c@{\;}l@{\,}l@{\,}l@{\,}l@{\,}l@{\,}l@{\,}l}
\sigma \tidyParallelCirc{i} \tau
& \eqdef & \lambda[i,n] & f[i-1] & \mu[i,n] & \theta[i-1] & \nu[i,n] & l[i-1] & \omega[i,n] \\
& =      & \lambda[i,n] & f[i-1] & \mu      & \theta[i-1] & \nu[i,n] & l[i-1] & \omega[i,n].
\end{array}
\]
\end{definition}

Note that this is the same composition formula as in~\cref{prop:combinatorialModelCompositionsTidyParallelCitelangis}.
Here are some examples of tidy parallel compositions on $2$-permutations:
\begin{gather*}{1}
  {\blue3}1|{\blue42234}1 \tidyParallelCirc{1} 3{\red1312}2 = {\blue5}3{\red1312}{\blue64456}2, \\
  {\blue314}2|2{\blue341} \tidyParallelCirc{2} 3{\red1312}2 = {\blue516}4{\red2423}3{\blue561}, \\
  3{\blue1}|{\blue422}3{\blue41} \tidyParallelCirc{3} 3{\red1312}2 = 5{\blue1}{\red3534}{\blue622}4{\blue61}, \\
  {\blue31}4{\blue223}|4{\blue1} \tidyParallelCirc{4} 3{\red1312}2 = {\blue31}6{\blue223}{\red4645}5{\blue1},
\end{gather*}
where we have marked by a vertical bar the separation $\mu|\nu$.

Similarly to \cref{lem:LexMinParallel}, there is a relation between the tidy parallel composition~$\tidyParallelCirc{i}$ of \cref{def:compositionTidyParallelZinbiel} and the messy parallel composition~$\tidyParallelCirc{i}$ of \cref{def:compositionMessyParallelZinbiel}.

\begin{lemma}
\label{lem:morphismParallelLexMin}
For any two $2$-permutations~$\sigma \in \Perm_2(m)$ and~$\tau \in \Perm_2(n)$ and~$i \in [m]$,
\[
\sigma \tidyParallelCirc{i} \tau = \LexMin(\sigma \messyParallelCirc{i} \tau).
\]
\end{lemma}

As a consequence, the tidy parallel composition rules of \cref{def:compositionTidyParallelZinbiel} define an operad.

\begin{proposition}
\label{prop:fullyBoundedCuttableBasisTidy}
The family $\big( \Perm_2(n) \big)_{n>0}$ endowed with the tidy parallel composition rules~$\tidyParallelCirc{i}$ of~\cref{def:compositionTidyParallelZinbiel} defines a non-symmetric set operad~$\tidyZinbielParallel[\op{l,r}]$, called the \defn{tidy parallel $2$-Zinbiel operad}. Moreover~$\tidyCitelangisParallel[\op{l,r}]$ is the suboperad of~$\tidyZinbielParallel[\op{l,r}]$ given by fully bounded cuttable $2$-permutations.
\end{proposition}


\subsection{Actions of series citelangis operads}
\label{subsec:actionSeriesCitalangisOperads}

We now discuss the action of series $k$-citelangis operads~$\tidyCitelangisSeries$ and~$\messyCitelangisSeries$ on certain $k$-permutations and $k$-posets.
Our presentation closely follows the prototype given by the actions of the parallel $k$-citelangis operads presented in \cref{subsec:actionParallelCitalangisOperads}.
More precisely, the structure, phrasing and notations here are a carbon copy of that of \cref{subsec:actionParallelCitalangisOperads}.
However, all definitions differ, in particular that of the compositions of multiposets and multipermutations.
In particular, substituing $k = 2$ in this section does not recover \cref{subsec:actionParallelCitalangisOperads}.
We start with some combinatorial considerations on certain $k$-permutations.


\subsubsection{\texorpdfstring{$k$}{k}-rooted cuts in \texorpdfstring{$k$}{k}-permutations}
\label{subsubsec:krootedkpermutations}

We introduce first an intruiguing class of $k$-permutations.
These permutations will be instrumental in studying the action of the tidy series $k$-citelangis operad, but we also believe that they would deserve further study for their own sake.

\begin{definition}
\label{def:krootedCutMultipermutation}
Let~$\sigma \in \Perm_k(n)$ be a $k$-permutation of degree~$n$ and~$\gamma \in [n-1]$.
We say that~$\gamma$ is a \defn{$k$-rooted cut} of~$\sigma$ if we can write~$\sigma = \mu \nu \omega$ where~$\mu$, $\nu$ and~$\omega$ are words such that~$|\mu| = k$ and~$\nu_i \le \gamma < \omega_j$ for all~$i \in [|\nu|]$ and~$j \in [|\omega|]$.
We denote by~$\kcuts(\sigma)$ the set of $k$-rooted cuts of~$\sigma$.
We say that the $k$-permutation~$\sigma$ is \defn{$k$-rooted cuttable} if it admits a $k$-rooted cut, and \defn{$k$-rooted uncuttable} if it admits no $k$-rooted cut.
\end{definition}

For example, $3$ is a $2$-rooted cut of the $2$-permutation~$31213244$, while the $2$-permutation~$31421324$ is $2$-rooted uncuttable.
Note that there is no condition on the sizes of~$\nu$ and~$\omega$ in \cref{def:krootedCutMultipermutation} (they can be empty, or of sizes which are not multiples of~$k$).
In particular, it is convenient to observe that we could change the condition~$|\mu| = k$ by~$|\mu| \le k$.
Indeed, if~$|\mu| < k$, one can always recover the situation where~$|\mu| = k$ by transferring the $k-|\mu|$ letters at the beginning of~$\nu$ to the end of~$\mu$.
We now observe that $k$-rooted cuts behave properly with restrictions in the sense of \cref{def:restrictionMultipermutation}.

\begin{lemma}
\label{lem:krootedCutsRestriction}
Consider~$L \subseteq [n]$ and~$\gamma \in [\min(L), \max(L)-1]$, and let~$\gamma^{|L} \eqdef |[\gamma] \cap L|$ denote the number of elements of~$L$ between~$1$ and~$\gamma$.
If~$\gamma$ is a $k$-rooted cut of a $k$-permutation~$\sigma \in \Perm_k(n)$, then $\gamma^{|L}$ is a $k$-rooted cut of its restriction~$\sigma^{|L}$.
\end{lemma}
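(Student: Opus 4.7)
The plan is to mimic the short proof of~\cref{lem:boundedCutsRestriction}. Starting from a decomposition $\sigma = \mu \nu \omega$ witnessing that $\gamma$ is a $k$-rooted cut of $\sigma$ (so $|\mu| = k$, the letters of $\nu$ are all $\le \gamma$, and the letters of $\omega$ are all $> \gamma$), I would apply the restriction operator and use the fact that it commutes with concatenation to obtain $\sigma^{|L} = \mu^{|L} \nu^{|L} \omega^{|L}$. By definition of the restriction on values, the letters of $\nu^{|L}$ are all $\le \gamma^{|L}$ and those of $\omega^{|L}$ are all $> \gamma^{|L}$, since elements of $L$ that are $\le \gamma$ (resp.~$> \gamma$) get relabeled by their rank in $L$, which is $\le \gamma^{|L}$ (resp.~$> \gamma^{|L}$).

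The subtle point is that $|\mu^{|L}|$ may be strictly smaller than $k$, whereas the definition of $k$-rooted cut requires a prefix of length exactly $k$. I would handle this by redistributing letters: set $\mu'$ to be the first $k$ letters of the word $\mu^{|L} \nu^{|L}$, $\nu'$ to be the remaining letters of $\mu^{|L} \nu^{|L}$, and $\omega' \eqdef \omega^{|L}$. Then $\sigma^{|L} = \mu' \nu' \omega'$ with $|\mu'| = k$, and since every letter moved into $\mu'$ originally came from $\nu^{|L}$, the word $\nu'$ is still a suffix of $\nu^{|L}$ and thus has all letters $\le \gamma^{|L}$, while $\omega' = \omega^{|L}$ still has all letters $> \gamma^{|L}$.

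The main (but mild) obstacle is to verify that this redistribution is always possible, i.e.\ that $|\mu^{|L}| + |\nu^{|L}| \ge k$. This is where the hypothesis $\gamma \ge \min(L)$ is used: the value $\min(L)$ has $k$ copies in $\sigma$, all of them are $\le \gamma$ and must therefore lie inside $\mu \nu$, so after restriction they provide $k$ copies of the value $1$ inside $\mu^{|L} \nu^{|L}$. Hence $|\mu^{|L}| + |\nu^{|L}| \ge k$, the redistribution is well defined, and $\gamma^{|L}$ is indeed a $k$-rooted cut of $\sigma^{|L}$.
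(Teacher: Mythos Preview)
Your proof is correct and follows essentially the same approach as the paper's: decompose $\sigma = \mu\nu\omega$, restrict, and then handle the fact that $|\mu^{|L}|$ may be strictly less than $k$. The paper simply invokes the observation made right after \cref{def:krootedCutMultipermutation} that the condition $|\mu| = k$ can be relaxed to $|\mu| \le k$, whereas you explicitly carry out the redistribution and justify it using the hypothesis $\gamma \ge \min(L)$; both arguments are fine.
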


\begin{proof}
Since~$\gamma$ is a $k$-rooted cut of~$\sigma$, we can write~$\sigma = \mu \nu \omega$ with~$|\mu| = k$ and~$\nu_i \le \gamma < \omega_j$ for all~$i \in [|\nu|]$ and~$j \in [|\omega|]$.
Then~$\sigma^{|L} = \mu^{|L} \nu^{|L} \omega^{|L}$.
Moreover~$|\mu^{|L}| \le k$ and~$\nu^{|L}_i \le \gamma^{|L} < \omega^{|L}_j$ for any~$i \in [|\nu^{|L}|]$ and~$j \in [|\omega^{|L}|]$.
Therefore, $\gamma^{|L}$ is a $k$-rooted cut of~$\sigma^{|L}$.
\end{proof}

Note that the reverse statement is wrong.
Consider for instance the $2$-permutation~$\sigma = 213123$ and the subset~$L = \{1,2\}$.
Then~$1$ is not a $2$-rooted cut of~$\sigma$, but $1^{|L} = 1$ is a $2$-rooted cut of~$\sigma^{|L} = 2112$.

\begin{proposition}
\label{prop:equivalenceFullykrootedCuttable}
The following conditions are equivalent for a $k$-permutation of degree~$n$:
\begin{enumerate}[(i)]
\item its restriction to any interval of~$[n]$ of size at least $2$ is $k$-rooted cuttable,
\item its restriction to any subset of~$[n]$ of size at least $2$ is $k$-rooted cuttable.
\end{enumerate}
\end{proposition}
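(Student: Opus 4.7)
The plan is to mimic directly the argument for the parallel case given in \cref{prop:equivalenceFullyBoundedCuttable}, replacing bounded cuts by $k$-rooted cuts and invoking \cref{lem:krootedCutsRestriction} in place of \cref{lem:boundedCutsRestriction}. Since every interval of $[n]$ is in particular a subset of $[n]$, the implication (ii) $\Rightarrow$ (i) is immediate and needs no argument.

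For the reverse implication (i) $\Rightarrow$ (ii), I would fix a $k$-permutation $\sigma \in \Perm_k(n)$ satisfying~(i) and a subset $L \subseteq [n]$ with $|L| \ge 2$. Since $\min(L) < \max(L)$, the interval $I \eqdef [\min(L), \max(L)]$ has size at least $2$, so by~(i) the restriction $\sigma^{|I}$ admits a $k$-rooted cut $\gamma \in [\min(L), \max(L)-1]$. Using the relations between restriction operators recalled in \cref{rem:relationOperatorsMultipermutations}, we may identify $\sigma^{|L}$ with the restriction of $\sigma^{|I}$ to the subset $\bar L \eqdef \set{\ell - \min(L) + 1}{\ell \in L}$ of $[|I|]$. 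Then \cref{lem:krootedCutsRestriction} applied to $\sigma^{|I}$ and $\bar L$ produces a $k$-rooted cut $\gamma^{|\bar L}$ of~$\sigma^{|L}$.

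The only detail to verify is that $\gamma^{|\bar L}$ is a genuine cut, that is $1 \le \gamma^{|\bar L} \le |L| - 1$. Since $\min(L) \in \bar L$ satisfies $\min(L) \le \gamma$, we have $\gamma^{|\bar L} \ge 1$; and since $\max(L) \in \bar L$ satisfies $\max(L) > \gamma$, the element $\max(L)$ is not counted, so $\gamma^{|\bar L} \le |L| - 1$. Therefore $\sigma^{|L}$ is $k$-rooted cuttable, establishing~(ii). There is no real obstacle here beyond carefully tracking that the shift and restriction operators cooperate as recorded in \cref{rem:relationOperatorsMultipermutations}, which is exactly what made the parallel analogue go through.
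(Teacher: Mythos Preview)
Your proof is correct and follows exactly the paper's own argument, which is itself a verbatim adaptation of the parallel case \cref{prop:equivalenceFullyBoundedCuttable} with \cref{lem:krootedCutsRestriction} in place of \cref{lem:boundedCutsRestriction}. One small notational slip: in your final paragraph you write ``$\min(L) \in \bar L$'' and ``$\max(L) \in \bar L$'', but $\bar L$ is the shifted set $\{\ell-\min(L)+1 \mid \ell \in L\}$, so you mean $1 \in \bar L$ and $|I| \in \bar L$; the reasoning is unaffected.
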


\begin{proof}
Assume that~$\sigma \in \Perm_k(n)$ satisfies~(i).
Let~$L \subseteq [n]$ with~$|L| \ge 2$.
Since~$|L| \ge 2$, we have~$\min(L) < \max(L)$ so that the restriction~$\sigma^{|[\min(L), \max(L)]}$ admits a $k$-rooted cut~$\gamma$ with~$\min(L) \le \gamma < \max(L)$.
By \cref{rem:relationOperatorsMultipermutations}, the restriction~$\sigma^{|L}$ is just the restriction of $\sigma^{|[\min(L), \max(L)]}$ to~$\bar L \eqdef \set{\ell-\min(L)+1}{\ell \in L}$.
By \cref{lem:krootedCutsRestriction}, $\sigma^{|L}$ admits a $k$-rooted cut~$\gamma^{|\bar L}$ with~${1 \le \gamma^{|\bar L} \le |L|}$.
Therefore, $\sigma$ satisfies~(ii).
The reverse implication is obvious.
\end{proof}

\begin{definition}
\label{def:fullykrootedCuttable}
A $k$-permutation is \defn{fully $k$-rooted cuttable} if it satisfies the equivalent conditions of \cref{prop:equivalenceFullykrootedCuttable}.
\end{definition}

For example, the $2$-permutation~$31213244$ is $2$-rooted cuttable but not fully $2$-rooted cuttable (since the restriction to the interval~$[1,3]$ is not $2$-rooted cuttable).
In contrast, the $2$-permutation~$363121244556$ is fully $2$-rooted cuttable.
Note that~$1^k$ is fully $k$-rooted cuttable as there is no subset of size at least~$2$.

\medskip
When~$k = 1$, we drop $1$- in $1$-rooted, and just say fully rooted cuttable permutations.
They coincide with the usual Catalan permutations avoiding the pattern~$231$.
Recall that a $231$ pattern in a permutation is a subword~$b \cdot c \cdot a$ of three (non-necessarily consecutive) letters such that~${a < b < c}$.
Let us start with the following simple observation.

\begin{lemma}
\label{lem:rootedUncuttableImpliesPattern}
A rooted uncuttable permutation of degree at least~$2$ contains a pattern~$231$.
\end{lemma}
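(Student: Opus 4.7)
The plan is to argue by contrapositive: I will show that a $231$-avoiding permutation $\sigma$ of degree $n \ge 2$ must admit a rooted cut. The backbone of the argument is the well-known structural observation that in a $231$-avoiding permutation, every entry strictly left of the maximum value $n$ is strictly smaller than every entry strictly right of $n$. Indeed, if a value $a$ appears left of $n$ and a value $c$ appears right of $n$ with $c < a$, then the subword $a \cdot n \cdot c$ realizes the forbidden pattern~$231$. Letting $p$ denote the position of $n$ in $\sigma$, this forces
\[
\{\sigma_1, \dots, \sigma_{p-1}\} = \{1, \dots, p-1\} \qandq \{\sigma_{p+1}, \dots, \sigma_n\} = \{p, \dots, n-1\}.
\]

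From this decomposition I will exhibit an explicit rooted cut. If $p \ge 2$, I would take $\gamma \eqdef p-1 \in [n-1]$ together with the factorization $\sigma = \mu \cdot \nu \cdot \omega$ where $\mu \eqdef \sigma_1$, $\nu \eqdef \sigma_2 \cdots \sigma_{p-1}$ (possibly empty when $p=2$), and $\omega \eqdef \sigma_p \cdots \sigma_n$; the entries of $\nu$ lie in $\{1, \dots, p-1\} \ssm \{\sigma_1\}$ hence are $\le \gamma$, while the entries of $\omega$ lie in $\{p, \dots, n\}$ hence are $> \gamma$. If instead $p = 1$, meaning $\sigma_1 = n$, I would take $\gamma \eqdef n-1$ with $\mu \eqdef n$, $\nu \eqdef \sigma_2 \cdots \sigma_n$ (a permutation of $[n-1]$, so all entries~$\le \gamma$), and $\omega$ empty. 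In either case $\gamma \in \kcuts(\sigma)$, proving that $\sigma$ is rooted cuttable.

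There is no real obstacle: this $k=1$ base case is considerably simpler than its $k=2$ analog \cref{lem:boundedUncuttableImpliesPattern}, because the single-letter prefix $\mu$ required by the rooted-cut definition is immediately accommodated by placing it at the position of the maximum, after which the canonical recursive decomposition of $231$-avoiding permutations delivers the cut with no further case analysis.
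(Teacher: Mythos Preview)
Your proof is correct. You argue by contrapositive, exploiting the classical recursive structure of $231$-avoiding permutations: the position of the maximum value~$n$ splits the remaining entries into an interval $[p-1]$ on the left and $\{p,\dots,n-1\}$ on the right, from which you read off a rooted cut $\gamma = p-1$ (or $\gamma = n-1$ when $p=1$).

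The paper's proof is more direct and uses a different pivot. It takes the \emph{first letter} $b = \sigma_1$ as the candidate cut value: since $\sigma$ is rooted uncuttable, $b$ fails to be a rooted cut, which forces some $c > b$ to precede some $a < b$ among the remaining letters, and then $b \cdot c \cdot a$ is the desired $231$ pattern. This is a couple of lines and avoids the case split on the position of~$n$. Your approach, by contrast, leans on the well-known decomposition of $231$-avoiding permutations and makes the cut fully explicit; it also mirrors more transparently the inductive structure one would use to prove \cref{prop:equivalenceFullykrootedCuttable}-type statements. Both arguments are elementary and of comparable length; they simply choose opposite pivots (first letter versus maximum value) and opposite logical directions (direct versus contrapositive).
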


\begin{proof}
Let~$\sigma$ be a rooted uncuttable permutation and let~$b$ be its first letter.
Since~$b$ is not a rooted cut of~$\sigma$, there is~$a < b < c$ such that~$c$ appears before~$a$ in~$\sigma$.
Therefore, $b \cdot c \cdot a$ is a $231$ pattern in~$\sigma$.
\end{proof}

\begin{proposition}
A permutation is fully rooted cuttable if and only if it avoids the pattern~$231$.
\end{proposition}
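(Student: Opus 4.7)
The plan is to establish both directions by leveraging \cref{lem:rootedUncuttableImpliesPattern} together with the fact that containment of a pattern is preserved under restriction to subsets. The argument closely mirrors the proof of \cref{prop:fullyBoundedCuttablePatterns} in the bounded setting, but is simpler since only one length-$3$ pattern is involved.

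For the ``only if'' direction, I argue contrapositively. Suppose $\sigma$ contains a $231$ pattern, witnessed by three values $a < b < c$ appearing in the order $b, c, a$. Then the restriction $\sigma^{|\{a,b,c\}}$ is (after relabelling) exactly the permutation $231$. A direct check shows that $231$ is rooted uncuttable: its first letter is $2$, and neither $\gamma = 1$ nor $\gamma = 2$ makes the tail~$31$ split into an initial block of values~$\le \gamma$ followed by a block of values~$> \gamma$. Hence $\sigma$ admits a restriction of degree at least~$2$ that is rooted uncuttable, so $\sigma$ is not fully rooted cuttable.

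For the ``if'' direction, I argue again contrapositively. If $\sigma$ is not fully rooted cuttable, then by \cref{def:fullykrootedCuttable} some restriction $\sigma^{|L}$ of degree at least~$2$ is rooted uncuttable. By \cref{lem:rootedUncuttableImpliesPattern}, this restriction contains a $231$ pattern. Since restriction preserves relative orders among the remaining letters (see \cref{def:restrictionMultipermutation}), any $231$ pattern of $\sigma^{|L}$ lifts to a $231$ pattern of~$\sigma$, so $\sigma$ itself contains~$231$.

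Neither direction presents a real obstacle: the hard combinatorial content is already packaged in \cref{lem:rootedUncuttableImpliesPattern}, and what remains is the routine observation that the $231$-pattern class is closed under restriction. The only point worth spelling out carefully is that the three values extracted from the rooted-uncuttability argument (namely the first letter $b$ together with a value $c > b$ preceding some value $a < b$ in the tail) are genuinely distinct, which is immediate because $b$ cannot reappear in $\sigma_2 \dots \sigma_n$.
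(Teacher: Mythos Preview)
Your proof is correct and follows essentially the same approach as the paper: both directions rely on \cref{lem:rootedUncuttableImpliesPattern} together with the fact that pattern containment is preserved under restriction. The only cosmetic difference is that for the ``only if'' direction you restrict to the three-element set $\{a,b,c\}$ to obtain $231$ directly, whereas the paper restricts to the interval $[a,c]$; both choices work for the same reason (the extremal values $a$ and $c$ land in the tail in the wrong order).
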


\begin{proof}
There is nothing to prove for the permutation~$1$.
If a permutation~$\sigma$ contains a $231$-pattern~$b \cdot c \cdot a$, then its restriction~$\sigma^{|[a,c]}$ is uncuttable, so that $\sigma$ is not fully rooted cuttable.
Conversely, if~$\sigma$ avoids the pattern~$231$, then all its restrictions do, so that they are all rooted cuttable by \cref{lem:rootedUncuttableImpliesPattern}.
\end{proof}

Similar statements hold when~$k = 2$.

\begin{lemma}
\label{lem:2rootedUncuttableImpliesPattern}
A $2$-rooted uncuttable $2$-permutation of degree at least~$2$ contains a pattern~$b \cdot b' \cdot c \cdot a$ with~$a \le b, b' \le c$.
\end{lemma}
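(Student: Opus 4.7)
The plan is to choose, for each configuration of the first two letters $p \eqdef \sigma_1$ and $q \eqdef \sigma_2$, a cut candidate $\gamma \in [n-1]$ whose failure (\ie positions $3 \le i < j$ with $\sigma_i > \gamma$ and $\sigma_j \le \gamma$), together with positions~$1$, $2$, and occasionally the position of the second copy of~$p$, furnishes the required pattern $b \cdot b' \cdot c \cdot a$. This mirrors the approach of \cref{lem:boundedUncuttableImpliesPattern}, but is simplified by the fact that only the first two letters (not both endpoints) of~$\sigma$ are distinguished in the definition of a 2-rooted cut.

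First, I dispose of the two trivial obstructions: $p = q = 1$ makes $\gamma = 1$ a valid 2-rooted cut (all later letters exceed~$1$), and $p = q = n$ makes $\gamma = n-1$ valid; neither can therefore occur. The remaining configurations split into three cases. If $p = q \in \{2, \dots, n-1\}$, take $\gamma = p - 1$; since both copies of~$p$ already occupy positions~$1, 2$, the obstruction sharpens to $\sigma_i > p$ and $\sigma_j < p$, yielding the pattern $p \cdot p \cdot \sigma_i \cdot \sigma_j$ at positions $1, 2, i, j$. If $p > q$, take $\gamma = q$; the obstruction gives $\sigma_i > q$ and $\sigma_j \le q < p$, hence the pattern $p \cdot q \cdot \sigma_i \cdot \sigma_j$.

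The main obstacle is the last case $p < q$. I take $\gamma = q - 1$, obtaining $\sigma_i \ge q$ and $\sigma_j < q$. If $\sigma_j \le p$, the pattern $p \cdot q \cdot \sigma_i \cdot \sigma_j$ at positions $1,2,i,j$ works directly. The delicate residual subcase is $p < \sigma_j < q$, where the four positions $1, 2, i, j$ alone cannot form the pattern (because $a = \sigma_j > p = b$). The key move is to bring in the position $p_2 \ge 3$ of the second copy of~$p$ (well-defined since $\sigma_2 = q \ne p$, and distinct from~$i, j$ since $\sigma_i, \sigma_j > p$), and to split on the position of~$p_2$ relative to~$i$:
\begin{itemize}
\item if $p_2 < i$, the positions $2 < p_2 < i < j$ carry values $q \cdot p \cdot \sigma_i \cdot \sigma_j$, giving the pattern with $a = \sigma_j < q = b$ and $b' = p < q \le \sigma_i = c$;
\item if $p_2 > i$, the positions $1 < 2 < i < p_2$ carry values $p \cdot q \cdot \sigma_i \cdot p$, giving the pattern with $a = p \le p = b$ and $b' = q \le \sigma_i = c$.
\end{itemize}
This completes the case analysis. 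The substantive step is the interval subcase $p < \sigma_j < q$: the second occurrence of~$p$ is essential because it provides a ``low'' value that can play either the role of~$a$ or of~$b'$ in the pattern, depending on where it sits relative to~$i$.
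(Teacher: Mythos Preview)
Your proof has a genuine gap stemming from a misreading of the pattern condition. The notation ``$a \le b, b' \le c$'' means that \emph{both} $b$ and $b'$ lie in the interval $[a,c]$, \ie the four inequalities $a \le b$, $a \le b'$, $b \le c$, $b' \le c$ (compare \cref{prop:fullykrootedCuttablePatterns}\,(1), which for $k=2$ reads $a \le b_i \le c$ for $i \in [2]$, and the use of the restriction $\sigma^{|[a,c]}$ in the proof of \cref{prop:fullyBoundedCuttablePatterns}). You only verify two of them, and the other two fail in your construction:
\begin{itemize}
\item \textbf{Case $p > q$.} You take $\gamma = q$ and offer the pattern $p \cdot q \cdot \sigma_i \cdot \sigma_j$ at positions $1,2,i,j$. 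But $b \le c$ requires $p \le \sigma_i$, and you only know $\sigma_i > q$. For $\sigma = 41214323$ (which is $2$-rooted uncuttable), the unique obstruction for $\gamma = 1$ is $(i,j) = (3,4)$ with $\sigma_i = 2 < 4 = p$, so your candidate $4 \cdot 1 \cdot 2 \cdot 1$ is not an instance of the pattern.
\item \textbf{Case $p < q$, subcase $p_2 < i$.} You offer $q \cdot p \cdot \sigma_i \cdot \sigma_j$ at positions $2, p_2, i, j$, checking $a < b$ and $b' < c$. But $a \le b'$ requires $\sigma_j \le p$, which is precisely what fails in this subcase. For $\sigma = 14214233$ (uncuttable), the only $i$ with $\sigma_i \ge q = 4$ is $i = 5$, the second $1$ sits at $p_2 = 4 < i$, and every admissible $\sigma_j \in \{2,3\}$ exceeds $p = 1$; your candidate $4 \cdot 1 \cdot 4 \cdot \sigma_j$ fails $a \le b'$.
\end{itemize}
In both counterexamples the pattern does exist (\eg $4 \cdot 2 \cdot 4 \cdot 2$ at positions $1,3,5,7$ in the first, and at $2,3,5,6$ in the second), but it is not captured by positions $1,2,i,j$ or $2,p_2,i,j$; one has to leave the first letter out and use the second copy of $\max(p,q)$ instead. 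The paper's proof does exactly this: it lets $s \eqdef \max(\sigma_1,\sigma_2)$, studies the position of the \emph{second} occurrence of $s$, and builds the pattern around that position rather than insisting on using both $\sigma_1$ and $\sigma_2$.
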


\begin{proof}
Suppose by contradiction that a $2$-permutation~$\sigma = \sigma_1 \dots \sigma_{2n}$ contains no such pattern.

Assume first that~$\sigma_1 = \sigma_2 \defeq v$.
Then for any~$u \le v < w$, both values~$u$ must appear before both values~$w$, otherwise we would have the pattern~$v \cdot v \cdot w \cdot u$.
Therefore, both~$v-1$ and~$v$ (resp.~$1$, resp.~$n-1$) are $2$-rooted cuts of~$\sigma$ if~$1 < v < n$ (resp.~if~$v = 1$, resp.~if~$v = n$).

Assume now that~$\sigma_1 \ne \sigma_2$, and let~$r \eqdef \min(\sigma_1, \sigma_2)$ and~$s \eqdef \max(\sigma_1, \sigma_2)$.
Let~$p$ denote the position of the second~$s$ of~$\sigma$, \ie $p > 2$ and~$\sigma_p = s$.
We distinguish two cases:
\begin{enumerate}[(i)]
\item Assume first that no value of~$\sigma$ appears both before and after the position~$p$. This implies that for~$w > s$, both values~$w$ appear after the position~$p$, as otherwise we would have a forbidden pattern~$s \cdot w \cdot w \cdot s$. Let~$v$ be the minimal value that appears after the position~$p$. Note that~$v > 1$ as otherwise~$\{r,s\} \cdot s \cdot 1$ is a forbidden pattern. We claim that~$v-1$ is a $2$-rooted cut of~$\sigma$. Indeed, for any~$u < v$, both values~$u$ appear before the position~$p$ by definition. Moreover, for any~$w \ge v$ distinct from~$s$, both values~$w$ appear after the position~$p$, as otherwise we would have~$v \le w < s$ and thus the forbidden pattern~$s \cdot w \cdot s \cdot v$.
\item Assume now that there is a value~$t$ that appears both before and after the position~$p$. Note that it imposes that~$s < t$ as otherwise we would have the forbidden pattern~$s \cdot t \cdot s \cdot t$. Let~$q$ denote the position of the first value~$t$. We can assume without loss of generality that $q$ is the minimal position of a value that appears both before and after~$p$. Let~$v$ be the minimal value that appears after the position~$q$. Note that~$v > 1$ as otherwise~$\{r,s\} \cdot t \cdot 1$ is a forbidden pattern. We claim that~$v-1$ is a $2$-rooted cut of~$\sigma$. Indeed, for any~$u < v$, both values~$u$ appear before the position~$q$ by definition. Moreover, for any~$w \ge v$ distinct from~$s$, both values~$w$ appear after the position~$q$. Otherwise, by minimality of the position~$q$, the second value~$w$ could not be on the right of~$p$. Therefore, either $w > s$ and we have the forbidden pattern~$s \cdot w \cdot w \cdot s$, or~$v \le w < s < t$ and we have the forbidden pattern~$s \cdot w \cdot t \cdot v$.
\qedhere
\end{enumerate}
\end{proof}

\pagebreak
\begin{proposition}
A $2$-permutation is fully $2$-rooted cuttable if and only if it avoids the pattern ${b \cdot b' \cdot c \cdot a}$ with~$a \le b , b' \le c$.
\end{proposition}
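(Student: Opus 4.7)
The plan is to mirror the proof of Proposition \ref{prop:fullyBoundedCuttablePatterns}, using Lemma \ref{lem:2rootedUncuttableImpliesPattern} as the key input (the analogue of Lemma \ref{lem:boundedUncuttableImpliesPattern}). First I dispose of the trivial case of the $2$-permutation $11$, which avoids the pattern vacuously and is fully $2$-rooted cuttable vacuously.

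For the forward direction, assume that $\sigma$ contains the forbidden pattern $b \cdot b' \cdot c \cdot a$ with $a \le b, b' \le c$. I claim that the restriction $\tau \eqdef \sigma^{|[a,c]}$ is $2$-rooted uncuttable, so that $\sigma$ fails condition (i) of \cref{prop:equivalenceFullykrootedCuttable} applied to the interval $[a,c]$. Since the restriction to $[a,c]$ sends $a$ to the minimum value $1$ and $c$ to the maximum value $n \eqdef c-a+1$, the pattern survives in $\tau$ as four positions $p_1 < p_2 < p_3 < p_4$ carrying values $B, B', n, 1$ for some $1 \le B, B' \le n$. Suppose toward a contradiction that $\gamma \in [n-1]$ were a $2$-rooted cut, so that $\tau = \mu\,\nu\,\omega$ with $|\mu| = 2$, all letters of $\nu$ at most $\gamma$, and all letters of $\omega$ greater than $\gamma$. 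The value $n$ must lie in $\mu$ or $\omega$, and the value $1$ must lie in $\mu$ or $\nu$. A short case analysis on these two placements rules out every possibility:
\begin{itemize}
\item if both $n$ and $1$ lie in $\mu$, then $\mu = (n,1)$, forcing $p_3 = 1$, which contradicts $p_1 < p_2 < p_3$;
\item if $n \in \mu$ and $1 \in \nu$, then $p_3 \le 2$, again contradicting $p_1 < p_2 < p_3$;
\item if $n \in \omega$ and $1 \in \mu$, then $p_4 \le 2$, contradicting $p_1 < p_2 < p_3 < p_4$;
\item if $n \in \omega$ and $1 \in \nu$, then $n$ appears strictly after $1$ in $\tau$, contradicting $p_3 < p_4$.
\end{itemize}

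For the reverse direction, assume $\sigma$ avoids the pattern $b \cdot b' \cdot c \cdot a$ with $a \le b, b' \le c$. Since the pattern-avoidance property is preserved under restriction (any sub-pattern of a restriction lifts to a pattern in $\sigma$), every restriction of $\sigma$ also avoids it. By \cref{lem:2rootedUncuttableImpliesPattern}, every restriction of size at least $2$ must therefore be $2$-rooted cuttable, so $\sigma$ satisfies condition (ii) of \cref{prop:equivalenceFullykrootedCuttable} and is fully $2$-rooted cuttable.

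The main obstacle will be the forward direction, where one must verify that no legal $2$-rooted cut of $\sigma^{|[a,c]}$ can accommodate the pattern positions; but this reduces to the four-case analysis above, which is entirely parallel to the argument behind \cref{prop:fullyBoundedCuttablePatterns}. No further input is needed beyond \cref{lem:2rootedUncuttableImpliesPattern} and the observation that restriction preserves pattern containment.
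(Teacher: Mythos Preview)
Your proof is correct and follows the same strategy as the paper: show that containing the pattern forces the restriction $\sigma^{|[a,c]}$ to be $2$-rooted uncuttable, and conversely invoke \cref{lem:2rootedUncuttableImpliesPattern} on every restriction. The paper simply asserts that $\sigma^{|[a,c]}$ is uncuttable, whereas you spell out the four-case analysis explicitly; your argument is a valid elaboration of that step.
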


\begin{proof}
There is nothing to prove for the $2$-permutation~$11$.
If a $2$-permutation~$\sigma$ contains a pattern~${b \cdot b' \cdot c \cdot a}$ with~$a \le b , b' \le c$, then its restriction~$\sigma^{|[a,c]}$ is $2$-rooted uncuttable, so that $\sigma$ is not fully $2$-rooted cuttable.
Conversely, if~$\sigma$ avoids the pattern~${b \cdot b' \cdot c \cdot a}$ with~$a \le b , b' \le c$, then all its restrictions do, so that they are all $2$-rooted cuttable by \cref{lem:2rootedUncuttableImpliesPattern}.
\end{proof}

In contrast, for $k > 2$, there is no clear characterization of fully $k$-rooted cuttable $k$-permutations in terms of pattern avoidance.
The following statement provides a necessary and a sufficient pattern avoiding conditions, although these two conditions do not match.
The necessary condition~\eqref{item:fullykrootedCuttablePatternsNecessary} was considered for permutations in~\cite{Pilaud-brickAlgebra}.

\begin{proposition}
\label{prop:fullykrootedCuttablePatterns}
Let~$\sigma$ be a $k$-permutation.
\begin{enumerate}
\item \label{item:fullykrootedCuttablePatternsNecessary}
If~$\sigma$ if fully $k$-rooted cuttable, it contains no pattern~$b_1 \cdots b_k \cdot c \cdot a$ with~$a \le b_i \le c$~for~$i \in [k]$,
\item \label{item:fullykrootedCuttablePatternsSufficient}
If~$\sigma$ is not fully $k$-rooted cuttable, it contains a pattern~$a_1 \cdots a_k \cdot b \cdot a$ with~$a < b$ and~$a_i \le b$ for~$i \in [k]$, and a pattern~$b_1 \cdots b_k \cdot b \cdot a$ with~$a < b$ and~$a \le b_i$ for~$i \in [k]$.
\end{enumerate}
\end{proposition}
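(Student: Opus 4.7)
The plan is to prove both parts by direct position bookkeeping, exploiting $k$-rooted cuts of a suitable restriction $\tau$ of $\sigma$ only at the extremal values $\gamma = 1$ and $\gamma = \max(\tau) - 1$.

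For part~(1), I will argue the contrapositive. Assume $\sigma$ contains a pattern $b_1 \cdots b_k \cdot c \cdot a$ at positions $p_1 < \cdots < p_k < q < r$ with $a \le b_i \le c$. Observe first that $a < c$ necessarily, since otherwise the pattern would require $k+2$ copies of a common value, impossible in a $k$-permutation. I will then show that the restriction $\tau \eqdef \sigma^{|[a,c]}$ is $k$-rooted uncuttable; combined with \cref{prop:equivalenceFullykrootedCuttable}, this yields that $\sigma$ is not fully $k$-rooted cuttable. The pattern descends to positions $p'_1 < \cdots < p'_k < q' < r'$ in $\tau$ with $\tau_{q'} = c - a + 1 = \max(\tau)$ and $\tau_{r'} = 1 = \min(\tau)$. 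Assuming for contradiction that some $\gamma' \in [1, c-a]$ is a $k$-rooted cut of $\tau$, written as $\tau = \mu \nu \omega$ with $|\mu| = k$, I will argue that $\tau_{q'} = \max(\tau) > \gamma'$ together with $q' > k$ forces $q'$ into $\omega$, so $q' > k + |\nu|$; while $\tau_{r'} = 1 \le \gamma'$ together with $r' > k$ forces $r'$ into $\nu$, so $r' \le k + |\nu|$, contradicting $r' > q' > k + |\nu|$.

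For part~(2), suppose $\sigma$ is not fully $k$-rooted cuttable. By \cref{prop:equivalenceFullykrootedCuttable}, there exists an interval $[a_0, c_0]$ of length $m \eqdef c_0 - a_0 + 1 \ge 2$ such that $\tau \eqdef \sigma^{|[a_0, c_0]}$ is $k$-rooted uncuttable. I will produce each pattern by choosing a specific value of $\gamma$ that saturates the uncuttability condition. For the first pattern, take $\gamma = m - 1$: the uncuttability of $\tau$ yields positions $k < p < q$ with $\tau_p > m - 1$ (hence $\tau_p = m$) and $\tau_q \le m - 1$. Lifting back to $\sigma$ via \cref{def:restrictionMultipermutation}, the first $k$ positions of $\tau$ correspond to $k$ positions in $\sigma$ whose values lie in $[a_0, c_0]$, hence are $\le c_0$; position $p$ lifts to a position where $\sigma$ takes the value $c_0$; position $q$ lifts to a position with value $a' \in [a_0, c_0 - 1]$. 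Setting $b \eqdef c_0$ and $a \eqdef a'$ yields the desired pattern $a_1 \cdots a_k \cdot b \cdot a$ with $a_i \le b$ and $a < b$. For the second pattern, the same argument with $\gamma = 1$ produces positions with $\tau_p > 1$ and $\tau_q = 1$, which after lifting gives $k$ letters all $\ge a_0$ followed by a letter strictly larger than $a_0$ followed by $a_0$, matching $b_1 \cdots b_k \cdot b \cdot a$ with $a \le b_i$ and $a < b$.

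I do not anticipate a conceptual obstacle; the main care is in correctly tracking positions and values under the restriction operator of \cref{def:restrictionMultipermutation} and in verifying that the positional conditions imposed by the definition of a $k$-rooted cut (\cref{def:krootedCutMultipermutation}) are incompatible with the forbidden pattern of part~(1) while forcing the patterns of part~(2). The asymmetry between the necessary condition in~(1) and the sufficient conditions in~(2) is expected, since the extremal choices $\gamma \in \{1, m-1\}$ only pin down respectively the last or the middle letter of the extracted pattern as an extremum of $\tau$, and no single $\gamma$ can simultaneously force both to coincide with $\min(\tau)$ and $\max(\tau)$.
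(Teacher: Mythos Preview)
Your proposal is correct and follows essentially the same approach as the paper: part~(1) is the same contrapositive via the restriction $\sigma^{|[a,c]}$, and part~(2) is the same extraction of the two patterns from an uncuttable interval restriction. Your use of the extremal cut values $\gamma = m-1$ and $\gamma = 1$ in part~(2) is in fact a cleaner phrasing than the paper's, which argues via ``the first occurrence of $\beta$'' and (symmetrically) ``the last occurrence of $\alpha$'' without making the underlying cut value explicit.
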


\begin{proof}
For~\eqref{item:fullykrootedCuttablePatternsNecessary}, assume by contradiction that~$\sigma$ contains a pattern~$b_1 \cdots b_k \cdot c \cdot a$ with~${a \le b_i \le c}$ for~$i \in [k]$.
Then in its restriction~$\sigma^{|[a,c]}$ to the interval~$[a,c]$, the letters~$a$ and~$c$ appear after the first~$k$ letters.
It follows that~$\sigma^{|[a,c]}$ is $k$-rooted uncuttable, so that~$\sigma$ is not fully $k$-rooted cuttable.

For~\eqref{item:fullykrootedCuttablePatternsSufficient}, assume that~$\sigma$ is not fully $k$-rooted cuttable.
Let~$\alpha < \beta$ be such that~$\sigma^{|[\alpha, \beta]}$ is not cuttable.
Denote by~$a_1, \dots, a_k$ the first $k$ letters of~$\sigma$ in~$[\alpha, \beta]$ and by~$b$ the first occurrence of~$\beta$ in~$\sigma$.
Since~$\beta$ is not a $k$-rooted cut of~$\sigma^{|[\alpha, \beta]}$, there is a letter~$a$ after~$b$ in~$\sigma$ with~$a < b$.
We have thus found a pattern~$a_1 \cdots a_k \cdot b \cdot a$ with~$a < b$ and~$a_i \le b$ for~$i \in [k]$.
The proof for the other pattern is symmetric considering the last occurrence of~$\alpha$ in~$\sigma$.
\end{proof}

\begin{remark}
In \cref{prop:fullykrootedCuttablePatterns}, observe that
\begin{itemize}
\item the necessary condition~\eqref{item:fullykrootedCuttablePatternsNecessary} is not sufficient: for instance, the $3$-permutation~$113213223$ is $3$-rooted uncuttable while it contains no pattern~$b_1 \cdot b_2 \cdot b_3 \cdot c \cdot a$ with~$a \le b_i \le c$~for~$i \in [3]$.
\item the sufficient condition~\eqref{item:fullykrootedCuttablePatternsSufficient} is not necessary: for instance, the permutation~$1432$ is fully $1$-rooted cuttable (it avoids~$231$) but contains a pattern~$a_1 \cdot b \cdot a$ with~$a < b$ and~$a_1 \le b$ (consider~$143$) and a pattern~$b_1 \cdot b \cdot a$ with~$a < b$ and~$a \le b_1$ (consider~$432$).
\end{itemize}
\end{remark}

We derive in particular the following observation from \cref{prop:fullykrootedCuttablePatterns}\,\eqref{item:fullykrootedCuttablePatternsNecessary}.

\begin{corollary}
\label{coro:fullykrootedCuttableStructure}
If~$\sigma \in \Perm_k(n)$ is fully $k$-rooted cuttable and~$i \in [n]$, the suffix of~$\sigma$ located after the last occurence of~$i$ decomposes into~$\mu \nu$ where~$\mu_p < i < \nu_q$ for all~$p \in [|\mu|]$ and~$q \in [|\nu|]$.
\end{corollary}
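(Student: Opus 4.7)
The plan is to deduce this corollary directly from the necessary pattern-avoidance condition established in Proposition~4.27\,(i). The key observation is that, by definition, the suffix $\rho$ of $\sigma$ located after the last occurrence of $i$ contains no letter equal to $i$ (all $k$ copies of $i$ appear weakly before the last occurrence of $i$, which itself is not in $\rho$). Hence every letter of $\rho$ is either strictly less than $i$ or strictly greater than $i$, and the claim ``$\rho = \mu\nu$ with $\mu_p < i < \nu_q$'' amounts to showing that no letter of value greater than $i$ occurs in $\rho$ before a letter of value smaller than $i$.

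I would argue this last point by contradiction: suppose that $\rho$ contains a letter~$c > i$ appearing strictly before a letter~$a < i$. Since all $k$ occurrences of the value~$i$ in $\sigma$ are located in the prefix preceding $\rho$, one can read in $\sigma$ the consecutive-by-positions (but not necessarily by indices) subword $\underbrace{i\, i \cdots i}_{k} \cdot c \cdot a$, that is, a pattern $b_1 \cdots b_k \cdot c \cdot a$ with $b_1 = \cdots = b_k = i$. Because $a < i < c$, we have $a \le b_j \le c$ for every $j \in [k]$, so this is exactly a forbidden pattern in the sense of Proposition~4.27\,(i), contradicting the assumption that $\sigma$ is fully $k$-rooted cuttable.

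There is essentially no obstacle here: once one notices that the $k$ copies of $i$ serve as a ready-made witness for the $b_1, \dots, b_k$ positions of the forbidden pattern, the contradiction is immediate. The only subtlety worth double-checking is the boundary case where $a = i-1$ or $c = i+1$, but the non-strict inequalities $a \le b_j \le c$ in Proposition~4.27\,(i) handle these cases without any special treatment. Thus the full proof should fit in one short paragraph, and I would write it exactly as a one-line application of Proposition~4.27\,(i) to the subword obtained by concatenating the $k$ occurrences of $i$ (in the prefix) with any putative pair $c > i$, $a < i$ appearing in $\rho$ in that order.
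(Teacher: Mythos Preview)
Your proposal is correct and matches the paper's approach exactly: the paper simply states that this corollary follows from \cref{prop:fullykrootedCuttablePatterns}\,\eqref{item:fullykrootedCuttablePatternsNecessary}, and your argument spells out precisely how, using the $k$ copies of~$i$ as the witnesses~$b_1, \dots, b_k$ in the forbidden pattern.
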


Finally, we observe that fully $k$-rooted cuttable $k$-permutations form a pattern class.

\begin{theorem}
\label{thm:fullykrootedCuttablepermutationClass}
The set of fully $k$-rooted cuttable $k$-permutations is a $k$-permutation class: for any fully $k$-rooted cuttable $k$-permutation~${\sigma \in \Perm_k(n)}$ and any~$L \subseteq [n]$, the restriction~$\sigma^{|L}$ is fully $k$-rooted cuttable.
\end{theorem}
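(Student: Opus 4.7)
The plan is to mimic directly the short proof of the parallel analogue \cref{thm:fullyBoundedCuttablepermutationClass}, which works purely from the equivalent definition in \cref{prop:equivalenceFullykrootedCuttable}\,(ii): being fully $k$-rooted cuttable is the condition that every restriction to a subset of size at least $2$ is $k$-rooted cuttable. Since the restriction operation composes nicely (a restriction of a restriction is a restriction), this property is automatically inherited by all restrictions.

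More precisely, let $\sigma \in \Perm_k(n)$ be fully $k$-rooted cuttable and fix $L = \{\ell_1 < \dots < \ell_{|L|}\} \subseteq [n]$. Let $X \subseteq [|L|]$ with $|X| \ge 2$. By the composition formula for restrictions in \cref{rem:relationOperatorsMultipermutations}, we have
\[
(\sigma^{|L})^{|X} = \sigma^{|\set{\ell_x}{x \in X}}.
\]
The set $\set{\ell_x}{x \in X}$ is a subset of $[n]$ of size $|X| \ge 2$, so by the equivalent condition~(ii) of \cref{prop:equivalenceFullykrootedCuttable} applied to $\sigma$, the restriction $\sigma^{|\set{\ell_x}{x \in X}}$ is $k$-rooted cuttable. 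Hence $(\sigma^{|L})^{|X}$ is $k$-rooted cuttable for every $X \subseteq [|L|]$ with $|X| \ge 2$, which is precisely condition~(ii) for $\sigma^{|L}$. Therefore $\sigma^{|L}$ is fully $k$-rooted cuttable.

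I do not anticipate a real obstacle here: the whole argument is formal once \cref{prop:equivalenceFullykrootedCuttable} is available, because the characterization in~(ii) is visibly stable under restriction. The only thing to double-check is the identity $(\sigma^{|L})^{|X} = \sigma^{|\set{\ell_x}{x \in X}}$, which is exactly the content of \cref{rem:relationOperatorsMultipermutations}. An alternative route, which would avoid even this reference, would be to invoke the necessary pattern-avoidance condition of \cref{prop:fullykrootedCuttablePatterns}\,(i), noting that patterns in a $k$-permutation are preserved under restriction; however, since \cref{prop:fullykrootedCuttablePatterns} only gives a necessary (not sufficient) condition for $k \ge 3$, this would only reprove half of the equivalence and is less clean than the direct argument above.
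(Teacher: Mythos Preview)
Your proof is correct and essentially identical to the paper's: both verify condition~(ii) of \cref{prop:equivalenceFullykrootedCuttable} for $\sigma^{|L}$ by using the composition of restrictions $(\sigma^{|L})^{|X} = \sigma^{|\set{\ell_x}{x \in X}}$ from \cref{rem:relationOperatorsMultipermutations}. Your remark that the pattern-avoidance route via \cref{prop:fullykrootedCuttablePatterns} would not suffice for $k \ge 3$ is also accurate.
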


\begin{proof}
Assume that~$\sigma \in \Perm_k(n)$ is fully $k$-rooted cuttable and that~$L = \{\ell_1, \dots, \ell_{|L|}\} \subseteq [n]$.
For any~$X \subseteq [|L|]$, the restriction~$(\sigma^{|L})^{|X}$ coincides with the restriction~$\sigma^{|\set{\ell_x}{x \in X}}$ by \cref{rem:relationOperatorsMultiposets}, and is thus $k$-rooted cuttable.
Therefore, $\sigma^{|L}$ is fully $k$-rooted cuttable.
\end{proof}


\subsubsection{Action of~\texorpdfstring{$\tidyCitelangisSeries$}{TCitk} on words and permutations}
\label{subsubsec:actionTidySeriesSignaletic}

We show in this section that $\FQSym_k$ can be endowed with a tidy series $k$-citelangis structure, and that the resulting tidy series $k$-citelangis algebra is free. Therefore, the free tidy series $k$-citelangis subalgebra generated by the $k$-permutation~$1^{\{k\}}$ provides a combinatorial model for the basis for the tidy series $k$-citelangis operad.

\pagebreak
\paraul{Action on words}
We first show that the free algebra~$\alphabet^{\ge k}$ can be endowed with the structure of a tidy series $k$-citelangis algebra.

\begin{definition}
\label{def:tidyCitelangisSeriesActionWords}
For a tidy series $k$-citelangis operation~$\operation \in \{\op{l}, \op{r}\}^*$ and two words~$X$ and~$Y$ such that~${|\operation| \le \min(|X|, |Y|)}$, we define inductively
\[
X \; \operation \; Y =
\begin{cases}
X Y & \text{if } \operation = \varepsilon, \\
x (\underline{X} \; \underline{\operation} \; Y) & \text{if } \operation = \; \op{l} \!\underline{\operation} \text{ and } X = x\underline{X}, \\
y (X \; \underline{\operation} \; \underline{Y}) & \text{if } \operation = \; \op{r} \!\underline{\operation} \text{ and } Y = y\underline{Y}.
\end{cases}
\]
In other words, we choose the first letters of the result among the first letters of~$X$ or~$Y$ depending on the operation~$\operation$, and we then concatenate the remaining suffixes of~$X$ and~$Y$.
\end{definition}

For example, when~$k = 1$, the two operations are given for the words~$xX$ and~$yY$ by
\[
x X \op{l} y Y = x X y Y
\qqandqq
x X \op{r} y Y = y x X Y.
\]
In particular, the concatenation is given by~${\cdot} = {\op{l}}$.
When~$k = 2$, the four operations are given for the words~$x_1 x_2 X$ and~$y_1 y_2 Y$ by
\begin{gather*}
x_1 x_2 X \op{l,l} y_1 y_2 Y = x_1 x_2 X y_1 y_2 Y,
\\
x_1 x_2 X \op{l,r} y_1 y_2 Y = x_1 y_1 x_2 X y_2 Y,
\\
x_1 x_2 X \op{r,l} y_1 y_2 Y = y_1 x_1 x_2 X y_2 Y,
\\
x_1 x_2 X \op{r,r} y_1 y_2 Y = y_1 y_2 x_1 x_2 X Y.
\end{gather*}
Again, the concatenation is given by~${\cdot} = {\op{l,l}}$.

\begin{proposition}
\label{prop:tidyCitelangisSeriesWords}
The free algebra~$\alphabet^{\ge k}$, endowed with the operations of \cref{def:tidyCitelangisSeriesActionWords}, defines a tidy series $k$-citelangis algebra.
The concatenation product~$\cdot$ of~$\alphabet^{\ge k}$ is given by~$\op{l}\!\!^k$.
\end{proposition}

\begin{proof}
Consider a destination vector~${\ind{p} \in [3]^k}$ of arity~$3$, and the corresponding tidy series $k$-citelangis operations~${\operation[a]_\ind{p}, \operation[b]_\ind{p}, \operation[c]_\ind{p}, \operation[d]_\ind{p} \in \{\op{l}, \op{r}\}^k}$ defined in \cref{prop:tidyCitelangisSeriesRelations}. Then for any words~$X,Y,Z$, the words that appear in
\[
X \, \operation[a]_\ind{p} \, (Y \, \operation[b]_\ind{p} \, Z)
\qqandqq
(X \, \operation[d]_\ind{p} \, Y) \, \operation[c]_\ind{p} \, Z
\]
is the word in~$X \shuffle Y \shuffle Z$ such that
\begin{itemize}
\item for~$i \in [k]$, its $i$-th letter was taken from the word~$X$ if~$\ind{p}_i = 1$, from the word~$Y$ if~$\ind{p}_i = 2$ and from the word~$Z$ if~$\ind{p}_i = 3$, and 
\item its remaining letters are taken first from~$X$, then from~$Y$ and finally from~$Z$.
\end{itemize}
Therefore our operations on words indeed satisfy the tidy series $k$-citelangis relation \eqref{eq:tidyCitelangisSeriesp}.
\end{proof}

\paraul{Action on permutations}
Replacing the concatenation by the shifted concatenation, one endows similarly the algebra~$\FQSym_k$ of $k$-permutations with the structure of a tidy series $k$-citelangis algebra.
This can be rephrased as follows.

\begin{definition}
\label{def:tidyCitelangisSeriesActionPermutations}
For any operation~${\operation \in \Operations_k}$ and any two $k$-permutations~$\mu$ and~$\nu$ of degree~$m$ and~$n$ respectively, $\mu \, \operation \, \nu$ is the $k$-permutation~$\pi$ appearing in~$\mu \shiftedShuffle \nu$ such that
\begin{itemize}
\item for all~$i \in [k]$, we have~$\pi_i \le m$ if~$\operation_i = {\op{l}}$ while~$\pi_i > m$ if~$\operation_i = {\op{r}}$,
\item the remaining entries of~$\pi$ are the concatenation of the remaining entries of~$\mu$ and the remaining entries of~$\nu[m]$.
\end{itemize}
\end{definition}

For example, for~$\mu = {\blue 321312132}$ and~$\nu = {\red 221211}$ in~$\FQSym_3$ we have~$\mu \op{l,r,r} \nu = {\blue 3}{\red 55}{\blue 21312132}{\red 4544}$ and~$\mu \op{r,l,l} \nu = {\red 5}{\blue 321312132}{\red 54544}$.
The following statement is immediate from \cref{prop:tidyCitelangisSeriesWords}.

\begin{proposition}
\label{prop:tidyCitelangisSeriesMultipermutations}
The algebra~$(\FQSym_k, \bar\cdot)$, endowed with the operations of \cref{def:tidyCitelangisSeriesActionPermutations}, defines a tidy series $k$-citelangis algebra.
The concatenation product~$\bar\cdot$ of~$\FQSym_k$ is given by~$\op{l}\!\!^k$.
\end{proposition}

The goal of this section is to show that the tidy series $k$-citelangis algebra~$\FQSym_k$ is free.
To manipulate this tidy series $k$-citelangis algebra, we consider the evaluations of syntax trees of~$\Syntax[\Operations_k]$ in~$\FQSym_k$.
See \cref{fig:tidyEvalSeries} for an illustration.

\begin{definition}
\label{def:tidyEvalSeries}
Denote by~$\tidyEvalPermSeries(\tree ; \sigma_1, \dots, \sigma_p)$ the evaluation of a syntax tree~$\tree \in \Syntax[\Operations_k]$ of arity~$p$ on $p$ $k$-permutations~${\sigma_1, \dots, \sigma_p}$ of~$\Perm_k$ using the tidy series $k$-citelangis structure of~$\FQSym_k$. 
The \defn{tidy series permutation evaluation} of~$\tree$ is then~$\tidyEvalPermSeries(\tree) \eqdef \tidyEvalPermSeries(\tree ; 1^{\{k\}}, \dots, 1^{\{k\}})$.
We extend by linearity $\tree$ to the elements of~$\Free[\Operations_k]$ on the one hand, and $\sigma_1, \dots, \sigma_p$ to the elements of~$\FQSym_k$ on the other hand.
\end{definition}

\begin{figure}[t]
	\centerline{$
	\tidyEvalPermSeries \left(
		\begin{tikzpicture}[baseline={([yshift=-.8ex]current bounding box.center)}, level 1/.style={sibling distance = 1.8cm, level distance = 1cm}, level 2/.style={sibling distance = 1cm, level distance = .7cm}, level 3/.style={sibling distance = .5cm, level distance = .6cm}]
			\node [rectangle, draw] {$\op{l,r}$}
				child {node [rectangle, draw] {$\op{r,r}$}
					child {node [rectangle, draw] {$\op{l,r}$}
						child {node {11}}
						child {node {22}}
					}
					child {node [rectangle, draw] {$\op{l,l}$}
						child {node {33}}
						child {node {44}}
					}
				}
				child {node [rectangle, draw] {$\op{r,l}$}
					child {node {55}}
					child {node {66}}
				}
			;
		\end{tikzpicture}
	\right)
	=
	36 \cdot \tidyEvalPermSeries \left(
		\begin{tikzpicture}[baseline={([yshift=-.8ex]current bounding box.center)}, level 1/.style={sibling distance = 1cm, level distance = .7cm}, level 2/.style={sibling distance = .5cm, level distance = .6cm}]
			\node [rectangle, draw] {$\op{r}$}
				child {node [rectangle, draw] {$\op{l,r}$}
					child {node {11}}
					child {node {22}}
				}
				child {node [rectangle, draw] {$\op{l}$}
					child {node {3}}
					child {node {44}}
				}
			;
		\end{tikzpicture}	
	\right) \cdot \tidyEvalPermSeries \left(
		\begin{tikzpicture}[baseline={([yshift=-.8ex]current bounding box.center)}, level/.style={sibling distance=18mm/(#1+2), level distance = 1cm/sqrt(#1+2)}]
			\node [rectangle, draw] {$\op{l}$}
				child {node {55}}
				child {node {6}}
			;
		\end{tikzpicture}	
	\right)
	=
	36 \cdot 3121244 \cdot 556
	$}
	\caption{Illustration of \cref{rem:tidyEvalSeries}.}
	\label{fig:tidyEvalSeries}
\end{figure}

\begin{remark}
\label{rem:tidyEvalSeries}
Let us rephrase algorithmically \cref{def:tidyEvalSeries}.
For this, we generalize the evaluation to partial syntax trees, \ie trees whose nodes are labeled by operations with at most~$k$ letters~$\{\op{l}, \op{r}\}$ and whose leaves are labeled by any words.
The evaluation~$\tidyEvalPermSeries(\tree[s])$ of such a partial syntax tree~$\tree[s]$ is defined inductively as follows.
If~$\tree[s]$ is a leaf labeled by a word~$\ind{w}$, then~$\tidyEvalPermSeries(\tree[s]) = \ind{w}$.
Otherwise, if~$\tree[s]$ has a root with $\ell$ signals, $\tidyEvalPermSeries(\tree[s])$ is obtained as follows.
Let $\ell$ cars traverse the partial syntax tree~$\tree[s]$ in series.
The $j$-th car follows and erases the first letter of each signal it traverses, and finally arrives at a leaf where it reads and erases the first letter~$w_j$.
Let~$\ind{w} = w_1 \dots w_\ell$ denote the word formed by the letters read by the $\ell$ cars at their destination.
After all cars have reached their destinations, all letters of the signal at the root have been erased.
We are left with the two partial syntax trees~$\tree[l]$ and~$\tree[r]$ (where some letters of the signals in the nodes and of the words in the leaves have been erased by the $\ell$ cars).
The evaluation~$\tidyEvalPermSeries(\tree[s])$ is then obtained inductively by
\[
\tidyEvalPermSeries(\tree[s]) = \ind{w} \cdot \tidyEvalPermSeries(\tree[l]) \cdot \tidyEvalPermSeries(\tree[r]).
\]
Finally, for a syntax tree~$\tree$ of arity~$p$ and $k$-permutations~$\sigma_1 \in \Perm_k(n_1), \dots, \sigma_p \in \Perm_k(n_p)$, the evaluation $\tidyEvalPermSeries(\tree ; \sigma_1, \dots, \sigma_p)$ is the evaluation of the partial syntax tree~$\tree[s]$ obtained from~$\tree$ by putting the permutation~$\sigma_i[n_1 + \dots + n_{i-1}]$ at the $i$-th leaf for all~$i \in [p]$.
In particular, for~$\tidyEvalPermSeries(\tree) = \tidyEvalPermSeries(\tree ; 1^{\{k\}}, \dots, 1^{\{k\}})$, the $i$-th leaf is labeled by the word~$i^{\{k\}}$.
See \cref{fig:tidyEvalSeries}.
\end{remark}

As $\FQSym_k$ is a tidy series $k$-citelangis algebra, the tidy series permutation evaluation is preserved by the tidy series $k$-citelangis relations of \cref{prop:tidyCitelangisSeriesRelations}.
Thus, $\tidyEvalPermSeries(\tree[s] ; F_1, \dots, F_p) = \tidyEvalPermSeries(\tree[t] ; F_1, \dots, F_p)$ for any~$\tree[s], \tree[t] \in \Free[\Operations_k](p)$ which are equivalent modulo the tidy series $k$-citelangis relations, and for any~$F_1, \dots, F_p \in \FQSym_k$.
The objective of this section is to show the reciprocal statement.
The proof is based on $k$-rooted cuts in $k$-permutations introduced in \cref{def:krootedCutMultipermutation}.

\paraul{Tidy series permutation evaluations and $k$-rooted cuts}
Our next two lemmas state that the $k$-rooted cuts of a $k$-permutation~$\rho$ precisely correspond to its decompositions of the form~$\rho = \sigma \, \operation \, \tau$, where~$\operation \in \Operations_2$.
Their proofs immediately follow from \cref{def:tidyCitelangisSeriesActionPermutations} and are thus left to the reader.

\begin{lemma}
\label{lem:operationImplieskrootedCut}
For any $k$-permutations~$\sigma \in \Perm_k(m)$ and~$\tau \in \Perm_k(n)$, and any operation~${\operation \in \Operations_k}$, the degree~$m$ of~$\sigma$ is a $k$-rooted cut of~$\sigma \, \operation \, \tau$.
\end{lemma}

\begin{lemma}
\label{lem:krootedCutImpliesOperation}
For any $k$-permutation~$\rho \in \Perm_k(\ell)$ and any $k$-rooted cut~$\gamma \in \kcuts(\rho)$, there is a unique~${\operation \in \Operations_k}$ (defined by $\operation_i \eqdef {\op{l}}$ if~$\rho_i \le \gamma$ and~$\operation_i \eqdef {\op{r}}$ if~$\rho_i > \gamma$) such that~${\rho = \rho^{|[\gamma]} \, \operation \, \rho^{|[\ell] \ssm [\gamma]}}$.
\end{lemma}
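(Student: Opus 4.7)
The plan is to unpack the definition of a $k$-rooted cut and then verify directly from \cref{def:tidyCitelangisSeriesActionPermutations} that the proposed $\operation$ works, and that it is the only possible choice.

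First I would write, using that $\gamma$ is a $k$-rooted cut of $\rho$, the decomposition $\rho = \mu \nu \omega$ with $|\mu| = k$ and $\nu_i \le \gamma < \omega_j$ for all relevant $i,j$. The key structural observation is that the restriction $\rho^{|[\gamma]}$ consists exactly of the letters of $\mu$ whose values lie in $[\gamma]$ followed by $\nu$ (since $\omega$ contributes nothing), while $\rho^{|[\ell] \ssm [\gamma]}$ consists of the letters of $\mu$ whose values exceed $\gamma$ followed by $\omega$, each suitably shifted via \cref{def:shiftMultiset}. In particular, the degree of $\rho^{|[\gamma]}$ is $\gamma$ and the degree of $\rho^{|[\ell] \ssm [\gamma]}$ is $\ell - \gamma$.

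Next I would define $\operation \in \Operations_k$ by $\operation_i \eqdef {\op{l}}$ if $\rho_i \le \gamma$ and $\operation_i \eqdef {\op{r}}$ otherwise, and compute $\rho^{|[\gamma]} \, \operation \, \rho^{|[\ell] \ssm [\gamma]}$ via \cref{def:tidyCitelangisSeriesActionPermutations}. Reading the first $k$ letters of that product: slot $i$ draws the next available letter of $\rho^{|[\gamma]}$ if $\operation_i = {\op{l}}$ and of $\rho^{|[\ell] \ssm [\gamma]}[\gamma]$ if $\operation_i = {\op{r}}$; by the definition of $\operation$ and the description of the restrictions, this is exactly the letter $\rho_i = \mu_i$. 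The remaining entries are, by the definition of the operation, the concatenation of the remaining entries of $\rho^{|[\gamma]}$ (which is precisely $\nu$) with the remaining entries of $\rho^{|[\ell] \ssm [\gamma]}[\gamma]$ (which is precisely $\omega$). Hence the product equals $\mu \nu \omega = \rho$.

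For uniqueness, I would observe that any operator $\operation' \in \Operations_k$ with $\rho = \rho^{|[\gamma]} \, \operation' \, \rho^{|[\ell] \ssm [\gamma]}$ must, by the first bullet of \cref{def:tidyCitelangisSeriesActionPermutations}, satisfy $\rho_i \le \gamma$ iff $\operation'_i = {\op{l}}$; this forces $\operation' = \operation$. No step is a genuine obstacle: the only care needed is bookkeeping with the shift $\cdot[\gamma]$ and tracking which letters of $\mu$ go into each restriction, since both the definition of the citelangis product and the definition of restriction interact through the values of the first $k$ letters.
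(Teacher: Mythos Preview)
Your proposal is correct and is precisely the direct verification from \cref{def:tidyCitelangisSeriesActionPermutations} that the paper has in mind; indeed, the paper does not give a proof at all, stating that it ``immediately follow[s] from \cref{def:tidyCitelangisSeriesActionPermutations} and [is] thus left to the reader.'' Your write-up simply fills in this bookkeeping explicitly.
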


\begin{remark}
\label{rem:algoTidyEvalSeries}
\cref{lem:krootedCutImpliesOperation} gives an inductive algorithm to compute all decompositions of a given $k$-permutation~$\rho$ as an evaluation of the form~$\rho = \tidyEvalPermSeries(\tree ; \sigma_1, \dots, \sigma_p)$.
Namely, $\rho$ admits
\begin{itemize}
\item the trivial evaluation~$\rho = \tidyEvalPermSeries(\one ; \rho)$, where~$\one$ is the unit syntax tree with no node and a single leaf, and 
\item the evaluation~$\rho = \tidyEvalPermSeries(\tree ; \sigma_1, \dots, \sigma_l, \tau_1, \dots, \tau_r)$, for any $k$-rooted cut~$\gamma \in \kcuts(\rho)$ and any evaluations~$\rho^{|[\gamma]} = \tidyEvalPermSeries(\tree[l] ; \sigma_1, \dots, \sigma_l)$ and $\rho^{|[\ell] \ssm [\gamma]} = \tidyEvalPermSeries(\tree[r] ; \tau_1, \dots, \tau_r)$, where~$\tree$ is the syntax tree with root~$\operation \in \Operations_k$ defined by \cref{lem:krootedCutImpliesOperation} and with subtrees~$\tree[l]$ and~$\tree[r]$.
\end{itemize}
This algorithm implies the existence of decompositions of the form~$\rho = \tidyEvalPermSeries(\tree ; \sigma_1, \dots, \sigma_p)$ where~$\sigma_1, \dots, \sigma_p$ are $k$-rooted uncuttable.
In fact, we can even impose the position of the first $k$-rooted cut.
\end{remark}

\begin{corollary}
\label{coro:krootedCutImpliesSyntaxTree}
For any $k$-permutation~$\rho \in \Perm_k$ and any $k$-rooted cut~$\gamma \in \kcuts(\rho)$, there exists a syntax tree~$\tree$ of arity~$p$ with left subtree of arity~$l$ and $k$-rooted uncuttable $k$-permutations ${\sigma_1 \in \Perm_k(n_1), \dots, \sigma_p \in \Perm_k(n_p)}$ such that~$\rho = \tidyEvalPermSeries(\tree ; \sigma_1, \dots, \sigma_p)$ and~$\gamma = n_1 + \dots + n_l$.
\end{corollary}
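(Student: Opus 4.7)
The plan is to prove the statement by a direct induction on the degree $\ell$ of $\rho$, combining \cref{lem:krootedCutImpliesOperation} with the recursive algorithm sketched in \cref{rem:algoTidyEvalSeries}. The base case is immediate: if $\rho$ has degree $1$, there are no $k$-rooted cuts to consider, so there is nothing to prove.

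For the inductive step, given $\rho \in \Perm_k(\ell)$ and $\gamma \in \kcuts(\rho)$, I first apply \cref{lem:krootedCutImpliesOperation} to obtain the operator $\operation \in \Operations_k$ such that $\rho = \rho^{|[\gamma]} \, \operation \, \rho^{|[\ell] \ssm [\gamma]}$. The two restrictions $\rho^{|[\gamma]} \in \Perm_k(\gamma)$ and $\rho^{|[\ell] \ssm [\gamma]} \in \Perm_k(\ell-\gamma)$ are both of strictly smaller degree than $\ell$ (since $1 \le \gamma < \ell$). I would then iterate the algorithm of \cref{rem:algoTidyEvalSeries}: starting from the trivial evaluation and greedily selecting any available $k$-rooted cut (stopping whenever the permutation to be further decomposed is $k$-rooted uncuttable), the inductive hypothesis yields decompositions $\rho^{|[\gamma]} = \tidyEvalPermSeries(\tree[l] ; \sigma_1, \dots, \sigma_l)$ and $\rho^{|[\ell] \ssm [\gamma]} = \tidyEvalPermSeries(\tree[r] ; \sigma_{l+1}, \dots, \sigma_p)$ where all $\sigma_i$ are $k$-rooted uncuttable.

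Finally, let $\tree$ be the syntax tree with root $\operation$, left subtree $\tree[l]$ (of arity $l$), and right subtree $\tree[r]$ (of arity $p-l$). The definition of tidy series permutation evaluation (via the inductive formula of \cref{rem:tidyEvalSeries} or directly via \cref{def:tidyCitelangisSeriesActionPermutations}) gives ${\rho = \tidyEvalPermSeries(\tree ; \sigma_1, \dots, \sigma_p)}$. Moreover, since the degrees satisfy $n_1 + \dots + n_l = \deg \rho^{|[\gamma]} = \gamma$, the required equality $\gamma = n_1 + \dots + n_l$ holds.

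There is no real obstacle here: the statement is essentially a structured restatement of the algorithm described in \cref{rem:algoTidyEvalSeries}, and all the work has already been done in \cref{lem:krootedCutImpliesOperation}. The only mild subtlety is ensuring that the greedy decomposition of $\rho^{|[\gamma]}$ and $\rho^{|[\ell] \ssm [\gamma]}$ indeed terminates at $k$-rooted uncuttable pieces, which is immediate since the recursion strictly decreases the degree and $k$-rooted uncuttable permutations are precisely the base case of this recursion.
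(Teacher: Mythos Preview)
Your proposal is correct and follows essentially the same approach as the paper: the corollary is stated immediately after \cref{rem:algoTidyEvalSeries} without a separate proof, as it is a direct consequence of that algorithm together with \cref{lem:krootedCutImpliesOperation}. Your write-up simply makes this explicit by first cutting at the prescribed~$\gamma$ and then applying the algorithm recursively to each piece; the only minor imprecision is calling the recursive decomposition of the two pieces ``the inductive hypothesis'' when you are really invoking the termination of the algorithm from \cref{rem:algoTidyEvalSeries} (which does not require a prescribed cut), but the logic is sound.
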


We now characterize the $k$-permutations~$\rho$ that admit a decomposition of the form~${\rho = \tidyEvalPermSeries(\tree)}$.

\begin{proposition}
\label{prop:characterizationTidySeriesPermutationEvaluations}
The tidy series permutation evaluations of the syntax trees of~$\Syntax[\Operations_k]$ are precisely the fully $k$-rooted cuttable $k$-permutations.
\end{proposition}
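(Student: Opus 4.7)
The proof will follow the same strategy as \cref{prop:characterizationTidyParallelPermutationEvaluations}, with the parallel lemmas replaced by their series analogues. Concretely, I will prove both inclusions by induction on the arity of the syntax tree (respectively on the degree of the $k$-permutation), using \cref{lem:krootedCutsRestriction,lem:operationImplieskrootedCut,lem:krootedCutImpliesOperation} and \cref{thm:fullykrootedCuttablepermutationClass}.

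For the forward direction, I would fix~$\tree \in \Syntax[\Operations_k](\ell)$ and let~$\rho \eqdef \tidyEvalPermSeries(\tree)$. The case~$\ell = 1$ is trivial. For~$\ell \ge 2$, let~$\tree[l]$ and~$\tree[r]$ denote the left and right subtrees of~$\tree$, and let~$\gamma$ be the arity of~$\tree[l]$. Since the root operation of~$\tree$ is one of the tidy series $k$-citelangis operators, the description in \cref{rem:tidyEvalSeries} gives directly that~$\rho^{|[\gamma]} = \tidyEvalPermSeries(\tree[l])$ and~$\rho^{|[\ell] \ssm [\gamma]} = \tidyEvalPermSeries(\tree[r])$. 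For any interval~$[a,b] \subseteq [\ell]$ with~$a < b$, I distinguish three cases exactly as in the parallel proof: if~$b \le \gamma$ or $\gamma \le a$, then~$\rho^{|[a,b]}$ is a restriction of one of~$\rho^{|[\gamma]}, \rho^{|[\ell] \ssm [\gamma]}$, which are fully $k$-rooted cuttable by induction; so by \cref{lem:krootedCutsRestriction} it is $k$-rooted cuttable. If~$a < \gamma < b$, then by \cref{lem:operationImplieskrootedCut} the value~$\gamma$ is a $k$-rooted cut of~$\rho$, and \cref{lem:krootedCutsRestriction} then gives that~$\gamma - a$ is a $k$-rooted cut of~$\rho^{|[a,b]}$.

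For the reverse direction, I would proceed by induction on the degree~$\ell$ of a fully $k$-rooted cuttable~$\rho \in \Perm_k(\ell)$. If~$\ell = 1$, then~$\rho = 1^{\{k\}} = \tidyEvalPermSeries(\one)$. If~$\ell \ge 2$, $\rho$ admits some $k$-rooted cut~$\gamma$ by assumption. By \cref{thm:fullykrootedCuttablepermutationClass}, the restrictions~$\rho^{|[\gamma]}$ and~$\rho^{|[\ell] \ssm [\gamma]}$ are again fully $k$-rooted cuttable, so by induction there exist syntax trees~$\tree[l]$ and~$\tree[r]$ with $\rho^{|[\gamma]} = \tidyEvalPermSeries(\tree[l])$ and $\rho^{|[\ell] \ssm [\gamma]} = \tidyEvalPermSeries(\tree[r])$. \cref{lem:krootedCutImpliesOperation} then yields a unique operator~$\operation \in \Operations_k$ with~$\rho = \rho^{|[\gamma]} \, \operation \, \rho^{|[\ell] \ssm [\gamma]}$, so that~$\rho = \tidyEvalPermSeries(\tree)$ where~$\tree$ is the syntax tree with root~$\operation$ and subtrees~$\tree[l], \tree[r]$.

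Both directions are essentially formal once the right lemmas are in place, so I do not expect any real obstacle: the series and parallel arguments are parallel. The only mild subtlety is making sure that, in the forward direction, one genuinely has~$\rho^{|[\gamma]} = \tidyEvalPermSeries(\tree[l])$ and~$\rho^{|[\ell] \ssm [\gamma]} = \tidyEvalPermSeries(\tree[r])$ as $k$-permutations; this is immediate from the recursive description of~$\tidyEvalPermSeries$ in \cref{rem:tidyEvalSeries}, since the first~$k$ letters produced at the root already form the operator~$\operation$, after which the two subtrees are evaluated independently on disjoint shifted letter ranges.
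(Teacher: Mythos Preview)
Your proposal is correct and follows essentially the same approach as the paper's own proof: both directions are handled by induction (on the arity of~$\tree$ and on the degree of~$\rho$ respectively), invoking \cref{lem:krootedCutsRestriction,lem:operationImplieskrootedCut,lem:krootedCutImpliesOperation} and \cref{thm:fullykrootedCuttablepermutationClass} in exactly the way you describe, with the same three-case split on the position of~$[a,b]$ relative to~$\gamma$.
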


\begin{proof}
Consider first a $k$-permutation~$\rho = \tidyEvalPermSeries(\tree)$ with~$\tree \in \Syntax[\Operations_k](\ell)$.
We prove by induction on~$\ell$ that~$\rho$ is fully $k$-rooted cuttable.
If~$\ell = 1$, there is nothing to prove.
Assume that~$\ell \ge 2$ and let~$1 \le a < b \le \ell$.
Let~$\tree[l]$ and~$\tree[r]$ denote the left and right subtrees of~$\tree$, and let~$\gamma$ be the arity of~$\tree[l]$, so that~$\rho^{|[\gamma]} = \tidyEvalPermSeries(\tree[l])$ and~$\rho^{|[\ell] \ssm [\gamma]} = \tidyEvalPermSeries(\tree[r])$.
We distinguish three cases:
\begin{itemize}
\item Assume that~$b \le \gamma$. Since~$\rho^{|[\ell]} = \tidyEvalPermSeries(\tree[l])$ is fully $k$-rooted cuttable by induction hypothesis and $k$-rooted cuts are preserved by restriction by \cref{lem:krootedCutsRestriction}, we obtain that~$\rho^{|[a,b]} = (\rho^{|[\gamma]})^{|[a,b]}$ is $k$-rooted cuttable.
\item Assume that~$\gamma \le a$. The argument is similar since~$\rho^{|[a,b]} = (\rho^{|[\ell] \ssm [\gamma]})^{|[a-\gamma,b-\gamma]}$.
\item Assume finally that~$a < \gamma < b$. By \cref{lem:operationImplieskrootedCut}, $\gamma$ is a $k$-rooted cut of~$\rho$. Therefore, $\gamma-a$ is a $k$-rooted cut of~$\rho^{|[a,b]}$ by \cref{lem:krootedCutsRestriction}.
\end{itemize}

Conversely, consider now a fully $k$-rooted cuttable $k$-permutation~$\rho \in \Perm_k(\ell)$.
Similarly to \cref{rem:algoTidyEvalSeries}, we prove by induction on~$\ell$ that~$\rho$ is the tidy series permutation evaluation of a syntax tree.
If~$\ell = 1$, then~$\rho = \tidyEvalPermSeries(\one)$.
If~$\ell \ge 2$, then~$\rho$ admits at least one $k$-rooted cut~$\gamma$ by assumption.
Moreover, $\rho^{|[\gamma]}$ and~$\rho^{|[\ell] \ssm [\gamma]}$ are both fully $k$-rooted cuttable by \cref{thm:fullykrootedCuttablepermutationClass}.
By induction, we obtain that~$\rho^{|[\gamma]} = \tidyEvalPermSeries(\tree[l])$ and~$\rho^{|[\ell] \ssm [\gamma]} = \tidyEvalPermSeries(\tree[r])$.
Then~$\rho = \tidyEvalPermSeries(\tree)$, where~$\tree$ is the syntax tree with root~$\operation \in \Operations_k$ defined by \cref{lem:krootedCutImpliesOperation} and with subtrees~$\tree[l]$ and~$\tree[r]$.
\end{proof}

\paraul{Freeness}
Our objective is now to prove that the decompositions of a given $k$-permutation~$\rho$ provided by \cref{coro:krootedCutImpliesSyntaxTree} are all equivalent up to the tidy series $k$-citelangis relations of \cref{prop:tidyCitelangisSeriesRelations}.
Our first step is to understand the evaluations of a quadratic syntax tree on three permutations.
We start from a simple observation, which again immediately follows from \cref{def:tidyCitelangisSeriesActionPermutations}.
Recall that a syntax tree is tidy series when all the traffic signals not contained in its series routes point to the left.

\begin{lemma}
\label{lem:krootedCutIffTidy}
For any $k$-permutations~$\rho \in \Perm_k(\ell)$, $\sigma \in \Perm_k(m)$ and~$\tau \in \Perm_k(n)$, and any operations~$\operation[a], \operation[b], \operation[a]', \operation[b]' \in \Operations_k$, we have
\begin{itemize}
\item $\ell + m$ is a $k$-rooted cut of~$\tidyEvalPermSeries\Big( 
	\begin{tikzpicture}[baseline=-.5cm, level 1/.style={sibling distance = .8cm, level distance = .7cm}, level 2/.style={sibling distance = .6cm, level distance = .5cm}]
		\node [rectangle, draw, minimum height=.5cm] {$\operation[a]$}
			child {node {}}
			child {node [rectangle, draw, minimum height=.5cm] {$\operation[b]$}
				child {node {}}
				child {node {}}
			}
		;
	\end{tikzpicture}
; \rho, \sigma, \tau \Big)$ if and only if
	\begin{tikzpicture}[baseline=-.5cm, level 1/.style={sibling distance = .8cm, level distance = .7cm}, level 2/.style={sibling distance = .6cm, level distance = .5cm}]
		\node [rectangle, draw, minimum height=.5cm] {$\operation[a]$}
			child {node {}}
			child {node [rectangle, draw, minimum height=.5cm] {$\operation[b]$}
				child {node {}}
				child {node {}}
			}
		;
	\end{tikzpicture}
is tidy series,
\item $\ell$ is a $k$-rooted cut of~$\tidyEvalPermSeries \Big( 
	\begin{tikzpicture}[baseline=-.5cm, level 1/.style={sibling distance = .8cm, level distance = .7cm}, level 2/.style={sibling distance = .6cm, level distance = .5cm}]
		\node [rectangle, draw, minimum height=.5cm] {$\operation[a]'$}
			child {node [rectangle, draw, minimum height=.5cm] {$\operation[b]'$}
				child {node {}}
				child {node {}}
			}
			child {node {}}
		;
	\end{tikzpicture}
; \rho, \sigma, \tau \Big)$ if and only if
	\begin{tikzpicture}[baseline=-.5cm, level 1/.style={sibling distance = .8cm, level distance = .7cm}, level 2/.style={sibling distance = .6cm, level distance = .5cm}]
		\node [rectangle, draw, minimum height=.5cm] {$\operation[a]'$}
			child {node [rectangle, draw, minimum height=.5cm] {$\operation[b]'$}
				child {node {}}
				child {node {}}
			}
			child {node {}}
		;
	\end{tikzpicture}
is tidy series.
\end{itemize}
\end{lemma}

\begin{lemma}
\label{lem:uniqueTidySeriesCitelangisPermutationEvaluationQuadratic}
For any $k$-permutations~$\rho, \sigma, \tau \in \Perm_k$, and any operations~$\operation[a], \operation[b], \operation[a]', \operation[b]' \in \Operations_k$, if
\[
	\tidyEvalPermSeries \Big( 
	\begin{tikzpicture}[baseline=-.5cm, level 1/.style={sibling distance = .8cm, level distance = .7cm}, level 2/.style={sibling distance = .6cm, level distance = .5cm}]
		\node [rectangle, draw, minimum height=.5cm] {$\operation[a]$}
			child {node {}}
			child {node [rectangle, draw, minimum height=.5cm] {$\operation[b]$}
				child {node {}}
				child {node {}}
			}
		;
	\end{tikzpicture}
	; \rho, \sigma, \tau \Big)
	=
	\tidyEvalPermSeries \Big( 
	\begin{tikzpicture}[baseline=-.5cm, level 1/.style={sibling distance = .8cm, level distance = .7cm}, level 2/.style={sibling distance = .6cm, level distance = .5cm}]
		\node [rectangle, draw, minimum height=.5cm] {$\operation[a]'$}
			child {node [rectangle, draw, minimum height=.5cm] {$\operation[b]'$}
				child {node {}}
				child {node {}}
			}
			child {node {}}
		;
	\end{tikzpicture}
	; \rho, \sigma, \tau \Big),
\]
then
\(
	\begin{tikzpicture}[baseline=-.5cm, level 1/.style={sibling distance = .8cm, level distance = .7cm}, level 2/.style={sibling distance = .6cm, level distance = .5cm}]
		\node [rectangle, draw, minimum height=.5cm] {$\operation[a]$}
			child {node {}}
			child {node [rectangle, draw, minimum height=.5cm] {$\operation[b]$}
				child {node {}}
				child {node {}}
			}
		;
	\end{tikzpicture}
	=
	\begin{tikzpicture}[baseline=-.5cm, level 1/.style={sibling distance = .8cm, level distance = .7cm}, level 2/.style={sibling distance = .6cm, level distance = .5cm}]
		\node [rectangle, draw, minimum height=.5cm] {$\operation[a]'$}
			child {node [rectangle, draw, minimum height=.5cm] {$\operation[b]'$}
				child {node {}}
				child {node {}}
			}
			child {node {}}
		;
	\end{tikzpicture}
\)
is a tidy series $k$-citelangis relation.
\end{lemma}

\begin{proof}
Let~$\pi$ denote the $k$-permutation obtained by these evaluations, and let $\ell$, $m$ and~$n$ denote the degrees of~$\rho$, $\sigma$ and~$\tau$ respectively.
By \cref{lem:operationImplieskrootedCut}, we obtain that~$\ell$ and~$\ell + m$ are $k$-rooted cuts of~$\pi$.
By \cref{lem:krootedCutIffTidy}, we therefore derive that the two syntax trees are tidy series.
Moreover, the series destination vector of the two syntax trees are both given by the first $k$ letters of~$\pi$ where we replace all letters between~$1$ and~$\ell$ by~$1$, all letters between~$\ell + 1$ and~$\ell + m$ by~$2$, and all letters between~$\ell+m+1$ and~$\ell+m+n$ by~$3$.
Since they are tidy series and have the same series destination vector, the two syntax trees form a tidy series $k$-citelangis relation.
\end{proof}

We now prove that, up to the tidy series $k$-citelangis relations, any $k$-permutation can be obtained in a unique way as the evaluation of a syntax tree on $k$-rooted uncuttable $k$-permutations.

\begin{proposition}
\label{prop:uniqueTidySeriesCitelangisPermutationEvaluation}
For any syntax trees~$\tree , \tree' \in \Syntax[\Operations_k]$ of arity~$p$ and~$p'$ respectively, and any $k$-rooted uncuttable $k$-permuta\-tions~$\sigma_1, \dots, \sigma_p$, $\sigma'_1, \dots, \sigma'_{p'} \in \Perm_k$, if
\[
\tidyEvalPermSeries(\tree ; \sigma_1, \dots, \sigma_p) = \tidyEvalPermSeries(\tree' ; \sigma'_1, \dots, \sigma'_{p'}),
\]
then $p = p'$, $\tree = \tree'$ modulo the tidy series $k$-citelangis relations and~$\sigma_i = \sigma_i'$ for all~$i \in [p]$.
\end{proposition}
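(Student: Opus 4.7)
The plan is to mimic the argument used in the parallel setting for \cref{prop:uniqueTidyParallelCitelangisPermutationEvaluation}, proceeding by induction on the arity $p$ and using the series analogues of the lemmas already established. Let $\pi \eqdef \tidyEvalPermSeries(\tree ; \sigma_1, \dots, \sigma_p) = \tidyEvalPermSeries(\tree' ; \sigma'_1, \dots, \sigma'_{p'})$ of degree $n$. The base case $p = 1$ is immediate: since $\sigma_1$ is $k$-rooted uncuttable, $\pi = \sigma_1$ has no $k$-rooted cut, but by \cref{lem:operationImplieskrootedCut} any internal node of $\tree'$ would produce one, so $\tree' = \one$, $p' = 1$, and $\sigma'_1 = \pi = \sigma_1$.

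For the inductive step, write $\tree$ with root $\operation[a]$ and subtrees $\tree[l], \tree[r]$ of arities $l$ and $p-l$, and let $\gamma = n_1 + \dots + n_l$ be the corresponding $k$-rooted cut of $\pi$ provided by \cref{lem:operationImplieskrootedCut}; define $\operation[a]', \tree[l]', \tree[r]', l', \gamma'$ analogously for $\tree'$. If $\gamma = \gamma'$, then \cref{lem:krootedCutImpliesOperation} forces $\operation[a] = \operation[a]'$ and splits the common evaluation into $\tidyEvalPermSeries(\tree[l] ; \sigma_1, \dots, \sigma_l) = \pi^{|[\gamma]} = \tidyEvalPermSeries(\tree[l]' ; \sigma'_1, \dots, \sigma'_{l'})$ and the symmetric identity for the right subtrees, so the induction hypothesis applied to each gives the conclusion.

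The interesting case is $\gamma \neq \gamma'$; assume without loss of generality $\gamma < \gamma'$ and let $\rho \eqdef \pi^{|[\gamma]}$, $\sigma \eqdef \pi^{|[\gamma'] \ssm [\gamma]}$, $\tau \eqdef \pi^{|[n] \ssm [\gamma']}$. By \cref{lem:krootedCutsRestriction}, $\gamma'-\gamma$ is a $k$-rooted cut of $\pi^{|[n] \ssm [\gamma]}$ and $\gamma$ is a $k$-rooted cut of $\pi^{|[\gamma']}$. Invoking \cref{coro:krootedCutImpliesSyntaxTree} twice and the induction hypothesis, we obtain quadratic decompositions $\tree[r] = \tree[s] \pmod{\text{relations}}$ with root $\operation[b]$ splitting at arity $u$ and $\tree[l]' = \tree[s]' \pmod{\text{relations}}$ with root $\operation[b]'$ splitting at arity $u'$, whose left and right leaves correspond, respectively, to uncuttable decompositions of $\sigma$ and $\tau$, and of $\rho$ and $\sigma$. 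Three applications of the induction hypothesis (to the common evaluations of $\rho$, $\sigma$, and $\tau$) yield $p = p'$ and $\sigma_i = \sigma'_i$ for all $i \in [p]$. Finally, the two quadratic syntax trees
\[
	\begin{tikzpicture}[baseline=-.5cm, level 1/.style={sibling distance = .8cm, level distance = .7cm}, level 2/.style={sibling distance = .6cm, level distance = .5cm}]
		\node [rectangle, draw, minimum height=.5cm] {$\operation[a]$}
			child {node {}}
			child {node [rectangle, draw, minimum height=.5cm] {$\operation[b]$}
				child {node {}}
				child {node {}}
			}
		;
	\end{tikzpicture}
\qqandqq
	\begin{tikzpicture}[baseline=-.5cm, level 1/.style={sibling distance = .8cm, level distance = .7cm}, level 2/.style={sibling distance = .6cm, level distance = .5cm}]
		\node [rectangle, draw, minimum height=.5cm] {$\operation[a]'$}
			child {node [rectangle, draw, minimum height=.5cm] {$\operation[b]'$}
				child {node {}}
				child {node {}}
			}
			child {node {}}
		;
	\end{tikzpicture}
\]
both evaluate to $\pi$ on $(\rho, \sigma, \tau)$, so \cref{lem:uniqueTidySeriesCitelangisPermutationEvaluationQuadratic} provides a tidy series $k$-citelangis relation identifying them, which combined with the inductive identifications $\tree[r] \equiv \tree[s]$ and $\tree[l]' \equiv \tree[s]'$ yields $\tree \equiv \tree'$ modulo the relations.

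The main obstacle is the bookkeeping in the case $\gamma < \gamma'$: one must align three nested inductive decompositions (of $\rho$, $\sigma$, and $\tau$) with the two different ``shapes'' of quadratic trees on opposite sides, and in the series setting the restrictions $\pi^{|[\gamma'] \ssm [\gamma]}$ interact less symmetrically with the operator first letters than in the parallel setting, so one should take care that the induction hypothesis genuinely produces trees in which the prescribed first $k$-rooted cut occurs at the desired position — this is exactly the refined form of \cref{coro:krootedCutImpliesSyntaxTree} that makes the argument go through.
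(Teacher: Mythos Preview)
Your proposal is correct and follows essentially the same approach as the paper's own proof: induction on~$p$, case split on whether the two root-level $k$-rooted cuts~$\gamma$ and~$\gamma'$ coincide, and in the case~$\gamma < \gamma'$ the use of \cref{lem:krootedCutsRestriction}, \cref{coro:krootedCutImpliesSyntaxTree}, three applications of the induction hypothesis to~$\rho, \sigma, \tau$, and finally \cref{lem:uniqueTidySeriesCitelangisPermutationEvaluationQuadratic} to identify the two quadratic trees. The paper's proof is a verbatim series analogue of the parallel argument in \cref{prop:uniqueTidyParallelCitelangisPermutationEvaluation}, which is exactly what you outlined.
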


\begin{proof}
Let~$\pi = \tidyEvalPermSeries(\tree ; \sigma_1, \dots, \sigma_p) = \tidyEvalPermSeries(\tree' ; \sigma'_1, \dots, \sigma'_{p'})$ and let~$n$ denote its degree.

We prove the result by induction on~$p$.
If~$p = 1$, then~$\pi$ is $k$-rooted uncuttable, so that~$p' = 1$, $\tree = \tree'$ is the only syntax tree of arity~$1$, and~$\sigma_1 = \sigma_1' = \pi$.

Assume now that~$p > 1$.
Let~$\operation[a]$ be the root of~$\tree$, let~$\tree[l]$ and~$\tree[r]$ be its left and right subtrees, let~$l$ be the arity of~$\tree[l]$ and let~$\gamma$ be the corresponding $k$-rooted cut of~$\pi$.
We thus have~${\tidyEvalPermSeries(\tree[l] ; \sigma_1, \dots, \sigma_l) = \pi^{|[\gamma]}}$ and~$\tidyEvalPermSeries(\tree[r] ; \sigma_{l+1}, \dots, \sigma_p) = \pi^{|[n] \ssm [\gamma]}$.
Define~$\operation[a]', \tree[l]', \tree[r]', l'$ and~$\gamma'$ similarly for~$\tree'$.
These notations are illustrated below:

\begin{center}
	\begin{tikzpicture}[baseline={([yshift=-.8ex]current bounding box.center)}, level/.style={sibling distance = .5cm, level distance = .7cm}]
		\node {$\tree$}
			child {node {$\sigma_1$}}
			child [edge from parent/.style={}] {node {$\dots$}}
			child {node {$\sigma_p$}}
		;
	\end{tikzpicture}
	=
	\begin{tikzpicture}[baseline={([yshift=-.8ex]current bounding box.center)}, level 1/.style={sibling distance = 2cm, level distance = .8cm}, level 2/.style={sibling distance = .6cm, level distance = .7cm}]
		\node [rectangle, draw, minimum height=.5cm] {$\operation[a]$}
			child {node {$\tree[l]$}
				child {node {$\sigma_1$}}
				child [edge from parent/.style={}] {node {$\dots$}}
				child {node {$\sigma_l$}}
			}
			child {node {$\tree[r]$}
				child {node {$\sigma_{l+1}$}}
				child [edge from parent/.style={}] {node {$\dots$}}
				child {node {$\sigma_p$}}
			}
		;
	\end{tikzpicture}
	\qquad\text{and}\qquad
	\begin{tikzpicture}[baseline={([yshift=-.8ex]current bounding box.center)}, level/.style={sibling distance = .5cm, level distance = .7cm}]
		\node {$\tree'$}
			child {node {$\sigma'_1$}}
			child [edge from parent/.style={}] {node {$\dots$}}
			child {node {$\sigma'_{p'}$}}
		;
	\end{tikzpicture}
	=
	\begin{tikzpicture}[baseline={([yshift=-.8ex]current bounding box.center)}, level 1/.style={sibling distance = 2cm, level distance = .8cm}, level 2/.style={sibling distance = .6cm, level distance = .7cm}]
		\node [rectangle, draw, minimum height=.5cm] {$\operation[a]'$}
			child {node {$\tree[l]'$}
				child {node {$\sigma'_1$}}
				child [edge from parent/.style={}] {node {$\dots$}}
				child {node {$\sigma'_{l'}$}}
			}
			child {node {$\tree[r]'$}
				child {node {$\sigma'_{l+1}$}}
				child [edge from parent/.style={}] {node {$\dots$}}
				child {node {$\sigma'_{p'}$}}
			}
		;
	\end{tikzpicture}
\end{center}

Assume first that~$\gamma = \gamma'$. Then~$\operation[a] = \operation[a]'$, $\tidyEvalPermSeries(\tree[l] ; \sigma_1, \dots, \sigma_l) = \pi^{|[\gamma]} = \tidyEvalPermSeries(\tree[l]' ; \sigma'_1, \dots, \sigma'_{l'})$ and~$\tidyEvalPermSeries(\tree[r] ; \sigma_{l+1}, \dots, \sigma_p) = \pi^{|[n] \ssm [\gamma]} = \tidyEvalPermSeries(\tree[r]' ; \sigma'_{l'+1}, \dots, \sigma'_{p'})$ by \cref{lem:krootedCutImpliesOperation}.
By induction hypothesis, the first equality ensures that~$l = l'$, that~$\tree[l] = \tree[l]'$ modulo the tidy series $k$-citelangis relations and that~${\sigma_i = \sigma'_i}$ for all~$i \in [l]$, while the second equality ensures that~$p-l = p'-l'$, that~$\tree[r] = \tree[r]'$ modulo the tidy series $k$-citelangis relations and that~$\sigma_i = \sigma'_i$ for all~$i \in [p] \ssm [l]$.
We thus conclude that~$p = p'$, that~$\tree = \tree'$ modulo the tidy series $k$-citelangis relations, and that~$\sigma_i = \sigma'_i$ for all~$i \in [p]$.

Assume now without loss of generality that~$\gamma < \gamma'$.
Consider the $k$-permutations~$\rho \eqdef \pi^{|[\gamma]}$, $\sigma \eqdef \pi^{|[\gamma'] \ssm [\gamma]}$ and~$\tau \eqdef \pi^{|[n] \ssm [\gamma']}$.
Since~$\gamma'$ is a $k$-rooted cut of~$\pi$ larger than~$\gamma$, \cref{lem:krootedCutsRestriction} ensures that~$\gamma'-\gamma$ is a $k$-rooted cut of~$\pi^{|[n] \ssm [\gamma]}$.
By \cref{coro:krootedCutImpliesSyntaxTree} and induction hypothesis, there exists a syntax tree~$\tree[s]$ of arity~$p-l$ with root~$\operation[b]$, left subtree~$\tree[u]$ of arity~$u$ and right subtree~$\tree[v]$, such that~$\tree[l]' = \tree[s]'$ modulo the tidy series $k$-citelangis relations and~$\tidyEvalPermSeries(\tree[s] ; \sigma_{l+1}, \dots, \sigma_p) = \pi^{|[n] \ssm [\gamma]}$, $\tidyEvalPermSeries(\tree[u] ; \sigma_{l+1}, \dots, \sigma_{l+u}) = \sigma$ and $\tidyEvalPermSeries(\tree[v] ; \sigma_{l+u+1}, \dots, \sigma_p) = \tau$.
Similarly, since~$\gamma$ is a $k$-rooted cut of~$\pi$ smaller than~$\gamma'$, \cref{lem:krootedCutsRestriction} ensures that~$\gamma$ is a $k$-rooted cut of~$\pi^{|[\gamma']}$.
By \cref{coro:krootedCutImpliesSyntaxTree} and induction hypothesis, there exists a syntax tree~$\tree[s]'$ of arity~$l'$, with root~$\operation[b]'$, left subtree~$\tree[u]'$ of arity~$u'$ and right subtree~$\tree[v]'$, such that~$\tree[l]' = \tree[s]'$ modulo the tidy series $k$-citelangis relations and $\tidyEvalPermSeries(\tree[s]' ; \sigma'_1, \dots, \sigma'_{l'}) = \pi^{|[\gamma']}$, $\tidyEvalPermSeries(\tree[u]' ; \sigma'_1, \dots, \sigma'_{u'}) = \rho$ and $\tidyEvalPermSeries(\tree[v]' ; \sigma_{u'+1}, \dots, \sigma_{l'}) = \sigma$.
Since
\[
\begin{array}{r@{\;}c@{\;}l}
\tidyEvalPermSeries(\tree[l] ; \sigma_1, \dots, \sigma_l) & = \rho = & \tidyEvalPermSeries(\tree[u]' ; \sigma'_1, \dots, \sigma'_{u'}), \\
\tidyEvalPermSeries(\tree[u] ; \sigma_{l+1}, \dots, \sigma_{l+u}) & = \sigma = & \tidyEvalPermSeries(\tree[v]' ; \sigma_{u'+1}, \dots, \sigma_{l'}), \\
\text{and}\quad
\tidyEvalPermSeries(\tree[v] ; \sigma_{l+u+1}, \dots, \sigma_p) & = \tau = & \tidyEvalPermSeries(\tree[r]' ; \sigma'_{l'+1}, \dots, \sigma'_{p'}),
\end{array}
\]
we obtain by three applications of the induction hypothesis that~$p = p'$ and~$\sigma_i = \sigma'_i$ for all~$i \in [p]$.
Moreover, we have
\[
	\tidyEvalPermSeries \Big( 
	\begin{tikzpicture}[baseline=-.5cm, level 1/.style={sibling distance = .8cm, level distance = .7cm}, level 2/.style={sibling distance = .6cm, level distance = .5cm}]
		\node [rectangle, draw, minimum height=.5cm] {$\operation[a]$}
			child {node {}}
			child {node [rectangle, draw, minimum height=.5cm] {$\operation[b]$}
				child {node {}}
				child {node {}}
			}
		;
	\end{tikzpicture}
	; \rho, \sigma, \tau \Big)
	= \pi =
	\tidyEvalPermSeries \Big( 
	\begin{tikzpicture}[baseline=-.5cm, level 1/.style={sibling distance = .8cm, level distance = .7cm}, level 2/.style={sibling distance = .6cm, level distance = .5cm}]
		\node [rectangle, draw, minimum height=.5cm] {$\operation[a]'$}
			child {node [rectangle, draw, minimum height=.5cm] {$\operation[b]'$}
				child {node {}}
				child {node {}}
			}
			child {node {}}
		;
	\end{tikzpicture}
	; \rho, \sigma, \tau \Big).
\]
By \cref{lem:uniqueTidySeriesCitelangisPermutationEvaluationQuadratic}, we conclude that
\(
	\begin{tikzpicture}[baseline=-.5cm, level 1/.style={sibling distance = .8cm, level distance = .7cm}, level 2/.style={sibling distance = .6cm, level distance = .5cm}]
		\node [rectangle, draw, minimum height=.5cm] {$\operation[a]$}
			child {node {}}
			child {node [rectangle, draw, minimum height=.5cm] {$\operation[b]$}
				child {node {}}
				child {node {}}
			}
		;
	\end{tikzpicture}
	=
	\begin{tikzpicture}[baseline=-.5cm, level 1/.style={sibling distance = .8cm, level distance = .7cm}, level 2/.style={sibling distance = .6cm, level distance = .5cm}]
		\node [rectangle, draw, minimum height=.5cm] {$\operation[a]'$}
			child {node [rectangle, draw, minimum height=.5cm] {$\operation[b]'$}
				child {node {}}
				child {node {}}
			}
			child {node {}}
		;
	\end{tikzpicture}
\)
is a tidy series $k$-citelangis relation, and thus that~$\tree = \tree'$ up to tidy series $k$-citelangis relations.
\end{proof}

\begin{remark}
\label{rem:algoTidyEvalSeriesNormalForms}
From \cref{rem:algoTidyEvalSeries,prop:uniqueTidySeriesCitelangisPermutationEvaluation}, we derive an inductive algorithm to compute the decomposition of a given $k$-permutation~$\rho$ as an evaluation of the form~$\rho = \tidyEvalPermSeries(\tree ; \sigma_1, \dots, \sigma_p)$, where~$\tree$ is in normal form in the tidy series $k$-citelangis rewriting system of \cref{subsubsec:rewritingSystemCitelangis} and~$\sigma_1, \dots, \sigma_p$ are $k$-rooted uncuttable $k$-permutations.
Namely, 
\begin{itemize}
\item if~$\rho$ is $k$-rooted uncuttable, then~$\rho = \tidyEvalPermSeries(\one ; \rho)$, where~$\one$ is the unit syntax tree with no node and a single leaf, and
\item otherwise, $\rho = \tidyEvalPermSeries(\tree ; \sigma_1, \dots, \sigma_l, \tau_1, \dots, \tau_r)$, where~$\gamma$ be the rightmost $k$-rooted cut of~$\rho$, $\rho^{|[\gamma]} = \tidyEvalPermSeries(\tree[l] ; \sigma_1, \dots, \sigma_l)$ and $\rho^{|[\ell] \ssm [\gamma]} = \tidyEvalPermSeries(\tree[r] ; \tau_1, \dots, \tau_r)$ are such that~$\tree[l], \tree[r]$ are in normal form and~$\sigma_1, \dots, \sigma_l, \tau_1, \dots, \tau_r$ are $k$-rooted uncuttable, and~$\tree$ is the syntax tree with root~$\operation \in \Operations_k$ defined by \cref{lem:krootedCutImpliesOperation} and with subtrees~$\tree[l]$ and~$\tree[r]$.
\end{itemize}
\end{remark}

\cref{prop:uniqueTidySeriesCitelangisPermutationEvaluation} proves the main result of this section.

\begin{theorem}
\label{thm:freeTidyCitelangisSeries}
The tidy series $k$-citelangis algebra~$\FQSym_k$ is free on $k$-rooted uncuttable $k$-permutations.
\end{theorem}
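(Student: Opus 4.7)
The plan is to deduce the theorem directly from the existence and uniqueness of decompositions of $k$-permutations as tidy series permutation evaluations on $k$-rooted uncuttable $k$-permutations. Recall that a $\op{l,r}$-tidy series $k$-citelangis algebra is free on a subset $B$ if and only if every element is uniquely (modulo the defining relations) an evaluation of a syntax tree whose leaves are labeled by elements of $B$. Here we will take $B$ to be the set of $k$-rooted uncuttable $k$-permutations.

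First, I would establish existence: every $k$-permutation $\rho \in \Perm_k(n)$ admits a decomposition $\rho = \tidyEvalPermSeries(\tree ; \sigma_1, \dots, \sigma_p)$ where each $\sigma_i$ is $k$-rooted uncuttable. This is essentially the algorithm recorded in \cref{rem:algoTidyEvalSeries}, formalized by induction on $n$. If $\rho$ is $k$-rooted uncuttable, take the trivial decomposition $\rho = \tidyEvalPermSeries(\one ; \rho)$. Otherwise, choose any $k$-rooted cut $\gamma \in \kcuts(\rho)$; by \cref{lem:krootedCutImpliesOperation} there is a unique $\operation \in \Operations_k$ such that $\rho = \rho^{|[\gamma]} \, \operation \, \rho^{|[n] \ssm [\gamma]}$, and by induction hypothesis both restrictions $\rho^{|[\gamma]}$ and $\rho^{|[n] \ssm [\gamma]}$ decompose as tidy series evaluations on $k$-rooted uncuttable $k$-permutations. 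Grafting these two syntax trees under the root labeled $\operation$ yields the desired decomposition of $\rho$. (This is also the content of \cref{coro:krootedCutImpliesSyntaxTree}.)

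Second, I would invoke \cref{prop:uniqueTidySeriesCitelangisPermutationEvaluation} for uniqueness: if $\tidyEvalPermSeries(\tree ; \sigma_1, \dots, \sigma_p) = \tidyEvalPermSeries(\tree' ; \sigma'_1, \dots, \sigma'_{p'})$ with all $\sigma_i$ and $\sigma'_j$ being $k$-rooted uncuttable, then $p = p'$, the labels agree ($\sigma_i = \sigma'_i$ for all $i \in [p]$), and $\tree = \tree'$ modulo the tidy series $k$-citelangis relations.

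Combining the two steps, the linear map from the free tidy series $k$-citelangis algebra on the set of $k$-rooted uncuttable $k$-permutations to $\FQSym_k$ sending the generator attached to $\sigma$ to $\sigma$ itself is surjective (by existence) and injective (by uniqueness, since the free algebra is by definition the quotient of the space of syntax trees with labeled leaves by exactly the tidy series $k$-citelangis relations). Since this map is clearly a morphism of tidy series $k$-citelangis algebras by \cref{prop:tidyCitelangisSeriesMultipermutations}, it is an isomorphism, which is the desired freeness statement. There is no substantial obstacle at this stage: all the hard work has already been done in \cref{lem:krootedCutImpliesOperation,coro:krootedCutImpliesSyntaxTree,prop:uniqueTidySeriesCitelangisPermutationEvaluation}, and the present theorem is merely a repackaging of those results into the language of free algebras.
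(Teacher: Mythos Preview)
Your proposal is correct and takes essentially the same approach as the paper: the paper simply states that \cref{prop:uniqueTidySeriesCitelangisPermutationEvaluation} proves the theorem, and you have spelled out the standard unpacking of that statement (existence from \cref{coro:krootedCutImpliesSyntaxTree}, uniqueness from \cref{prop:uniqueTidySeriesCitelangisPermutationEvaluation}, hence freeness). One small slip: you wrote ``$\op{l,r}$-tidy series $k$-citelangis algebra'' at the start, but the $\op{l,r}$ prefix belongs to the parallel setting; here it should simply read ``tidy series $k$-citelangis algebra''.
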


\paraul{A complete combinatorial model for~$\tidyCitelangisSeries$}
By \cref{thm:freeTidyCitelangisSeries}, the tidy series $k$-citelangis operad~$\tidyCitelangisSeries$ can be fully understood from the tidy series $k$-citelangis subalgebra of~$\FQSym_k$ generated by the $k$-permutation~$1^{\{k\}}$.
We close this section with a completely explicit combinatorial model for this algebra.
We first obtain from \cref{prop:characterizationTidySeriesPermutationEvaluations} a combinatorial model for the operations of~$\tidyCitelangisSeries$.

\begin{proposition}
\label{prop:combinatorialModelTidySeriesCitelangis}
The tidy series permutation evaluation~$\tree \mapsto \tidyEvalPermSeries(\tree)$ is a graded bijection from the tidy series equivalence classes of syntax trees of~$\Syntax[\Operations_k]$ to the fully $k$-rooted cuttable $k$-permutations.
\end{proposition}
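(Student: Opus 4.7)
The plan is to mirror the proof of the parallel analogue (Proposition~\ref{prop:combinatorialModelTidyParallelCitelangis}), which reduces cleanly to three ingredients already established in the excerpt.

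First, I would check that the map is well defined on equivalence classes: by Proposition~\ref{prop:tidyCitelangisSeriesMultipermutations}, $\FQSym_k$ is a tidy series $k$-citelangis algebra, so the evaluation $\tidyEvalPermSeries(\tree;F_1,\dots,F_p)$ is preserved whenever $\tree$ is replaced by a syntax tree equivalent to it modulo the tidy series $k$-citelangis relations. In particular, $\tree \mapsto \tidyEvalPermSeries(\tree)$ factors through the tidy series equivalence classes. The grading (arity of $\tree$ equals the degree of $\tidyEvalPermSeries(\tree)$) is immediate from the inductive algorithm described in Remark~\ref{rem:tidyEvalSeries}.

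Second, I would invoke Proposition~\ref{prop:characterizationTidySeriesPermutationEvaluations}: the image of the evaluation map consists precisely of the fully $k$-rooted cuttable $k$-permutations. This gives both that the map lands in the announced target and that it is surjective.

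Third, for injectivity, suppose two equivalence classes $[\tree]$ and $[\tree']$ satisfy $\tidyEvalPermSeries(\tree)=\tidyEvalPermSeries(\tree')$. By definition, $\tidyEvalPermSeries(\tree)=\tidyEvalPermSeries(\tree; 1^{\{k\}},\dots,1^{\{k\}})$ and similarly for $\tree'$. The $k$-permutation~$1^{\{k\}}$ has degree~$1$, hence admits no $k$-rooted cut, so it is $k$-rooted uncuttable. Proposition~\ref{prop:uniqueTidySeriesCitelangisPermutationEvaluation} then applies and forces $\tree=\tree'$ modulo the tidy series $k$-citelangis relations, i.e.\ $[\tree]=[\tree']$. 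There is no genuine obstacle here since the freeness statement (Theorem~\ref{thm:freeTidyCitelangisSeries}) already contains the heart of the argument; the real work—identifying fully $k$-rooted cuttable $k$-permutations as the image and proving uniqueness of decompositions into $k$-rooted uncuttable pieces—has been carried out in the preceding subsections through the analysis of $k$-rooted cuts and the quadratic triangulation of Lemma~\ref{lem:uniqueTidySeriesCitelangisPermutationEvaluationQuadratic}.
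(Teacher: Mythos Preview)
Your proof is correct and follows essentially the same approach as the paper's own proof: well-definedness from \cref{prop:tidyCitelangisSeriesMultipermutations}, surjectivity onto fully $k$-rooted cuttable $k$-permutations from \cref{prop:characterizationTidySeriesPermutationEvaluations}, and injectivity from the uniqueness of decompositions. The only cosmetic difference is that for injectivity you invoke \cref{prop:uniqueTidySeriesCitelangisPermutationEvaluation} directly (noting that $1^{\{k\}}$ is $k$-rooted uncuttable), while the paper cites the freeness result \cref{thm:freeTidyCitelangisSeries}; since that theorem is itself an immediate consequence of \cref{prop:uniqueTidySeriesCitelangisPermutationEvaluation}, the two arguments are the same.
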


\begin{proof}
This map~$\tree \mapsto \tidyEvalPermSeries(\tree)$ is surjective on fully $k$-rooted cuttable $k$-permutations by \cref{prop:characterizationTidySeriesPermutationEvaluations}.
It is compatible with the tidy series $k$-citelangis relations by \cref{prop:tidyCitelangisSeriesMultipermutations}.
Finally, it is bijective since the tidy series $k$-citelangis algebra~$\FQSym_k$ is free on uncuttable $k$-permutations by \cref{thm:freeTidyCitelangisSeries}.
\end{proof}

Therefore, the fully $k$-rooted cuttable $k$-permutations can be thought of as a basis of the tidy series $k$-citelangis operad~$\tidyCitelangisSeries$.
Through the bijection of \cref{prop:characterizationTidySeriesPermutationEvaluations}, we can thus define the compositions of the tidy series $k$-citelangis operad~$\tidyCitelangisSeries$ directly on fully $k$-rooted cuttable $k$-permutations.

\begin{definition}
\label{def:combinatorialModelCompositionsTidySeriesCitelangis}
For any integers~$i \le m$ and~$n$, and any two fully $k$-rooted cuttable $k$-permutations $\sigma = \tidyEvalPermSeries(\tree[s]) \in \Perm_k(m)$ and $\tau = \tidyEvalPermSeries(\tree[t]) \in \Perm_k(n)$, we define the \defn{tidy series $i$-th composition} as
\[
\sigma \tidySeriesCirc{i} \tau \eqdef \tidyEvalPermSeries(\tree[s] \circ_i \tree[t]) \in \Perm_k(m+n-1).
\]
\end{definition}

The following statement provides a direct combinatorial description of the composition~$\tidySeriesCirc{i}$ of the tidy series $k$-citelangis operad~$\tidyCitelangisSeries$ on fully $k$-rooted cuttable $k$-permutations.

\begin{proposition}
\label{prop:combinatorialModelCompositionsTidySeriesCitelangis}
Let~$\sigma \in \Perm_k(m)$ and~$\tau \in \Perm_k(n)$ be two fully $k$-rooted cuttable $k$-permutations and~$i \in [m]$.
Write~$\sigma = \omega_0 \, i \, \omega_1 \, i \, \omega_2 \cdots \omega_{k-1} \, i \, \omega_k$, where~$\omega_k = \mu \, \nu$ with~$\mu_p < i < \nu_q$ for all~$p \in [|\mu|]$ and~$q \in [|\nu|]$ according to \cref{coro:fullykrootedCuttableStructure}, and write~$\tau = \tau_1 \, \cdots \, \tau_k \, \theta$.
Then
\[
\begin{array}{c@{\; = \;}l@{\,}l@{\,}l@{\,}l@{\,}l@{\,}l@{\,}l@{\,}l@{\,}l@{\,}l}
\sigma \tidySeriesCirc{i} \tau
& \omega_0[i,n] & \tau_1[i-1] & \omega_1[i,n] & \tau_2[i-1] & \omega_2[i,n] \cdots \omega_{k-1}[i,n] & \tau_k[i-1] & \mu[i,n] & \theta[i-1] & \nu[i,n] \\
& \omega_0[i,n] & \tau_1[i-1] & \omega_1[i,n] & \tau_2[i-1] & \omega_2[i,n] \cdots \omega_{k-1}[i,n] & \tau_k[i-1] & \mu      & \theta[i-1] & \nu[n-1].
\end{array}
\]
\end{proposition}

\begin{proof}
Consider arbitrary syntax trees~$\tree[s]$ and~$\tree[t]$ such that~$\sigma = \tidyEvalPermSeries(\tree[s])$ and~$\tau = \tidyEvalPermSeries(\tree[t])$.
By \cref{rem:tidyEvalSeries}, $\tidyEvalPermSeries(\tree[s] \circ_i \tree[t])$ is obtained inductively by letting~$k$ cars traverse~$\tree[s] \circ_i \tree[t]$ in series and recording the position of their arrival.
The cars that arrive at position~$i$ in~$\tree[s]$ thus continue their journey through~$\tree[t]$.
Therefore, the $k$ values~$i$ in~$\sigma$ are replaced by the first~$k$ values of~$\tau$, and the remaining values of~$\tau$ are placed at the only possible position in~$\sigma \tidySeriesCirc{i} \tau$.
\end{proof}

Here are some examples of tidy series compositions on fully $2$-rooted cuttable $2$-permutations:
\begin{gather*}
  {\blue66}1{\blue4}1|{\blue4322355} \tidySeriesCirc{1} 23{\red2113} = {\blue88}2{\blue6}3{\red2113}{\blue6544577}, \\
  {\blue6614143}2{\blue}2|{\blue355} \tidySeriesCirc{2} 23{\red2113} = {\blue8816165}3{\blue}4{\red3224}{\blue577}, \\
  {\blue661414}3{\blue22}3|{\blue55} \tidySeriesCirc{3} 23{\red2113} = {\blue881616}4{\blue22}5{\red4335}{\blue77}, \\
  {\blue661}4{\blue1}4{\blue3223}|{\blue55} \tidySeriesCirc{4} 23{\red2113} = {\blue881}5{\blue1}6{\blue3223}{\red5446}{\blue77}, \\
  {\blue6614143223}5{\blue}5| \tidySeriesCirc{5} 23{\red2113} = {\blue8814143223}6{\blue}7{\red6557}{\blue}, \\
  {\blue}6{\blue}6{\blue1414322355}| \tidySeriesCirc{6} 23{\red2113} = {\blue}7{\blue}8{\blue1414322355}{\red7668},
\end{gather*}
where we have marked by a vertical bar the separation $\mu|\nu$.

\begin{remark}
\label{rem:tidySeriesZinbiel}
Motivated by \cref{prop:combinatorialModelCompositionsTidySeriesCitelangis}, we will extend in \cref{subsubsec:seriesZinbiel} the tidy series compositions of \cref{def:combinatorialModelCompositionsTidySeriesCitelangis} from fully $k$-rooted cuttable $k$-permutations to all $k$-permutations.
\end{remark}


\subsubsection{Action of~\texorpdfstring{$\messyCitelangisSeries$}{MCitk} on words and permutations}
\label{subsubsec:actionMessySeriesSignaleticPermutations}

The objective of this section is to show that $\FQSym_k$ can also be endowed with a messy series $k$-citelangis algebra structure, generalizing the dendriform algebra structure on~$\FQSym_1$. Moreover, the resulting messy series $k$-citelangis algebra is free. Therefore, the free messy series $k$-citelangis subalgebra generated by the $k$-permutation~$1^{\{k\}}$ provides a combinatorial model for the basis for the messy series $k$-citelangis operad.

\paraul{Action on words}
We first show that the shuffle algebra~$\Shuffle^{\ge k}$ can be endowed with the structure of a messy series $k$-citelangis algebra.

\begin{definition}
\label{def:messyCitelangisSeriesActionWords}
For a messy series $k$-citelangis operation~$\operation \in \{\op{l}, \op{r}\}^*$ and two words~$X$ and~$Y$ such that~${|\operation| \le \min(|X|, |Y|)}$, we define
\[
X \; \operation \; Y =
\begin{cases}
X \shuffle Y & \text{if } \operation = \varepsilon, \\
x (\underline{X} \; \underline{\operation} \; Y) & \text{if } \operation = \; \op{l} \!\underline{\operation} \text{ and } X = x\underline{X}, \\
y (X \; \underline{\operation} \; \underline{Y}) & \text{if } \operation = \; \op{r} \!\underline{\operation} \text{ and } Y = y\underline{Y}.
\end{cases}
\]
In other words, we consider the shuffle of~$X$ and~$Y$, except that the~$i$-th letter of~$X \, \operation \, Y$ is forced to belong to~$X$ (resp.~to~$Y$) if the $i$-th letter of~$\operation$ is~$\op{l}$ (resp.~is~$\op{r}$). 
\end{definition}

For example, when~$k = 1$, the two operations are given for the words~$xX$ and~$yY$ by
\[
x X \op{l} y Y = x \big( X \shuffle y Y \big)
\qqandqq
x X \op{r} y Y = y \big( x X \shuffle Y \big).
\]
In particular, the shuffle product is given by~${\shuffle} = {\op{m}} \eqdef {\op{l}} + {\op{r}}$.
When~$k = 2$, the four operations are given for the words~$x_1 x_2 X$ and~$y_1 y_2 Y$ by
\begin{gather*}
x_1 x_2 X \op{l,l} y_1 y_2 Y = x_1 x_2 \big( X \shuffle y_1 y_2 Y \big),
\\
x_1 x_2 X \op{l,r} y_1 y_2 Y = x_1 y_1 \big( x_2 X \shuffle y_2 Y \big),
\\
x_1 x_2 X \op{r,l} y_1 y_2 Y = y_1 x_1 \big( x_2 X \shuffle y_2 Y \big),
\\
x_1 x_2 X \op{r,r} y_1 y_2 Y = y_1 y_2 \big( x_1 x_2 X \shuffle Y \big).
\end{gather*}
Again, the shuffle product is given by~${\shuffle} = {\op{m,m}} \eqdef {\op{l,l}} + {\op{l,r}} + {\op{r,l}} + {\op{r,r}}$.

\begin{proposition}[\cite{Pilaud-brickAlgebra}]
\label{prop:messyCitelangisSeriesWords}
The shuffle algebra~$\Shuffle^{\ge k}$, endowed with the operations of \cref{def:messyCitelangisSeriesActionWords}, defines a messy series $k$-citelangis algebra.
The shuffle product~$\shuffle$ of~$\Shuffle^{\ge k}$ is given by~$\op{m}\!\!^k \eqdef \sum_{\operation \in \Operations_k} \operation$.
\end{proposition}

\begin{proof}
Consider a destination vector~${\ind{p} \in [3]^k}$ of arity~$3$, and the corresponding messy series $k$-citelangis operations~${\operation[a]_\ind{p}, \operation[b]_\ind{p}, \operation[c]_\ind{p}, \operation[d]_\ind{p} \in \{\op{l}, \op{m}, \op{r}\}^k}$ defined in \cref{prop:messyCitelangisSeriesRelations}. Then for any words~$X,Y,Z$, the words that appear in
\[
X \, \operation[a]_\ind{p} \, (Y \, \operation[b]_\ind{p} \, Z)
\qqandqq
(X \, \operation[d]_\ind{p} \, Y) \, \operation[c]_\ind{p} \, Z
\]
are precisely the words in~$X \shuffle Y \shuffle Z$ such that for~$i \in [k]$, their $i$-th letter was taken from the word~$X$ if~$\ind{p}_i = 1$, from the word~$Y$ if~$\ind{p}_i = 2$ and from the word~$Z$ if~$\ind{p}_i = 3$.
Therefore our operations on words indeed satisfy the messy series $k$-citelangis relation \eqref{eq:messyCitelangisSeriesp}.
\end{proof}

\paraul{Action on permutations}
Replacing the shuffle by the shifted shuffle, one endows similarly the algebra~$\FQSym_k$ of $k$-permutations with the structure of a messy series $k$-citelangis algebra.
This can be rephrased as follows.

\begin{definition}
\label{def:messyCitelangisSeriesActionPermutations}
For any operation~${\operation \in \Operations_k}$ and any two $k$-permutations~$\mu$ and~$\nu$ of degree~$m$ and~$n$ respectively, $\mu \, \operation \, \nu$ is the sum of all $k$-permutations~$\pi$ appearing in~$\mu \shiftedShuffle \nu$ such that for all~$i \in [k]$, we have~$\pi_i \le m$ if~$\operation_i = {\op{l}}$ while~$\pi_i > m$ if~$\operation_i = {\op{r}}$.
\end{definition}

For example, for~$\mu = {\blue 321312132}$ and~$\nu = {\red 221211}$ we have~$\mu \op{l,r,r} \nu = {\blue 3}{\red 55}({\blue 21312132} \shuffle {\red 4544})$ and~$\mu \op{r,l,l} \nu = {\red 5}{\blue 32}({\blue 1312132} \shuffle {\red 54544})$.
The next statement is immediate from \cref{prop:messyCitelangisSeriesWords}.

\begin{proposition}[\cite{Pilaud-brickAlgebra}]
\label{prop:messyCitelangisSeriesMultipermutations}
The algebra~$(\FQSym_k, \shiftedShuffle)$, endowed with the operations of \cref{def:messyCitelangisSeriesActionPermutations}, defines a messy series $k$-citelangis algebra.
The shifted shuffle product~$\shiftedShuffle$ of~$\FQSym_k$ is given~by~$\op{m}\!\!^k$.
\end{proposition}

Similarly to \cref{def:tidyEvalSeries}, we consider the evaluations of syntax trees of~$\Syntax[\Operations_k]$ in~$\FQSym_k$ to manipulate this messy series $k$-citelangis algebra.
See \cref{fig:LinExtEvalPosetSeries}.

\begin{definition}
\label{def:messyEvalSeries}
Denote by~$\messyEvalPermSeries(\tree ; \sigma_1, \dots, \sigma_p)$ the evaluation of a syntax tree~$\tree \in \Syntax[\Operations_k]$ of arity~$p$ on $p$ $k$-permutations~${\sigma_1, \dots, \sigma_p}$ of~$\Perm_k$ using the messy series $k$-citelangis structure of~$\FQSym_k$. 
The \defn{messy series permutation evaluation} of~$\tree$ is then~$\messyEvalPermSeries(\tree) \eqdef \messyEvalPermSeries(\tree ; 1^{\{k\}}, \dots, 1^{\{k\}})$.
We extend by linearity $\tree$ to the elements of~$\Free[\Operations_k]$ on the one hand, and $\sigma_1, \dots, \sigma_p$ to the elements of~$\FQSym_k$ on the other hand.
\end{definition}

\begin{remark}
\label{rem:messyEvalSeries}
Let us rephrase algorithmically \cref{def:messyEvalSeries}.
For this, we generalize the evaluation to partial syntax trees as in \cref{rem:tidyEvalSeries}, where nodes are labeled by operations with at most~$k$ letters~${\{\op{l}, \op{r}\}}$ and leaves are labeled by any words.
The messy series permutation evaluation~$\messyEvalPermSeries(\tree[s])$ of such a partial syntax tree~$\tree[s]$ is defined inductively by
\begin{itemize}
\item $\messyEvalPermSeries(\tree[s]) = \ind{w}$ if $\tree[s]$ is a leaf labeled by the word~$\ind{w}$,
\item $\messyEvalPermSeries(\tree[s]) = \ind{w} \cdot \big( \messyEvalPermSeries(\tree[l]) \shuffle \messyEvalPermSeries(\tree[r]) \big)$ if $\tree[s]$ has a root with $\ell$ signals, and $\tree[l]$ and~$\tree[r]$ are the two partial syntax trees left after $\ell$ cars traversed~$\tree[s]$ in series while erasing the first letters of the signals in the nodes and of the words in the leaves as described in \cref{rem:tidyEvalSeries}.
\end{itemize}
Finally, for a syntax tree~$\tree$ of arity~$p$ and $k$-permutations~$\sigma_1 \in \Perm_k(n_1), \dots, \sigma_p \in \Perm_k(n_p)$, the evaluation $\messyEvalPermSeries(\tree ; \sigma_1, \dots, \sigma_p)$ is the evaluation of the partial syntax tree~$\tree[s]$ obtained from~$\tree$ by putting the permutation~$\sigma_i[n_1 + \dots + n_{i-1}]$ at the $i$-th leaf for all~$i \in [p]$.
In particular, for~$\tidyEvalPermSeries(\tree) = \tidyEvalPermSeries(\tree ; 1^{\{k\}}, \dots, 1^{\{k\}})$, the $i$-th root is labeled by the word~$i^{\{k\}}$.
See \cref{fig:LinExtEvalPosetSeries}.
\end{remark}

\paraul{Freeness}
It turns out that the tidy and messy series permutation evaluations are related by triangularity for the lexicographic order.

\begin{definition}
For a homogeneous element~$F \in \FQSym_k$, we denote by~$\LexMin(F)$ the lexicographic minimal $k$-permutation with a non-zero coefficient in~$F$.
\end{definition}

\begin{lemma}
\label{lem:LexMinSeries}
For any~$\tree \in \Syntax[\Operations_k](p)$ and~$\sigma_1, \dots, \sigma_p \in \Perm_k$, we have
\[
\tidyEvalPermSeries(\tree ; \sigma_1, \dots, \sigma_p) = \LexMin \big( \messyEvalPermSeries(\tree ; \sigma_1, \dots, \sigma_p) \big).
\]
In particular, $\tidyEvalPermSeries(\tree) = \LexMin \big( \messyEvalPermSeries(\tree) \big)$.
\end{lemma}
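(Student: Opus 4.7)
The plan is to mimic exactly the argument used for \cref{lem:LexMinParallel}, replacing the parallel descriptions of \cref{rem:tidyEvalParallel,rem:messyEvalParallel} by the series descriptions of \cref{rem:tidyEvalSeries,rem:messyEvalSeries}. Namely, I would prove by induction on the partial syntax tree~$\tree[s]$ (with nodes labeled by operators in~$\{\op{l},\op{r}\}^{\le k}$ and leaves labeled by words) that $\tidyEvalPermSeries(\tree[s]) = \LexMin\bigl(\messyEvalPermSeries(\tree[s])\bigr)$, then specialize by labeling the $i$-th leaf of~$\tree$ with $\sigma_i[n_1+\dots+n_{i-1}]$ to get the statement of the lemma, and further specialize $\sigma_i = 1^{\{k\}}$ to recover the final assertion.

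For the base case, if $\tree[s]$ is a single leaf labeled by a word~$\ind{w}$, then $\tidyEvalPermSeries(\tree[s]) = \ind{w} = \messyEvalPermSeries(\tree[s])$ and the claim is trivial. For the inductive step, consider a root with $\ell$ signals: the series traversal by $\ell$ cars reads a word $\ind{w}$ and produces two partial subtrees $\tree[l]$ and $\tree[r]$ which are the same in both frameworks (they depend only on the cars' paths, not on how we glue the subresults). By \cref{rem:tidyEvalSeries,rem:messyEvalSeries},
\[
\tidyEvalPermSeries(\tree[s]) = \ind{w} \cdot \tidyEvalPermSeries(\tree[l]) \cdot \tidyEvalPermSeries(\tree[r])
\qqandqq
\messyEvalPermSeries(\tree[s]) = \ind{w} \cdot \bigl( \messyEvalPermSeries(\tree[l]) \shuffle \messyEvalPermSeries(\tree[r]) \bigr).
\]
The induction hypothesis gives $\tidyEvalPermSeries(\tree[l]) = \LexMin\bigl(\messyEvalPermSeries(\tree[l])\bigr)$ and similarly for $\tree[r]$.

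The key step, and really the only content beyond bookkeeping, is the following elementary fact on the shuffle: if $F$ is a linear combination of words over an alphabet~$A$ and $G$ a linear combination over a disjoint alphabet~$B$ with every letter of~$A$ strictly smaller than every letter of~$B$, then $\LexMin(F \shuffle G) = \LexMin(F) \cdot \LexMin(G)$. This is because in any shuffle of two fixed words $X \in A^*$ and $Y \in B^*$, the lexicographically smallest interleaving consumes all of~$X$ before starting~$Y$ (at each choice the letter from~$X$ beats the letter from~$Y$), and one then minimizes over $X$ and $Y$ independently. Applied to our setting, the values appearing in $\messyEvalPermSeries(\tree[l])$ and $\messyEvalPermSeries(\tree[r])$ lie in disjoint intervals with those of~$\tree[l]$ smaller, because the $i$-th leaf of the partial tree obtained from~$\tree$ was labeled by the shifted permutation $\sigma_i[n_1+\dots+n_{i-1}]$ and the cars only erase letters without altering this ordering of ranges.

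Combining these ingredients yields
\[
\LexMin\bigl(\messyEvalPermSeries(\tree[s])\bigr) = \ind{w} \cdot \LexMin\bigl(\messyEvalPermSeries(\tree[l])\bigr) \cdot \LexMin\bigl(\messyEvalPermSeries(\tree[r])\bigr) = \ind{w} \cdot \tidyEvalPermSeries(\tree[l]) \cdot \tidyEvalPermSeries(\tree[r]) = \tidyEvalPermSeries(\tree[s]),
\]
completing the induction. The only subtle point, and thus the main obstacle, is guaranteeing that the disjoint-ranges hypothesis for the shuffle lemma really propagates through the recursion: one must check that after the $\ell$ series cars have each erased a letter of some leaf, the resulting partial subtrees $\tree[l]$ and $\tree[r]$ still have leaves respectively labeled by words over disjoint, correctly ordered value sets. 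This is immediate from the fact that the shifts $\sigma_i[n_1+\dots+n_{i-1}]$ place leaf~$i$ on an interval of values entirely below leaf~$i+1$, and car erasures do not create new values.
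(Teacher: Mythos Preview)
Your proposal is correct and follows essentially the same approach as the paper: an induction on partial syntax trees using the recursive descriptions of \cref{rem:tidyEvalSeries,rem:messyEvalSeries}, reducing to the fact that $\LexMin$ of a shuffle of words on disjoint, correctly ordered alphabets is the concatenation of the individual $\LexMin$'s. The paper leaves this last shuffle fact and the disjoint-ranges propagation implicit, so your version is slightly more explicit, but the structure and content are the same.
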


\begin{proof}
The proof works by induction using the descriptions of~$\tidyEvalPermSeries$ in \cref{rem:tidyEvalSeries} and of~$\messyEvalPermSeries$ in \cref{rem:messyEvalSeries}.
Indeed, for any partial syntax tree~$\tree[s]$, we have
\begin{itemize}
\item if $\tree[s]$ is a leaf labeled by the word~$\ind{w}$, then
\(
\tidyEvalPermSeries(\tree[s]) = \ind{w} = \messyEvalPermSeries(\tree[s]),
\)
\item if $\tree[s]$ has a root with $\ell$ signals, and $\tree[l]$ and~$\tree[r]$ are the two partial syntax trees left after $\ell$ cars traversed~$\tree[s]$ in series while erasing the first letters of the signals in the nodes and of the words in the leaves as described in \cref{rem:tidyEvalSeries}, then
\begin{align*}
\tidyEvalPermSeries(\tree[s])
& = \ind{w} \cdot \tidyEvalPermSeries(\tree[l]) \cdot \tidyEvalPermSeries(\tree[r])
  = \ind{w} \cdot \LexMin \big( \messyEvalPermSeries(\tree[l]) \big) \cdot \LexMin \big( \messyEvalPermSeries(\tree[r]) \big) \\
& = \LexMin \big( \ind{w} \cdot \big( \messyEvalPermSeries(\tree[l]) \shuffle \messyEvalPermSeries(\tree[r]) \big) \big)
  = \LexMin \big( \messyEvalPermSeries(\tree[s]) \big).
\end{align*}
\end{itemize}
The result then follows by application on the syntax tree~$\tree[s]$ obtained from~$\tree$ by putting the permutation~$\sigma_i[n_1 + \dots + n_{i-1}]$ at the $i$-th leaf for all~$i \in [p]$.
\end{proof}

\begin{remark}
Fix~$\tree \in \Syntax[\Operations_k](p)$.
Observe that denoting $\le$ the lexicographic order on \mbox{$k$-per}\-mutations, we have~$\sigma_1 \le \sigma'_1, \dots, \sigma_p \le \sigma'_p$ implies $\tidyEvalPermSeries(\tree ; \sigma_1, \dots, \sigma_p) \le \tidyEvalPermSeries(\tree ; \sigma'_1, \dots, \sigma'_p)$.
Hence, \cref{lem:LexMinSeries} extends to $\FQSym_k$: for any homogeneous elements~${F_1, \dots, F_p \in \FQSym_k}$,
\[
\LexMin \big( \messyEvalPermSeries(\tree ; F_1, \dots, F_p) \big) = \tidyEvalPermSeries \big( \tree;  \LexMin(F_1), \dots, \LexMin(F_p) \big).
\]
\end{remark}

We derive from \cref{lem:LexMinSeries} the main result of this section.

\begin{theorem}
\label{thm:freeMessyCitelangisSeries}
The messy series $k$-citelangis algebra~$\FQSym_k$ is free on $k$-rooted uncuttable $k$-permutations.
\end{theorem}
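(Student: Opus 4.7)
The plan is to mimic exactly the triangularity argument used for \cref{thm:freeMessyCitelangisParallel}, replacing parallel by series throughout. The two ingredients that make this work are already in place: \cref{lem:LexMinSeries} relates the messy and tidy series permutation evaluations via~$\LexMin$, and \cref{prop:uniqueTidySeriesCitelangisPermutationEvaluation} gives the corresponding freeness statement in the tidy setting. Finally, by the discussion in \cref{subsubsec:normalFormsCitelangis}, the tidy and messy series $k$-citelangis rewriting systems share the same set of normal forms.

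Concretely, I would argue as follows. Take two normal forms $\tree, \tree' \in \Syntax[\Operations_k]$ of the messy series $k$-citelangis rewriting system, of arities $p$ and~$p'$ respectively, together with $k$-rooted uncuttable $k$-permutations $\sigma_1, \dots, \sigma_p$ and $\sigma'_1, \dots, \sigma'_{p'}$ satisfying
\[
\messyEvalPermSeries(\tree ; \sigma_1, \dots, \sigma_p) = \messyEvalPermSeries(\tree' ; \sigma'_1, \dots, \sigma'_{p'}).
\]
Applying $\LexMin$ to both sides and invoking \cref{lem:LexMinSeries} (in its extended version over $\FQSym_k$) gives
\[
\tidyEvalPermSeries(\tree ; \sigma_1, \dots, \sigma_p) = \tidyEvalPermSeries(\tree' ; \sigma'_1, \dots, \sigma'_{p'}).
\]
Now \cref{prop:uniqueTidySeriesCitelangisPermutationEvaluation} applies verbatim and yields $p = p'$, $\sigma_i = \sigma'_i$ for all $i \in [p]$, and $\tree = \tree'$ modulo the tidy series $k$-citelangis relations. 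Since both $\tree$ and $\tree'$ are normal forms of the messy series rewriting system, and since (by \cref{subsubsec:normalFormsCitelangis}) the two rewriting systems share the same normal forms, we conclude that $\tree = \tree'$ on the nose.

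I do not expect a genuine obstacle: the entire structural work has been done in the tidy case, and the messy case is recovered for free from the lexicographic triangularity of \cref{lem:LexMinSeries}. The only point that warrants a little care is making sure that the ``same normal forms'' statement from \cref{subsubsec:normalFormsCitelangis} is indeed valid for all $k \ge 1$ in series (not only in parallel or for small $k$), so that the final identification of $\tree$ with $\tree'$ as syntax trees is legitimate; but this is exactly the content of the discussion there. As a byproduct, combining this theorem with \cref{prop:characterizationTidySeriesPermutationEvaluations} (together with the combinatorial model of \cref{prop:combinatorialModelTidySeriesCitelangis}) yields a complete combinatorial model for the operad~$\messyCitelangisSeries$: the free messy series $k$-citelangis subalgebra of~$\FQSym_k$ generated by~$1^{\{k\}}$ is the linear span of fully $k$-rooted cuttable $k$-permutations, thereby settling the freeness problem left open in~\cite{Pilaud-brickAlgebra}.
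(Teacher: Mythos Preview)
Your proposal is correct and follows essentially the same approach as the paper: both use \cref{lem:LexMinSeries} to pass from the messy to the tidy evaluation, then invoke \cref{prop:uniqueTidySeriesCitelangisPermutationEvaluation}, and finally use the fact from \cref{subsubsec:normalFormsCitelangis} that the tidy and messy series rewriting systems share the same normal forms to conclude $\tree = \tree'$. One tiny remark: you only need the basic version of \cref{lem:LexMinSeries} (applied to actual $k$-permutations), not its $\FQSym_k$ extension.
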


\begin{proof}
Consider the tidy and the messy series $k$-citelangis rewriting systems defined in \cref{subsubsec:rewritingSystemCitelangis}.
As seen in \cref{subsubsec:normalFormsCitelangis}, these two rewriting systems have the same normal forms.
Consider two such normal forms~$\tree, \tree'$ of arity~$p$ and~$p'$ respectively, and some $k$-rooted uncuttable $k$-permutations~$\sigma_1, \dots, \sigma_p, \sigma'_1, \dots, \sigma'_{p'}$, such that~$\messyEvalPermSeries(\tree ; \sigma_1, \dots, \sigma_p) = \messyEvalPermSeries(\tree' ; \sigma'_1, \dots, \sigma'_{p'})$.
It then follows from \cref{lem:LexMinSeries} that $\tidyEvalPermSeries(\tree ; \sigma_1, \dots, \sigma_p) = \tidyEvalPermSeries(\tree' ; \sigma'_1, \dots, \sigma'_{p'})$.
\cref{prop:uniqueTidySeriesCitelangisPermutationEvaluation} then implies that~$p = p'$, that ${\sigma_i = \sigma'_i}$ for all~$i \in [p]$, and that~$\tree$ and~$\tree'$ are tidy series $k$-citelangis equivalent.
As we assumed that they were in normal form, we obtain that~$\tree = \tree'$.
The result follows.
\end{proof}

\begin{remark}
By \cref{thm:freeMessyCitelangisSeries}, the messy series $k$-citelangis operad can be fully understood from the messy series $k$-citelangis subalgebra of~$\FQSym_k$ generated by the $k$-permutation~$1^{\{k\}}$.
However, while the fully $k$-rooted cuttable $k$-permutations still provide a combinatorial model for the basis of this messy series $k$-citelangis algebra similarly to \cref{prop:combinatorialModelTidySeriesCitelangis}, the compositions~$\messySeriesCirc{i}$ of the messy series $k$-citelangis operad~$\messyCitelangisSeries$ on fully $k$-rooted cuttable $k$-permutations are more intricate than \cref{prop:combinatorialModelCompositionsTidySeriesCitelangis}.
\end{remark}


\subsubsection{Operations of~\texorpdfstring{$\messyCitelangisSeries$}{MCitk} on \texorpdfstring{$k$}{k}-rooted \texorpdfstring{$k$}{k}-posets}
\label{subsubsec:actionMessyParallelSignaleticRootedPosets}

In this section, we observe that the result of any messy series permutation evaluation is the sum of all linear extensions of a well-chosen $k$-poset.
This observation allows us to encode the messy series permutation evaluation by an alternative combinatorial model and to study directly on this model the action of the messy series $k$-citelangis operad~$\messyCitelangisSeries$.
It also motivates the introduction of the series $k$-poset operad that will be studied in \cref{subsubsec:seriesPosetOperad}.

\paraul{Operations on $k$-rooted $k$-posets and series poset evaluations}
We first define some operations on the following $k$-posets.

\begin{definition}
A multiposet~$\le_M$ is \defn{$k$-rooted} if it contains a chain (\ie a totally ordered) submultiposet of size~$k$ whose elements are smaller than any other element.
For~$j \le k$, we denote by~$\min_j(\le_M)$ the~$j$-th element of this chain, by~$\Root_j(\le_M)$ the chain formed by the first $j$ elements of this chain, and by~$\le_{M_{*j}}$ the submultiposet of~$\le_M$ induced by~$M_{*j} \eqdef M \ssm \Root_j(\le_M)$.
\end{definition}

\begin{definition}
\label{def:messyCitelangisSeriesActionPosets}
Consider an operation~$\operation \in \Operations_k \eqdef \{\op{l}, \op{r}\}^k$ and two $k$-rooted $k$-posets~$\le_M$ and~$\le_N$ of degrees~$m$ and~$n$ respectively.
For~$j \le k$, denote by~$\ell_j$ (resp.~$r_j$) the number of $\op{l}$ (resp.~$\op{r}$) signals among the first~$j$ signals of~$\operation$.
In particular, $\ell \eqdef \ell_k$ (resp.~$r \eqdef r_k$) is the number of $\op{l}$ (resp.~$\op{r}$) signals in~$\operation$.
We define a $k$-rooted $k$-poset ${\le_P} \eqdef {\le_M} \; \operation \; {\le_N}$~by:
\begin{itemize}
\item $P \eqdef M \sqcup N[m]$ where $N[m]$ is the $k$-set~$(m+1)^{\{k\}} \dots (m+n)^{\{k\}}$ obtained from~$N$ by shifting each element by the degree~$m$ of~$M$ as in \cref{def:shiftPoset},
\item in $\le_P$, the elements of $M$ (respectively of~$N[m]$) are ordered by~$\le_M$ (resp.~by~$\le_{N[m]}$), and the only other relations are such that~$\Root_\ell(\le_M) \, \operation \, \Root_r(\le_N)$ is a $k$-root of~$\le_P$. Formally, the comparison~$x \le_P y$ holds for~$x, y \in P$ if and only if one of the following statements holds:
	\begin{itemize}
	\item $x \in M$, $y \in M$ and $x \le_M y$, or
	\item $x \in N[m]$, $y \in N[m]$ and $x \le_{N[m]} y$, or
	\item $x = \min_{\ell_j}(\le_M)$ and~$y \in N[m]_{*r_j}$ for $j \le k$ such that~$\operation_j = {\op{l}}$, or
	\item $x \in M_{*\ell_j}$ and~$y = \min_{r_j}(\le_N[m])$ for $j \le k$ such that~$\operation_j = {\op{r}}$.
	\end{itemize}
\end{itemize}
\end{definition}

\begin{remark}
\label{rem:sumsPosetsSeries}
\cref{def:messyCitelangisSeriesActionPosets} can be conveniently rephrased in terms of ordered sums and disjoint unions of posets presented in \cref{def:disjointUnionOrderedSum}.
Indeed,
\[
{\le_M} \; \operation \; {\le_N} = \big( \Root_\ell(\le_M) \; \operation \; \Root_r(\le_N) \big) + \big( {\le_{M_{*\ell}}} \sqcup {\le_{N[m]_{*r}}} \big),
\]
where~$m = |M|$, and $\ell$ and~$r$ are the number of~$\op{l}$ and~$\op{r}$ signals in~$\operation$.
\end{remark}

\begin{figure}[b]
	\centerline{\begin{tabular}{c@{\hspace{1.2cm}}c@{\hspace{1.2cm}}c@{\hspace{1.2cm}}c@{\hspace{1.2cm}}c}
		\begin{tikzpicture}[level/.style={sibling distance=1cm, level distance = .7cm}]
			\node {$1$} [grow' = up]
				child {node {$1$}}
			;
		\end{tikzpicture}
		&
		\begin{tikzpicture}[level/.style={sibling distance=1cm, level distance = .7cm}]
			\node {$1$} [grow' = up]
				child {node {$1$}
					child {node {$2$}
						child {node {$2$}}
					}
				}
			;
		\end{tikzpicture}
		&
		\begin{tikzpicture}[level/.style={sibling distance=1cm, level distance = .7cm}]
			\node {$1$} [grow' = up]
				child {node {$2$}
					child {node {$1$}}
					child {node {$2$}}
				}
			;
		\end{tikzpicture}
		&
		\begin{tikzpicture}[level/.style={sibling distance=1cm, level distance = .7cm}]
			\node {$2$} [grow' = up]
				child {node {$1$}
					child {node {$1$}}
					child {node {$2$}}
				}
			;
		\end{tikzpicture}
		&
		\begin{tikzpicture}[level/.style={sibling distance=1cm, level distance = .7cm}]
			\node {$2$} [grow' = up]
				child {node {$2$}
					child {node {$1$}
						child {node {$1$}}
					}
				}
			;
		\end{tikzpicture}
		\\[.2cm]
		${\le_{I_2}}$ &
		${\le_{I_2}} \op{l,l} {\le_{I_2}}$ &
		${\le_{I_2}} \op{l,r} {\le_{I_2}}$ &
		${\le_{I_2}} \op{r,l} {\le_{I_2}}$ &
		${\le_{I_2}} \op{r,r} {\le_{I_2}}$
	\end{tabular}}
	\caption{Four series operations on $2$-rooted $2$-posets. See \cref{def:messyCitelangisSeriesActionPosets}.}
	\label{fig:exmMessyCitelangisSeriesActionPosets}
\end{figure}
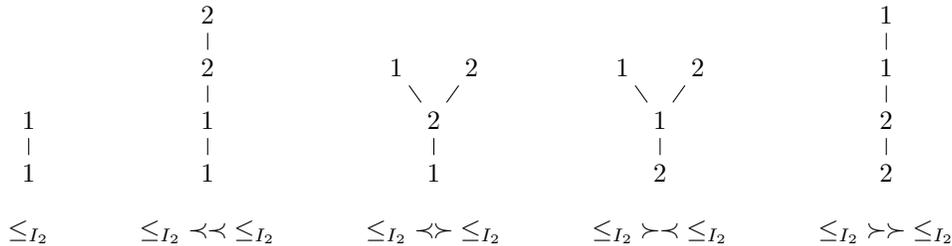

\begin{figure}[t]
	\centerline{$
	\begin{tikzpicture}[baseline={([yshift=-.8ex]current bounding box.center)}, level/.style={sibling distance=1cm, level distance = .7cm}, blue]
		\node {$3$} [grow' = up]
			child {node {$3$}
				child {node {$1$}
					child {node {$2$}
						child {node {$1$}}
						child {node {$2$}}
					}
				}
				child {node {$4$}
					child {node {$4$}}
				}
			}
		;
	\end{tikzpicture}
	\op{l,r}
	\begin{tikzpicture}[baseline={([yshift=-.8ex]current bounding box.center)}, level/.style={sibling distance=1cm, level distance = .7cm}, red]
		\node {$2$} [grow' = up]
			child {node {$1$}
				child {node {$1$}}
				child {node {$2$}}
			}
		;
	\end{tikzpicture}
	=
	\begin{tikzpicture}[baseline={([yshift=-.8ex]current bounding box.center)}, level 2/.style={sibling distance=2cm}, level/.style={sibling distance=1cm, level distance = .7cm}]
		\node {\blue $3$} [grow' = up]
			child [blueNode] {node {\red $6$}
				child [blueNode] {node {$3$}
					child {node {$1$}
						child {node {$2$}
							child {node {$1$}}
							child {node {$2$}}
						}
					}
					child {node {$4$}
						child {node {$4$}}
					}
				}
				child [redNode] {node {$5$}
					child {node {$5$}}
					child {node {$6$}}
				}
			}
		;
	\end{tikzpicture}	
	=
	\EvalPosetSeries \left(
		\begin{tikzpicture}[baseline={([yshift=-.8ex]current bounding box.center)}, level/.style={sibling distance=18mm/#1, level distance = 1cm/sqrt(#1)}]
			\node [rectangle, draw] {$\op{l,r}$}
				child {node [rectangle, draw] {$\op{r,r}$}
					child {node [rectangle, draw] {$\op{l,r}$}
						child {node {}}
						child {node {}}
					}
					child {node [rectangle, draw] {$\op{l,l}$}
						child {node {}}
						child {node {}}
					}
				}
				child {node [rectangle, draw] {$\op{r,l}$}
					child {node {}}
					child {node {}}
				}
			;
		\end{tikzpicture}
	\right)
	$}
	\caption{Another example of series operation on $2$-rooted $2$-posets. See \cref{def:messyCitelangisSeriesActionPosets}.}
	\label{fig:exmMessyCitelangisSeriesActionPosetsBis}
\end{figure}

For example, let~$\le_{I_k}$ denote the only $k$-chain on~$1^{\{k\}}$ represented in \cref{fig:exmMessyCitelangisSeriesActionPosets}\,(left).
Then for any operation~$\operation \in \Operations_k$ with~$\ell$ signals~$\op{l}$ and~$r$ signals~$\op{r}$, the $k$-poset~$\le_{I_k}$ forms a ``Y'' where the bottom branch is the chain~$1^{\{\ell\}} \, \operation \, 2^{\{r\}}$, the left branch is the chain on~$1^{\{k-\ell\}}$, and the right branch is the chain on~$2^{\{k-r\}}$.
This is illustrated when~$k = 2$ in \cref{fig:exmMessyCitelangisSeriesActionPosets}.
See \cref{fig:exmMessyCitelangisSeriesActionPosetsBis} for another example.
The next statement clearly follows from \cref{rem:sumsPosetsSeries}.

\begin{lemma}
\label{lem:operationskrootedPosets}
For any operation~$\operation \in \Operations_k$ and any two $k$-rooted $k$-posets~$\le_M$ and~$\le_N$ of degrees~$m$ and~$n$ respectively, ${\le_M \; \operation \; \le_N}$ is a $k$-rooted $k$-poset of degree~$m+n$.
\end{lemma}

This statement allows to define the evaluation of a syntax tree on posets.
See \cref{fig:exmMessyCitelangisSeriesActionPosetsBis}.

\begin{definition}
\label{def:posetSeries}
Denote by~$\EvalPosetSeries(\tree ; \le_{M_1}, \dots, \le_{M_p})$ the evaluation of a syntax tree~${\tree \in \Syntax[\Operations_k]}$ of arity~$p$ on $p$ $k$-rooted $k$-posets~${\le_{M_1}, \dots, \le_{M_p}}$ using the operations of \cref{def:messyCitelangisSeriesActionPosets}. 
The \defn{series poset evaluation} of~$\tree$ is then~$\EvalPosetSeries(\tree) \eqdef \EvalPosetSeries(\tree ; \le_{I_k}, \dots, \le_{I_k})$, where~$\le_{I_k}$ is the $k$-chain on~$1^{\{k\}}$.
\end{definition}

\begin{remark}
\label{rem:intervalLabeled}
It is not difficult to check that, for any operation~$\operation \in \Operations_k$ and any two $k$-rooted $k$-posets~$\le_M$ and~$\le_N$,
\begin{itemize}
\item if~$\le_M$ and~$\le_N$ are trees, then~${\le_M} \, \operation \, {\le_N}$ is a tree (see \cref{def:treeMultiposet}), and
\item if~$\le_M$ and~$\le_N$ are interval labelled, then~${\le_M} \, \operation \, {\le_N}$ is interval labeled (see \cref{def:intervalLabelled}).
\end{itemize}
Therefore, for any~$\tree \in \Syntax[\Operations_k]$, the series poset evaluation~$\EvalPosetSeries(\tree)$ is a $k$-rooted interval labelled tree.
We will characterize the series poset evaluations later in \cref{prop:characterizationSeriesPosetEvaluations}.
\end{remark}

\paraul{$k$-rooted cuts in $k$-posets}
We now aim at characterizing series poset evaluations.
As in \cref{subsubsec:actionTidySeriesSignaletic}, this understanding goes through $k$-rooted cuts in $k$-rooted $k$-posets.

\begin{definition}
\label{def:krootedCutPoset}
Let~$\le_M$ be a $k$-rooted $k$-poset of degree~$n$ and~$\gamma \in [n-1]$.
We say that $\gamma$ is a \defn{$k$-rooted cut} of~$\le_M$ if we can decompose~$M$ into~$M = \Root_k(\le_M) \sqcup L \sqcup R$ such that, for all~$\ell \in L$ and~$r \in R$, we have~$\ell \le \gamma < r$ and~$\ell$ and~$r$ are incomparable for~$\le_M$.
We denote by~$\kcuts(\le_M)$ the set of $k$-rooted cuts of~$\le_M$.
We say that the $k$-rooted $k$-poset~$\le_M$ is \defn{$k$-rooted cuttable} if it admits a $k$-rooted cut, and \defn{$k$-rooted uncuttable} if it admits no $k$-rooted cut.
\end{definition}

The following statements are similar to \cref{lem:krootedCutsRestriction,prop:equivalenceFullykrootedCuttable}.

\begin{lemma}
\label{lem:krootedCutsRestrictionPoset}
Consider~$L \subseteq [m]$ and~$\gamma \in [\min(L), \max(L)-1]$, and let~$\gamma^{|L} \eqdef |[\gamma] \cap L|$ denote the number of elements of~$L$ between~$1$ and~$\gamma$.
If~$\gamma$ is a $k$-rooted cut of a $k$-rooted $k$-poset~$\le_M$ of degree~$m$, then $\gamma^{|L}$ is a $k$-rooted cut of its restriction~$\le_{M^{|L}}$.
\end{lemma}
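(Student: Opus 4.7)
The plan is to mirror the proof of the analogous \cref{lem:krootedCutsRestriction} for $k$-permutations, starting from the decomposition $M = \Root_k(\le_M) \sqcup A \sqcup B$ that witnesses $\gamma$ as a $k$-rooted cut, with $a \le \gamma < b$ for all $a \in A$, $b \in B$, and with $A$ and $B$ mutually incomparable in $\le_M$. Restricting each block to $L$ yields the candidate decomposition
\[
M^{|L} = \Root_k(\le_M)^{|L} \sqcup A^{|L} \sqcup B^{|L}.
\]

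The easy verifications are immediate from the definition of the restricted poset: the mutual incomparability of $A^{|L}$ and $B^{|L}$ in $\le_{M^{|L}}$ is inherited from $\le_M$, and under the value relabeling $v \mapsto v^{|L} = |[v] \cap L|$, every element of $A^{|L}$ acquires relabeled value $\le \gamma^{|L}$ while every element of $B^{|L}$ acquires relabeled value $> \gamma^{|L}$.

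The crux is to identify $\Root_k(\le_{M^{|L}})$. The chain property and the minimality property of $\Root_k(\le_M)$ are both preserved under restriction, so $\Root_k(\le_M)^{|L}$ always forms an initial chain in $\le_{M^{|L}}$ sitting below everything else. The subtle point --- parallel to the permutation proof, where the rigid condition $|\mu|=k$ was flexibly read as $|\mu| \le k$ by transferring letters from the beginning of $\nu$ to the end of $\mu$ --- is that $\Root_k(\le_M)^{|L}$ may contain strictly fewer than $k$ elements when some values occurring in the original root are absent from $L$. I would resolve this by transferring initial elements of $A^{|L}$ into the root, exploiting the hypothesis $\gamma \ge \min(L)$, which forces all $k$ copies of $\min(L)$ to live in $\Root_k(\le_M) \cup A$ so that sufficiently many small candidates are available for the transfer.

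The main obstacle is verifying that the augmented chain genuinely dominates all remaining elements of $A^{|L} \cup B^{|L}$ in $\le_{M^{|L}}$; this is where the natural chain discipline on copies of the same integer (from \cref{def:multiposet}) has to combine with the $A$-vs-$B$ incomparability to guarantee that the borrowed elements are indeed comparable to, and smaller than, every non-root element of $M^{|L}$. Once the correct $k$-root is in place, the residual decomposition (together with what survives of $A^{|L}$ after the transfer, plus the untouched $B^{|L}$) produces the desired witness that $\gamma^{|L}$ is a $k$-rooted cut of $\le_{M^{|L}}$.
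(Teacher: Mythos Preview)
You correctly identify the crux: after restriction, $\Root_k(\le_M)^{|L}$ may have fewer than $k$ elements. But the proposed transfer of elements from $A^{|L}$ into the root cannot work, and your final paragraph has the logic backwards. In a $k$-permutation the ``root'' is merely the first $k$ letters, carrying no order-theoretic constraint, so sliding the prefix boundary is harmless. In a $k$-rooted $k$-poset, by contrast, the root chain must lie \emph{below every other element}. The elements you borrow from $A$ are, by hypothesis, \emph{incomparable} to every element of $B$; restriction preserves this, so they remain incomparable to $B^{|L}$ and therefore cannot participate in any root chain of $\le_{M^{|L}}$.

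Here is a concrete failure for $k=2$: take $\le_M$ of degree $3$ with root chain $3_1 < 3_2$ sitting below two otherwise incomparable chains $1_1 < 1_2$ and $2_1 < 2_2$ (this is $\EvalPosetSeries$ of the right comb with both nodes labeled ${\op{r,r}}$). Then $\gamma = 1$ is a $2$-rooted cut with $A = \{1_1,1_2\}$, $B = \{2_1,2_2\}$. Restricting to $L = \{1,2\}$ yields a poset consisting of two incomparable $2$-chains, hence with two minimal elements: it is not $2$-rooted at all, so no transfer can manufacture a root. The paper states this lemma without proof, calling it ``similar to'' the permutation case; in fact either an extra hypothesis is needed (that $\le_{M^{|L}}$ is itself $k$-rooted) or the definition of $k$-rooted cut must be relaxed to allow a root chain of length at most $k$, in which case no transfer is needed and the plain restriction of the decomposition already witnesses the cut.
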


\begin{proposition}
\label{prop:equivalenceFullykrootedCuttablePoset}
The following conditions are equivalent for a $k$-rooted $k$-poset of degree~$n$:
\begin{enumerate}[(i)]
\item its restriction to any interval of~$[n]$ of size at least $2$ is $k$-rooted cuttable,
\item its restriction to any subset of~$[n]$ of size at least $2$ is $k$-rooted cuttable.
\end{enumerate}
\end{proposition}

\begin{definition}
\label{def:fullykrootedCuttablePoset}
A $k$-rooted $k$-poset is \defn{fully $k$-rooted cuttable} if it satisfies the equivalent conditions of \cref{prop:equivalenceFullykrootedCuttablePoset}.
\end{definition}

Note that the only multiposet on~$1^k$ is fully $k$-rooted cuttable as there is no subset of size at least~$2$.
The following statements are similar to \cref{lem:operationImplieskrootedCut,lem:krootedCutImpliesOperation}.

\begin{lemma}
\label{lem:operationImplieskrootedCutPoset}
For any $k$-rooted $k$-posets~$\le_M$ and~$\le_N$, and any operation~${\operation \in \Operations_k}$, the degree~$m$ of~$\le_M$ is a $k$-rooted cut of~$\le_M \, \operation \, \le_N$.
\end{lemma}

\begin{lemma}
\label{lem:krootedCutImpliesOperationPoset}
For any $k$-rooted $k$-poset~$\le_P$ of degree~$p$ and any $k$-rooted cut~$\gamma \in \kcuts(\le_P)$, there is a unique~${\operation \in \Operations_k}$ (defined by $\operation_i \eqdef {\op{l}}$ if~$\Root_i(\le_P) \le \gamma$ and~$\operation_i \eqdef {\op{r}}$ if~$\Root_i(\le_P) > \gamma$) such that~${{\le_P} = {\le_{P^{|[\gamma]}}} \, \operation \, {\le_{P^{|[p] \ssm [\gamma]}}}}$.
\end{lemma}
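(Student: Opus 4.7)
The plan is to adapt the proof of Lemma~\ref{lem:krootedCutImpliesOperation} from permutations to posets, using the ordered-sum reformulation of the series operation recorded in Remark~\ref{rem:sumsPosetsSeries}. Fix a $k$-rooted cut $\gamma$ with decomposition $P = \Root_k(\le_P) \sqcup L \sqcup R$ and define $\operation$ exactly as stated, letting $\ell$ and $r$ denote the number of $\op{l}$ and $\op{r}$ signals in $\operation$, so that $\ell + r = k$; write $\le_M \eqdef \le_{P^{|[\gamma]}}$ and $\le_N \eqdef \le_{P^{|[p] \ssm [\gamma]}}$. The uniqueness clause is immediate, since the $i$-th coordinate of any operator realizing such a decomposition is forced by whether $\Root_i(\le_P)$ has value $\le \gamma$ or $> \gamma$, so only existence requires work.

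I would first identify the $\ell$-root of $\le_M$ and the $r$-root of $\le_N$. By the definition of $\operation$, the set $\set{\Root_i(\le_P)}{\operation_i = \op{l}}$ consists of precisely the $\ell$ root elements of $\le_P$ whose values lie in $[\gamma]$, hence they survive in $\le_M$; they form a subchain of $\Root_k(\le_P)$ and they are strictly below every element of $L$ in $\le_P$ (hence in $\le_M$) because the $k$-root sits below every non-root element. Thus these $\ell$ elements constitute $\Root_\ell(\le_M)$, and a symmetric argument yields $\Root_r(\le_N)$. I would then apply the ordered-sum identity
\[
\le_M \, \operation \, \le_N = \big( \Root_\ell(\le_M) \, \operation \, \Root_r(\le_N) \big) + \big( \le_{M_{*\ell}} \sqcup \le_{N[m]_{*r}} \big)
\]
from Remark~\ref{rem:sumsPosetsSeries}. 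By construction of $\operation$, the interleaved chain on the right reproduces $\Root_k(\le_P)$ position by position: the $j$-th entry comes from $\le_M$ if $\operation_j = \op{l}$ and from $\le_N$ otherwise, matching whether $\Root_j(\le_P)$ has value $\le \gamma$ or $> \gamma$. The top summand $\le_{M_{*\ell}} \sqcup \le_{N[m]_{*r}}$ coincides with the restrictions of $\le_P$ to $L$ and to $R[m]$ respectively, and the disjoint-union structure (as opposed to any interleaving) is exactly the pairwise incomparability condition built into the definition of a $k$-rooted cut. Combining the two summands gives $\le_P = \Root_k(\le_P) + (\le_L \sqcup \le_R) = \le_M \, \operation \, \le_N$.

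The main technical obstacle is the careful bookkeeping of indices in the chain interleaving: one has to verify via the counters $\ell_j$ and $r_j$ from Definition~\ref{def:messyCitelangisSeriesActionPosets} that $\min_{\ell_j}(\le_M) = \Root_j(\le_P)$ whenever $\operation_j = \op{l}$, and symmetrically on the right. A secondary subtlety is that the restrictions $\le_M$ and $\le_N$ are in general only $\ell$- and $r$-rooted rather than $k$-rooted, so working with the ordered-sum formula of Remark~\ref{rem:sumsPosetsSeries} instead of directly with Definition~\ref{def:messyCitelangisSeriesActionPosets} sidesteps the question of whether the bottom chain of minimal elements of $\le_M$ (or $\le_N$) can always be extended to length $k$.
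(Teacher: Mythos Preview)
The paper does not give its own proof here: it declares Lemma~\ref{lem:krootedCutImpliesOperationPoset} ``similar to'' the permutation version (Lemma~\ref{lem:krootedCutImpliesOperation}) and leaves it to the reader. Your proposal is exactly the intended transposition of that argument to posets via the ordered-sum identity of Remark~\ref{rem:sumsPosetsSeries}, and the sketch is correct.

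Your closing remark about rootedness is not a mere technicality but a genuine point the paper glosses over. Definition~\ref{def:messyCitelangisSeriesActionPosets} requires both inputs to be $k$-rooted, yet for an arbitrary $k$-rooted $k$-poset $\le_P$ with a $k$-rooted cut $\gamma$ the restriction $\le_{P^{|[\gamma]}}$ need not be $k$-rooted. For instance with $k=2$ and degree $3$, take $\le_P$ with $2$-root $3_1 < 3_2$ and, above $3_2$, the chains $1_1 < 1_2$ and $2_1 < 2_2$ mutually incomparable; then $\gamma = 2$ is a $2$-rooted cut (with $R = \varnothing$), but $\le_{P^{|[2]}}$ has two incomparable minima and is only $0$-rooted. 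So the operation is not literally defined by Definition~\ref{def:messyCitelangisSeriesActionPosets} on these inputs. Your choice to work through the formula of Remark~\ref{rem:sumsPosetsSeries}, which only needs an $\ell$-root of $\le_M$ and an $r$-root of $\le_N$, is precisely the right way to make the statement rigorous; the restrictions always carry these shorter roots, as you argue, and the identity $\le_P = \Root_k(\le_P) + (\le_L \sqcup \le_R)$ then matches the ordered-sum on the nose.
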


\begin{remark}
\label{rem:algoPosetSeries}
Similarly to \cref{rem:algoTidyEvalSeries}, observe that \cref{lem:krootedCutImpliesOperationPoset} gives an inductive algorithm to compute all decompositions of a given $k$-rooted $k$-poset~$\le_P$ as an evaluation of the form ${{\le_P} = \EvalPosetSeries(\tree ; \le_{M_1}, \dots, \le_{M_p})}$.
Namely, $\le_P$ admits
\begin{itemize}
\item the trivial evaluation~$\le_0 = \EvalPosetSeries(\one ; \le_0)$, where~$\one$ is the unit syntax tree with no node and a single leaf, and 
\item the evaluation~${\le_P} = \EvalPosetSeries(\tree ; \le_{M_1}, \dots, \le_{M_l}, \le_{N_1}, \dots, \le_{N_r})$, for any~${\gamma \in \kcuts(\le_P)}$ and any~${\le_{P^{|[\gamma]}}} = \EvalPosetSeries(\tree[l] ; \le_{M_1}, \dots, \le_{M_l})$ and ${\le_{P^{|[\ell] \ssm [\gamma]}}} = \EvalPosetSeries(\tree[r] ; \le_{N_1}, \dots, \le_{N_r})$, where~$\tree$ is the syntax tree with root~$\operation \in \Operations_k$ defined by \cref{lem:krootedCutImpliesOperationPoset} and with subtrees~$\tree[l]$ and~$\tree[r]$.
\end{itemize}
This algorithm implies the existence of decompositions of the form~${\le_P} = \EvalPosetSeries(\tree ; \le_{M_1}, \dots, \le_{M_p})$ where~$\le_{M_1}, \dots, \le_{M_p}$ are $k$-rooted uncuttable.
\end{remark}

Note that not all $k$-rooted $k$-posets are obtained by evaluating syntax trees on~$\Operations_k$.
For instance, the evaluation the syntax trees on~$\Operations_2$ of arity~$2$ only produces the four $2$-rooted $2$-posets of \cref{fig:exmMessyCitelangisSeriesActionPosets}, thus only two of the six linear $2$-rooted $2$-posets of degree~$2$.
We now characterize the $k$-rooted $k$-posets which are series poset evaluations of syntax trees.

\begin{proposition}
\label{prop:characterizationSeriesPosetEvaluations}
The series poset evaluations of the syntax trees of~$\Syntax[\Operations_k]$ are precisely the fully $k$-rooted cuttable $k$-posets.
\end{proposition}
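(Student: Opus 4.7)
The plan is to prove both inclusions by induction on the degree of the poset (respectively the arity of the syntax tree), following exactly the template already used in the proofs of \cref{prop:characterizationTidyParallelPermutationEvaluations,prop:characterizationParallelPosetEvaluations,prop:characterizationTidySeriesPermutationEvaluations}. All the combinatorial ingredients are in place: \cref{lem:operationImplieskrootedCutPoset} guarantees that grafting at the root produces a $k$-rooted cut at the join, \cref{lem:krootedCutImpliesOperationPoset} inverts this by reading off the operator~$\operation$ from any given $k$-rooted cut, \cref{lem:krootedCutsRestrictionPoset} transports $k$-rooted cuts under restriction, and \cref{prop:equivalenceFullykrootedCuttablePoset} is the analogue of \cref{prop:equivalenceFullyBoundedCuttable} ensuring that full $k$-rooted cuttability is preserved under arbitrary restrictions. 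The heart of the argument is therefore purely bookkeeping; the real content lies in the lemmas already proven.

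For the forward direction, I will take~${\le_P} = \EvalPosetSeries(\tree)$ with~$\tree \in \Syntax[\Operations_k](\ell)$ and show by induction on~$\ell$ that~$\le_P$ is fully $k$-rooted cuttable, \ie that every restriction~$\le_{P^{|[a,b]}}$ with~$1 \le a < b \le \ell$ admits a $k$-rooted cut. The base~$\ell = 1$ is vacuous. For the inductive step, write~$\tree$ as a grafting of its left subtree~$\tree[l]$ of arity~$\gamma$ and its right subtree~$\tree[r]$, so that~${\le_{P^{|[\gamma]}}} = \EvalPosetSeries(\tree[l])$ and~${\le_{P^{|[\ell]\ssm[\gamma]}}} = \EvalPosetSeries(\tree[r])$ by definition of~$\EvalPosetSeries$ and \cref{rem:sumsPosetsSeries}. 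The three cases~$b\le\gamma$, $\gamma\le a$, and~$a<\gamma<b$ are then handled exactly as in the proof of \cref{prop:characterizationParallelPosetEvaluations}: the first two use the induction hypothesis combined with \cref{lem:krootedCutsRestrictionPoset}, while the last uses \cref{lem:operationImplieskrootedCutPoset} to get~$\gamma$ as a cut of~$\le_P$ followed by \cref{lem:krootedCutsRestrictionPoset} to obtain~$\gamma - a$ as a cut of~$\le_{P^{|[a,b]}}$.

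For the reverse direction, I will argue by induction on the degree~$\ell$ of a fully $k$-rooted cuttable $k$-poset~$\le_P$, exactly as in \cref{rem:algoPosetSeries}. The base~$\ell=1$ gives~${\le_P} = {\le_{I_k}} = \EvalPosetSeries(\one)$. For~$\ell \ge 2$, full $k$-rooted cuttability furnishes a $k$-rooted cut~$\gamma$, and \cref{thm:fullykrootedCuttablepermutationClass}'s poset analogue (which is built into \cref{prop:equivalenceFullykrootedCuttablePoset} applied restriction-wise) ensures that both~$\le_{P^{|[\gamma]}}$ and~$\le_{P^{|[\ell]\ssm[\gamma]}}$ remain fully $k$-rooted cuttable. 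The induction hypothesis produces syntax trees~$\tree[l]$ and~$\tree[r]$ with ${\le_{P^{|[\gamma]}}} = \EvalPosetSeries(\tree[l])$ and~${\le_{P^{|[\ell]\ssm[\gamma]}}} = \EvalPosetSeries(\tree[r])$, and \cref{lem:krootedCutImpliesOperationPoset} produces the unique operator~$\operation \in \Operations_k$ such that grafting~$\tree[l]$ and~$\tree[r]$ under~$\operation$ recovers~$\le_P$.

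The main potential obstacle is making sure that the restriction of a fully $k$-rooted cuttable $k$-poset to an initial segment or a final segment determined by a $k$-rooted cut is itself $k$-rooted: this is not automatic from \cref{def:fullykrootedCuttablePoset} alone, but follows because the $k$ minimal elements of~$\le_P$ lie entirely within~$[\gamma]$ (so that~$\le_{P^{|[\gamma]}}$ inherits the same $k$-root), while on the~$[\ell]\ssm[\gamma]$ side the definition of $k$-rooted cut forces the unique minimal chain of~$\le_{P^{|[\ell]\ssm[\gamma]}}$ to be the chain of first~$k$ elements read after~$\gamma$ in the natural traversal, as encoded by the operator~$\operation$ of \cref{lem:krootedCutImpliesOperationPoset}. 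Once this point is checked, the induction closes cleanly.
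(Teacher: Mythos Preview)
Your approach is correct and matches the paper's proof essentially verbatim; the paper explicitly says ``the proof is exactly the same as \cref{prop:characterizationTidySeriesPermutationEvaluations}'' and carries out precisely the two-direction induction you describe, citing the same lemmas.

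One correction to your final paragraph: the claim that ``the $k$ minimal elements of $\le_P$ lie entirely within $[\gamma]$'' is false. The root chain $\Root_k(\le_P)$ generally contains elements with values on both sides of $\gamma$ --- for instance if ${\le_P} = {\le_M} \, \operation \, {\le_N}$ with $\operation$ mixing $\op{l}$ and $\op{r}$ symbols, the root interleaves elements of $\le_M$ and $\le_{N[m]}$ by \cref{rem:sumsPosetsSeries}. The obstacle you raise is instead resolved directly by the hypothesis: by \cref{def:fullykrootedCuttablePoset}, every restriction of $\le_P$ to an interval of size at least $2$ is $k$-rooted cuttable, and being $k$-rooted cuttable presupposes being $k$-rooted since \cref{def:krootedCutPoset} is stated only for $k$-rooted $k$-posets; a restriction to a singleton is trivially a $k$-chain. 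So the $k$-rootedness of $\le_{P^{|[\gamma]}}$ and $\le_{P^{|[\ell]\ssm[\gamma]}}$ is already built into full $k$-rooted cuttability, contrary to your assertion that it is ``not automatic from \cref{def:fullykrootedCuttablePoset} alone.''
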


\begin{proof}
The proof is exactly the same as \cref{prop:characterizationTidySeriesPermutationEvaluations}.
Consider first a series poset evaluation~${\le_P} = \EvalPosetSeries(\tree)$ with~$\tree \in \Syntax[\Operations_k](\ell)$.
We prove by induction on~$\ell$ that~$\le_P$ is fully $k$-rooted cuttable.
If~$\ell = 1$, there is nothing to prove.
Assume that~$\ell \ge 2$ and let~$1 \le a < b \le \ell$.
Let~$\tree[l]$ and~$\tree[r]$ denote the left and right subtrees of~$\tree$, and let~$\gamma$ be the arity of~$\tree[l]$, so that~${\le_{P^{|[\gamma]}}} = \EvalPosetSeries(\tree[l])$ and~$\le_{M^{|[\ell] \ssm [\gamma]}} = \EvalPosetSeries(\tree[r])$.
We distinguish three cases:
\begin{itemize}
\item Assume that~$b \le \gamma$. Since~$\le_{M^{|[\ell]}} = \EvalPosetSeries(\tree[l])$ is fully $k$-rooted cuttable by induction hypothesis and $k$-rooted cuts are preserved by restriction by \cref{lem:krootedCutsRestrictionPoset}, we obtain that~${\le_{P^{|[a,b]}}} = (\le_{P^{|[\gamma]}})^{|[a,b]}$ is $k$-rooted cuttable.
\item Assume that~$\gamma \le a$. The argument is similar since~${\le_{P^{|[a,b]}}} = (\le_{P^{|[\ell] \ssm [\gamma]}})^{|[a-\gamma,b-\gamma]}$.
\item Assume finally that~$a < \gamma < b$. By \cref{lem:operationImplieskrootedCutPoset}, $\gamma$ is a $k$-rooted cut of~$\le_P$. Therefore, $\gamma-a$ is a $k$-rooted cut of~$\le_{P^{|[a,b]}}$ by \cref{lem:krootedCutsRestrictionPoset}.
\end{itemize}

Conversely, consider now a fully $k$-rooted cuttable $k$-poset~$\le_P$ of degree~$\ell$.
Similarly to \cref{rem:algoPosetSeries}, we prove by induction on~$\ell$ that~$\le_P$ is the series poset evaluation of a syntax tree.
If~$\ell = 1$, then~${\le_P} = \EvalPosetSeries(\one)$.
If~$\ell \ge 2$, then~$\le_P$ admits at least one $k$-rooted cut~$\gamma$ by assumption.
Moreover, $\le_{P^{|[\gamma]}}$ and~$\le_{P^{|[\ell] \ssm [\gamma]}}$ are both fully $k$-rooted cuttable by \cref{lem:krootedCutsRestrictionPoset}.
By induction, we obtain that~${\le_{P^{|[\gamma]}}} = \EvalPosetSeries(\tree[l])$ and~${\le_{P^{|[\ell] \ssm [\gamma]}}} = \EvalPosetSeries(\tree[r])$.
Then~${\le_P} = \EvalPosetSeries(\tree)$, where~$\tree$ is the syntax tree with root~$\operation \in \Operations_k$ defined by \cref{lem:krootedCutImpliesOperation} and with subtrees~$\tree[l]$ and~$\tree[r]$.
\end{proof}

\paraul{Connections between~$\tidyEvalPermSeries$, $\messyEvalPermSeries$ and~$\EvalPosetSeries$}
We have seen in \cref{lem:LexMinSeries} that $\tidyEvalPermSeries(\tree) = \LexMin \big( \messyEvalPermSeries(\tree) \big)$ for any syntax tree~$\tree \in \Syntax[\Operations_k]$.
We now compare with the series poset evaluation~$\EvalPosetSeries(\tree)$.

First, as in \cref{subsubsec:actionMessyParallelSignaleticBoundedPosets}, we get back the messy series permutation evaluation by projecting posets to the sum of their linear extensions.
Recall that for a $k$-poset~$\le_M$, we let
\[
\LinExt(\le_M) \eqdef \sum_{\mu \in \linearExtensions(\le_M)} \mu
\]
be the sum of all linear extensions of~$\le_M$ (meaning the formal sum in~$\K\Perm_2(|M|)$).
We obtain the following statement, illustrated in \cref{fig:LinExtEvalPosetSeries}.

\begin{lemma}
\label{lem:LinExtEvalPosetSeries}
For any syntax tree~$\tree \in \Syntax[\Operations_k]$, we have~$\messyEvalPermSeries(\tree) = \LinExt \big( \EvalPosetSeries(\tree) \big)$.
\end{lemma}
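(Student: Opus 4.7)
The plan is to mirror the argument of \cref{lem:LinExtEvalPosetParallel}, replacing the parallel poset decomposition of \cref{rem:sumsPosetsParallel} by its series analogue \cref{rem:sumsPosetsSeries}. The main observation to prove first is the following quadratic compatibility: for any operator $\operation \in \Operations_k$ and any two $k$-rooted $k$-posets $\le_M$ and $\le_N$,
\[
\LinExt({\le_M}) \; \operation \; \LinExt({\le_N}) = \LinExt\big({\le_M} \; \operation \; {\le_N}\big),
\]
where the left hand side uses the messy series $k$-citelangis operators of \cref{def:messyCitelangisSeriesActionPermutations} on $\FQSym_k$ and the right hand side uses the poset operators of \cref{def:messyCitelangisSeriesActionPosets}.

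To prove this quadratic identity, I would unfold the right hand side using \cref{rem:sumsPosetsSeries}, which writes ${\le_M} \, \operation \, {\le_N}$ as the ordered sum of the root chain $\Root_\ell(\le_M) \, \operation \, \Root_r(\le_N)$ (which has a unique linear extension, namely the word $\operation$ with $\op{l}$ replaced by an element from $\Root(\le_M)$ and $\op{r}$ replaced by an element from $\Root(\le_N)$) and the disjoint union ${\le_{M_{*\ell}}} \sqcup {\le_{N[m]_{*r}}}$. Applying \cref{lem:disjointUnionOrderedSumLinearExtensions} twice yields
\[
\LinExt({\le_M} \, \operation \, {\le_N}) = \pi_\operation \cdot \big( \LinExt({\le_{M_{*\ell}}}) \shuffle \LinExt({\le_{N[m]_{*r}}}) \big),
\]
where $\pi_\operation$ is the length-$k$ prefix dictated by $\operation$. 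On the other side, \cref{def:messyCitelangisSeriesActionPermutations} states that $\LinExt({\le_M}) \, \operation \, \LinExt({\le_N})$ is exactly the sum of shifted shuffles whose first $k$ letters match the pattern prescribed by $\operation$, and a direct bookkeeping argument (distinguishing the $k$ prefix letters from the remainder to be shuffled) identifies this with the above expression.

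Granting this quadratic compatibility, the full statement follows by induction on the size of the syntax tree $\tree$. The base case $\tree = \one$ is trivial since both sides equal~$1^{\{k\}}$. For the inductive step, write $\tree = \operation(\tree[l], \tree[r])$ at the root, so that by \cref{rem:messyEvalSeries} and the poset evaluation of \cref{def:posetSeries}:
\[
\messyEvalPermSeries(\tree) = \messyEvalPermSeries(\tree[l]) \; \operation \; \messyEvalPermSeries(\tree[r])
\qandq
\EvalPosetSeries(\tree) = \EvalPosetSeries(\tree[l]) \; \operation \; \EvalPosetSeries(\tree[r]).
\]
Using the induction hypothesis and the quadratic identity above, both sides coincide.

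The main technical point is the quadratic compatibility; the combinatorics of how $\operation$ forces prescribed positions among the first $k$ letters of a shuffle must be matched carefully with the ordered sum decomposition of the poset, because the series circulation rule forces the cars to consume letters sequentially from a single root chain rather than in parallel. Once the bijection between the prefix contribution and the root chain is set up cleanly, the rest is a routine induction.
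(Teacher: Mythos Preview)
Your proposal is correct and takes essentially the same approach as the paper: establish the quadratic compatibility $\LinExt(\le_M) \, \operation \, \LinExt(\le_N) = \LinExt({\le_M} \, \operation \, {\le_N})$ via \cref{rem:sumsPosetsSeries} and \cref{lem:disjointUnionOrderedSumLinearExtensions}, then induct on the syntax tree. The paper's proof is terser, simply citing those two references for the quadratic identity, while you spell out the ordered-sum decomposition and the prefix/shuffle matching in more detail; but the argument is the same.
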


\begin{proof}
\cref{rem:sumsPosetsSeries,lem:disjointUnionOrderedSumLinearExtensions} prove that
\[
\LinExt(\le_M) \; \operation \, \LinExt(\le_N) = \LinExt({\le_M} \; \operation \; {\le_N})
\]
for any operation~$\operation\in\Operations_k$ and any $k$-rooted $k$-posets~$\le_M$ and~$\le_N$.
The statement then follows by induction on syntax trees.
\end{proof}

\begin{figure}[t]
	\centerline{$
	\messyEvalPermSeries \left(
		\begin{tikzpicture}[baseline={([yshift=-.8ex]current bounding box.center)}, level/.style={sibling distance=18mm/#1, level distance = 1cm/sqrt(#1)}]
			\node [rectangle, draw] {$\op{l,r}$}
				child {node [rectangle, draw] {$\op{r,r}$}
					child {node [rectangle, draw] {$\op{l,r}$}
						child {node {}}
						child {node {}}
					}
					child {node [rectangle, draw] {$\op{l,l}$}
						child {node {}}
						child {node {}}
					}
				}
				child {node [rectangle, draw] {$\op{r,l}$}
					child {node {}}
					child {node {}}
				}
			;
		\end{tikzpicture}
	\right)
	=
	\LinExt \left(
		\begin{tikzpicture}[baseline={([yshift=-.8ex]current bounding box.center)}, level 2/.style={sibling distance=2cm}, level/.style={sibling distance=1cm, level distance = .7cm}]
			\node {$3$} [grow' = up]
				child {node {$6$}
					child {node {$3$}
						child {node {$1$}
							child {node {$2$}
								child {node {$1$}}
								child {node {$2$}}
							}
						}
						child {node {$4$}
							child {node {$4$}}
						}
					}
					child {node {$5$}
						child {node {$5$}}
						child {node {$6$}}
					}
				}
			;
		\end{tikzpicture}	
	\right)
	=
	\begin{array}{c@{\;}c@{\;}c@{\;}c@{\;}c@{\;}c}
		363121244556 & + & 363121244565 & + \\
		363121245456 & + & 363121245465 & + \\
		363121245546 & + & \dots\dots       \\
		\ldots\ldots & + & 365653441221
	\end{array}
	$}
	\caption{Illustration of \cref{lem:LinExtEvalPosetSeries}.}
	\label{fig:LinExtEvalPosetSeries}
\end{figure}

Conversely, we can obtain~$\EvalPosetSeries(\tree)$ from $\messyEvalPermSeries(\tree)$ by intersecting all linear orders corresponding to the $k$-permutations that appear in~$\messyEvalPermSeries(\tree)$.

\medskip
We now describe the connection between $\tidyEvalPermSeries(\tree)$ and~$\EvalPosetSeries(\tree)$.
By \cref{lem:LinExtEvalPosetSeries,lem:LexMinSeries}, the tidy series permutation evaluation~$\tidyEvalPermSeries(\tree)$ is the lexicographically minimal linear extension of~$\EvalPosetSeries(\tree)$.
It can thus be obtained from~$\EvalPosetSeries(\tree)$ by repeatedly reading and deleting the minimal source.

Conversely, there are several syntax trees~$\tree$ whose series poset evaluation~$\EvalPosetSeries(\tree)$ has the same lexicographically minimal linear extension.
Given a $k$-rooted cuttable $k$-permutation~$\sigma$, we can find all $k$-posets~$\EvalPosetSeries(\tree)$ whose lexicographically minimal linear extension is~$\sigma$ by first using \cref{rem:tidyEvalSeries} to compute all possible syntax trees~$\tree$ such that~$\sigma = \tidyEvalPermSeries(\tree)$, and then applying \cref{def:messyCitelangisSeriesActionPosets,def:posetSeries} to get their series poset evaluations~$\EvalPosetSeries(\tree)$.

\paraul{Relations among series poset evaluations}
We now study which syntax trees evaluate to the same $k$-rooted $k$-poset.
Observe first the following quadratic relation:

\begin{equation}
\label{eq:relationPosetsSeries}
\EvalPosetSeries \left( 
	\begin{tikzpicture}[baseline=-.5cm, level 1/.style={sibling distance = 1cm, level distance = .7cm}, level 2/.style={sibling distance = .8cm, level distance = .6cm}]
		\node [rectangle, draw] {$\op{l}\!\!^k$}
			child {node [rectangle, draw] {$\op{r}\!\!^k$}
				child {node {}}
				child {node {}}
			}
			child {node {}}
		;
	\end{tikzpicture}
\right)
=
\begin{tikzpicture}[baseline=1cm, level/.style={sibling distance=1cm, level distance = .7cm}]
	\node {$2$} [grow' = up]
		child [dotted, thick] {node {$2$}
			child [solid] {node {$1$}
				child [dotted, thick] {node {$1$}}
			}
			child [solid] {node {$3$}
				child [dotted, thick] {node {$3$}}
			}
		}
	;
\end{tikzpicture}
=
\EvalPosetSeries \left(
	\begin{tikzpicture}[baseline=-.5cm, level 1/.style={sibling distance = 1cm, level distance = .7cm}, level 2/.style={sibling distance = .8cm, level distance = .6cm}]
		\node [rectangle, draw] {$\op{r}\!\!^k$}
			child {node {}}
			child {node [rectangle, draw] {$\op{l}\!\!^k$}
				child {node {}}
				child {node {}}
			}
		;
	\end{tikzpicture}
\right).
\end{equation}

\warning
The operations of \cref{def:messyCitelangisSeriesActionPosets} on $k$-rooted $k$-posets do not satisfy the other messy series $k$-citelangis relations, and thus do not define a messy series $k$-citelangis algebra.
In fact, we will now show that \cref{eq:relationPosetsSeries} is the only relation that leads to the same series poset evaluations.

\begin{lemma}
\label{lem:uniqueRelationSeriesPosetOperad}
For any operations~$\operation, \operation' \in \Operations_k$ and any $k$-rooted $k$-posets~${\le_M}, {\le_N}, {\le_{M'}}, {\le_{N'}}$, we have~${\le_M} \; \operation \; {\le_N} = {\le_{M'}} \; \operation' \; {\le_{N'}}$ if and only if 
\begin{itemize}
\item either~$\operation = \operation'$, ${\le_M} = {\le_{M'}}$ and ${\le_N} = {\le_{N'}}$, 
\item or~$\operation = {\op{l}\!^k}$ and~$\operation' = {\op{r}\!^k}$, and there exists a $k$-rooted $k$-poset~$\le_O$ such that ${{\le_M} = {\le_{M'}} \op{r}\!^k \, {\le_O}}$ and~${{\le_{N'}} = {\le_O} \op{l}\!^k \, {\le_N}}$.
\end{itemize}
\end{lemma}

\begin{proof}
The ``if'' direction is immediate as the second case was already observed in \cref{eq:relationPosetsSeries}.

For the ``only if'' direction, consider a $k$-rooted $k$-poset~${\le_P} \eqdef {\le_M} \; \operation \; {\le_N}$ obtained from an operation~$\operation \in \Operations_k$ and two $k$-rooted $k$-posets~$\le_M$ and~$\le_N$, and let~$p$ be the degree of~$\le_P$.

When~$\operation \notin \{\op{l}\!^k, \op{r}\!^k\}$, the element~$\min_k(\le_P)$ is covered by precisely two elements, and all values of~$\le_P$ appear outside of its $k$-root.
Therefore, $\le_P$ has a unique $k$-rooted cut~$\gamma$, so that~$\operation$, $\le_M$ and~$\le_N$ are all determined by \cref{lem:operationImplieskrootedCutPoset,lem:krootedCutImpliesOperationPoset}.

Observe now that when~$\operation = {\op{l}\!^k}$, we have~${\le_M} = {\le_{P^{|[p] \ssm R}}}$ and~${\le_N} = {\le_{P^{|R}}}$ where~$R$ be the upper order ideal of~$\le_P$ generated by the maximal value covering~$\min_k(\le_P)$.
Similarly, when~$\operation = {\op{r}\!^k}$, we have~${\le_M} = {\le_{P^{|L}}}$ and~${\le_N} = {\le_{P^{|[p] \ssm L}}}$ where~$L$ be the upper order ideal of~$\le_P$ generated by the minimal value covering~$\min_k(\le_P)$.
Finally, these two situations are simultaneously possible only in the case when~${{\le_M} = {\le_{M'}} \op{r}\!^k \, {\le_O}}$ and~${{\le_{N'}} = {\le_O} \op{l}\!^k \, {\le_N}}$ where~$\le_{M'}$, $\le_{N'}$ and~$\le_O$ are the sub-$k$-posets of~$\le_P$ respectively induced by~$L$, $R$ and~$P \ssm (L \cup R)$.
\end{proof}

\begin{remark}
\cref{lem:uniqueRelationSeriesPosetOperad} fails for multiposets, as we used that the multiplicity of each value in a $k$-poset is at least the cardinality~$k$ of the root chain.
\end{remark}

Finally, the next statement is similar to \cref{prop:uniqueTidySeriesCitelangisPermutationEvaluation}.

\begin{proposition}
\label{prop:uniqueSeriesPosetEvaluation}
For any syntax trees~$\tree , \tree' \in \Syntax[\Operations_k]$ of arity~$p$ and~$p'$ respectively, and any $k$-rooted uncuttable $k$-posets~$\le_{M_1}, \dots, \le_{M_p}$, $\le_{M'_1}, \dots, \le_{M'_{p'}}$, if
\[
\tidyEvalPermSeries(\tree ; \le_{M_1}, \dots, \le_{M_p}) = \tidyEvalPermSeries(\tree' ; \le_{M'_1}, \dots, \le_{M'_{p'}}),
\]
then $p = p'$, $\tree = \tree'$ modulo rewritings using \cref{eq:relationPosetsSeries} and~$\le_{M_i} = \le_{M'_i}$ for all~$i \in [p]$.
\end{proposition}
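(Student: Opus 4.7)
The plan is to follow closely the argument used in \cref{prop:uniqueTidyParallelCitelangisPermutationEvaluation} and \cref{prop:uniqueParallelPosetEvaluation}, working by induction on the arity $p$ of $\tree$. Write ${\le_P} \eqdef \EvalPosetSeries(\tree ; \le_{M_1}, \dots, \le_{M_p}) = \EvalPosetSeries(\tree' ; \le_{M'_1}, \dots, \le_{M'_{p'}})$ and let $n$ be its degree. The base case $p = 1$ is immediate: $\le_P$ is $k$-rooted uncuttable, so~$p'=1$ by \cref{lem:operationImplieskrootedCutPoset}, and $\tree$, $\tree'$ are both the unit tree, with~${\le_{M_1}} = {\le_P} = {\le_{M'_1}}$.

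For the inductive step $p \ge 2$, let $\operation[a]$ be the root of $\tree$, with left subtree $\tree[l]$ of arity $l$ and right subtree $\tree[r]$, so that $\gamma \eqdef |M_1| + \dots + |M_l|$ is a $k$-rooted cut of $\le_P$ by \cref{lem:operationImplieskrootedCutPoset}, with ${\le_{P^{|[\gamma]}}} = \EvalPosetSeries(\tree[l] ; \le_{M_1}, \dots, \le_{M_l})$ and ${\le_{P^{|[n] \ssm [\gamma]}}} = \EvalPosetSeries(\tree[r] ; \le_{M_{l+1}}, \dots, \le_{M_p})$. Define $\operation[a]'$, $\tree[l]'$, $\tree[r]'$, $l'$ and $\gamma'$ similarly for $\tree'$. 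If $\gamma = \gamma'$, then \cref{lem:krootedCutImpliesOperationPoset} gives $\operation[a] = \operation[a]'$ and the restrictions of $\le_P$ to $[\gamma]$ and $[n] \ssm [\gamma]$ admit matching decompositions, so two applications of the induction hypothesis yield $p = p'$, $\tree[l] = \tree[l]'$ and $\tree[r] = \tree[r]'$ modulo \eqref{eq:relationPosetsSeries}, and $\le_{M_i} = \le_{M'_i}$ for all $i \in [p]$.

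Assume now without loss of generality that $\gamma < \gamma'$. Set ${\le_O} \eqdef {\le_{P^{|[\gamma]}}}$, ${\le_Q} \eqdef {\le_{P^{|[\gamma'] \ssm [\gamma]}}}$ and ${\le_R} \eqdef {\le_{P^{|[n] \ssm [\gamma']}}}$. By \cref{lem:krootedCutsRestrictionPoset}, $\gamma'-\gamma$ is a $k$-rooted cut of ${\le_{P^{|[n] \ssm [\gamma]}}}$ and $\gamma$ is a $k$-rooted cut of ${\le_{P^{|[\gamma']}}}$. By a poset analogue of \cref{coro:boundedCutImpliesSyntaxTree} (obtained exactly as in \cref{rem:algoPosetSeries}), applied together with the induction hypothesis, one produces a syntax tree $\tree[s]$ with root $\operation[b]$ whose left subtree $\tree[u]$ of arity $u$ and right subtree $\tree[v]$ satisfy $\tree[r] = \tree[s]$ modulo \eqref{eq:relationPosetsSeries} and $\EvalPosetSeries(\tree[u]; \dots) = {\le_Q}$, $\EvalPosetSeries(\tree[v]; \dots) = {\le_R}$; symmetrically one produces $\tree[s]'$ with root $\operation[b]'$, left subtree $\tree[u]'$ evaluating to $\le_O$ and right subtree $\tree[v]'$ evaluating to $\le_Q$ with $\tree[l]' = \tree[s]'$ modulo \eqref{eq:relationPosetsSeries}. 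Three further applications of the induction hypothesis give $p = p'$ and ${\le_{M_i}} = {\le_{M'_i}}$ for all~$i \in [p]$.

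It remains to identify the quadratic trees formed by the roots: both ${\operation[a]} \circ_2 \operation[b]$ applied to $({\le_O}, {\le_Q}, {\le_R})$ and $\operation[a]' \circ_1 \operation[b]'$ applied to $({\le_O}, {\le_Q}, {\le_R})$ produce the same $k$-poset $\le_P$. Since $\gamma \ne \gamma'$, the root operators $\operation[a]$ and $\operation[a]'$ are distinct. Applying \cref{lem:uniqueRelationSeriesPosetOperad} to the outer composition, the only possibility is $\operation[a] = {\op{l}\!^k}$, $\operation[a]' = {\op{r}\!^k}$, and ${\operation[b] = \op{r}\!^k}$, $\operation[b]' = {\op{l}\!^k}$, exhibiting precisely the quadratic relation \eqref{eq:relationPosetsSeries} between the two root-quadratic patterns of $\tree$ and $\tree'$. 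Combined with $\tree[r] = \tree[s]$ and $\tree[l]' = \tree[s]'$ modulo \eqref{eq:relationPosetsSeries} this yields $\tree = \tree'$ modulo \eqref{eq:relationPosetsSeries}, completing the induction. The main subtlety, as in the parallel case, is ensuring that the quadratic witness lemma (here \cref{lem:uniqueRelationSeriesPosetOperad}) is strong enough to detect that only the $\op{l}\!^k/\op{r}\!^k$ relation can account for two distinct root cuts; everything else is bookkeeping paralleling \cref{prop:uniqueTidyParallelCitelangisPermutationEvaluation}.
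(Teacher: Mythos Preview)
Your approach is exactly what the paper has in mind: its proof is the one-liner ``Immediate by induction from \cref{lem:uniqueRelationSeriesPosetOperad}'', and you have correctly unfolded that induction following the template of \cref{prop:uniqueTidyParallelCitelangisPermutationEvaluation}.

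There is, however, an orientation error in the last paragraph that makes the final step fail as written. Under $\gamma < \gamma'$, the second alternative of \cref{lem:uniqueRelationSeriesPosetOperad} forces the \emph{smaller} cut to come from~${\op{r}\!\!^k}$ and the \emph{larger} one from~${\op{l}\!\!^k}$: the degree condition ${\le_M} = {\le_{M'}} \,{\op{r}\!\!^k}\, {\le_O}$ in the lemma requires $|M| > |M'|$, and in the arity~$3$ poset of \eqref{eq:relationPosetsSeries} the cut at~$1$ has root~${\op{r}\!\!^k}$ while the cut at~$2$ has root~${\op{l}\!\!^k}$. So you should have $\operation[a] = {\op{r}\!\!^k}$, $\operation[a]' = {\op{l}\!\!^k}$, and then $\operation[b] = {\op{l}\!\!^k}$, $\operation[b]' = {\op{r}\!\!^k}$. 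With your assignment the two quadratic tops are ${\op{l}\!\!^k} \circ_2 {\op{r}\!\!^k}$ and ${\op{r}\!\!^k} \circ_1 {\op{l}\!\!^k}$, which are \emph{not} identified by \eqref{eq:relationPosetsSeries}; with the corrected one they are ${\op{r}\!\!^k} \circ_2 {\op{l}\!\!^k}$ and ${\op{l}\!\!^k} \circ_1 {\op{r}\!\!^k}$, which is precisely \eqref{eq:relationPosetsSeries}. Two minor points: the sentence ``Since $\gamma \ne \gamma'$, the root operators $\operation[a]$ and $\operation[a]'$ are distinct'' is not the right justification --- what rules out the first alternative of the lemma is $|{\le_O}| = \gamma \ne \gamma'$, not any a priori distinctness of the operators; and the values of $\operation[b], \operation[b]'$ do not fall out of a single application of the lemma --- you need a short second application (or a degree comparison) to identify the intermediate poset produced by the lemma with your~$\le_Q$.
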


\begin{proof}
Immediate by induction from \cref{lem:uniqueRelationSeriesPosetOperad}.
\end{proof}

\paraul{Series poset evaluations of series $k$-citelangis normal forms}
As the operations of \cref{def:messyCitelangisSeriesActionPosets} on $k$-rooted $k$-posets do not satisfy all the messy series $k$-citelangis relations, the series poset evaluations of all syntax trees result to too many different $k$-posets.
To obtain an alternative combinatorial model for the messy series $k$-citelangis operad, we thus need to restrict our attention to series poset evaluations of normal forms of the messy series $k$-citelangis rewriting system described in \cref{subsubsec:rewritingSystemCitelangis}.

\begin{definition}
\label{def:normalPosetSeries}
A \defn{series normal} $k$-poset is a $k$-poset that can be obtained as the series poset evaluation of a normal form of the messy series $k$-citelangis rewriting system.
\end{definition}

In particular, by \cref{rem:intervalLabeled,prop:characterizationSeriesPosetEvaluations}, a series normal $k$-poset is an interval labeled and fully $k$-rooted cuttable tree.

\begin{example}
\label{exm:willowPosets}
When~$k = 1$, the series normal $1$-posets are just interval labeled trees such that:
\begin{itemize}
\item any node has at most one smaller child, and
\item if a node is larger than its parent, then it is larger than all its children.
\end{itemize}
We call them \defn{willow posets}.
These posets are illustrated in \cref{fig:TamariLatticeWillowPosets} for~$n = 4$.
In each poset, we have colored the increasing edges in red and the decreasing ones in blue.
The conditions defining willow posets translate to the following color rule: blue edges cannot appear in parallel (no node appears as the bottom of two blue covering relations) while red edges cannot appear in series (no node appears both at the bottom and at the top of red covering relations).

\begin{figure}[t]
	\centerline{
    \begin{tikzpicture}[scale=2, inner sep=2pt]
    	\node (a) at (4.2,1) {
    		\begin{tikzpicture}[level/.style={sibling distance=.6cm, level distance = .7cm}]
    			\node {$4$} [grow' = up]
    				child [blueEdge] {node {$3$}
    					child [blueEdge] {node {$2$}
    						child [blueEdge] {node {$1$}}
    					}
    				}
    			;
    		\end{tikzpicture}	
    	};
    	\node (b) at (3,2) {
            \begin{tikzpicture}[level/.style={sibling distance=.6cm, level distance = .7cm}]
            	\node {$4$} [grow' = up]
            		child [blueEdge] {node {$3$}
            			child [blueEdge] {node {$1$}
            				child [redEdge] {node {$2$}}
            			}
            		}
            	;
            \end{tikzpicture}	
    	};
    	\node (c) at (2,3) {
            \begin{tikzpicture}[level/.style={sibling distance=.6cm, level distance = .7cm}]
            	\node {$4$} [grow' = up]
            		child [blueEdge] {node {$1$}
            			child [redEdge] {node {$3$}
            				child [blueEdge] {node {$2$}}
            			}
            		}
            	;
            \end{tikzpicture}	
    	};
    	\node (d) at (1,4) {
            \begin{tikzpicture}[level/.style={sibling distance=.6cm, level distance = .7cm}]
            	\node {$4$} [grow' = up]
            		child [blueEdge] {node {$1$}
            			child [redEdge] {node {$2$}}
            			child [redEdge] {node {$3$}}
            		}
            	;
            \end{tikzpicture}	
    	};
    	\node (e) at (2,5) {
            \begin{tikzpicture}[level/.style={sibling distance=.6cm, level distance = .7cm}]
            	\node {$1$} [grow' = up]
            		child [redEdge] {node {$4$}
            			child [blueEdge] {node {$2$}
            				child [redEdge] {node {$3$}}
            			}
            		}
            	;
            \end{tikzpicture}	
    	};
    	\node (f) at (3,6) {
            \begin{tikzpicture}[level/.style={sibling distance=.6cm, level distance = .7cm}]
            	\node {$1$} [grow' = up]
            		child [redEdge] {node {$2$}}
            		child [redEdge] {node {$4$}
            			child [blueEdge] {node {$3$}}
            		}
            	;
            \end{tikzpicture}	
    	};
    	\node (g) at (4.2,7) {
            \begin{tikzpicture}[level/.style={sibling distance=.6cm, level distance = .7cm}]
            	\node {$1$} [grow' = up]
            		child [redEdge] {node {$2$}}
            		child [redEdge] {node {$3$}}
            		child [redEdge] {node {$4$}}
            	;
            \end{tikzpicture}	
    	};
    	\node (h) at (3.5,3.1) {
            \begin{tikzpicture}[level/.style={sibling distance=.6cm, level distance = .7cm}]
            	\node {$4$} [grow' = up]
            		child [blueEdge] {node {$2$}
            			child [blueEdge] {node {$1$}}
            			child [redEdge] {node {$3$}}
            		}
            	;
            \end{tikzpicture}	
    	};
    	\node (i) at (3,4) {
            \begin{tikzpicture}[level/.style={sibling distance=.6cm, level distance = .7cm}]
            	\node {$1$} [grow' = up]
            		child [redEdge] {node {$4$}
            			child [blueEdge] {node {$3$}
            				child [blueEdge] {node {$2$}}
            			}
            		}
            	;
            \end{tikzpicture}	
    	};
    	\node (j) at (4.2,4) {
            \begin{tikzpicture}[level/.style={sibling distance=.6cm, level distance = .7cm}]
            	\node {$2$} [grow' = up]
            		child [blueEdge] {node {$1$}}
            		child [redEdge] {node {$4$}
            			child [blueEdge] {node {$3$}}
            		}
            	;
            \end{tikzpicture}	
    	};
    	\node (k) at (4.2,5.5) {
            \begin{tikzpicture}[level/.style={sibling distance=.6cm, level distance = .7cm}]
            	\node {$1$} [grow' = up]
            		child [redEdge] {node {$3$}
            			child [blueEdge] {node {$2$}}
            		}
            		child [redEdge] {node {$4$}}
            	;
            \end{tikzpicture}	
    	};
    	\node (l) at (5.4,4) {
            \begin{tikzpicture}[level/.style={sibling distance=.6cm, level distance = .7cm}]
            	\node {$3$} [grow' = up]
            		child [blueEdge] {node {$1$}
            			child [redEdge] {node {$2$}}
            		}
            		child [redEdge] {node {$4$}}
            	;
            \end{tikzpicture}	
    	};
    	\node (m) at (6.6,3) {
            \begin{tikzpicture}[level/.style={sibling distance=.6cm, level distance = .7cm}]
            	\node {$3$} [grow' = up]
            		child [blueEdge] {node {$2$}
            			child [blueEdge] {node {$1$}}
            		}
            		child [redEdge] {node {$4$}}
            	;
            \end{tikzpicture}	
    	};
    	\node (n) at (6,6) {
            \begin{tikzpicture}[level/.style={sibling distance=.6cm, level distance = .7cm}]
            	\node {$2$} [grow' = up]
            		child [blueEdge] {node {$1$}}
            		child [redEdge] {node {$3$}}
            		child [redEdge] {node {$4$}}
            	;
            \end{tikzpicture}	
    	};
    	\draw[-, ultra thick] (a) -- (b);
    	\draw[-, ultra thick] (a) -- (h);
    	\draw[-, ultra thick] (a) -- (m);
    	\draw[-, ultra thick] (b) -- (c);
    	\draw[-, ultra thick] (b) -- (l);
    	\draw[-, ultra thick] (c) -- (d);
    	\draw[-, ultra thick] (c) -- (i);
    	\draw[-, ultra thick] (d) -- (e);
    	\draw[-, ultra thick] (e) -- (f);
    	\draw[-, ultra thick] (f) -- (g);
    	\draw[-, ultra thick] (h) -- (d);
    	\draw[-, ultra thick] (h) -- (j);
    	\draw[-, ultra thick] (i) -- (e);
    	\draw[-, ultra thick] (i) -- (k);
    	\draw[-, ultra thick] (j) -- (f);
    	\draw[-, ultra thick] (j) -- (n);
    	\draw[-, ultra thick] (k) -- (g);
    	\draw[-, ultra thick] (l) -- (k);
    	\draw[-, ultra thick] (m) -- (l);
    	\draw[-, ultra thick] (m) -- (n);
    	\draw[-, ultra thick] (n) -- (g);
    \end{tikzpicture}
    }
	\caption{The Tamari lattice on willow posets, defined in \cref{exm:willowPosets}. Increasing edges are colored red, while decreasing ones are colored blue, and there are no two blue edges in parallel nor two red edges in series. Compare to \cref{fig:TamariLatticeBinaryTrees} for an illustration of the bijection between binary trees and willow posets.}
	\label{fig:TamariLatticeWillowPosets}
\end{figure}
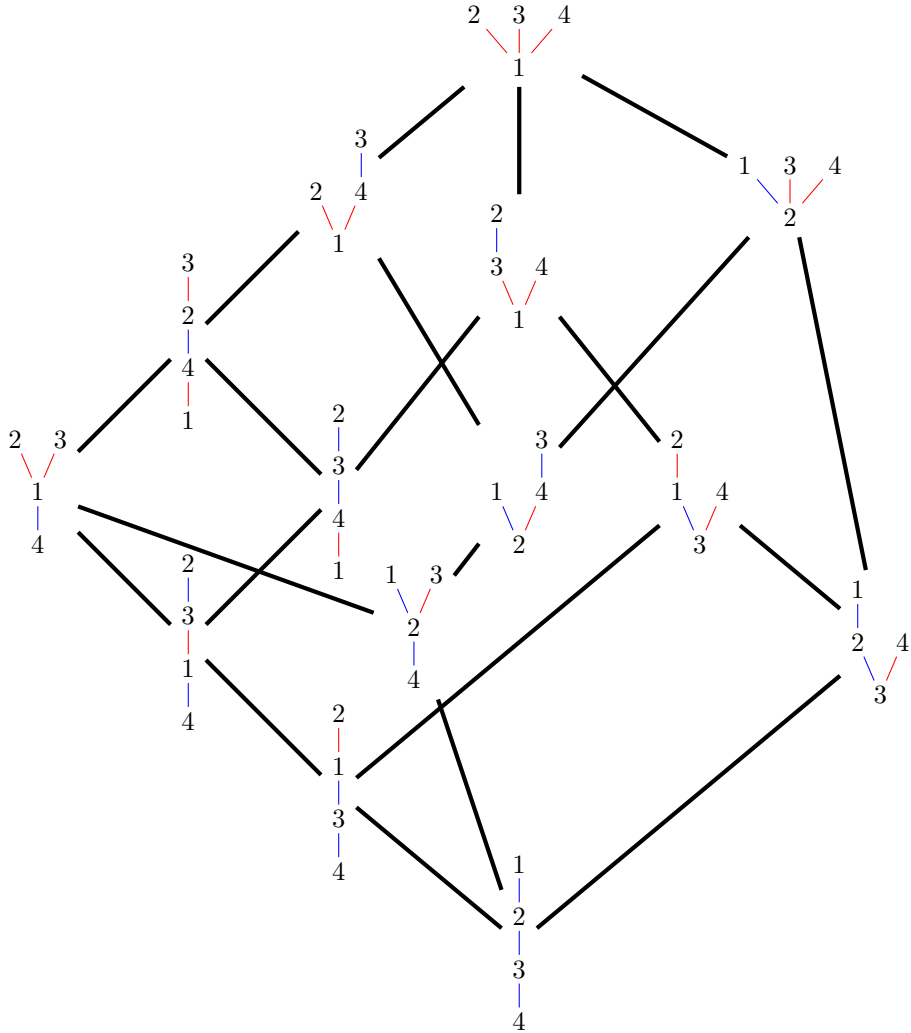
\end{example}

There is an obvious bijection sending a binary tree~$t$ to a willow poset~$w(t)$.
Given a binary tree~$t$, we label its nodes using the infix labeling where all labels in the left (resp.~right) subtree of the $i$-th node are smaller (resp.~larger) than~$i$.
Given a node~$x$ in~$t$, we define:
\begin{itemize}
\item the \defn{right origin} of~$x$: a node and its right child have the same right origin, and a node which is not a right child is its own right origin.
\item the \defn{right outgrowth} of~$x$: all nodes with right origin~$x$, except~$x$ itself.
\end{itemize}
We define the tree~$w(t)$ where the children of a node are its left child in~$t$ (if any) together with its right outgrowth in~$t$.
We claim that $w(t)$ is a willow poset:
\begin{itemize}
\item for any node, its only smaller child in~$w(t)$ is its left child in~$t$, and
\item if a node is larger than its parent, then its parent is its right origin, so that this node has no right outgrowth.
\end{itemize}
The reader can compare \cref{fig:TamariLatticeBinaryTrees,fig:TamariLatticeWillowPosets} for an illustration of this bijection.

In contrast to the $k = 1$ case discussed in \cref{exm:willowPosets}, there seems to be no simple characterization of the series normal $k$-posets for~$k > 1$.
There are still some necessary conditions on the increasing and decreasing edges, but they are more difficult to express.

There is however a simple algorithm to decide whether a given fully $k$-rooted cuttable $k$-poset~$\le_P$ is series normal.
Namely, compute its lexicographically minimal linear extension~$\sigma$, then the normal form~$\tree$ of the tidy series $k$-citelangis rewriting system such that~$\sigma = \tidyEvalPermSeries(\tree)$ by \cref{rem:algoTidyEvalSeriesNormalForms}, and check whether~${\le_P} = \EvalPosetSeries(\tree)$.

Finally, one can also consider the list of all series poset evaluations of the $3^k-1$ quadratic series $k$-signaletic combs distinct from that of \cref{eq:relationPosetsSeries}.
A fully $k$-rooted cuttable $k$-poset is series normal if it does not contain any of these forbidden $k$-posets as patterns.
The meaning of patterns in $k$-rooted $k$-posets should be clear with the poset series compositions defined in \cref{def:seriesRootedPosetOperad}.
Here, we prefer to illustrate intuitively the notion on an example:

\centerline{$
	\EvalPosetSeries \left(
		\begin{tikzpicture}[baseline=-.5cm, level 1/.style={sibling distance = .8cm, level distance = .6cm}, level 2/.style={sibling distance = .8cm, level distance = .6cm}]
			\node [rectangle, draw, fill=red!50] {$\op{r,l}$}
				child {node {\blue $11$}}
				child {node [rectangle, draw, fill=red!50] {$\op{r,l}$}
					child {node {\cyan $22$}}
					child {node {\green $33$}}
				}
			;
		\end{tikzpicture}
	\right)
	=
	\begin{tikzpicture}[baseline={([yshift=-2ex]current bounding box.center)}, level/.style={sibling distance=.7cm, level distance = .7cm}]
		\node {\green $3$} [grow' = up]
			child [edge from parent/.style={green, draw}] {node {\blue $1$}
				child [blueNode] {node {$1$}}
				child {node {\cyan $2$}
					child [cyanNode] {node {$2$}}
					child [greenNode] {node {$3$}}
				}
			}
		;
	\end{tikzpicture}	
	\text{ is a pattern in }
	\EvalPosetSeries \left(
		\begin{tikzpicture}[baseline={([yshift=-.8ex]current bounding box.center)}, level/.style={sibling distance=1cm, level distance = .6cm}, level 3/.style={sibling distance=1.5cm, level distance = .6cm}]
			\node [rectangle, draw] {$\op{r,r}$}
				child {node {}}
				child {node [rectangle, draw] {$\op{r,l}$}
					child {node [rectangle, draw, fill=red!50] {$\op{r,l}$}
						child {node [rectangle, draw, fill=blue!50] {$\op{l,r}$}
							child {node {}}
							child {node {}}
						}
						child {node [rectangle, draw, fill=red!50] {$\op{r,l}$}
							child {node [rectangle, draw, fill=cyan!50] {$\op{r,l}$}
								child {node {}}
								child {node {}}
							}
							child {node [rectangle, draw, fill=green!50] {$\op{r,r}$}
								child {node {}}
								child {node {}}
							}
						}
					}
					child {node {}}
				}
			;
		\end{tikzpicture}
	\right)
	=
	\begin{tikzpicture}[baseline={([yshift=-2ex]current bounding box.center)}, level/.style={sibling distance=1.5cm, level distance = .7cm}]
		\node {$8$} [grow' = up]
			child {node {\green $7$}
				child [edge from parent/.style={green, draw}] {node {$1$}
					child [edge from parent/.style={black, draw}] {node {$1$}}
					child {node {\blue $2$}
						child [blueNode, sibling distance=2cm] {node {$3$}
							child [sibling distance=.8cm] {node {$2$}}
							child [sibling distance=.8cm] {node {$3$}}
						}
						child [sibling distance=2cm] {node {\cyan $5$}
							child [cyanNode, sibling distance=1cm] {node {$4$}
								child [sibling distance=.8cm] {node {$4$}}
								child [sibling distance=.8cm] {node {$5$}}
							}
							child [greenNode, sibling distance=1cm] {node {$7$}
								child {node {$6$}
									child {node {$6$}}
								}
							}
						}
					}
				}
				child {node {$8$}}
			}
		;
	\end{tikzpicture}	
$}

Note that for~$k = 1$, the forbidden posets are
\[
	\EvalPosetSeries \left( \compoR{l}{l} \right)
	=
	\begin{tikzpicture}[baseline=.6cm, level/.style={sibling distance=.5cm, level distance = .7cm}]
		\node {$1$} [grow' = up]
			child [redEdge] {node {$2$}
				child {node {$3$}
				}
			}
		;
	\end{tikzpicture}	
	\qqandqq
	\EvalPosetSeries \left( \compoR{r}{r} \right)
	=
	\begin{tikzpicture}[baseline=.3cm, level/.style={sibling distance=.7cm, level distance = .7cm}]
		\node {$3$} [grow' = up]
			child [blueEdge] {node {$1$}}
			child [blueEdge] {node {$2$}}
		;
	\end{tikzpicture}	
\]
which leads to the description of \cref{exm:willowPosets}.
For~$k = 2$, the forbidden $2$-posets are

\medskip
\centerline{
\begin{tabular}{cccccccc}
	\begin{tikzpicture}[level/.style={sibling distance=.5cm, level distance = .7cm}]
		\node {$1$} [grow' = up]
			child {node {$1$}
				child {node {$2$}
					child {node {$2$}
						child {node {$3$}
							child {node {$3$}
							}
						}
					}
				}
			}
		;
	\end{tikzpicture}	
	&
	\begin{tikzpicture}[level/.style={sibling distance=.5cm, level distance = .7cm}]
		\node {$1$} [grow' = up]
			child {node {$2$}
				child {node {$1$}}
				child {node {$2$}
					child {node {$3$}
						child {node {$3$}
						}
					}
				}
			}
		;
	\end{tikzpicture}	
	&
	\begin{tikzpicture}[level/.style={sibling distance=.5cm, level distance = .7cm}]
		\node {$1$} [grow' = up]
			child {node {$3$}
				child {node {$1$}}
				child {node {$2$}
					child {node {$2$}}
					child {node {$3$}}
				}
			}
		;
	\end{tikzpicture}	
	&
	\begin{tikzpicture}[level/.style={sibling distance=.5cm, level distance = .7cm}]
		\node {$2$} [grow' = up]
			child {node {$1$}
				child {node {$1$}}
				child {node {$2$}
					child {node {$3$}
						child {node {$3$}
						}
					}
				}
			}
		;
	\end{tikzpicture}	
	&
	\begin{tikzpicture}[level/.style={sibling distance=.5cm, level distance = .7cm}]
		\node {$3$} [grow' = up]
			child {node {$1$}
				child {node {$1$}}
				child {node {$2$}
					child {node {$2$}}
					child {node {$3$}}
				}
			}
		;
	\end{tikzpicture}	
	&
	\begin{tikzpicture}[level/.style={sibling distance=.5cm, level distance = .7cm}]
		\node {$2$} [grow' = up]
			child {node {$3$}
				child {node {$1$}
					child {node {$1$}
					}
				}
				child {node {$2$}}
				child {node {$3$}}
			}
		;
	\end{tikzpicture}	
	&
	\begin{tikzpicture}[level/.style={sibling distance=.5cm, level distance = .7cm}]
		\node {$3$} [grow' = up]
			child {node {$2$}
				child {node {$1$}
					child {node {$1$}
					}
				}
				child {node {$2$}}
				child {node {$3$}}
			}
		;
	\end{tikzpicture}	
	&
	\begin{tikzpicture}[level/.style={sibling distance=.5cm, level distance = .7cm}]
		\node {$3$} [grow' = up]
			child {node {$3$}
				child {node {$1$}
					child {node {$1$}
					}
				}
				child {node {$2$}
					child {node {$2$}}
				}
			}
		;
	\end{tikzpicture}
	\\[.3cm]
    \compoR{l,l}{l,l}
	&
    \compoR{l,r}{l,l}
	&
    \compoR{l,r}{r,l}
	&
    \compoR{r,l}{l,l}
	&
    \compoR{r,l}{r,l}
	&
    \compoR{r,r}{l,r}
	&
    \compoR{r,r}{r,l}
	&
    \compoR{r,r}{r,r}
\end{tabular}
}

\medskip
\noindent
were we represented below each $2$-poset the series $k$-signaletic comb from which it was evaluated.


\subsubsection{Series poset operad}
\label{subsubsec:seriesPosetOperad}

As observed above, the operations~$\Operations_k$ on $k$-rooted $k$-posets defined in \cref{def:messyCitelangisSeriesActionPosets} do not satisfy the messy series $k$-citelangis relations.
However, they satisfy a subset of these relations which in turn defines an operad on posets.
This operad can be seen as a suboperad of an operad on all $k$-rooted $k$-posets defined by the following composition rules, illustrated in \cref{fig:seriesPosetOperad}.

\begin{definition}
\label{def:seriesRootedPosetOperad}
Let~$\le_M$ and~$\le_N$ be two $k$-rooted $k$-posets of degrees~$m$ and~$n$ respectively, and let~$i \in [m]$.
Let~$i_1, i_2, \dots, i_k$ denote the~$k$ copies of~$i$ in~$M$ such that~$i_1 <_M i_2 <_M \dots <_M i_k$.
We define the \defn{series composition}~${\le_P} \eqdef {\le_M} \posetSeriesCirc{i} {\le_N}$ by:
\begin{itemize}
\item $P \eqdef 1^{\{k\}} \dots (m+n-1)^{\{k\}}$,
\item $\le_P$ is obtained by inserting~$\le_N$ in~$\le_M$ by placing~$\min_j(\le_N)$ at~$i_j$ for all~$j \in [k]$, and performing the appropriate shift. More precisely, using the notations of \cref{def:shiftMultiset,def:shiftMultisetPlace}, the comparison $x \le_P y$ holds for~$x,y \in P$ if and only if one of the following statements holds:
	\begin{itemize}
	\item $x \in M[i,n]$, $y \in M[i,n]$ and~$\overline{x} \le_M \overline{y}$, or
	\item $x \in N[i-1]$, $y \in N[i-1]$ and~$\overline{x} \le_N \overline{y}$, or
	\item $x \in M[i,n]$, $y \in N[i-1]$, $\overline{x} \le_M i_j$ and~$\min_j(\le_N) \le_N y$ for some~$j \in [k]$ or
	\item $x \in N[i-1]$, $y \in M[i,n]$, $\overline{x} = \min_j(\le_N)$ and~$i_j \le_M \overline{y}$ for some~$j \in [k]$.
	\end{itemize}
\end{itemize}
\end{definition}

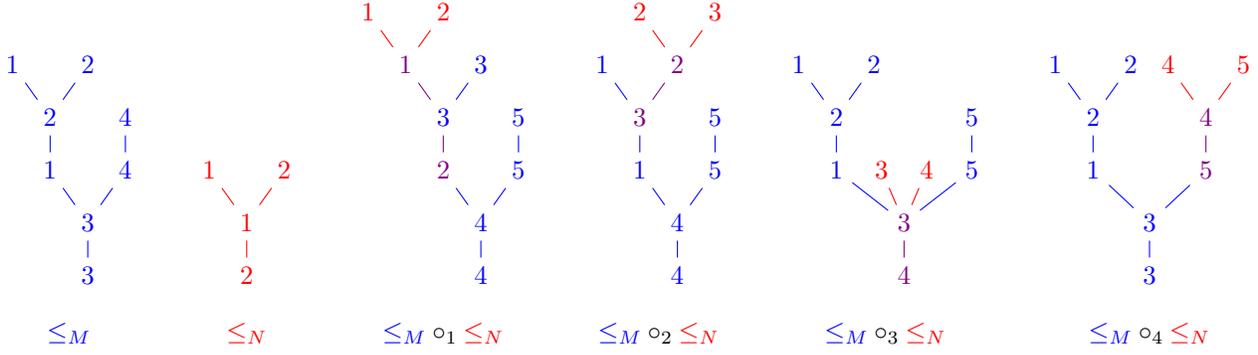
\begin{figure}
	\centerline{
	\begin{tabular}{c@{\qquad}c@{\qquad}c@{\qquad}c@{\qquad}c@{\qquad}c}
	\begin{tikzpicture}[level/.style={sibling distance=1cm, level distance = .7cm}, blue]
		\node {$3$} [grow' = up]
			child {node {$3$}
				child {node {$1$}
					child {node {$2$}
						child {node {$1$}}
						child {node {$2$}}
					}
				}
				child {node {$4$}
					child {node {$4$}}
				}
			}
		;
	\end{tikzpicture}
	&
	\begin{tikzpicture}[level/.style={sibling distance=1cm, level distance = .7cm}, red]
		\node {$2$} [grow' = up]
			child {node {$1$}
				child {node {$1$}}
				child {node {$2$}}
			}
		;
	\end{tikzpicture}
	&
	\begin{tikzpicture}[level/.style={sibling distance=1cm, level distance = .7cm}, blue]
		\node {$4$} [grow' = up]
			child [blueEdge] {node {$4$}
				child {node {\violet $2$}
					child [violetEdge] {node {$3$}
						child [violetEdge] {node {\violet $1$}
							child [redEdge] {node {\red $1$}}
							child [redEdge] {node {\red $2$}}						
						}
						child [blueEdge] {node {$3$}}
					}
				}
				child {node {$5$}
					child {node {$5$}}
				}
			}
		;
	\end{tikzpicture}
	&
	\begin{tikzpicture}[level/.style={sibling distance=1cm, level distance = .7cm}, blue]
		\node {$4$} [grow' = up]
			child {node {$4$}
				child {node {$1$}
					child {node {\violet $3$}
						child {node {$1$}}
						child [violetEdge] {node {\violet $2$}
							child [redEdge] {node {\red $2$}}
							child [redEdge] {node {\red $3$}}						
						}
					}
				}
				child {node {$5$}
					child {node {$5$}}
				}
			}
		;
	\end{tikzpicture}
	&
	\begin{tikzpicture}[level/.style={level distance = .7cm}, level 2/.style={sibling distance=.6cm}, level 4/.style={sibling distance=1cm}, blue]
		\node {\violet $4$} [grow' = up]
			child [violetEdge] {node {\violet $3$}
				child [blueEdge] {node {$1$}
					child {node {$2$}
						child {node {$1$}}
						child {node {$2$}}
					}
				}
				child [redEdge] {node {\red $3$}}
				child [redEdge] {node {\red $4$}}
				child [blueEdge] {node {$5$}
					child {node {$5$}}
				}
			}
		;
	\end{tikzpicture}
	&
	\begin{tikzpicture}[level/.style={level distance = .7cm}, level 2/.style={sibling distance=1.5cm}, level 4/.style={sibling distance=1cm}, blue]
		\node {$3$} [grow' = up]
			child {node {$3$}
				child {node {$1$}
					child {node {$2$}
						child {node {$1$}}
						child {node {$2$}}
					}
				}
				child {node {\violet $5$}
					child [violetEdge] {node {\violet $4$}
						child [redEdge] {node {\red $4$}}
						child [redEdge] {node {\red $5$}}					
					}
				}
			}
		;
	\end{tikzpicture}
	\\[.2cm]
	$\blue \le_M$ & $\red \le_N$ & ${\blue \le_M} \posetSeriesCirc{1} {\red \le_N}$ & ${\blue \le_M} \posetSeriesCirc{2} {\red \le_N}$ & ${\blue \le_M} \posetSeriesCirc{3} {\red \le_N}$ & ${\blue \le_M} \posetSeriesCirc{4} {\red \le_N}$
	\end{tabular}
	}
	\caption{Examples of compositions in the series $2$-poset operad. See \cref{def:seriesRootedPosetOperad}.}
	\label{fig:seriesPosetOperad}
\end{figure}

The following statement is left to the reader.

\begin{lemma}
For any two $k$-rooted $k$-posets~$\le_M$ and~$\le_N$ of degrees~$m$ and~$n$ respectively, and any~$i \in [m]$, the series composition ${\le_M} \posetSeriesCirc{i} {\le_N}$ is a $k$-rooted $k$-poset of degree~$m+n-1$.
\end{lemma}

\begin{proposition}
The series compositions~$\posetSeriesCirc{i}$ of \cref{def:seriesRootedPosetOperad} define an operad structure~$\rootedPosSeries$ on $k$-rooted $k$-posets.
\end{proposition}

\begin{proof}
Rather than a formal proof, we illustrate the argument on pictures.
For the the series axioms (see \cref{subsec:operads}), we have:

\bigskip
\centerline{$
	\left(
	\begin{tikzpicture}[baseline={([yshift=-.8ex]current bounding box.center)}, scale=.4]
		\draw[fill=green!30] (3.9,0) -- (3.9,2) -- (1.9,6) -- (6.1,6) -- (4.1,2) -- (4.1,0) -- (3.9,0);
		\node at (4.7,1) {\blue $i_1$};
		\node at (5.7,4) {\blue $i_k$};
		\node [circle, fill=blue, inner sep=1.5pt] at (4,1) {};
		\node [circle, fill=blue, inner sep=1.5pt] at (4.5,3) {};
		\node [circle, fill=blue, inner sep=1.5pt] at (5,4) {};
		\draw [blue] (4,1) -- (4,2) -- (5,4);
	\end{tikzpicture}
	\posetSeriesCirc{\blue i}
	\begin{tikzpicture}[baseline={([yshift=-.8ex]current bounding box.center)}, scale=.4]
		\draw[fill=blue!30] (3.9,0) -- (3.9,2) -- (2.4,5) -- (5.6,5) -- (4.1,2) -- (4.1,0) -- (3.9,0);
		\node at (3.5,1) {\red $j_1$};
		\node at (2.5,4) {\red $j_k$};
		\node [circle, fill=blue, inner sep=1.5pt] at (4,0) {};
		\node [circle, fill=blue, inner sep=1.5pt] at (4,1) {};
		\node [circle, fill=blue, inner sep=1.5pt] at (4,2) {};
		\draw [blue] (4,0) -- (4,2);
		\node [circle, fill=red, inner sep=1pt] at (4,1) {};
		\node [circle, fill=red, inner sep=1pt] at (3.5,3) {};
		\node [circle, fill=red, inner sep=1pt] at (3,4) {};
		\draw [red] (4,1) -- (4,2) -- (3,4);
	\end{tikzpicture}
	\right)
	\posetSeriesCirc{{\blue i}+{\red j}-1}
	\begin{tikzpicture}[baseline={([yshift=-.8ex]current bounding box.center)}, scale=.4]
		\draw[fill=red!30] (3.9,0) -- (3.9,2) -- (2.9,4) -- (5.1,4) -- (4.1,2) -- (4.1,0) -- (3.9,0);
		\node [circle, fill=red, inner sep=1.5pt] at (4,0) {};
		\node [circle, fill=red, inner sep=1.5pt] at (4,1) {};
		\node [circle, fill=red, inner sep=1.5pt] at (4,2) {};
		\draw [red] (4,0) -- (4,2);
	\end{tikzpicture}
	=
	\begin{tikzpicture}[baseline={([yshift=-.8ex]current bounding box.center)}, scale=.4]
		\draw[fill=green!30] (3.9,0) -- (3.9,2) -- (1.9,6) -- (6.1,6) -- (4.1,2) -- (4.1,0) -- (3.9,0);
		\draw[fill=blue!30] (3.9,1) -- (3.9,2) -- (5,4.2) -- (8.2,7.3) -- (9.7,4) -- (5.1,4) -- (4.1,2) -- (4.1,1) -- (3.9,1);
		\draw[fill=red!30] (4.4,3) -- (5,4.2) -- (6.8,6) -- (5.8,8) -- (8.1,8) -- (7.1,6) -- (5.1,4) -- (4.6,3) -- (4.4,3);
		\node [circle, fill=blue, inner sep=1.5pt] at (4,1) {};
		\node [circle, fill=blue, inner sep=1.5pt] at (4.5,3) {};
		\node [circle, fill=blue, inner sep=1.5pt] at (5,4) {};
		\draw [blue] (4,1) -- (4,2) -- (5,4);
		\node [circle, fill=red, inner sep=1pt] at (4.5,3) {};
		\node [circle, fill=red, inner sep=1pt] at (5.97,5) {};
		\node [circle, fill=red, inner sep=1pt] at (6.95,6) {};
		\draw [red] (4.5,3) -- (5.05,4.1) -- (6.95,6);
	\end{tikzpicture}
	=	
	\begin{tikzpicture}[baseline={([yshift=-.8ex]current bounding box.center)}, scale=.4]
		\draw[fill=green!30] (3.9,0) -- (3.9,2) -- (1.9,6) -- (6.1,6) -- (4.1,2) -- (4.1,0) -- (3.9,0);
		\node at (4.7,1) {\blue $i_1$};
		\node at (5.7,4) {\blue $i_k$};
		\node [circle, fill=blue, inner sep=1.5pt] at (4,1) {};
		\node [circle, fill=blue, inner sep=1.5pt] at (4.5,3) {};
		\node [circle, fill=blue, inner sep=1.5pt] at (5,4) {};
		\draw [blue] (4,1) -- (4,2) -- (5,4);
	\end{tikzpicture}
	\posetSeriesCirc{\blue i}
	\left(
	\begin{tikzpicture}[baseline={([yshift=-.8ex]current bounding box.center)}, scale=.4]
		\draw[fill=blue!30] (3.9,0) -- (3.9,2) -- (2.4,5) -- (5.6,5) -- (4.1,2) -- (4.1,0) -- (3.9,0);
		\node at (3.5,1) {\red $j_1$};
		\node at (2.5,4) {\red $j_k$};
		\node [circle, fill=blue, inner sep=1.5pt] at (4,0) {};
		\node [circle, fill=blue, inner sep=1.5pt] at (4,1) {};
		\node [circle, fill=blue, inner sep=1.5pt] at (4,2) {};
		\draw [blue] (4,0) -- (4,2);
		\node [circle, fill=red, inner sep=1pt] at (4,1) {};
		\node [circle, fill=red, inner sep=1pt] at (3.5,3) {};
		\node [circle, fill=red, inner sep=1pt] at (3,4) {};
		\draw [red] (4,1) -- (4,2) -- (3,4);
	\end{tikzpicture}
	\posetSeriesCirc{\red j}
	\begin{tikzpicture}[baseline={([yshift=-.8ex]current bounding box.center)}, scale=.4]
		\draw[fill=red!30] (3.9,0) -- (3.9,2) -- (2.9,4) -- (5.1,4) -- (4.1,2) -- (4.1,0) -- (3.9,0);
		\node [circle, fill=red, inner sep=1.5pt] at (4,0) {};
		\node [circle, fill=red, inner sep=1.5pt] at (4,1) {};
		\node [circle, fill=red, inner sep=1.5pt] at (4,2) {};
		\draw [red] (4,0) -- (4,2);
	\end{tikzpicture}
	\right)
$}

\bigskip
\noindent
For the the parallel axiom (see \cref{subsec:operads}), we have:

\bigskip
\centerline{$
	\left(
	\begin{tikzpicture}[baseline={([yshift=-.8ex]current bounding box.center)}, scale=.4]
		\draw[fill=green!30] (3.9,0) -- (3.9,2) -- (1.9,6) -- (6.1,6) -- (4.1,2) -- (4.1,0) -- (3.9,0);
		\node at (3.5,1) {\blue $i_1$};
		\node at (2.5,4) {\blue $i_k$};
		\node at (5,2.5) {\red $j_1$};
		\node at (5.5,4) {\red $j_k$};
		\node [circle, fill=blue, inner sep=1.5pt] at (4,1) {};
		\node [circle, fill=blue, inner sep=1.5pt] at (3.5,3) {};
		\node [circle, fill=blue, inner sep=1.5pt] at (3,4) {};
		\draw [blue] (4,1) -- (4,2) -- (3,4);
		\node [circle, fill=red, inner sep=1.5pt] at (4.25,2.5) {};
		\node [circle, fill=red, inner sep=1.5pt] at (4.75,3.5) {};
		\node [circle, fill=red, inner sep=1.5pt] at (5,4) {};
		\draw [red] (4.25,2.5) -- (5,4);
	\end{tikzpicture}
	\posetSeriesCirc{\blue i}
	\begin{tikzpicture}[baseline={([yshift=-.8ex]current bounding box.center)}, scale=.4]
		\draw[fill=blue!30] (3.9,0) -- (3.9,2) -- (2.9,4) -- (5.1,4) -- (4.1,2) -- (4.1,0) -- (3.9,0);
		\node [circle, fill=blue, inner sep=1.5pt] at (4,0) {};
		\node [circle, fill=blue, inner sep=1.5pt] at (4,1) {};
		\node [circle, fill=blue, inner sep=1.5pt] at (4,2) {};
		\draw [blue] (4,0) -- (4,2);
	\end{tikzpicture}
	\right)
	\posetSeriesCirc{{\red j}+{\blue q}-1}
	\begin{tikzpicture}[baseline={([yshift=-.8ex]current bounding box.center)}, scale=.4]
		\draw[fill=red!30] (3.9,0) -- (3.9,2) -- (2.9,4) -- (5.1,4) -- (4.1,2) -- (4.1,0) -- (3.9,0);
		\node [circle, fill=red, inner sep=1.5pt] at (4,0) {};
		\node [circle, fill=red, inner sep=1.5pt] at (4,1) {};
		\node [circle, fill=red, inner sep=1.5pt] at (4,2) {};
		\draw [red] (4,0) -- (4,2);
	\end{tikzpicture}
	=
	\begin{tikzpicture}[baseline={([yshift=-.8ex]current bounding box.center)}, scale=.4]
		\draw[fill=green!30] (3.9,0) -- (3.9,2) -- (1.9,6) -- (6.1,6) -- (4.1,2) -- (4.1,0) -- (3.9,0);
		\draw[fill=blue!30] (3.9,1) -- (3.9,2) -- (2.9,4) -- (0,4) -- (1,6) -- (3,4.2) -- (4.1,2) -- (4.1,1) -- (3.9,1);
		\draw[fill=red!30] (4.15,2.5) -- (5,4.2) -- (7,6) -- (8,4) -- (5.1,4) -- (4.3,2.4) -- (4.15,2.5);
		\node [circle, fill=blue, inner sep=1.5pt] at (4,1) {};
		\node [circle, fill=blue, inner sep=1.5pt] at (3.5,3) {};
		\node [circle, fill=blue, inner sep=1.5pt] at (3,4) {};
		\draw [blue] (4,1) -- (4,2) -- (3,4);
		\node [circle, fill=red, inner sep=1.5pt] at (4.25,2.5) {};
		\node [circle, fill=red, inner sep=1.5pt] at (4.75,3.5) {};
		\node [circle, fill=red, inner sep=1.5pt] at (5,4) {};
		\draw [red] (4.25,2.5) -- (5,4);
	\end{tikzpicture}
	=	
	\left(
	\begin{tikzpicture}[baseline={([yshift=-.8ex]current bounding box.center)}, scale=.4]
		\draw[fill=green!30] (3.9,0) -- (3.9,2) -- (1.9,6) -- (6.1,6) -- (4.1,2) -- (4.1,0) -- (3.9,0);
		\node at (3.5,1) {\blue $i_1$};
		\node at (2.5,4) {\blue $i_k$};
		\node at (5,2.5) {\red $j_1$};
		\node at (5.5,4) {\red $j_k$};
		\node [circle, fill=blue, inner sep=1.5pt] at (4,1) {};
		\node [circle, fill=blue, inner sep=1.5pt] at (3.5,3) {};
		\node [circle, fill=blue, inner sep=1.5pt] at (3,4) {};
		\draw [blue] (4,1) -- (4,2) -- (3,4);
		\node [circle, fill=red, inner sep=1.5pt] at (4.25,2.5) {};
		\node [circle, fill=red, inner sep=1.5pt] at (4.75,3.5) {};
		\node [circle, fill=red, inner sep=1.5pt] at (5,4) {};
		\draw [red] (4.25,2.5) -- (5,4);
	\end{tikzpicture}
	\posetSeriesCirc{\red j}
	\begin{tikzpicture}[baseline={([yshift=-.8ex]current bounding box.center)}, scale=.4]
		\draw[fill=red!30] (3.9,0) -- (3.9,2) -- (2.9,4) -- (5.1,4) -- (4.1,2) -- (4.1,0) -- (3.9,0);
		\node [circle, fill=red, inner sep=1.5pt] at (4,0) {};
		\node [circle, fill=red, inner sep=1.5pt] at (4,1) {};
		\node [circle, fill=red, inner sep=1.5pt] at (4,2) {};
		\draw [red] (4,0) -- (4,2);
	\end{tikzpicture}
	\right)
	\posetSeriesCirc{\blue i}
	\begin{tikzpicture}[baseline={([yshift=-.8ex]current bounding box.center)}, scale=.4]
		\draw[fill=blue!30] (3.9,0) -- (3.9,2) -- (2.9,4) -- (5.1,4) -- (4.1,2) -- (4.1,0) -- (3.9,0);
		\node [circle, fill=blue, inner sep=1.5pt] at (4,0) {};
		\node [circle, fill=blue, inner sep=1.5pt] at (4,1) {};
		\node [circle, fill=blue, inner sep=1.5pt] at (4,2) {};
		\draw [blue] (4,0) -- (4,2);
	\end{tikzpicture}
$}
\end{proof}

The following statements connect this operad~$\rootedPosSeries$ with the series poset evaluation of \cref{def:posetSeries}.

\begin{lemma}
\label{lem:seriesPosetOperad}
For any operation~${\operation \in \Operations_k}$ and any $k$-rooted $k$-posets~$\le_M$ and~$\le_N$, we have
\[
{\le_M} \; \operation \; {\le_N} = \EvalPosetSeries(\operation) \, \posetSeriesCirc{} \, ({\le_M}, {\le_N}),
\]
where the posets~$\EvalPosetSeries(\operation) \eqdef {\le_{I_k}} \; \operation \; {\le_{I_k}}$ are illustrated in \cref{fig:exmMessyCitelangisSeriesActionPosets} for~$k = 2$.
\end{lemma}

\begin{proof}
By \cref{def:seriesRootedPosetOperad}, the series composition~$\EvalPosetSeries(\operation) \, \posetSeriesCirc{} \, ({\le_M}, {\le_N})$ places $\min_j(\le_M)$ at~$1_j$ and ${\min_j(\le_N)}$ at~$2_j$ in the $k$-poset~$\EvalPosetSeries(\operation)$ for all~$j \in [k]$.
This coincides with the result of the operation~${\le_M} \; \operation \; {\le_N}$ described in \cref{def:messyCitelangisSeriesActionPosets,rem:sumsPosetsSeries}.
\end{proof}

\cref{lem:seriesPosetOperad} enables us to interpret the operations on $k$-rooted $k$-posets of the previous section as a suboperad of~$\rootedPosSeries$.
We say that two syntax trees~$\tree, \tree'$ on~$\Operations_k$ with the same arity are \defn{series poset equivalent} and we write~$\tree \simeq^{\series} \tree'$ if they have the same series poset evaluation.

\begin{theorem}
\label{thm:seriesPosetOperad}
The series poset equivalence is compatible with the grafting of syntax trees: for any syntax trees~$\tree \simeq^\series \tree'$ of arity~$p$ and~$\tree[s] \simeq^\series \tree[s]'$ of arity~$q$ and~$i \in [p]$, we have~$\tree \circ_i \tree[s] \simeq^\series \tree' \circ_i \tree[s]'$.
Therefore, there exists a \defn{series $k$-poset operad}
\[
\posSeries \eqdef \EvalPosetSeries \big( \Syntax[\Operations_k] \big).
\]
\end{theorem}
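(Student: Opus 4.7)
The plan is to mirror the parallel case treated in \cref{thm:parallelPosetOperad}, using \cref{lem:seriesPosetOperad} as the key ingredient. First I would use the fact, established just above, that $\rootedPosSeries$ is a (non-symmetric) operad under the compositions of \cref{def:seriesRootedPosetOperad}. By the universal property of the free operad, there is a unique operad morphism
\[
\Phi \colon \Free[\Operations_k] \to \rootedPosSeries
\]
sending each generator $\operation \in \Operations_k$ to the $k$-rooted $k$-poset $\EvalPosetSeries(\operation) = {\le_{I_k}} \; \operation \; {\le_{I_k}}$.

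Next I would verify by induction on the syntax tree that $\Phi$ coincides with $\EvalPosetSeries$. The base case (a single leaf) is immediate since both send it to $\le_{I_k}$. For the inductive step, write a syntax tree $\tree$ as $\operation \circ (\tree[l], \tree[r])$ with root $\operation \in \Operations_k$. By definition of $\EvalPosetSeries$ we have $\EvalPosetSeries(\tree) = \EvalPosetSeries(\tree[l]) \; \operation \; \EvalPosetSeries(\tree[r])$, and by \cref{lem:seriesPosetOperad} this equals $\EvalPosetSeries(\operation) \circ \big( \EvalPosetSeries(\tree[l]), \EvalPosetSeries(\tree[r]) \big)$. Since $\Phi$ is an operad morphism and agrees with $\EvalPosetSeries$ on the subtrees $\tree[l]$ and $\tree[r]$ by induction, this matches $\Phi(\tree)$. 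More generally one proves the same identity for evaluation on arbitrary inputs, so $\EvalPosetSeries$ is an operad morphism.

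From this it follows automatically that the series poset equivalence is a congruence: if $\tree \simeq^\series \tree'$ and $\tree[s] \simeq^\series \tree[s]'$, then
\[
\EvalPosetSeries(\tree \circ_i \tree[s]) = \EvalPosetSeries(\tree) \circ_i \EvalPosetSeries(\tree[s]) = \EvalPosetSeries(\tree') \circ_i \EvalPosetSeries(\tree[s]') = \EvalPosetSeries(\tree' \circ_i \tree[s]'),
\]
which is exactly $\tree \circ_i \tree[s] \simeq^\series \tree' \circ_i \tree[s]'$. Consequently $\posSeries = \EvalPosetSeries(\Syntax[\Operations_k])$ is the image of the operad morphism $\EvalPosetSeries$, hence a suboperad of $\rootedPosSeries$, generated by $\bigset{\EvalPosetSeries(\operation)}{\operation \in \Operations_k}$.

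There is no real obstacle here: everything rests on the compatibility lemma \cref{lem:seriesPosetOperad}, whose role is precisely to identify the binary operations $\operation$ on $k$-rooted $k$-posets with the partial compositions in $\rootedPosSeries$ at the generators $\EvalPosetSeries(\operation)$. Once this identification is in place, the statement becomes a formal consequence of the freeness of $\Free[\Operations_k]$, exactly as in the parallel case.
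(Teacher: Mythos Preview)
Your proposal is correct and follows essentially the same approach as the paper: identify $\EvalPosetSeries$ with the unique operad morphism $\Free[\Operations_k] \to \rootedPosSeries$ determined on generators (via induction on \cref{lem:seriesPosetOperad}), and conclude that $\posSeries$ is the suboperad of $\rootedPosSeries$ generated by $\bigset{\EvalPosetSeries(\operation)}{\operation \in \Operations_k}$. The paper states this more tersely, but the argument is the same.
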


\begin{proof}
By induction on \cref{lem:seriesPosetOperad}, the map~$\EvalPosetSeries$ coincides with the unique operad morphism from the free operad~$\Free[\Operations_k]$ to~$\rootedPosSeries$ that sends a generator~$\operation \in \Operations_k$ to~$\EvalPosetSeries(\operation)$.
Therefore, $\posSeries$ is the suboperad of~$\rootedPosSeries$ generated by~$\bigset{\EvalPosetSeries(\operation)}{\operation \in \Operations_k}$.
\end{proof}

By construction, the series $k$-poset operad~$\posSeries$ satisfies the relation of \cref{eq:relationPosetsSeries}.
\cref{prop:uniqueSeriesPosetEvaluation} implies that it is the only relation in~$\posSeries$.
This generalizes the $L$-algebras of P.~Leroux~\cite{Leroux-Lalgebras}.

\begin{theorem}
\label{thm:presentationSeriesPosetOperad}
The series $k$-poset operad~$\posSeries$ is generated by~$\Operations_k$ with the unique relation
\[
\begin{tikzpicture}[baseline=-.5cm, level 1/.style={sibling distance = 1cm, level distance = .7cm}, level 2/.style={sibling distance = .8cm, level distance = .6cm}]
	\node [rectangle, draw] {$\op{l}\!\!^k$}
		child {node [rectangle, draw] {$\op{r}\!\!^k$}
			child {node {}}
		child {node {}}
		}
		child {node {}}
	;
\end{tikzpicture}
=
\begin{tikzpicture}[baseline=-.5cm, level 1/.style={sibling distance = 1cm, level distance = .7cm}, level 2/.style={sibling distance = .8cm, level distance = .6cm}]
	\node [rectangle, draw] {$\op{r}\!\!^k$}
		child {node {}}
		child {node [rectangle, draw] {$\op{l}\!\!^k$}
			child {node {}}
		child {node {}}
		}
	;
\end{tikzpicture}
.
\]
\end{theorem}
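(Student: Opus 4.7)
The plan is to combine \cref{thm:seriesPosetOperad} with the uniqueness statement of \cref{prop:uniqueSeriesPosetEvaluation}. By \cref{thm:seriesPosetOperad}, the series poset evaluation $\EvalPosetSeries$ is the unique operad morphism from the free operad $\Free[\Operations_k]$ onto $\posSeries$ that sends each generator $\operation \in \Operations_k$ to $\EvalPosetSeries(\operation)$. The identity displayed in \cref{eq:relationPosetsSeries} shows that the quadratic relation stated in the theorem lies in the kernel of this morphism. Denoting by $\Operad[Q]$ the quotient of $\Free[\Operations_k]$ by the operad ideal generated by this single relation, the morphism $\EvalPosetSeries$ therefore factors through a surjective operad morphism $\phi : \Operad[Q] \twoheadrightarrow \posSeries$.

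Second, I would establish that $\phi$ is injective. For this, I would pick two syntax trees $\tree, \tree' \in \Syntax[\Operations_k]$ whose images in $\posSeries$ agree, that is, $\EvalPosetSeries(\tree) = \EvalPosetSeries(\tree')$. Since the trivial $k$-chain $\le_{I_k}$ has degree $1$, the restriction condition for full $k$-rooted cuttability is vacuous, and in particular $\le_{I_k}$ is $k$-rooted uncuttable. Applying \cref{prop:uniqueSeriesPosetEvaluation} with all input $k$-posets equal to $\le_{I_k}$ then yields that $\tree$ and $\tree'$ coincide modulo rewritings using the relation of \cref{eq:relationPosetsSeries}. Hence $\tree$ and $\tree'$ represent the same element of $\Operad[Q]$, so $\phi$ is injective, and therefore an isomorphism.

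No genuine obstacle remains beyond what is already handled by \cref{prop:uniqueSeriesPosetEvaluation}, whose core content is the inductive statement \cref{lem:uniqueRelationSeriesPosetOperad} classifying when two expressions ${\le_M} \, \operation \, {\le_N}$ and ${\le_{M'}} \, \operation' \, {\le_{N'}}$ coincide and singling out \cref{eq:relationPosetsSeries} as the unique origin of collisions. The proof is thus a short deduction from the machinery assembled in the preceding lemmas.
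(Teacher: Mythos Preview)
Your proposal is correct and matches the paper's approach exactly: the paper's proof consists of the single sentence preceding the theorem, namely that the relation of \cref{eq:relationPosetsSeries} holds in~$\posSeries$ by construction and that \cref{prop:uniqueSeriesPosetEvaluation} implies it is the only relation. You have simply unpacked this into the standard factorization-plus-injectivity argument, correctly noting that~$\le_{I_k}$ is $k$-rooted uncuttable (vacuously, since it has degree~$1$) so that \cref{prop:uniqueSeriesPosetEvaluation} applies, and implicitly using that~$\EvalPosetSeries$ sends basis syntax trees to basis posets so that its kernel is spanned by differences~$\tree - \tree'$ with equal evaluations.
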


\begin{corollary}\label{corol:dual-poset}
For any $k$, the $\K$-linearlized operad~$\posSeries$ is Koszul. Its Koszul dual~$(\posSeries)^\koszul$ is a finite dimensional operad generated by~$\Operations_k$ with a unique arity~$3$ element
\[
\begin{tikzpicture}[baseline=-.5cm, level 1/.style={sibling distance = 1cm, level distance = .7cm}, level 2/.style={sibling distance = .8cm, level distance = .6cm}]
	\node [rectangle, draw] {$\op{l}\!\!^k$}
		child {node [rectangle, draw] {$\op{r}\!\!^k$}
			child {node {}}
		child {node {}}
		}
		child {node {}}
	;
\end{tikzpicture}
=
\begin{tikzpicture}[baseline=-.5cm, level 1/.style={sibling distance = 1cm, level distance = .7cm}, level 2/.style={sibling distance = .8cm, level distance = .6cm}]
	\node [rectangle, draw] {$\op{r}\!\!^k$}
		child {node {}}
		child {node [rectangle, draw] {$\op{l}\!\!^k$}
			child {node {}}
		child {node {}}
		}
	;
\end{tikzpicture}\,,
\]
all the other arity~$3$ compositions being zero. In particular its Hilbert series is
\[
\Hilbert_{(\posSeries)^\koszul}(t) = t + 2^k t^2 + t^3.
\]
\end{corollary}

\begin{proof}
The only non trivial fact is the Koszulity of~$\posSeries$. We prove it on the dual $(\posSeries)^\koszul$ where it is plain thanks to the Poincar\'e\,--\,Birkhoff\,--\,Witt basis rewriting criterion. Indeed, all trees of arity $4$ rewrites ultimately to $0$.
\end{proof}

\begin{remark}
Combinatorially, the dual series $k$-poset operad~$(\posSeries)^\koszul$ can be realized as a quotient of the messy series $k$-signaletic operad~$\messySignaleticSeries$ spanned by the destination vectors of the form~$\destVect{1^k}{1}$, $\destVect{w}{2}$ with $w \in \{1,2\}^k$, and $\destVect{2^k}{3}$, which we call \defn{$L$-destination vectors}.
The surjective morphism $\messySignaleticSeries \longrightarrow (\posSeries)^\koszul$ sends a destination vector to itself if it is an $L$-destination vector and to $0$ otherwise.
\end{remark}

Thanks to the preceding corollary, we can now identify~$\posSeries$ using Manin powers (see \cref{subsec:ManinProducts}).

\begin{proposition}
The operad~$\posSeries$ is isomorphic to the $k$-th black or white power of the operad~$\posSeries[1]$ of $L$-algebras:
\[
\posSeries \simeq {\posSeries[1]}^{\whiteManin k} \simeq {\posSeries[1]}^{\blackManin k}.
\]
\end{proposition}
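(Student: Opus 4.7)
The plan is to pass to the Koszul duals and exploit the standard dualities
\[
(\mathcal{P} \whiteManin \mathcal{Q})^\koszul \simeq \mathcal{P}^\koszul \blackManin \mathcal{Q}^\koszul
\qqandqq
(\mathcal{P} \blackManin \mathcal{Q})^\koszul \simeq \mathcal{P}^\koszul \whiteManin \mathcal{Q}^\koszul
\]
between white and black Manin products of binary quadratic operads (recalled in \cref{subsec:parallelManin}). By \cref{corol:dual-poset}, $\posSeries$ is Koszul and its Koszul dual $(\posSeries)^\koszul$ admits an explicit finite presentation, so it suffices to show that $(\posSeries)^\koszul \simeq ((\posSeries[1])^\koszul)^{\blackManin k}$ and that the $k$-th black and white Manin powers of $(\posSeries[1])^\koszul$ coincide.

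First I would describe $((\posSeries[1])^\koszul)^{\blackManin k}$ by induction on $k$. The operad $(\posSeries[1])^\koszul$ is generated by $\{\op{l},\op{r}\}$, its unique nontrivial arity-$3$ composition is given by $\op{l}\circ_1\op{r} = \op{r}\circ_2\op{l}$, and it vanishes in arities at least $4$. For binary quadratic set operads the arity-$2$ generators of a black Manin product identify with the Cartesian product of the generators, and the arity-$3$ relations are obtained coordinate by coordinate from combinations of the relations in the two factors. Iterating, the generators of $((\posSeries[1])^\koszul)^{\blackManin k}$ identify with $\{\op{l},\op{r}\}^k = \Operations_k$, and an arity-$3$ composition is nonzero exactly when each coordinate projects to the unique nonzero arity-$3$ composition of $(\posSeries[1])^\koszul$. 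This yields exactly one nonzero arity-$3$ operation, namely $\op{l}\!^k \circ_1 \op{r}\!^k = \op{r}\!^k \circ_2 \op{l}\!^k$, with all higher compositions vanishing. Comparing with the presentation of $(\posSeries)^\koszul$ in \cref{corol:dual-poset} yields $(\posSeries)^\koszul \simeq ((\posSeries[1])^\koszul)^{\blackManin k}$, which dualizes to $\posSeries \simeq {\posSeries[1]}^{\whiteManin k}$.

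It then remains to show $\posSeries \simeq {\posSeries[1]}^{\blackManin k}$, equivalently that $((\posSeries[1])^\koszul)^{\blackManin k} \simeq ((\posSeries[1])^\koszul)^{\whiteManin k}$. There is always a canonical surjection from the black to the white Manin product, and the two coincide precisely when no extra quadratic relations appear in the white product. In our case, the sparsity of $(\posSeries[1])^\koszul$ (all but one quadratic compositions vanish) forces equality: any additional white-product relation is a combination of componentwise vanishing conditions that is already imposed in the black product. Alternatively and perhaps more cleanly, one can invoke a Hilbert-series criterion: both the black and the white $k$-th power of $(\posSeries[1])^\koszul$ have Hilbert series $t + 2^k t^2 + t^3$, matching that of $(\posSeries)^\koszul$, and a surjection between graded components of the same finite dimension must be an isomorphism.

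The main obstacle is the precise bookkeeping of the relations in the black Manin product of two binary quadratic set operads, and in particular the verification that zero relations combine coordinatewise as asserted above. This amounts to unfolding the definition of $\blackManin$ recalled in \cref{subsec:parallelManin} and performing an arity-$3$ dimension count to certify simultaneously the isomorphism with $(\posSeries)^\koszul$ and the coincidence of $\whiteManin$ and $\blackManin$ in this sparse setting.
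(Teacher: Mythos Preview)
Your overall strategy (pass to the Koszul duals and exploit the explicit finite presentation of $(\posSeries)^\koszul$ from \cref{corol:dual-poset}) is the same as the paper's, but there is a genuine mix-up between the black and white products in your Step~1. The description you give --- ``an arity-$3$ composition is nonzero exactly when each coordinate projects to the unique nonzero arity-$3$ composition of $(\posSeries[1])^\koszul$'' --- is precisely the characterization of the \emph{white} Manin power, via \cref{prop:whiteManin}: it describes the suboperad of the tensor power $((\posSeries[1])^\koszul)^{\otimes k}$ generated in arity~$2$. It is not a direct description of the black product, whose relations are not obtained ``coordinate by coordinate'' in any obvious sense. So what Step~1 actually establishes is $(\posSeries)^\koszul \simeq ((\posSeries[1])^\koszul)^{\whiteManin k}$, which dualizes to $\posSeries \simeq (\posSeries[1])^{\blackManin k}$, the opposite of what you claim. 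Your Step~2 then has to supply the other isomorphism, but its justification (``sparsity forces equality'', or the Hilbert-series comparison) is circular: to know the Hilbert series of $((\posSeries[1])^\koszul)^{\blackManin k}$ you would first need to compute $(\posSeries[1])^{\whiteManin k}$, which already gives $\posSeries \simeq (\posSeries[1])^{\whiteManin k}$ directly.

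The paper's proof avoids this detour by applying \cref{prop:whiteManin} twice, once on each side of the duality. The coordinatewise map $\Free[\Operations_k] \to \Free[\Operations_1]^{\otimes k}$ descends to both $\posSeries \to (\posSeries[1])^{\otimes k}$ and $(\posSeries)^\koszul \to ((\posSeries[1])^\koszul)^{\otimes k}$, and both are injective (this is the only thing to check, and it follows from inspecting the single relation on each side). The first injectivity gives $\posSeries \simeq (\posSeries[1])^{\whiteManin k}$; the second gives $(\posSeries)^\koszul \simeq ((\posSeries[1])^\koszul)^{\whiteManin k}$, which dualizes to $\posSeries \simeq (\posSeries[1])^{\blackManin k}$. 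No separate ``black~$=$~white'' argument is needed.
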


\begin{proof}
We consider the map~$\Free[\Operations_k] \to \Free[\Operations]^{\otimes k}$ which sends~$\operation$ to~$\operation_1\otimes\dots\otimes \operation_k$. This map is compatible with the relations of~$\posSeries$ (\cref{thm:presentationSeriesPosetOperad}) as well as the relations of~$(\posSeries)^\koszul$ (\cref{corol:dual-poset}) giving two maps
\[
\posSeries \to {\posSeries[1]}^{\otimes k}
\qqandqq
(\posSeries)^\koszul \to {(\posSeries[1])^\koszul}^{\otimes k}.
\]
Thanks to \cref{prop:whiteManin}, the statement amounts to the fact that these two maps are injective. This is clear from the relations.
\end{proof}

\begin{remark}
The map $\posSeries \to {\posSeries[1]}^{\otimes k}$ of the preceeding proof can be seen combinatorially on $k$-posets. Indeed, it sends a $k$-poset $\le_P$ of degree $n$ to the tensor product~$\le_P^{(1)} \otimes \dots \otimes \le_P^{(k)}$ where~$\le_P^{(i)}$ is the restriction of~$\le_P$ to the set~$\{1_i,\dots, n_i\}$ of the $i$-th copy of the integers of~$[n]$. Thanks to the relation this map is injective. This is not obvious on posets.
\end{remark}


\subsubsection{Hilbert series and numerology in the series $k$-poset operads}
\label{subsubsec:HilbertSeriesPosetOperad}

We now explore some numerological facts about the series $k$-poset operads~$\posSeries$ defined in \cref{subsubsec:seriesPosetOperad}. Thanks to \cref{corol:dual-poset}, one can compute its Hilbert series by Koszul duality.

\begin{corollary}
\label{coro:HilbertSeriesPosetOperad}
The Hilbert series $\Hilbert_{{\posSeries}}(t)$ it the unique solution of the form $H=t + O(t^2)$ of the algebraic equation $H^3 - 2^k H^2 + H - t = 0$.
\end{corollary}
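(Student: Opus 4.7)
The plan is to combine the Koszul duality between $\posSeries$ and $(\posSeries)^\koszul$ with the explicit Hilbert series $\Hilbert_{(\posSeries)^\koszul}(t) = t + 2^k t^2 + t^3$ computed in \cref{corol:dual-poset}. Since $\posSeries$ is Koszul, \cref{thm:HilbertKoszulLagrange} applies and yields the functional equation
\[
\Hilbert_{(\posSeries)^\koszul}\big(-\Hilbert_{\posSeries}(-t)\big) = t.
\]

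Writing $H(t) \eqdef \Hilbert_{\posSeries}(t)$ and substituting the explicit polynomial expression of $\Hilbert_{(\posSeries)^\koszul}$, the Koszul duality relation becomes
\[
-H(-t) + 2^k\, H(-t)^2 - H(-t)^3 = t.
\]
Replacing $t$ by $-t$ and multiplying by $-1$ then gives exactly $H(t)^3 - 2^k\, H(t)^2 + H(t) - t = 0$, which is the claimed algebraic equation.

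For uniqueness of the solution of the form $H = t + O(t^2)$, it suffices to note that the polynomial $F(H,t) \eqdef H^3 - 2^k H^2 + H - t$ satisfies $F(0,0) = 0$ and $\frac{\partial F}{\partial H}(0,0) = 1 \ne 0$, so the implicit function theorem for formal power series provides a unique series $H(t) \in t\K[[t]]$ with constant term $0$ and linear term $t$ satisfying $F(H(t), t) = 0$. Since the Hilbert series $\Hilbert_{\posSeries}(t)$ starts with $t$ (the unique operation of arity $1$), it must coincide with this unique solution. The entire argument is routine once \cref{corol:dual-poset} and \cref{thm:HilbertKoszulLagrange} are invoked; no real obstacle is expected beyond keeping track of the signs in the Koszul inversion formula.
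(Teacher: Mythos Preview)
Your proof is correct and follows exactly the approach the paper intends: apply \cref{thm:HilbertKoszulLagrange} to the Koszul pair $\posSeries$ and $(\posSeries)^\koszul$, substitute the explicit Hilbert series $t + 2^k t^2 + t^3$ from \cref{corol:dual-poset}, and track the signs. The paper leaves this computation implicit (the corollary is stated without proof, relying on the sentence ``one can compute its Hilbert series by Koszul duality''), so your write-up simply fills in the routine details, including the uniqueness argument via the formal implicit function theorem.
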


It is well known that any algebraic univariate power series is holonomic (some authors call them $D$-finite), which means that it satisfies a linear differential equation whose coefficients are in the field~$\C(t)$ of rational functions over~$\C$. Moreover, the differential equation can be automatically computed~\cite[Remark~B.12]{FlajoletSedgewick}. In the situation of~\cref{coro:HilbertSeriesPosetOperad}, this can actually be done in a more general setting by replacing $2^k$ by any number $K \ge 2$. Thus we set $H \eqdef t + \sum_{n>1} u_n t^n$ to be the solution of the equation $H^3 - KH^2 + H - t = 0$. Then $H$ is the solution of the differential equation
\[
(27 t^2 + 2C t - D) \frac{\partial^2H}{\partial t^2} + (27 t + C) \frac{\partial H}{\partial t} - 3H + K = 0,
\]
where $C = K(2K^2-9)$ and $D = K^2-4$
An extraction of coefficients shows that $u_n$ satisfies the recurrence 
\[
D (n + 1) (n + 2) u_{n + 2} = C (2n + 1)(n + 1) u_{n + 1}  + 3 (3n - 1) (3n + 1) u_{n},
\]
for~$n > 1$, together with the initial conditons $u_1 = 1$, $u_2 = K$.
\cref{table:dposetk} gives the first values of $\dim \big( \posSeries(n) \big)$.
Note that in the special case when $K = 2$ (that is $k = 1$), we have $C = -2$ and $D = 0$, so that this recurrence degenerates into
\[
2 (2n + 1)(n + 1) u_{n + 1} = 3 (3n - 1) (3n + 1) u_{n}.
\]
which yields the closed form
\[
u_n = \frac{1}{n}\binom{3n-2}{n-1}.
\]

\begin{table}[t]
	\[
    \begin{array}{l|rrrrrrrr}
      \raisebox{-.1cm}{$k$} \backslash \, \raisebox{.1cm}{$n$}
      & 1 &  2 &    3 &      4 &       5 &         6 &          7  &             8 \\[.1cm]
      \hline
      1 & 1 & 2 & 7 & 30 & 143 & 728 & 3876 & 21318 \\
      2 & 1 & 4 & 31 & 300 & 3251 & 37744 & 459060 & 5773548 \\
      3 & 1 & 8 & 127 & 2520 & 56003 & 1333472 & 33262836 & 858011352 \\
      4 & 1 & 16 & 511 & 20400 & 912131 & 43696576 & 2193011700 & 113813345712 \\
      5 & 1 & 32 & 2047 & 163680 & 14658563 & 1406534528 & 141388074996 & 14697175640928
    \end{array}
	\]
	\caption{The values of~$\dim \big( \posSeries(n) \big)$ for~$k \in [5]$ and~$n \in [8]$.}
	\label{table:dposetk}
\end{table}

\begin{remark}
This raises the open question whether there are interesting combinatorial operads whose Hilbert series corresponds to arbitrary integer~$K \ge 2$.
\end{remark}

\begin{remark}
The numerology presented here is also valid for the parallel $2$-poset operad~$\posParallel[2]$ defined in \cref{subsubsec:parallelPosetOperad} since the operads~$\posParallel[2]$ and~$\posSeries[2]$ are isomorphic by \cref{thm:presentationParallelPosetOperad,thm:presentationSeriesPosetOperad}.
\end{remark}


\pagebreak
\subsubsection{Series \texorpdfstring{$k$}{k}-Zinbiel operads}
\label{subsubsec:seriesZinbiel}

As in \cref{subsubsec:parallelZinbiel}, we look for an operad $\messyZinbielSeries$ on $k$-permutations which contains the operad $\messyCitelangisSeries$ as a suboperad generated in degree $2$ and which moreover closes the following diagram of operad morphisms. We will also find a tidy version $\tidyZinbielSeries$, closing the right square where the dashed arrows are not operad morphisms, but bijections of normal forms.

\begin{center}
\begin{tikzcd}[column sep=2cm, row sep=.3cm]
	\phantom{|}\rootedPosSeries\phantom{|} \arrow[r, "\LinExt",  two heads]
	& \phantom{|}\messyZinbielSeries\phantom{|} \arrow[r, dashrightarrow, "\LexMin", hook, two heads]
	& \phantom{|}\tidyZinbielSeries\phantom{|} \\
	\phantom{|}\posSeries\phantom{|} \arrow[r, "\LinExt", two heads] \arrow[u, hook]
	& \phantom{|}\messyCitelangisSeries\phantom{|} \arrow[r, dashrightarrow, "\LexMin", hook, two heads] \arrow[u, hook]
	& \phantom{|}\tidyCitelangisSeries\phantom{|} \arrow[u, hook]
\end{tikzcd}
\end{center}

\paraul{Messy series $k$-Zinbiel operad}
We start with the messy version.

\begin{definition}
\label{def:compositionMessySeriesZinbiel}
Let~$\sigma \in \Perm_k(m)$ and~$\tau \in \Perm_k(n)$ be two $k$-permutations and~$i \in [m]$.
Write~$\sigma = \omega_0 \, i \, \omega_1 \, i \, \omega_2 \cdots \omega_{k-1} \, i \, \omega_k$ and~$\tau = \tau_1 \, \cdots \, \tau_k \, \theta$, where~$\tau_1, \dots, \tau_k$ are the first $k$ letters of~$\tau$.
Then the \defn{messy series $i$-th composition} of $\sigma$ and $\mu$ is defined by
\[
\sigma \messySeriesCirc{i} \tau = \omega_0[i,n] \, \tau_1[i-1] \, \omega_1[i,n] \, \tau_2[i-1] \, \omega_2[i,n] \cdots \omega_{k-1}[i,n] \, \tau_k[i-1] \, (\omega_k[i,n] \shuffle \theta[i-1]).
\]
We extends this definition by linearity.
\end{definition}

Here are some examples of messy series compositions on $k$-permutations:
\begin{align*}
  {\blue3}1{\blue232}1{\blue44} \messySeriesCirc{1} 31{\red3122} = \; &
  {\blue5}3{\blue454}1{\red3122}{\blue66}  +
  {\blue5}3{\blue454}1{\red312}{\blue6}{\red2}{\blue6}  +
  {\blue5}3{\blue454}1{\red312}{\blue66}{\red2} + \dots +
  {\blue5}3{\blue454}1{\red3}{\blue6}{\red12}{\blue6}{\red2}
  \\ & + \cdots (\text{$15$~terms}) \cdots +
  {\blue5}3{\blue454}1{\blue6}{\red3}{\blue6}{\red122}  +
  {\blue5}3{\blue454}1{\blue66}{\red3122}, \\
  {\blue31}2{\blue3}2{\blue144} \messySeriesCirc{2} 31{\red3122} = \; &
  {\blue51}4{\blue5}2{\red4233}{\blue1}{\blue66} +
  {\blue51}4{\blue5}2{\red423}{\blue1}{\red3}{\blue66} +
  {\blue51}4{\blue5}2{\red423}{\blue16}{\red3}{\blue6} + \dots +
  {\blue51}4{\blue5}2{\red4}{\blue1}{\red23}{\blue6}{\red3}{\blue6}
  \\ & + \cdots (\text{$35$~terms}) \cdots +
  {\blue5}3{\blue454}1{\blue6}{\red3}{\blue6}{\red122}  +
  {\blue5}3{\blue454}1{\blue66}{\red3122}, \\
  3{\blue12}3{\blue2144} \messySeriesCirc{3} 31{\red3122} = \; &
  5{\blue12}3{\red5344}{\blue2166} +
  5{\blue12}3{\red534}{\blue2}{\red4}{\blue166} +
  5{\blue12}3{\red534}{\blue21}{\red4}{\blue66} + \dots +
  5{\blue12}3{\red5}{\blue2}{\red34}{\blue16}{\red4}{\blue6} +
  \\ & + \cdots (\text{$70$~terms}) \cdots +
  5{\blue12}3{\blue216}{\red5}{\blue6}{\red344} +
  5{\blue12}3{\blue2166}{\red5344}, \\
  {\blue312321}44 \messySeriesCirc{4} 31{\red3122} = \; &
  {\blue312321}64{\red6455}.
\end{align*}

Let us stress out that, even when~$k = 2$, the messy parallel composition~$\sigma \messyParallelCirc{i} \tau$ of \cref{def:compositionMessyParallelZinbiel} and messy series composition~$\sigma \messySeriesCirc{i} \tau$ of \cref{def:compositionMessySeriesZinbiel} differ.

The following two lemmas relate the messy series composition~$\messySeriesCirc{i}$ of $k$-permutations of \cref{def:compositionMessySeriesZinbiel} and the series composition~$\posetSeriesCirc{i}$ of $k$-rooted $k$-posets of \cref{def:seriesRootedPosetOperad}.
They play the same role for the composition as \cref{lem:LinExtEvalPosetSeries} played for the operations.

\begin{lemma}
\label{lem:morphismSeriesPermPos}
For any two $k$-permutations~$\sigma \in \Perm_k(m)$ and~$\tau \in \Perm_k(n)$ and~$i \in [m]$,
\[
\sigma \messySeriesCirc{i} \tau = \LinExt \big( {\le_\sigma} \posetSeriesCirc{i} {\le_\tau} \big).
\]
\end{lemma}

\begin{proof}
As in \cref{def:compositionMessySeriesZinbiel}, write~$\sigma = \omega_0 \, i \, \omega_1 \, i \, \omega_2 \cdots \omega_{k-1} \, i \, \omega_k$ and~$\tau = \tau_1 \, \cdots \, \tau_k \, \theta$.
From the description of \cref{def:seriesRootedPosetOperad}, the poset~${\le_\sigma} \posetSeriesCirc{i} {\le_\tau}$ decomposes into the ordered sum
\[
{\le_{\omega_0[i,n]}} + \big\{ \tau_1[i-1] \big\} + {\le_{\omega_1[i,n]}} + \cdots + {\le_{\omega_{k-1}[i,n]}} + \big\{ \tau_k[i-1] \big\} + \big( {\le_{\omega_k[i,n]}} \, \sqcup \, {\le_{\theta[i-1]}} \big)
\]
Hence, the statement follows from \cref{lem:disjointUnionOrderedSumLinearExtensions}.
\end{proof}

\begin{lemma}
\label{lem:morphismSeriesPermPos2}
For any two $k$-rooted $k$-posets~${\le_M} \in \rootedPosSeries(m)$ and~${\le_N} \in \rootedPosSeries[k](n)$ and~$i \in [m]$,
\[
\LinExt(\le_M) \messySeriesCirc{i} \LinExt(\le_N) = \LinExt({\le_M} \posetSeriesCirc{i} {\le_N}),
\]
\end{lemma}

\begin{proof}
Fix three integers $m$, $n$, and $i\in[m]$.
Remark that if $\nu$ is a permutation appearing in the messy series composition $\sigma \messySeriesCirc{i} \tau$ for some $\sigma \in \Perm_k(m)$ and~$\tau \in \Perm_k(n)$, one can recover uniquely these two permutations from $\nu$.
As a consequence, $\LinExt(\le_M) \messySeriesCirc{i} \LinExt(\le_N)$ cannot have multiplicities, so that we can argue by double inclusion.

Observe from \cref{def:seriesRootedPosetOperad} that the inclusion of relations~${\le_M} \subseteq {\le'_M}$ and~${\le_N} \subseteq {\le'_N}$ imply the inclusion~${{\le_M} \posetSeriesCirc{i} {\le_N}} \subseteq {{\le'_M} \posetSeriesCirc{i} {\le'_N}}$.
For~$\sigma \in \LinExt(\le_M)$ and~$\tau \in \LinExt(\le_N)$, we have~${\le_M} \subseteq {\le_\sigma}$ and~${\le_N} \subseteq {\le_\tau}$, hence~${{\le_M} \posetSeriesCirc{i} {\le_N}} \subseteq {{\le_\sigma} \posetSeriesCirc{i} {\le_\sigma}}$.
We thus get~$\LinExt({\le_\sigma} \posetSeriesCirc{i} {\le_\tau}) \subseteq \LinExt({\le_M} \posetSeriesCirc{i} {\le_N})$.
As~$\LinExt({\le_\sigma} \posetSeriesCirc{i} {\le_\tau}) = \sigma \messySeriesCirc{i} \tau$ by \cref{lem:morphismSeriesPermPos}, we obtain that
\[
\LinExt(\le_M) \messySeriesCirc{i} \LinExt(\le_N)  \; = \bigcup_{\substack{\sigma \in \LinExt(\le_M) \\ \tau \in \LinExt(\le_N)}} \sigma \messySeriesCirc{i} \tau \; \subseteq \; \LinExt({\le_M} \posetSeriesCirc{i} {\le_N}).
\]

Conversely, starting from any linear extension~$\rho$ of~${\le_M} \posetSeriesCirc{i} {\le_N}$, restricting~$\rho$  to~$M[i,n]$ and~$N[i-1]$ and renumbering respectively yields linear extensions~$\sigma$ of~$\le_M$ and~$\tau$ of~$\le_N$.
A straightforward case analysis of on \cref{def:seriesRootedPosetOperad} shows that~$\rho$ appears in~$\sigma \messySeriesCirc{i} \tau$.
\end{proof}

As a consequence, \cref{def:compositionMessySeriesZinbiel} actually defines an operad composition.

\begin{proposition}
\label{prop:morphismSeriesLinExt}
The family $\big( \K\Perm_k(n) \big)_{n>0}$ endowed with the messy series composition rules~$\messySeriesCirc{i}$ of \cref{def:compositionMessySeriesZinbiel} defines an operad $\messyZinbielSeries[k]$, called the \defn{messy series $k$-Zinbiel operad}. Moreover, $\LinExt$ is a surjective operad morphism from $\rootedPosSeries[k]$ to $\messyZinbielSeries[k]$.
\end{proposition}

\begin{proof}
Thanks to \cref{lem:morphismSeriesPermPos,lem:morphismSeriesPermPos2}, we have
\[
\sigma \messySeriesCirc{i} (\tau \messySeriesCirc{j} \mu) = \LinExt \big( {\le_\sigma} \posetSeriesCirc{i} ({\le_\tau} \posetSeriesCirc{j} {\le_\mu}) \big).
\]
Using similar equalities for the other compositions, we prove the operad axioms.
The morphism property is just \cref{lem:morphismSeriesPermPos}.
The surjectivity follows from $\LinExt({\le_\sigma}) = \sigma$.
\end{proof}

\begin{proposition}
\label{prop:fullyRootedCuttableBasisMessy}
The operad $\messyCitelangisSeries[k]$ is the suboperad of $\messyZinbielSeries[k]$ generated by the four elements $1122$, $1212+1221$, $2112+2121$ and $2211$.
\end{proposition}

\begin{proof}
The generators are given by the linear extensions of the $k$-rooted $k$-posets of \cref{fig:exmMessyCitelangisSeriesActionPosets}:
\[
\begin{array}{l@{\;=\;}l@{\quad}c@{\quad}l@{\;=\;}l}
\LinExt({\le_I} \op{l,l} {\le_I}) & 1122,
&& \LinExt({\le_I} \op{l,r} {\le_I}) & 1212+1221, \\
\LinExt({\le_I} \op{r,l} {\le_I}) & 2112+2121
& \text{and}
& \LinExt({\le_I} \op{r,r} {\le_I}) & 2211.
\end{array}
\qedhere
\]
\end{proof}

\paraul{Tidy series $k$-Zinbiel operad}
The goal of this section is to generalize to any $k$-permutations the composition formula of~\cref{prop:combinatorialModelCompositionsTidySeriesCitelangis} for fully $k$-rooted cuttable $k$-permutations, as suggested by \cref{rem:tidySeriesZinbiel}.

\begin{definition}
\label{def:compositionTidySeriesZinbiel}
Let~$\sigma \in \Perm_k(m)$ and~$\tau \in \Perm_k(n)$ be two $k$-permutations and~$i \in [m]$.
Write~$\sigma = \omega_0 \, i \, \omega_1 \, i \, \omega_2 \cdots \omega_{k-1} \, i \, \mu \, \nu$ where $\mu$ is the maximal factor such that~$\mu_p < i$ for all~$p \in [|\mu|]$.
Therefore either $\nu$ is empty or $i < \nu_1$.
Write moreover~$\tau = \tau_1 \, \cdots \, \tau_k \, \theta$.
Then the \defn{tidy series $i$-th composition} of $\sigma$ and $\mu$ is defined by
\[
\begin{array}{c@{\;}c@{\;}l@{\,}l@{\,}l@{\,}l@{\,}l@{\,}l@{\,}l@{\,}l@{\,}l}
\sigma \tidySeriesCirc{i} \tau
& \eqdef & \omega_0[i,n] & \tau_1[i-1] & \omega_1[i,n] & \tau_2[i-1] & \omega_2[i,n] \cdots \omega_{k-1}[i,n] & \tau_k[i-1] & \mu[i,n] & \theta[i-1] & \nu[i,n] \\
& =      & \omega_0[i,n] & \tau_1[i-1] & \omega_1[i,n] & \tau_2[i-1] & \omega_2[i,n] \cdots \omega_{k-1}[i,n] & \tau_k[i-1] & \mu      & \theta[i-1] & \nu[n-1].
\end{array}
\]
\end{definition}

Note that this is the same composition formula as in~\cref{prop:combinatorialModelCompositionsTidySeriesCitelangis}.
Here are some examples of tidy series compositions on $k$-permutations:
\begin{gather*}
  {\blue3}1{\blue232}1|{\blue44} \tidySeriesCirc{1} 31{\red3122} = {\blue5}3{\blue454}1{\red3122}{\blue66}, \\
  {\blue31}2{\blue3}2{\blue1}|{\blue44} \tidySeriesCirc{2} 31{\red3122} = {\blue51}4{\blue5}2{\blue1}{\red4233}{\blue66}, \\
  3{\blue12}3{\blue21}|{\blue44} \tidySeriesCirc{3} 31{\red3122} = 5{\blue12}3{\blue21}{\red5344}{\blue66}, \\
  {\blue312321}44| \tidySeriesCirc{4} 31{\red3122} = {\blue312321}64{\red6455},
\end{gather*}
where we have marked by a vertical bar the separation $\mu|\nu$.

Let us stress out again that, even when~$k = 2$, the tidy parallel composition~$\sigma \tidyParallelCirc{i} \tau$ of \cref{def:compositionTidyParallelZinbiel} and tidy series composition~$\sigma \tidySeriesCirc{i} \tau$ of \cref{def:compositionTidySeriesZinbiel} differ.

Similarly to \cref{lem:LexMinSeries}, there is a relation between the tidy series composition~$\tidySeriesCirc{i}$ of \cref{def:compositionTidySeriesZinbiel} and the messy series composition~$\messySeriesCirc{i}$ of \cref{def:compositionMessySeriesZinbiel}.

\begin{lemma}
\label{lem:morphismSeriesLexMin}
For any two $k$-permutations~$\sigma \in \Perm_k(m)$ and~$\tau \in \Perm_k(n)$ and~$i \in [m]$,
\[
\sigma \tidySeriesCirc{i} \tau = \LexMin(\sigma \messySeriesCirc{i} \tau).
\]
\end{lemma}

As a consequence, the tidy series composition rules of \cref{def:compositionTidySeriesZinbiel} define an operad.

\begin{proposition}
\label{prop:fullyRootedCuttableBasisTidy}
The family $\big( \Perm_k(n) \big)_{n>0}$ endowed with the tidy series composition rules~$\tidySeriesCirc{i}$ of~\cref{def:compositionTidySeriesZinbiel} defines a non-symmetric set operad~$\tidyZinbielSeries[k]$, called the \defn{tidy series $k$-Zinbiel operad}. Moreover~$\tidyCitelangisSeries[k]$ is the suboperad of~$\tidyZinbielSeries[k]$ given by fully $k$-rooted cuttable $k$-permutations.
\end{proposition}


\newpage
\section{Series versus parallel}
\label{sec:seriesParallel}

In this section, we show that the series and parallel circulation rules strongly disagree but are reconcilable.


\subsection{Series and parallel are not isomorphic}
\label{subsec:seriesParallelNotIsomorphic}

In this section, we study witnesses for the non-isomorphism of the four signaletic operads~$\messySignaleticParallel$, $\tidySignaleticParallel$, $\messySignaleticSeries$ and~$\tidySignaleticSeries$ for~$k \ge 2$.
These witnesses are isomorphism invariants that turn out to differ in these four operads.
We have seen in \cref{coro:HilbertSignaletic} that these four operads have the same Hilbert series, so we need finer invariants.
For this, we study two families of binary operations in these operads:
\begin{itemize}
\item \defn{associative} operations such that $\operation[a] \circ_1 \operation[a] = \operation[a] \circ_2 \operation[a]$,
\item \defn{left-bipotent} (resp.~\defn{right-bipotent}) operations such that $\operation[a] \circ_1 \operation[a] = \operation[0]_3$ (resp.~$\operation[a] \circ_2 \operation[a] = \operation[0]_3$), where~$\operation[0]_n$ is the zero operation of arity~$n$.
\end{itemize}


\subsubsection{The $k = 1$ case}

As a warm-up, we start with the $k = 1$ case.
In this case, there is no distinction between the parallel and series situations, and we therefore abbreviate~$\messySignaletic = \messySignaleticParallel[1] = \messySignaleticSeries[1] = \Dias$ and~$\tidySignaletic = \tidySignaleticParallel[c] = \tidySignaleticSeries[c] = \Dupdual_c$ for~$c \in \{\op{l}, \odot \op{r}\}$.
We first express explicitly the two self-compositions of a generic binary operation in these four operads.

\begin{lemma}
\label{lem:selfCompositionSignaletic1}
The self-compositions of a generic operation~$\operation[a] \eqdef \lambda \, {\op{l}} + \mu \, {\op{r}}$ (with~$\lambda, \mu \in \K$) in the $1$-signaletic operads~$\messySignaletic$, $\tidySignaletic[\op{l}]$, $\tidySignaletic[\odot]$ and~$\tidySignaletic[\op{r}]$ extends in terms of destination vectors~as
\begin{center}
\renewcommand{\arraystretch}{1.3}
\begin{tabular}{c|c|c}
& $\operation[a] \circ_1 \operation[a]$ & $\operation[a] \circ_2 \operation[a]$ \\
\hline
in $\messySignaletic$ & $\lambda^2 \destVect{1}{3} + \lambda\mu \destVect{2}{3} + (\lambda\mu + \mu^2) \destVect{3}{3}$ & $(\lambda^2 + \lambda\mu) \destVect{1}{3} + \lambda\mu \destVect{2}{3} + \mu^2 \destVect{3}{3}$ \\
in $\tidySignaletic[\op{l}]$ & $\lambda^2 \destVect{1}{3} + \lambda\mu \destVect{2}{3} + \lambda\mu \destVect{3}{3}$ & $\lambda^2 \destVect{1}{3} + \lambda\mu \destVect{2}{3} + \mu^2 \destVect{3}{3}$ \\
in $\tidySignaletic[\odot]$ & $\lambda^2 \destVect{1}{3} + \lambda\mu \destVect{2}{3} + \mu^2 \destVect{3}{3}$ & $\lambda^2 \destVect{1}{3} + \lambda\mu \destVect{2}{3} + \mu^2 \destVect{3}{3}$ \\
in $\tidySignaletic[\op{r}]$ & $\lambda^2 \destVect{1}{3} + \lambda\mu \destVect{2}{3} + \mu^2 \destVect{3}{3}$ & $\lambda\mu \destVect{1}{3} + \lambda\mu \destVect{2}{3} + \mu^2 \destVect{3}{3}$
\end{tabular}
\end{center}
\end{lemma}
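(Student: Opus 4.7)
The plan is to expand each self-composition $\operation[a] \circ_i \operation[a]$ by bilinearity into the four elementary products
\[
\op{l} \circ_i \op{l}, \qquad \op{l} \circ_i \op{r}, \qquad \op{r} \circ_i \op{l}, \qquad \op{r} \circ_i \op{r},
\]
with coefficients $\lambda^2$, $\lambda\mu$, $\lambda\mu$, $\mu^2$, and then to evaluate each of these eight products in each of the four operads. In the free operad, $\op{a} \circ_1 \op{b}$ is the left comb with root $\op{a}$ and left subtree $\op{b}$, while $\op{a} \circ_2 \op{b}$ is the right comb with root $\op{a}$ and right subtree $\op{b}$, so it suffices to compute the image of each of these eight quadratic trees under the quotient map defining each $1$-signaletic operad.

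For $\messySignaletic = \Dias$, I would apply the destination vector composition rule recalled in \cref{subsubsec:dendriformDiassociativeOperads} directly, using $\op{l} = \destVect{1}{2}$ and $\op{r} = \destVect{2}{2}$; this yields the first row of the table in a single pass. For each of the three tidy operads, I would invoke the signaletic interpretation of \cref{subsec:signaleticDiassDualDup}: in each of the eight quadratic trees, exactly one of the two signals is not traversed by the car, and the tree equals the same destination vector $\destVect{p}{3}$ as in the messy case when this unused signal satisfies the orientation constraint (point to the left for $\tidySignaletic[\op{l}]$, point toward the car's route for $\tidySignaletic[\odot]$, point to the right for $\tidySignaletic[\op{r}]$), and vanishes otherwise. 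Gathering the surviving contributions by destination vector will then produce the remaining three rows.

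The computation is entirely mechanical and presents no real obstacle; the only subtle point is the interpretation of the $\odot$ constraint, which depends on the position of the unused signal relative to the route. Specifically, for a right comb whose root signal $\op{l}$ sends the car to the leftmost leaf, the unused signal sits in the right subtree and ``pointing toward the route'' means pointing to the left, whereas for a left comb whose root signal $\op{r}$ sends the car to the rightmost leaf, the unused signal sits in the left subtree and must point to the right. With this convention settled, the twelve tidy entries reduce to a straightforward case verification that matches the messy entries modulo the vanishing of those terms violating the tidiness constraint.
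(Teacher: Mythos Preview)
Your proposal is correct and matches the paper's approach exactly: expand bilinearly into the four terms with coefficients $\lambda^2,\lambda\mu,\lambda\mu,\mu^2$, then compute the image of each quadratic tree as a destination vector (or zero, in the tidy cases). The paper's proof is a one-line sample computation in $\messySignaletic$ followed by ``the proof is identical for the seven other compositions,'' so your more explicit description of the tidy vanishing rules is if anything more complete than what the paper records.
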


\begin{proof}
We just need to expand the composition and compute the corresponding destination vectors.
For instance, in~$\messySignaletic$ we have
\[
\operation[a] \circ_1 \operation[a] = \lambda^2 \!\! \compoL{l}{l} \!\! + \lambda\mu \!\! \compoL{r}{l} \!\! +\lambda\mu \!\! \compoL{l}{r} \!\! + \mu^2 \!\! \compoL{r}{r} \!\! = \lambda^2 \destVect{1}{3} + \lambda\mu \destVect{2}{3} + (\lambda\mu + \mu^2) \destVect{3}{3}.
\]
The proof is identical for the seven other compositions.
\end{proof}

By coefficients comparisons, we immediately derive from \cref{lem:selfCompositionSignaletic1} the following statements.

\begin{proposition}
\label{prop:associativeSignaletic1}
Up to scalar multiples, the only associative operations are precisely
\begin{itemize}
\item the operations~$\op{l}$ and~$\op{r}$ in the messy $1$-signaletic operad~$\messySignaletic$,
\item the operations~$\op{l}$ and~${\op{l}} + {\op{r}}$ in the $\op{l}$-tidy signaletic operad~$\tidySignaletic[\op{l}]$,
\item any operation in the $\odot$-tidy signaletic operad~$\tidySignaletic[\odot]$ (\ie dual duplicial operad~$\Dupdual$),
\item the operations~$\op{r}$ and~${\op{l}} + {\op{r}}$ in the $\op{r}$-tidy signaletic operad~$\tidySignaletic[\op{r}]$.
\end{itemize}
\end{proposition}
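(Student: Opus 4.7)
The plan is to set $\operation[a] \eqdef \lambda \, {\op{l}} + \mu \, {\op{r}}$ for scalars~$\lambda, \mu \in \K$, apply \cref{lem:selfCompositionSignaletic1} to expand both $\operation[a] \circ_1 \operation[a]$ and $\operation[a] \circ_2 \operation[a]$ in the basis of destination vectors $\destVect{1}{3}, \destVect{2}{3}, \destVect{3}{3}$, and then equate coefficients. Since the destination vectors form a basis of the arity-$3$ component in each of the four operads, the associativity condition $\operation[a] \circ_1 \operation[a] = \operation[a] \circ_2 \operation[a]$ is equivalent to a polynomial system in~$\lambda$ and~$\mu$, which I expect to be trivial to solve in each case.

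Concretely, in~$\messySignaletic$, comparing the coefficients of $\destVect{1}{3}$ (or equivalently of $\destVect{3}{3}$) yields~$\lambda \mu = 0$, forcing $\operation[a]$ to be a scalar multiple of~$\op{l}$ or of~$\op{r}$. In~$\tidySignaletic[\op{l}]$, the coefficients of $\destVect{1}{3}$ and $\destVect{2}{3}$ already agree, and the equality of the $\destVect{3}{3}$ coefficients gives $\lambda\mu = \mu^2$, \ie $\mu(\lambda - \mu) = 0$, so $\operation[a]$ is proportional to~$\op{l}$ or to~${\op{l}} + {\op{r}}$. Symmetrically, in~$\tidySignaletic[\op{r}]$, the equality of the $\destVect{1}{3}$ coefficients gives $\lambda^2 = \lambda\mu$, yielding $\operation[a]$ proportional to~$\op{r}$ or to~${\op{l}} + {\op{r}}$. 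Finally, in~$\tidySignaletic[\odot]$, the two lines of the table in \cref{lem:selfCompositionSignaletic1} coincide identically, so every operation is associative.

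There is no real obstacle here: the work was already done in \cref{lem:selfCompositionSignaletic1}, and the present proposition is essentially a direct coefficient comparison followed by solving three one-variable polynomial equations of degree~$2$. The only thing to notice is that the destination vectors are linearly independent in~$\Operad(3)$ for each of the four operads (which follows from \cref{thm:signaleticOperadsQuadratic} and \cref{coro:HilbertSignaletic}), so that coefficient comparison is legitimate.
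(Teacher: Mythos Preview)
Your proposal is correct and follows exactly the paper's approach: the paper simply states that \cref{prop:associativeSignaletic1} (and \cref{prop:bipotentSignaletic1}) follow ``by coefficients comparisons'' from \cref{lem:selfCompositionSignaletic1}, which is precisely what you carry out in detail. Your remark on the linear independence of destination vectors is a nice justification the paper leaves implicit.
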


\begin{proposition}
\label{prop:bipotentSignaletic1}
Up to scalar multiples, the only non-trivial left-bipotent or right-bipotent operations in the $1$-signaletic operads~$\messySignaletic$, $\tidySignaletic[\op{l}]$, $\tidySignaletic[\odot]$ and~$\tidySignaletic[\op{r}]$ are the following:
\begin{itemize}
\item the operation~$\op{r}$ is left-bipotent in the $\op{l}$-tidy signaletic operad~$\tidySignaletic[\op{l}]$,
\item the operation~$\op{l}$ is right-bipotent in the $\op{r}$-tidy signaletic operad~$\tidySignaletic[\op{r}]$.
\end{itemize}
\end{proposition}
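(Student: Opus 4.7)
The plan is to deduce the proposition directly from \cref{lem:selfCompositionSignaletic1}, in the exact same spirit as the deduction of \cref{prop:associativeSignaletic1}. Writing a generic binary operation as $\operation[a] = \lambda \, {\op{l}} + \mu \, {\op{r}}$ with $\lambda, \mu \in \K$, I read off the three coefficients of $\operation[a] \circ_1 \operation[a]$ and of $\operation[a] \circ_2 \operation[a]$ in the basis $\{\destVect{1}{3}, \destVect{2}{3}, \destVect{3}{3}\}$ of each operad, and I demand that all three coefficients vanish (for the left-bipotent condition and for the right-bipotent condition, respectively). This produces, in each of the eight cases, a simple system of quadratic equations in $\lambda$ and $\mu$.

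Concretely, I would treat the four operads one after the other. In $\messySignaletic$, the left-bipotent system forces $\lambda^2 = \lambda\mu = \lambda\mu + \mu^2 = 0$, giving $\lambda = \mu = 0$; similarly on the right. In $\tidySignaletic[\op{l}]$, the left-bipotent system reads $\lambda^2 = \lambda\mu = 0$, so $\lambda = 0$ and $\mu$ is free, yielding $\operation[a] = \mu \, {\op{r}}$; the right-bipotent system adds $\mu^2 = 0$ and kills everything. The case $\tidySignaletic[\op{r}]$ is symmetric, producing $\operation[a] = \lambda \, {\op{l}}$ as the only right-bipotent operation up to scalar. Finally in $\tidySignaletic[\odot]$, both compositions share the coefficients $(\lambda^2, \lambda\mu, \mu^2)$ by \cref{lem:selfCompositionSignaletic1}, so either bipotency condition forces $\lambda = \mu = 0$.

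Aggregating these eight elementary case analyses yields exactly the two surviving non-trivial operations claimed in the statement. Since \cref{lem:selfCompositionSignaletic1} has already done all the combinatorial work of expanding the self-compositions into destination vectors, there is essentially no obstacle left: the proof is a half-page of direct verification, and the main thing to be careful about is not to confuse the four operads or the two bipotency conditions while cycling through the cases.
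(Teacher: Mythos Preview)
Your proposal is correct and is exactly the approach of the paper: the paper's proof is literally the sentence ``By coefficients comparisons, we immediately derive from \cref{lem:selfCompositionSignaletic1} the following statements,'' and your case analysis spells out precisely those comparisons.
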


As the dimensions of the spaces of associative, left-bipotent and right-bipotent operations are operad isomorphism invariants, \cref{prop:associativeSignaletic1,prop:bipotentSignaletic1} enable to distinguish between the messy and tidy $1$-signaletic operads.

\begin{corollary}
\label{coro:non-isomorphic1}
The messy and tidy $1$-signaletic operads~$\messySignaletic$, $\tidySignaletic[\op{l}]$, $\tidySignaletic[\odot]$ and~$\tidySignaletic[\op{r}]$ are not isomorphic.
\end{corollary}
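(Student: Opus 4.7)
The plan is to leverage the fact that any operad isomorphism $\phi\colon \Operad \to \Operad'$ must commute with the partial compositions $\circ_1$ and $\circ_2$ individually, so the properties ``associative'', ``left-bipotent'', and ``right-bipotent'' are preserved under $\phi$. Moreover, $\phi$ restricts to a linear isomorphism $\Operad(2) \to \Operad'(2)$ of $2$-dimensional vector spaces, hence the subsets of $\Operad(2)$ cut out by the quadratic equations $\operation[a] \circ_1 \operation[a] = \operation[a] \circ_2 \operation[a]$, $\operation[a] \circ_1 \operation[a] = 0$, and $\operation[a] \circ_2 \operation[a] = 0$ are invariant algebraic subvarieties whose geometric structure (number of $1$-dimensional components, total dimension) can be extracted directly from \cref{prop:associativeSignaletic1,prop:bipotentSignaletic1}.

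First, I would tabulate these invariants. By \cref{prop:associativeSignaletic1}, the variety of associative operations is the whole $2$-dimensional space $\Operad(2)$ for $\tidySignaletic[\odot]$, while it is a union of exactly two distinct lines through the origin for each of $\messySignaletic$, $\tidySignaletic[\op{l}]$ and $\tidySignaletic[\op{r}]$. This immediately separates $\tidySignaletic[\odot]$ from the three others. By \cref{prop:bipotentSignaletic1}, the variety of left-bipotent operations is zero in $\messySignaletic$ and $\tidySignaletic[\op{r}]$, but contains the nonzero line spanned by $\op{r}$ in $\tidySignaletic[\op{l}]$; symmetrically, the variety of right-bipotent operations is zero in $\messySignaletic$ and $\tidySignaletic[\op{l}]$, but contains the nonzero line spanned by $\op{l}$ in $\tidySignaletic[\op{r}]$.

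These invariants now distinguish every pair: $\messySignaletic$ differs from $\tidySignaletic[\op{l}]$ and $\tidySignaletic[\op{r}]$ by the absence of any bipotent operation, and $\tidySignaletic[\op{l}]$ differs from $\tidySignaletic[\op{r}]$ because the former admits a nonzero left-bipotent but no right-bipotent operation, while the latter is the opposite. Writing out the argument formally amounts to assuming $\phi\colon \Operad \to \Operad'$ is an isomorphism between two of our operads and transporting a generator of a bipotent (resp.\ associative) line, then reading off a contradiction from the tables.

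The only potentially subtle point is ruling out a ``swap'' that exchanges $\circ_1$ and $\circ_2$ between $\tidySignaletic[\op{l}]$ and $\tidySignaletic[\op{r}]$. This does not arise as a worry because an operad morphism is by definition required to satisfy $\phi(\operation[p] \circ_i \operation[q]) = \phi(\operation[p]) \circ_i \phi(\operation[q])$ for the same index $i$; so left-bipotency (a condition on $\circ_1$) is transported to left-bipotency, not to right-bipotency. This distinction is really the main conceptual point of the argument, since algebraically $\tidySignaletic[\op{l}]$ and $\tidySignaletic[\op{r}]$ are anti-isomorphic via the mirror involution, but anti-isomorphism is weaker than isomorphism of non-symmetric operads.
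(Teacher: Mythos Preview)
Your proposal is correct and follows essentially the same approach as the paper: the paper simply observes that the sets of associative, left-bipotent, and right-bipotent binary operations are isomorphism invariants, and then reads off from \cref{prop:associativeSignaletic1,prop:bipotentSignaletic1} that these invariants distinguish the four operads. Your additional remark on why an anti-isomorphism swapping~$\circ_1$ and~$\circ_2$ does not count as an operad isomorphism (relevant for the pair~$\tidySignaletic[\op{l}]$ and~$\tidySignaletic[\op{r}]$) is a useful clarification that the paper leaves implicit.
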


\begin{remark}
By Koszul duality, \cref{coro:non-isomorphic1} implies a similar statement on the $1$-citelangis operads.
In contrast to our analysis on $1$-signaletic operads, we note that the spaces of associative, left-bipotent and right-bipotent would not suffice to distinguish the $1$-citelangis operads.
\end{remark}


\subsubsection{The $k = 2$ case}

To prepare the general case, we now consider the $k = 2$ situation.
Note that since the Manin product is commutative up to operad isomorphism, the tidy parallel $2$-signaletic operads~$\tidySignaleticParallel[cd] = \Dupdual_{c} \whiteManin \Dupdual_{d}$ and~$\tidySignaleticParallel[dc] = \Dupdual_{d} \whiteManin \Dupdual_{c}$ are isomorphic for any~${c, d \in \{\op{l}, \odot, \op{r}\}}$.
We show below that these are the only isomorphisms between the $2$-signaletic operads.
Our argument is again based on the associative, left-bipotent and right-bipotent binary operations in these operads.

\begin{proposition}
\label{prop:associativeSignaletic2}
All associative operations in the $2$-signaletic operads are given in \cref{table:associativeBipotent}.
\end{proposition}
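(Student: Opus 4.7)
The plan is to generalise the strategy of \cref{lem:selfCompositionSignaletic1,prop:associativeSignaletic1} from the $k=1$ case. For each of the four $2$-signaletic operads, I would write a generic binary operation $\operation[a] = \sum_{\operation \in \Operations_2} \lambda_\operation \, \operation$ with four free coefficients, explicitly compute its two self-compositions $\operation[a] \circ_1 \operation[a]$ and $\operation[a] \circ_2 \operation[a]$ in the destination vector basis indexed by $[3]^2$ using the composition formulas \cref{eq:messySignaleticParallelDestinations,eq:tidySignaleticParallelDestinations,eq:messySignaleticSeriesDestinations,eq:tidySignaleticSeriesDestinations}, and then equate these two quadratic expressions coefficient by coefficient. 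This yields a system of at most nine quadratic equations in the unknowns $\lambda_{\op{l,l}}, \lambda_{\op{l,r}}, \lambda_{\op{r,l}}, \lambda_{\op{r,r}}$, whose solutions are by definition the associative operations.

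For the messy parallel operad $\messySignaleticParallel[2]$, the coordinatewise nature of \cref{eq:messySignaleticParallelDestinations} allows one to essentially recycle \cref{lem:selfCompositionSignaletic1}: the coefficient of $\destVect{(r_1,r_2)}{3}$ in $\operation[a] \circ_i \operation[a]$ splits into a product of two $k=1$ self-composition coefficients, one controlling each coordinate, weighted by the appropriate $\lambda$'s. Equivalently, via \cref{prop:whiteManin}, $\operation[a]$ lives in the white Manin product $\Dias \whiteManin \Dias$, so that associativity is a bilinear condition on the $(2\times 2)$-matrix $(\lambda_\operation)$ that can be read off from the $k=1$ answer by summing over each index separately. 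The same shortcut applies to each tidy parallel variant $\tidySignaleticParallel[\constraint_1\constraint_2] = \Dupdual_{\constraint_1} \whiteManin \Dupdual_{\constraint_2}$ coming from \cref{rem:multitidySignaleticParallel}, with the three associativity classes exhibited in \cref{prop:associativeSignaletic1} plugged in on each side.

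For the two series operads $\messySignaleticSeries[2]$ and $\tidySignaleticSeries[2]$, no such Manin decomposition is available, since the series composition \cref{eq:messySignaleticSeriesDestinations} couples the coordinates through the order in which the cars leave the root. I would therefore perform the expansion directly in each of these two operads. In every case the extreme coefficients (those of $\destVect{(1,1)}{3}$ and $\destVect{(3,3)}{3}$) force simple conditions on $\lambda_{\op{l,l}}$ and $\lambda_{\op{r,r}}$, which then propagate through the mixed coefficients and pin down the finite list of linear pencils recorded in the announced table.

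The hard part will be the bookkeeping rather than any conceptual difficulty: with four operads and up to eighteen coefficient equations per operad in four unknowns, a careful case analysis is needed to avoid overlooking solutions. In particular, in the parallel case one must be cautious that associative elements need not arise from simple tensor factorisations in the Manin product, but may come from genuinely mixed linear combinations of the four generators.
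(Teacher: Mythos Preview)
Your proposal is essentially correct and follows the same brute-force approach the paper takes: expand $\operation[a] \circ_1 \operation[a]$ and $\operation[a] \circ_2 \operation[a]$ in the destination-vector basis, equate the nine coefficients, and solve the resulting quadratic system in the four unknowns. The paper does exactly this for $\messySignaleticParallel[2]$, writes out the nine equations and the six solution rays explicitly, and then simply says ``the proof is identical for the other cases.''

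One caution about your proposed Manin-product shortcut for the parallel operads. The claim that the coefficient of a destination vector ``splits into a product of two $k=1$ self-composition coefficients'' and that associativity ``can be read off from the $k=1$ answer by summing over each index separately'' is not correct for a general $\operation[a]$. A generic element is a sum of four simple tensors, not a single one, so the self-composition mixes the coordinates through the cross terms $\lambda_{\operation}\lambda_{\operation'}$; in particular, the exotic associative solutions such as ${\op{l,l}} - {\op{l,r}} + {\op{r,r}}$ in $\messySignaleticParallel[2]$ do not arise from tensoring $k=1$ associatives and cannot be found by any coordinate-wise reduction. You half-acknowledge this in your last paragraph, but the earlier description oversells what the Manin structure buys you. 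In practice the tensor structure only tells you how to compute each coordinate of a composition of simple tensors; it does not factor the associativity system, and you must still solve the full nine-equation quadratic system directly, which is exactly what the paper does.
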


\begin{proof}
Consider a generic associative operation~$\operation[a] \eqdef \lambda \, {\op{l,l}} + \mu \, {\op{l,r}} + \nu \, {\op{r,l}} + \omega \, {\op{r,r}}$ with~$\lambda, \mu, \nu, \omega \in \K$ in the messy parallel $2$-signaletic operad~$\messySignaleticParallel[2]$.
Comparing the coefficients of the destination vectors that appear in the expansions of~$\operation[a] \circ_1 \operation[a]$ and~$\operation[a] \circ_2 \operation[a]$, we obtain the following $9$ equations:
\[
\begin{array}{r@{\;=\;}l@{\qquad}r@{\;=\;}l@{\qquad}r@{\;=\;}l}
\lambda(\lambda+\mu+\nu+\omega) & \lambda\lambda, &
\mu(\lambda+\nu) & \mu\lambda, &
\mu(\mu+\omega) & (\lambda+\mu)\mu, \\
\nu(\lambda+\mu) & \nu\lambda, &
\omega\lambda & \omega\lambda, &
\omega\mu & (\nu+\omega)\mu, \\
\nu(\nu+\omega) & (\lambda+\nu)\nu, &
\omega\nu & (\mu+\omega)\nu, &
\omega\omega & (\lambda+\mu+\nu+\omega)\omega.
\end{array}
\]
Up to scaling, this leads to $6$ solutions for~$(\lambda, \mu, \nu, \omega)$:
\[
(1,0,0,0), \quad
(0,1,0,0), \quad
(0,0,1,0), \quad
(0,0,0,1), \quad
(1,-1,0,1), \qandq
(1,0,-1,1).
\]
The proof is identical for the other cases.
\end{proof}

\begin{proposition}
\label{prop:bipotentSignaletic2}
All left-bipotent and right-bipotent operations in the $2$-signaletic operads are given in \cref{table:associativeBipotent}.
\end{proposition}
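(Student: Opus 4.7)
The proof will follow exactly the template of Proposition~\ref{prop:associativeSignaletic2}: write a generic binary operation as $\operation[a] \eqdef \lambda \, {\op{l,l}} + \mu \, {\op{l,r}} + \nu \, {\op{r,l}} + \omega \, {\op{r,r}}$ with $\lambda,\mu,\nu,\omega \in \K$, expand $\operation[a] \circ_1 \operation[a]$ (for left-bipotent) and $\operation[a] \circ_2 \operation[a]$ (for right-bipotent) in each of the four $2$-signaletic operads, and set the expansion equal to $\operation[0]_3$. Comparing coefficients of the nine destination vectors of $[3]^2$ then yields a system of quadratic equations in $(\lambda,\mu,\nu,\omega)$ whose solutions we enumerate.

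Concretely, the plan is to carry out the following four expansions using the composition rules of \cref{eq:messySignaleticParallelDestinations,eq:tidySignaleticParallelDestinations,eq:messySignaleticSeriesDestinations,eq:tidySignaleticSeriesDestinations}. For each operad $\Operad \in \{\messySignaleticParallel[2], \tidySignaleticParallel[2], \messySignaleticSeries[2], \tidySignaleticSeries[2]\}$, I compute the coefficient of each destination vector~$\destVect{\ind{p}}{3} \in [3]^2$ in $\operation[a] \circ_i \operation[a]$ using the fact that $\destVect{p_1,p_2}{2} \circ_i \destVect{q_1,q_2}{2}$ has a unique destination vector given by the respective circulation rule. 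In the parallel case, each car acts independently according to \cref{eq:messySignaleticParallelDestinations,eq:tidySignaleticParallelDestinations}, so the computation essentially factors into two copies of the~$k=1$ situation treated in \cref{lem:selfCompositionSignaletic1}. In the series case, one must instead track which car has been ``consumed'' at each branching, which introduces correction terms in the coefficients but keeps the computation entirely mechanical.

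Setting all nine coefficients to zero in each case yields a polynomial system that can be solved by elementary elimination: the relations involving $\destVect{1,1}{3}$ and $\destVect{3,3}{3}$ force either $\lambda = 0$ or $\omega = 0$ (respectively), after which the remaining equations become linear or immediately factor. The expected solutions, up to scalar multiples, are exactly those listed in \cref{table:associativeBipotent}: for instance in $\tidySignaleticParallel[2]$ one finds left-bipotent operations corresponding to operations involving only $\op{r}$ in the first coordinate, paralleling the $k=1$ observation of \cref{prop:bipotentSignaletic1} extended by the Manin product structure~$\tidySignaleticParallel[2] = \Dupdual_{\op{l}} \whiteManin \Dupdual_{\op{l}}$ established in \cref{subsec:parallelManin}, while in $\messySignaleticParallel[2] = \Dias \whiteManin \Dias$ there are no non-trivial bipotent operations (mirroring the $k = 1$ case).

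The only real work is bookkeeping: there are $4$ operads $\times\, 2$ compositions $\times\, 9$ destination vectors, yielding up to $72$ quadratic equations to write down, but each is a product of at most two of $\lambda,\mu,\nu,\omega$ and the systems decouple nicely along the parallel/series distinction. The main obstacle, accordingly, is purely notational rather than conceptual; no new structural input beyond \cref{lem:selfCompositionSignaletic1} and the composition rules is needed. The computation can either be presented as a single large table (as in \cref{lem:selfCompositionSignaletic1}) from which \cref{prop:associativeSignaletic2,prop:bipotentSignaletic2} are read off by coefficient matching, or verified by a short computer algebra calculation.
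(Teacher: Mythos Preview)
Your approach is essentially the same as the paper's: expand a generic operation in the destination-vector basis and solve the resulting quadratic system by coefficient comparison. One scoping remark: the proposition refers to \cref{table:associativeBipotent}, which covers not four but twelve operads --- the nine $\constraint$-tidy parallel variants~$\tidySignaleticParallel[\constraint]$ for~$\constraint \in \{\op{l}, \odot, \op{r}\}^2$ (see \cref{rem:multitidySignaleticParallel}), together with~$\messySignaleticParallel[2]$, $\messySignaleticSeries[2]$ and~$\tidySignaleticSeries[2]$. Your method handles all of them identically (and your Manin-product observation in fact explains the parallel cases uniformly), so this only increases the bookkeeping, not the difficulty.
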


\begin{proof}
They are again obtained by comparisons of the coefficients of the destination vectors in the expansions of~$\operation[a] \circ_1 \operation[a]$ and~$\operation[a] \circ_2 \operation[a]$.
\end{proof}

As the dimensions of the spaces of associative, left-bipotent, and right-bipotent operations are operad isomorphism invariants, \cref{prop:associativeSignaletic2,prop:bipotentSignaletic2} enable to distinguish between the parallel and series and the messy and tidy $2$-signaletic operads.

\begin{corollary}
\label{coro:non-isomorphic2}
Among all $2$-signaletic operads of \cref{table:associativeBipotent}, the only isomorphisms are
\[
\tidySignaleticParallel[\op{l,o}] \simeq \tidySignaleticParallel[\op{o,l}],
\qquad
\tidySignaleticParallel[\op{l,r}] \simeq \tidySignaleticParallel[\op{r,l}]
\qqandqq
\tidySignaleticParallel[\op{o,r}] \simeq \tidySignaleticParallel[\op{r,o}].
\]
\end{corollary}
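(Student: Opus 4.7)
The plan is to split the proof into two parts: first establishing the three claimed isomorphisms, and then ruling out all others using the invariants provided by \cref{prop:associativeSignaletic2,prop:bipotentSignaletic2}.

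For the positive direction, the three isomorphisms follow directly from \cref{subsec:parallelManin}: since $\tidySignaleticParallel[\op{cd}] = \Dupdual_c \whiteManin \Dupdual_d$ for any $c,d \in \{\op{l}, \odot, \op{r}\}$, and the white Manin product is commutative up to operad isomorphism (as recalled at the start of \cref{subsec:parallelManin}), we immediately obtain $\tidySignaleticParallel[\op{cd}] \simeq \tidySignaleticParallel[\op{dc}]$. This delivers the three pairs listed in the statement without further work.

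For the negative direction, the key observation is that the properties of associativity, left-bipotency, and right-bipotency of binary operations are intrinsic to the operad structure: they are defined purely in terms of the partial compositions $\circ_1, \circ_2$ and the zero operation $\operation[0]_3$. Consequently, any operad isomorphism $\phi : \Operad \to \Operad'$ induces (via its arity-$2$ component) a linear bijection $\phi_2 : \Operad(2) \to \Operad'(2)$ which carries associative operations to associative operations, and similarly for left- and right-bipotent operations. Hence the three sets
\[
V_{\text{assoc}}(\Operad),\qquad V_{\text{left}}(\Operad),\qquad V_{\text{right}}(\Operad)
\]
— viewed as projective subvarieties of $\Operad(2)$ — are operad isomorphism invariants, and so are in particular their cardinalities (up to scalar) and the dimensions of their linear spans.

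The strategy is then to read off from \cref{table:associativeBipotent} these three varieties for each entry, and to check that the resulting triples distinguish all equivalence classes beyond the three identified above. The main subtlety is that a naive count of projective components does not suffice: for example, an operad whose constraint word contains $\odot$ may exhibit a \emph{linear subspace} of associative operations (as happens already for $\tidySignaletic[\odot] = \Dupdual$ in the $k=1$ analysis), whereas another operad with the same number of projective components may only exhibit a finite union of isolated lines. I expect this linear-versus-isolated distinction to be the main obstacle, and the natural refinement is to also record the dimension of the linear span of each variety. Once this finer invariant is tabulated, a straightforward finite case check over the pairs appearing in \cref{table:associativeBipotent} completes the proof; there is no need for any further structural argument, since every pair outside the three equivalence classes is separated by at least one of the refined invariants.
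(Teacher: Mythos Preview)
Your proposal is correct and follows essentially the same approach as the paper: the positive direction via commutativity of the white Manin product, and the negative direction by reading off the associative, left-bipotent and right-bipotent loci from \cref{table:associativeBipotent} and checking that they separate all remaining pairs. The paper's own argument is in fact the single sentence preceding the corollary (``the dimensions of the spaces of associative, left-bipotent and right-bipotent operations are operad isomorphism invariants''), so your write-up is already more detailed than what appears there; your observation that these loci are algebraic varieties rather than linear subspaces, and that one must record their component structure (not just a single dimension), is a genuine refinement of the paper's somewhat loose phrasing.
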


\hvFloat[floatPos=p, capWidth=h, capPos=right, capAngle=90, objectAngle=90, capVPos=c, objectPos=c]{figure}
{
\renewcommand{\arraystretch}{1.2}
\begin{tabular}{c|c|c|c}
& associative operations & left-bipotent operations & right-bipotent operations \\
\hline
\multirow{2}{*}{in $\messySignaleticParallel[2]$} & $\K \, {\op{l,l}}$, \quad $\K \, {\op{l,r}}$, \quad $\K \, {\op{r,l}}$, \quad $\K \, {\op{r,r}}$, & \multirow{2}{*}{$\{\operation[0]_2\}$} & \multirow{2}{*}{$\{\operation[0]_2\}$} \\
& $\K \, ({\op{l,l}} - {\op{l,r}} + {\op{r,r}})$ \quad and \quad $\K \, ({\op{l,l}} - {\op{r,l}} + {\op{r,r}})$ & & \\
\hline
\multirow{2}{*}{in $\tidySignaleticParallel[\op{l,l}]$} & $\K \, {\op{l,l}}$, \quad $\K \, ({\op{l,l}} + {\op{l,r}})$, \quad $\K \, ({\op{l,l}} + {\op{r,l}})$, \quad $\K \, ({\op{l,l}} + {\op{r,r}})$, & \multirow{2}{*}{$\K \, {\op{l,r}} + \K \, {\op{r,r}}$ and $\K \, {\op{r,l}} + \K \, {\op{r,r}}$} & \multirow{2}{*}{$\{\operation[0]_2\}$} \\
& and \quad $\K \, ({\op{l,l}} + {\op{l,r}} + {\op{r,l}} + {\op{r,r}})$ & & \\
\hline
\multirow{2}{*}{in $\tidySignaleticParallel[\op{l,o}]$} & $\K \, {\op{l,l}} + \K \, {\op{l,r}}$, \quad $\K \, {\op{l,l}} + \K \, ({\op{l,r}} + {\op{r,r}})$, & \multirow{2}{*}{$\K \, {\op{r,l}} + \K \, {\op{r,r}}$} & \multirow{2}{*}{$\{\operation[0]_2\}$} \\
& and \quad $\K \, ({\op{l,l}} + {\op{r,l}}) + \K \, ({\op{l,r}} + {\op{r,r}})$ & & \\
\hline
\multirow{2}{*}{in $\tidySignaleticParallel[\op{l,r}]$} & $\K \, {\op{l,r}}$, \quad $\K \, ({\op{l,r}} + {\op{r,r}})$, \quad $\K \, ({\op{l,l}} + {\op{l,r}})$, & \multirow{2}{*}{$\K \, {\op{r,l}} + \K \, {\op{r,r}}$} & \multirow{2}{*}{$\K \, {\op{l,l}} + \K \, {\op{r,l}}$} \\
& and \quad $\K \, {\op{r,l}} + \K \, ({\op{l,l}} + {\op{l,r}} + {\op{r,r}})$ & & \\
\hline
\multirow{2}{*}{in $\tidySignaleticParallel[\op{o,l}]$} & $\K \, {\op{l,l}} + \K \, {\op{r,l}}$, \quad $\K \, {\op{l,l}} + \K \, ({\op{r,l}} + {\op{r,r}})$, & \multirow{2}{*}{$\K \, {\op{l,r}} + \K \, {\op{r,r}}$} & \multirow{2}{*}{$\{\operation[0]_2\}$} \\
& and \quad $\K \, ({\op{l,l}} + {\op{l,r}}) + \K \, ({\op{r,l}} + {\op{r,r}})$ & & \\
\hline
in $\tidySignaleticParallel[\op{o,o}]$ & $\K \, {\op{l,l}} + \K \, {\op{l,r}} + \K \, {\op{r,l}} + \K \, {\op{r,r}}$ & $\{\operation[0]_2\}$ & $\{\operation[0]_2\}$ \\
\hline
\multirow{2}{*}{in $\tidySignaleticParallel[\op{o,r}]$} & $\K \, {\op{l,r}} + \K \, {\op{r,r}}$, \quad $\K \, ({\op{l,l}} + {\op{l,r}}) + \K \, {\op{r,r}}$, & \multirow{2}{*}{$\{\operation[0]_2\}$} & \multirow{2}{*}{$\K \, {\op{l,l}} + \K \, {\op{r,l}}$} \\
& and \quad $\K \, ({\op{l,l}} + {\op{l,r}}) + \K \, ({\op{r,l}} + {\op{r,r}})$ & & \\
\hline
\multirow{2}{*}{in $\tidySignaleticParallel[\op{r,l}]$} & $\K \, {\op{r,l}}$, \quad $\K \, ({\op{l,l}} + {\op{r,l}})$, \quad $\K \, ({\op{r,l}} + {\op{r,r}})$, & \multirow{2}{*}{$\K \, {\op{l,r}} + \K \, {\op{r,r}}$} & \multirow{2}{*}{$\K \, {\op{l,l}} + \K \, {\op{l,r}}$} \\
& and \quad $\K \, ({\op{l,l}} + {\op{r,l}} + {\op{r,r}}) + \K \, {\op{l,r}}$ & & \\
\hline
\multirow{2}{*}{in $\tidySignaleticParallel[\op{r,o}]$} & $\K \, {\op{r,l}} + \K \, {\op{r,r}}$, \quad $\K \, ({\op{l,l}} + {\op{r,l}}) + \K \, {\op{r,r}}$, & \multirow{2}{*}{$\{\operation[0]_2\}$} & \multirow{2}{*}{$\K \, {\op{l,l}} + \K \, {\op{l,r}}$} \\
& and \quad $\K \, ({\op{l,l}} + {\op{r,l}}) + \K \, ({\op{l,r}} + {\op{r,r}})$ & & \\
\hline
\multirow{2}{*}{in $\tidySignaleticParallel[\op{r,r}]$} & $\K \, {\op{r,r}}$, \quad $\K \, ({\op{l,l}} + {\op{r,r}})$, \quad $\K \, ({\op{l,r}} + {\op{r,r}})$, \quad $\K \, ({\op{r,l}} + {\op{r,r}})$ & \multirow{2}{*}{$\{\operation[0]_2\}$} & \multirow{2}{*}{$\K \, {\op{l,l}} + \K \, {\op{l,r}}$ and $\K \, {\op{l,l}} + \K \, {\op{r,l}}$} \\
& and \quad $\K \, ({\op{l,l}} + {\op{l,r}} + {\op{r,l}} + {\op{r,r}})$ & & \\
\hline
in $\messySignaleticSeries[2]$ & $\K \, {\op{l,l}}$ \quad and \quad $\K \, {\op{r,r}}$ & $\K \, ({\op{r,l}} - {\op{r,r}})$ & $\K \, ({\op{l,l}} - {\op{l,r}})$ \\
\hline
\multirow{2}{*}{in $\tidySignaleticSeries[2]$} & $\K \, {\op{l,l}}$, \quad $\K \, ({\op{l,l}} + {\op{r,r}})$, & \multirow{2}{*}{$\K \, {\op{l,r}} + \K \, {\op{r,r}}$} & \multirow{2}{*}{$\K \, {\op{l,r}}$} \\
& and \quad $\K \, ({\op{l,l}}  + {\op{r,l}} + {\op{r,r}}) + \K \, {\op{l,r}}$ & &
\end{tabular}
}
{The spaces of associative, left-bipotent and right-bipotent operations in all $2$-signaletic operads.}
{table:associativeBipotent}

\begin{remark}
By Koszul duality, \cref{coro:non-isomorphic2} implies a similar statement on the $2$-citelangis operads.
In contrast to our analysis on $2$-signaletic operads, we note that the spaces of associative, left-bipotent and right-bipotent would not suffice to distinguish the $2$-citelangis operads.
\end{remark}


\subsubsection{The $k \ge 3$ case}

As $k$ grows, the description of the associative, left-bipotent and right-bipotent operations becomes more and more complicated.
We now collect some relevant observations about these operations.

\paraul{Associative operations in parallel}
For the parallel situation, we can make use of the insertion operations~$\Ins^{\operation[a]}_i$ of \cref{rem:extensionMessySignaleticParallel,rem:extensionTidySignaleticParallel} to create associative and bipotent operations in arity~$k$ from those in arity~$\ell$ for~$\ell < k$.
The following two statements are immediate from the fact that these insertion operations are operad morphisms.

\begin{proposition}
\label{prop:associativeMessySignaleticParallel}
For any~$k \ge 1$ and~$i \in [k+1]$, if~$\operation[a]$ is associative in~$\messySignaleticParallel$, then~$\Ins^{\op{l}}_i(\operation[a])$ and~$\Ins^{\op{r}}_i(\operation[a])$ are associative in~$\messySignaleticParallel[k+1]$.
Therefore, any operation of~$\{\op{l}, \op{r}\}^k$ is associative in~$\messySignaleticParallel$.
\end{proposition}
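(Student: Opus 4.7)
The first statement is essentially immediate once we unpack what it means for $\Ins^{\op{l}}_i$ and $\Ins^{\op{r}}_i$ to be operad morphisms. As noted in Remark~\ref{rem:extensionMessySignaleticParallel}, both maps are injective operad morphisms $\messySignaleticParallel \to \messySignaleticParallel[k+1]$, so in particular they commute with the partial compositions $\circ_1$ and $\circ_2$. Applying $\Ins^{\operation[c]}_i$ (for $\operation[c] \in \{\op{l}, \op{r}\}$) to the associativity relation $\operation[a] \circ_1 \operation[a] = \operation[a] \circ_2 \operation[a]$ therefore yields
\[
\Ins^{\operation[c]}_i(\operation[a]) \circ_1 \Ins^{\operation[c]}_i(\operation[a]) = \Ins^{\operation[c]}_i(\operation[a] \circ_1 \operation[a]) = \Ins^{\operation[c]}_i(\operation[a] \circ_2 \operation[a]) = \Ins^{\operation[c]}_i(\operation[a]) \circ_2 \Ins^{\operation[c]}_i(\operation[a]),
\]
which is exactly the associativity of $\Ins^{\operation[c]}_i(\operation[a])$ in $\messySignaleticParallel[k+1]$.

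For the second statement, I would proceed by induction on $k$. The base case $k = 1$ is read off directly from the diassociative relations in Definition~\ref{def:dias}: the relations $\compoL{l}{l} = \compoR{l}{l}$ and $\compoR{r}{r} = \compoL{r}{r}$ say precisely that $\op{l}$ and $\op{r}$ are each associative in $\messySignaleticParallel[1] = \Dias$. For the inductive step, any operator $\operation[a] = \operation[a]_1 \cdots \operation[a]_{k+1}$ in $\{\op{l},\op{r}\}^{k+1}$ can be written as $\operation[a] = \Ins^{\operation[a]_{k+1}}_{k+1}(\operation[a]_1 \cdots \operation[a]_k)$; the inner operator is associative by the induction hypothesis, and then the first statement promotes this associativity to $\operation[a]$ itself.

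The whole argument is a two-line consequence of the morphism property, so there is no real obstacle here; the only point to check carefully is the base case, which is built into the very definition of $\Dias$.
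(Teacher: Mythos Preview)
Your proof is correct and matches the paper's approach exactly: the paper states that the result is ``immediate from the fact that these insertion operators are operad morphisms,'' and you have simply unpacked that remark, including the induction on $k$ with base case read off from the diassociative relations.
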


\begin{proposition}
\label{prop:associativeTidySignaleticParallel}
For any~$\constraint \in \{\op{l}, \odot, \op{r}\}^k, \constraint[d] \in \{\op{l}, \odot, \op{r}\}^\ell$ and~$\operation[a] \in \{\op{l}, \op{r}\}^k, \operation[b] \in \{\op{l}, \op{r}\}^\ell$, if the operation~$\operation[a]\operation[b]$ is associative in~$\tidySignaleticParallel[{\constraint[c] \constraint[d]}]$, then~$\operation[a] {\op{l}} \operation[b]$ and~$\operation[a] {\op{l}} \operation[b] + \operation[a] {\op{r}} \operation[b]$ (resp.~$\operation[a] {\op{l}} \operation[b]$ and~$\operation[a] {\op{r}} \operation[b]$, resp.~$\operation[a] {\op{r}} \operation[b]$ and~$\operation[a] {\op{l}} \operation[b] + \operation[a] {\op{r}} \operation[b]$) are associative in~$\tidySignaleticParallel[{\constraint[c] \op{l} \constraint[d]}]$ (resp.~in~$\tidySignaleticParallel[{\constraint[c] \odot \constraint[d]}]$, resp.~in~$\tidySignaleticParallel[{\constraint[c] \op{r} \constraint[d]}]$).
\end{proposition}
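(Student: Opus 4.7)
The plan is to adapt the argument of Proposition~\ref{prop:associativeMessySignaleticParallel} to the tidy setting by constructing, for each $\constraint_i \in \{\op{l}, \odot, \op{r}\}$ and each ``admissible'' operation $\operation[x]$ of $\Dupdual_{\constraint_i}$, an insertion operad morphism $\Ins^{\operation[x]}_{|\constraint[c]|+1} : \tidySignaleticParallel[{\constraint[c]\constraint[d]}] \to \tidySignaleticParallel[{\constraint[c]\constraint_i\constraint[d]}]$ sending $\operation[a]\operation[b]$ to $\operation[a]\operation[x]\operation[b]$. Since operad morphisms preserve associativity, this immediately yields the claimed list of associative elements.

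The first step is to list, via Proposition~\ref{prop:associativeSignaletic1} and Remark~\ref{rem:associativeProductsDend}, the associative operations in the three basic tidy $1$-signaletic operads: the operations $\op{l}$ and $\op{l}+\op{r}$ in $\Dupdual_{\op{l}}$; every linear combination in $\Dupdual = \Dupdual_{\odot}$; and $\op{r}$ and $\op{l}+\op{r}$ in $\Dupdual_{\op{r}}$. These are precisely the six pairs $(\constraint_i, \operation[x])$ matching the three cases of the statement. For each such $\operation[x]$, the $\K$-linear map $\varphi_{\operation[x]} : \As \to \Dupdual_{\constraint_i}$ sending the unique binary generator to $\operation[x]$ preserves the single associative relation of $\As$ by hypothesis on $\operation[x]$, and is therefore an operad morphism.

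The second step uses the Manin decomposition $\tidySignaleticParallel[\constraint] = \Dupdual_{\constraint_1} \whiteManin \dots \whiteManin \Dupdual_{\constraint_k}$ from Subsection~\ref{subsec:parallelManin}, together with the functoriality of $\whiteManin$ and the fact that $\As$ is its neutral element, to define
\[
\Ins^{\operation[x]}_{|\constraint[c]|+1} \eqdef \id_{\tidySignaleticParallel[\constraint[c]]} \whiteManin \varphi_{\operation[x]} \whiteManin \id_{\tidySignaleticParallel[\constraint[d]]} : \tidySignaleticParallel[{\constraint[c]\constraint[d]}] \longrightarrow \tidySignaleticParallel[{\constraint[c]\constraint_i\constraint[d]}],
\]
which acts on pure generators by $\operation[a]\operation[b] \mapsto \operation[a]\operation[x]\operation[b]$. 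Applied to an associative $\operation[a]\operation[b]$, this yields associativity of $\operation[a]\operation[x]\operation[b]$, and when $\operation[x] = \op{l}+\op{r}$ linearity of $\Ins$ expands the image as $\operation[a]\op{l}\operation[b] + \operation[a]\op{r}\operation[b]$, giving exactly the form claimed in the statement.

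The main conceptual obstacle is verifying that the insertion indeed descends from the free operad to the Manin product; this is precisely where the functoriality of $\whiteManin$ in each factor is used, and is the direct tidy analogue of the fact exploited in Remarks~\ref{rem:extensionMessySignaleticParallel} and~\ref{rem:extensionTidySignaleticParallel} that inserting an associative element yields an operad morphism on signaletic operads.
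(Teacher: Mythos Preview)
Your proposal is correct and follows essentially the same approach as the paper, which simply states that the result is ``immediate from the fact that these insertion operators are operad morphisms,'' referring to Remarks~\ref{rem:extensionMessySignaleticParallel} and~\ref{rem:extensionTidySignaleticParallel}. You have made explicit the Manin product construction of these insertion morphisms (which the paper alludes to in Remark~\ref{rem:extensionMessySignaleticParallel}) and the identification of the relevant associative elements in each $\Dupdual_{\constraint_i}$, thereby filling in the details that the paper leaves implicit.
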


It would be interesting to characterize which operations in~$\messySignaleticParallel$ and~$\tidySignaleticParallel[\constraint]$ are non-trivially associative, in the sense that they do not arise as consequences of \cref{prop:associativeMessySignaleticParallel,prop:associativeTidySignaleticParallel}.
For instance, when~$k = 2$, the only non-trivially associative operations in parallel $2$-signaletic operads are up to symmetries:
\begin{itemize}
\item the operations~${\op{l,l}} - {\op{l,r}} + {\op{r,r}}$ and~${\op{l,l}} - {\op{r,l}} + {\op{r,r}}$ in~$\messySignaleticParallel[2]$,
\item the operation~${\op{l,l}} + {\op{r,r}}$ in~$\tidySignaleticParallel[\op{l,l}]$,
\item all operations of~$\K \, {\op{r,l}} + \K \, ({\op{l,l}} + {\op{l,r}} + {\op{r,r}})$ in~$\tidySignaleticParallel[\op{l,r}]$.
\end{itemize}

\paraul{Associative operations in series}
For the series situation, we have no simple way to construct associative operations.
We can however make the following observations, whose proofs are immediate on destination vectors.

\begin{proposition}
\label{prop:associativeMessySignaleticSeries}
For any~$k \ge 1$, the operations~$\op{l}\!^k$ and~$\op{r}\!^k$ are associative in~$\messySignaleticSeries$.
All other operations of~$\{\op{l}, \op{r}\}^k$ are not associative.
\end{proposition}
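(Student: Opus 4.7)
The plan is to encode each generator $\operation[a] \in \{\op{l}, \op{r}\}^k$ as a destination vector $\ind{a} \in \{1,2\}^k$ (with $\ind{a}_i = 1$ iff $\operation[a]_i = \op{l}$), and then use the explicit series composition rule of \cref{eq:messySignaleticSeriesDestinations} to compute the two destination vectors $\ind{r}^{(1)}, \ind{r}^{(2)} \in [3]^k$ representing $\operation[a] \circ_1 \operation[a]$ and $\operation[a] \circ_2 \operation[a]$ in $\messySignaleticSeries(3)$. For each $j \in [k]$, write $c_j^= \eqdef |\{\ell \le j : \ind{a}_\ell = \ind{a}_j\}|$ for the number of cars that match the $j$-th one among the first $j$. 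Plugging $i=1$, $q=2$ into \eqref{eq:messySignaleticSeriesDestinations} gives
\[
\ind{r}^{(1)}_j = \begin{cases} \ind{a}_{c_j^=} & \text{if } \ind{a}_j = 1, \\ 3 & \text{if } \ind{a}_j = 2, \end{cases}
\qquad\text{and symmetrically}\qquad
\ind{r}^{(2)}_j = \begin{cases} 1 & \text{if } \ind{a}_j = 1, \\ 1 + \ind{a}_{c_j^=} & \text{if } \ind{a}_j = 2. \end{cases}
\]

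The associativity condition $\ind{r}^{(1)} = \ind{r}^{(2)}$ then decomposes coordinatewise into: at every position $j$ where $\ind{a}_j = 1$, one must have $\ind{a}_{c_j^=} = 1$, and at every position $j$ where $\ind{a}_j = 2$, one must have $\ind{a}_{c_j^=} = 2$. In both cases the requirement reads $\ind{a}_{c_j^=} = \ind{a}_j$.

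For $\operation[a] = \op{l}^k$ (resp.~$\op{r}^k$) the vector $\ind{a}$ is constant, so the condition $\ind{a}_{c_j^=} = \ind{a}_j$ holds trivially, proving the first half of the statement. Conversely, suppose $\operation[a]$ contains both an $\op{l}$ and an $\op{r}$, and let $j^\star$ be the first position where $\ind{a}$ takes the value opposite to $\ind{a}_1$. Then $c_{j^\star}^= = 1$, so the associativity condition forces $\ind{a}_1 = \ind{a}_{c_{j^\star}^=} = \ind{a}_{j^\star}$, contradicting the choice of $j^\star$. Hence $\operation[a]$ is not associative, which settles the second half.

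The routine part is the bookkeeping of the composition formula on destination vectors; the main (though still short) obstacle is keeping track of the two index sets $\{\ell \le j : \ind{a}_\ell = 1\}$ and $\{\ell \le j : \ind{a}_\ell = 2\}$ entering the two compositions, which I unify with the single notation $c_j^=$ so that both $\circ_1$ and $\circ_2$ produce the same coordinatewise equation $\ind{a}_{c_j^=} = \ind{a}_j$.
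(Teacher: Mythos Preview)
Your proof is correct and follows exactly the approach the paper indicates (the paper merely says the proof is ``immediate on destination vectors''); you have supplied the explicit bookkeeping the paper leaves to the reader. The unification via $c_j^=$ is a nice touch that makes the coordinatewise condition $\ind{a}_{c_j^=} = \ind{a}_j$ symmetric and the contradiction at the first sign-change position~$j^\star$ transparent.
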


\begin{proposition}
\label{prop:associativeTidySignaleticSeries}
For any~$k \ge 1$, the operations~$\op{l}\!^k$ and~${\op{l}\!^k} + {\op{r}\!^k}$ are associative in~$\tidySignaleticSeries$.
\end{proposition}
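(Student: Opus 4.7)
The proof is a direct computation using the explicit formula~\eqref{eq:tidySignaleticSeriesDestinations} for composition in $\tidySignaleticSeries$ expressed on destination vectors. Under the correspondence of \cref{thm:signaleticOperadsQuadratic}, the generator $\op{l}\!^k$ is the destination vector $\destVect{1^k}{2}$ (all $k$ cars follow the left signals and arrive at leaf~$1$) and, symmetrically, $\op{r}\!^k$ is $\destVect{2^k}{2}$. The plan is then to expand both sides of the associativity identity $\operation[a] \circ_1 \operation[a] = \operation[a] \circ_2 \operation[a]$ termwise and match destination vectors.

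The associativity of $\op{l}\!^k$ is immediate. Applying~\eqref{eq:tidySignaleticSeriesDestinations} to $\ind{p} = \ind{q} = \destVect{1^k}{2}$ at $i = 1$ gives $\ind{r}_j = 1 + \ind{q}_j - 1 = 1$ for all~$j$ (the tidy vanishing condition is vacuous since $|\ind{p}|_1 = k$), while at $i = 2$ every $\ind{p}_j < i$ produces $\ind{r}_j = \ind{p}_j = 1$; both sides thus evaluate to $\destVect{1^k}{3}$.

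For $\operation[a] = \op{l}\!^k + \op{r}\!^k$, I would expand bilinearly into four cross-terms on each side of the associativity identity. The key observation is that the tidy vanishing rule ``$\ind{q}_\ell = 1$ for all $|\ind{p}|_i < \ell \le k$'' kills exactly one cross-term on each side: on the left, $\op{r}\!^k \circ_1 \op{r}\!^k = 0$ since $|\ind{p}|_1 = 0$ while all entries of $\ind{q} = \destVect{2^k}{2}$ equal~$2$, and symmetrically $\op{l}\!^k \circ_2 \op{r}\!^k = 0$ on the right. A brief check of the six remaining cross-terms yields
\[
\operation[a] \circ_1 \operation[a] \; = \; \destVect{1^k}{3} + \destVect{2^k}{3} + \destVect{3^k}{3} \; = \; \operation[a] \circ_2 \operation[a],
\]
where the middle summand $\destVect{2^k}{3}$ arises from $\op{l}\!^k \circ_1 \op{r}\!^k$ on the left but from $\op{r}\!^k \circ_2 \op{l}\!^k$ on the right, and the outer summands $\destVect{1^k}{3}$ and $\destVect{3^k}{3}$ arise from the pure $\op{l}\!^k$ and the two ``diagonal-mixed'' compositions respectively.

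There is no serious obstacle here: the computation is elementary, and the only subtlety is carefully tracking which of the eight cross-terms vanish under the tidy condition. In hindsight, a conceptual reading is available: $\op{l}\!^k$ realises the concatenation product of the tidy series citelangis algebra on $\FQSym_k$ from \cref{subsec:actionSeriesCitalangisOperads}, while $\op{l}\!^k + \op{r}\!^k$ plays the role of a symmetrised concatenation, whose associativity is therefore to be expected.
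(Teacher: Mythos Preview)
Your proof is correct and follows the same approach as the paper, which simply states that the proofs are ``immediate on destination vectors.'' Your explicit expansion is exactly the computation the paper has in mind. One minor slip in your closing sentence: on the right-hand side, the summand~$\destVect{3^k}{3}$ comes from the pure composition $\op{r}\!^k \circ_2 \op{r}\!^k$, not from a mixed term; the mixed term $\op{r}\!^k \circ_1 \op{l}\!^k$ contributes~$\destVect{3^k}{3}$ only on the left.
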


Note that there are other associative operations in the series $k$-signaletic operads.
For instance:
\begin{itemize}
\item the associative operations in the tidy series $3$-signaletic operad~$\tidySignaleticSeries[3]$ are given by:
\[
\qquad
\K \, {\op{l,l,l}},
\qquad
\K \, {\op{r,r,r}},
\qqandqq
\K \, ({\op{r,l,l}} - {\op{r,l,r}}) + \K \, ({\op{l,r,l}} - {\op{l,r,r}}).
\]
In fact, the compositions of any two operations in~$\K \, ({\op{r,l,l}} - {\op{r,l,r}}) + \K \, ({\op{l,r,l}} - {\op{l,r,r}})$ vanish.
\item the associative operations in the tidy series $3$-signaletic operad~$\tidySignaleticSeries[3]$ are given by:
\begin{gather*}
\qquad
\K \, {\op{l,l,l}},
\qquad
\K \, ({\op{l,l,l}} + {\op{r,r,r}}),
\qquad
\K \, ({\op{l,l,l}} + {\op{r,l,l}} + {\op{r,r,l}} + {\op{r,r,r}}),
\\
\qquad
\K \, {\op{l,l,r}} + \K \, {\op{l,r,r}} + \K \, {\op{r,l,r}},
\qquad
\K \, ({\op{l,l,l}} + {\op{r,r,l}}) + \K \, {\op{l,r,r}} + \K \, {\op{r,l,r}},
\qquad\text{and}
\\
\qquad
\bigset{\lambda \, ({\op{l,l,l}} + {\op{r,l,l}} + {\op{r,r,l}} + {\op{r,r,r}}) + \mu \, ({\op{l,l,r}} + {\op{l,r,r}}) \pm \sqrt{\lambda\mu} \, ({\op{l,r,l}} + {\op{r,l,r}})}{\lambda, \mu \in \K}.
\end{gather*}
Note that the latter set is not a linear subspace of~$\tidySignaleticSeries[3](2)$.
\end{itemize}

\paraul{Messy parallel and messy series are not isomorphic}
We finally prove that the messy parallel and series $k$-signaletic operads are not isomorphic, studying their left-bipotent and right-bipotent operations.
On the one hand, we observe that the messy parallel $k$-signaletic operad have no such operations.

\begin{proposition}
\label{prop:bipotentMessySignaleticParallel}
The messy parallel $k$-signaletic operad~$\messySignaleticParallel$ has no non-trivial left-bipotent or right-bipotent operation.
\end{proposition}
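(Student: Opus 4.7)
The plan is to work directly in the destination vector basis of \cref{subsubsec:HilberSeriesSignaletic}. Writing any candidate element as $\operation[a] = \sum_{\operation \in \{\op{l}, \op{r}\}^k} c_\operation \, \operation$, I will expand $\operation[a] \circ_i \operation[a]$ in the basis $[3]^k$ of destination vectors of $\messySignaleticParallel(3)$ using the coordinatewise rule \eqref{eq:messySignaleticParallelDestinations}, and show that bipotency forces every coefficient $c_\operation$ to vanish. The composition formula for $i = 1$ and $i = 2$ applied to two arity-$2$ operators $\operation, \operation' \in \{\op{l}, \op{r}\}^k$ specializes as follows: $(\operation \circ_1 \operation')_j = 3$ if $\operation_j = \op{r}$, and $(\operation \circ_1 \operation')_j \in \{1,2\}$ equal to $\operation'_j$ if $\operation_j = \op{l}$; symmetrically $(\operation \circ_2 \operation')_j = 1$ if $\operation_j = \op{l}$, and $(\operation \circ_2 \operation')_j \in \{2,3\}$ determined by $\operation'_j$ if $\operation_j = \op{r}$. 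Hence, given a target destination vector $\ind{r} \in [3]^k$, the first factor $\operation$ is \emph{uniquely} determined by $\ind{r}$ (its $\op{r}$-positions are read off from the $3$'s of $\ind{r}$ in the $\circ_1$ case, and from $\{2,3\}$-entries of $\ind{r}$ in the $\circ_2$ case), while the second factor $\operation'$ is constrained exactly at positions where $\operation_j$ does not freeze the output.

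The core step is then a minimal-support trick. Suppose $\operation[a] \neq 0$ satisfies $\operation[a] \circ_1 \operation[a] = 0$, and pick $\operation_*$ with $c_{\operation_*} \neq 0$ having \emph{minimal} number of $\op{r}$ letters. Let $S \eqdef \{j : (\operation_*)_j = \op{r}\}$ and take $\ind{r}$ with $\ind{r}_j = 3$ for $j \in S$ and $\ind{r}_j = 1$ for $j \notin S$. By the previous paragraph, the only first factor producing $\ind{r}$ is $\operation_*$, while the admissible second factors $\operation'$ are exactly those whose $\op{r}$-positions lie inside $S$. By minimality of $|\operation_*|_{\op{r}} = |S|$ among non-vanishing coefficients, any such $\operation'$ with $c_{\operation'} \neq 0$ must have $|\operation'|_{\op{r}} = |S|$, and hence equal $\operation_*$. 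The coefficient of $\ind{r}$ in $\operation[a] \circ_1 \operation[a]$ therefore collapses to $c_{\operation_*}^2 \neq 0$, contradicting left-bipotency. The right-bipotent case is perfectly symmetric: one picks $\operation_*$ with $c_{\operation_*} \neq 0$ \emph{maximizing} $|\operation_*|_{\op{r}}$, and the analogous choice $\ind{r}_j = 3$ on $\{j : (\operation_*)_j = \op{r}\}$, $\ind{r}_j = 1$ elsewhere, now forces admissible second factors $\operation'$ to have an $\op{r}$ at every position where $\operation_*$ does, so maximality isolates $\operation'=\operation_*$ and the same contradiction~$c_{\operation_*}^2 \neq 0$ appears.

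The only real obstacle is untangling which pairs $(\operation, \operation')$ contribute to a chosen destination vector $\ind{r}$; once one observes that in the parallel setting these contributions factor coordinatewise, the extremal argument is a one-line computation and is uniform in $k$. An alternative, more structural route would be to use the identification $\messySignaleticParallel = \Dias^{\whiteManin k}$ from \cref{subsec:parallelManin}: left/right bipotency is preserved under the embedding into the Hadamard product $\Dias^{\otimes k}$, compositions in the Hadamard product factor through each tensor slot, and the $k=1$ case is covered by \cref{prop:bipotentSignaletic1}; the same extremal idea then amounts to choosing the Hadamard tensor that first witnesses the minimality of the support, recovering the argument above.
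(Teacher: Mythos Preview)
Your argument is correct and takes a genuinely different route from the paper. The paper proceeds by induction on~$k$: it first isolates the coefficient of~${\op{l}}^k$ by looking at the destination vector~$\destVect{1^k}{3}$, and then uses the restriction morphisms~$\Res_{[k]\ssm\{i\}}$ of \cref{rem:restrictionMessySignaleticParallel} to push a hypothetical bipotent element down to~$\messySignaleticParallel[k-1]$, where the inductive hypothesis (anchored at the explicit~$k\le 2$ computations of \cref{prop:bipotentSignaletic1,prop:bipotentSignaletic2}) forces all the restricted coefficients to vanish, whence every pair of operators differing at a single position has opposite coefficients. Your approach avoids both the induction and the base-case tables: by choosing~$\operation_*$ of extremal~$\op{r}$-weight and the destination vector with $3$'s exactly on the $\op{r}$-positions of~$\operation_*$, you isolate the single term~$c_{\operation_*}^2$ directly in any arity. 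In fact the paper's computation for~${\op{l}}^k$ is precisely the~$S=\varnothing$ instance of your extremal trick, so your argument can be read as the natural generalization that makes the induction unnecessary. The alternative Manin-product route you sketch at the end is not really an independent argument (as you acknowledge, it unwinds to the same extremal computation once one handles the fact that~$\operation[a]$ need not be a pure tensor), but the main proof stands on its own.
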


\begin{proof}
The proof works by induction on~$k$ and heavily relies on \cref{rem:restrictionMessySignaleticSeries}.
The result was already stated in \cref{prop:bipotentSignaletic1} when~$k = 1$ and in \cref{prop:bipotentSignaletic2,table:associativeBipotent} when~${k = 2}$.
Assume now that we have proved the statement for the parallel~$(k-1)$-signaletic operad.
Consider a generic left-bipotent binary operation~$\operation[a] \eqdef \sum_{\operation[b] \in \{\op{l}, \op{r}\}^k} \lambda_{\operation[b]} \, \operation[b]$ of the messy parallel $k$-signaletic operad.
We observe that
\begin{itemize}
\item $\lambda_{{\op{l}}^k} = 0$ since the coefficient of~$\destVect{1^{\{k\}}}{3}$ in~${\operation[a] \circ_1 \operation[a]}$ is~$(\lambda_{{\op{l}}^k})^2$.
\item for any~$i \in [k]$, applying the restriction morphism to the positions~$[k] \ssm \{i\}$ as defined in \cref{rem:restrictionMessySignaleticSeries}, we obtain an operation
\[
\operation[a]' \eqdef \Res_{[k] \ssm \{i\}}(\operation[a]) = \sum_{\substack{\operation[c] \in \{\op{l}, \op{r}\}^{i-1} \\ \operation[d] \in \{\op{l}, \op{r}\}^{k-i}}} \big( \lambda_{\operation[c]{\op{l}}\operation[d]} + \lambda_{\operation[c]{\op{r}}\operation[d]} \big) \operation[c]\operation[d]
\]
of the parallel $(k-1)$-signaletic operad satisfying~$\operation[a]' \circ_1 \operation[a]' = \operation[0]_3$. This shows that any two operations of~$\{\op{l}, \op{r}\}^k$ which only differ at position~$i \in [k]$ have the opposite coefficients in~$\operation[a]$.
\end{itemize}
We conclude that all coefficients of~$\operation[a]$ vanish, so that the only left-bipotent operation in~$\messySignaleticParallel$ is the zero operation~$\operation[0]_2$. The proof is symmetric for right-bipotent operations.
\end{proof}

In contrast, we observe that the messy parallel $k$-signaletic operad has (many) non-trivial left-bipotent and right-bipotent operations.
This requires the following observation, whose immediate proof is left to the reader.

\begin{lemma}
\label{lem:bipotentMessySignaleticSeries}
For any two binary operations~$\operation[a]$ and~$\operation[b]$ of the $k$-signaletic operad and any~$\ell,r \ge 0$, 
\begin{itemize}
\item the destination vector of~$\big( {\op{l}\!^\ell} \, \operation[a] \, {\op{r}\!^r} \big) \circ_1 \big( {\op{l}\!^\ell} \, \operation[b] \, {\op{r}\!^r} \big)$ is obtained from the destination vector of~$\operation[a] \circ_1 \operation[b]$ by adding the prefix~$1^{\{\ell\}}$ and the suffix~$3^{\{r\}}$,
\item the destination vector of~$\big( {\op{r}\!^\ell} \, \operation[a] \, {\op{l}\!^r} \big) \circ_2 \big(  {\op{r}\!^\ell} \, \operation[b] \, {\op{l}\!^r} \big)$ is obtained from the destination vector of~$\operation[a] \circ_2 \operation[b]$ by adding the prefix~$3^{\{\ell\}}$ and the suffix~$1^{\{r\}}$.
\end{itemize}
\end{lemma}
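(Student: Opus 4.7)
The plan is to reduce to the case of single operators by bilinearity of composition, then directly trace through the car traversal using the series circulation rule (as formalized in \cref{eq:messySignaleticSeriesDestinations,eq:tidySignaleticSeriesDestinations}). Since the destination vector of a composition of operators is determined entirely by tracking where each car arrives, and both the composition and the destination-vector basis are defined bilinearly, it suffices to check the statement when $\operation[a], \operation[b] \in \Operations_k$ are single operators; then ${\op{l}\!^\ell} \, \operation[a] \, {\op{r}\!^r}$ and ${\op{l}\!^\ell} \, \operation[b] \, {\op{r}\!^r}$ are single operators in $\Operations_{k+\ell+r}$, and the composition is a single quadratic syntax tree on three leaves.

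For the first item, consider the tree $\tree \eqdef \big( {\op{l}\!^\ell} \, \operation[a] \, {\op{r}\!^r} \big) \circ_1 \big( {\op{l}\!^\ell} \, \operation[b] \, {\op{r}\!^r} \big)$: it has root labelled ${\op{l}\!^\ell} \, \operation[a] \, {\op{r}\!^r}$, with its left child itself labelled ${\op{l}\!^\ell} \, \operation[b] \, {\op{r}\!^r}$ and carrying leaves $1$ and $2$, and its right child being leaf $3$. Send $k + \ell + r$ cars in series through $\tree$. The first $\ell$ cars each read an $\op{l}$ at the root, descend into the left subtree, again read an $\op{l}$ (its signal also begins with $\op{l}\!^\ell$), and terminate at leaf $1$, contributing the prefix $1^{\{\ell\}}$ to the destination vector. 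The next $k$ cars read the letters of $\operation[a]$ at the root, then (if they descend into the left subtree) the letters of $\operation[b]$, so their destinations are exactly those of $\operation[a] \circ_1 \operation[b]$, contributing that destination vector in the middle. The last $r$ cars read an $\op{r}$ at the root and terminate directly at leaf $3$, contributing the suffix $3^{\{r\}}$. This gives the claimed description.

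The second item is entirely symmetric: now the root is $\big( {\op{r}\!^\ell} \, \operation[a] \, {\op{l}\!^r} \big)$, the subtree grafted at leaf $2$ has signal ${\op{r}\!^\ell} \, \operation[b] \, {\op{l}\!^r}$, the first $\ell$ cars go right twice and end at leaf $3$, the middle $k$ cars reproduce the destinations of $\operation[a] \circ_2 \operation[b]$, and the last $r$ cars read $\op{l}$ at the root and land at leaf $1$. There is no real obstacle here; the only thing to verify carefully is that one is applying the series circulation correctly (erasing the leftmost remaining letter at each node), which is exactly why cars $\ell+1, \dots, \ell+k$ see $\operation[a]$ unmodified at the root and then $\operation[b]$ unmodified in the grafted subtree, faithfully reproducing the computation of the destination vector of $\operation[a] \circ_i \operation[b]$.
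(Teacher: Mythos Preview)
Your proof is correct and follows exactly the approach the paper intends: the paper declares the proof ``immediate'' and leaves it to the reader, and the natural thing to do is precisely what you did, namely trace the $k+\ell+r$ cars through the quadratic tree and observe that the first $\ell$ and last $r$ cars behave trivially while the middle $k$ cars faithfully reproduce the computation for $\operation[a] \circ_i \operation[b]$. Your careful remark that the series erasure rule is what makes cars $\ell+1,\dots,\ell+k$ see $\operation[a]$ and $\operation[b]$ unmodified is exactly the point one needs to check, and you have it right. The reduction to basis operators by bilinearity is a harmless preliminary, though in context the lemma is really only meant for basis operators anyway (``destination vector'' is a basis-level notion).
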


\begin{proposition}
\label{prop:bipotentMessySignaleticSeries}
For any~$\ell, r \in \N$, the binary operation~${\op{l}}^\ell{\op{r,l}}{\op{r}}^r - {\op{l}}^\ell{\op{r,r}}{\op{r}}^r$ is left-bipotent and the operation~${\op{r}}^\ell{\op{l,r}}{\op{l}}^r - {\op{r}}^\ell{\op{l,l}}{\op{l}}^r$ is right-bpotent in the messy series $k$-signaletic operad~$\messySignaleticSeries$.
\end{proposition}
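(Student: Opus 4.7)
The plan is to reduce the general statement to the case $k=2$ (\ie $\ell=r=0$), which is already recorded in \cref{table:associativeBipotent,prop:bipotentSignaletic2}, by a direct application of \cref{lem:bipotentMessySignaleticSeries} extended by bilinearity.

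First I would set $A \eqdef {\op{r,l}} - {\op{r,r}} \in \messySignaleticSeries[2](2)$ and verify that $A \circ_1 A = 0$ by computing the four relevant destination vectors using formula~\eqref{eq:messySignaleticSeriesDestinations}. With ${\op{r,l}}$ corresponding to $\destVect{2,1}{2}$ and ${\op{r,r}}$ to $\destVect{2,2}{2}$, the formula for~$\circ_1$ yields
\[
{\op{r,l}} \circ_1 {\op{r,l}} = {\op{r,l}} \circ_1 {\op{r,r}} = \destVect{3,2}{3}
\qqandqq
{\op{r,r}} \circ_1 {\op{r,l}} = {\op{r,r}} \circ_1 {\op{r,r}} = \destVect{3,3}{3},
\]
so that expanding $A \circ_1 A$ by bilinearity cancels in pairs. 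By the same token, setting $B \eqdef {\op{l,r}} - {\op{l,l}}$ and composing at position~$2$ gives ${\op{l,r}} \circ_2 {\op{l,r}} = {\op{l,r}} \circ_2 {\op{l,l}} = \destVect{1,2}{3}$ and ${\op{l,l}} \circ_2 {\op{l,r}} = {\op{l,l}} \circ_2 {\op{l,l}} = \destVect{1,1}{3}$, whence $B \circ_2 B = 0$.

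Next I would lift these vanishings to arbitrary $\ell, r$. Expanding the self-composition of $A^{\ell,r} \eqdef {\op{l}}^\ell A {\op{r}}^r = {\op{l}}^\ell {\op{r,l}} {\op{r}}^r - {\op{l}}^\ell {\op{r,r}} {\op{r}}^r$ by bilinearity gives a sum of four terms of the form $({\op{l}}^\ell \operation[a] {\op{r}}^r) \circ_1 ({\op{l}}^\ell \operation[b] {\op{r}}^r)$ with $\operation[a], \operation[b] \in \{{\op{r,l}}, {\op{r,r}}\}$. By \cref{lem:bipotentMessySignaleticSeries}, each such term is the single destination vector obtained from $\operation[a] \circ_1 \operation[b]$ by prepending $1^{\{\ell\}}$ and appending $3^{\{r\}}$. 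Since $A \circ_1 A = 0$ in $\messySignaleticSeries[2]$ and this prefix/suffix operation is $\K$-linear and injective on destination vectors, the same signed combination now yields $A^{\ell,r} \circ_1 A^{\ell,r} = 0$ in $\messySignaleticSeries[\ell+2+r]$. The argument for $B^{\ell,r} \eqdef {\op{r}}^\ell {\op{l,r}} {\op{l}}^r - {\op{r}}^\ell {\op{l,l}} {\op{l}}^r$ is entirely symmetric, using the second bullet of \cref{lem:bipotentMessySignaleticSeries} (prefix $3^{\{\ell\}}$, suffix $1^{\{r\}}$) together with the vanishing $B \circ_2 B = 0$.

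There is no serious obstacle here: the only bookkeeping cost is carefully tracking the indices in formula~\eqref{eq:messySignaleticSeriesDestinations}, and one must be slightly careful that the four destination vectors appearing in $A \circ_1 A$ (respectively $B \circ_2 B$) really collapse into two coinciding pairs, so that the signs of the bilinear expansion produce a complete cancellation rather than merely some partial simplification.
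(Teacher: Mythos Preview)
Your proof is correct and follows exactly the same approach as the paper's own proof: establish the base case $\ell=r=0$ (which the paper simply cites from \cref{prop:bipotentSignaletic2,table:associativeBipotent}) and then lift it via \cref{lem:bipotentMessySignaleticSeries}. You have simply spelled out the destination-vector verification and the bilinear extension that the paper leaves implicit.
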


\begin{proof}
This was already observed in \cref{prop:bipotentSignaletic2,table:associativeBipotent} when~$p = q = 0$.
The general case follows by \cref{lem:bipotentMessySignaleticSeries}.
\end{proof}

From~\cref{prop:bipotentMessySignaleticParallel,prop:bipotentMessySignaleticSeries}, we managed to distinguish the series and parallel messy signaletic operads.

\begin{corollary}
\label{coro:non-isomorphic}
For any~$k \ge 1$, the messy parallel and series $k$-signaletic operads~$\messySignaleticParallel$ and~$\messySignaleticSeries$ are not isomorphic.
\end{corollary}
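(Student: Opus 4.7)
The plan is to leverage \cref{prop:bipotentMessySignaleticParallel,prop:bipotentMessySignaleticSeries} by observing that the property of admitting a non-zero left-bipotent binary operation is preserved under operad isomorphism. Explicitly, if $\phi : \Operad \to \Operad'$ is an operad isomorphism, then from $\phi(\operation[a]) \circ_1 \phi(\operation[a]) = \phi(\operation[a] \circ_1 \operation[a])$ one sees that $\phi$ maps the set of left-bipotent operations in $\Operad(2)$ bijectively onto the set of left-bipotent operations in $\Operad'(2)$. In particular, ``every left-bipotent binary operation is trivial'' is an isomorphism invariant.

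Applied to $k \ge 2$, \cref{prop:bipotentMessySignaleticParallel} asserts that the only left-bipotent operation in $\messySignaleticParallel(2)$ is the zero operation, whereas \cref{prop:bipotentMessySignaleticSeries} exhibits the explicit non-zero left-bipotent operation ${\op{l}}^\ell{\op{r,l}}{\op{r}}^r - {\op{l}}^\ell{\op{r,r}}{\op{r}}^r$ in $\messySignaleticSeries(2)$ for any $\ell, r \in \N$ with $\ell + r = k - 2$ (such a choice exists precisely because $k \ge 2$). This contradicts the existence of any isomorphism $\messySignaleticParallel \simeq \messySignaleticSeries$.

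The case $k = 1$ is degenerate: $\messySignaleticParallel[1]$ and $\messySignaleticSeries[1]$ both coincide with the diassociative operad $\Dias$, so they are tautologically isomorphic and the statement of the corollary must be read with the tacit restriction $k \ge 2$. No substantive obstacle remains: the entire argument reduces to the routine invariance check above, sitting on top of the two preceding propositions which do all the real work.
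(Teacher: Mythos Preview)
Your argument is correct and follows exactly the paper's approach: use \cref{prop:bipotentMessySignaleticParallel,prop:bipotentMessySignaleticSeries} together with the fact that the existence of a non-trivial left-bipotent binary operation is an isomorphism invariant. Your observation about the $k=1$ case is also spot on: the paper itself introduces \cref{subsec:seriesParallelNotIsomorphic} with the restriction ``for~$k \ge 2$'', and indeed $\messySignaleticParallel[1] = \messySignaleticSeries[1] = \Dias$, so the bound $k \ge 1$ in the corollary as stated is a minor slip.
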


Based on computer experiments, we conjecture that there are in fact no non-trivial isomorphisms between all our $k$-signaletic operads.

\begin{conjecture}
\label{conj:non-isomorphic}
For any~$k \ge 1$, the only isomorphisms among all $k$-signaletic operads are of the form
\[
\tidySignaleticParallel[\constraint] \simeq \tidySignaleticParallel[\sigma(\constraint)]
\]
for some~$\constraint \in \{\op{l}, \odot, \op{r}\}^k$ and~$\sigma \in \fS_k$.
\end{conjecture}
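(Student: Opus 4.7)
The plan is to construct operad isomorphism invariants fine enough to distinguish all the $k$-signaletic operads except for the known symmetries coming from the commutativity of the white Manin product. Since an operad isomorphism $\phi : \signaletic \to \signaletic[k']'$ restricts to a linear isomorphism $\signaletic(2) \to \signaletic[k']'(2)$ that preserves compositions, the algebraic subvarieties of $\signaletic(2)$ carved out by universally quantified operadic equations on a single binary operation—associativity $\operation \circ_1 \operation = \operation \circ_2 \operation$, left-bipotence $\operation \circ_1 \operation = 0$, right-bipotence $\operation \circ_2 \operation = 0$, and logical combinations thereof—are preserved by $\phi$. Their dimensions, number of irreducible components, and incidences form the invariants, generalizing the strategy already used in Propositions 7.3, 7.4, 7.7, 7.8 and the full list in Table 2.

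The first step is routine: the isomorphisms $\tidySignaleticParallel[\constraint] \simeq \tidySignaleticParallel[\sigma(\constraint)]$ follow from commutativity (up to isomorphism) of the white Manin product applied to the decomposition $\tidySignaleticParallel[\constraint] = \underset{i \in |\constraint|}{\scalebox{2}{$\square$}} \Dupdual_{\constraint_i}$ of Subsection 7.1. The second step separates the four ``families'' (messy parallel, messy series, tidy parallel, tidy series): Propositions 7.10 and 7.12 already show that $\messySignaleticParallel$ has trivial bipotent varieties while $\messySignaleticSeries$ has explicit large bipotent varieties; Table 2 shows that every tidy parallel $\tidySignaleticParallel[\constraint]$ has bipotent varieties of yet different structure; and the tidy series operad $\tidySignaleticSeries$ has an associative variety that is not even a linear subspace (witnessed by the one-parameter algebraic family of associative operations listed after Proposition 7.9 in the $k=3$ case), a phenomenon that never occurs in any parallel operad by the Manin power decomposition.

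The third and most delicate step is to read off the multiset $(n_\op{l}, n_\odot, n_\op{r})$ of letters of $\constraint$ from the isomorphism class of $\tidySignaleticParallel[\constraint]$. The idea is that in each factor $\Dupdual_c$ one has explicit associative and bipotent spaces of known dimensions by Proposition 6.4, and under the white Manin product, an operation $\operation = \sum_{\operation[a]} \lambda_{\operation[a]} \operation[a]$ is associative (resp.\ left-/right-bipotent) exactly when a specific system of quadratic equations in the $\lambda_{\operation[a]}$, parameterized by destination vectors, is satisfied. These systems are explicitly factorable across the Manin product by the coordinatewise composition rule, so the associated varieties are products of the per-factor varieties, whose Hilbert polynomials are products of those of the factors. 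Since the per-factor Hilbert polynomials in $\Dupdual_\op{l}, \Dupdual_\odot, \Dupdual_\op{r}$ are all distinct, unique factorization in the polynomial ring recovers the multiset of $\constraint$. A parallel analysis, together with the invariant ``number of restriction operad morphisms onto a given signaletic suboperad'' (Remarks 3.21 and 3.28), distinguishes $\tidySignaleticSeries[k]$ from all $\tidySignaleticParallel[\constraint]$ having matching lower-order invariants.

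The hard part will be Step~3 for large $k$. While the coordinatewise structure of the Manin power makes the varieties of set-theoretically defined conditions (individual equations on a pure tensor) straightforward, the varieties cut out on a \emph{generic} linear combination of generators involve mixing among the $2^k$ coordinates; the resulting system is still polynomial but its irreducible decomposition can be subtle, as already seen in the non-linear right-bipotent variety of $\tidySignaleticParallel[\op{r,r}]$ in Table 2. Carrying out this factorization uniformly in $k$ and $\constraint$—or replacing dimensions by a finer invariant such as the Hilbert series of the defining ideal, or the orbit structure under the automorphism group of the generating vector space—is where the real work lies, and this is likely what has kept the statement at the level of a conjecture.
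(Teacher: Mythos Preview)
The statement you are addressing is labelled a \emph{conjecture} in the paper, and the paper does not supply a proof: it is explicitly recorded as an observation ``based on computer experiments'', supported only by the partial results of the surrounding section (the $k\le 2$ tables and \cref{coro:non-isomorphic} for the messy parallel versus messy series case). So there is no proof in the paper to compare your proposal against; you are attempting to settle an open problem.

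As a strategy your outline is reasonable, and you are honest that it is incomplete. But Step~3 contains a genuine gap that is more serious than you indicate. You assert that for $\tidySignaleticParallel[\constraint]$ the associative and bipotent varieties are ``products of the per-factor varieties'' because compositions are coordinatewise under the Manin decomposition. This is false for generic (non-pure-tensor) operations. The arity~$2$ component $\tidySignaleticParallel[\constraint](2)$ has dimension~$2^k$, and a generic element $\sum_{\operation[a]\in\Operations_k}\lambda_{\operation[a]}\operation[a]$ is not a pure tensor in $\bigotimes_i \Dupdual_{\constraint_i}(2)$; the quadratic conditions for associativity couple all $2^k$ coordinates together and do \emph{not} factor. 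Table~2 already exhibits this: in $\tidySignaleticParallel[\op{l,r}]$ one component of the associative variety is $\K\,{\op{r,l}} + \K\,({\op{l,l}}+{\op{l,r}}+{\op{r,r}})$, a two-dimensional linear space that is manifestly not a product of one-dimensional spaces from the two factors, nor does its second generator decompose as a pure tensor. Hence the ``unique factorization of Hilbert polynomials'' step does not go through as written.

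Your secondary invariant, ``number of restriction operad morphisms onto a given signaletic suboperad'', is also problematic as an isomorphism invariant: the restriction morphisms $\Res_I$ of \cref{rem:restrictionMessySignaleticParallel,rem:restrictionTidySignaleticParallel} are defined with respect to a specific presentation, and under an abstract operad isomorphism there is no reason that such morphisms are preserved or counted. You would need to characterise them intrinsically. In short, the plan identifies the right kind of invariants, but none of the three steps is actually carried out for general~$k$, and the factorability claim in Step~3 is incorrect; this is precisely why the statement remains a conjecture.
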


\begin{remark}
By Koszul duality, we have shown that the messy parallel and series $k$-citelangis operads~$\messyCitelangisParallel$ and~$\messyCitelangisSeries$ are not isomorphic.
\end{remark}


\subsection{Parallel-series and series-parallel operads}
\label{subsec:seriesParallelOperads}

In this section, we briefly indicate how to reconcile the series and parallel circulation rules.

\subsubsection{Parallel-series operads}

First of all, it is not difficult to define parallel-series operads.
The input is a sequence of integers $\b{k} = (k_1, \dots, k_\ell)$ whose sum is denoted by~$|\b{k}|$.
We then consider syntax trees on $\Operations_{|\b{k}|} \simeq \Operations_{k_1} \otimes \dots \otimes \Operations_{k_\ell}$, where each node is a traffic signal decomposed into $\ell$ blocks of sizes~$k_1, \dots, k_\ell$.
As usual, $|\b{k}|$ cars drive through the syntax tree, starting at its root, and following the traffic signals at each node.
But the cars are colored with~$\ell$ colors: there are $k_i$ cars colored~$i$ for all~$i \in [\ell]$.
Cars of the same color follow the series traffic rule on their block: the $k_i$ cars of color~$i$ start one after the others and follow and erase the first remaining traffic signal of the $i$-th block in each node, never crossing barriers between blocks.
This defines two \defn{$\b{k}$-signaletic parallel-series operads}~$\messySignaleticParallelSeries$ and~$\tidySignaleticParallelSeries$ according to the ordering rules of the traffic signals not traversed by a car.
The complete \cref{subsec:HilbertSeriesKoszulitySignaletic} on Hilbert series and Kozulity applies with the corresponding definition for right $\b{k}$-signaletic combs.

\begin{theorem}
The $\b{k}$-signaletic parallel-series operads~$\messySignaleticParallelSeries$ and~$\tidySignaleticParallelSeries$ are quadratic and Koszul with Hilbert series $\sum_{p \ge 1} p^{|\b{k}|} t^p$.
\end{theorem}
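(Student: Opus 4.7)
The plan is to follow the uniform approach developed in Section~3.3, which was explicitly designed to be independent of whether cars circulate in series, in parallel, or in any mixture, and of whether the ordering rule is tidy or messy. The Hilbert series comes for free: in a syntax tree of arity~$p$, each of the $|\b{k}|$ cars has exactly $p$ possible leaves it can end at, and every destination vector in~$[p]^{|\b{k}|}$ is realized by some syntax tree (built as a suitable right comb). Since the $\b{k}$-signaletic equivalence classes syntax trees of arity~$p$ are in bijection with destination vectors of length~$|\b{k}|$ in~$[p]$, this yields~$\Hilbert(t) = \sum_{p \ge 1} p^{|\b{k}|} t^p$, and in particular the relations in arity~$3$ form a subspace of $\Free[\Operations_{|\b{k}|}](3)$, so the operads are quadratic.

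For Koszulity, the strategy is to exhibit a Poincar\'e\,--\,Birkhoff\,--\,Witt basis given by right $\b{k}$-signaletic combs, exactly as in \cref{subsec:HilbertSeriesKoszulitySignaletic}. First I would define a right $\b{k}$-signaletic comb to be a syntax tree on~$\Operations_{|\b{k}|}$ whose left children are all empty leaves and whose unused signals (those not traversed by any of the $|\b{k}|$ cars, according to the block-by-block circulation rule) all point to the left. The analogue of \cref{lem:bijectionCombsDestinations} holds: each destination vector is the destination vector of exactly one right $\b{k}$-signaletic comb, constructed by letting the cars traverse the right comb to their assigned leaves and then filling the untouched signals with~$\op{l}$. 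In particular, the number of right combs of arity~$p$ equals $p^{|\b{k}|}$, which matches the Hilbert series.

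Next I would introduce the $\b{k}$-signaletic Tamari order on syntax trees of a given arity, defined verbatim as in \cref{def:signaleticTamariOrder}: first compare shapes in the classical Tamari order, then compare labels coordinatewise using ${\op{r}} \le {\op{l}}$. The same proof as \cref{lem:maximumSignaleticTamari} shows that within each $\b{k}$-signaletic equivalence class, the right $\b{k}$-signaletic comb is the unique maximum. I would then orient each quadratic relation towards its (unique) right comb, obtaining a rewriting system. Termination follows because each rewriting strictly increases the $\b{k}$-signaletic Tamari order (the rotation-shaped rules perform a right rotation, the shape-preserving rules only replace some $\op{r}$ by $\op{l}$ at the bottom node), and confluence follows from \cref{lem:signaleticCombs}: any syntax tree can first be right-combed by rotations, then have its untouched signals replaced by~$\op{l}$ from the root downwards, reaching the right $\b{k}$-signaletic comb of its class. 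Since the number of normal forms equals the Hilbert series dimension at every arity, the rewriting system is convergent, which proves that the defining relations form a quadratic presentation, that the right $\b{k}$-signaletic combs form a Poincar\'e\,--\,Birkhoff\,--\,Witt basis, and hence that the operads are Koszul.

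No serious new obstacle is expected: the entire argument of Section~3.3 was engineered to apply uniformly, the only point requiring care being that the circulation rule is now block-by-block. The main thing to double-check is \cref{lem:bijectionCombsDestinations}, since cars of different colors now traverse different blocks of each signal, but the reconstruction of a right comb from a destination vector is still completely mechanical because the behaviors on disjoint blocks do not interact. Once this is verified, the remainder of the proof is a transcription of \cref{thm:signaleticOperadsQuadratic}.
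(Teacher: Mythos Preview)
Your proposal is correct and follows exactly the approach intended by the paper, which simply states that ``the complete \cref{subsec:HilbertSeriesKoszulitySignaletic} on Hilbert series and Koszulity applies with the corresponding definition for right $\b{k}$-signaletic combs.'' Your writeup is in fact considerably more detailed than what the paper provides, spelling out explicitly how each lemma of Section~3.3 adapts to the block-by-block rule.
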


As a consequence, we can consider their dual \defn{$\b{k}$-citelangis parallel-series operads}~$\messyCitelangisParallelSeries$ and~$\tidyCitelangisParallelSeries$ which share the same Hilbert series with $\citelangis[|\b{k}|]$.
The definition clearly translates into the following description of these operads in terms of Manin products.

\begin{proposition}
The $\b{k}$-signaletic and $\b{k}$-citelangis parallel-series operads are Manin products of series signaletic and citelangis operads:
\[
\messySignaleticParallelSeries = \underset{i \in [\ell]}{\scalebox{2}{$\square$}} \messySignaleticSeries[k_i],
\quad
\tidySignaleticParallelSeries = \underset{i \in [\ell]}{\scalebox{2}{$\square$}} \tidySignaleticSeries[k_i],
\quad
\messyCitelangisParallelSeries = \underset{i \in [\ell]}{\scalebox{2}{$\blacksquare$}} \messyCitelangisSeries[k_i]
\qandq
\tidyCitelangisParallelSeries = \underset{i \in [\ell]}{\scalebox{2}{$\blacksquare$}} \tidyCitelangisSeries[k_i].
\]
\end{proposition}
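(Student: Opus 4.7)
The plan is to mimic the argument given earlier for parallel Manin products (\cref{subsec:parallelManin}) by showing that the parallel--series operads already coincide with the ordinary tensor product of their series factors, which, together with \cref{prop:whiteManin} and quadraticity, suffices.

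First, I would set up the combinatorial tensor decomposition on destination vectors. A syntax tree on $\Operations_{|\b{k}|}$ is exactly the data of $\ell$ syntax trees on $\Operations_{k_1}, \dots, \Operations_{k_\ell}$ sharing the same underlying shape, obtained by reading the $i$-th block of each node's signal. Since the $k_i$ cars of color $i$ only ever consult and modify the $i$-th block of each signal they traverse, their series journey in the composite tree is literally the series journey in the $i$-th projected tree on $\Operations_{k_i}$. Thus the (messy or tidy) parallel--series destination of a syntax tree is the $\ell$-tuple of its series destinations in each block, and the (messy or tidy) parallel--series equivalence on syntax trees of the same shape is precisely the conjunction of the $\ell$ series equivalences on the projected trees. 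For the tidy version, this uses that the ``point to the left'' constraint is letter-wise and that each letter belongs to exactly one block.

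Second, I would check that the grafting rule is coordinate-wise: the formulas in \cref{eq:messySignaleticSeriesDestinations,eq:tidySignaleticSeriesDestinations} depend only on the block being considered, and they are applied in parallel on the $\ell$ blocks. Together with the previous step, this identifies $\messySignaleticParallelSeries$ (resp.\ $\tidySignaleticParallelSeries$) as an operad with the tensor operad $\messySignaleticSeries[k_1] \otimes \dots \otimes \messySignaleticSeries[k_\ell]$ (resp.\ $\tidySignaleticSeries[k_1] \otimes \dots \otimes \tidySignaleticSeries[k_\ell]$). By \cref{thm:signaleticOperadsQuadratic} all signaletic operads are quadratic, so the parallel--series operads are generated in arity~$2$, and \cref{prop:whiteManin} immediately gives the desired white Manin product identification
\[
\messySignaleticParallelSeries \simeq \underset{i \in [\ell]}{\scalebox{2}{$\square$}} \messySignaleticSeries[k_i]
\qqandqq
\tidySignaleticParallelSeries \simeq \underset{i \in [\ell]}{\scalebox{2}{$\square$}} \tidySignaleticSeries[k_i].
\]

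Third, the citelangis identities follow by Koszul duality: by definition of the $\b{k}$-citelangis parallel--series operads as the Koszul duals of the corresponding signaletic ones, and by definition of the black Manin product as $\Operad \blackManin \Operad' = (\Operad^\koszul \whiteManin \Operad'^\koszul)^\koszul$, we obtain the two remaining identities. The only point that needs a brief check here is that Koszul duality commutes with $\ell$-fold (iterated) Manin products, which is standard and follows by induction from the binary case.

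The potential obstacle is purely bookkeeping: making sure that the tidy constraint truly decomposes blockwise. This is where I would be most careful, but it reduces to the observation already implicit in \cref{rem:multitidySignaleticParallel}, namely that tidiness is imposed letter by letter on each signal, so imposing it on every letter of every block is the same as imposing it globally on the unconcatenated signal.
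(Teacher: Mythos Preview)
Your proposal is correct and follows exactly the approach the paper intends: the paper states that ``the definition clearly translates into'' this proposition without further proof, and your argument spells out the same reasoning used in \cref{subsec:parallelManin} for the pure parallel case (tensor decomposition via concatenation of destination vectors, then \cref{prop:whiteManin} plus quadraticity, then Koszul duality for the citelangis statements). There is nothing to add.
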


The particular case where $\b{k} = (k,\ell)$ is of length $2$ has interesting combinatorial properties. It acts on $k+\ell$ multipermutations where the first $k$ signals chose the first $k$ letters and the last $\ell$ signals chose the last $\ell$ letters of the shuffles.
Most of all the results of \cref{sec:actions} can be adapted in this setting.

\begin{theorem}
The algebra~$\FQSym_{k+\ell}$ can be endowed with both messy and tidy parallel-series $(k,\ell)$-citelangis algebra structures which are both free.
\end{theorem}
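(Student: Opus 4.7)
The plan is to adapt and combine the combinatorial arguments developed for the series $k$-citelangis operads in \cref{subsec:actionSeriesCitalangisOperads} and for the parallel $2$-citelangis operads in \cref{subsec:actionParallelCitalangisOperads}.

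First, I would define the tidy parallel-series $(k,\ell)$-citelangis action on $\FQSym_{k+\ell}$ as follows. Viewing an operator $\operation \in \Operations_{k+\ell}$ as a concatenation $\operation = \operation_1 \operation_2$ with $\operation_1 \in \Operations_k$ and $\operation_2 \in \Operations_\ell$, and given two $(k+\ell)$-permutations $\mu \in \Perm_{k+\ell}(m)$ and $\nu \in \Perm_{k+\ell}(n)$, the result $\mu \, \operation \, \nu$ is the unique $(k+\ell)$-permutation of $\mu \shiftedShuffle \nu$ whose first $k$ letters are selected from $\mu$ and $\nu[m]$ via the series rule of $\operation_1$ as in \cref{def:tidyCitelangisSeriesActionPermutations}, whose last $\ell$ letters are selected symmetrically from the right via $\operation_2$, and whose middle letters are the concatenation of the remaining letters of $\mu$ followed by the remaining letters of $\nu[m]$. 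The messy version is defined identically, except that the middle letters are shuffled rather than concatenated, so that the result is a sum over all such $(k+\ell)$-permutations. A direct verification, analogous to \cref{prop:tidyCitelangisSeriesMultipermutations,prop:messyCitelangisParallelMultipermutations}, shows that these define tidy and messy parallel-series $(k,\ell)$-citelangis algebra structures on $\FQSym_{k+\ell}$.

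Next, I would introduce the notion of a \emph{$(k,\ell)$-rooted cut} of a $(k+\ell)$-permutation $\sigma \in \Perm_{k+\ell}(n)$: this is a value $\gamma \in [n-1]$ such that $\sigma = \alpha \, \rho \, \tau \, \beta$ with $|\alpha| = k$, $|\beta| = \ell$, and $\rho_i \le \gamma < \tau_j$ for all valid indices $i, j$. As in \cref{prop:equivalenceFullykrootedCuttable}, the conditions of admitting such a cut on every interval or on every subset of size at least $2$ would be equivalent, yielding the class of \emph{fully $(k,\ell)$-rooted cuttable} permutations. Mimicking \cref{lem:krootedCutImpliesOperation,prop:characterizationTidySeriesPermutationEvaluations,prop:uniqueTidySeriesCitelangisPermutationEvaluation}, I would show that $(k,\ell)$-rooted cuts correspond precisely to the decompositions $\rho = \sigma \, \operation \, \tau$ under the tidy action, that any $(k+\ell)$-permutation decomposes as an evaluation of a syntax tree on $(k,\ell)$-rooted uncuttable permutations, and that this decomposition is unique up to the tidy parallel-series $(k,\ell)$-citelangis relations. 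This establishes the freeness of the tidy algebra on $(k,\ell)$-rooted uncuttable $(k+\ell)$-permutations.

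For the messy version, I would prove a triangularity relation stating that the tidy evaluation of a syntax tree is the lexicographic minimum of the corresponding messy evaluation, as in \cref{lem:LexMinParallel,lem:LexMinSeries}. Since the tidy and messy parallel-series $(k,\ell)$-citelangis rewriting systems have the same normal forms, this triangularity transfers the freeness of the tidy algebra to the messy one, by the same argument as in \cref{thm:freeMessyCitelangisParallel,thm:freeMessyCitelangisSeries}.

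The main obstacle will be defining the tidy rule for the last $\ell$ letters in such a way that the $\LexMin$ triangularity still holds. In \cref{subsec:actionSeriesCitalangisOperads}, tidy selection of the first $k$ letters automatically yields the lexicographically smallest of the corresponding messy terms; on the suffix side, one has to choose the convention (either reversing the suffix, or running the series rule from right to left, or, more subtly, selecting each successive last letter so as to minimise the global $\LexMin$ of the shuffle) so that tidy selection remains compatible with both the $\LexMin$ projection and the definition of $(k,\ell)$-rooted cuts. Once this convention is fixed, the rest of the arguments should follow the pattern of \cref{subsec:actionParallelCitalangisOperads,subsec:actionSeriesCitalangisOperads} essentially verbatim.
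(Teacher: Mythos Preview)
Your proposal is correct and follows exactly the approach the paper intends: the paper states this theorem without proof, merely noting that ``Most of all the results of \cref{sec:actions} can be adapted in this setting'' and that ``Details are left to the reader.'' Your outline---defining the action via first-$k$ and last-$\ell$ letter selection, introducing $(k,\ell)$-rooted cuts, proving freeness of the tidy structure directly, and transferring it to the messy structure by $\LexMin$ triangularity---is precisely the adaptation the paper has in mind.

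Your flagged obstacle is real but has a clean resolution already visible in the paper. In the parallel $2$-citelangis case of \cref{subsec:actionParallelCitalangisOperads}, the tidy operad used is not $\tidyCitelangisParallel[\op{l,l}]$ but $\tidyCitelangisParallel[\op{l,r}]$: the unused signal in the \emph{last}-letter coordinate must point \emph{right}, not left, for the $\LexMin$ triangularity to hold (since the last letter of the lexicographically minimal shuffle comes from the right operand). The same convention extends here: the tidy parallel-series $(k,\ell)$-citelangis operad to use is the one with constraint $\op{l}^k\op{r}^\ell$ (unused signals in the first block point left, in the second block point right), and the ``series rule from the right'' for the last $\ell$ letters should mirror the series rule on the reversed word. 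With this choice, the $\LexMin$ argument of \cref{lem:LexMinParallel,lem:LexMinSeries} goes through verbatim.
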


There is also a notion of $(k,\ell)$-rooted bounded posets and the messy and tidy $(k,\ell)$-Zinbiel operads, getting the following diagram:

\medskip
\begin{center}
\begin{tikzcd}[column sep=2cm, row sep=.3cm]
	\phantom{|}\boundedPosParallel[k,\ell]\phantom{|} \arrow[r, "\LinExt",  two heads]
	& \phantom{|}\messyZinbielParallel[k,\ell]\phantom{|} \arrow[r, dashrightarrow, "\LexMin", hook, two heads]
	& \phantom{|}\tidyZinbielParallel[k,\ell]\phantom{|} \\
	\phantom{|}\posParallel[k,\ell]\phantom{|} \arrow[r, "\LinExt", two heads] \arrow[u, hook]
	& \phantom{|}\messyCitelangisParallel[k,\ell]\phantom{|} \arrow[r, dashrightarrow, "\LexMin", hook, two heads] \arrow[u, hook]
	& \phantom{|}\tidyCitelangisParallel[k,\ell]\phantom{|} \arrow[u, hook]
\end{tikzcd}
\end{center}

\medskip
Details are left to the reader.

\subsubsection{Series-parallel operads}

Under appropriate conditions, we can also adapt the series setting to work with blocks of parallel operads.
In a first version, the input is a weakly decreasing sequence of integers $\b{k} = (k_1 \ge \dots \ge k_\ell)$ whose sum is denoted by~$|\b{k}|$.
We consider again syntax trees on $\Operations_{|\b{k}|} \simeq \Operations_{k_1} \otimes \dots \otimes \Operations_{k_\ell}$, where each node is a traffic signal decomposed into $\ell$ blocks of sizes~$k_1, \dots, k_\ell$.
As before, $|\b{k}|$ cars drive through the syntax tree, starting at its root, and following the traffic signals at each node, but the traffic rule is now as follows.
The first $k_1$ cars start in parallel at the root and follow the signals in parallel in the first block.
After this first round, we erase the entire block of~$k_1$ letters at each node that has been traversed by at least one car.
The next $k_2$ cars start again in parallel from the root and follow the signals in parallel in the first remaining block.
Note that in the nodes not traversed in the first round, the cars of the second round are reading traffic signal from the first block.
This causes no problem since the size of the first block is at least~$k_2$.
Again, we erase the entire block (now of size $k_1$ or $k_2$) at each node that has been traversed by at least one car.
We repeat until the $\ell$-th round.
This defines two \defn{$\b{k}$-signaletic series-parallel operads}~$\messySignaleticSeriesParallel$ and~$\tidySignaleticSeriesParallel$.
Again, the complete \cref{subsec:HilbertSeriesKoszulitySignaletic} on Hilbert series and Koszulity applies with the corresponding definition for right $\b{k}$-signaletic combs.

\begin{theorem}
The $\b{k}$-signaletic series-parallel operads~$\messySignaleticSeries[\b{k}]$ and~$\tidySignaleticSeries[\b{k}]$ are quadratic and Koszul with hilbert series $\sum_{p \ge 1} p^{|\b{k}|} t^p$.
\end{theorem}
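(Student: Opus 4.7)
The plan is to adapt the uniform proof strategy of Section 3.4 to this series-parallel setting; every argument there was deliberately written to be independent of the series/parallel and messy/tidy choices, and the only real work is checking that the extra layer of structure (rounds of cars with block erasure) behaves well under grafting.

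First, I would verify that the series-parallel circulation rule assigns a well-defined destination vector $\destVect{\ell_1,\dots,\ell_{|\b{k}|}}{n} \in [n]^{|\b{k}|}$ to each syntax tree of arity $n$ on $\Operations_{|\b{k}|}$, and that the induced equivalence is compatible with grafting $\tree \circ_i \tree[s]$. The compatibility is proved by writing down an explicit composition formula for destination vectors, analogous to \eqref{eq:messySignaleticParallelDestinations}--\eqref{eq:tidySignaleticSeriesDestinations}: a car ending at position $j \ne i$ in $\tree$ continues to end in the shifted copy of that position, while cars ending at position $i$ are redistributed in $\tree[s]$ according to the residual signals left after the erasure rounds. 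The decreasing condition $k_1 \ge k_2 \ge \dots \ge k_\ell$ is used precisely here: when the $r$-th round of $k_r$ cars enters a subtree not yet touched by the first $r-1$ rounds, there are still $k_1 \ge k_r$ unused signals available at each node, so the round can proceed unambiguously.

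Second, the Hilbert series computation is immediate: every destination vector in $[p]^{|\b{k}|}$ is realized (for example by appropriate right combs, constructed as in Lemma~\ref{lem:bijectionCombsDestinations}), so $\dim \messySignaleticSeriesParallel(p) = \dim \tidySignaleticSeriesParallel(p) = p^{|\b{k}|}$ and the Hilbert series equals $\sum_{p\ge 1} p^{|\b{k}|} t^p$.

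Third, I would introduce the right $\b{k}$-signaletic combs (left child of each node empty; unvisited signals tidy or free, as in Definition~\ref{def:rightSignaleticComb}) and the $\b{k}$-signaletic Tamari order (Definition~\ref{def:signaleticTamariOrder}), then orient the quadratic relations into a rewriting system whose target on each equivalence class is the unique right comb realizing the given destination vector. The verbatim analogues of Lemmas~\ref{lem:maximumSignaleticTamari}--\ref{lem:signaleticCombs} and Corollaries~\ref{coro:rewritingTermination} and~\ref{coro:signaleticNormalForms} apply without change, since these proofs only used the bijection between right combs and destination vectors, the fact that right rotations climb the Tamari order, and the inductive top-down rewriting of trees of right-comb shape. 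Convergence of the rewriting system then yields quadraticity and Koszulity by Theorem~\ref{thm:signaleticOperadsQuadratic} (equivalently, via the Poincaré--Birkhoff--Witt criterion of \cref{subsec:Koszul}).

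The main obstacle is the first step. The round-based erasure introduces a subtle global coupling between rounds — within a single node, once any car of round $r$ uses the currently-leading block, the whole block is consumed for all subsequent cars passing through that node, even in later rounds. Showing that the composition formula on destination vectors really matches the naive ``continue the rounds through the grafted subtree'' recipe requires tracking, for each position $i \in [|\b{k}|]$ in the destination of the upper tree, which round the $i$-th car belongs to, how many blocks of which sizes remain at the grafting leaf, and that the sequence of residual block sizes is itself a decreasing suffix of $\b{k}$. Once this bookkeeping is set up, the remaining steps are cosmetic adjustments of Section 3.4.
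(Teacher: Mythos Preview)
Your proposal is correct and follows exactly the route the paper itself indicates: the paper offers no separate proof, merely asserting that ``the complete \cref{subsec:HilbertSeriesKoszulitySignaletic} on Hilbert series and Koszulity applies with the corresponding definition for right $\b{k}$-signaletic combs.'' Your outline makes this explicit, and you correctly single out the compatibility of the round-based erasure rule with grafting (your first step) as the only place requiring genuine care beyond the uniform arguments of \cref{subsec:HilbertSeriesKoszulitySignaletic}.
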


As a consequence, we can consider their dual \defn{$\b{k}$-citelangis series-parallel operads}~$\messyCitelangisSeriesParallel$ and~$\tidyCitelangisSeriesParallel$ which share the same Hilbert series with $\citelangis[|\b{k}|]$.
However it is not clear how these $\b{k}$-citelangis series-parallel operads can act on multipermutations.
Finally, thanks to~\cref{rem:extensionMessySignaleticParallel}, we can relax the decreasing condition on~$\b{k}$, by adding some~$\op{l}$ or~$\op{r}$ (not changing the operad) to comply with the decreasing assumption.


\clearpage
\section{Open questions}
\label{sec:openQuestions}

This works raises a lot of open questions. We present some of them below.


\subsection{A general notion of series Manin products?}

All the parallel constructions presented here are $k$-th powers of certain operads for Manin products.
The similarities between the parallel and series situations suggest that the series constructions could be $k$-th powers of the same operads for a general twisted Manin product defined for any non-symmetric operads or only in some specific cases.
It would be interesting to have a general construction or at least some more examples with a similar behavior.


\subsection{Symmetric $k$-Zinbiel operads?}

In this paper, we did not provide a thorough analysis of the $k$-Zinbiel operads and in particular the messy ones.
The main reason is that we think that they should be investigated as symmetric operads.
Indeed, in the $k = 1$ case, the non-symmetric suboperad generated in degree~$2$ within the Zinbiel operad is the dendriform operad.
One has to use the symmetric groups action to generate all permutations.
For larger values of $k$, the series $k$-citelangis operad is generated by a strict subspace of degree~$2$.
For example for~$k = 2$, there are four generators but six $2$-permutations.
So it may be interesting to investigate the suboperad generated by the full component of degree~$2$.
However, we think that an in depth study of the series $k$-Zinbiel operad as a symmetric operad might be much more interesting.
Is it Koszul? What is its dual (some kind of $k$-Leibniz operad)?


\subsection{\texorpdfstring{$k$}{k}-tridendriform operads?}

A straightforward extension of the signaletic interpretation of the diassociative operad is obtained by adding a splitting signal~$\bot$.
When a car reads a signal~$\bot$, it splits into two cars, one following the left branch and the other one following the right branch.
In this setting, for a tree of arity~$n$, each destination is replaced by a non-empty subset of~$[n]$ (because at least one car arrives at the leaves of the tree).
This operad was already considered by J.-L.~Loday and M.~Ronco under the name of triassociative operad~\cite{LodayRonco-tridendriform}.
It is quadratic, Koszul and its dual is knows as the tridendriform operad.
Typical tridendriform algebras include quasi-shuffle algebras and in particular the algebra of quasi-symmetric functions.
Its usual Manin square was considered by P.~Leroux under the name of ennea operad~\cite{Leroux-ennea}.

It is very likely that both parallel and series stories generalize in this setting.
Moreover, J.-L.~Loday and M.~Ronco considered the triduplicial operad which after some adaptation should provide the base case for the tidy setting.
With respect to action and combinatorial realization, the analogue of $\FQSym$ and the Zinbiel operad should be provided by a suitable $k$-version of the algebra of word quasi-symmetric function $\WQSym$~\cite{DuchampHivertThibon-NCSF6, MenousNovelliThibon, Mlodecki-these} where permutations are replaced by packed words or equivalently ordered set partitions.

Finally, a similar extension would be to add a stoping signal~$\ominus$.
A car that reads a signal~$\ominus$ stops, so that the destination set of each car can now be empty.
It would be worth to study the parallel and series extensions of this rule as well.


\subsection{\texorpdfstring{$k$}{k}-duplicial operads?}
\label{subsec:dupl}

As discussed in \cref{subsubsec:tidyParallelSignaleticOperads,subsubsec:tidyParallelCitelangisOperads}, there are natural ways to extend any of the three duplicial operads~$\Dup$, $\Dup_{\op{l}}$ and~$\Dup_{\op{r}}$ and their duals in the parallel situation.
In contrast, in \cref{subsubsec:tidySeriesSignaleticOperads,subsubsec:tidySeriesCitelangisOperads}, we could only extend the twisted duplicial operad~$\Dup_{\op{l}}$ and its dual in the series situation.

A natural question is thus to look for a generalization of the classical duplicial operad~$\Dup$ and its dual in series.
The idea would be to force all signals not traversed by the series route to point towards these series route.
The difficulty is that given a signal not traversed by the series routes, there is no natural way to decide towards which series route it should point.
Natural choices include the following:
\begin{itemize}
\item in each node, the $i$-th letter points towards the $i$-th route,
\item in each node~$v$, the $i$-th free letter points towards the $i$-th route not passing through~$v$,
\item in each node, each letter goes up in the tree until it reaches a route, and then points towards this route,
\item all signals that are on the left (resp.~right) of all routes to point to the right (resp.~left).
\end{itemize}
However, none of these rules lead to an operad.
An appropriate rule still remains to be found.


\subsection{Interpolating with \texorpdfstring{$q$}{q}-analogues between messy and tidy?}

Recall that there exists a $q$-deformation of the shuffle product defined inductively by
\[
X \shuffle_q \varepsilon = X,
\qquad
\varepsilon \shuffle_q Y = Y,
\qqandqq
xX \shuffle_q yY \eqdef x(X \shuffle_q yY) + q^{|xX|}\,y(xX \shuffle Y),
\]
for any two words~$X,Y$ and any two letters~$x,y$.
This quantum shuffle, which is the simplest case of M.~Rosso's construction~\cite{Rosso-quantumGroups}, is associative and interpolates between the concatenation when~$q = 0$ and the shuffle when~$q = 1$.
When~$q$ is not a root of the unity, the $q$-shuffle algebra is isomorphic to the concatenation algebra.
One can similarly define a shifted $q$-shuffle of two permutations.
This gives a quantum deformation~$\FQSym_q$ of the algebra of free quasi-symmetric functions~\cite{ThibonUng, HivertNovelliThibon-commutativeCombinatorialHopfAlgebras}.

The tidy rules return~$0$ or a single permutation defined using a concatenation, whereas the messy rules involve a shuffle.
The comparison with the $q$-shuffle suggests that it may be possible to interpolate between the tidy and the messy rules for the three families of operads: signaletic, citelangis and Zinbiel.


\newpage
\bibliographystyle{alpha}
\bibliography{signaleticOperads}
\label{sec:biblio}

\end{document}